\documentclass[11pt, article]{amsart}
\usepackage{tikz-cd}
\usepackage{relsize}
\usepackage{amsmath,amsfonts,amssymb,amsthm,mathtools,xparse}
\usepackage[abbrev,nobysame]{amsrefs}
\usepackage{mathrsfs}

\usepackage{hyperref}
\usepackage{accents}
\usepackage{xcolor}
\usepackage{soul}
\usepackage{calligra}
\usepackage{enumitem}

\usepackage[T1]{fontenc}
\usepackage[ttdefault=true]{AnonymousPro}

\setlist[enumerate]{wide=\parindent}

\usepackage{bbm}

 \allowdisplaybreaks

\DeclareMathAlphabet{\mathcalligra}{T1}{calligra}{m}{n}

\DeclareMathOperator{\Res}{Res}

\DeclareMathOperator{\Hom}{Hom}

\DeclareMathOperator{\Sing}{Sing}

\DeclareMathOperator{\Aut}{Aut}
\DeclareMathOperator{\Sign}{Sign}

\DeclareMathOperator{\End}{End}

\DeclareMathOperator{\obj}{obj}

\DeclareMathOperator{\ev}{ev}
\DeclareMathOperator{\res}{res}
\DeclareMathOperator{\rank}{rank}

\begin{document}

\newtheorem{thm}{Theorem}[section]
\newtheorem{prop}[thm]{Proposition}
\newtheorem{coro}[thm]{Corollary}
\newtheorem{conj}[thm]{Conjecture}
\newtheorem{example}[thm]{Example}
\newtheorem{lem}[thm]{Lemma}

\newtheorem{hy}[thm]{Hypothesis}
\newtheorem*{acks}{Acknowledgements}
\theoremstyle{definition}
\newtheorem{de}[thm]{Definition}
\newtheorem{ex}[thm]{Example}
\newtheorem{rem}[thm]{Remark}
\newtheorem{draft}[thm]{[DRAFT]}

\newtheorem{convention}[thm]{Convention}


\newcommand{\C}{{\mathbb{C}}}
\newcommand{\Z}{{\mathbb{Z}}}
\newcommand{\N}{{\mathbb{N}}}
\newcommand{\te}[1]{\mbox{#1}}
\newcommand{\set}[2]{{
    \left\{
        {#1}
    \,\middle\vert\,
        {#2}
    \right\}
}}

\newcommand{\choice}[2]{{
\left[
\begin{array}{c}
{#1}\\{#2}
\end{array}
\right]
}}
\def \<{{\langle}}
\def \>{{\rangle}}

\def \:{\mathopen{\overset{\circ}{
    \mathsmaller{\mathsmaller{\circ}}}
    }}
\def \;{\mathclose{\overset{\circ}{\mathsmaller{\mathsmaller{\circ}}}}}

\newcommand{\overit}[2]{{
    \mathop{{#1}}\limits^{{#2}}
}}
\newcommand{\belowit}[2]{{
    \mathop{{#1}}\limits_{{#2}}
}}

\newcommand{\wt}[1]{\widetilde{#1}}

\newcommand{\wh}[1]{\widehat{#1}}

\newcommand{\wck}[1]{\reallywidecheck{#1}}

\newcommand{\qqq}{{\mathbbm q}}

\newlength{\dhatheight}
\newcommand{\dwidehat}[1]{%
    \settoheight{\dhatheight}{\ensuremath{\widehat{#1}}}%
    \addtolength{\dhatheight}{-0.45ex}%
    \widehat{\vphantom{\rule{1pt}{\dhatheight}}%
    \smash{\widehat{#1}}}}
\newcommand{\dhat}[1]{%
    \settoheight{\dhatheight}{\ensuremath{\hat{#1}}}%
    \addtolength{\dhatheight}{-0.35ex}%
    \hat{\vphantom{\rule{1pt}{\dhatheight}}%
    \smash{\hat{#1}}}}

\newcommand{\dwh}[1]{\dwidehat{#1}}

\newcommand{\dis}{\displaystyle}

\newcommand{\pd}[1]{\frac{\partial}{\partial {#1}}}

\newcommand{\pdiff}[2]{\frac{\partial^{#2}}{\partial #1^{#2}}}

\newcommand{\qb}[2]{\genfrac{[}{]}{0pt}{}{#1}{#2}}

\newcommand{\vvp}[1]{\< #1 \>_{\,\,\,\mathllap{\wp}}}

\newcommand{\vvinf}[1]{\< #1 \>_{\,\,\,\,\,\mathllap{\infty}}}


\newcommand{\g}{{\mathfrak g}}
\newcommand{\ff}{{\mathfrak f}}
\newcommand{\f}{\ff}
\newcommand{\gc}{{\bar{\g'}}}
\newcommand{\h}{{\mathfrak h}}
\newcommand{\cent}{{\mathfrak c}}
\newcommand{\notc}{{\not c}}
\newcommand{\Loop}{{\mathcal L}}
\newcommand{\G}{{\mathcal G}}
\newcommand{\D}{\mathcal D}
\newcommand{\T}{\mathcal T}
\newcommand{\Free}{\mathcal F}
\newcommand{\Cfk}{\mathcal C}
\newcommand{\nil}{\mathfrak n}
\newcommand{\al}{\alpha}
\newcommand{\be}{\beta}
\newcommand{\beck}{\be^\vee}
\newcommand{\ssl}{{\mathfrak{sl}}}
\newcommand{\id}{\te{id}}
\newcommand{\rtu}{{\xi}}
\newcommand{\period}{{N}}
\newcommand{\half}{{\frac{1}{2}}}
\newcommand{\reciprocal}[1]{{\frac{1}{#1}}}
\newcommand{\inverse}{^{-1}}
\newcommand{\inv}{\inverse}
\newcommand{\SumInZm}[2]{\sum\limits_{{#1}\in\Z_{#2}}}
\newcommand{\uce}{{\mathfrak{uce}}}
\newcommand{\Rcat}{\mathcal R}
\newcommand{\cS}{{\mathcal{S}}}
\newcommand{\Ures}{\U^{\res}}

\newcommand{\psc}[2]{\vphantom{#1}^{\mathrlap{#2}} #1}

\newcommand{\biot}[2]{\,\vphantom{\ot}_{\mathrlap{#1}} \ot_{#2}}

\newcommand{\Apsc}[1]{A}

\newcommand{\Bpsc}[1]{B}

\newcommand{\tpsc}[2]{\wt{#1}}


\newcommand{\E}{{\mathcal{E}}}
\newcommand{\F}{{\mathcal{F}}}

\newcommand{\Etopo}{{\mathcal{E}_{\te{topo}}}}

\newcommand{\Ye}{{\mathcal{Y}_\E}}

\newcommand{\rh}{{{\bf h}}}
\newcommand{\rp}{{{\bf p}}}
\newcommand{\rrho}{{{\pmb \varrho}}}
\newcommand{\ral}{{{\pmb \al}}}

\newcommand{\comp}{{\mathfrak{comp}}}
\newcommand{\ctimes}{{\widehat{\boxtimes}}}
\newcommand{\ptimes}{{\widehat{\otimes}}}
\newcommand{\ptimeslt}{{
{}_{\te{t}}\ptimes
}}
\newcommand{\ptimesrt}{{\ot_{\te{t}} }}
\newcommand{\ttp}[1]{{
    {}_{{#1}}\ptimes
}}
\newcommand{\bigptimes}{{\widehat{\bigotimes}}}
\newcommand{\bigptimeslt}{{
{}_{\te{t}}\bigptimes
}}
\newcommand{\bigptimesrt}{{\bigptimes_{\te{t}} }}
\newcommand{\bigttp}[1]{{
    {}_{{#1}}\bigptimes
}}

\newcommand{\ot}{\otimes}
\newcommand{\Ot}{\bigotimes}
\newcommand{\bt}{\boxtimes}

\newcommand{\affva}[1]{V_{\wh\g}\left(#1,0\right)}
\newcommand{\saffva}[1]{L_{\wh\g}\left(#1,0\right)}
\newcommand{\saffmod}[1]{L_{\wh\g}\left(#1\right)}

\newcommand{\bd}[1]{{\boldsymbol #1}}

\newcommand{\otcopies}[2]{\belowit{\underbrace{{#1}\ot \cdots \ot {#1}}}{{#2}\te{-times}}}

\newcommand{\wtotcopies}[3]{\belowit{\underbrace{{#1}\wh\ot_{#2} \cdots \wh\ot_{#2} {#1}}}{{#3}\te{-times}}}


\newcommand{\tar}{{\mathcal{DY}}_0\left(\mathfrak{gl}_{\ell+1}\right)}
\newcommand{\U}{{\mathcal{U}}}
\newcommand{\htar}{\mathcal{DY}_\hbar\left(A\right)}
\newcommand{\hhtar}{\widetilde{\mathcal{DY}}_\hbar\left(A\right)}
\newcommand{\htarz}{\mathcal{DY}_0\left(\mathfrak{gl}_{\ell+1}\right)}
\newcommand{\hhtarz}{\widetilde{\mathcal{DY}}_0\left(A\right)}
\newcommand{\qhei}{\U_\hbar\left(\hat{\h}\right)}
\newcommand{\n}{{\mathfrak{n}}}
\newcommand{\vac}{{{\mathbbm 1}}}


\makeatletter
\@addtoreset{equation}{section}
\def\theequation{\thesection.\arabic{equation}}
\makeatother \makeatletter

\title{Quantum affine vertex algebra at root of unity}

\author{Fei Kong$^1$}
\email{kongmath@hunnu.edu.cn}
\address{Key Laboratory of Computing and Stochastic Mathematics (Ministry of Education), School of Mathematics and Statistics, Hunan Normal University, Changsha, China 410081}
\thanks{$^1$Partially supported by the NSF of China  (No.12371027 and No.12471029).}


\begin{abstract}
Let $\mathfrak g$ be a finite simple Lie algebra, and let $r$ denote the ratio of the square length of long roots to that of short roots.
Let $\wp>2r$ be an integer and $\zeta$ a primitive $\wp$-th root of unity.
Denote by $\mathcal U_\zeta(\widehat{\mathfrak g})$ the Lusztig big quantum affine algebra at root of unity defined by divided powers.
In this paper, we establish a current algebra presentation of $\mathcal U_\zeta(\widehat{\mathfrak g})$.
Based on this presentation, we construct a  $\mathbb Z_\wp$-module quantum vertex algebras $V_{\wp,\tau}^\ell(\mathfrak g)$ for each integer $\ell$.
Moreover, we establish a fully faithful functor from the category of smooth weighted $\mathcal U_\zeta(\widehat{\mathfrak g})$-modules of level $\ell$ to the category of $(\Z_\wp,\chi_\phi)$-equivariant $\phi$-coordinated quasi-modules of $V_{\wp,\tau}^\ell(\mathfrak g)$,
where $\chi_\phi:\Z_\wp\to\C^\times$ is the group homomorphism defined by $s\mapsto \zeta^s$.
We also determine the image of this functor.
The structure $V_{\wp,\tau}^\ell(\mathfrak g)$ is substantially different from that of affine vertex algebras.
We realize $V_{\wp,\tau}^\ell(\mathfrak g)$ as a deformation of a simpler quantum vertex algebra $V_{\wp,\varepsilon}^\ell(\mathfrak g)$ by using vertex bialgebras,
and decompose $V_{\wp,\varepsilon}^\ell(\mathfrak g)$ into a Heisenberg vertex algebra and a more interesting quantum vertex algebra determined by a quiver.
\end{abstract}


\keywords{Quantum vertex algebras, quantum affine vertex algebras, quantum affine algebra at root of unity, equivariant $\phi$-coordinated quasi-modules}

\subjclass[2020]{17B69}
%
\maketitle


\section{Introduction}

Quantum affine algebras admit two parallel realizations, distinguished by the deformation parameter and the choice of base ring: the formal deformation quantum affine algebra $\U_\hbar(\wh\g)$ defined over the ring $\C[[\hbar]]$ of formal power series, and the complex-parametric quantum affine algebra $\U_\zeta(\wh\g)$
in which the quantum parameter $\zeta$ is a nonzero complex number.
Based on the Drinfeld presentation \cite{Dr-new}, Frenkel and Jing constructed vertex representations
for simply-laced untwisted quantum affine algebras in \cite{FJ-vr-qaffine}, and formulated a fundamental problem of
developing certain ``quantum vertex algebra theory'' associated to quantum affine algebras, in parallel with the connection between affine Kac-Moody Lie algebras and vertex algebras.

As one of the fundamental works, Etingof and Kazhdan (\cite{EK-qva}) developed a theory of quantum vertex operator algebras in the sense of formal deformations of vertex algebras.
The most visible difference between these and vertex algebras is that the usual locality is replaced by the $S$-locality. Such $S$-locality is controlled by a rational quantum Yang-Baxter operator.
Partly motivated by the work of Etingof and Kazhdan, H. Li
conducted a series of studies.
While vertex algebras are analogues of commutative associative algebras, H. Li introduced the notion of
nonlocal vertex algebras \cite{Li-nonlocal}, which are analogues of noncommutative associative algebras. A nonlocal vertex algebra is a weak quantum vertex algebra \cite{Li-nonlocal} if it satisfies the $S$-locality. In addition, it becomes a quantum vertex algebra \cite{Li-nonlocal} if the $S$-locality is controlled by a rational quantum Yang-Baxter operator.
Mainly in order to associate quantum vertex algebras to quantum affine algebras, a theory of $\phi$-coordinated quasi modules was developed in \cite{Li-phi-coor, Li-G-phi}.
The $\hbar$-adic counterparts of these notions were introduced in \cite{Li-h-adic}.
In this framework, a quantum vertex operator algebra in sense of Etingof-Kazhdan is an $\hbar$-adic quantum vertex algebra whose classical limit is a vertex algebra.

In the very paper \cite{EK-qva}, Etingof and Kazhdan constructed quantum vertex operator algebras as formal deformations of
the universal affine vertex algebras of type $A$, by using the $R$-matrix type relations given in \cite{RS-RTT}.
Butorac, Jing and Ko\v{z}i\'{c} (\cite{BJK-qva-BCD}) extended Etingof-Kazhdan's construction
to type $B$, $C$ and $D$ rational $R$-matrices.
The quantum vertex operator algebras associated with trigonometric $R$-matrices of type $A$, $B$, $C$ and $D$ were constructed in \cite{Kozic-qva-tri-A, K-qva-phi-mod-BCD}.
In \cite{JKLT-Defom-va}, we developed a method to construct quantum vertex operator algebras by using vertex bialgebras.
By using this method, we constructed the quantum lattice vertex algebras, which is a family of quantum vertex operator algebras as deformations of lattice vertex algebras.
Moreover, based on the Drinfeld's quantum affinization construction (\cite{Dr-new,J-KM,Naka-quiver}), in \cite{K-Quantum-aff-va} we constructed the quantum affine vertex algebras $V_{\hat\g,\hbar}(\ell,0)$ and $L_{\hat\g,\hbar}(\ell,0)$ for all symmetric Kac-Moody Lie algebras $\g$.
When $\g$ is of finite type, we proved that $L_{\hat\g,\hbar}(\ell,0)/\hbar L_{\hat\g,\hbar}(\ell,0)$
is isomorphic to the simple affine vertex algebra $L_{\hat\g}(\ell,0)$ of positive level $\ell$.

Quantum groups at roots of unity exhibit substantial differences in their structure and representation theory compared to formal deformation quantum groups, which were first systematically studied by Lusztig in \cite{Luztig-quantum-book}.
In contrast to the semisimple representation category of generic quantum groups, the category of finite-dimensional modules over a quantum group at a root of unity is nonsemisimple, featuring abundant projective modules, various types of blocks, and nontrivial extension groups.
Although vertex operator algebras and quantum groups originally arose from distinct research contexts, the representation categories of vertex operator algebras are connected to the module category of quantum groups at root of unity \cite{KL-tensor-struct-qaff-1,KL-tensor-struct-qaff-2,
KL-tensor-struct-qaff-3,KL-tensor-struct-qaff-4,
finkelberg1996equivalence, zhang2008vertex, huang2017applicability,ACK-braidedTC,CR-unrolledquantumgps,
CRR-KL-corresp,ACK-wt-aff-small-quantum-gps}.
The classification of finite-dimensional irreducible representations of quantum affine algebras at root of unity was given in \cite{CP-qaff-rut, FM-rtu}.
And a family of infinite-dimensional irreducible representations was constructed by using vertex representations in \cite{CJ-vr-rtu}.

Let $\zeta$ be a primitive $\wp$-th root of unity.
In this paper, the quantum affine algebra $\U_\zeta(\wh\g)$ at a root of unity refers to the Lusztig big quantum group defined by divided powers. Our aim is to construct the $\Z_\wp$-module quantum vertex algebra $V_{\wp,\tau}^\ell(\g)$ associated with $\U_\zeta(\wh\g)$ at a $\wp$-th primitive root of unity and an integer level $\ell$. Furthermore, we establish a correspondence between smooth weighted $\U_\zeta(\wh\g)$-modules of level $\ell$ and equivariant $\phi$-coordinated quasi-modules for $V_{\wp,\tau}^\ell(\g)$.

Compared with the formal deformation version studied in \cite{K-Quantum-aff-va}, two additional obstacles must be overcome.
First, to apply the conceptual construction developed in \cite{Li-G-phi}, a current algebra presentation of $\U_\zeta(\wh\g)$ is required.
Let $\U_q(\wh\g)$ denote the quantum affine algebra over $\C(q)$, where $q$ is a formal variable.
The algebra $\U_\zeta(\wh\g)$ is realized as a quotient of the $\C[q,q^{-1}]$-subalgebra of $\U_q(\wh\g)$ generated by divided powers.
Using operator product expansions, we establish such a current algebra presentation in Section \ref{sec:qaff-rtu}.
At this point, the second obstacle emerges.
Among the defining relations, relation \eqref{zeta-Psi-def2}:
\begin{align}
\tag{$\zeta$10}\Psi_i^\pm(z)
    =\zeta_i^{\mp 2H_i(0)}\exp\left(\sum_{m<0}(\zeta_i^m-\zeta_i^{-m})H_i(m)\frac{z^{-m}}{\mp m}\right)
    \exp\left(\sum_{m>0}(\zeta_i^m-\zeta_i^{-m})H_i(m)\frac{z^{-m}}{\mp m}\right).
\end{align}
cannot be realized as an element in a quantum vertex algebra.
More precisely, for an object
$(W,H_i(z),\Psi_i^\pm(z),X_i^\pm(z))$ in $\mathcal R_\zeta^\ell(\wh\g)$ (see Theorem \ref{thm:presentation}), the set
\begin{align*}
  U_W=\set{H_i(\zeta^m z),\,\Psi_i^\pm(\zeta^m z),\,X_i^\pm(\zeta^m z)}{i\in I,\,m\in\Z_\wp}
\end{align*}
generates a $\Z_\wp$-module nonlocal vertex algebra $\<U_W\>_\phi$ for the associate $\phi(x,z)=xe^z$ (see Theorems \ref{thm:nonlocal-VA-gen} and \ref{thm:G-mod-nonlocal-VA-gen}).
In the formal deformation setting, a similar relation can be expressed as follows (see \cite[(6.21), Lemma 8.7]{K-Quantum-aff-va}):
\begin{align*}
  &\Psi_i^+(z)=\left(\frac{F(r_i+r\ell)}{F(r_i-r\ell)}\right)^\half
  \exp\left( \left(-q^{-r\ell z\pd z}F\left(z\pd z\right)H_i(z)\right)_{-1}^\phi \right)1_W,\\
  &\quad\te{where }F(z)=(q^{z}-q^{-z})/z\in \hbar \C[z^2][[\hbar]]\,\,\te{and}\,\,q=\exp(\hbar).
\end{align*}
However, this expression does not converge when $\hbar\to\log \zeta$.
Consequently, we require an additional family of generators $\xi_{i,m}^{1^\pm}$ corresponding to $\Psi_i^\pm(\zeta^m z)$,
along with relation \eqref{eq:va-rel-der} corresponding to the following relation, which approximates \eqref{zeta-Psi-def2}:
\begin{align*}
  &z \pd z \Psi_i^\pm(z)
  =\pm \:(H_i(\zeta_i\inv z)-H_i(\zeta_i z))\Psi_i^\pm(z)\;.
\end{align*}
The relations \eqref{tau1}-\eqref{eq:va-e+e-},
\eqref{eq:va-rel-varphi+-}-\eqref{eq:va-rel-rtu} reflect the rest defining relations.
We construct a $\Z_\wp$-module quantum vertex algebra $V_{\wp,\tau}^\ell(\g)$ corresponding to these relations \eqref{tau1}-\eqref{eq:va-rel-rtu} (see Section \ref{subsec:construct-V}).
Let $(W,Y_W^\phi)$ be an equivariant $\phi$-coordinated quasi module for $V_{\wp,\tau}^\ell(\g)$.
Relation \eqref{eq:va-rel-der} ensures that the differences between $\Psi_i^\pm(z)$ and $Y_W^\phi(\xi_{i,0}^{1^\pm},z)$ ($i\in I$) are controlled.
In fact, they generate a commutative group within the centroid of $Y_W^\phi(V_{\wp,\tau}^\ell(\g),z)$ (see Proposition \ref{prop:mho} for details).

In the formal deformation setting, the quantum affine vertex algebras $V_{\hat\g,\hbar}(\ell,0)$ and $L_{\hat\g,\hbar}(\ell,0)$ are formal deformations of affine vertex algebras.
In contrast, the structure of $V_{\wp,\tau}^\ell(\g)$ exhibits substantially different from that of affine vertex algebras.
Not only the additional family of generators $\xi_{i,m}^{1^\pm}$ corresponding to $\Psi_i^\pm(\zeta^mz)$,
but the theory of equivariant $\phi$-coordinated quasi-modules requires multiple times more generators than that of affine vertex algebras.
We give a rough description of the structure of $V_{\wp,\tau}^\ell(\g)$ in Section \ref{sec:struct}.
The definition of $V_{\wp,\tau}^\ell(\g)$ depends on a parameter $\tau$ lying in an abelian group $\mathfrak T\subset \C[[z]]^{9\wp^2\rank\g}$ with identity element denoted by $\varepsilon$.
We realize $V_{\wp,\tau}^\ell(\g)$ as a deformation of $V_{\wp,\varepsilon}^\ell(\g)$ by using a vertex bialgebra $H$, and furthermore identify $V_{\wp,\tau}^\ell(\g)$ as a quantum vertex subalgebra inside the smash product $V_{\wp,\varepsilon}^\ell(\g)\sharp H$.
Finally, we decompose
$V_{\wp,\varepsilon}^\ell(\g)$ into two quantum vertex algebras: a Heisenberg vertex algebra and a more interesting one, a quantum vertex algebra determined by a quiver (see Definition \ref{de:qva-quiver}).

This paper is organized as follows:
In Section \ref{sec:qva}, we recall the notions of quantum vertex algebras and equivariant $\phi$-coordinated quasi-modules, along with the theory of twistors and their associated deformations as developed in \cite{Sun-Twistor}. We then refine and extend the deformation approach introduced in \cite{JKLT-Defom-va} by using the language of twistors.
In Section \ref{sec:qaff-rtu}, we obtain a current algebra presentation of $\U_\zeta(\wh\g)$ by using vertex operator expansion.
In Section \ref{sec:construct}, we construct the $\Z_\wp$-module quantum vertex algebra $V_{\wp,\tau}^\ell(\g)$ associated to $\U_\zeta(\wh\g)$.
In Section \ref{sec:phi-mod}, we establish a fully faithful functor from the category of smooth weighted $\U_\zeta(\wh\g)$-modules of level integer $\ell$
to the category of equivariant $\phi$-coordinated quasi-modules of $V_{\wp,\tau}^\ell(\g)$ and determine the image of this functor.
Finally, in Section \ref{sec:struct}, we realize $V_{\wp,\tau}^\ell(\g)$ as a deformation of $V_{\wp,\varepsilon}^\ell(\g)$ by using vertex bialgebras
and we decompose $V_{\wp,\varepsilon}^\ell(\g)$ into a Heisenberg vertex algebra and a quantum vertex algebra determined by a quiver.

Throughout this paper, we denote the set of nonnegative integer and  positive integers by $\N$ and $\Z_+$, respectively.
For any ring $R$, we denote the set of invertible elements by $R^\times$.

\section{Quantum vertex algebras}\label{sec:qva}

In this section, we recall the notions of nonlocal vertex algebras, quantum vertex algebras, their $(G,\chi)$-module structures, and $(G,\chi_\phi)$-equivariant $\phi$-coordinated quasi-modules, together with their conceptual constructions. We also review the theory of twistors and the associated deformation of quantum vertex algebras. Finally, we refine and extend the deformation approach introduced in \cite{JKLT-Defom-va} by employing the language of twistors.

\subsection{Equivariant $\phi$-coordinated quasi-modules}\label{subsec:equiv-phi-mod}

In this subsection, we recall the notions of $(G,\chi)$-module nonlocal vertex algebras and $(G,\chi_\phi)$-equivariant $\phi$-coordinated quasi-modules, together with their conceptual constructions. We also present a result that allows one to compute relations in $(G,\chi_\phi)$-equivariant $\phi$-coordinated quasi-modules from relations in $(G,\chi)$-module nonlocal vertex algebras.

Let $W$ be a vector space over $\C$, let $k\in \Z_+$ and let $z_1,\dots,z_k$ be some mutually commutative formal variables.
Define
\begin{align*}
  \E^{(k)}(W;z_1,\dots,z_k)=\Hom(W,W((z_1,\dots,z_k))).
\end{align*}
We will also write $\E^{(k)}(W)=\E^{(k)}(W;z_1,\dots,z_k)$ and $\E(W)=\E^{(1)}(W)$ for convenience.

For a subset $\Gamma$ of $\C^\times$, we let
$\C_\Gamma[z]$ be the monoid generated by polynomials of the form $z-\al$, where $\al\in \Gamma$.
Then an ordered sequence $(a_1(z),\dots, a_k)$ in $\E(W)$ is called \emph{$\Gamma$-compatible} if there exists $f(z_1,z_2)\in \C_\Gamma[z_1/z_2]$
such that
\begin{align*}
  &\left(\prod_{1\le i<j\le k}f(z_i,z_j)\right)a_1(z_1)a_2(z_2)\cdots a_k(z_k)\in\E^{(k)}(W).
\end{align*}
In addition, this sequence is called \emph{compatible} if $\Gamma=\{1\}$, and is called \emph{quasi-compatible}
if $\Gamma=\C^\times$.
Moreover, a subset $U\subset \E(W)$ is called ($\Gamma$-, quasi-)compatible if every ordered sequence is ($\Gamma$-, quasi-)compatible.

Let $\phi(z,z_1)\ne z$ be an associate of the additive formal group law $F_a(z_1,z_2)=z_1+z_2$.
To be more precise, $z\ne\phi(z,z_1)\in \C((z))[[z_1]]$ such that
\begin{align*}
  \phi(z,0)=z,\quad\te{and}\quad \phi(\phi(z,z_1),z_2)=\phi(z,z_1+z_2).
\end{align*}
It was proved in \cite{Li-phi-coor} that
\begin{align}\label{eq:phi-exp}
  \phi(z,z_1)=\exp\left(z_1p(z)\pd{z}\right)z\quad\te{for some }0\ne p(z)\in \C((z)).
\end{align}
For a quasi-compatible ordered pair $(a(z),b(z))$ such that
\begin{align*}
  f(z_1,z_2)a(z_1)b(z_2)\in\E^{(2)}(W)\quad\te{for some }0\ne f(z_1,z_2)\in \C[z_1,z_2],
\end{align*}
a vertex operator map $Y_\E^\phi$ was defined in \cite{Li-phi-coor}:
\begin{align}\label{eq:def-Y-E}
  &Y_\E^\phi(a(z),z_0)b(z)=\sum_{n\in\Z}a(z)_n^\phi b(z)z_0^{-n-1}\\
  &\quad\nonumber=\iota_{z,z_0}f(\phi(z,z_0),z)\inv \left.\left( f(z_1,z)a(z_1)b(z) \right)\right|_{z_1=\phi(z,z_0)}.
\end{align}
The formal group law $F_a$ is obvious an associate of $F_a$. We denote the corresponding vertex operator map $Y_\E^{F_a}$ by $Y_\E$ for short.

The following two notions were introduced in \cite{Li-nonlocal}:
\begin{de}
A \emph{nonlocal vertex algebra} is a vector space $V$ equipped a distinguished vector $\vac$ and a linear map
$Y(\cdot,z):V\to \E(V)$ such that
\begin{align}\label{eq:vacuum-prop}
    Y(\vac,z)=1_V,\quad Y(u,z)\vac\in V[[z]]\quad \te{and}\quad \lim_{z\to 0}Y(u,z)\vac=u,
\end{align}
and that $\set{Y(u,z)}{u\in V}$ is compatible and
\begin{align}\label{eq:weak-asso}
    Y_\E(Y(u,z),z_0)Y(v,z)=Y(Y(u,z_0)v,z)\quad \te{for } u,v\in V.
\end{align}
\end{de}

\begin{rem}
We note that
\begin{align*}
    u\mapsto \lim_{z\to 0}\pd z Y(u,z)\vac
\end{align*}
defines a derivation of $V$.
This derivation is called the \emph{canonical derivation} and is denoted by $\partial$ in this paper.
\end{rem}

\begin{rem}
For a vector space $V$, and $A(z_1,z_2),B(z_1,z_2)\in \Hom(V,V((z_1))((z_2)))$,
we set
\begin{align*}
  A(z_1,z_2)\sim B(z_2,z_1),\quad\te{if }
  (z_1-z_2)^kA(z_1,z_2)=(z_1-z_2)^kB(z_2,z_1)\,\,\te{for some }k\in\N.
\end{align*}
\end{rem}

\begin{de}
A nonlocal vertex algebra $V$ is called a \emph{weak quantum vertex algebra}, if for each $u,v\in V$, there exist $\sum v_i\ot u_i\ot f_i(z)\in V\ot V\ot \C((z))$ such that
\begin{align*}
  Y(u,z_1)Y(v,z_2)\sim\sum \iota_{z_2,z_1}f_i(z_2-z_1)Y(v_i,z_2)Y(u_i,z_1).
\end{align*}
\end{de}

%

The following notion was given in \cite{Li-phi-coor}.
\begin{de}
Let $V$ be a nonlocal vertex algebra, and let $\phi(z,z_1)\ne z$ be an associate.
A \emph{$\phi$-coordinated (quasi) $V$-module} is a vector space $W$ equipped with a linear map $Y_W^\phi(\cdot,z):V\to \E(W)$ such that $Y_W^\phi(\vac,z)=1_W$, the subset $\{Y_W^\phi(v,z)\mid v\in V\}$ is (quasi-)compatible, and
\begin{align}\label{eq:phi-mod-asso}
  &Y_\E^\phi(Y_W^\phi(u,z),z_0)Y_W^\phi(v,z)=Y_W^\phi(Y(u,z_0)v,z)\quad \te{for }u,v\in V.
\end{align}
Furthermore, $W$ is called a \emph{(quasi) module} if $\phi=F_a$, and in this case, we denote the module map $Y_W^\phi$ by $Y_W$.
\end{de}

The following two notions were introduced in \cite{JKLT-G-phi-mod}:
\begin{de}
Let $G$ be a group with a linear character $\chi:G\to \C^\times$.
A \emph{$(G,\chi)$-module nonlocal vertex algebra} is a nonlocal vertex algebra $V$ equipped with a group homomorphism $R:G\to GL(V)$ such that $R(g)\vac=\vac$ and
\begin{align*}
  &R(g)Y(u,z)R(g)\inv =Y(R(g)u,\chi(g)z)\quad\te{for } g\in G,\, u\in V.
\end{align*}
Let $(V',R')$ be another $(G,\chi)$-module nonlocal vertex algebra. A nonlocal vertex algebra homomorphism $f:V\to V'$ is called a {\em $(G,\chi)$-module nonlocal vertex algebra homomorphism} if
\begin{align*}
  R'(g)\circ f=f\circ R(g)\quad \te{for }g\in G.
\end{align*}
\end{de}

\begin{rem}
Let $\chi$ be a trivial character, and let $(V,R)$ be a $G$-module nonlocal vertex algebra.
Then $R$ is a group homomorphism from $G$ to the automorphism group $\Aut(V)$ of $V$.
In this case, we call $(V,R)$ a \emph{$G$-module nonlocal vertex algebra}.
\end{rem}

\begin{de}
Let $V$ be a $(G,\chi)$-module nonlocal vertex algebra,
and let $\chi_\phi:G\to \C^\times$ be another linear character of $G$.
A \emph{$(G,\chi_\phi)$-equivariant $\phi$-coordinated quasi $V$-module}
is a $\phi$-coordinated quasi $V$-module $(W,Y_W^\phi)$
such that $\{Y_W^\phi(u,z)\mid u\in V\}$ is $\chi_\phi(G)$-compatible and
\begin{align}\label{eq:phi-mod-equiv}
  Y_W^\phi(R(g)u,z)=Y_W^\phi(u,\chi_\phi(g)\inv z)\quad \te{for }g\in G,\, u\in V.
\end{align}
\end{de}

\begin{rem}
Assume that there is $u\in V$ such that $\displaystyle\frac{d}{dz}Y_W^\phi(u,z)\ne 0$.
It was proved in \cite{JKLT-G-phi-mod} that
\begin{align}\label{eq:char-phi-compatible}
  \phi(z,\chi(g)z_0)=\chi_\phi(g)\phi(\chi_\phi(g)\inv z,z_0)\quad\te{for }g\in G.
\end{align}
In view of \eqref{eq:phi-exp}, the equation \eqref{eq:char-phi-compatible} is equivalent to
\begin{align}\label{eq:char-phi-compatible-alt}
  p(\chi_\phi(g)z)=\chi(g)\inv \chi_\phi(g)p(z)\quad\te{for }g\in G.
\end{align}
\end{rem}

The following was given in \cite[Theorem 4.8]{Li-phi-coor}:
\begin{thm}\label{thm:nonlocal-VA-gen}
Let $W$ be a vector space and let $U$ be a (quasi-)compatible subset of $\E(W)$.
For an associate $\phi(z,z_1)\ne z$, there exists a unique minimal vertex algebra $(\<U\>_\phi,Y_\E^\phi,1_W)$, such that
$U\subset \<U\>_\phi$.
Moreover, $W$ is a faithful $\phi$-coordinated (quasi) $\<U\>_\phi$-module with module map
\begin{align*}
    Y_W^\phi(a(z),z_0)=a(z_0)\quad\te{for }a(z)\in \<U\>_\phi.
\end{align*}
\end{thm}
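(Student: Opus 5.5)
The plan follows Li's original argument: build the smallest subspace of $\E(W)$ containing $U$ and $1_W$ that is closed under all products $a(z)_n^\phi b(z)$, and then verify the nonlocal vertex algebra axioms on it. The steps are as follows.

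First, I need a suitable ambient space. Consider the collection of all quasi-compatible subspaces $K\subset \E(W)$ that contain $1_W$ and $U$ and are closed under every $n$-th product defined in \eqref{eq:def-Y-E}.

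The key lemma is that a quasi-compatible subspace $K$ stays quasi-compatible after adjoining all products $a(z)_n^\phi b(z)$ with $a,b\in K$. To prove it, take a sequence $a_1,\dots,a_r,a,b,b_1,\dots,b_s$ from $K$ and a polynomial $f$ clearing its poles. Substitute $z_1=\phi(z,z_0)$ for the variable of $a$, keeping $z$ for $b$. Then multiply by the products $f(\phi(z,z_0),z_j)f(z,z_j)$. Since $\phi(z,z_0)\in\C((z))[[z_0]]$, these remain polynomial factors in the surviving variables after extracting coefficients of $z_0$. Iterating this gives a maximal (or at least sufficiently large) closed quasi-compatible space, and $\<U\>_\phi$ is defined as the intersection, or equivalently as the span of iterated products of elements of $U\cup\{1_W\}$. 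This span is minimal and unique by construction.

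Second, I verify the axioms of Definition 2.1 for $(\<U\>_\phi,Y_\E^\phi,1_W)$.
1. \emph{Vacuum.} Taking $b=1_W$, we get $Y_\E^\phi(a(z),z_0)1_W=a(\phi(z,z_0))\in \E(W)[[z_0]]$, whose value at $z_0=0$ is $a(z)$. Taking $a=1_W$, we get $Y_\E^\phi(1_W,z_0)b=b$.
2. \emph{Compatibility.} Compatibility of $\{Y_\E^\phi(a,z_0)\}$ acting on $\<U\>_\phi$ follows from the quasi-compatibility established in the key lemma.
3. \emph{Weak associativity} \eqref{eq:weak-asso}. This is the heart of the argument. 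It amounts to showing
\[
Y_\E(Y_\E^\phi(a,z_2),z_0)Y_\E^\phi(b,z_2)c=Y_\E^\phi(Y_\E^\phi(a,z_0)b,z_2)c .
\]
I would expand both sides using the definition with a common clearing polynomial $f(z_1,z)g(z_1,z_3)h(z,z_3)$. Then I use the associate property $\phi(\phi(z,z_2),z_0)=\phi(z,z_2+z_0)$ to identify the substitutions. Both sides then become $\iota$-expansions of the same rational-type expression evaluated at $z_1=\phi(z,z_0+z_2)$ and $z_3=\phi(z,z_2)$, divided by the same polynomials.

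Finally, for the module statement, set $Y_W^\phi(a(z),z_0)=a(z_0)$. The associativity \eqref{eq:phi-mod-asso} then holds tautologically, since both sides are $Y_\E^\phi(a(z_0),z_2)b(z_0)$ read appropriately. Faithfulness is clear, because $a\mapsto a(z_0)$ is injective.

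I expect the main obstacle to be the weak associativity step. The difficulty is keeping track of the distinct iota-expansion domains ($\iota_{z,z_0}$ versus expansions in $z_2$ and then $z_0$) while the clearing polynomials are only quasi-compatible, meaning their zeros lie on $z_1=\alpha z$ rather than only on the diagonal. My first attempt would be to reduce to an identity in $\E^{(3)}(W)$ after multiplying by all three clearing polynomials, where the substitutions commute, and then invert the polynomials using uniqueness of expansions in $\C((z))((z_0))$.
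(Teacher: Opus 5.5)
The paper gives no proof of this statement; it quotes Li's Theorem 4.8. Your outline has the same architecture as Li's argument: close $U\cup\{1_W\}$ under the products $a(x)^\phi_n b(x)$, show that adjoining products preserves quasi-compatibility, then check the axioms. Replacing Zorn's lemma by an increasing union of iterated closures is fine, and your vacuum, module and faithfulness parts are correct.

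The gap is in the two axioms that carry the content. You say compatibility of $\{Y_{\mathcal E}^\phi(a,z)\}$ on $\langle U\rangle_\phi$ ``follows from the quasi-compatibility established in the key lemma''; that does not follow. Quasi-compatibility on $W$ only gives clearing polynomials that may vanish on $x_1=\alpha x_2$ with $\alpha\neq 1$. The nonlocal vertex algebra axiom instead demands a bare factor $(z_1-z_2)^k$, with one $k$ working on all of $\langle U\rangle_\phi$.

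The missing ingredient is an iterated-substitution lemma. Write fields in a variable $x$ and suppose $F(x_1,x_2,x)=f(x_1,x_2)f(x_1,x)f(x_2,x)a(x_1)b(x_2)c(x)\in\mathcal E^{(3)}(W)$. Then one needs
\[
Y_{\mathcal E}^\phi(a,z_1)Y_{\mathcal E}^\phi(b,z_2)c=\iota_{z_1,z_2}\big(f(\phi(x,z_1),\phi(x,z_2))\,f(\phi(x,z_1),x)\,f(\phi(x,z_2),x)\big)^{-1}F(\phi(x,z_1),\phi(x,z_2),x).
\]
What makes this useful is the following observation. Write $f(x_1,x_2)=(x_1-x_2)^kg(x_1,x_2)$ with $g(x,x)\neq0$. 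Then $g(\phi(x,z_1),\phi(x,z_2))$ is a unit of $\mathbb C((x))[[z_1,z_2]]$, and $\phi(x,z_1)-\phi(x,z_2)=(z_1-z_2)\,p(x)\,(1+\cdots)$ with $p\neq0$. This is where the hypothesis $\phi(x,z)\neq x$ actually enters, and your outline never invokes it. Every non-diagonal zero of $f$ becomes a unit, so only $(z_1-z_2)^{-k}$, $z_1^{-k}$ and $z_2^{-k}$ remain to be expanded. That gives compatibility for each fixed $c$. Uniformity in $c$ must still be extracted, for example from the resulting pointwise associativity together with an integer $k$ such that $z^kY_{\mathcal E}^\phi(a,z)b\in\langle U\rangle_\phi[[z]]$.

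Weak associativity needs the same lemma, in both orders of composition. It cannot be done by ``expanding both sides with a common clearing polynomial''. Applying $Y_{\mathcal E}^\phi(a,z_2)$ to a coefficient $b_nc$, or $Y_{\mathcal E}^\phi(\,\cdot\,,z_2)c$ to a coefficient $a_nb$, uses a clearing polynomial for that particular pair. That polynomial is a power of $f(x_1,x)$ (respectively $f(x_2,x)$) that grows with $n$. It arises from inverting a cross factor such as $f(x_1,\phi(x,z_2))$ order by order.

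The same point corrects your sketch of the key lemma. The substituted cross factors $f(\phi(z,z_0),z_j)$ are not polynomials, and they must be inverted, not multiplied in. Their inverses, truncated at a fixed order in $z_0$, have only powers of $f(z,z_j)$ in the denominators, which is why the clearing polynomial for $a_nb$ depends on $n$.

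Once the lemma is in place, both sides of $Y_{\mathcal E}(Y(a,z_2),z_0)Y(b,z_2)c=Y(Y(a,z_0)b,z_2)c$ can be computed. Using $\phi(\phi(x,z_2),z_0)=\phi(x,z_0+z_2)$, each equals $z_0^{-k}z_2^{-k}\,\iota_{z_2,z_0}(z_2+z_0)^{-k}$ times the same unit times $F(\phi(x,z_0+z_2),\phi(x,z_2),x)$. Your closing idea of reducing to an identity in $\mathcal E^{(3)}(W)$ points the right way, but this lemma and the unit argument are the actual proof, and you still have to supply them.
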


\begin{rem}
If $\phi=F_a$, we write $\<U\>=\<U\>_{F_a}$ for short.
\end{rem}

\begin{rem}\label{rem:wquantum-VA-gen}
A subset $U$ of $\E(W)$ is called \emph{$S$-local} if for each $u(z),v(z)\in U$, there exists $\sum v_i(z)\ot u_i(z)\ot f_i(z_0)\in U\ot U\ot \C((z_0))$ such that
\begin{align*}
  (z_1-z_2)^ku(z_1)v(z_2)=(z_1-z_2)^k\sum \iota_{z_2,z_1}f_i(z_2-z_1) v_i(z_2)u_i(z_1)\quad\te{for some }k\in\N.
\end{align*}
Then $U$ is compatible. Moreover, it is proved in \cite[Theorem 5.8]{Li-nonlocal} that $\<U\>$ becomes a weak quantum vertex algebra.
\end{rem}

Let $G$ be a subset group of $\C^\times$.
It is clear that the inclusion is a linear character of $G$, and is denoted by $\chi_\phi$.
Let $\chi$ be another linear character of $G$ such that the condition \eqref{eq:char-phi-compatible} holds.
For each $g\in G$, one has a linear automorphism $R(g)$ on $\E(W)$ defined by
\begin{align*}
    &R(g)(a(z))=a(g\inv z).
\end{align*}
The following was given in \cite[Theorem 3.11]{JKLT-G-phi-mod}.

\begin{thm}\label{thm:G-mod-nonlocal-VA-gen}
Let $W$ be a vector space and let $U$ be a $G$-invariant quasi compatible subset of $\E(W)$.
Then $(\<U\>_\phi, R)$ is a $(G,\chi)$-module nonlocal vertex algebra,
and $(W,Y_W^\phi)$ is a $(G,\chi_\phi)$-equivariant $\phi$-coordinated quasi $\<U\>_\phi$-module.
\end{thm}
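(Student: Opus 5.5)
The plan is to reduce everything to Theorem \ref{thm:nonlocal-VA-gen}, which already makes $(\<U\>_\phi,Y_\E^\phi,1_W)$ a nonlocal vertex algebra and $W$ a $\phi$-coordinated quasi module with $Y_W^\phi(a(z),z_0)=a(z_0)$. Three things then remain to be checked:
\begin{enumerate}
\item the automorphisms $R(g)$ of $\E(W)$ preserve $\<U\>_\phi$;
\item the covariance $R(g)Y_\E^\phi(a(z),z_0)R(g)\inv=Y_\E^\phi(R(g)a(z),\chi(g)z_0)$ holds, with $R(g)1_W=1_W$;
\item the equivariance \eqref{eq:phi-mod-equiv} and $\chi_\phi(G)$-compatibility hold.
\end{enumerate}
Item (3) is essentially tautological. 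Indeed, $Y_W^\phi(R(g)a,z_0)=(R(g)a)(z_0)=a(g\inv z_0)=Y_W^\phi(a,\chi_\phi(g)\inv z_0)$. Quasi-compatibility of $\<U\>_\phi$ comes from the construction in Theorem \ref{thm:nonlocal-VA-gen}, and the $\chi_\phi(G)$-refinement will be tracked through the same induction as in item (1).

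The key computation is item (2). Take a quasi-compatible pair $(a(z),b(z))$ with $f(z_1,z)a(z_1)b(z)\in\E^{(2)}(W)$ and apply $R(g)$, i.e.\ substitute $z\mapsto g\inv z$ in \eqref{eq:def-Y-E}. This gives
\[
\iota f(\phi(g\inv z,z_0),g\inv z)\inv\bigl(f(z_1,g\inv z)a(z_1)b(g\inv z)\bigr)\big|_{z_1=\phi(g\inv z,z_0)}.
\]
By \eqref{eq:char-phi-compatible}, $\phi(g\inv z,z_0)=g\inv\phi(z,\chi(g)z_0)$. Setting $z_1=g\inv w_1$ and $\tilde f(w_1,z)=f(g\inv w_1,g\inv z)$, which is again a nonzero polynomial, we get $\tilde f(w_1,z)a(g\inv w_1)b(g\inv z)\in\E^{(2)}(W)$. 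The expression therefore equals $Y_\E^\phi(R(g)a(z),\chi(g)z_0)R(g)b(z)$, using that \eqref{eq:def-Y-E} is independent of the choice of $f$. Comparing coefficients gives
\[
R(g)\bigl(a(z)^\phi_n b(z)\bigr)=\chi(g)^{-n-1}(R(g)a)^\phi_n(R(g)b).
\]
This identity also shows that quasi-compatibility and $\chi_\phi(G)$-compatibility are transported by $R(g)$, since $\C_{\chi_\phi(G)}[z_1/z_2]$ is stable under rescaling both variables by the same factor. Trivially $R(g)1_W=1_W$, and $R$ is a group homomorphism because $R(gh)a(z)=a(h\inv g\inv z)=R(g)R(h)a(z)$.

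For item (1), I will use minimality. Consider $\{a\in\<U\>_\phi\mid R(g)a\in\<U\>_\phi\ \forall g\in G\}$. It contains $U$ by $G$-invariance and contains $1_W$, and by the coefficient identity above it is closed under all products $a^\phi_n b$. So it is a nonlocal vertex subalgebra containing $U$, and minimality forces it to be all of $\<U\>_\phi$. For the $\chi_\phi(G)$-compatibility of $\<U\>_\phi$, I expect to follow the inductive proof in \cite{Li-phi-coor}. Assume a sequence in which each $a_i$ and $b$ already satisfy a compatibility relation with a denominator in $\C_\Gamma$, $\Gamma=\chi_\phi(G)$. Then $a^\phi_n b$ does as well, since the denominators generated by substituting $z_1=\phi(z,z_0)$ and setting $z_0\to$ coefficients only involve the original factors $z_i-\al z_j$.

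I expect the main obstacle to be this last point: verifying that the compatibility denominators stay inside $\C_{\chi_\phi(G)}[z_1/z_2]$ rather than the full $\C^\times$ when passing from $U$ to $\<U\>_\phi$. The hypothesis only says $U$ is quasi-compatible, so I will first check that $G$-invariance together with the structure of the generated space gives the finer compatibility. If that fails, I will interpret the theorem's compatibility requirement as whatever is furnished by \cite{JKLT-G-phi-mod} and simply cite it.
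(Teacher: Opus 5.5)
\paragraph{Verdict.} Most of your argument is correct, but the $\chi_\phi(G)$-compatibility step is a genuine gap, and your first plan for it cannot succeed. The paper itself gives no proof here; it only cites \cite[Theorem 3.11]{JKLT-G-phi-mod}.

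\paragraph{What is correct.} Your covariance computation is right. Substituting $z\mapsto g\inv z$ in \eqref{eq:def-Y-E} and using \eqref{eq:char-phi-compatible} gives $R(g)\bigl(a(z)^\phi_n b(z)\bigr)=\chi(g)^{-n-1}(R(g)a)^\phi_n(R(g)b)$. The minimality argument for $R(g)\<U\>_\phi\subset\<U\>_\phi$, the homomorphism property of $R$, and the equivariance identity \eqref{eq:phi-mod-equiv} are also fine.

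\paragraph{The gap.} The set $\{Y_W^\phi(u,z)\mid u\in\<U\>_\phi\}$ is $\<U\>_\phi\supset U$ itself. Its $\chi_\phi(G)$-compatibility does not follow from $G$-invariance plus quasi-compatibility of $U$. Take $G=\{1\}$, so $\chi=\chi_\phi$ is trivial and \eqref{eq:char-phi-compatible} holds trivially; then $\chi_\phi(G)$-compatibility is ordinary compatibility. Let $W$ have basis $w,u,v_n$ ($n\le 0$), and define $a(z)w=\sum_{n\le 0}v_nz^{-n}$, $a(z)v_{-k}=2^kz^{-k-1}u$, $a(z)u=0$. Then $a(z_1)a(z_2)w=\iota_{z_1,z_2}(z_1-2z_2)\inv u$ and all triple products vanish. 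So $U=\{a(z)\}$ is quasi-compatible and $G$-invariant, but no power of $z_1/z_2-1$ makes $a(z_1)a(z_2)w$ lie in $W((z_1,z_2))$.

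Hence the statement is only true under the stronger hypothesis that $U$ is $\chi_\phi(G)$-compatible. That is exactly what the paper verifies when it applies the theorem in Proposition \ref{prop:qaff-mod-to-module-nva}. Your fallback of ``citing whatever \cite{JKLT-G-phi-mod} furnishes'' does not close this step.

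\paragraph{What you need to add.} Under the corrected hypothesis, set $\Gamma=\chi_\phi(G)$ and prove the $\Gamma$-version of the closure lemma of \cite{Li-phi-coor}. The claim is: if a finite sequence containing the adjacent pair $(a,b)$ is $\Gamma$-compatible via a product of factors $z_i-\al z_j$ with $\al\in\Gamma$, then the sequence stays $\Gamma$-compatible after replacing $(a,b)$ by $a^\phi_n b$.

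This holds for the following reason. After substituting $\phi(z,z_0)$ for the variable of $a$, each factor pairing $a$ with another entry becomes $z_i-\al\phi(z,z_0)$ or $\phi(z,z_0)-\al z_i$. Inverting these by expansion in nonnegative powers of $z_0$, the $z_0^n$-coefficient has denominators that are only powers of $z_i-\al z$ (resp.\ $z-\al z_i$), with the same $\al$. So for each $n$ a power of the original polynomial, still in $\C_\Gamma$, clears them.

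Iterating this lemma yields $\chi_\phi(G)$-compatibility of $\<U\>_\phi$. Alternatively, run the construction of $\<U\>_\phi$ inside a maximal $\Gamma$-compatible subspace containing $U\cup\{1_W\}$, which the lemma shows is $\phi$-closed. Either route completes your proof.
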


In the rest of this subsection, we work with a fixed special associate of $F_a(z_1,z_2)$, namely
\begin{align*}
  \phi(z,z_1)=ze^{z_1}=\exp\left(z_1 z\pd z\right)z,
\end{align*}
and consider a group $G$ together with two linear characters
\begin{align*}
  \chi,\chi_\phi:G\to \C^\times
\end{align*}
satisfying condition \eqref{eq:char-phi-compatible}.
This condition forces $\chi$ to be the trivial character.
The following result extends \cite[Lemma 7.1]{K-Quantum-aff-va} to $(G,\chi_\phi)$-equivariant $\phi$-coordinated quasi modules.

\begin{lem}\label{lem:phi-mod-rel-pre}
Let $V$ be a $G$-module nonlocal vertex algebra, and let $(W,Y_W^\phi,R)$ be a $(G,\chi_\phi)$-equivariant quasi-module of $V$. Let
\begin{align*}
  \sum_{i}\al_i\ot \beta_i\ot f_i(z),\quad
  \sum_{j}\mu_j\ot \nu_j\ot g_j(z)\in V\ot V\ot \C(z),
\end{align*}
such that for any $a\in G$,
\begin{align}\label{eq:phi-mod-rel-pre-1}
  &\sum_{i}\iota_{z_1,z_2}f_i(\chi_\phi(a)\inv e^{z_1-z_2})Y(R(a)\al_i,z_1)Y(\beta_i,z_2)\\
  =&\nonumber\sum_{j}\iota_{z_2,z_1}g_j(\chi_\phi(a)\inv e^{z_1-z_2})
  Y(\mu_j,z_2)Y(R(a)\nu_j,z_1).
\end{align}
Then
\begin{align}\label{eq:phi-mod-rel-pre-2}
  &\sum_{i}\iota_{z_1,z_2}f_i(z_1/z_2)Y_W^\phi(\al_i,z_1)Y_W^\phi(\beta_i,z_2)
  =\sum_{j}\iota_{z_2,z_1}g_j(z_1/z_2)Y_W^\phi(\mu_j,z_2)Y_W^\phi(\nu_j,z_1).
\end{align}
\end{lem}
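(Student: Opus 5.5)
We follow the strategy of \cite[Lemma 7.1]{K-Quantum-aff-va}: realize $W$ inside the framework of Theorem \ref{thm:nonlocal-VA-gen}, turn the hypothesis \eqref{eq:phi-mod-rel-pre-1} into an identity between iterated vertex operators in $V$, and push that identity to $W$ by associativity \eqref{eq:phi-mod-asso}. Equivariance \eqref{eq:phi-mod-equiv} then accounts for the shifts by $\chi_\phi(a)$.

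\textbf{Step 1 (reduction to a weak associativity statement).} Set $F(z_1,z_2)=\sum_i\iota_{z_1,z_2}f_i(z_1/z_2)Y_W^\phi(\al_i,z_1)Y_W^\phi(\beta_i,z_2)$, and let $G(z_1,z_2)$ be the right-hand side of \eqref{eq:phi-mod-rel-pre-2}. The family $\{Y_W^\phi(u,z)\}$ is $\chi_\phi(G)$-compatible, and $G$ acts through \eqref{eq:phi-mod-equiv}. Hence there is a product $p(z_1/z_2)$ of factors $(z_1/z_2-\chi_\phi(a)^{-1})$ such that both $p(z_1/z_2)F$ and $p(z_1/z_2)G$ lie in $\Hom(W,W((z_1,z_2)))$. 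We will show they are equal. Since multiplication by $p$ is injective on each of the two expansion spaces, this gives the claim after dividing by $p$ in the respective $\iota$-expansions. To prove $pF=pG$, it suffices to prove the equality near each pole $z_1=\chi_\phi(a)^{-1}z_2$ separately. We do this by substituting $z_1=\chi_\phi(a)^{-1}z_2e^{z_0}=\phi(\chi_\phi(a)^{-1}z_2,z_0)$ and working with the localized factor $p_a$, where $p=p_a\cdot(\text{unit near that pole})$.

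\textbf{Step 2 (local computation at each pole).} Fix $a\in G$. Using \eqref{eq:phi-mod-equiv}, write $Y_W^\phi(\al_i,\chi_\phi(a)^{-1}z)=Y_W^\phi(R(a)\al_i,z)$. Then, with $z_1=\phi(z_2,z_0)$, the definition \eqref{eq:def-Y-E} of $Y_\E^\phi$ together with associativity \eqref{eq:phi-mod-asso} identifies the substituted first expression with
\begin{align*}
\sum_i f_i(\chi_\phi(a)^{-1}e^{z_0})\,Y_W^\phi\bigl(Y(R(a)\al_i,z_0)\beta_i,z_2\bigr),
\end{align*}
up to a common polynomial factor in $z_0$. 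Do the same for the $g_j$-side, using skew-associativity in the opposite order, i.e. $Y(\mu_j,z_2)Y(R(a)\nu_j,z_1)$ rewritten through $Y(Y(\mu_j,-z_0)\cdot,\ )$, or equivalently the weak associativity with the roles of the variables swapped. The hypothesis \eqref{eq:phi-mod-rel-pre-1}, multiplied by a suitable power of $(z_1-z_2)$ and evaluated on $\vac$, then becomes an identity in $V((z_0))$. Applying $Y_W^\phi(\cdot,z_2)$ to it yields the equality of the localized expressions. Here we use that for the special associate $\phi(z,z_0)=ze^{z_0}$ the variable $z_1/z_2$ becomes $\chi_\phi(a)^{-1}e^{z_0}$, which is exactly why \eqref{eq:phi-mod-rel-pre-1} is formulated with $e^{z_1-z_2}$.

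\textbf{Step 3 (gluing).} This is the main obstacle. We must pass from the local identities at each pole $z_1=\chi_\phi(a)^{-1}z_2$ to the global identity $pF=pG$ in $\Hom(W,W((z_1,z_2)))$. Both sides are Laurent-polynomial-type objects in $z_1/z_2$ modulo the denominators, and the hypothesis is only available in the vertex-algebra coordinate $z_0$. The first thing to try is a partial-fraction decomposition of the $\C(z)$ coefficients (the $f_i$ and $g_j$) into principal parts at the points $\chi_\phi(a)^{-1}$. Each principal part is then handled by the Step 2 computation for the corresponding $a$. Alternatively, one can use the standard uniqueness argument from \cite{Li-phi-coor}: an element of $\Hom(W,W((z_1,z_2)))$ that vanishes after the substitution $z_1=\phi(z_2,z_0)$ must be zero. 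Combined with the observation that $\iota_{z_1,z_2}$ and $\iota_{z_2,z_1}$ agree once all poles are cleared, this concludes \eqref{eq:phi-mod-rel-pre-2}.
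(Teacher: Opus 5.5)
The decisive step fails. In Step 1, and again at the end of Step 3, you conclude $F=G$ from $pF=pG=:A\in\Hom(W,W((z_1,z_2)))$ by ``dividing by $p$ in the respective $\iota$-expansions''. But $F$ is an $\iota_{z_1,z_2}$-object and $G$ an $\iota_{z_2,z_1}$-object, so dividing gives
\begin{align*}
  F-G=\big((\iota_{z_1,z_2}-\iota_{z_2,z_1})\,p(z_1/z_2)^{-1}\big)A(z_1,z_2).
\end{align*}
This is a finite sum of terms $C_{t,k}(z_2)(z_2\partial_{z_2})^k\delta(c_tz_2/z_1)\,A(z_1,z_2)$, and it need not vanish. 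Already $(z_1-z_2)\,\iota_{z_1,z_2}(z_1-z_2)^{-1}=1=(z_1-z_2)\,\iota_{z_2,z_1}(z_1-z_2)^{-1}$, while the two expansions differ by $z_1^{-1}\delta(z_2/z_1)$. Injectivity of multiplication by $p$ on each expansion space does not help. Likewise, ``the expansions agree once poles are cleared'' applies to $A$, not to $F$ and $G$.

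So you establish only the first half of the paper's proof, namely $pF=pG$. For that, the case $a=1$ of \eqref{eq:phi-mod-rel-pre-1} evaluated on $\vac$, plus injectivity of the substitution $z_1=\phi(z_2,z_0)$ on $\Hom(W,W((z_1,z_2)))$, already suffices; your Step 2 for $a\neq 1$ yields nothing beyond this. The real content of the lemma is missing. Let $n_t$ be the multiplicity of the root $c_t$ of the clearing polynomial. Then $A$ must vanish to order $n_t$ along each line $z_1=c_tz_2$, i.e.\ $\lim_{z_1\to c_tz_2}(z_1\partial_{z_1})^kA(z_1,z_2)=0$ for $0\le k<n_t$.

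This is where the hypothesis for $a\neq 1$ genuinely enters. Take $a$ with $\chi_\phi(a)=c_t^{-1}$, and choose the clearing polynomial as in the paper: a square $h^2$, with one factor clearing the $f_i$ and the other clearing the operator products. Then \eqref{eq:phi-mod-equiv} and \eqref{eq:phi-mod-asso} show that $A(c_tz_2e^{z_0},z_2)$ equals a prefactor vanishing to order $n_t$ at $z_0=0$, times $Y_W^\phi(\Phi_a(z_0),z_2)$, where $\Phi_a(z_0)=\sum_if_i(c_te^{z_0})Y(R(a)\alpha_i,z_0)\beta_i$. Hence what is needed is $\Phi_a(z_0)\in V[[z_0]]$. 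The identity in $V((z_0))$ you extract by evaluating \eqref{eq:phi-mod-rel-pre-1} on $\vac$ only compares two Laurent series; it says nothing about their poles at $z_0=0$.

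The paper obtains the regularity from the two-variable form of the hypothesis. The left side of \eqref{eq:phi-mod-rel-pre-1} lies in $\Hom(V,V((z_1))((z_2)))$ and the right side in $\Hom(V,V((z_2))((z_1)))$, so their common value lies in $\Hom(V,V((z_1,z_2)))$. Substituting $z_1=z_2+z_0$ and using \eqref{eq:weak-asso} gives $\sum_if_i(c_te^{z_0})Y(Y(R(a)\alpha_i,z_0)\beta_i,z_2)\in\Hom(V,V((z_2))[[z_0]])$. Applying this to $\vac$ and setting $z_2=0$ yields $\Phi_a(z_0)\in V[[z_0]]$. Your partial-fraction suggestion would need exactly this input for each principal part.

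Separately, the existence of your $p$, like that of the paper's $h$, tacitly requires the poles of the $f_i,g_j$ to lie in $\chi_\phi(G)\cup\{0\}$. Compatibility only clears the poles of the operator products. Without this restriction the statement fails, e.g.\ for $\alpha_1=\beta_1=\mu_1=\nu_1=\vac$ and $f_1=g_1=(z-d)^{-1}$ with $d\notin\chi_\phi(G)$.
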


\begin{proof}
Let $n$ be a positive integer such that $(i,j\ge 1,a\in G)$
\begin{align*}
  &z^n f_i(\chi_\phi(a)\inv z),\quad z^ng_j(\chi_\phi(a)\inv z)\in \C[[z]],\quad
  z^nY(R(a)\al_i,z)\beta_i,\quad z^nY(R(a)\mu_j,z)\nu_j\in V[[z]].
\end{align*}
By letting the both hand sides of \eqref{eq:phi-mod-rel-pre-1} act on $\vac$ and multiplying $(z_1-z_2)^{2n}$, we get that
\begin{align*}
  &\sum_{i}\big((z_1-z_2)^n\iota_{z_1,z_2}f_i( e^{z_1-z_2})\big)
  \big((z_1-z_2)^nY(\al_i,z_1-z_2)\beta_i)\big)\\
  =&\sum_{j}\big((z_1-z_2)^n\iota_{z_2,z_1}g_j( e^{z_1-z_2})\big)
  e^{(z_1-z_2)\partial}\big((z_1-z_2)^nY(\mu_j,z_2-z_1)\nu_j\big).
\end{align*}
Taking $z_0=0$, we get that
\begin{align*}
  z_1^{2n}\sum_{i}f_i( e^{z_1})Y(\al_i,z_1)\beta_i=z_1^{2n}
  \sum_{j}g_i( e^{z_1})e^{z_1\partial}Y(\mu_j,-z_1)\nu_j.
\end{align*}
Since $z_1$ is invertible, we get that
\begin{align}\label{eq:phi-mod-rel-pre-3}
  &\sum_{i}f_i( e^{z})Y(\al_i,z)\beta_i=
  \sum_{j}g_i( e^{z})e^{z\partial}Y(\mu_j,-z)\nu_j.
\end{align}
Let $h(z)\in \C_{\chi_\phi(G)}[z]$, such that $(i,j\ge 1)$
\begin{align*}
  &h(z_2/z_1)f_i(z_1/z_2),\quad h(z_2/z_1)g_j(z_1/z_2)\in \C((z_1,z_2)),\\
  &h(z_2/z_1)Y_W^\phi(\al_i,z_1)Y_W^\phi(\beta_i,z_2),\quad
  h(z_2/z_1)Y_W^\phi(\mu_j,z_2)Y_W^\phi(\nu_j,z_1)\in \E^{(2)}(W).
\end{align*}
Then for any $a\in G$, we have that
\begin{align}\label{eq:phi-mod-rel-pre-4}
  &h(\chi_\phi(a)z_2/z_1)Y_W^\phi(R(a)\al_i,z_1)Y_W^\phi(\beta_i,z_2)\\
  =&h\left(z_2/(\chi_\phi(a)\inv z_1)\right)
  Y_W^\phi(\al_i,\chi_\phi(a)\inv z_1)Y_W^\phi(\beta_i,z_2)\in \E^{(2)}(W).\nonumber
\end{align}
From the weak associativity of $\phi$-coordinated quasi-modules \eqref{eq:phi-mod-asso} and \eqref{eq:phi-mod-rel-pre-4}, we get that
\begin{align}
  &\label{eq:phi-mod-rel-pre-5}
  \left.\left(h(\chi_\phi(a)z/z_1)Y_W^\phi(R(a)\al_i,z_1)Y_W^\phi(\beta_i,z)\right)\right|_{z_1=\phi(z,z_0)}\\
  &\nonumber\quad=h(\chi_\phi(a)e^{-z_0}) Y_W^\phi(Y(R(a)\al_i,z_0)\beta_i,z),\\
  &\left.\left( h(z/z_1)Y_W^\phi(\mu_j,z)Y_W^\phi(\nu_j,z_1) \right)\right|_{z=\phi(z_1,-z_0)}
  =h(e^{-z_0})Y_W^\phi(Y(\mu_j,-z_0)\nu_j,z_1).\nonumber
\end{align}
Combining the second equation with \cite[Remark 2.8]{Li-phi-coor}, we get that
\begin{align*}
  &\left.\left( h(z/z_1)Y_W^\phi(\mu_j,z)Y_W^\phi(\nu_j,z_1) \right)\right|_{z_1=\phi(z,z_0)}\nonumber\\
  =&\left.\left( \left.\left( h(z/z_1)Y_W^\phi(\mu_j,z)Y_W^\phi(\nu_j,z_1) \right)\right|_{z=\phi(z_1,-z_0)} \right)\right|_{z_1=\phi(z,z_0)}\nonumber\\
  =&\left.\left( h(e^{-z_0})Y_W^\phi(Y(\mu_j,-z_0)\nu_j,z_1) \right)\right|_{z_1=\phi(z,z_0)}\nonumber\\
  =&h(e^{-z_0})Y_W^\phi(Y(\mu_j,-z_0)\nu_j,\phi(z,z_0))\nonumber\\
  =&h(e^{-z_0})Y_W^\phi(e^{z_0\partial}Y(\mu_j,-z_0)\nu_j,z),
\end{align*}
where the last equation follows from \cite[Lemma 3.7]{Li-phi-coor}.
Combining this with \eqref{eq:phi-mod-rel-pre-3} and \eqref{eq:phi-mod-rel-pre-5}, we get
\begin{align*}
  &\left.\left(h(z/z_1)^2\sum_{i}\iota_{z_1,z}f_i(z_1/z)Y_W^\phi(\al_i,z_1)Y_W^\phi(\beta_i,z)\right)\right|_{z_1=\phi(z,z_0)}\\
  =&\left.\left(\sum_{i}\big(h(z/z_1)\iota_{z_1,z}f_i(z_1/z)\big)\big(h(z/z_1)Y_W^\phi(\al_i,z_1)Y_W^\phi(\beta_i,z)\big)\right)\right|_{z_1=\phi(z,z_0)}\\
  =&h(e^{-z_0})^2\sum_{i}f_i(e^{z_0}) Y_W^\phi(Y(\al_i,z_0)\beta_i,z)\\
  =&h(e^{-z_0})^2\sum_{j}g_i(e^{z_0})Y_W^\phi(e^{z_0\partial}Y(\mu_j,-z_0)\nu_j,z)\\
  =&\left.\left(\sum_{j}\big(h(z/z_1)\iota_{z,z_1}g_j(z_1/z)\big)
  \big(h(z/z_1)Y_W^\phi(\mu_j,z)Y_W^\phi(\nu_j,z_1)\big)\right)\right|_{z_1=\phi(z,z_0)}\\
  =&\left.\left(h(z/z_1)^2\sum_{j}\iota_{z,z_1}g_j(z_1/z)Y_W^\phi(\mu_j,z)Y_W^\phi(\nu_j,z_1)\right)\right|_{z_1=\phi(z,z_0)}.
\end{align*}
Combining this with the following fact
\begin{align*}
  &h(z_2/z_1)^2\sum_{i}\iota_{z_1,z_2}f_i(z_1/z_2)Y_W^\phi(\al_i,z_1)Y_W^\phi(\beta_i,z_2)\in \E^{(2)}(W),\\
  &h(z_2/z_1)^2\sum_{j}\iota_{z_2,z_1}g_j(z_1/z_2)Y_W^\phi(\mu_j,z_2)Y_W^\phi(\nu_j,z_1)\in \E^{(2)}(W),
\end{align*}
we get from \cite[Remark 2.8]{Li-phi-coor} that
\begin{align*}
  &h(z_2/z_1)^2\sum_{i}\iota_{z_1,z_2}f_i(z_1/z_2)Y_W^\phi(\al_i,z_1)Y_W^\phi(\beta_i,z_2)\\
  =&h(z_2/z_1)^2\sum_{j}\iota_{z_2,z_1}g_j(z_1/z_2)Y_W^\phi(\mu_j,z_2)Y_W^\phi(\nu_j,z_1).\nonumber
\end{align*}
It follows that
\begin{align*}
  &\sum_{i}\iota_{z_1,z_2}f_i(z_1/z_2)Y_W^\phi(\al_i,z_1)Y_W^\phi(\beta_i,z_2)
  -\sum_{j}\iota_{z_2,z_1}g_j(z_1/z_2)Y_W^\phi(\mu_j,z_2)Y_W^\phi(\nu_j,z_1)\\
  &\quad\nonumber=
  \left((\iota_{z_1,z_2}-\iota_{z_2,z_1})h(z_2/z_1)^{-2}\right)A(z_1,z_2),
\end{align*}
where
\begin{align*}
  A(z_1,z_2)= h(z_2/z_1)^2  \sum_{i}f_i(z_1/z)Y_W^\phi(\al_i, z_1)Y_W^\phi(\beta_i,z_2).
\end{align*}
Let $c_1,\dots,c_s\in \chi_\phi(G)$ and let positive integers $n_1,\dots, n_s$ such that
\begin{align*}
  h(z)^2=(1-c_1z)^{n_1}\cdots (1-c_sz)^{n_s}.
\end{align*}
By using \cite[Lemma 2.5]{Li-new-construction} and \cite[Remark 4.26]{JKLT-Defom-va}, we get that
\begin{align*}
  &\left((\iota_{z_1,z_2}-\iota_{z_2,z_1})h(z_2/z_1)^{-2}\right)
  =\sum_{t=1}^s\sum_{p=0}^{n_t-1}C_{t,p}(z_2)
  \frac{1}{p!}\left(z_2\pd{z_2}\right)^p\delta\left(\frac{c_tz_2}{z_1}\right),
\end{align*}
where $C_{t,p}(z)\in\C((z))$.
To prove this lemma, it suffices to show that
\begin{align*}
  A(z_1,z_2)\frac{1}{p!}\left(z_2\pd{z_2}\right)^p\delta\left(\frac{c_tz_2}{z_1}\right)
  =0\quad\te{for all }1\le t\le s,\,0\le p<n_t.
\end{align*}
This reduce to proving that
\begin{align}\label{eq:phi-mod-rel-pre-6}
  \lim_{z_1\to c_tz}\frac{1}{p!}\left(z_1\pd{z_1}\right)^pA(z_1,z)=0\quad\te{for all }1\le t\le s,\,0\le p<n_t.
\end{align}

For any $1\le t\le s$, let $a\in G$ such that $\chi_\phi(a)=c_t\inv$.
Then we have that
\begin{align}
  &A(z_1,z)|_{z_1=c_t\phi(z,z_0)}
  =A(c_t z_1,z)|_{z_1=\phi(z,z_0)}\nonumber\\
  =&\left.\left(h(c_t\inv z/z_1)^2\sum_{i}
  f_i(c_t z_1/z)Y_W^\phi(R(a)\al_i,
  z_1)Y_W^\phi(\beta_i,z)\right)\right|_{z_1=\phi(z,z_0)}\nonumber\\
  =&\left.\left(\sum_{i}
  \big(h(c_t\inv z/z_1)f_i(c_t z_1/z)\big)
  \big(h(c_t\inv z/z_1)Y_W^\phi(R(a)\al_i,
  z_1)Y_W^\phi(\beta_i,z)\big)\right)\right|_{z_1=\phi(z,z_0)}\nonumber\\
  =&\sum_{i}h(c_t\inv e^{-z_0})f_i(c_t e^{z_0})
  h(c_t\inv e^{-z_0})Y_W^\phi(Y(R(a)\al_i,z_0)\beta_i,z)\nonumber\\
  =&h(c_t\inv e^{-z_0})^2Y_W^\phi\left(\sum_{i}f_i(c_t e^{z_0})
  Y(R(a)\al_i,z_0)\beta_i,z\right),\nonumber%
\end{align}
where the second equation follows from \eqref{eq:phi-mod-equiv} and
the forth equation follows from \eqref{eq:phi-mod-rel-pre-5}.
It follows that for each $0\le p<n_t$, we have that
\begin{align*}
  &\lim_{z_1\to c_tz}\frac{1}{p!}\left(z_1\pd{z_1}\right)^pA(z_1,z)\nonumber\\
  =&
  \Res_{z_0}z_0^{-p-1}\left.\left(
  h(z/z_1)^2
  \sum_{i}f_i(z_1/z)Y_W^\phi(\al_i, z_1)Y_W^\phi(\beta_i,z)\right)\right|_{z_1=c_t\phi(z,z_0)}\nonumber\\
  =&
  \Res_{z_0}z_0^{-p-1}h(c_t\inv e^{-z_0})^2Y_W^\phi\left(\sum_{i}f_i(c_t e^{z_0})
  Y(R(a)\al_i,z_0)\beta_i,z\right).
\end{align*}
Observe that
\begin{align*}
  z_0^{-p-1}h(c_t\inv e^{-z_0})=(z_0^{-p-1}(1-e^{-z_0})^{n_t})
  \prod_{u\ne t}(1-c_uc_t\inv e^{-z_0})^{n_u}\in C[[z_0]],\quad\te{since }p<n_t.
\end{align*}
Thus, in order to prove \eqref{eq:phi-mod-rel-pre-6}, it suffices to show
\begin{align}\label{eq:phi-mod-rel-pre-7}
  \sum_{i}f_i(c_t e^{z_0})
  Y(R(a)\al_i,z_0)\beta_i\in V[[z_0]].
\end{align}

From \eqref{eq:phi-mod-rel-pre-1}, we have that
\begin{align}\label{eq:phi-mod-rel-pre-8}
  &\sum_{i}\iota_{z_1,z_2}f_i(c_t e^{z_1-z_2})
  Y(R(a)\al_i,z_1)Y(\beta_i,z_2)\in\E^{(2)}(W).
\end{align}
Then
\begin{align*}
  &z_0^{2n}\left.\left(\sum_{i}\iota_{z_1,z_2}f_i(c_t e^{z_1-z_2})
  Y(R(a)\al_i,z_1)Y(\beta_i,z_2)\right)\right|_{z_1=z_2+z_0}\\
  =&\left.\left(\sum_{i}\big((z_1-z_2)^nf_i(c_t e^{z_1-z_2})\big)\big((z_1-z_2)^nY(R(a)\al_i,z_1)Y(\beta_i,z_2)\big)\right)\right|_{z_1=z_2+z_0}\\
  =&\sum_{i}z_0^nf_i(c_t e^{z_0})z_0^n Y_\E(Y(R(a)\al_i,z_2),z_0)Y(\beta_i,z_2)\\
  =&\sum_{i}z_0^nf_i(c_t e^{z_0})z_0^n
  Y(Y(R(a)\al_i,z_0)\beta_i,z_2),
\end{align*}
where the first equation follows from \eqref{eq:phi-mod-rel-pre-8},
the second relation follows from the definition of $Y_\E$ (see \eqref{eq:def-Y-E}), and the last equation follows from \eqref{eq:weak-asso}.
Since $z_0$ is invertible, we get
\begin{align}
  &\sum_{i}f_i(c_t e^{z_0})Y(Y(R(a)\al_i,z_0)\beta_i,z_2)\nonumber\\
  =&\left.\left(\sum_{i}\iota_{z_1,z_2}f_i(c_t e^{z_1-z_2})Y(R(a)\al_i,z_1)Y(\beta_i,z_2)\right)\right|_{z_1=z_2+z_0}
  \label{eq:phi-mod-rel-pre-9}
  \quad\in \Hom(V,V((z_2))[[z_0]]).
\end{align}
By letting the both hand sides of \eqref{eq:phi-mod-rel-pre-9} act on $\vac$, we get that
\begin{align*}
  &\sum_{i}f_i(c_t e^{z_0})Y(Y(R(a)\al_i,z_0)\beta_i,z_2)\vac\nonumber\\
  =&\left.\left(\sum_{i}f_i(c_t e^{z_1-z_2})Y(R(a)\al_i,z_1)Y(\beta_i,z_2)\vac\right)\right|_{z_1=z_2+z_0}\in V[[z_0,z_2]].\nonumber
\end{align*}
Taking $z_2=0$, we get that
\begin{align*}
  &\sum_{i}f_i(c_t e^{z_0})Y(R(a)\al_i,z_0)\beta_i\in V[[z_0]],
\end{align*}
which proves \eqref{eq:phi-mod-rel-pre-7}.
\end{proof}

%

\begin{prop}\label{prop:phi-mod-rel-f}
Let $V$ be a $G$-module nonlocal vertex algebra, and let $(W,Y_W^\phi,R)$ be a $(G,\chi_\phi)$-equivariant quasi-module of $V$. Suppose that $\chi_\phi$ is injective. Let
\begin{align*}
  &\sum_{i\ge 1}\al_i\ot \beta_i\ot f_i(z),\quad \sum_{j\ge 1}\mu_j\ot \nu_j\ot g_j(z)\in V\ot V\ot \C(z),
  \quad \sum_{k\ge 0}\sum_{a\in G}\gamma_{k,a}\in V,
\end{align*}
such that for any $a\in G$,
\begin{align}\label{eq:phi-mod-rel-f-1}
  &\sum_{i\ge 1}\iota_{z_1,z_2}f_i(\chi_\phi(a)\inv e^{z_1-z_2})
  Y(R(a)\al_i,z_1)Y(\beta_i,z_2)\\
  &\qquad\nonumber
  -\sum_{j\ge 1}\iota_{z_2,z_1}g_j(\chi_\phi(a)\inv e^{z_1-z_2})
  Y(\mu_j,z_2)Y(R(a)\nu_j,z_1)\\
  &\quad\nonumber=
  \sum_{k\ge 0}Y(\gamma_{a,k},z_2)\frac{1}{k!}\pdiff{z_2}{k}
    z_1\inv\delta\left(\frac{z_2}{z_1}\right).
\end{align}
Then
\begin{align}\label{eq:phi-mod-rel-f-2}
  &\sum_{i\ge 1}\iota_{z_1,z_2}f_i(z_1/z_2)Y_W^\phi(\al_i,z_1)Y_W^\phi(\beta_i,z_2)\\
  &\qquad\nonumber
  -\sum_{j\ge 1}\iota_{z_2,z_1}g_j(z_1/z_2)Y_W^\phi(\mu_j,z_2)Y_W^\phi(\nu_j,z_1)\\
  &\quad\nonumber=
  \sum_{k\ge 0}\sum_{a\in G}\gamma_{a,k}(z_2)\frac{1}{k!}\left(z_2\pd{z_2}\right)^k
  \delta\left(\chi_\phi(a)\inv\frac{z_2}{z_1}\right).
\end{align}
\end{prop}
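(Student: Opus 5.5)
Since $\gamma_{a,k}$ does not depend on $z_1$, I plan to follow the proof of Lemma \ref{lem:phi-mod-rel-pre} essentially line by line; the only new ingredient is keeping track of the delta-function terms. Write $L_W(z_1,z_2)$ for the left-hand side of \eqref{eq:phi-mod-rel-f-2}. First take the $a=e$ component of \eqref{eq:phi-mod-rel-f-1}, apply both sides to $\vac$, multiply by a high power of $(z_1-z_2)$ to kill the delta terms, and set $z_2=0$. This yields the analogue of \eqref{eq:phi-mod-rel-pre-3},
\begin{align*}
  \sum_i f_i(e^{z})Y(\al_i,z)\beta_i-\sum_j g_j(e^{z})e^{z\partial}Y(\mu_j,-z)\nu_j
  =\sum_{k}\gamma_{e,k}\,(-1)^{k}\cdots z^{-k-1},
\end{align*}
so the difference is singular at $z=0$, with the residues controlled by the $\gamma_{e,k}$. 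Then choose $h\in\C_{\chi_\phi(G)}[z]$ as in the lemma and run the computation with $|_{z_1=\phi(z,z_0)}$. I expect to obtain $h(z_2/z_1)^2L_W(z_1,z_2)=0$ modulo terms supported on $z_1=z_2$; more precisely, the substituted difference should equal $h(e^{-z_0})^2Y_W^\phi(\text{singular part},z)$. This substituted difference is a polynomial in $z_0^{-1}$ times a factor $(1-e^{-z_0})^{2n_e}$ coming from $h$; if $h$ contains the factor $(1-z)$ with high multiplicity, the product is regular, and the uniqueness in \cite[Remark 2.8]{Li-phi-coor} gives $h(z_2/z_1)^2L_W=0$ in $\E^{(2)}(W)$ (after choosing $h$ so that it kills all poles).

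Consequently $L_W=\big((\iota_{z_1,z_2}-\iota_{z_2,z_1})h(z_2/z_1)^{-2}\big)A(z_1,z_2)$, which is a finite combination $\sum_t\sum_{p<n_t}B_{t,p}(z_2)\frac1{p!}(z_2\pd{z_2})^p\delta(c_tz_2/z_1)$ with $c_t=\chi_\phi(a)^{-1}$. Here I have to be careful, because $A$ is now the combined expression: it contains both the $f$ terms and the $g$ terms, since they no longer cancel identically. For this reason I would rather define $A$ through the $\iota_{z_2,z_1}$ expansion and treat $L_W$ directly. Each $t$ corresponds to a unique $a\in G$ because $\chi_\phi$ is injective. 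Since $\delta$-type sums are determined by their expansion coefficients, $B_{t,p}(z_2)$ is computed as a residue: $B_{t,p}(z)=\Res_{z_1}z_1^{-1}\big(\log(z_1/c_tz)\big)^p\ldots$. Equivalently, I compute $L_W(c_t\phi(z,z_0),z)$ in the sense of distributions via $z_1\pd{z_1}$-moments around $z_1=c_tz$.

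For the fixed $a$ with $\chi_\phi(a)=c_t^{-1}$, I use equivariance \eqref{eq:phi-mod-equiv} to replace $Y_W^\phi(\al_i,c_tz_1)$ by $Y_W^\phi(R(a)\al_i,z_1)$ and $Y_W^\phi(\nu_j,c_tz_1)$ by $Y_W^\phi(R(a)\nu_j,z_1)$. The $z_1=\phi(z,z_0)$ substitution combined with weak associativity then turns the local singular part of $L_W$ near $z_1=c_tz$ into $Y_W^\phi\big(\sum_i f_i(c_te^{z_0})Y(R(a)\al_i,z_0)\beta_i-\sum_j g_j(c_te^{z_0})e^{z_0\partial}Y(\mu_j,-z_0)R(a)\nu_j,\,z\big)$. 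By the vacuum argument applied to \eqref{eq:phi-mod-rel-f-1} at this $a$, the same argument as \eqref{eq:phi-mod-rel-pre-9} shows this vector equals $\sum_k\gamma_{a,k}\frac{(-1)^k}{\ldots}z_0^{-k-1}$ plus a regular part. Converting $z_0^{-k-1}$ through $z_1=c_tze^{z_0}$, i.e.\ $z_0=\log(z_1/c_tz)$, changes $\pdiff{z_2}{k}z_1^{-1}\delta(z_2/z_1)$ into $\frac1{k!}(z_2\pd{z_2})^k\delta(c_tz_2/z_1)$. This is exactly where the right-hand side of \eqref{eq:phi-mod-rel-f-2} should come from, with $\gamma_{a,k}(z_2)=Y_W^\phi(\gamma_{a,k},z_2)$.

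The main obstacle is this last matching of normalizations. The ordinary derivatives $\partial_{z_2}^k$ of $z_1^{-1}\delta$ in the additive picture must correspond exactly to $(z_2\partial_{z_2})^k\delta$ in the multiplicative picture. This requires checking that the change of variables $z_1-z_2=z_0\leftrightarrow z_1=z_2e^{z_0}$ on the vertex-algebra side sends the residue data of $\frac1{k!}\partial_{z_2}^kz_1^{-1}\delta(z_2/z_1)$, namely the coefficient of $z_0^{-k-1}$, to that of $\frac1{k!}(z_2\partial_{z_2})^k\delta$. I will verify this on $k=0,1$ first, by comparing the coefficients of $z_0^{-k-1}$ in both expansions. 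If a triangular correction appears, it would have to be absorbed, but the statement as written suggests the identification is exact.
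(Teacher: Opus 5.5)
Your overall plan of rerunning the proof of Lemma \ref{lem:phi-mod-rel-pre} while tracking delta terms could work. However, the step that is supposed to produce the right-hand side of \eqref{eq:phi-mod-rel-f-2} fails as written.

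Apply the $a$-component of \eqref{eq:phi-mod-rel-f-1} to $\vac$ and multiply by $(z_1-z_2)^N$. The delta terms disappear completely, and after setting $z_2=0$ you get an identity $z^N X(z)=z^N X'(z)$ in $V((z))$, where $z^N$ is invertible. Hence, for every $a\in G$, the following holds exactly:
\[
\sum_i f_i(ce^{z})\,Y(R(a)\al_i,z)\beta_i=\sum_j g_j(ce^{z})\,e^{z\partial}Y(\mu_j,-z)R(a)\nu_j,\qquad c=\chi_\phi(a)\inv .
\]
In particular, the analogue of \eqref{eq:phi-mod-rel-pre-3} has no $\gamma_{e,k}$-terms. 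This exact identity is the real reason $h(z_2/z_1)^2L_W=0$: the two substituted expressions coincide. A nonzero but regular difference would not let Li's uniqueness give zero. The identity also removes your worry about $A$: the $f$- and $g$-sides agree after multiplying by $h^2$, so $A$ can be defined from the $f$-side alone, exactly as in the Lemma.

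The same identity at $a$ also says that the vector you feed into $Y_W^\phi$ in your last step, the difference of the $f$- and $g$-expressions, is identically $0$. Followed literally, your argument yields vanishing delta coefficients. Moreover, $L_W(c_t\phi(z,z_0),z)$ has no meaning, since $L_W\notin\E^{(2)}(W)$.

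The fix is that the $\gamma_{a,k}$ are the singular coefficients of each side separately. Take $\Res_{z_1}z_1^k$ of the $a$-component applied to $\vac$; the $g$-part lies in $V((z_2))[[z_1]]$ and contributes nothing. Setting $z_2=0$ then gives
\[
\sum_i f_i(c_te^{z_0})\,Y(R(a)\al_i,z_0)\beta_i=\sum_{k\ge 0}\gamma_{a,k}\,z_0^{-k-1}+V[[z_0]],\qquad \chi_\phi(a)=c_t\inv ,
\]
with no signs. Then $L_W=\big((\iota_{z_1,z_2}-\iota_{z_2,z_1})h(z_2/z_1)^{-2}\big)A(z_1,z_2)$. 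By the Leibniz rule for $(z_1\pd{z_1})^p\delta(c_tz_2/z_1)$, the part of $L_W$ supported at $z_1=c_tz_2$ is read off from the principal part in $z_0$ of
\[
h(c_t\inv e^{-z_0})^{-2}A(c_t\phi(z,z_0),z)=Y_W^\phi\Big(\sum_if_i(c_te^{z_0})Y(R(a)\al_i,z_0)\beta_i,z\Big).
\]
Each term $z_0^{-k-1}$ contributes exactly $\frac{1}{k!}(z_2\pd{z_2})^k\delta(c_tz_2/z_1)$.

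Your ``main obstacle'' is not one: there is no triangular correction. For $F_{a,k}(z)=\frac{1}{2\cdot k!}(-z\pd z)^k\frac{\chi_\phi(a)z+1}{\chi_\phi(a)z-1}$, one has $(\iota_{z_1,z_2}-\iota_{z_2,z_1})F_{a,k}(z_1/z_2)=\frac{1}{k!}(z_2\pd{z_2})^k\delta(\chi_\phi(a)\inv z_2/z_1)$. At the same time, $F_{a,k}(\chi_\phi(a)\inv e^{x})=x^{-k-1}+O(1)$, which yields $\frac{1}{k!}\pdiff{z_2}{k}z_1\inv\delta(z_2/z_1)$ in the additive picture.

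The paper uses exactly these functions, but avoids all delta bookkeeping. It adds terms with $\al=\nu=\vac$, $\beta=\mu=-\gamma_{a,k}$ and $f=g=F_{a,k}$, which turns the delta terms on both sides into $\iota_{z_1,z_2}F-\iota_{z_2,z_1}F$ pieces. Injectivity of $\chi_\phi$ makes $F_{b,k}(\chi_\phi(a)\inv e^{z_1-z_2})$ regular at $z_1=z_2$ for $b\ne a$. The Proposition then becomes a direct instance of Lemma \ref{lem:phi-mod-rel-pre}.
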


\begin{proof}
For any integer $k\ge 0$ and $a\in G$, we set $\beta_{a,-k}=\mu_{a,-k}=-\gamma_{a,k}$, $\al_{a,-k}=\nu_{a,-k}=\vac$ and
\begin{align*}
  &f_{a,-k}(z)=g_{a,-k}(z)=\frac{1}{2k!}\left(-z\pd z\right)^k\frac{\chi_\phi(a)z+1}{\chi_\phi(a)z-1}\in\C(z).
\end{align*}
For $i\ge 1$ and $a\in G$, we set
\begin{align*}
  &\al_{a,i}=\begin{cases}
              \al_i, & \mbox{if }a=1,\\
              0,&\mbox{if }a\ne 1,
            \end{cases}
  \quad \beta_{a,i}=\begin{cases}
              \beta_i, & \mbox{if }a=1,\\
              0,&\mbox{if }a\ne 1,
            \end{cases}
  \quad f_{a,i}(z)=\begin{cases}
              f_i(z), & \mbox{if }a=1,\\
              0,&\mbox{if }a\ne 1,
            \end{cases}
  \\
  &\mu_{a,j}=\begin{cases}
              \mu_j, & \mbox{if }a=1,\\
              0,&\mbox{if }a\ne 1,
            \end{cases}
  \quad \nu_{a,j}=\begin{cases}
              \nu_j, & \mbox{if }a=1,\\
              0,&\mbox{if }a\ne 1,
            \end{cases}
  \quad g_{a,j}(z)=\begin{cases}
              g_j(z), & \mbox{if }a=1,\\
              0,&\mbox{if }a\ne 1.
            \end{cases}
\end{align*}
It is straightforward to verify that
\begin{align*}
  &\iota_{z_1,z_2}f_{a,-k}(z_1/z_2)-\iota_{z_2,z_1}g_{a,-k}(z_1/z_2)
  =\frac{1}{k!}\left(z_2\pd{z_2}\right)^k\delta\left(\chi_\phi(a)\inv \frac{z_2}{z_1}\right),\\
  &\iota_{z_1,z_2}f_{b,-k}(\chi_\phi(a)\inv e^{z_1-z_2})
  -\iota_{z_2,z_1}g_{b,-k}(\chi_\phi(a)\inv e^{z_1-z_2})
  =\delta_{a,b}\frac{1}{k!}\pdiff{z_2}{k}z_1\inv\delta\left(\frac{z_2}{z_1}\right).
\end{align*}
Then the equation \eqref{eq:phi-mod-rel-f-1} is equivalent to
\begin{align*}
  &\sum_{(b,i)\in G\times \Z}\iota_{z_1,z_2}f_{b,i}(\chi_\phi(a)\inv e^{z_1-z_2})Y(R(a)\al_{b,i},z_1)Y(\beta_{b,i},z_2)\\
  =&\sum_{(b,j)\in G\times \Z}\iota_{z_2,z_1}g_{b,j}(\chi_\phi(a)\inv e^{z_1-z_2})Y(\mu_{b,j},z_2)Y(R(a)\nu_{b,j},z_1)\quad\te{for all }a\in G,
\end{align*}
and the equation \eqref{eq:phi-mod-rel-f-2} is equivalent to
\begin{align*}
  &\sum_{(a,i)\in G\times \Z}\iota_{z_1,z_2}f_{a,i}(z_1/z_2)
  Y_W^\phi(\al_{a,i},z_1)Y_W^\phi(\beta_{a,i},z_2)\\
  =&\sum_{(a,j)\in G\times \Z}\iota_{z_2,z_1}g_{a,j}(z_1/z_2)
  Y_W^\phi(\mu_{a,j},z_2)Y_W^\phi(\nu_{a,j},z_1).
\end{align*}
Therefore, this proposition follows immediate from Lemma \ref{lem:phi-mod-rel-pre}.
\end{proof}

\subsection{Quantum vertex algebras and twistors}\label{subsec:qva-and-deform}

In this section, we recall the notion of quantum vertex algebras, the concepts of twisting operators and twisting tensor products introduced in \cite{LS-twisted-tensor}, as well as the notion of twistors and the deformation of nonlocal vertex algebras using twistors developed in \cite{Sun-Twistor}.
We also give a necessary condition for a deformation of a quantum vertex algebra by a twistor to remain a quantum vertex algebra.
Finally, we construct a special twistor of the tensor product quantum vertex algebra $U\otimes V$ using a twisting operator for the ordered pair $(U,V)$. This twistor will be used in the decomposition of quantum vertex algebras in Section \ref{subsec:structure}.

\begin{de}
Let $U,V$ be two nonlocal vertex algebras.
A \emph{twisting operator} $S(z)$ of the ordered pair $(U,V)$ is a linear map $S(z):U\ot V\to U\ot V\ot\C((z))$ such that
\begin{align}
  &S(z)(v\ot \vac)=v\ot \vac\quad
   S(z)(\vac\ot u)=\vac\ot u\quad\te{for }u,v\in V,\label{eq:qyb-vac}\tag{QYB1}\\
  &S(z_1)Y_U^{12}(z_2)
  =Y_U^{12}(z_2)S^{23}(z_1)S^{13}(z_1+z_2),
  \label{eq:qyb-hex1}\tag{QYB2}\\
  &S(z_1)Y_V^{23}(z_2)
  =Y_V^{23}(z_2)S^{12}(z_1-z_2)S^{13}(z_1),
  \label{eq:qyb-hex2}\tag{QYB3}
\end{align}
\end{de}

\begin{rem}
    \emph{
    Given a twisting operator $S(z)$ for the ordered pair $(U,V)$, $S(z)\sigma$ is a twisting operator defined in \cite[Definition 2.2]{LS-twisted-tensor}, where $\sigma:U\ot V\to V\ot U$ is the flip map.
    Conversely, let $R(z)$ be a twisting operator for the ordered pair $(U,V)$ in the sense of \cite[Definition 2.2]{LS-twisted-tensor}.
    Then $R(z)\sigma$ is a twisting operator for $(U,V)$.
    }
\end{rem}

\begin{rem}
The relations \eqref{eq:qyb-vac}, \eqref{eq:qyb-hex1} and \eqref{eq:qyb-hex2} imply the following relation
\begin{align}
  &[\partial\ot 1,S(z)]=-\frac{d}{dz}S(z),
  \quad [1\ot\partial, S(z)]=\frac{d}{dz}S(z).\label{eq:qyb-der-shift}\tag{QYB4}
\end{align}
\end{rem}

\begin{rem}\label{rem:twisting-op-inv}
Let $S(z)$ be a twisting operator for the ordered pair $(U,V)$, which is invertible as an $\C((z))$-module map on $U\ot V\ot \C((z))$, then $S^{21}(-z)\inv$ is a twisting operator for the ordered pair $(V,U)$.
\end{rem}

\begin{rem}\label{rem:prod-twisting-op}
Let $U,V$ be two nonlocal vertex algebra and let $S(z),T(z)$ be two twisting operators for $(U,V)$ such that
\begin{align}\label{eq:qyb-com1}
  &S^{12}(z_1)T^{13}(z_2)=T^{13}(z_2)S^{12}(z_1),\quad
  S^{23}(z_1)T^{13}(z_2)=T^{13}(z_2)S^{23}(z_1).
\end{align}
Then $S(z)T(z)$ is also a twisting operator for $(U,V)$.
\end{rem}

The following result was proved in \cite[Theorem 2.4]{LS-twisted-tensor}.

\begin{thm}\label{thm:twisted-tensor-prod}
Let $U$ and $V$ be nonlocal vertex algebras and let $S(z)$ be a twisting operator for the pair $(U,V)$.
Define
\begin{align*}
  Y_S(z)=\big(Y(z)\ot Y(z)\big)S^{23}(-z)\sigma^{23}.
\end{align*}
Then $(U\ot V,Y_S,\vac\ot\vac)$ carries the structure of a nonlocal vertex algebra, which contains $U$ and $V$ canonically as nonlocal vertex subalgebras.
Moreover, we denote this nonlocal vertex algebra by $U\ot_S V$.
\end{thm}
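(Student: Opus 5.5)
The plan is to verify the nonlocal vertex algebra axioms for $(U\ot V,Y_S,\vac\ot\vac)$ directly, using the definition of a nonlocal vertex algebra through the vacuum properties \eqref{eq:vacuum-prop}, compatibility, and weak associativity \eqref{eq:weak-asso}. This theorem is the statement of \cite[Theorem 2.4]{LS-twisted-tensor}, transported along the flip $\sigma$ as explained in the remark relating the two notions of twisting operator. So the most economical route is to check that $S(z)\sigma$ satisfies the hypotheses there and invoke that result. I would nevertheless organize a self-contained argument as follows.

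\textbf{Vacuum properties and compatibility.} Applying \eqref{eq:qyb-vac} gives $Y_S(\vac\ot\vac,z)=Y(\vac,z)\ot Y(\vac,z)=1$. For $u\ot v$ acting on $\vac\ot\vac$, the factor $S^{23}(-z)\sigma^{23}$ acts on $v\ot\vac$ and returns $\vac\ot v$ by \eqref{eq:qyb-vac}. Hence
\[
Y_S(u\ot v,z)(\vac\ot\vac)=Y(u,z)\vac\ot Y(v,z)\vac,
\]
which is creative, and its value at $z=0$ is $u\ot v$. The same computation shows that $u\mapsto u\ot\vac$ and $v\mapsto\vac\ot v$ are vertex algebra embeddings, because $S(z)(\vac\ot\cdot)$ and $S(z)(\cdot\ot\vac)$ are trivial. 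Compatibility of $\{Y_S(a,z)\}$ follows from compatibility in $U$ and in $V$: $S^{23}(-z)$ only introduces Laurent series in $z$, which become series in $z_1-z_2$ after multiplication, and these are killed by taking a large enough power of $(z_1-z_2)$.

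\textbf{Weak associativity.} This is the core step. One has to compare
\[
Y_\E\bigl(Y_S(u\ot v,z_2),z_0\bigr)Y_S(u'\ot v',z_2)
\qquad\text{with}\qquad
Y_S\bigl(Y_S(u\ot v,z_0)(u'\ot v'),z_2\bigr).
\]
I would expand the product $Y_S(a,z_1)Y_S(b,z_2)$ on $U\ot V\ot U\ot V\ot U\ot V$. Hexagon \eqref{eq:qyb-hex1} moves the twisting operator past $Y_U$, which produces $S(z_1-z_2)$ and $S(z_1)$ factors. Hexagon \eqref{eq:qyb-hex2} then moves it past $Y_V$. Together these rewrite the product as $(Y\ot Y)$ applied after the composition of the $U$- and $V$-products, twisted by $S^{23}(-z_0)$ together with the outer $S(-z_2)$ factors. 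Substituting $z_1=z_2+z_0$ and using weak associativity in $U$ and in $V$ separately should then yield the right-hand side. The derivative relation \eqref{eq:qyb-der-shift} handles the $e^{z\partial}$ shifts that arise when converting $S(z_1)$ into $S(z_2+z_0)$.

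\textbf{Main obstacle.} I expect the main difficulty to be the bookkeeping of tensor legs and of the expansion directions $\iota_{z_1,z_2}$ versus $\iota_{z_2,z_0}$ in this weak-associativity computation. In particular, one must confirm that the sign conventions in \eqref{eq:qyb-hex1} and \eqref{eq:qyb-hex2}, with arguments $z_1+z_2$ and $z_1-z_2$, exactly match $Y_S$, which is defined using $S^{23}(-z)\sigma^{23}$. To avoid redoing this by hand, I would first check that under $R(z)=S(z)\sigma$ the relations (QYB1)–(QYB3) translate literally into the axioms of \cite[Definition 2.2]{LS-twisted-tensor}, and that $Y_S$ coincides with the vertex operator defined there. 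Then I would cite \cite[Theorem 2.4]{LS-twisted-tensor}.
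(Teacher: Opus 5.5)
Your proposal is correct and matches the paper, which gives no independent argument here and simply invokes \cite[Theorem 2.4]{LS-twisted-tensor} through the dictionary $S(z)\leftrightarrow S(z)\sigma$ recorded in the remark after the definition of a twisting operator. Your direct sketch is a reasonable supplement, with one simplification: in the weak-associativity step, \eqref{eq:qyb-hex1} on the product side and \eqref{eq:qyb-hex2} on the iterate side already match the twisting factors after the substitution $z_1=z_2+z_0$, once you note that twisting factors on disjoint tensor legs commute, so \eqref{eq:qyb-der-shift} is not needed.
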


\begin{de}
A \emph{quantum vertex algebra} \cite{Li-nonlocal} is a nonlocal vertex algebra $V$ equipped with a twisting operator $S(z)$ for $(V,V)$ (called a \emph{quantum Yang-Baxter operator}) such that
\begin{align}
  &\exists k\in\N,\,\,(z_1-z_2)^kY(u,z_1)Y(v,z_2)=(z_1-z_2)^k Y^{12}(z_2)Y^{23}(z_2)S^{12}(z_2-z_1)(v\ot u),\label{eq:qyb-locallity}\tag{QYB5}\\
  &S(z)S^{21}(-z)=1,\quad S^{12}(z_1)S^{13}(z_1+z_2)S^{23}(z_2)
  =S^{23}(z_2)S^{13}(z_1+z_2)S^{12}(z_1).\label{eq:qyb-unitary-qyb}\tag{QYB6}
\end{align}
\end{de}

The following notion was given in \cite{Sun-Twistor}.

\begin{de}\label{de:twistor}
Let $V$ be a nonlocal vertex algebra.
A \emph{twistor} $T(z)$ of $V$ is a twisting operator for $(V,V)$, such that
\begin{align}\label{eq:twistor}
  T^{12}(z_1)T^{23}(z_2)=T^{23}(z_2)T^{12}(z_1).
\end{align}
In addition, we say $T(z)$ \emph{invertible} if it is invertible as a $\C((z))$-module map on $V\ot V\ot \C((z))$.
\end{de}

\begin{rem}
This definition is slightly different from the origin one given by Sun \cite{Sun-Twistor}.
Let $T(z)$ be a linear from $V\ot V\to V\ot V\ot \C((z))$.
Then $T(z)$ is a twistor if and only if $T^{21}(-z)$ is a twistor in the sense of \cite{Sun-Twistor}.
Moreover, if $T(z)$ is invertible, then $T(z)$ is a twistor if and only if $T(z)\inv$ is a twistor in the sense of \cite{Sun-Twistor}.
\end{rem}

\begin{rem}\label{rem:twistor-inv}
Let $T(z)$ be an invertible twistor.
Then $T^{21}(-z)\inv$ is also a twistor.
\end{rem}

\begin{rem}\label{rem:prod-twistor}
Let $V$ be a nonlocal vertex algebra and let $S(z),T(z)$ be two twistors of $V$, which satisfies the conditions \eqref{eq:qyb-com1} and
\begin{align}\label{eq:twistor-com}
  S^{12}(z_1)T^{23}(z_2)=T^{23}(z_2)S^{12}(z_1).
\end{align}
Then $S(z)T(z)$ is also a twistor of $V$.
\end{rem}

The following two results were proved in \cite[Theorem 3.2,Proposition 3.5]{Sun-Twistor}.

\begin{thm}\label{thm:nonlocal-VA-twistor-deform}
Let $(V,Y,\vac)$ be a nonlocal vertex algebra and let $T(z)$ be a twistor. Define
\begin{align*}
  &\mathfrak D_T(Y)(z)=Y(z)T^{21}(-z).
\end{align*}
Then $(V,\mathfrak D_T(Y),\vac)$ is also a nonlocal vertex algebra, which is denoted by $\mathfrak D_T(V)$.
\end{thm}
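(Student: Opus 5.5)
The strategy is to check the three axioms of a nonlocal vertex algebra for $\mathfrak D_T(Y)(z)=Y(z)T^{21}(-z)$ directly: the vacuum property \eqref{eq:vacuum-prop}, compatibility, and weak associativity \eqref{eq:weak-asso}. Each is reduced to the corresponding property of $Y$ using the twisting-operator identities \eqref{eq:qyb-vac}--\eqref{eq:qyb-hex2}, the derivative relation \eqref{eq:qyb-der-shift}, and the twistor commutation \eqref{eq:twistor}.

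\textbf{Vacuum property.} By \eqref{eq:qyb-vac}, $T^{21}(-z)(\vac\ot v)=\vac\ot v$, so $\mathfrak D_T(Y)(\vac,z)=Y(\vac,z)=1_V$. Likewise $T^{21}(-z)(u\ot\vac)=u\ot\vac$, so $\mathfrak D_T(Y)(u,z)\vac=Y(u,z)\vac\in V[[z]]$, with limit $u$ at $z=0$.

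\textbf{Compatibility.} Write $T^{21}(-z)(u\ot v)=\sum u_i\ot v_i\ot f_i(z)$, a finite sum with $f_i\in\C((z))$. For $u,v\in V$ we then have
\begin{align*}
\mathfrak D_T(Y)(u,z_1)\mathfrak D_T(Y)(v,z_2)w=Y(z_1)(1\ot Y(z_2))\,T^{32}(-z_2)\,T^{21}(-z_1)\ldots(u\ot v\ot w),
\end{align*}
where the dots record that the second twistor acts on the output of the first. I would first use \eqref{eq:qyb-hex2} (equivalently \eqref{eq:qyb-hex1}, after relabelling for the $T^{21}$ form) to move $T^{21}(-z_1)$ past the factor $Y(v,z_2)$. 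This produces a product $T^{21}(-z_1+z_2)T^{31}(-z_1)$ of twisting operators applied before $Y(z_1)Y(z_2)$. Each component involves only finitely many Laurent series, in $z_1-z_2$ and in $z_1$ or $z_2$ separately. Multiplying by a suitable power of $(z_1-z_2)$ then clears all poles, and compatibility of $Y$ gives the claim.

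\textbf{Weak associativity.} This is the core of the argument. Compute $\mathfrak D_T(Y)(\mathfrak D_T(Y)(u,z_0)v,z_2)$. Its left side is $Y(z_2)T^{21}(-z_2)\bigl(Y(z_0)T^{21}(-z_0)\ot 1\bigr)$, and \eqref{eq:qyb-hex1} commutes $T(-z_2)$ past $Y(z_0)$. The shifts $z_0+z_2$ that appear correspond exactly to the substitution $z_1=z_2+z_0$ in $Y_\E$. What remains is to show that the resulting ordering of twistors, applied to $u\ot v\ot w$, matches the one obtained from the product side $Y_\E(\mathfrak D_T(Y)(u,z_2),z_0)\mathfrak D_T(Y)(v,z_2)$ computed above.

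The two orderings differ by a transposition of $T^{21}$-type and $T^{32}$-type factors on overlapping tensor slots. This is exactly where the twistor axiom \eqref{eq:twistor}, $T^{12}(z_1)T^{23}(z_2)=T^{23}(z_2)T^{12}(z_1)$, is needed beyond the twisting-operator axioms. The final step applies weak associativity of $Y$ to the twisted vectors. I expect this bookkeeping step to be the main obstacle. It requires tracking the index conventions $T^{21}(-z)$ versus $T^{12}(z)$, and justifying that the expansions $\iota_{z_2,z_0}$ of $f(z_0+z_2)$ are compatible with the $\iota$ used in defining $Y_\E$, which amounts to clearing denominators by the same power of $(z_1-z_2)$ as in the compatibility step.

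As a sanity check, when $T$ is invertible one could instead realize $\mathfrak D_T(V)$ inside a twisted tensor product as in Theorem \ref{thm:twisted-tensor-prod}. However, the direct verification above avoids any invertibility assumption, so I would use it.
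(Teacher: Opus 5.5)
The paper does not prove this statement; it quotes it from \cite[Theorem 3.2]{Sun-Twistor}. Your direct verification is the natural argument and its strategy is sound. Its core is the computation the paper itself performs in the proof of Theorem \ref{thm:qva-twistor}.

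To make the bookkeeping precise, the product side is
\begin{align*}
  \mathfrak D_T(Y)(u,z_1)\mathfrak D_T(Y)(v,z_2)w
  =Y^{12}(z_1)Y^{23}(z_2)T^{31}(-z_1)T^{21}(-z_1+z_2)T^{32}(-z_2)(u\ot v\ot w).
\end{align*}
This comes from \eqref{eq:qyb-hex1}: after the flip, $Y(v,z_2)w$ occupies the first slot of $T(-z_1)$. The iterate is
\begin{align*}
  \mathfrak D_T(Y)(\mathfrak D_T(Y)(u,z_0)v,z_2)w
  =Y^{12}(z_2)Y^{12}(z_0)T^{31}(-z_2-z_0)T^{32}(-z_2)T^{21}(-z_0)(u\ot v\ot w).
\end{align*}
This comes from \eqref{eq:qyb-hex2}, with $T^{31}(-z_2-z_0)$ expanded in nonnegative powers of $z_0$.

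Two corrections follow. First, your hexagon labels are interchanged. The two axioms are not equivalent, but both are hypotheses, so nothing breaks. Second, the order of the factors matters: $T^{31}$ and $T^{21}$ need not commute. By contrast, $T^{21}(a)T^{32}(b)=T^{32}(b)T^{21}(a)$ is \eqref{eq:twistor} conjugated by $x\ot y\ot w\mapsto w\ot y\ot x$. After this swap, multiply by $(z_1-z_2)^N$, set $z_1=z_2+z_0$, and apply \eqref{eq:weak-asso} for $Y$ to the finitely many twisted triples. The two sides then match term by term, as you anticipate.

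What your outline does not yet give is compatibility in the paper's sense. That means membership in $\E^{(2)}(V)=\Hom(V,V((z_1,z_2)))$ with one exponent valid for every $w$; the definition also asks this for ordered sequences of every length. The operators $T^{32}(-z_2)$ and $T^{31}(-z_1)$ produce vectors that depend on $w$, so your pole-clearing only yields a $w$-dependent exponent $N_w$.

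For pairs, you can repair this after associativity. Your computation, done for a fixed $w$, gives for every $N\ge N_w$
\begin{align*}
  \left.\left((z_1-z_2)^{N}\mathfrak D_T(Y)(u,z_1)\mathfrak D_T(Y)(v,z_2)w\right)\right|_{z_1=z_2+z_0}
  =z_0^{N}\mathfrak D_T(Y)(\mathfrak D_T(Y)(u,z_0)v,z_2)w .
\end{align*}
Choose $k$ with $z^k\mathfrak D_T(Y)(u,z)v\in V[[z]]$ and take $N\ge k$; then the right side lies in $z_0^{N-k}V((z_2))[[z_0]]$. Write the bracket on the left as $z_1^{-a}z_2^{-b}G(z_1,z_2)$ with $a,b\ge 0$ and $G\in V[[z_1,z_2]]$. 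Then $G(z_2+z_0,z_2)\in z_0^{N-k}V[[z_0,z_2]]$. Since $z_1\mapsto z_2+z_0$ is an isomorphism $V[[z_1,z_2]]\to V[[z_0,z_2]]$, you get $G=(z_1-z_2)^{N-k}G'$ with $G'\in V[[z_1,z_2]]$. Hence $(z_1-z_2)^k\mathfrak D_T(Y)(u,z_1)\mathfrak D_T(Y)(v,z_2)w\in V((z_1,z_2))$, with $k$ independent of $w$.

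For longer sequences, either run the analogous multi-fold twisting argument with the same kind of uniformity step, or invoke the fact from Li's framework \cite{Li-nonlocal} that compatibility follows from pointwise weak associativity together with vacuum and truncation.

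Finally, your closing remark about embedding $\mathfrak D_T(V)$ in a twisted tensor product is not needed and would require its own justification; drop it.
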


%
%
%

\begin{prop}\label{prop:prod-twistor}
Let $V$ be a nonlocal vertex algebra and let $S(z),T(z)$ be two twistors of $V$, which satisfies the conditions \eqref{eq:qyb-com1} and
\eqref{eq:twistor-com}.
Then $T(z)$ is a twistor of $\mathfrak D_S(V)$, and
\begin{align*}
  \mathfrak D_T\left(\mathfrak D_S(V)\right)=\mathfrak D_{ST}(V).
\end{align*}
\end{prop}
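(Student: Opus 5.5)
Writing $\mathfrak D_S(Y)(z)=Y(z)S^{21}(-z)$, the statement has two parts: that $T(z)$ is a twistor of $\mathfrak D_S(V)$, and that $\mathfrak D_T(\mathfrak D_S(V))=\mathfrak D_{ST}(V)$. The second part is only a composition of definitions once the first is known. By Theorem \ref{thm:nonlocal-VA-twistor-deform} applied to $\mathfrak D_S(V)$ and $T$, the vertex operator of $\mathfrak D_T(\mathfrak D_S(V))$ is
\begin{align*}
  \mathfrak D_S(Y)(z)T^{21}(-z)=Y(z)S^{21}(-z)T^{21}(-z)=Y(z)(ST)^{21}(-z).
\end{align*}
By Remark \ref{rem:prod-twistor}, $ST$ is a twistor of $V$ (this uses \eqref{eq:qyb-com1} and \eqref{eq:twistor-com}), so the right-hand side is exactly $\mathfrak D_{ST}(Y)(z)$. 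The vacuum is unchanged, so the two nonlocal vertex algebras coincide. The real work is therefore the first claim.

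For the first claim, condition \eqref{eq:twistor} is a statement about $T$ alone on $V\ot V\ot V$, and it does not involve the vertex operator. It therefore holds for $T$ regarded as an operator on $\mathfrak D_S(V)$. Condition \eqref{eq:qyb-vac} is likewise independent of $Y$. What must be checked are the hexagon identities \eqref{eq:qyb-hex1} and \eqref{eq:qyb-hex2} with $Y$ replaced by $Y_S:=\mathfrak D_S(Y)$.

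For \eqref{eq:qyb-hex1} we compute
\begin{align*}
  T(z_1)Y^{12}(z_2)S^{21}(-z_2)=Y^{12}(z_2)T^{23}(z_1)T^{13}(z_1+z_2)S^{21}(-z_2),
\end{align*}
using \eqref{eq:qyb-hex1} for $T$ with respect to $Y$. We then want to move $S^{21}(-z_2)$ to the left, past $T^{23}(z_1)T^{13}(z_1+z_2)$. Two commutation facts are needed for this:
\begin{align*}
  S^{21}(z)T^{13}(w)=T^{13}(w)S^{21}(z),\qquad
  S^{21}(z)T^{23}(w)=T^{23}(w)S^{21}(z).
\end{align*}
These come from relabeling tensor slots in \eqref{eq:qyb-com1}. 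Conjugating $S^{12}(z_1)T^{13}(z_2)=T^{13}(z_2)S^{12}(z_1)$ by the flip $\sigma^{12}$ turns $S^{12}$ into $S^{21}$ and $T^{13}$ into $T^{23}$. Likewise $S^{23}T^{13}=T^{13}S^{23}$, after the permutation sending slots $(2,3,1)$ to $(1,2,3)$, gives a relation between $S^{12}$ and $T^{32}$. I will enumerate which of \eqref{eq:qyb-com1} and \eqref{eq:twistor-com} are needed, in which relabeled forms. After the commutation, the expression becomes $Y_S^{12}(z_2)T^{23}(z_1)T^{13}(z_1+z_2)$, as required.

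The identity \eqref{eq:qyb-hex2} is handled the same way. We use \eqref{eq:qyb-hex2} for $T$ and then commute $S^{32}(-z_2)$ past $T^{12}(z_1-z_2)T^{13}(z_1)$. Here the needed relations are $S^{32}T^{12}=T^{12}S^{32}$, which is \eqref{eq:twistor-com} after a flip of slots, and $S^{32}T^{13}=T^{13}S^{32}$, which comes from \eqref{eq:qyb-com1}.

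The main obstacle is bookkeeping. Each of the four commutations needed must be shown to be one of the hypotheses after a permutation of slots. If a needed commutation is not directly among the hypotheses, my fallback is to derive it from \eqref{eq:twistor} for $S$ and $T$ together with the given relations.
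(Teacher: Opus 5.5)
Your reduction is the right one: the second claim is formal, and the first amounts to the two hexagon identities for $T$ relative to $Y_S(z)=Y(z)S^{21}(-z)$, obtained by pushing the $S$-factor through the $T$-factors. The paper gives no argument here, only a citation to Sun. However, the bookkeeping you postponed is exactly where the proof breaks.

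Up to relabelling slots ($a,b,c$ distinct), the hypotheses give three families:
\begin{itemize}
\item the first relation of \eqref{eq:qyb-com1} says $S^{ab}$ commutes with $T^{ac}$;
\item the second relation of \eqref{eq:qyb-com1} says $S^{ab}$ commutes with $T^{cb}$;
\item \eqref{eq:twistor-com} says $S^{ab}$ commutes with $T^{bc}$.
\end{itemize}
For \eqref{eq:qyb-hex1} you need $S^{21}$ against $T^{23}$, which is the first family, as you say. You also need $S^{21}$ against $T^{13}$. This is \eqref{eq:twistor-com} conjugated by $\sigma^{12}$, not the second relation of \eqref{eq:qyb-com1}; that one only gives $S^{12}$ against $T^{32}$, as you yourself computed. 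For \eqref{eq:qyb-hex2} you need $S^{32}$ against $T^{12}$, which is the second relation of \eqref{eq:qyb-com1} conjugated by $\sigma^{23}$, not \eqref{eq:twistor-com}. You also need $S^{32}$ against $T^{13}$. This last one is of type $S^{ab}$ versus $T^{ca}$. Relabelled, it reads $T^{12}(z_1)S^{23}(z_2)=S^{23}(z_2)T^{12}(z_1)$, i.e.\ \eqref{eq:twistor-com} with $S$ and $T$ interchanged. It lies in none of the three families, so it does not come from \eqref{eq:qyb-com1}.

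Your fallback cannot produce it either. The relation \eqref{eq:twistor} for $S$ and for $T$ separately says nothing about mixed products, and the missing relation is genuinely needed. Here is a counterexample.
\begin{itemize}
\item Let $V$ be the free commutative differential algebra on $a,b,c$, viewed as a commutative vertex algebra.
\item Let $D$ be the derivation with $Da=b$, $Db=Dc=0$, and let $\partial_c$ be the partial derivative in the variable $c$ alone. Both commute with $\partial$ and with each other.
\item Put $S(z)=\exp(D\ot\partial_c)$.
\item Put $T(z)(u\ot v)=\lambda^{\deg_{ab}(u)\deg_a(v)}u\ot v$ on monomials, where $\deg_a$ counts factors $\partial^n a$, $\deg_{ab}$ counts factors $\partial^n a,\partial^n b$, and $\lambda\neq 1$.
\end{itemize}
Both are ($z$-independent) twistors. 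The relations \eqref{eq:qyb-com1} and \eqref{eq:twistor-com} hold, because $D$ preserves $\deg_{ab}$ and $\partial_c$ preserves both degrees. But $D$ lowers $\deg_a$, so $T^{12}S^{23}\neq S^{23}T^{12}$.

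In this example $Y_S(c,z)a=Y(c,z)a+b$. On $a\ot c\ot a$, the left side of \eqref{eq:qyb-hex2} for $T$ relative to $Y_S$ is $\lambda\, a\ot Y(c,z_2)a+a\ot b$. The right side is $\lambda\, a\ot Y(c,z_2)a+\lambda\, a\ot b$. So $T$ is not a twistor of $\mathfrak D_S(V)$.

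Hence the hypotheses must be read as holding for both ordered pairs $(S,T)$ and $(T,S)$. This is how they are supplied in the paper's applications: for instance, Lemma \ref{lem:T-tau-com} applies to $T_\tau,T_{\tau'}$ in either order when $\tau,\tau'\in\mathfrak L_H^\rho(V)$. With that extra relation, your computation goes through once the attributions are corrected.

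Finally, for the second claim you do not need Remark \ref{rem:prod-twistor}, which itself needs the same swapped relation to get \eqref{eq:twistor} for $ST$. Once $T$ is a twistor of $\mathfrak D_S(V)$, Theorem \ref{thm:nonlocal-VA-twistor-deform} already shows that $Y(z)S^{21}(-z)T^{21}(-z)$ defines a nonlocal vertex algebra.
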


We rewrite \cite[Theorem 3.6]{JKLT-Defom-va} by using the theory of twistors.

\begin{thm}\label{thm:qva-twistor}
Let $V$ be a quantum vertex algebra with quantum Yang-Baxter operator $S(z)$, and let $T(z)$ be an invertible twistor of $V$ such that the relation \eqref{eq:qyb-com1} holds for $(T(z),T(z))$.
Suppose that the relations \eqref{eq:qyb-com1} and \eqref{eq:twistor-com} hold for $(T(z),S(z))$.
Then $\mathfrak D_T(V)$ is a quantum vertex algebra with quantum Yang-Baxter operator
\begin{align}
  &T^{21}(-z)\inv S(z)T(z).
\end{align}
\end{thm}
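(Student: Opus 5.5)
Since Theorem \ref{thm:nonlocal-VA-twistor-deform} already makes $\mathfrak D_T(V)$ a nonlocal vertex algebra with $Y_T(z)=Y(z)T^{21}(-z)$, it remains to show three things for $S_T(z):=T^{21}(-z)\inv S(z)T(z)$: that it is a twisting operator for $(\mathfrak D_T(V),\mathfrak D_T(V))$, i.e.\ (QYB1)--(QYB3) with $Y$ replaced by $Y_T$; that $S$-locality (QYB5) holds for $Y_T$ with $S_T$; and that unitarity and the Yang--Baxter equation (QYB6) hold.

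\textbf{Unitarity and vacuum.} Unitarity is a direct computation: $S_T^{21}(-z)=T(z)\inv S^{21}(-z)T^{21}(-z)$. Hence $S_T(z)S_T^{21}(-z)=T^{21}(-z)\inv S(z)S^{21}(-z)T^{21}(-z)=1$, using $S(z)S^{21}(-z)=1$. The vacuum property (QYB1) follows because $S$, $T$ and $T^{21}(-z)\inv$ all fix $\vac\ot v$ and $v\ot\vac$.

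\textbf{$S$-locality.} We have
\[
Y_T(u,z_1)Y_T(v,z_2)=Y^{12}(z_1)Y^{23}(z_2)T^{23}(-z_2)\,T^{13}(-z_1)\ \text{-type factors}.
\]
More precisely, I will first use (QYB2) and (QYB3) for $T$ to move the twistor factors through the vertex operators. This rewrites the product as $Y(z_1)Y(z_2)$ applied to an operator of the form $T^{21}(z_2-z_1)$ composed with twistor factors acting on $(u,v,w)$. Next I apply (QYB5) for $V$ to swap the order. Finally I move the twistor factors back using the twistor commutation \eqref{eq:twistor}, together with \eqref{eq:qyb-com1} for $(T,T)$ and $(T,S)$. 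This should produce $Y_T(z_2)Y_T(z_1)$ applied to $T^{21}(-z)\inv S(z)T(z)$, with the appropriate argument $z_2-z_1$. This bookkeeping is essentially \cite[Theorem 3.6]{JKLT-Defom-va}; I will reproduce it in the twistor language.

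\textbf{Hexagon relations and Yang--Baxter.} The hexagons (QYB2) and (QYB3) for $S_T$ relative to $Y_T$ should follow by concatenating the hexagons of $T$, $T^{21}(-z)\inv$ (Remark \ref{rem:twistor-inv}) and $S$. The commutation hypotheses are what allow the factors to be reordered after each one is moved past $Y$. The Yang--Baxter equation for $S_T$ then reduces to that of $S$ after cancelling twistor factors, again using \eqref{eq:qyb-com1} and \eqref{eq:twistor-com} to commute the $T$'s past the $S$'s; alternatively, it can be deduced from $S$-locality together with associativity as in \cite{Li-nonlocal}. The main obstacle I expect is this index-tracking in (QYB2), (QYB3) and (QYB6): the question is whether the stated commutation hypotheses suffice for every reordering. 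If some reordering is missing, I would instead invoke \cite[Theorem 3.6]{JKLT-Defom-va} directly, with the dictionary between twistors and its deforming data.
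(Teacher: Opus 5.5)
Your proposal follows the paper's proof: $S$-locality by pushing the $T^{21}$-factors of $Y_T$ through $Y$ with the hexagon for $T$ (the paper arrives at $Y^{12}(z_1)Y^{23}(z_2)T^{31}(-z_1)T^{21}(-z_1+z_2)T^{32}(-z_2)$) and then applying \eqref{eq:qyb-locallity} for $V$; unitarity by the same cancellation; the twisting-operator property by combining the hexagons of $T^{21}(-z)\inv$, $S(z)$ and $T(z)$; and the Yang--Baxter equation by moving all $T$-factors to the outside and using \eqref{eq:qyb-unitary-qyb} for $S$ together with $T^{12}T^{13}T^{23}=T^{23}T^{13}T^{12}$. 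The reorderings you worry about all go through, because on $V^{\ot 3}$ any $T^{ab}$ commutes with any $T^{cd}$ or $S^{cd}$ when $\{a,b\}\ne\{c,d\}$; the one pattern not literally among the hypotheses, $T^{23}$ against $S^{12}$, follows from $S^{12}(z)=S^{21}(-z)\inv$. This same fact is what lets the extra factor $T^{21}(-z_2)$ of $Y_T^{12}(z_2)$ (resp.\ $T^{32}(-z_2)$ of $Y_T^{23}(z_2)$) pass through $S_T^{23}S_T^{13}$ (resp.\ $S_T^{12}S_T^{13}$) when you check the hexagons relative to $Y_T$ rather than $Y$.
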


\begin{proof}
Set $\bar T(z)=T^{21}(-z)\inv$, and set $S_T(z)=T^{21}(-z)\inv S(z)T(z)$. From Remark \ref{rem:twistor-inv}, we get that $\bar T(z)$ is a also twistor of $V$.
From \eqref{eq:qyb-com1} and \eqref{eq:twistor-com}, we have that
\begin{align*}
  &\bar T^{12}(z_1)S^{13}(z_2)=S^{13}(z_2)\bar T^{12}(z_1),\quad S^{23}(z_1)\bar T^{13}(z_2)=\bar T^{13}(z_2)S^{23}(z_1),\\
  &\bar T^{12}(z_1)T^{13}(z_2)=T^{13}(z_2)\bar T^{12}(z_1),\quad T^{23}(z_1)\bar T^{13}(z_2)=\bar T^{13}(z_2)T^{23}(z_1).
\end{align*}
Then we get from Remark \ref{rem:prod-twistor} that $\bar T(z)S(z)T(z)$ is a twistor of $V$.
From \eqref{eq:qyb-hex1}, we get that
\begin{align*}
  &T^{21}(-z_1)Y^{23}(z_2)=\sigma^{12}T^{12}(-z_1)\sigma^{12}Y^{23}(z_2)\\
  =&\sigma^{12}T^{12}(-z_1)Y^{12}(z_2)\sigma^{23}\sigma^{12}\\
  =&\sigma^{12}Y^{12}(z_2)T^{23}(-z_1)T^{13}(-z_1+z_2)\sigma^{23}\sigma^{12}\\
  =&Y^{23}(z_2)\sigma^{12}\sigma^{23}T^{23}(-z_1)T^{13}(-z_1+z_2)\sigma^{23}\sigma^{12}\\
  =&Y^{23}(z_2)T^{31}(-z_1)T^{21}(-z_1+z_2).
\end{align*}
Combining this with \eqref{eq:qyb-com1} and \eqref{eq:twistor-com}, we get that
\begin{align*}
  &Y_T^{12}(z_1)Y_T^{23}(z_2)=Y^{12}(z_1)T^{21}(-z_1)Y^{23}(z_2)T^{32}(-z_2)\\
  =&Y^{12}(z_1)Y^{23}(z_2)T^{31}(-z_1)T^{21}(-z_1+z_2)T^{32}(-z_2),\\
  &Y_T^{12}(z_2)Y_T^{23}(z_1)S_T(z_2-z_1)\sigma^{12}
  =Y^{12}(z_2)Y^{23}(z_1)T^{31}(-z_2)T^{21}(-z_2+z_1)T^{32}(-z_1)\\
  &\times T^{21}(-z_2+z_1)\inv S^{12}(z_2-z_1)T^{12}(z_2-z_1)\sigma^{12}\\
  =&Y^{12}(z_2)Y^{23}(z_1)S^{12}(z_2-z_1)\sigma^{12}T^{31}(-z_1)
  T^{21}(z_2-z_1)T^{32}(-z_2).
\end{align*}
Note that $T(z)$ maps $V\ot V$ into $V\ot V\ot\C((z))$.
We get from \eqref{eq:qyb-locallity} that for each $u,v\in V$, there exists $k\in\N$, such that
\begin{align*}
  &(z_1-z_2)^kY_T(u,z_1)Y_T(v,z_2)w\\
  =&(z_1-z_2)^kY_T^{12}(z_1)Y_T^{23}(z_2)(u\ot v\ot w)\\
  =&(z_1-z_2)^kY^{12}(z_1)Y^{23}(z_2)T^{31}(-z_1)T^{21}(-z_1+z_2)T^{32}(-z_2)(u\ot v\ot w)\\
  =&(z_1-z_2)^kY^{12}(z_2)Y^{23}(z_1)S^{12}(z_2-z_1)\sigma^{12}T^{31}(-z_1)
  T^{21}(z_2-z_1)T^{32}(-z_2)(u\ot v\ot w)\\
  =&(z_1-z_2)^kY_T^{12}(z_2)Y_T^{23}(z_1)T^{21}(-z_2+z_1)\inv S^{12}(z_2-z_1)T^{12}(z_2-z_1)(v\ot u\ot w)\\
  =&(z_1-z_2)^kY_T^{12}(z_2)Y_T^{23}(z_1)S_T^{12}(z_2-z_1)(v\ot u\ot w)
\end{align*}
for any $w\in V$. It shows that $(\mathfrak D_T(V),T^{21}(-z)\inv S(z)T(z))$ satisfies the relation \eqref{eq:qyb-locallity}.
From \eqref{eq:qyb-unitary-qyb}, \eqref{eq:qyb-com1} and \eqref{eq:twistor-com}, we get that
\begin{align*}
  &S_T^{21}(-z) S_T(z)=T(z)\inv S^{21}(-z) T^{21}(-z)T^{21}(-z)\inv S(z)T(z)=1,
\end{align*}
and
\begin{align*}
  &S_T^{12}(z_1)S_T^{13}(z_1+z_2)S_T^{23}(z_2)\\
  =&T^{21}(-z_1)\inv S^{12}(z_1)T^{12}(z_1)T^{31}(-z_1-z_2)\inv S^{13}(z_1+z_2)T^{13}(z_1+z_2)\\
  &\times T^{32}(-z_2)\inv S^{23}(z_2)T^{23}(z_2)\\
  =&T^{21}(-z_1)\inv T^{31}(-z_1-z_2)\inv T^{32}(-z_2)\inv S^{12}(z_1)S^{13}(z_1+z_2)S^{23}(z_2)\\
  &\times T^{12}(z_1)T^{13}(z_1+z_2)T^{23}(z_2)\\
  =&T^{32}(-z_2)\inv T^{31}(-z_1-z_2)\inv T^{21}(-z_1)\inv S^{23}(z_2)S^{13}(z_1+z_2)S^{12}(z_1)\\
  &\times T^{23}(z_2)T^{13}(z_1+z_2)T^{12}(z_1)\\
  =&T^{32}(-z_2)\inv S^{23}(z_2)T^{23}(z_2)T^{31}(-z_1-z_2)\inv S^{13}(z_1+z_2)T^{13}(z_1+z_2)\\
  &\times T^{21}(-z_1)\inv S^{12}(z_1)T^{12}(z_1)\\
  =&S_T^{23}(z_2)S_T^{13}(z_1+z_2)S_T^{12}(z_1).
\end{align*}
This proves the relations \eqref{eq:qyb-unitary-qyb}.
Therefore, $\mathfrak D_T(V)$ becomes a quantum vertex algebra with quantum Yang-Baxter operator $T^{21}(-z)\inv S(z)T(z)$.
\end{proof}

The twisted tensor product can be viewed as the deformation by a twistor (\cite[Lemma 4.10]{Sun-Twistor}).
\begin{lem}\label{lem:twistor-twisting-op}
Let $S(z)$ be a twisting operator for the pair $(U,V)$.
Then $T(z)=S^{14}(z)$ is a twistor of the tensor product nonlocal vertex algebra $U\ot V$ and $\mathfrak D_T(U\ot V)=U\ot_SV$.
\end{lem}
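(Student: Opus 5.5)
The plan is to work directly in $(U\ot V)^{\ot 3}$ and reduce every identity to one of the axioms \eqref{eq:qyb-vac}--\eqref{eq:qyb-hex2} for $S(z)$. We label the tensor factors of $(U\ot V)\ot(U\ot V)$ as $u_1,v_1,u_2,v_2$, so that $T(z)=S^{14}(z)$ means $S(z)$ acting on the pair $(u_1,v_2)$. The tensor product nonlocal vertex algebra has vertex operator
\begin{align*}
Y_{U\ot V}(z)=\big(Y_U(z)\ot Y_V(z)\big)\sigma^{23}.
\end{align*}
The strategy is to write each block operator $T^{ab}$ on $(U\ot V)^{\ot 3}$ in terms of the individual factors $u_1,v_1,u_2,v_2,u_3,v_3$. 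For instance, $T^{ab}$ is $S$ acting on $(u_a,v_b)$.

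First I will check the conditions on $T(z)$ that do not involve vertex operators. For \eqref{eq:qyb-vac}: if the first block is $\vac\ot\vac$, then $u_1=\vac$, and $S(z)(\vac\ot v_2)=\vac\ot v_2$. The case where the second block is the vacuum is symmetric. The twistor commutation \eqref{eq:twistor} is immediate. Indeed, $T^{12}(z_1)$ acts on $(u_1,v_2)$ and $T^{23}(z_2)$ acts on $(u_2,v_3)$. These are disjoint sets of tensor factors, so the two operators commute.

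Next I verify \eqref{eq:qyb-hex1} for $T$, applied to $u_1\ot v_1\ot u_2\ot v_2\ot u_3\ot v_3$. The operator $Y^{12}_{U\ot V}(z_2)$ applies $Y_U(z_2)$ to $(u_1,u_2)$ and $Y_V(z_2)$ to $(v_1,v_2)$. The operator $T(z_1)$ then acts by $S(z_1)$ on the merged $U$-factor together with $v_3$. Since $S$ does not touch $v_1,v_2$, it commutes with the $Y_V$ part. For the $U$ part, \eqref{eq:qyb-hex1} for $S$ with factors $(u_1,u_2,v_3)$ gives $Y_U^{12}(z_2)S(z_1)$ on $(u_2,v_3)$ composed with $S(z_1+z_2)$ on $(u_1,v_3)$. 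In block notation this is exactly $Y^{12}_{U\ot V}(z_2)T^{23}(z_1)T^{13}(z_1+z_2)$. The argument for \eqref{eq:qyb-hex2} is the same: now $Y^{23}_{U\ot V}$ merges $(u_2,u_3)$ and $(v_2,v_3)$, $S$ commutes with the $Y_U$ part, and \eqref{eq:qyb-hex2} for $S$ on $(u_1,v_2,v_3)$ produces $S$ on $(u_1,v_2)$ with argument $z_1-z_2$ and $S$ on $(u_1,v_3)$ with argument $z_1$. These are $T^{12}(z_1-z_2)$ and $T^{13}(z_1)$. Hence $T$ is a twisting operator for $(U\ot V,U\ot V)$ satisfying \eqref{eq:twistor}, so it is a twistor.

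Finally, I compute $\mathfrak D_T(Y)(z)=Y_{U\ot V}(z)T^{21}(-z)$ on $u_1\ot v_1\ot u_2\ot v_2$. Here $T^{21}(-z)$ is $S(-z)$ acting on the pair $(u_2,v_1)$, that is, on tensor positions $(3,2)$, taken in the order $(U,V)$. Applying $\sigma^{23}$ then yields $\big(Y_U(z)\ot Y_V(z)\big)$ composed with $S(-z)$ on the new positions $(2,3)$. This is $(Y\ot Y)S^{23}(-z)\sigma^{23}=Y_S(z)$ from Theorem \ref{thm:twisted-tensor-prod}. Since both structures share the vacuum $\vac\ot\vac$, we get $\mathfrak D_T(U\ot V)=U\ot_S V$.

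The only real difficulty is this index bookkeeping. In particular, I must check carefully that conjugating $S^{32}(-z)$ by $\sigma^{23}$ gives $S^{23}(-z)$ with the arguments in the correct $(U,V)$ order and the correct sign of $z$.
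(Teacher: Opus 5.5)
Your proof is correct: $T^{12}(z_1)$ and $T^{23}(z_2)$ act on the disjoint pairs $(u_1,v_2)$ and $(u_2,v_3)$, so they commute. The hexagon identities for $T$ reduce factorwise to \eqref{eq:qyb-hex1} and \eqref{eq:qyb-hex2} for $S$, tensored with the untouched $Y_V$ or $Y_U$ part. Finally, $\sigma^{23}S^{32}(-z)=S^{23}(-z)\sigma^{23}$ turns $Y(z)T^{21}(-z)$ into $Y_S(z)$.

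The paper gives no argument of its own for this lemma and simply cites \cite[Lemma 4.10]{Sun-Twistor}. Your direct check is what that citation amounts to. It has the small advantage of being carried out in the paper's convention for twistors, which differs from Sun's by $T(z)\leftrightarrow T^{21}(-z)$, so no translation between conventions is needed.
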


We introduce the another twistor of the tensor product nonlocal vertex algebra $U\ot V$
that will be used in Section \ref{subsec:structure}.

\begin{lem}\label{lem:double-twisted}
Let $U$ and $V$ be nonlocal vertex algebras and let $S_1(z)$, $S_2(z)$ be two twisting operators of the pair $(U,V)$.
Suppose that the relation \eqref{eq:qyb-com1} holds for $(S_1(z),S_2(z))$ and $(S_2(z),S_2(z))$.
Then $T(z)=S_1^{14}(z)S_2^{32}(-z)$ is a twistor of the tensor product nonlocal vertex algebra $U\ot V$.
\end{lem}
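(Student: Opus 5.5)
We need to show that $T(z)=S_1^{14}(z)S_2^{32}(-z)$, acting on $(U\ot V)\ot(U\ot V)$ with tensor slots numbered $1,2,3,4$, is a twistor of $U\ot V$. The plan is to factor $T(z)$ into two twistors and apply Remark \ref{rem:prod-twistor}.

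\textbf{Step 1: the two factors are twistors.} By Lemma \ref{lem:twistor-twisting-op}, $A(z):=S_1^{14}(z)$ is a twistor of $U\ot V$. For the second factor, set $B(z):=S_2^{32}(-z)$. Writing $\sigma$ for the flip of the two copies of $U\ot V$, we have $B(z)=\sigma\,S_2^{14}(-z)\,\sigma$, i.e. $B(z)=\big(S_2^{14}\big)^{21}(-z)$. Lemma \ref{lem:twistor-twisting-op} says $S_2^{14}(z)$ is a twistor.

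Using Remark \ref{rem:twistor-inv} directly would require invertibility. So I would instead verify by hand that $X^{21}(-z)$ is a twistor whenever $X(z)$ is:
\begin{itemize}
\item \eqref{eq:qyb-vac} is immediate.
\item Conjugating \eqref{eq:qyb-hex2} for $X$ by the appropriate permutation of the three factors gives \eqref{eq:qyb-hex1} for $X^{21}(-z)$, and conversely. This is the same manipulation as in the proof of Theorem \ref{thm:qva-twistor}, where $T^{21}(-z_1)Y^{23}(z_2)$ was rewritten.
\item \eqref{eq:twistor} for $X^{21}(-z)$ follows from \eqref{eq:twistor} for $X$ after permuting the three factors and renaming $z_1\leftrightarrow z_2$.
\end{itemize}
Alternatively, one can check directly that $B$ satisfies (QYB2)--(QYB3) on $U\ot V$, using the hexagon identities of $S_2$ in the slots it acts on. In either case $B(z)$ is a twistor of $U\ot V$.

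\textbf{Step 2: the commutation relations.} To apply Remark \ref{rem:prod-twistor} to the pair $(A,B)$, we need \eqref{eq:qyb-com1} and \eqref{eq:twistor-com}. Index three copies of $U\ot V$ by slots $(1,2),(3,4),(5,6)$. Then:
\begin{align*}
&A^{12}=S_1^{14}, && A^{23}=S_1^{36}, && A^{13}=S_1^{16},\\
&B^{12}=S_2^{32}, && B^{23}=S_2^{54}, && B^{13}=S_2^{52}.
\end{align*}
We must check three things.
\begin{itemize}
\item $A^{12}B^{13}=B^{13}A^{12}$, i.e. $S_1^{14}$ commutes with $S_2^{52}$. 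These act on disjoint slots, so this is trivial.
\item $A^{23}B^{13}=B^{13}A^{23}$, i.e. $S_1^{36}$ commutes with $S_2^{52}$. Again disjoint slots.
\item $A^{12}B^{23}=B^{23}A^{12}$, i.e. $S_1^{14}$ commutes with $S_2^{54}$. These share the $V$-slot $4$, and their $U$-slots are $1$ and $5$. This is exactly the second relation in \eqref{eq:qyb-com1} for $(S_1,S_2)$, read in a $U\ot U\ot V$ arrangement with the $V$-factor shared.
\end{itemize}
So the hypothesis on $(S_1,S_2)$ supplies the only nontrivial commutation.

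\textbf{Step 3: conclude, and the main obstacle.} Remark \ref{rem:prod-twistor} then gives that $A(z)B(z)=T(z)$ is a twistor. The hypothesis \eqref{eq:qyb-com1} for $(S_2,S_2)$ is what makes $B$ itself satisfy \eqref{eq:twistor}. Indeed, $B^{12}B^{23}=S_2^{32}S_2^{54}$, and these two operators have different $U$-slots ($3$ and $5$) and different $V$-slots ($2$ and $4$). At first glance they therefore commute trivially. However, the hexagon identities in Step 1 mix slots, so the relation may be needed there.

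I expect the main obstacle to be the careful bookkeeping in Step 1. We must verify \eqref{eq:qyb-hex1} and \eqref{eq:qyb-hex2} for $B$ on $U\ot V$ with its tensor-product vertex operator $Y_{U\ot V}=Y_U\ot Y_V$ (with slot flip). Applying (QYB3) for $S_2$ produces an extra factor $S_2$ in a third slot pair, and that factor must be commuted past other $S_2$ factors. This is where the $(S_2,S_2)$ instance of \eqref{eq:qyb-com1} enters. I would do this step first, by writing out both sides of each hexagon on pure tensors.
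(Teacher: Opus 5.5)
Your overall route is the paper's: write $T(z)=A(z)B(z)$ with $A(z)=S_1^{14}(z)$ and $B(z)=S_2^{32}(-z)$. Then $A$ is a twistor by Lemma~\ref{lem:twistor-twisting-op}, you check that $B$ is a twistor and that the commutations hold, and you apply Remark~\ref{rem:prod-twistor}. However, the first justification in your Step~1 is false, and it obscures where the $(S_2,S_2)$ hypothesis is used.

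It is not true in general that $X^{21}(-z)$ is a twistor whenever $X(z)$ is. Conjugating \eqref{eq:qyb-hex2} for $X$ gives
\begin{align*}
  X^{21}(-z_1)Y^{12}(z_2)=Y^{12}(z_2)X^{31}(-z_1-z_2)X^{32}(-z_1).
\end{align*}
But \eqref{eq:qyb-hex1} for $X^{21}(-z)$ demands $Y^{12}(z_2)X^{32}(-z_1)X^{31}(-z_1-z_2)$. The two factors come out in the opposite order, and the same happens for the other hexagon. Remark~\ref{rem:twistor-inv} avoids this by passing to the inverse, which reverses the order.

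So you must use your ``direct check''. That is exactly where the $(S_2,S_2)$ hypothesis enters, not in \eqref{eq:twistor} for $B$, which indeed holds trivially. Label three copies of $U\ot V$ by slots $(1,2),(3,4),(5,6)$.
\begin{itemize}
\item Applying \eqref{eq:qyb-hex2} for $S_2$ to $B(z_1)Y_{U\ot V}^{12}(z_2)$ yields $S_2^{52}(-z_1-z_2)S_2^{54}(-z_1)$. But \eqref{eq:qyb-hex1} for $B$ requires $B^{23}(z_1)B^{13}(z_1+z_2)=S_2^{54}(-z_1)S_2^{52}(-z_1-z_2)$.
\item Applying \eqref{eq:qyb-hex1} for $S_2$ to $B(z_1)Y_{U\ot V}^{23}(z_2)$ yields $S_2^{52}(-z_1)S_2^{32}(-z_1+z_2)$. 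But \eqref{eq:qyb-hex2} for $B$ requires $S_2^{32}(-z_1+z_2)S_2^{52}(-z_1)$.
\end{itemize}
The first swap is between operators sharing the $U$-slot $5$; the second is between operators sharing the $V$-slot $2$. These are precisely the two relations in \eqref{eq:qyb-com1} for $(S_2,S_2)$.

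A smaller point concerns Step~2. There, $A^{12}B^{23}=B^{23}A^{12}$ is not the only nontrivial commutation. When you check \eqref{eq:twistor} for $AB$ directly, you must also move $A^{23}(z_2)=S_1^{36}(z_2)$ past $B^{12}(z_1)=S_2^{32}(-z_1)$, and these share the $U$-slot $3$. In effect, Remark~\ref{rem:prod-twistor} tacitly needs \eqref{eq:twistor-com} for both orders of the pair. This extra commutation is supplied by the first relation of \eqref{eq:qyb-com1} for $(S_1,S_2)$, so your conclusion stands.
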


\begin{proof}
Set $T_1(z)=S_1^{14}(z)$ and $T_2(z)=S_2^{32}(-z)$.
From Lemma \ref{lem:twistor-twisting-op}, we have that $T_1(z)$ is a twistor of $U\ot V$.
It is straightforward to verify that $T_2(z)$ is also a twistor of $U\ot V$ and \eqref{eq:qyb-com1}, \eqref{eq:twistor-com} hold for $(T_1(z),T_2(z))$.
Then Remark \ref{rem:prod-twistor} yields the lemma.
\end{proof}
%

\begin{rem}\label{rem:double-twisted}
Let $T(z)$ be the twistor obtained in Lemma \ref{lem:double-twisted}. Then
$\mathfrak D_T(U\ot V)$ contains $U$ and $V$ canonically as nonlocal vertex subalgebras.
\end{rem}
%

\begin{prop}\label{prop:qva-double-twisted}
Let $U$ and $V$ be two quantum vertex algebras with quantum Yang-Baxter operators $S_U(z)$ and $S_V(z)$, respectively.
Let $S_1(z)$, $S_2(z)$ be two invertible twisting operators of the pair $(U,V)$, such that the relation \eqref{eq:qyb-com1} holds for $(S_i(z),S_j(z))$ for $i,j\in \{1,2\}$,
and
\begin{align}
  &S_i^{12}(z_1)S_U^{13}(z_2)=S_U^{13}(z_2)S_i^{12}(z_1),\quad
   S_i^{13}(z_1)S_V^{23}(z_2)=S_V^{23}(z_2)S_i^{13}(z_1),
   \label{eq:qva-double-cond1}\\
  &S_i^{12}(z_1)S_V^{23}(z_2)=S_V^{23}(z_2)S_i^{12}(z_1),\quad
   S_U^{12}(z_1)S_i^{23}(z_2)=S_2^{23}(z_2)S_i^{12}(z_1)
   \label{eq:qva-double-cond2}
\end{align}
for $i=1,2$.
Denote by $T(z)=S_1^{14}(z)S_2^{32}(-z)$ the twistor obtained in Lemma \ref{lem:double-twisted}.
Then $\mathfrak D_T(U\ot V)$ is a quantum vertex algebra with quantum Yang-Baxter operator
\begin{align*}
  &S_U^{13}(z)S_V^{24}(z)S_2^{14}(z)\inv  S_1^{14}(z)S_1^{32}(-z)\inv S_2^{32}(-z).
\end{align*}
\end{prop}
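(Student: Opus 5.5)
The plan is to derive the statement from Theorem \ref{thm:qva-twistor}, applied to the tensor product quantum vertex algebra $W=U\ot V$ and the twistor $T(z)=S_1^{14}(z)S_2^{32}(-z)$.

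\textbf{The quantum vertex algebra $U\ot V$.} First I would note that $U\ot V$ is a quantum vertex algebra with quantum Yang-Baxter operator $S(z)=S_U^{13}(z)S_V^{24}(z)$. Here the tensor factors of $W\ot W=U\ot V\ot U\ot V$ are labelled $1,2,3,4$. This is standard:
\begin{itemize}
\item \eqref{eq:qyb-vac}, \eqref{eq:qyb-hex1} and \eqref{eq:qyb-hex2} hold factorwise;
\item \eqref{eq:qyb-locallity} follows by multiplying the two localities;
\item \eqref{eq:qyb-unitary-qyb} holds factorwise, since $S_U$ and $S_V$ act on disjoint slots and hence commute.
\end{itemize}

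\textbf{The twistor and its invertibility.} Lemma \ref{lem:double-twisted} already shows that $T(z)$ is a twistor. Invertibility follows from the invertibility of $S_1$ and $S_2$.

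\textbf{Computing the new Yang-Baxter operator.} We have $T^{21}(-z)=S_1^{32}(-z)S_2^{14}(z)$, read in $W$-slots. So $T^{21}(-z)^{-1}=S_2^{14}(z)^{-1}S_1^{32}(-z)^{-1}$. These two factors commute, because they act on disjoint slots $\{1,4\}$ and $\{2,3\}$. Hence
\[
T^{21}(-z)^{-1}S(z)T(z)=S_2^{14}(z)^{-1}S_1^{32}(-z)^{-1}S_U^{13}(z)S_V^{24}(z)S_1^{14}(z)S_2^{32}(-z).
\]
To reach the stated form, I would move $S_U^{13}S_V^{24}$ to the left and regroup. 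This uses the commutation hypotheses \eqref{eq:qva-double-cond1}–\eqref{eq:qva-double-cond2}, specialized to the case where both arguments carry the same $z$, together with the trivial commutation of operators acting on disjoint slots. I expect some care here: the relations as stated are in three-slot form with independent variables, so one must check they relabel correctly to the four-slot situation.

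\textbf{Verifying the hypotheses of Theorem \ref{thm:qva-twistor}.} This is the main obstacle. I must check:
\begin{itemize}
\item \eqref{eq:qyb-com1} for the pair $(T,T)$;
\item \eqref{eq:qyb-com1} and \eqref{eq:twistor-com} for the pair $(T,S)$.
\end{itemize}
These are identities among operators on $W^{\ot 3}$, which has six slots: $U_1V_1U_2V_2U_3V_3$.

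Each of $T^{ab}$ and $S^{ab}$ is a product of two-slot factors. The factors are $S_1$, $S_2$, $S_U$ and $S_V$, placed on specific pairs of $U$- and $V$-slots. So every required commutation reduces to commuting individual factors, and there are three kinds:
\begin{itemize}
\item factors on disjoint slots, which commute trivially;
\item factors sharing exactly one slot, which are handled by the assumed relations;
\item factors sharing two slots, which should not occur.
\end{itemize}
For the second kind, pairs of the form $(S_i,S_j)$ are covered by \eqref{eq:qyb-com1} for all $i,j$. Pairs with $S_U$ (shared $U$-slot) or with $S_V$ (shared $V$-slot) are covered by \eqref{eq:qva-double-cond1}–\eqref{eq:qva-double-cond2}. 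I would tabulate the factor pairs that share a slot and match each to one of these hypotheses, keeping track of the order of the shared slot. This is exactly why the hypotheses list both the $12/13$ and the $13/23$ patterns.

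Once this bookkeeping is complete, Theorem \ref{thm:qva-twistor} gives that $\mathfrak D_T(U\ot V)$ is a quantum vertex algebra with the computed operator.
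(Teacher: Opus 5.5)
Your proposal is correct and follows the paper's proof: apply Theorem \ref{thm:qva-twistor} to $U\ot V$ with Yang--Baxter operator $S_U^{13}(z)S_V^{24}(z)$ and the invertible twistor $T(z)$, after checking \eqref{eq:qyb-com1} for $(T,T)$ and \eqref{eq:qyb-com1}, \eqref{eq:twistor-com} for $(T,S_U^{13}S_V^{24})$ slot by slot. Your explicit regrouping of $T^{21}(-z)^{-1}S(z)T(z)$ into the stated product, which the paper leaves implicit, is valid; it uses the second relation of \eqref{eq:qva-double-cond2} in its evidently intended form $S_U^{12}(z_1)S_i^{23}(z_2)=S_i^{23}(z_2)S_U^{12}(z_1)$, since the printed version contains a typo.
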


\begin{proof}
Let $T(z)=S_1^{14}(z)S_2^{32}(-z)$ be the twistor of $U\ot V$ given in Lemma \ref{lem:double-twisted}.
It is straightforward to verify that the relation \eqref{eq:qyb-com1} holds for $(T(z),T(z))$ and $(T(z),S_U^{13}(z)S_V^{24}(z))$, and the relation \eqref{eq:twistor-com} holds for $(T(z),S_U^{13}(z)S_V^{24}(z))$.
Then Theorem \ref{thm:qva-twistor} yields the result.
\end{proof}

\subsection{Deform by vertex bialgebras}\label{subsec:deform-vertex-bialgebra}

In \cite{JKLT-Defom-va}, the authors developed a method for obtaining quantum vertex algebras as deformations of vertex algebras via vertex bialgebras. In this subsection, we refine and extend this method using the language of twistors, with the aim of constructing quantum vertex algebras deformed from general quantum vertex algebras by using of vertex bialgebras.
Recalling the notions of vertex bialgebras and smash products given in \cite{Li-smash}.
\begin{de}
An \emph{(nonlocal) vertex bialgebra} is a (nonlocal) vertex algebra $V$ equipped with a classical coalgebra structure $(\Delta,\varepsilon)$ such that the coproduct $\Delta:V\to V\ot V$ and the counit $\varepsilon:V\to\C$ are homomorphisms of (nonlocal) vertex algebras.
\end{de}

\begin{rem}\label{rem:bialg-der}
Let $(H,\Delta,\varepsilon)$ be a bialgebra over $\C$ equipped with a derivation $\partial$.
Then $H$ is a nonlocal vertex bialgebra with vacuum $1$ and vertex operator map defined by
\begin{align*}
  Y(a,z)b=\left(e^{z\partial}a\right)b\quad\te{for } a,b\in H.
\end{align*}
We denote this nonlocal vertex bialgebra by $(H,\partial,\Delta,\varepsilon)$.
\end{rem}

\begin{de}
Let $(H,\Delta,\varepsilon)$ be a nonlocal vertex bialgebra.
A (left) \emph{$H$-module (nonlocal) vertex algebra} (\cite{Li-smash}) is a nonlocal vertex algebra $V$ equipped with a module structure $\tau(\cdot,z)$ on $V$ for $H$ viewed as a nonlocal vertex algebra such that
\begin{align}
  &\tau(h,z)v\in V\ot \C((z)),\qquad
  \tau(h,z)\vac=\varepsilon(h)\vac,
  \label{eq:mod-va-for-vertex-bialg1-2}\\
  &
  \tau(h,z_1)Y(u,z_2)v=\sum Y(\tau(h_{(1)},z_1-z_2)u,z_2)\tau(h_{(2)},z_1)v
  \label{eq:mod-va-for-vertex-bialg3}
\end{align}
for $h\in H$, $u,v\in V$, where $\Delta(h)=\sum h_{(1)}\ot h_{(2)}$ is the coproduct in the Sweedler notation.
\end{de}

\begin{thm}\cite[Theorem 4.9]{Li-smash}
Let $(H,\Delta,\varepsilon)$ be a nonlocal vertex bialgebra and let $(V,\tau)$ be an $H$-module nonlocal vertex algebra.
Set $V\sharp H=V\ot H$ as a vector space.
For $u,v\in V$, $h,h'\in H$ define
\begin{align*}
  Y^\sharp (u\ot h,z)(v\ot h')=\sum Y(u,z)\tau(h_{(1)},z)v\ot Y(h_{(2)},z)h',
\end{align*}
where $\Delta(h)=\sum h_{(1)}\ot h_{(2)}$.
Then $(V\sharp H,Y^\sharp, \vac\ot \vac)$ carries the structure of a nonlocal vertex algebra, which contains $V$ and $H$ canonically as nonlocal vertex subalgebras such that
\begin{align*}
  Y^\sharp(h,z_1)Y^\sharp(u,z_2)=\sum Y^\sharp (\tau(h_{(1)},z_1-z_2)u,z_2)Y^\sharp(h_{(2)},z_1)\quad\te{for }u\in V,\,h\in H.
\end{align*}
\end{thm}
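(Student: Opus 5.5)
The plan is to realize $V\sharp H$ as a twisted tensor product $V\ot_S H$ in the sense of Theorem \ref{thm:twisted-tensor-prod}, for a suitable twisting operator $S(z)$ of the ordered pair $(V,H)$.

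\textbf{Choice of $S$.} Comparing $Y_S(z)=(Y(z)\ot Y(z))S^{23}(-z)\sigma^{23}$ with the formula for $Y^\sharp$ suggests
\begin{align*}
  S(z)(v\ot h)=\sum \tau(h_{(1)},-z)v\ot h_{(2)}\quad\te{for }v\in V,\,h\in H.
\end{align*}
By \eqref{eq:mod-va-for-vertex-bialg1-2} this lies in $V\ot H\ot\C((z))$. With this choice, $Y_S(z)(u\ot h\ot v\ot h')$ is exactly $\sum Y(u,z)\tau(h_{(1)},z)v\ot Y(h_{(2)},z)h'$, which is $Y^\sharp(u\ot h,z)(v\ot h')$. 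So once $S$ is shown to be a twisting operator, Theorem \ref{thm:twisted-tensor-prod} gives the nonlocal vertex algebra structure, and the canonical embeddings $u\mapsto u\ot\vac$ and $h\mapsto \vac\ot h$.

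\textbf{Verifying the axioms.} I would check the twisting operator axioms in this order.
\begin{enumerate}
\item For \eqref{eq:qyb-vac}: use $\tau(h,z)\vac=\varepsilon(h)\vac$ together with the counit axiom $\sum\varepsilon(h_{(1)})h_{(2)}=h$. Also use $\tau(\vac,z)=1$, which holds because $\tau$ is a module structure, and $\Delta(\vac)=\vac\ot\vac$.
\item For \eqref{eq:qyb-hex1}, which involves $Y_V$: rewrite \eqref{eq:mod-va-for-vertex-bialg3} with $z_1$ replaced by $-z_1$. This expresses $\tau(h,-z_1)Y(u,z_2)v$ as $\sum Y(\tau(h_{(1)},-z_1-z_2)u,z_2)\tau(h_{(2)},-z_1)v$. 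After applying coassociativity to match the Sweedler indices, this is exactly $Y^{12}(z_2)S^{23}(z_1)S^{13}(z_1+z_2)$.
\item For \eqref{eq:qyb-hex2}, which involves $Y_H$: use that $\Delta$ is a vertex algebra homomorphism, so $\Delta(Y(h,z_2)h')=\sum Y(h_{(1)},z_2)h'_{(1)}\ot Y(h_{(2)},z_2)h'_{(2)}$. Combine this with the module associativity $\tau(Y(h,z_2)h',z)=\tau(h,z+z_2)\tau(h',z)$.
\end{enumerate}

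\textbf{Main obstacle.} The hard step is the associativity identity in item 3. A priori $\tau$ satisfies only weak associativity (compatibility plus \eqref{eq:phi-mod-asso} for $\phi=F_a$). I would try to upgrade it using the finiteness condition $\tau(h,z)v\in V\ot\C((z))$, i.e.\ the dependence on $z$ is only through a finite sum with Laurent coefficients. Concretely, for fixed $h,h',v$ I would pick $k$ with $(z_1-z_2)^k\tau(h,z_1)\tau(h',z_2)v$ lying in $V\ot\C((z_1,z_2))$, substitute $z_1=z_2+z_0$, and cancel the invertible factor $z_0^k$. Because both sides now lie in finite-type spaces, the two expansions agree as elements of $V\ot\C((z_2))((z_0))$. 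This yields \eqref{eq:qyb-hex2} after setting $z=-z_1$.

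\textbf{The commutation relation.} Finally, I would compute both sides of
\[
Y^\sharp(h,z_1)Y^\sharp(u,z_2)=\sum Y^\sharp (\tau(h_{(1)},z_1-z_2)u,z_2)Y^\sharp(h_{(2)},z_1)
\]
on a vector $v\ot h'$, using the definition of $Y^\sharp$. The left side reduces to $\tau(h_{(1)},z_1)Y(u,z_2)v\ot Y(h_{(2)},z_1)h'$. Applying \eqref{eq:mod-va-for-vertex-bialg3} and coassociativity turns this into the right side.
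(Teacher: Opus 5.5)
Your proof is correct. Note that the paper does not prove this statement at all: it quotes it from Li's smash-product paper. So the only comparison available is with the machinery the paper develops around it.

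\textbf{How your route relates to the paper.} You derive the result from the Li--Sun twisted tensor product theorem (Theorem \ref{thm:twisted-tensor-prod}), which the paper also quotes. Your operator $S(z)(v\ot h)=\sum\tau(h_{(1)},-z)v\ot h_{(2)}$ is exactly the paper's $T_\tau(z)$ from \eqref{eq:T-tau}, once $H$ is viewed as a left comodule over itself via $\sigma\circ\Delta$. That map is a nonlocal vertex algebra homomorphism, and it satisfies $(\rho\ot 1)\rho=\sigma^{23}(1\ot\Delta)\rho$ and $(1\ot\varepsilon)\rho=\mathrm{Id}$. Hence your checks of \eqref{eq:qyb-vac}--\eqref{eq:qyb-hex2} are precisely the ``straightforward verification'' behind Lemma \ref{lem:def-twisting-op}.

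\textbf{The individual steps.} All of the following are right:
\begin{itemize}
\item the identification $Y_S=Y^\sharp$;
\item the three axiom checks, including the expansion conventions: expanding $S^{13}(z_1+z_2)$ and $S^{12}(z_1-z_2)$ in nonnegative powers of $z_2$ matches $z_1\mapsto -z_1$ in \eqref{eq:mod-va-for-vertex-bialg3} and in module associativity;
\item the final commutator computation.
\end{itemize}

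\textbf{What each approach buys.} Your route is short and conceptual. Combined with Lemma \ref{lem:twistor-twisting-op}, it also exhibits $V\sharp H$ as the twistor deformation $\mathfrak D_{S^{14}}(V\ot H)$ of the ordinary tensor product, which fits how the paper later uses smash products. The cost is reliance on the Li--Sun theorem. A direct check of the vacuum property and weak associativity for $Y^\sharp$ would be self-contained.

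\textbf{Your ``main obstacle'' is not an obstacle.} The product $\tau(h,z_1)\tau(h',z_2)v$ is a finite sum in $V\ot\C((z_1))\ot\C((z_2))\subset V\ot\C((z_1,z_2))$. So you may take $f=1$ in \eqref{eq:def-Y-E}, and $\tau(Y(h,z_0)h',z)v=\tau(h,z+z_0)\tau(h',z)v$ follows at once. Only this product needs to be of finite type. The left side $\tau(Y(h,z_0)h',z_2)v$ generally involves infinitely many vectors, so your phrase ``both sides now lie in finite-type spaces'' is inaccurate, though harmless.
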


Recall the following notion introduced in \cite{JKLT-Defom-va}.

\begin{de}
Let $(H,\Delta,\varepsilon)$ be a nonlocal vertex bialgebra.
A \emph{(left) $H$-comodule nonlocal vertex algebra} (\cite{JKLT-Defom-va}) is a nonlocal vertex algebra $V$ equipped with a homomorphism
$\rho:V\to V\ot H$ of nonlocal vertex algebras such that
\begin{align}
  (\rho\ot 1)\rho=\sigma^{23}(1\ot \Delta)\rho,\quad (1\ot \varepsilon)\rho=\te{Id}_V.
\end{align}
\end{de}

Now, we fix a nonlocal vertex bialgebra $H$.
For any vector space $U$, we note that $$\Hom(H,\Hom(U,U\ot\C((z))))$$ is a unital associative algebra,
where the multiplication is defined by
\begin{align*}
  (f\ast g)(h,z)u=\sum f(h_{(2)},z)g(h_{(1)},z)u
\end{align*}
for $f,g\in\Hom(H,\Hom(U,U\ot\C((z))))$,
and the unit $\varepsilon$ defined by
\begin{align*}
  \varepsilon(h,z)u=\varepsilon(h)u\quad\te{for }h\in H,\,u\in U.
\end{align*}
For $f,g\in\Hom(H,\Hom(U,U\ot\C((z))))$, we say that $f$ and $g$ commute if
\begin{align*}
  [f(h,z_1),g(k,z_2)]=0\quad \te{for }h,k\in H.
\end{align*}

\begin{lem}\label{lem:T-tau-mult}
Let $(V,\rho)$ be a left $H$-comodule nonlocal vertex algebra,
and let $U$ be a vector space.
For each $\tau\in \Hom(H,\Hom(U,U\ot\C((z))))$,
define
\begin{align}\label{eq:T-tau}
  T_\tau(z):&U\ot V\longrightarrow U\ot V\ot \C((z))\\
  &\nonumber u\ot v\mapsto \sum \tau(v_{(2)},-z)u\ot v_{(1)}
\end{align}
where $\rho(v)=\sum v_{(1)}\ot v_{(2)}\in V\ot H$.
Then $T_\varepsilon(z)=1_U\ot 1_V$ and
\begin{align}\label{eq:T-tau-mult}
  &T_{\tau_1\ast\tau_2}(z)=T_{\tau_1}(z)T_{\tau_2}(z).
\end{align}
Moreover, for $\tau_1,\tau_2\in \Hom(H,\Hom(U,U\ot \C((z))))$ we have that
\begin{align}
  &T_{\tau_1}^{23}(z_1)T_{\tau_2}^{13}(z_2)
  =T_{\tau_2}^{13}(z_2)T_{\tau_1}^{23}(z_1)\quad\te{on }U\ot U\ot V,\quad\te{if $H$ is cocommutative},\label{eq:T-tau-com2}\\
  &T_{\tau_1}^{12}(z_1)T_{\tau_2}^{13}(z_2)
  =T_{\tau_2}^{13}(z_2)T_{\tau_1}^{12}(z_1)\quad\te{on }U\ot V\ot V,\quad \te{if $\tau_1$ commutes with $\tau_2$}.
  \label{eq:T-tau-com3}
\end{align}
\end{lem}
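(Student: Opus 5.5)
The plan is to verify every claim by evaluating both sides on pure tensors, using only the comodule axioms for $\rho$ and the definition of the convolution product $\ast$. Sweedler notation $\rho(v)=\sum v_{(1)}\ot v_{(2)}$ is used throughout.

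For the unit, $(1\ot\varepsilon)\rho=\mathrm{Id}_V$ gives
\begin{align*}
T_\varepsilon(z)(u\ot v)=\sum \varepsilon(v_{(2)})u\ot v_{(1)}=u\ot v.
\end{align*}

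For \eqref{eq:T-tau-mult}, I first expand the right-hand side:
\begin{align*}
T_{\tau_1}(z)T_{\tau_2}(z)(u\ot v)=\sum \tau_1(v_{(1)(2)},-z)\tau_2(v_{(2)},-z)u\ot v_{(1)(1)}.
\end{align*}
The coassociativity axiom $(\rho\ot 1)\rho=\sigma^{23}(1\ot\Delta)\rho$ says that
\begin{align*}
\sum v_{(1)(1)}\ot v_{(1)(2)}\ot v_{(2)}=\sum v_{(1)}\ot v_{(2)(2)}\ot v_{(2)(1)}.
\end{align*}
Substituting this, the expression becomes $\sum \tau_1(v_{(2)(2)},-z)\tau_2(v_{(2)(1)},-z)u\ot v_{(1)}$. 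By definition of $\ast$ this is $\sum (\tau_1\ast\tau_2)(v_{(2)},-z)u\ot v_{(1)}$, which is $T_{\tau_1\ast\tau_2}(z)(u\ot v)$. The one thing to be careful about is that the order $h_{(2)},h_{(1)}$ in the definition of $\ast$ matches the flip $\sigma^{23}$ in the comodule axiom; this is exactly why $\ast$ is defined with the reversed order.

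For \eqref{eq:T-tau-com3}, I work on $u\ot v\ot w\in U\ot V\ot V$. Here $T^{12}$ uses the coaction on $v$ and $T^{13}$ uses the coaction on $w$, so the two operators touch different $V$-factors. Both orders therefore give
\begin{align*}
\sum \tau_1(v_{(2)},-z_1)\tau_2(w_{(2)},-z_2)u\ot v_{(1)}\ot w_{(1)}
\end{align*}
up to the order of the two $\tau$'s acting on $u$. Equality then follows from the hypothesis $[\tau_1(h,z_1),\tau_2(k,z_2)]=0$. The expansions in $\C((z_1))$ and $\C((z_2))$ are independent, so no ordering issue arises.

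For \eqref{eq:T-tau-com2}, I work on $u\ot u'\ot v\in U\ot U\ot V$. Both operators act on the same $v$, but on different $U$-factors. One order applies $\rho$ twice in the pattern
\begin{align*}
\sum \tau_2(v_{(2)},-z_2)u\ot \tau_1(v_{(1)(2)},-z_1)u'\ot v_{(1)(1)}.
\end{align*}
The other order gives the same expression with the roles of the two $H$-legs swapped: $\tau_1$ receives the leg that $\tau_2$ received above. Using the comodule axiom, both become sums over $\sum v_{(1)}\ot v_{(2)(2)}\ot v_{(2)(1)}$, one with $\tau_1$ on $v_{(2)(2)}$ and one with $\tau_1$ on $v_{(2)(1)}$. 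Cocommutativity $\sigma\Delta=\Delta$ identifies the two. This index bookkeeping is the only delicate step, and cocommutativity is precisely what closes it.
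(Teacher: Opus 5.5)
Your proof is correct and follows essentially the same route as the paper. The paper writes the same Sweedler computations as operator identities built from iterated coactions and permutations: coassociativity appears as $\sigma^{34}\Delta^3\rho^2=\rho^2\rho^2$, and cocommutativity appears as $\rho^3\rho^3=\sigma^{45}\rho^3\rho^3$, exactly as you use them in each case.
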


\begin{proof}
The first statement is clear.
For $\sigma\in S_k$, we write
\begin{align*}
  \sigma=\left(\genfrac{}{}{0pt}{0}{1}{\sigma(1)}
  \genfrac{}{}{0pt}{0}{2}{\sigma(2)}
  \genfrac{}{}{0pt}{0}{\cdots}{\cdots}
  \genfrac{}{}{0pt}{0}{k}{\sigma(k)}\right).
\end{align*}
The relation \eqref{eq:T-tau-mult} follows from the identities below
\begin{align*}
  &T_{\tau_1\ast\tau_2}(z)=\tau_1^{12}(-z)\tau_2^{23}(-z)\binom{1234}{3412}\sigma^{34}\Delta^3\rho^2,\\
  &T_{\tau_1}(z)T_{\tau_2}(z)=\tau_1^{12}(-z)\tau_2^{23}(-z)\binom{1234}{3412}\rho^2\rho^2,
\end{align*}
and the fact that $\sigma^{34}\Delta^3\rho^2=\rho^2\rho^2$.
Note that
\begin{align}
  &T_{\tau_1}^{23}(z_1)T_{\tau_2}^{13}(z_2)=\tau_2^{12}(-z_2)\tau_1^{34}(-z_1)\binom{12345}{24531}\rho^3\rho^3,\label{eq:deforming-triple-twistor-1}\\
  &T_{\tau_1}^{12}(z_1)T_{\tau_2}^{13}(z_2)
  =\tau_1^{12}(-z_1)\tau_2^{23}(-z_2)\binom{12345}{34152}\rho^2\rho^3,
  \label{eq:deforming-triple-twistor-2}\\
  &T_{\tau_2}^{13}(z_2)T_{\tau_1}^{23}(z_1)=
  {\tau_2}^{12}(-z_2){\tau_1}^{34}(-z_1)\binom{12345}{24513}
  \rho^3\rho^3,\label{eq:deforming-triple-twistor-4}\\
  &T_{\tau_2}^{13}(z_2)T_{\tau_1}^{12}(z_1)=
  {\tau_2}^{12}(-z_2){\tau_1}^{23}(-z_1)\binom{12345}{34251}
  \rho^3\rho^3.\label{eq:deforming-triple-twistor-5}
\end{align}
Then the relation \eqref{eq:T-tau-com2} follows from \eqref{eq:deforming-triple-twistor-1}, \eqref{eq:deforming-triple-twistor-4} and
the fact
\begin{align*}
  &\rho^3\rho^3=\sigma^{45}\rho^3\rho^3,\quad\te{since $H$ is cocommutative},
\end{align*}
and the relation \eqref{eq:T-tau-com3} follows from \eqref{eq:deforming-triple-twistor-2}, \eqref{eq:deforming-triple-twistor-5} and the fact
\begin{align*}
  &\tau_1^{12}(-z_1)\tau_2^{23}(-z_2)
  =\tau_2^{12}(-z_2)\tau_1^{23}(-z_1)\sigma^{12},\quad\te{since $\tau_1$ commutes with $\tau_2$}.
\end{align*}
\end{proof}

We furthermore fix a left $H$-comodule nonlocal vertex algebra $(V,\rho)$.
It is straightforward to verify the following result.
\begin{lem}\label{lem:def-twisting-op}
Let $(U,\tau)$ be an $H$-module nonlocal vertex algebra.
Then $T_\tau(z)$ is a twisting operator for $(U,V)$.
\end{lem}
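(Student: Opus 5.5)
We must verify (QYB1)–(QYB3) for $T_\tau(z)(u\ot v)=\sum\tau(v_{(2)},-z)u\ot v_{(1)}$. Throughout we use three structural facts: the comodule map $\rho$ is a homomorphism of nonlocal vertex algebras, it is coassociative/counital in the sense of the definition, and $\tau$ satisfies the $H$-module vertex algebra axioms \eqref{eq:mod-va-for-vertex-bialg1-2}, \eqref{eq:mod-va-for-vertex-bialg3}.

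\emph{Vacuum condition (QYB1).} Since $\rho$ is a vertex algebra homomorphism, $\rho(\vac)=\vac\ot\vac$. Moreover $\tau(\vac,z)=1_U$, because $\tau$ is a module structure for $H$. Hence $T_\tau(z)(u\ot\vac)=u\ot\vac$. On the other side, $\tau(h,-z)\vac=\varepsilon(h)\vac$, and counitality $(1\ot\varepsilon)\rho=\mathrm{Id}_V$ then gives $T_\tau(z)(\vac\ot v)=\vac\ot v$.

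\emph{Hexagon (QYB2), moving $T$ past $Y_U$.} Apply both sides to $u\ot u'\ot v$. The left side is
\[
\sum \tau(v_{(2)},-z_1)Y(u,z_2)u'\ot v_{(1)}.
\]
We expand it using \eqref{eq:mod-va-for-vertex-bialg3}:
\[
\tau(h,-z_1)Y(u,z_2)u'=\sum Y\big(\tau(h_{(1)},-z_1-z_2)u,z_2\big)\tau(h_{(2)},-z_1)u'.
\]
With $h=v_{(2)}$, the coassociativity $(\rho\ot1)\rho=\sigma^{23}(1\ot\Delta)\rho$ rewrites $\sum v_{(1)}\ot\Delta(v_{(2)})$ as an iterated $\rho$. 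The result matches $Y^{12}(z_2)T^{23}(z_1)T^{13}(z_1+z_2)$: first $T^{13}(z_1+z_2)$ acts on $u$ via $\tau(\cdot,-(z_1+z_2))$, then $T^{23}(z_1)$ acts on $u'$. The delicate point is getting the order of Sweedler factors right, so that $h_{(1)}$ attaches to $T^{13}$ and $h_{(2)}$ to $T^{23}$; we will check this by writing both sides as $\tau^{12}\tau^{23}$ composed with a permutation and $\rho^2\rho^2$, as in the proof of Lemma \ref{lem:T-tau-mult}.

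\emph{Hexagon (QYB3), moving $T$ past $Y_V$.} Apply both sides to $u\ot v\ot v'$. Since $\rho$ is a homomorphism,
\[
\rho(Y(v,z_2)v')=\sum Y(v_{(1)},z_2)v'_{(1)}\ot Y(v_{(2)},z_2)v'_{(2)}.
\]
Hence the left side is
\[
\sum\tau\big(Y(v_{(2)},z_2)v'_{(2)},-z_1\big)u\ot Y(v_{(1)},z_2)v'_{(1)}.
\]
Because $\tau$ is a module for $H$, module weak associativity gives
\[
\tau(Y(h,z_2)h',-z_1)=\tau(h,-z_1+z_2)\tau(h',-z_1)
\]
in a suitable expansion. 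This expansion is legitimate, and not just weak associativity, because $\tau(h,z)$ maps into $U\ot\C((z))$, so the relevant products lie in $U((z_1,z_2))$-type spaces and the substitution $z_0\mapsto z_2$ is well defined. This yields
\[
\sum\tau(v_{(2)},-(z_1-z_2))\tau(v'_{(2)},-z_1)u\ot Y(v_{(1)},z_2)v'_{(1)}=Y^{23}(z_2)T^{12}(z_1-z_2)T^{13}(z_1)(u\ot v\ot v').
\]
The main obstacle is justifying this expansion step rigorously. We will handle it via the iterate formula for modules of the nonlocal vertex algebra $H$, using rationality (the $\C((z))$-valuedness of $\tau$) to identify $\iota_{z_1,z_2}$ expansions with the required ones.
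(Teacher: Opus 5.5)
Your proposal is correct and is the direct verification of \eqref{eq:qyb-vac}--\eqref{eq:qyb-hex2} that the paper leaves as ``straightforward.'' In (QYB1) you use $\rho(\vac)=\vac\ot\vac$, $\tau(\vac,z)=1_U$ and counitality. In (QYB2) you use \eqref{eq:mod-va-for-vertex-bialg3} together with the $\sigma^{23}$-twisted coassociativity; after applying $\sigma^{23}$ this reads $\sum v_{(1)}\ot v_{(2)(1)}\ot v_{(2)(2)}=\sum v_{(1)(1)}\ot v_{(2)}\ot v_{(1)(2)}$, so no cocommutativity is needed. In (QYB3) you use that $\rho$ is a homomorphism plus the module iterate formula.

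The step you flag as the main obstacle is actually immediate. Since $\tau(h',z)u\in U\ot\C((z))$, the pair $(\tau(h,z),\tau(h',z))$ is compatible with $f=1$. Weak associativity therefore gives $\tau(Y(h,z_0)h',z)=\tau(h,z+z_0)\tau(h',z)$ exactly, and you then just substitute $z=-z_1$, $z_0=z_2$.
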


\begin{lem}\label{lem:T-S-com}
Suppose further that $V$ is a quantum vertex algebra with quantum Yang-Baxter operator $S_V(z)$, such that
\begin{align}\label{eq:tau-rho-S-com2}
  (\rho\ot 1)S(z)=S^{13}(z)(\rho\ot 1),\quad
  (1\ot \rho)S(z)=S^{12}(z)(1\ot \rho).
\end{align}
Let $U$ be a quantum vertex algebra with quantum Yang-Baxter operator $S_U(z)$, and let $\tau$ be an $H$-module nonlocal vertex algebra structure on $U$ such that
\begin{align}
  &(\tau(h,z_1)\ot 1)S_U(z_2)=S_U(z_2)(\tau(h,z_1)\ot 1),\label{eq:tau-rho-S-com1-1}\\
  &(1\ot \tau(h,z_1))S_U(z_2)=S_U(z_2)(1\ot \tau(h,z_1)).
  \label{eq:tau-rho-S-com1-2}
\end{align}
Then we have that
\begin{align*}
  &T_\tau^{12}(z_1)S_V^{23}(z_2)
  =S_V^{23}(z_2)T_\tau^{12}(z_1),\quad
   T_\tau^{12}(z_1)S_U^{13}(z_2)
  =S_U^{13}(z_2)T_\tau^{12}(z_1),\\
  &T_\tau^{13}(z_1)S_V^{23}(z_2)
  =S_V^{23}(z_2)T_\tau^{12}(z_1),\quad
   S_U^{12}(z_1)T_\tau^{23}(z_2)
  =T_\tau^{23}(z_1)S_U^{12}(z_1).
\end{align*}
\end{lem}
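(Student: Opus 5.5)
The plan is to check each of the four identities by applying both sides to a pure tensor and writing $T_\tau$ out explicitly through its definition \eqref{eq:T-tau}. Recall that $T_\tau(z)(u\ot v)=\sum \tau(v_{(2)},-z)u\ot v_{(1)}$, with $\rho(v)=\sum v_{(1)}\ot v_{(2)}$. So $T_\tau$ acts on the $V$-factor only through $\rho$, and on the $U$-factor only through the operators $\tau(h,-z)$. Each identity then reduces to one of two kinds of commutation:
\begin{itemize}
\item one between $S_U$ and $\tau(h,\cdot)$, which is exactly \eqref{eq:tau-rho-S-com1-1} or \eqref{eq:tau-rho-S-com1-2};
\item one between $S_V$ and $\rho$, which is exactly \eqref{eq:tau-rho-S-com2}.
\end{itemize}

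\textbf{The two identities involving $S_U$.} For $T_\tau^{12}(z_1)S_U^{13}(z_2)$ we work on $U\ot V\ot U$. Here $S_U^{13}$ acts on factors $1,3$, while $T_\tau^{12}$ applies $\rho$ to factor $2$ and then $\tau(v_{(2)},-z_1)$ to factor $1$. The only nontrivial interaction is that $\tau(h,-z_1)\ot 1$ must commute with $S_U(z_2)$ on the $(1,3)$ factors, which is \eqref{eq:tau-rho-S-com1-1}. Similarly, $S_U^{12}(z_1)T_\tau^{23}(z_2)$ on $U\ot U\ot V$ needs $1\ot\tau(h,-z_2)$ to commute with $S_U(z_1)$, which is \eqref{eq:tau-rho-S-com1-2}. (I read the statement as having the spectral parameters match; the printed $z_1$ on the right-hand side appears to be a typo for $z_2$.) In both cases one writes out the Sweedler sums and moves the $\tau$-operator past $S_U$. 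The point that makes this legitimate is that $\rho$ acts on a factor that $S_U$ does not touch.

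\textbf{The two identities involving $S_V$.} For $T_\tau^{12}(z_1)S_V^{23}(z_2)$ on $U\ot V\ot V$, write $S_V(z_2)(v\ot w)=\sum v^i\ot w^i\ot f_i(z_2)$. Then
\[
T_\tau^{12}(z_1)S_V^{23}(z_2)(u\ot v\ot w)
=\sum \tau\bigl(v^i_{(2)},-z_1\bigr)u\ot v^i_{(1)}\ot w^i\, f_i(z_2).
\]
By the first half of \eqref{eq:tau-rho-S-com2}, $(\rho\ot1)S_V(z_2)=S_V^{13}(z_2)(\rho\ot1)$. This lets us rewrite the sum as $S_V^{23}(z_2)$ applied after first expanding $\rho(v)$ and acting with $\tau(v_{(2)},-z_1)$ on $u$, which is $S_V^{23}(z_2)T_\tau^{12}(z_1)(u\ot v\ot w)$. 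The identity for $T_\tau^{13}$ is handled the same way using the second half, $(1\ot\rho)S_V=S_V^{12}(1\ot\rho)$. The stated right-hand side there presumably should read $T_\tau^{13}$, by symmetry.

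\textbf{Main obstacle.} The only real difficulty is bookkeeping: tracking the tensor positions, i.e.\ which factor the $H$-component produced by $\rho$ lands in after applying $S_V^{13}$ or $S_V^{12}$. One also has to make sure that the $\C((z_2))$-coefficients from $S_V$ commute with the $\tau$-action. They do, because $\tau(h,z_1)$ is $\C((z_2))$-linear and all expansions stay in $U\ot V\ot V\ot\C((z_1))((z_2))$. I would present one identity in full with the permutation notation used in the proof of Lemma~\ref{lem:T-tau-mult}, and state that the other three follow in the same way.
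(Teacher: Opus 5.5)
Your proposal is correct and is essentially the paper's proof. The paper also writes each side as $\tau(\,\cdot\,,-z_1)$ composed with a permutation of tensor factors and with $\rho$ applied to the relevant $V$-slot. It then moves $S_U$ past $\tau$ using \eqref{eq:tau-rho-S-com1-1} or \eqref{eq:tau-rho-S-com1-2}, and $S_V$ past $\rho$ using the two halves of \eqref{eq:tau-rho-S-com2}, with the same assignment of hypotheses to identities that you give.

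Your readings of the misprints are also the intended ones. The paper's own middle expression $S_V^{23}(z_2)\tau^{12}(-z_1)\binom{1234}{2341}\rho^3$ equals $S_V^{23}(z_2)T_\tau^{13}(z_1)$. The last identity should read $S_U^{12}(z_1)T_\tau^{23}(z_2)=T_\tau^{23}(z_2)S_U^{12}(z_1)$.
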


\begin{proof}
Note that
\begin{align*}
  &T_\tau^{12}(z_1)S_V^{23}(z_2)
  =S_V^{23}(z_2)\tau^{12}(-z_1)\binom{1234}{2314}\rho^2
  =S_V^{23}(z_2)T_\tau^{12}(z_1),\\
  &T_\tau^{12}(z_1)S_U^{13}(z_2)
  =S_U^{13}(z_2)\tau^{12}(-z_1)\binom{1234}{2314}\rho^2
  =S_U^{13}(z_2)T_\tau^{12}(z_1),\\
  &T_\tau^{13}(z_1)S_V^{23}(z_2)
  =S_V^{23}(z_2)\tau^{12}(-z_1)\binom{1234}{2341}\rho^3
  =S_V^{23}(z_2)T_\tau^{12}(z_1),\\
  &S_U^{12}(z_1)T_\tau^{23}(z_2)
  =\tau^{23}(-z_2)S_U^{13}(z_2)\binom{1234}{1342}\rho^3
  =T_\tau^{23}(z_1)S_U^{12}(z_1),
\end{align*}
which completes the proof of lemma.
\end{proof}

%

Recall the following notion introduced in \cite{JKLT-Defom-va}.

\begin{de}\label{de:deform-triple}
Let $V$ be a nonlocal vertex algebra.
A \emph{deforming triple} for $V$ is a triple $(H,\rho,\tau)$,
where $H$ is a nonlocal vertex bialgebra,
$(V,\rho)$ is a left $H$-comodule nonlocal vertex algebra and $(V,\tau)$ is an $H$-module nonlocal vertex algebra, such that
\begin{align}\label{eq:rho-tau-compatible}
  \rho(\tau(h,z)v)=(\tau(h,z)\ot 1)\rho(v)\quad \te{for }h\in H,\,v\in V.
\end{align}
\end{de}



Let $\mathfrak L_H^\rho(V)$ denote the set of all $H$-module structures on $V$ for which $(H,\rho,\tau)$ forms a deforming triple.

\begin{lem}\label{lem:T-tau-com}
Let $(U,\tau_1)$ be an $H$-module nonlocal vertex algebra and let $\tau_2\in \mathfrak L_H^\rho(V)$.
Then
\begin{align}\label{eq:T-tau-com1}
  T_{\tau_1}^{12}(z_1)T_{\tau_2}^{23}(z_2)
  =T_{\tau_2}^{23}(z_2)T_{\tau_1}^{12}(z_1)
  \quad\te{on }U\ot V\ot V.
\end{align}
\end{lem}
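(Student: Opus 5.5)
Both sides of \eqref{eq:T-tau-com1} will be written out in the composite notation of Lemma \ref{lem:T-tau-mult}, and the only input beyond that will be the compatibility condition \eqref{eq:rho-tau-compatible} for the deforming triple $(H,\rho,\tau_2)$.

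Fix $u\in U$ and $v,w\in V$. Write $\rho(v)=\sum v_{(1)}\ot v_{(2)}$ and $\rho(w)=\sum w_{(1)}\ot w_{(2)}$.

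The left side applies $T_{\tau_2}^{23}(z_2)$ first:
\begin{align*}
  T_{\tau_2}^{23}(z_2)(u\ot v\ot w)=\sum u\ot \tau_2(w_{(2)},-z_2)v\ot w_{(1)}.
\end{align*}
Applying $T_{\tau_1}^{12}(z_1)$ then requires $\rho(\tau_2(w_{(2)},-z_2)v)$. By \eqref{eq:rho-tau-compatible} this equals $\sum \tau_2(w_{(2)},-z_2)v_{(1)}\ot v_{(2)}$. Hence
\begin{align*}
  T_{\tau_1}^{12}(z_1)T_{\tau_2}^{23}(z_2)(u\ot v\ot w)
  =\sum \tau_1(v_{(2)},-z_1)u\ot \tau_2(w_{(2)},-z_2)v_{(1)}\ot w_{(1)}.
\end{align*}

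The right side applies $T_{\tau_1}^{12}(z_1)$ first:
\begin{align*}
  T_{\tau_1}^{12}(z_1)(u\ot v\ot w)=\sum \tau_1(v_{(2)},-z_1)u\ot v_{(1)}\ot w.
\end{align*}
Applying $T_{\tau_2}^{23}(z_2)$ then acts by $\tau_2(w_{(2)},-z_2)$ on $v_{(1)}$ and leaves $w_{(1)}$ in the third slot. This gives exactly the same expression as the left side.

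The key point is that \eqref{eq:rho-tau-compatible} lets $\rho$ pass through $\tau_2(h,z)$, since $\tau_2$ acts only on the $V$-factor and leaves the $H$-component $v_{(2)}$ untouched. Once that is used, the two composites agree termwise, and no cocommutativity of $H$ is needed.

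The only care needed is formal. The expressions $\tau_2(w_{(2)},-z_2)v$ lie in $V\ot\C((z_2))$, so $\rho$ must be extended $\C((z_2))$-linearly. Applying $\tau_1(\cdot,-z_1)$ afterwards then produces elements of $U\ot V\ot V\ot\C((z_1))((z_2))$ on both sides. Since all sums are finite, both sides lie in this same space and the identity holds there.

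This could equally be phrased in the permutation notation of Lemma \ref{lem:T-tau-mult}, as $\tau_1^{12}(-z_1)\tau_2^{23}(-z_2)\pi\rho^2\rho^3$ for a suitable permutation $\pi$. I expect no real obstacle beyond this bookkeeping.
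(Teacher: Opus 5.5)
Your proof is correct and is essentially the paper's argument: the paper records that both composites equal $\tau_1^{12}(-z_1)\tau_2^{34}(-z_2)\binom{12345}{24153}\rho^2\rho^3$, which is exactly your termwise expression $\sum \tau_1(v_{(2)},-z_1)u\ot \tau_2(w_{(2)},-z_2)v_{(1)}\ot w_{(1)}$. Your element-level computation only makes explicit the use of the compatibility condition \eqref{eq:rho-tau-compatible}, which the paper leaves implicit in the first of its two identities.
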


\begin{proof}
The lemma follows from the following two equations
\begin{align*}
  &T_{\tau_1}^{12}(z_1)T_{\tau_2}^{23}(z_2)=\tau_1^{12}(-z_1)\tau_2^{34}(-z_2)\binom{12345}{24153}\rho^2\rho^3,
  \\
  &T_{\tau_2}^{23}(z_2)T_{\tau_1}^{12}(z_1)={\tau_1}^{12}(-z_1){\tau_2}^{34}(-z_2)\binom{12345}{24153}
  \rho^2\rho^3.
\end{align*}
\end{proof}

\begin{prop}\label{prop:deforming-triple-twistor}
For $\tau\in\mathfrak L_H^\rho(V)$,
$T_\tau(z)$ is a twistor of $V$.
Moreover, if $H$ is cocommutative and $\tau$ commutes with itself,
then \eqref{eq:qyb-com1} holds for $(T_\tau(z),T_\tau(z))$.
\end{prop}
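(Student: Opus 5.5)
The plan is to reduce everything to the lemmas already established for operators of the form $T_\tau(z)$, taking $U=V$ there.

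\textbf{Step 1: $T_\tau(z)$ is a twisting operator.} Since $\tau\in\mathfrak L_H^\rho(V)$, the pair $(V,\tau)$ is in particular an $H$-module nonlocal vertex algebra. Applying Lemma \ref{lem:def-twisting-op} with $U=V$ shows that $T_\tau(z)$ is a twisting operator for $(V,V)$. This gives \eqref{eq:qyb-vac}, \eqref{eq:qyb-hex1} and \eqref{eq:qyb-hex2}.

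\textbf{Step 2: the twistor relation.} It remains to verify \eqref{eq:twistor},
\begin{align*}
T_\tau^{12}(z_1)T_\tau^{23}(z_2)=T_\tau^{23}(z_2)T_\tau^{12}(z_1).
\end{align*}
This is exactly Lemma \ref{lem:T-tau-com} with $U=V$ and $\tau_1=\tau_2=\tau$. That lemma only needs $(U,\tau_1)$ to be an $H$-module nonlocal vertex algebra and $\tau_2\in\mathfrak L_H^\rho(V)$, and both conditions hold here.

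The real content of that lemma is the compatibility \eqref{eq:rho-tau-compatible}. When $T^{23}_\tau$ acts first, it applies $\tau$ to the middle factor. One must then be able to apply $\rho$ to $\tau(h,z)v$ and get $(\tau(h,z)\ot1)\rho(v)$, so that the two orders of composition give the same normal-ordered expression $\tau^{12}\tau^{34}\binom{12345}{24153}\rho^2\rho^3$. I would recheck this bookkeeping, since it is the only step using the deforming-triple hypothesis, but it has already been carried out in Lemma \ref{lem:T-tau-com}.

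\textbf{Step 3: the second assertion.} Assume $H$ is cocommutative and $\tau$ commutes with itself. Relation \eqref{eq:qyb-com1} for $(T_\tau,T_\tau)$ consists of two identities:
\begin{align*}
T^{12}_\tau(z_1)T^{13}_\tau(z_2)&=T^{13}_\tau(z_2)T^{12}_\tau(z_1),\\
T^{23}_\tau(z_1)T^{13}_\tau(z_2)&=T^{13}_\tau(z_2)T^{23}_\tau(z_1).
\end{align*}
The first is \eqref{eq:T-tau-com3} with $U=V$ and $\tau_1=\tau_2=\tau$, which uses that $\tau$ commutes with itself. The second is \eqref{eq:T-tau-com2} with $U=V$, which uses cocommutativity of $H$.

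\textbf{Main obstacle.} The only delicate point is that Lemma \ref{lem:T-tau-mult} is stated on the spaces $U\ot U\ot V$ and $U\ot V\ot V$. With $U=V$ both become $V\ot V\ot V$, so the specialization is legitimate. No further computation beyond these specializations should be needed.
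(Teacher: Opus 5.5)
Your proposal is correct and follows the paper's proof exactly. The paper likewise obtains the twisting-operator property from Lemma \ref{lem:def-twisting-op}, the twistor relation \eqref{eq:twistor} from Lemma \ref{lem:T-tau-com}, and the moreover part from the identities \eqref{eq:T-tau-com3} and \eqref{eq:T-tau-com2} of Lemma \ref{lem:T-tau-mult}, all specialized to $U=V$ and $\tau_1=\tau_2=\tau$.
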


\begin{proof}
Lemmas \ref{lem:def-twisting-op} and \ref{lem:T-tau-com} proves the first statement, and
the moreover statement follows immediate from Lemmas \ref{lem:T-tau-mult}.
\end{proof}

As an immediate consequence of Theorem \ref{thm:nonlocal-VA-twistor-deform}, Lemma \ref{lem:T-tau-mult}
and Proposition \ref{prop:deforming-triple-twistor}, we have the following result.
\begin{coro}\label{coro:D-T-tau-Y}
For $\tau\in \mathfrak L_H^\rho(V)$, we have that
\begin{align*}
  \mathfrak D_{T_\tau}(Y)(u,z)=\sum Y(u_{(1)},z)\tau(u_{(2)},z),
  \quad\te{where}\,\,\rho(u)=\sum u_{(1)}\ot u_{(2)}\in V\ot H.
\end{align*}
\end{coro}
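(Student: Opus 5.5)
Since Proposition \ref{prop:deforming-triple-twistor} shows that $T_\tau(z)$ is a twistor of $V$, Theorem \ref{thm:nonlocal-VA-twistor-deform} tells us that $\mathfrak D_{T_\tau}(V)$ is a nonlocal vertex algebra with vertex operator $\mathfrak D_{T_\tau}(Y)(z)=Y(z)T_\tau^{21}(-z)$. All that remains is to evaluate this operator on $u\ot w$ by unwinding the definition \eqref{eq:T-tau} with $U=V$. The only point requiring attention is keeping track of which tensor factor is acted on and which is coacted on, and of the sign of $z$.

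The computation runs as follows. By definition, $T_\tau(z)(a\ot b)=\sum \tau(b_{(2)},-z)a\ot b_{(1)}$, where $\rho(b)=\sum b_{(1)}\ot b_{(2)}$. Hence $T_\tau^{21}(z)=\sigma T_\tau(z)\sigma$ acts by
\begin{align*}
  T_\tau^{21}(z)(u\ot w)=\sigma T_\tau(z)(w\ot u)=\sum u_{(1)}\ot \tau(u_{(2)},-z)w .
\end{align*}
Replacing $z$ by $-z$ gives $T_\tau^{21}(-z)(u\ot w)=\sum u_{(1)}\ot \tau(u_{(2)},z)w$. Applying $Y(z)$, viewed as the map $a\ot b\mapsto Y(a,z)b$ (extended $\C((z))$-linearly, which is legitimate because $\tau(h,z)w\in V\ot\C((z))$ by \eqref{eq:mod-va-for-vertex-bialg1-2}), we obtain
\begin{align*}
  \mathfrak D_{T_\tau}(Y)(u,z)w=\sum Y(u_{(1)},z)\tau(u_{(2)},z)w ,
\end{align*}
which is exactly the claimed formula.

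The main thing to double-check is the convention hidden in the notation $T^{21}$: it should mean conjugation by the flip on $V\ot V$, as in the proof of Theorem \ref{thm:qva-twistor}, where $T^{21}(-z_1)=\sigma^{12}T^{12}(-z_1)\sigma^{12}$. We also need the product $Y(u_{(1)},z)\tau(u_{(2)},z)w$ to be well defined as a formal series. This holds because $\rho(u)$ is a finite sum, and for each fixed $h$ the expression $\tau(h,z)w$ is a finite sum of elements of $V$ times Laurent series, so each term is a product of a Laurent series in $z$ with an element of $V((z))$. Lemma \ref{lem:T-tau-mult} is only needed to confirm that $T_\tau$ is the correct $\C((z))$-linear operator, for instance that $T_\varepsilon=1$ recovers $Y$.
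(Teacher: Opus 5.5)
Your proposal is correct and follows essentially the same route as the paper: it obtains the corollary directly from Proposition \ref{prop:deforming-triple-twistor} and Theorem \ref{thm:nonlocal-VA-twistor-deform} by unwinding $Y(z)T_\tau^{21}(-z)$ through \eqref{eq:T-tau}. Your handling of the flip convention $T^{21}=\sigma T\sigma$ and of the sign of $z$ is right.
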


The following result was given in \cite[Theorem 2.25, Proposition 2,26]{JKLT-Defom-va}.

\begin{prop}\label{prop:L-H-rho-V-comosition-pre}
Suppose that $H$ is cocommutative.
For $\tau\in\mathfrak L_H^\rho(V)$, $(\mathfrak D_{T_\tau}(V), \rho)$ is an $H$-comodule nonlocal vertex algebra.
Moreover, $\rho:\mathfrak L_H^\rho(V)\to V\sharp H$ is a nonlocal vertex algebra homomorphism.
\end{prop}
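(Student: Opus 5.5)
The plan is to verify both assertions directly from the explicit formula for the deformed vertex operator in Corollary \ref{coro:D-T-tau-Y}, namely
\begin{align*}
  Y_\tau(u,z)v:=\mathfrak D_{T_\tau}(Y)(u,z)v=\sum Y(u_{(1)},z)\tau(u_{(2)},z)v,
  \quad\te{where }\rho(u)=\sum u_{(1)}\ot u_{(2)}.
\end{align*}
The counit and coassociativity axioms for $\rho$ do not involve the vertex operator, so they hold verbatim for $\mathfrak D_{T_\tau}(V)$. Likewise, $\mathfrak D_{T_\tau}(V)$ has the same vacuum, and $\rho(\vac)=\vac\ot\vac$. So what remains is to show that $\rho$ intertwines vertex operators, in two ways:
\begin{align*}
  \rho(Y_\tau(u,z)v)=(Y_\tau\ot Y_H)(u_{(1)}\ot u_{(2)},z)\rho(v),\qquad
  \rho(Y_\tau(u,z)v)=Y^\sharp(\rho(u),z)\rho(v),
\end{align*}
the second with respect to the smash product $V\sharp H$ built from the module structure $\tau$.

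I would first compute the common left-hand side. Since $\rho$ is a homomorphism for the original $Y$, and by the compatibility \eqref{eq:rho-tau-compatible} $\rho(\tau(h,z)w)=(\tau(h,z)\ot1)\rho(w)$, one gets
\begin{align*}
  \rho(Y_\tau(u,z)v)=\sum Y(u_{(1)(1)},z)\tau(u_{(2)},z)v_{(1)}\ot Y(u_{(1)(2)},z)v_{(2)}.
\end{align*}
Here $\rho(u_{(1)})=\sum u_{(1)(1)}\ot u_{(1)(2)}$, and the $H$-leg $u_{(2)}$ acts on $V$ through $\tau$.

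Next, for the comodule statement, I would expand the right-hand side:
\[
\sum Y(u_{(1)(1)},z)\tau(u_{(1)(2)},z)v_{(1)}\ot Y(u_{(2)},z)v_{(2)}.
\]
The two expressions differ only by which $H$-leg of $(\rho\ot1)\rho(u)$ is fed into $\tau$ and which into $Y_H$. The identity $(\rho\ot1)\rho=\sigma^{23}(1\ot\Delta)\rho$ rewrites both as expressions in $(1\ot\Delta)\rho(u)$, one using $\Delta$ and the other using $\sigma\Delta$. Cocommutativity of $H$ then identifies them.

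For the smash product statement, I would expand $Y^\sharp(u_{(1)}\ot u_{(2)},z)(v_{(1)}\ot v_{(2)})$ by its definition:
\[
\sum Y(u_{(1)},z)\tau(u_{(2)(1)},z)v_{(1)}\ot Y(u_{(2)(2)},z)v_{(2)}.
\]
This is again $(1\ot\Delta)\rho(u)$ with legs distributed the same way. Coassociativity, plus cocommutativity if the Sweedler legs come out in the wrong order, matches it with the left-hand side.

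The main obstacle is purely bookkeeping. One has to keep track of which Sweedler leg goes into $\tau$ and which into $Y_H$, and apply $(\rho\ot1)\rho=\sigma^{23}(1\ot\Delta)\rho$ in the correct direction. This is exactly where cocommutativity is needed, since without it the two orderings of $\Delta(u_{(2)})$ would not match. No analytic issues arise, because all operations are finite sums tensored with $\C((z))$.
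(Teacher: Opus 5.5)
Your proposal is correct and is essentially the direct Sweedler-notation verification behind the paper's citation of \cite[Theorem 2.25, Proposition 2.26]{JKLT-Defom-va}; you also correctly read the domain of $\rho$ in the second assertion as $\mathfrak D_{T_\tau}(V)$ rather than $\mathfrak L_H^\rho(V)$. Note that, with the convention $(\rho\ot 1)\rho=\sigma^{23}(1\ot\Delta)\rho$, the identity $\rho(\mathfrak D_{T_\tau}(Y)(u,z)v)=Y^\sharp(\rho(u),z)\rho(v)$ follows from \eqref{eq:rho-tau-compatible} and coassociativity alone, so cocommutativity is needed only for the comodule assertion.
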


The following result was given in \cite[Proposition 3.3]{JKLT-Defom-va}.

\begin{prop}\label{prop:L-H-rho-V-compostition}
Suppose that $H$ is cocommutative.
Let $\tau$ and $\tau'$ be commuting elements in $\mathfrak L_H^\rho(V)$.
Then $\tau\ast\tau'\in\mathfrak L_H^\rho(V)$
and $\tau\ast\tau'=\tau'\ast\tau$.
Moreover, $\tau\in \mathfrak L_H^\rho\left(\mathfrak D_{T_{\tau'}}(V)\right)$.
\end{prop}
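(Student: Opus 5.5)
The plan is to verify the three assertions directly from the definitions. I would use cocommutativity of $H$ to reorder Sweedler indices, and the commutation $[\tau(h,z_1),\tau'(k,z_2)]=0$ to reorder operators.

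\emph{Commutativity of $\ast$.} By definition,
\[
(\tau\ast\tau')(h,z)=\sum\tau(h_{(2)},z)\tau'(h_{(1)},z).
\]
Since $H$ is cocommutative, $\sum h_{(1)}\ot h_{(2)}=\sum h_{(2)}\ot h_{(1)}$, so this equals $\sum\tau(h_{(1)},z)\tau'(h_{(2)},z)$. Because $\tau$ and $\tau'$ commute, the latter equals $\sum\tau'(h_{(2)},z)\tau(h_{(1)},z)=(\tau'\ast\tau)(h,z)$.

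\emph{$\tau\ast\tau'$ is an $H$-module structure.} I would view $V$ as a module for the tensor product nonlocal vertex algebra $H\ot H$ via $h\ot k\mapsto\tau'(h,z)\tau(k,z)$. This is where commutativity is used to get weak associativity: the two commuting families are each weakly associative, so their products are as well. I would then pull this module structure back along the vertex algebra homomorphism $\Delta:H\to H\ot H$. The outcome is exactly $\tau\ast\tau'$, which is therefore a module map. I expect this step to be the main technical point. It requires checking that compatibility of the set $\{\tau'(h,z)\tau(k,z)\}$ and the identity $Y_\E(\cdot)=$ module map hold when two commuting module structures are multiplied. I would verify this by a direct computation with $Y_\E$ from \eqref{eq:def-Y-E}, clearing poles with a common $f(z_1,z)$.

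\emph{Remaining deforming-triple axioms for $\tau\ast\tau'$.}
\begin{itemize}
\item The condition $\tau(h,z)v\in V\ot\C((z))$ is clear.
\item For the vacuum, $(\tau\ast\tau')(h,z)\vac=\sum\varepsilon(h_{(2)})\varepsilon(h_{(1)})\vac=\varepsilon(h)\vac$.
\item For \eqref{eq:mod-va-for-vertex-bialg3}, I would apply the axiom first for $\tau'$ and then for $\tau$. This produces $\sum Y(\tau(h_{(2)},z_1-z_2)\tau'(h_{(1)},z_1-z_2)u,z_2)\,\tau(h_{(4)},z_1)\tau'(h_{(3)},z_1)v$. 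Coassociativity and cocommutativity regroup this into $\sum Y((\tau\ast\tau')(h_{(1)},z_1-z_2)u,z_2)(\tau\ast\tau')(h_{(2)},z_1)v$.
\item Compatibility \eqref{eq:rho-tau-compatible} holds because each factor commutes with $\rho$ in the first tensor slot.
\end{itemize}

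\emph{$\tau\in\mathfrak L_H^\rho(\mathfrak D_{T_{\tau'}}(V))$.} By Proposition \ref{prop:L-H-rho-V-comosition-pre}, $(\mathfrak D_{T_{\tau'}}(V),\rho)$ is an $H$-comodule nonlocal vertex algebra. The map $\tau$ is unchanged as an $H$-module on the underlying space, so the vacuum and $\rho$-compatibility conditions carry over. It remains to check \eqref{eq:mod-va-for-vertex-bialg3} for the deformed vertex operator, which by Corollary \ref{coro:D-T-tau-Y} is
\[
Y'(u,z)=\sum Y(u_{(1)},z)\tau'(u_{(2)},z).
\]
I would compute
\[
\tau(h,z_1)Y(u_{(1)},z_2)\tau'(u_{(2)},z_2)v=\sum Y(\tau(h_{(1)},z_1-z_2)u_{(1)},z_2)\,\tau(h_{(2)},z_1)\tau'(u_{(2)},z_2)v.
\]
Next I would commute $\tau$ past $\tau'$. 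Finally I would use $\rho(\tau(h_{(1)},z)u)=(\tau(h_{(1)},z)\ot1)\rho(u)$ to recognise the result as $\sum Y'(\tau(h_{(1)},z_1-z_2)u,z_2)\tau(h_{(2)},z_1)v$.
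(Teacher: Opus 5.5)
Your proof is correct. The paper gives no argument of its own for this proposition; it simply quotes \cite[Proposition 3.3]{JKLT-Defom-va}, and your direct verification supplies the omitted routine check. That verification has three parts: cocommutativity plus $[\tau(h,z_1),\tau'(k,z_2)]=0$ give $\tau\ast\tau'=\tau'\ast\tau$; pulling back the commuting $H\ot H$-module structure along $\Delta$ shows $\tau\ast\tau'$ is an $H$-module, after which you check the remaining axioms; and you check \eqref{eq:mod-va-for-vertex-bialg3} for $\mathfrak D_{T_{\tau'}}(Y)$ via Corollary~\ref{coro:D-T-tau-Y} and \eqref{eq:rho-tau-compatible}.

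The step you flag as the main technical point is actually easy. Every $\tau(h,z)v$ lies in $V\ot\C((z))$, so all products of these operators already lie in $\E^{(2)}(V)$, and you may take $f=1$ in \eqref{eq:def-Y-E}.
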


The following result was given in \cite[Proposition 3.4]{JKLT-Defom-va},
and is a consequence of Propositions \ref{prop:prod-twistor},
\ref{prop:deforming-triple-twistor},
\ref{prop:L-H-rho-V-compostition}
and Lemma \ref{lem:T-tau-mult}.

\begin{prop}\label{prop:L-H-rho-V-compostition2}
Suppose that $H$ is cocommutative.
Let $\tau$ and $\tau'$ be commuting elements in $\mathfrak L_H^\rho(V)$.
Then
\begin{align}
  \mathfrak D_{T_\tau}\left(\mathfrak D_{T_{\tau'}}(V)\right)=\mathfrak D_{T_{\tau\ast\tau'}}(V).
\end{align}
\end{prop}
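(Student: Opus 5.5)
The plan is to combine Proposition \ref{prop:prod-twistor} with the multiplicativity of $T_\tau$. By Proposition \ref{prop:L-H-rho-V-compostition}, $\tau\ast\tau'\in\mathfrak L_H^\rho(V)$, so by Proposition \ref{prop:deforming-triple-twistor} the operators $T_{\tau'}(z)$, $T_\tau(z)$ and $T_{\tau\ast\tau'}(z)$ are all twistors of $V$. Lemma \ref{lem:T-tau-mult} then gives the operator identity
\begin{align*}
  T_{\tau\ast\tau'}(z)=T_\tau(z)T_{\tau'}(z)=T_{\tau'}(z)T_\tau(z),
\end{align*}
where the second equality uses $\tau\ast\tau'=\tau'\ast\tau$ from Proposition \ref{prop:L-H-rho-V-compostition}.

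Next I verify the hypotheses of Proposition \ref{prop:prod-twistor} for the pair $(S,T)=(T_{\tau'},T_\tau)$, with $U=V$.
\begin{enumerate}
\item The first relation of \eqref{eq:qyb-com1}, $T_{\tau'}^{12}(z_1)T_\tau^{13}(z_2)=T_\tau^{13}(z_2)T_{\tau'}^{12}(z_1)$, is \eqref{eq:T-tau-com3}, which needs only that $\tau$ and $\tau'$ commute.
\item The second relation of \eqref{eq:qyb-com1}, $T_{\tau'}^{23}(z_1)T_\tau^{13}(z_2)=T_\tau^{13}(z_2)T_{\tau'}^{23}(z_1)$, is \eqref{eq:T-tau-com2}, which holds because $H$ is cocommutative.
\item Relation \eqref{eq:twistor-com}, $T_{\tau'}^{12}(z_1)T_\tau^{23}(z_2)=T_\tau^{23}(z_2)T_{\tau'}^{12}(z_1)$, is Lemma \ref{lem:T-tau-com} applied with $(U,\tau_1)=(V,\tau')$ and $\tau_2=\tau$. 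This is legitimate since $(V,\tau')$ is an $H$-module nonlocal vertex algebra and $\tau\in\mathfrak L_H^\rho(V)$.
\end{enumerate}

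Proposition \ref{prop:prod-twistor} now yields that $T_\tau$ is a twistor of $\mathfrak D_{T_{\tau'}}(V)$ and that
\begin{align*}
  \mathfrak D_{T_\tau}\big(\mathfrak D_{T_{\tau'}}(V)\big)=\mathfrak D_{T_{\tau'}T_\tau}(V).
\end{align*}
Since $T_{\tau'}T_\tau=T_{\tau'\ast\tau}=T_{\tau\ast\tau'}$, this is exactly the claimed identity $\mathfrak D_{T_\tau}(\mathfrak D_{T_{\tau'}}(V))=\mathfrak D_{T_{\tau\ast\tau'}}(V)$.

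The one point I expect to need care is the meaning of $\mathfrak D_{T_\tau}$ on the left side. In the statement it could mean the deformation of the nonlocal vertex algebra $\mathfrak D_{T_{\tau'}}(V)$ by its own operator $T_\tau$, formed from the comodule map $\rho$, which remains valid by Proposition \ref{prop:L-H-rho-V-comosition-pre} and the "moreover" part of Proposition \ref{prop:L-H-rho-V-compostition}. That operator is built from the same $\rho$ and $\tau$ and the same formula \eqref{eq:T-tau}, and $\rho$ is unchanged as a linear map. Hence it coincides with $T_\tau$ on $V\ot V$, so the two readings agree.

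As a cross-check on the order of the factors, Corollary \ref{coro:D-T-tau-Y} gives the left side's vertex operator directly:
\begin{align*}
  \sum Y(u_{(1)},z)\tau'(u_{(2)},z)\tau(u_{(3)},z).
\end{align*}
By commutativity of $\tau$ and $\tau'$ and cocommutativity of $H$, this equals $\sum Y(u_{(1)},z)(\tau\ast\tau')(u_{(2)},z)$, which is the vertex operator of $\mathfrak D_{T_{\tau\ast\tau'}}(V)$.
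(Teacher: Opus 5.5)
Your proposal is correct and follows essentially the route the paper indicates (it cites \cite[Proposition 3.4]{JKLT-Defom-va} and attributes the result to Propositions \ref{prop:prod-twistor}, \ref{prop:deforming-triple-twistor}, \ref{prop:L-H-rho-V-compostition} and Lemma \ref{lem:T-tau-mult}): you apply Proposition \ref{prop:prod-twistor} to $(S,T)=(T_{\tau'},T_\tau)$, get the commutation hypotheses from \eqref{eq:T-tau-com3}, \eqref{eq:T-tau-com2} and Lemma \ref{lem:T-tau-com}, and then identify $T_{\tau'}T_\tau=T_{\tau'\ast\tau}=T_{\tau\ast\tau'}$. Each hypothesis is matched in the correct order, and your cross-check via Corollary \ref{coro:D-T-tau-Y} is also valid.
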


An element $\tau\in\mathfrak L_H^\rho(V)$ is called \emph{invertible} if there exists $\tau\inv\in \mathfrak L_H^\rho(V)$, such that $\tau\ast \tau\inv=\varepsilon$.
The following result is a generalization of \cite[Theorem 3.6]{JKLT-Defom-va}.

\begin{thm}\label{thm:qva}
Let $V$ be a quantum vertex algebra with quantum Yang-Baxter operator $S(z)$.
Suppose that $(H,\rho,\tau)$ is a deforming triple of $V$, such that $H$ is cocommutative, $\tau$ is commutes with itself and $\tau$ is invertible in $\mathfrak L_H^\rho(V)$.
Suppose further that $S(z)$, $\rho$ and $\tau$ satisfy the relations \eqref{eq:tau-rho-S-com2}, \eqref{eq:tau-rho-S-com1-1} and \eqref{eq:tau-rho-S-com1-2}.
Then $\mathfrak D_{T_\tau}(V)$ is a quantum vertex algebra with quantum Yang-Baxter operator $S_T(z)$ defined by
\begin{align*}
  S_T(z)=T_\tau^{21}(-z)\inv S(z) T_\tau(z).
\end{align*}
\end{thm}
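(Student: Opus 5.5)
The plan is to reduce Theorem \ref{thm:qva} to Theorem \ref{thm:qva-twistor}, with $T(z)=T_\tau(z)$. That requires four ingredients: that $T_\tau(z)$ is an invertible twistor of $V$; that \eqref{eq:qyb-com1} holds for $(T_\tau(z),T_\tau(z))$; and that \eqref{eq:qyb-com1} and \eqref{eq:twistor-com} hold for the pair $(T_\tau(z),S(z))$. Once these are in place, Theorem \ref{thm:qva-twistor} gives directly that $\mathfrak D_{T_\tau}(V)$ is a quantum vertex algebra with quantum Yang-Baxter operator $T_\tau^{21}(-z)\inv S(z)T_\tau(z)=S_T(z)$.

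\textbf{Twistor properties and invertibility.} Since $\tau\in\mathfrak L_H^\rho(V)$, Proposition \ref{prop:deforming-triple-twistor} shows that $T_\tau(z)$ is a twistor of $V$. Because $H$ is cocommutative and $\tau$ commutes with itself, the same proposition gives \eqref{eq:qyb-com1} for $(T_\tau,T_\tau)$. For invertibility, let $\tau\inv\in\mathfrak L_H^\rho(V)$ satisfy $\tau\ast\tau\inv=\varepsilon$. Then Lemma \ref{lem:T-tau-mult} gives
\[
T_\tau(z)T_{\tau\inv}(z)=T_{\varepsilon}(z)=1 .
\]
Cocommutativity of $H$ also makes $\ast$ commutative on such elements (compare Proposition \ref{prop:L-H-rho-V-compostition}), so $\tau\inv\ast\tau=\varepsilon$ and $T_{\tau\inv}(z)T_\tau(z)=1$ as well. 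Extending $\C((z))$-linearly, $T_\tau(z)$ is invertible on $V\ot V\ot\C((z))$.

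\textbf{Commutation with $S(z)$.} This is the step that needs genuine work. I will apply Lemma \ref{lem:T-S-com} with $U=V$, $S_U=S_V=S$, and the given $\tau,\rho$; its hypotheses are exactly \eqref{eq:tau-rho-S-com2}, \eqref{eq:tau-rho-S-com1-1} and \eqref{eq:tau-rho-S-com1-2}. The lemma then yields
\begin{align*}
&T_\tau^{12}(z_1)S^{23}(z_2)=S^{23}(z_2)T_\tau^{12}(z_1),\quad T_\tau^{12}(z_1)S^{13}(z_2)=S^{13}(z_2)T_\tau^{12}(z_1),
\end{align*}
together with the corresponding relations for $T_\tau^{13}$ versus $S^{23}$ and $S^{12}$ versus $T_\tau^{23}$. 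These are precisely the instances of \eqref{eq:qyb-com1} and \eqref{eq:twistor-com} needed for $(T_\tau,S)$.

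The main obstacle is bookkeeping. Lemma \ref{lem:T-S-com} as printed has index and variable typos in its last two identities, so I would re-derive each required commutation directly. Each one is obtained by writing $T_\tau$ as $\tau(\cdot,-z)$ applied after $\rho$ and a permutation. One then pushes $S$ through $\rho$ using \eqref{eq:tau-rho-S-com2}, and through $\tau$ using \eqref{eq:tau-rho-S-com1-1} and \eqref{eq:tau-rho-S-com1-2}. The only care needed is matching the tensor slots, in the same way as in the proof of Lemma \ref{lem:T-tau-mult}.
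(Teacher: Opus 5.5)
Your reduction to Theorem \ref{thm:qva-twistor} is the same as the paper's proof. It uses Proposition \ref{prop:deforming-triple-twistor} for the twistor property and for \eqref{eq:qyb-com1} on $(T_\tau,T_\tau)$, Lemma \ref{lem:T-tau-mult} for invertibility, and Lemma \ref{lem:T-S-com} for the relations with $S(z)$; you are right to re-derive its four commutations because of the index typos.

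The one step that fails as you justify it is the left inverse. Proposition \ref{prop:L-H-rho-V-compostition} gives $\tau\ast\tau'=\tau'\ast\tau$ only for \emph{commuting} $\tau,\tau'$. For cocommutative $H$ you have $(\tau\ast\tau^{-1})(h,z)=\sum\tau(h_{(1)},z)\tau^{-1}(h_{(2)},z)$ and $(\tau^{-1}\ast\tau)(h,z)=\sum\tau^{-1}(h_{(1)},z)\tau(h_{(2)},z)$. These agree only if $\tau^{-1}$ commutes with $\tau$, which is not a hypothesis; self-commutation of $\tau$ does not imply it. From $\tau\ast\tau^{-1}=\varepsilon$ alone, Lemma \ref{lem:T-tau-mult} gives only a right inverse of $T_\tau(z)$, and that right inverse need not be two-sided.

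Here is a counterexample. Take $H=\C[g]$ with $g$ group-like and $\partial=0$, and $V=\C[x_1,x_2,\dots]$ with $\partial=0$ and $S(z)=1$. Set $\rho(v)=v\ot g^{\deg v}$, $\tau(g^n,z)=A^n$ and $\tau^{-1}(g^n,z)=B^n$. Here $A,B$ are the degree-preserving algebra endomorphisms with $A(x_1)=x_1$, $A(x_k)=x_{k-1}$ for $k\ge 2$, and $B(x_k)=x_{k+1}$. Every hypothesis of the theorem holds, and $AB=1$ gives $\tau\ast\tau^{-1}=\varepsilon$. But $T_\tau(z)(u\ot v)=A^{\deg v}u\ot v$ kills $(x_2-x_1)\ot x_1$. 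So $T_\tau^{21}(-z)^{-1}$ does not exist, and neither does $S_T(z)$.

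The paper's proof makes the same one-sided inference: it concludes invertibility from $T_\tau(z)T_{\tau^{-1}}(z)=1$ alone. So the theorem should be read with a two-sided inverse, $\tau^{-1}\ast\tau=\varepsilon$ as well as $\tau\ast\tau^{-1}=\varepsilon$; this is automatic, for example, when $\tau^{-1}$ commutes with $\tau$. Under that hypothesis Lemma \ref{lem:T-tau-mult} gives $T_{\tau^{-1}}(z)T_\tau(z)=T_{\tau^{-1}\ast\tau}(z)=1$ directly, and the rest of your argument is complete.
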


\begin{proof}
Applying Proposition \ref{prop:deforming-triple-twistor},
we get a twistor $T_\tau(z)$ such that the relation \eqref{eq:qyb-com1} holds for $(T_\tau(z),T_\tau(z))$.
Since $\tau\ast\tau\inv=\varepsilon$, we get from Lemma \ref{lem:T-tau-mult} that
\begin{align*}
  T_\tau(z)T_{\tau\inv}(z)=T_\varepsilon(z)=1_{V\ot V},
\end{align*}
which proves that $T_\tau(z)$ is invertible.
Lemma \ref{lem:T-S-com} proves that the relations \eqref{eq:qyb-com1} and \eqref{eq:twistor-com} hold for $(S(z),T_\tau(z))$.
Then Theorem \ref{thm:qva-twistor} yields the theorem.
\end{proof}

Using deforming triples, we obtain two twisting operators satisfying the hypotheses of Proposition \ref{prop:qva-double-twisted}, as described in the following proposition.

\begin{prop}\label{prop:double-twisted-vba}
Let $U$ and $V$ be two quantum vertex algebras with quantum Yang-Baxter operators $S_U(z)$ and $S_V(z)$, respectively.
Let $H$ be a cocommutative nonlocal vertex bialgebra,
let $\rho$ be an $H$-comodule nonlocal vertex algebra structure on $V$, and let $\tau_1$, $\tau_2$ be two invertible $H$-module nonlocal vertex algebra structures on $U$, such that $\tau_i$ commutes with $\tau_j$ for $i,j=1,2$.
Suppose that the relations \eqref{eq:tau-rho-S-com1-1}, \eqref{eq:tau-rho-S-com1-2} hold for $\tau_i$ and $S_U(z)$ ($i=1,2$)
and the relation \eqref{eq:tau-rho-S-com2} holds for $\rho$ and $S_V(z)$.
Then the 
two linear maps $T_{\tau_1}(z)$ and $T_{\tau_2}(z)$ constructed via \eqref{eq:T-tau} are two invertible twisting operators for $(U,V)$ satisfying the hypotheses of Proposition \ref{prop:qva-double-twisted}.
\end{prop}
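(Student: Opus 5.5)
We verify, one at a time, the hypotheses that Proposition \ref{prop:qva-double-twisted} places on $S_1(z)=T_{\tau_1}(z)$ and $S_2(z)=T_{\tau_2}(z)$, using the lemmas of this subsection.

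\emph{Twisting operators.} Since $(U,\tau_i)$ is an $H$-module nonlocal vertex algebra and $(V,\rho)$ is an $H$-comodule nonlocal vertex algebra, Lemma \ref{lem:def-twisting-op} shows that $T_{\tau_i}(z)$ is a twisting operator for $(U,V)$ for $i=1,2$.

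\emph{Invertibility.} By the meaning of ``invertible'' for $H$-module structures, there is $\tau_i\inv$ with $\tau_i\ast\tau_i\inv=\varepsilon$. Lemma \ref{lem:T-tau-mult} then gives
\begin{align*}
  T_{\tau_i}(z)T_{\tau_i\inv}(z)=T_\varepsilon(z)=1_{U\ot V}.
\end{align*}
This is a right inverse. I would also need $\tau_i\inv\ast\tau_i=\varepsilon$ to get a two-sided inverse. Since $H$ is cocommutative, the convolution is commutative on the relevant elements, so this holds; alternatively one can simply define invertibility two-sidedly. Hence $T_{\tau_i}(z)$ is invertible as a $\C((z))$-linear map.

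\emph{Relation \eqref{eq:qyb-com1} for $(S_i,S_j)$.} This is a statement on $U\ot U\ot V$ for the $(13),(23)$ part and on $U\ot V\ot V$ for the $(12),(13)$ part. The first identity, $T_{\tau_i}^{12}(z_1)T_{\tau_j}^{13}(z_2)=T_{\tau_j}^{13}(z_2)T_{\tau_i}^{12}(z_1)$ on $U\ot V\ot V$, follows from \eqref{eq:T-tau-com3} because $\tau_i$ commutes with $\tau_j$. The second, $T_{\tau_i}^{23}(z_1)T_{\tau_j}^{13}(z_2)=T_{\tau_j}^{13}(z_2)T_{\tau_i}^{23}(z_1)$ on $U\ot U\ot V$, follows from \eqref{eq:T-tau-com2} because $H$ is cocommutative. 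Both hold for all $i,j\in\{1,2\}$, including $i=j$.

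\emph{Conditions \eqref{eq:qva-double-cond1} and \eqref{eq:qva-double-cond2}.} These are the four commutation relations of Lemma \ref{lem:T-S-com}. Its hypotheses are \eqref{eq:tau-rho-S-com2} for $(\rho,S_V)$ and \eqref{eq:tau-rho-S-com1-1}, \eqref{eq:tau-rho-S-com1-2} for $(\tau_i,S_U)$, which are assumed. Applying that lemma to $\tau=\tau_1$ and to $\tau=\tau_2$ gives the required identities.

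The main obstacle is bookkeeping in this last step. The statement of Lemma \ref{lem:T-S-com} contains apparent index typos ($T^{12}$ versus $T^{13}$, and $z_1$ versus $z_2$), and \eqref{eq:qva-double-cond2} writes $S_2^{23}$ where $S_i^{23}$ is meant. I would recheck each identity directly. For example, $T_\tau^{13}(z_1)S_V^{23}(z_2)=S_V^{23}(z_2)T_\tau^{13}(z_1)$ follows by writing both sides as $\tau(-z_1)$ applied after $\rho$ on the third factor and then using $(1\ot\rho)S_V=S_V^{12}(1\ot\rho)$. Similarly, $S_U^{12}(z_1)T_\tau^{23}(z_2)=T_\tau^{23}(z_2)S_U^{12}(z_1)$ follows from \eqref{eq:tau-rho-S-com1-2}, since $T_\tau^{23}$ acts on the second $U$-factor only through $\tau(\cdot,-z_2)$. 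With these corrected identities all hypotheses of Proposition \ref{prop:qva-double-twisted} hold.
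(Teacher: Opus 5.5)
Your argument follows the paper's proof step for step: Lemma~\ref{lem:def-twisting-op} gives the twisting property, Lemma~\ref{lem:T-tau-mult} gives invertibility and \eqref{eq:qyb-com1} via \eqref{eq:T-tau-com2} and \eqref{eq:T-tau-com3}, and Lemma~\ref{lem:T-S-com} gives \eqref{eq:qva-double-cond1} and \eqref{eq:qva-double-cond2}. Your direct re-derivations of the mis-indexed identities from \eqref{eq:tau-rho-S-com2} and \eqref{eq:tau-rho-S-com1-2} are correct.

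The one step that fails is your first justification of two-sidedness. Cocommutativity only lets you swap the Sweedler legs, giving $(\tau_i\inv\ast\tau_i)(h,z)=\sum\tau_i\inv(h_{(1)},z)\tau_i(h_{(2)},z)$. To identify this with $(\tau_i\ast\tau_i\inv)(h,z)=\sum\tau_i(h_{(2)},z)\tau_i\inv(h_{(1)},z)$ you must also reverse the order of composition. That requires $\tau_i$ and $\tau_i\inv$ to commute, and only $\tau_i,\tau_j$ are assumed to commute.

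The one-sided notion is genuinely insufficient, as the following example shows.
\begin{itemize}
\item Take $H=\C[g]$ with $g$ group-like and $\partial=0$, and $V=H$ with $\rho=\Delta$.
\item Take $U=\C[x_1,x_2,\dots]$ with $\partial=0$, and $S_U=S_V=1$.
\item Let $A,B$ be the algebra endomorphisms of $U$ with $A(x_1)=0$, $A(x_{k+1})=x_k$ and $B(x_k)=x_{k+1}$, so that $AB=1\neq BA$.
\item Set $\tau(g^n,z)=A^n$ and $\tau'(g^n,z)=B^n$.
\end{itemize}
These are $H$-module nonlocal vertex algebra structures, $\tau$ commutes with itself, and all the compatibility hypotheses hold trivially. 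Moreover $\tau\ast\tau'=\varepsilon$, yet $T_\tau(z)(x_1\ot g)=Ax_1\ot g=0$, so $T_\tau(z)$ is not invertible.

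So the proposition needs ``invertible'' in the two-sided sense $\tau_i\ast\tau_i\inv=\varepsilon=\tau_i\inv\ast\tau_i$, which is your second option. Lemma~\ref{lem:T-tau-mult} then gives both $T_{\tau_i}(z)T_{\tau_i\inv}(z)=1$ and $T_{\tau_i\inv}(z)T_{\tau_i}(z)=1$. The paper's own proof passes over the same point, since it infers invertibility from the right inverse alone. With the two-sided reading made explicit, your argument is complete.
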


\begin{proof}
Applying Lemma \ref{lem:def-twisting-op}, we get that $T_{\tau_1}(z)$ and $T_{\tau_2}(z)$ are two twisting operators for $(U,V)$.
Note that both $\tau_1$ and $\tau_2$ are invertible.
We get from Lemma \ref{lem:T-tau-mult} that
\begin{align*}
  T_{\tau_i}(z)T_{\tau_i\inv}(z)=T_{\tau_i\ast\tau_i\inv}(z)
  =T_{\varepsilon}(z)=1_U\ot 1_V\quad\te{for }i=1,2,
\end{align*}
which shows that both $T_{\tau_1}(z)$ and $T_{\tau_2}(z)$ are invertible.
Since $H$ is cocommutative and $\tau_i$ commutes with $\tau_j$ ($i,j=1,2$), we get from Lemma \ref{lem:T-tau-mult} that the relation \eqref{eq:qyb-com1} holds for $(T_{\tau_i}(z),T_{\tau_j}(z))$ ($i,j=1,2$).
From Lemma \ref{lem:T-S-com}, we have that the relations \eqref{eq:qva-double-cond1} and \eqref{eq:qva-double-cond2} hold for $T_{\tau_1}(z)$, $T_{\tau_2}(z)$, $S_U(z)$ and $S_V(z)$.
\end{proof}

\section{Quantum affine algebras at root of unity}\label{sec:qaff-rtu}

Let $I=\{1,2,\dots,n\}$,
let $A=(a_{ij})_{i,j\in I}$ be a Cartan matrix, and let $\g=\g(A)$ be the corresponding finite dimensional simple Lie algebra over $\C$.
We fix a realization $(\h,\Pi,\Pi^\vee)$ of $\g$, where $\h$ is a Cartan subalgebra of $\g$, $\Pi=\set{\al_i}{i\in I}\subset\h^\ast$ is the set of simple roots and
$\Pi^\vee=\set{h_i}{i\in I}\subset\h$ is the set of simple coroots.
Define
\begin{align}
    I_L=\set{i\in I}{\al_i\,\,\te{is a long root}},\quad
    I_S=\set{i\in I}{\al_i\,\,\te{is a short root}},
\end{align}
and define
\begin{align}\label{eq:def-r-i}
    r=\begin{cases}
        1,&\mbox{if $\g$ is simply-laced},\\
        2,&\mbox{if $\g$ is of type $B_n,C_n,F_4$},\\
        3,&\mbox{if $\g$ is of type $G_2$},
    \end{cases}
    \quad r_i=\begin{cases}
        1,&\mbox{if }i\in I_S,\\
        r,&\mbox{if }i\in I_L.
    \end{cases}
\end{align}
Let $\wh I=\{0\}\cup I$, and let $\wh A=(a_{ij})_{i,j\in \wh I}$ be the generalized Cartan matrix of the untwisted affine Lie algebra $\wh\g=\g\ot\C[t,t\inv]\oplus \C c$ of $\g$.
Let $r_0=r$.
Then we get from \cite[(6.22)]{Kac-book} and \cite[Table Aff]{Kac-book} that
\begin{align}\label{eq:sym}
    r_ia_{ij}=r_ja_{ji}\quad\te{for }i,j\in \wh I.
\end{align}
Denote by $\theta^\vee$ the coroot of $\g$ of maximal height, and set $h_0=c-\theta^\vee$.
Then $\wh\Pi^\vee=\{h_0\}\uplus \Pi^\vee$ forms a set of simple coroots of $\wh\g$.
Define a bilinear map $\<\cdot,\cdot\>$ on $\h\oplus\C c$ by
\begin{align}
    \<h_i,h_j\>=ra_{ij}/r_j\quad\te{for }i,j\in \wh I.
\end{align}
Let $\Lambda$ be the weight lattice of $\g$, that is,
\begin{align}
  \Lambda=\set{\lambda\in \h^\ast}{\lambda(h_i)\in\Z,\,\,i\in I}.
\end{align}

Let $q$ be an indeterminate, let $\C(q)$ be the field of rational functions of $q$ with complex coefficients, and let $\C[q,q\inv]$ be the ring of Laurent polynomials.
For $r,n\in\N$, $n\ge r$, and an invertible element $a\in \C[q,q\inv]$, we define
\begin{align*}
  [n]_a=\frac{a^n-a^{-n}}{a-a\inv},\quad [n]_a!=[n]_a[n-1]_a\cdots [2]_a[1]_a,\quad \qb{n}{r}_a=\frac{[n]_a!}{[r]_a![n-r]_a!}.
\end{align*}
For each real coroot $h$ of $\wh\g$, we set
\begin{align*}
  q_h=q^{2r/\<h,h\>}.
\end{align*}
For convenience, we set
\begin{align*}
  q_i=q_{h_i}\quad\te{for }i\in \wh I.
\end{align*}
Then it is easy to see that $q_i=q^{r_i}$.
Let $\U_q(\wh\g)$ be the associative algebra over $\C(q)$ which is generated by elements $x_i^\pm, k_i^{\pm 1}$ ($i\in \wh I$) with the following defining relations:
\begin{align*}
  &k_ik_i\inv=k_i\inv k_i=1,\quad k_ik_j=k_jk_i,\\
  &k_ix_j^\pm k_i\inv=q_i^{\pm a_{ij}}x_j^\pm,\quad
   [x_i^+,x_j^-]=\delta_{ij}\frac{k_i-k_i\inv}{q_i-q_i\inv},\\
  &\sum_{k=0}^{1-a_{ij}}(-1)^k\qb{1-a_{ij}}{k}_{q_i}(x_i^\pm)^kx_j^\pm (x_i^\pm)^{1-a_{ij}-k}=0\quad \te{if }i\ne j.
\end{align*}
View $\U_q(\wh\g)$ as an $\C[q,q\inv]$-algebra. Let $\U_q^{\res}(\wh\g)$ be the $\C[q,q\inv]$-subalgebra of $\U_q(\wh\g)$ generated by
\begin{align*}
  &(x_i^\pm)^{(k)}:=(x_i^\pm)^k/[k]_{q_i}!,\quad k_i^{\pm 1}\quad \te{for }i\in\wh I,\quad k\in\Z_+.
\end{align*}
For each invertible element $a$ in an arbitrary associative algebra over $\C(q)$, set
\begin{align*}
  &\qb{a}{0}_q=1,\quad
  \qb{a}{m}_q=\prod_{s=1}^{m}\frac{aq^{1-s}-a\inv q^{s-1}}{q^s-q^{-s}}\quad\te{for }m\in\Z_+.
\end{align*}
Then
\begin{align*}
  \qb{k_i}{m}_{q_i}\in \U_q^{\res}(\wh\g)\quad\te{for }i\in \wh I,\,m\in\N.
\end{align*}
Note that $\displaystyle\qb{k_i}{m}_{q_i}$ is denoted by $\displaystyle\qb{k_i;0}{m}$ in \cite{CP-qaff-rut}.

\begin{lem}\label{lem:qb-mult}
Let $a,b$ be two invertible elements in an arbitrary associative algebra over $\C(q)$, and let $m\in\N$.
Suppose $ab=ba$. Then
\begin{align*}
  \qb{ab}{m}_q=\sum_{k=0}^ma^{m-k}b^{-k}\qb{b}{m-k}_q\qb{a}{k}_q.
\end{align*}
\end{lem}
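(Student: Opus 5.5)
The plan is to reduce the identity to a polynomial identity in two commuting variables, and then verify that identity by specialization to powers of $q$, where it becomes the $q$-Vandermonde identity.

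\textbf{Reduction.} Let $x,y$ be commuting indeterminates and work in the Laurent polynomial ring $\C(q)[x^{\pm1},y^{\pm1}]$. By definition, $\qb{x}{m}_q$ is a Laurent polynomial in $x$ whose coefficients lie in $\C(q)$; the only denominators are the nonzero scalars $q^s-q^{-s}$. Hence both sides of
\begin{align*}
  \qb{xy}{m}_q=\sum_{k=0}^m x^{m-k}y^{-k}\qb{y}{m-k}_q\qb{x}{k}_q
\end{align*}
are elements of $\C(q)[x^{\pm1},y^{\pm1}]$. Since $ab=ba$ and both are invertible, there is a $\C(q)$-algebra homomorphism from $\C(q)[x^{\pm1},y^{\pm1}]$ into the given algebra with $x\mapsto a$, $y\mapsto b$. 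This map sends $\qb{xy}{m}_q\mapsto\qb{ab}{m}_q$ and similarly for the other terms, so it suffices to prove the displayed identity in $\C(q)[x^{\pm1},y^{\pm1}]$.

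\textbf{Specialization.} Let $P(x,y)$ be the difference of the two sides. I will show that $P(q^M,q^N)=0$ for all $M,N\in\N$. Given this, fix $N$: then $P(x,q^N)$ is a Laurent polynomial in $x$ over $\C(q)$ with infinitely many zeros $q^M$. These zeros are pairwise distinct because $q$ is transcendental, so $P(x,q^N)=0$. Writing $P=\sum_j p_j(y)x^j$, each $p_j$ then vanishes at the infinitely many distinct points $q^N$, hence $p_j=0$ and $P=0$.

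At $x=q^n$ with $n\in\N$, the factors in $\qb{q^n}{m}_q$ are
\begin{align*}
  \frac{q^{n+1-s}-q^{s-1-n}}{q^s-q^{-s}}=\frac{[n+1-s]_q}{[s]_q}.
\end{align*}
Therefore $\qb{q^n}{m}_q$ equals the Gaussian binomial $\qb{n}{m}_q$ (and vanishes when $m>n$). The claim $P(q^M,q^N)=0$ thus becomes
\begin{align*}
  \qb{M+N}{m}_q=\sum_{k=0}^m q^{M(m-k)-Nk}\qb{N}{m-k}_q\qb{M}{k}_q .
\end{align*}

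\textbf{The $q$-Vandermonde step.} I will prove this by induction on $N$ with $M$ fixed. The case $N=0$ is immediate, since only the $k=m$ term survives. For the induction step I use the two Pascal rules
\begin{align*}
  \qb{N+1}{j}_q=q^{-j}\qb{N}{j}_q+q^{N+1-j}\qb{N}{j-1}_q=q^{j}\qb{N}{j}_q+q^{-(N+1-j)}\qb{N}{j-1}_q ,
\end{align*}
each of which can be checked directly from $[n]_q$. First expand the left side $\qb{M+N+1}{m}_q$ by Pascal. Then apply the inductive hypothesis to $\qb{M+N}{m}_q$ and to $\qb{M+N}{m-1}_q$. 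Finally regroup by $k$ and apply Pascal again to $\qb{N}{m-k}_q$ and $\qb{N}{m-k-1}_q$ to recover the right side at $N+1$.

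\textbf{Main obstacle.} This last step is purely bookkeeping, but it is where the argument can go wrong. The choice between the two Pascal rules must be made so that the powers of $q$ match the weight $q^{M(m-k)-(N+1)k}$. If the exponents do not align, I will instead verify the specialized identity through the generating function $\prod_{s}(1+q^{2s-1-n}t)=\sum_m\qb{n}{m}_qt^m$ (suitably normalized). With that, the identity reduces to the factorization of the product for $M+N$ into the products for $M$ and for $N$.
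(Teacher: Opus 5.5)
Your argument is correct, but it takes a genuinely different route from the paper.

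\textbf{The paper's route.} The paper proves the identity by direct expansion. It uses the $q$-binomial theorem $\prod_{s=0}^{m-1}(1+q^{2s}z)=\sum_{s}q^{s(m-1)}\qb{m}{s}_qz^s$ to write $\qb{a}{m}_q$ as an explicit combination of the powers $a^{m-2k}$. It substitutes this into the right-hand side and reindexes the resulting triple sum. It then kills all but one term using
\[
\sum_{t=0}^k(-1)^tq^{t(k-1)}\qb{k}{t}_q=\prod_{t=0}^{k-1}(1-q^{2t}),
\]
which vanishes for $k\ge 1$.

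\textbf{Your route.} You pass to the universal case $\C(q)[x^{\pm1},y^{\pm1}]$ and note that $\qb{q^n}{m}_q$ is the Gaussian binomial, vanishing for $m>n$. The identity principle on the grid $\{(q^M,q^N)\}$ then reduces everything to the symmetric $q$-Vandermonde identity.

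Your bookkeeping worry is unfounded. Use the rule $\qb{n+1}{j}_q=q^{-j}\qb{n}{j}_q+q^{n+1-j}\qb{n}{j-1}_q$ twice: for $(n,j)=(M+N,m)$ on the left, and for $(n,j)=(N,m-k)$ on the right. The exponents then match term by term, namely $q^{M(m-k)-Nk-m}$ and $q^{M(m-k)-Nk+N+1-m}$. This needs the convention $\qb{N}{j}_q=0$ for $j<0$ or $j>N$, which is consistent with the vanishing of $\qb{q^N}{j}_q$ for $j>N$. So the generating-function fallback is not needed.

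\textbf{Comparison.} Your route is more conceptual and less error-prone than the paper's chain of index reshufflings. It also shows that the lemma is just $q$-Vandermonde in disguise. The paper's computation is self-contained and never leaves the given algebra, at the cost of that long manipulation.

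Your one use of genericity of $q$ is the distinctness of the $q^M$. This is harmless: the statement is over $\C(q)$, and the paper's expansion also divides by $[k]_q!$.
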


\begin{proof}
Recall the following formula
\begin{align*}
  \prod_{s=0}^{m-1}(1+q^{2s}z)=\sum_{s=0}^mq^{s(m-1)}\qb{m}{s}_qz^s.
\end{align*}
Then we have
\begin{align*}
  &\qb{a}{m}_q=\frac{1}{(q-q\inv)^m}
  \sum_{k=0}^m(-1)^kq^{-m(m-1)/2}q^{k(m-1)}
  \frac{a^{m-2k}}{[k]_q![m-k]_q!}.
\end{align*}
Then
\begin{align*}
  &\sum_{k=0}^ma^{m-k}b^{-k}\qb{b}{m-k}_q\qb{a}{k}_q\\
  =&\frac{1}{(q-q\inv)^m}
  \sum_{k=0}^m\sum_{s=0}^k\sum_{t=0}^{m-k}
  (-1)^{s+t} q^{-k(k-1)/2}q^{-(m-k)(m-k-1)/2}q^{s(k-1)}
  q^{t(m-k-1)}\\
  &\quad\times
  \frac{a^{m-2s}b^{m-2k-2t}}{[s]_q![k-s]_q![t]_q![m-k-t]_q!}\\
  =&\frac{1}{(q-q\inv)^m}
  \sum_{s=0}^m
  (-1)^sq^{-m(m-1)/2}q^{s(m-1)}\frac{(ab)^{m-2s}}{[s]_q![m-s]_q!}\\
  &\quad\times
  \sum_{k=s}^m\sum_{t=0}^{m-k}
  q^{(k+t-s)(m-k-t)}
  \qb{m-s}{k+t-s}_q b^{2s-2k-2t}
  (-1)^tq^{t(k+t-s-1)}\qb{k+t-s}{t}_q\\
  =&\frac{1}{(q-q\inv)^m}
  \sum_{s=0}^m
  (-1)^sq^{-m(m-1)/2}q^{s(m-1)}\frac{(ab)^{m-2s}}{[s]_q![m-s]_q!}\\
  &\quad\times
  \sum_{k=0}^{m-s}q^{k(m-s)}\qb{m-s}{k}_q b^{-2k}
  \sum_{t=0}^k q^{t(k-1)}\qb{k}{t}_q (-1)^t\\
  =&\frac{1}{(q-q\inv)^m}
  \sum_{s=0}^m
  (-1)^sq^{-m(m-1)/2}q^{s(m-1)}\frac{(ab)^{m-2s}}{[s]_q![m-s]_q!}\\
  &\quad\times
  \sum_{k=0}^{m-s}q^{k(m-s)}\qb{m-s}{k}_q b^{-2k}
  \prod_{t=0}^{k-1}(1-q^{2t})\\
  =&\frac{1}{(q-q\inv)^m}
  \sum_{s=0}^m
  (-1)^sq^{-m(m-1)/2}q^{s(m-1)}\frac{(ab)^{m-2s}}{[s]_q![m-s]_q!}
  =\qb{ab}{m},
\end{align*}
which completes the proof.
\end{proof}

Let $T_i$ ($i\in \wh I$) be the $\C(q)$-algebra automorphisms of $\U_q(\wh\g)$ defined by Lusztig \cite{Luztig-quantum-book}:
\begin{align*}
  &T_i(x_i^+)=-x_i^-k_i,\quad T_i(x_i^-)=-k_i\inv x_i^+,\quad T_i(k_j)=k_i^{-a_{ij}}k_j,\\
  &T_i(x_j^+)=\sum_{s=0}^{-a_{ij}}(-1)^{s-a_{ij}}q_i^{-s}
  (x_i^+)^{(-a_{ij}-s)}
  x_j^+(x_i^+)^{(s)}\quad\te{if }i\ne j,\\
  &T_i(x_j^-)=\sum_{s=0}^{-a_{ij}}(-1)^{s-a_{ij}}q_i^s(x_i^-)^{(s)}
  x_j^-(x_i^-)^{(-a_{ij}-s)}\quad\te{if }i\ne j.
\end{align*}
In fact \cite{Luztig-quantum-book}, $T_i$ preserves $\U_q^{\res}(\wh\g)$.
Denote by $\Delta^\times$ the set of all real coroots of $\wh\g$.
For each $h=\sum_{i\in \wh I}a_i h_i\in \oplus_{i\in \wh I}\Z h_i\in\Delta^\times$ of $\wh\g$, we set
\begin{align*}
  k_h^{\pm 1}=\prod_{i\in \wh I}k_i^{\pm a_i}.
\end{align*}
Then for $h\in\Delta^{\vee,\times}$, $i\in \wh I$ and $m\in\N$, we have
\begin{align*}
  T_i(k_h^{\pm 1})=k_{s_i(h)}^{\pm 1},\,\,
  T_i\left(\qb{k_h}{m}_{q_h}\right)=\qb{k_{s_i(h)}}{m}_{q_h},
  \quad\te{where }
  s_i:\Delta^{\vee,\times}\to \Delta^{\vee,\times}:\,\,h\mapsto h-\al_i(h)h_i.
\end{align*}
Note that each $s_i$ is bijective preserving the bilinear form $\<\cdot,\cdot\>$, and for each $h\in\Delta^{\vee,\times}$, there exist $i_1,\dots,i_n\in \wh I$, such that
$s_{i_1}\circ s_{i_2}\circ \cdots \circ s_{i_n}(h)\in \wh I$.
It follows that
\begin{align*}
  k_h^{\pm 1},\quad \qb{k_h;n}{m}_{q_h}\in \U_q^{\res}(\wh\g)
  \quad\te{for }h\in \Delta^\times,\,\,n\in\Z,\,\,m\in\Z_+.
\end{align*}

We also need another realization of $\U_q(\wh\g)$,
due to \cite{Dr-new, beck}.
\begin{thm}
$\U_q(\wh\g)$
is the $\C(q)$-algebra generated by central elements $\gamma^{\pm 1}$ and
\begin{eqnarray}\label{eq:tqagenerators}
\set{ k_i^{\pm 1},\, h_i(m),\  x^\pm_i(n)}
{
   i\in I, \, m\in\Z\setminus\{0\},\,  n\in\Z
},
\end{eqnarray}
subject to the relations in terms of the generating functions
\begin{eqnarray*}
 &&\psi_i^\pm(z)=\sum_{\pm m\ge 0}\psi_i^\pm(m)z^{-m} =k_i^{\pm 1}\exp
    \left(
        \pm (q_i-q_i\inv)\sum\limits_{\pm m> 0}h_i(m)z^{-m}
    \right),\\
 &&x^\pm_i(z)=\sum\limits_{m\in \Z} x^\pm_i(m)z^{-m},
\end{eqnarray*}
The relations are ($i,j\in I$):
%
%
\begin{align}
\label{Q0}\tag{Q0}&\gamma\gamma\inv=\gamma\inv\gamma=1,\quad k_i^{\pm 1} k_i^{\mp 1}=1,\quad k_i^{\pm 1}h_j(m)=h_j(m)k_i^{\pm 1},\\
\label{Q1}\tag{Q1}&  \psi_i^\pm(z_1)\psi_j^\pm(z_2)=\psi_j^\pm(z_2)\psi_i^\pm(z_1) ,\\
\label{Q2}\tag{Q2}& \psi^+_i(z_1)\psi^-_j(z_2)=\psi^-_j(z_2)\psi^+_i(z_1)
    \iota_{z_1,z_2}g_{ij}(z_2/z_1)\inverse g_{ij}(\gamma^{-2}z_2/z_1),\\
\label{Q3}\tag{Q3}& k_ix_j^\pm(z)k_i\inv=q^{\pm r_ia_{ij}}x_j^\pm(z),\\
\label{Q4}\tag{Q4}&\psi^+_i(z_1)x^\pm_j(z_2)=x^\pm_j(z_2)\psi^+_i(z_1)
    \iota_{z_1,z_2}g_{ij}(\gamma^{-1} z_2/z_1)^{\pm 1},\\
\label{Q5}\tag{Q5}&\psi^-_i(z_1)x^\pm_j(z_2)=x^\pm_j(z_2)\psi^-_i(z_1)
    \iota_{z_2,z_1}g_{ji}(\gamma^{\mp 1}z_1/z_2)^{\mp 1},\\
\label{Q6}\tag{Q6}&[x_i^+(z_1),x_j^-(z_2)]
=\frac{\delta_{ij}}{q_i-q_i\inv}
    \Bigg(
        \psi_i^+(z_2\gamma^{-1} )\delta\left(
            \frac{z_2}{z_1}
        \right)
        -
        \psi_i^-(z_2\gamma^{-1})\delta\left(
            \frac{z_2 \gamma^{-2} }{z_1}
        \right)
    \Bigg),\\
\label{Q7}\tag{Q7}&(z_1-q_i^{\pm a_{ij}}z_2)x^\pm_i(z_1)x^\pm_j(z_2)=
    ( q_i^{\pm a_{ij}}z_1-z_2)x^\pm_j(z_2)x^\pm_i(z_1),\\
\label{Q8}\tag{Q8}&\sum_{\sigma\in S_{{m}_{ij}}}\sum_{k=0}^{{m}_{ij}}
    (-1)^k \qb {{m}_{ij}}{k}_{q_i}x_i^\pm(z_{\sigma(1)})\cdots x_i^\pm(z_{\sigma(k)}) x_j^\pm(w)\\
&\nonumber\quad\quad\times
       x_i^\pm(z_{\sigma(k+1)})\cdots x_i^\pm(z_{\sigma({m}_{ij})})
\ =0,\quad
    \te{if}\ \ {a}_{ij}\le 0,
\end{align}
where $m_{ij}=1-a_{ij}$, $g_{ij}(z)=(q_i^{a_{ij}}-z)/(1-q_i^{a_{ij}}z)$,
and the map $\iota_{z_1,z_2}$ is defined as in \cite[\S 3.1]{fhl}.
\end{thm}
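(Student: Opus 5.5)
This theorem is Drinfeld's new realization, as proved by Beck; the paper cites it from \cite{Dr-new, beck} rather than proving it. The plan I would follow is Beck's, organized through the Lusztig automorphisms $T_i$ recalled above. Write $\mathcal D$ for the abstract $\C(q)$-algebra given by the generators \eqref{eq:tqagenerators}, $\gamma^{\pm 1}$ and the relations \eqref{Q0}--\eqref{Q8}. I would build a homomorphism $\Phi:\mathcal D\to\U_q(\wh\g)$ and an inverse $\Psi:\U_q(\wh\g)\to\mathcal D$.

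\textbf{Constructing $\Phi$.} First extend the action of $T_i$ ($i\in\wh I$) to the extended affine braid group, adjoining diagram automorphisms where necessary. For each fundamental coweight $\omega_i^\vee$ ($i\in I$), let $T_{\omega_i}$ be the corresponding translation element. Then set
\begin{align*}
  &\Phi(\gamma)=k_{c},\qquad \Phi(k_i)=k_i,\\
  &\Phi(x_i^+(m))=o(i)^m T_{\omega_i}^{-m}(x_i^+),\qquad
   \Phi(x_i^-(m))=o(i)^m T_{\omega_i}^{m}(x_i^-),
\end{align*}
where $o:I\to\{\pm1\}$ is a sign function that alternates along the Dynkin diagram. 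Define $\Phi(h_i(m))$ by solving the $m$-th coefficient of \eqref{Q6}, that is, by expressing $\psi_i^\pm(m)$ through $[x_i^+(m),x_i^-(0)]$ and then taking logarithms.

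\textbf{Checking the relations under $\Phi$.} The main tool is that $T_{\omega_j}$ fixes $x_i^\pm$ and $k_i$ for $j\neq i$, and that the translations $T_{\omega_i}$ pairwise commute. Together these reduce every relation to rank-one and rank-two computations:
\begin{enumerate}
\item \eqref{Q0}, \eqref{Q1} and \eqref{Q3} follow from the fact that the $T_\lambda$ commute and from how weights transform.
\item \eqref{Q6} for $i=j$ and \eqref{Q4}, \eqref{Q5}, \eqref{Q7} for $i=j$ reduce, via the embedding $\U_{q_i}(\wh{\ssl}_2)\to\U_q(\wh\g)$ attached to $i$, to the $\wh{\ssl}_2$ case. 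In that case one checks directly that the $\psi^\pm$ are commutative, and proves the $q$-commutation of real root vectors by induction using Levendorskii--Soibelman type convexity of the PBW ordering.
\item For $i\neq j$, use the subalgebra generated by $x_i^\pm$, $x_j^\pm$ and the translations to reduce to rank two.
\item \eqref{Q2} follows from \eqref{Q4}, \eqref{Q5} and \eqref{Q6} by a standard Jacobi-type argument.
\end{enumerate}

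\textbf{Constructing $\Psi$ and finishing.} Send $x_i^\pm, k_i$ ($i\in I$) to $x_i^\pm(0), k_i$. Send $k_0$ to $\gamma k_\theta^{-1}$. Send $x_0^\pm$ to a suitably normalized iterated $q$-commutator of $x^\mp_{i_1}(\pm1),x^\mp_{i_2}(0),\dots$ along a reduced path to $\theta$, with the sign and power of $k_\theta$ chosen so that $[x_0^+,x_0^-]$ comes out right. One must then check the Chevalley--Serre relations involving the index $0$ inside $\mathcal D$. Finally, verify $\Phi\Psi=\id$ on the generators $x_i^\pm,k_i$ ($i\in\wh I$), which forces $\Phi$ to be surjective, and $\Psi\Phi=\id$ on the generators of $\mathcal D$, which gives injectivity. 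The latter needs that $\Psi(T_{\omega_i})$ acts on $\mathcal D$ as the shift $x_j^\pm(m)\mapsto x_j^\pm(m\mp\delta_{ij})$, up to sign; this I would prove by realizing the shift as an automorphism of $\mathcal D$ and comparing both on generators.

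\textbf{Main obstacle.} The hardest steps are the Drinfeld--Serre relation \eqref{Q8} in the image of $\Phi$, and dually the affine Serre relations between $x_0^\pm$ and $x_i^\pm$ under $\Psi$. For \eqref{Q8} I would try Damiani's reduction first. One reduces to showing the relation when the $z_{\sigma(k)}$-modes are all equal, by a symmetrization argument that uses \eqref{Q7}. That case is the ordinary Serre relation transported by $T_{\omega_i}^{-m}$. Any remaining case is handled by commuting with $h_j(\pm1)$, which raises or lowers the modes.
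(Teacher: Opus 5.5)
The paper does not prove this theorem; it only quotes it from Drinfeld and Beck. Your outline is Beck's argument, as completed in later work (e.g.\ by Damiani), so in approach you agree with what the paper relies on. Three details would fail as written, however.

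First, your $\Phi$ uses Beck's normalization, but the paper's (Q4)--(Q6) distribute the powers of $\gamma$ differently. The coefficient of $z_1^{-m}z_2^{m}$ in (Q6) requires $[x_i^+(m),x_i^-(-m)]=(k_i-\gamma^{-2m}k_i^{-1})/(q_i-q_i^{-1})$. Your images give $T_{\omega_i}^{-m}\big((k_i-k_i^{-1})/(q_i-q_i^{-1})\big)=(k_ik_c^{m}-k_i^{-1}k_c^{-m})/(q_i-q_i^{-1})$, since $T_{\omega_i}^{-m}(x_i^+)$ has weight $\alpha_i+m\delta$. With $\Phi(\gamma)=k_c$ this is off by the central factor $k_c^{m}$. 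Take instead $\Phi(x_i^+(m))=o(i)^m k_c^{-m}T_{\omega_i}^{-m}(x_i^+)$ and read off $\psi_i^\pm$ from (Q6). After that renormalization, (Q2), (Q4) and (Q5) also come out in the paper's form, with only integer powers of $k_c$.

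Second, (Q1) does not follow from the translations commuting, since that only yields invariance statements. Obtain (Q1) together with (Q2) by conjugating (Q6) for the index $j$ by $\psi_i^+(z_1)$, using (Q4). On the support of $\delta(z_2/z_1)$ the two $g$-factors cancel, which gives (Q1).

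Third, in your reduction of (Q8), $\mathrm{ad}\,h(\pm1)$ only multiplies the Serre series by $p_{\pm1}(z)=\sum_t z_t^{\pm1}$, up to harmless shifts in $w$. So propagating from the equal-mode case only controls the pairing of the Serre coefficients with $\C[e_1,e_{m-1},e_m^{\pm1}]$, where $m=m_{ij}$. For $m=4$ (type $G_2$) this is a proper subring of the symmetric Laurent polynomials. Use $\mathrm{ad}\,h_i(r)$ for all $r\neq0$ instead; (Q4)--(Q5) give these equally well.

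Beyond these points, the steps that carry the real weight are named but not supplied:
\begin{itemize}
\item the rank-one embedding $\U_{q_i}(\wh{\ssl}_2)\to\U_q(\wh\g)$ compatible with $T_{\omega_i}$, which is Beck's key lemma;
\item the rank-two computations for $i\neq j$;
\item the Drinfeld--Jimbo relations involving the index $0$ (affine Serre relations and $[\Psi(x_0^+),\Psi(x_0^-)]$) inside $\mathcal D$, using only (Q0)--(Q8);
\item the identity $\Psi\circ T_{\omega_i}=(\text{shift})\circ\Psi$, which needs $T_{\omega_i}(x_i^\pm)$ and $T_{\omega_i}(x_0^\pm)$ as explicit words in the Chevalley generators.
\end{itemize}
Since the paper also defers all of this to the literature, that is acceptable here. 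But what you have written is a roadmap of the cited proofs rather than a proof.
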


Let $\U_q(\wh\g)^\pm$ be the subalgebra of $\U_q(\wh\g)$ generated by $x_i^\pm(n)$, $i\in I$, $n\in \Z$, and let
$\U_q(\wh\g)^0$ be the subalgebra of $\U_q(\wh\g)$ generated by $h_i(m)$, $k_i^{\pm 1}$, $\gamma^{\pm 1}$, $i\in I$, $0\ne m\in \Z$.
Then $\U_q(\wh\g)^0$ (resp. $\U_q(\wh\g)^\pm$) is isomorphic to the $\C(q)$-algebra generated by $h_i(m)$, $k_i^{\pm 1}$, $\gamma^{\pm 1}$
(resp. $x_i^\pm(n)$) subject to relations \eqref{Q0}, \eqref{Q1}, \eqref{Q2} (resp. \eqref{Q7}, \eqref{Q8}).
Moreover, the multiplication gives an isomorphism of vector spaces
\begin{align*}
  \U_q(\wh\g)\cong \U_q(\wh\g)^-\ot_{\C(q)} \U_q(\wh\g)^0\ot_{\C(q)} \U_q(\wh\g)^+.
\end{align*}



Denoted by $\U_q^{\res}(\wh \g)^\pm$ the $\C[q,q\inv]$-subalgebra of $\U_q(\wh\g)$ generated by $(x_i^\pm(n))^{(m)}$, $i\in I$, $n\in \Z$,
$m\in \Z_+$.
Denoted by $\U_q^{\res}(\wh\g)^0$ the $\C[q,q\inv]$-subalgebra of $\U_q(\wh\g)$ generated by $k_i$, $\displaystyle\qb{k_i}{m}_{q_i}$ for $i\in \wh I$, $m\in\Z_+$, and $\wt h_i(n)$ for $i\in I$, $0\ne n\in\Z$, where $$\wt h_i(n)=n h_i(n)/[n]_{q_i}.$$

\begin{rem}
For each $i\in I$, we have that
\begin{align}
  \label{Q9}\tag{Q9}\psi_i^\pm(z)=k_i^{\pm 1}
  \exp\left( \pm\sum_{\pm m>0}(q_i^m-q_i^{-m})\wt h_i(m)\frac{z^{-m}}{m} \right).
\end{align}
\end{rem}

The following result was proved in \cite[Proposition 6.1]{CP-qaff-rut}.

\begin{prop}\label{prop:res-form}
We have $\U_q^{\res}(\wh\g)^\pm\subset \U_q^{\res}(\wh\g)$, $\U_q^{\res}(\wh\g)^0\subset \U_q^{\res}(\wh\g)$.
The algebra $\U_q^{\res}(\wh\g)$ is generated by the subalgebras $\U_q^{\res}(\wh\g)^+$, $\U_q^{\res}(\wh\g)^-$, $\gamma^{\pm 1}$ and $k_i^{\pm 1}$, $i\in I$.
Moreover, the multiplication gives an isomorphism of $\C[q,q\inv]$-modules
\begin{align*}
  &\U_q^{\res}(\wh\g)\cong \U_q^{\res}(\wh\g)^-\ot_{\C[q,q\inv]} \U_q^{\res}(\wh\g)^0\ot_{\C[q,q\inv]} \U_q^{\res}(\wh\g)^+.
\end{align*}
\end{prop}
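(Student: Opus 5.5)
The statement is \cite[Proposition 6.1]{CP-qaff-rut}. I would prove it by reduction to the Beck–Chari–Pressley description of restricted integral forms via Lusztig's braid group action. There are three tasks, taken in order: show that each triangular piece lies in $\U_q^{\res}(\wh\g)$, then generation, then the triangular decomposition.

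\textbf{Containment.} I would express Drinfeld generators as images of Chevalley generators under braid group elements. Following Beck, let $\omega_i^\vee$ be the fundamental coweights and $T_{\omega_i^\vee}$ the corresponding element of the extended braid group; one may need a diagram automorphism, which also preserves the integral form. Up to signs and powers of $k$ and $\gamma$ one has
\begin{align*}
x_i^+(n)=\pm T_{\omega_i^\vee}^{-n}(x_i^+),\qquad x_i^-(n)=\pm T_{\omega_i^\vee}^{n}(x_i^-)\,\times(\text{units}).
\end{align*}
Since $T_i$ preserves $\U_q^{\res}(\wh\g)$ and is an algebra automorphism, it sends divided powers to divided powers. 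Hence $(x_i^\pm(n))^{(m)}\in\U_q^{\res}(\wh\g)$, which gives $\U_q^{\res}(\wh\g)^\pm\subset\U_q^{\res}(\wh\g)$.

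For the Cartan part, $k_i$ and $\qb{k_i}{m}_{q_i}$ are already in $\U_q^{\res}$, including for $i=0$ (as shown just before the statement). It remains to treat $\wt h_i(n)$. The commutator $[x_i^+(n),x_i^-(0)]$ equals $\psi_i^\pm(n)/(q_i-q_i\inv)$, and this lies in $\U_q^{\res}$ by the divided-power commutation formulas, which are integral. Indeed the quantity $\psi_i^\pm(n)/(q_i-q_i\inv)$ is, up to a unit, the quantity usually called $\hat P$-type, which is integral.

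Taking the logarithm in \eqref{Q9} expresses $(q_i^n-q_i^{-n})\wt h_i(n)/n$ through the $\psi$'s. To get integrality of $\wt h_i(n)$ itself, I would use the Garland–Chari–Pressley series $\mathcal P_i^\pm(z)=\exp\big(-\sum \frac{h_i(\pm m)}{[m]_{q_i}}z^{\pm m}\big)$. Its coefficients arise as limits of $(x_i^+(0))^{(m)}(x_i^-(1))^{(m)}$, exactly as in the $\widehat{\mathfrak{sl}}_2$ computation of Chari–Pressley. That identity shows $\mathcal P_{i,m}\in\U_q^{\res}$, and Newton's identities then give $\wt h_i(n)=nh_i(n)/[n]_{q_i}$ as an integral polynomial in the $\mathcal P_{i,m}$. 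This rank-one identity, transported to each $i$ via $T_{\omega_i^\vee}$ and $q\mapsto q_i$, is the main obstacle; I would reproduce the Chari–Pressley $\widehat{\mathfrak{sl}}_2$ identity carefully.

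\textbf{Generation and decomposition.} For generation, the Chevalley generators satisfy $x_i^\pm=x_i^\pm(0)$ for $i\in I$. For $i=0$, $x_0^\pm$ is, up to units, $T_{\omega^\vee}$-conjugate to $x_\theta^\mp(\pm1)$. This root vector lies in the subalgebra generated by $\U_q^{\res,\pm}$, $\gamma^{\pm1}$ and $k_i^{\pm1}$, together with the $\qb{k}{m}$ obtained from commutators of divided powers via Lusztig's formula $[x^{+(a)},x^{-(b)}]$. The divided powers of $x_0^\pm$ arise the same way.

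For the decomposition, surjectivity of multiplication follows from the straightening relations: \eqref{Q4}, \eqref{Q5}, \eqref{Q6} and their divided-power versions are integral, so $\U_q^{\res,+}$ times $\U_q^{\res,-}$ can be reordered into $-,0,+$ order. Injectivity comes from the $\C(q)$ triangular isomorphism stated earlier, once each factor is known to be a free $\C[q,q\inv]$-lattice. For that I would use PBW bases of divided powers of root vectors (Beck's convex order) and the fact that $\U_q^{\res,0}$ is spanned by monomials in $\mathcal P_{i,m}$, $\qb{k_i}{m}$ and $k_i$. Comparing ranks over $\C(q)$ then gives the isomorphism.
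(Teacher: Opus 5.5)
\textbf{Overall.} The paper gives no argument here; it quotes the proposition from \cite[Proposition 6.1]{CP-qaff-rut}. Your outline follows the route behind that reference, and several parts of it are sound. These are Beck's formulas $x_i^\pm(n)=\pm T_{\omega_i^\vee}^{\mp n}(x_i^\pm)$ up to units, which give $\U_q^{\res}(\wh\g)^\pm\subset\U_q^{\res}(\wh\g)$; the rank-one $\mathcal P$-series; and Newton's identity via $u\frac{d}{du}\log\mathcal P_i^\pm(u)$, which expresses $\wt h_i(n)$ integrally in the $\mathcal P_{i,m}$.

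\textbf{Gaps.} At the three places where the cited work does the real work, your sketch asserts rather than proves. In the first and third, the mechanism you state would not suffice.

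First, suppose the Chari--Pressley identity is used in the form $(x_i^+(0))^{(m)}(x_i^-(1))^{(m)}\equiv c\,k_i^m\mathcal P_{i,m}$ modulo the left ideal generated by the $x_i^+(k)$. That is what extracting a ``limit'' or highest-weight component gives. In this form it does not show $\mathcal P_{i,m}\in\U_q^{\res}(\wh\g)$, because the discarded terms must themselves be integral. You need an exact straightening formula with coefficients in $\C[q,q\inv]$, applied inductively in $m$.

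Second, surjectivity of the multiplication map is precisely such integral straightening: of $(x_i^+(a))^{(r)}(x_i^-(b))^{(s)}$, and of $\U_q^{\res}(\wh\g)^0$ past divided powers. The relations \eqref{Q4}--\eqref{Q6} are identities over $\C(q)$. The statement ``their divided-power versions are integral'' is the claim to be proved, not a consequence of them.

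Third, generation, and equally the inclusion $\U_q^{\res}(\wh\g)\subseteq\U_q^{\res}(\wh\g)^-\U_q^{\res}(\wh\g)^0\U_q^{\res}(\wh\g)^+$, require $(x_0^\pm)^{(m)}$ to lie in the right place. Up to units these are the divided powers of the non-simple root vector $x_\theta^\mp(\pm1)$, and they must lie in $\U_q^{\res}(\wh\g)^\mp$. Lusztig's formula for $E^{(a)}F^{(b)}$ inside a single $U_q(\mathfrak{sl}_2)$ produces the $\qb{k}{m}$. It gives no way to divide $(x_0^\pm)^m$ by $[m]_{q_0}!$ inside that subalgebra. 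A separate Lusztig-type integrality argument for divided powers of non-simple root vectors, via the braid group, is needed here and is missing.

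\textbf{Injectivity.} This part is easier than your plan suggests, and ``comparing ranks over $\C(q)$'' is not a valid step for modules of infinite rank. No PBW bases are needed. Each of the three factors is a $\C[q,q\inv]$-submodule of the $\C(q)$-vector space $\U_q(\wh\g)$, hence torsion-free, hence flat over the PID $\C[q,q\inv]$. So their tensor product is torsion-free and embeds into its localization $\U_q(\wh\g)^-\ot_{\C(q)}\U_q(\wh\g)^0\ot_{\C(q)}\U_q(\wh\g)^+$. Multiplication on this localization is injective by the triangular decomposition over $\C(q)$ stated earlier in the paper.
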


Let $\wp\in \Z$ such that $\wp>2r$, and let $\ell$ be an integer.
Let $\zeta$ be a $\wp$-th primitive root of unity.
For $i\in \wh I$, we set $\zeta_i=\zeta^{r_i}$ and $\wp_i=\wp/\gcd(\wp,2r_i)$.
Denote by $\C_\zeta$ the one-dimensional $\C[q,q\inv]$-module by letting $q\mapsto \zeta$.
Define
\begin{align}
  \U_\zeta(\wh\g)=\U_q^{\res}(\wh\g)\ot_{\C[q,q\inv]}\C_\zeta.
\end{align}

Note that $k_0^{\pm 1}$ can also be written as follows:
\begin{align*}
  &k_0^{\pm 1}=\gamma^{\pm 1} k_{\theta^\vee}^{\mp 1}.
\end{align*}
Since $\<\theta^\vee,\theta^\vee\>=\<h_0,h_0\>$, Lemma \ref{lem:qb-mult} yields the following result.
\begin{lem}
For each $m\in\N$,
\begin{align*}
  \qb{\gamma}{m}_{q_0}\in \U_q^{\res}(\wh\g).
\end{align*}
\end{lem}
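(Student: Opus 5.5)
The plan is to apply Lemma \ref{lem:qb-mult} to the factorization $\gamma = k_0 k_{\theta^\vee}$ and then solve for $\qb{\gamma}{m}_{q_0}$ by induction on $m$.

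Set $a=k_0$ and $b=k_{\theta^\vee}$. These commute and are invertible, and $ab=\gamma$ because $k_0=\gamma k_{\theta^\vee}^{-1}$. Since $\<\theta^\vee,\theta^\vee\>=\<h_0,h_0\>$, we have $q_{\theta^\vee}=q_{h_0}=q_0$. All quantum binomials below are therefore taken with respect to the single parameter $q_0$. Lemma \ref{lem:qb-mult} with $q$ replaced by $q_0$ gives
\begin{align*}
  \qb{\gamma}{m}_{q_0}=\sum_{k=0}^m k_0^{m-k}k_{\theta^\vee}^{-k}\qb{k_{\theta^\vee}}{m-k}_{q_0}\qb{k_0}{k}_{q_0}.
\end{align*}

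Next, check that every factor on the right lies in $\U_q^{\res}(\wh\g)$. Clearly $k_0^{\pm 1}\in \U_q^{\res}(\wh\g)$, and $\qb{k_0}{k}_{q_0}\in \U_q^{\res}(\wh\g)$ by the earlier observation. Also $\theta^\vee$ is a real coroot, so $k_{\theta^\vee}^{\pm1}$ and $\qb{k_{\theta^\vee}}{j}_{q_{\theta^\vee}}$ lie in $\U_q^{\res}(\wh\g)$ by the Lusztig braid group argument recalled above: $T_i$ preserves the restricted form and carries $\qb{k_h}{m}_{q_h}$ to $\qb{k_{s_i(h)}}{m}_{q_h}$, and $\theta^\vee$ is $W$-conjugate to a simple coroot. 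Since $q_{\theta^\vee}=q_0$, every term of the sum lies in $\U_q^{\res}(\wh\g)$, hence so does $\qb{\gamma}{m}_{q_0}$.

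This direct argument needs no induction. The inductive route is only a fallback: if one insisted on expressing things in the other direction, e.g. writing $\gamma$ as $k_0\cdot k_{\theta^\vee}$ in a form where some factor is not yet known to be in the restricted form, one would isolate the $k=m$ term and induct on $m$. Here that is unnecessary.

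The main point needing care is the identification $q_{\theta^\vee}=q_0$, which is exactly where $\<\theta^\vee,\theta^\vee\>=\<h_0,h_0\>$ is used. Without it, the binomials in Lemma \ref{lem:qb-mult} would involve a parameter different from the one for which membership in $\U_q^{\res}(\wh\g)$ is known. A second, minor point is that Lemma \ref{lem:qb-mult} is stated over $\C(q)$ with parameter $q$, whereas here it is applied with $q$ replaced by $q_0=q^{r}$; the lemma's proof is purely formal in the parameter, so this substitution is legitimate.
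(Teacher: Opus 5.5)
Your proposal is correct and is exactly the paper's argument: write $\gamma=k_0k_{\theta^\vee}$, use $\<\theta^\vee,\theta^\vee\>=\<h_0,h_0\>$ to get $q_{\theta^\vee}=q_0$, and apply Lemma \ref{lem:qb-mult}. Each factor then lies in $\U_q^{\res}(\wh\g)$ by the braid-group observation, so the paragraph about an inductive fallback is superfluous and can be dropped.
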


A $\U_\zeta(\wh\g)$-module $W$ is called of \emph{level $\ell$} if
\begin{align*}
  \gamma=\zeta_0^{\ell}\quad\te{and}\quad \qb{\gamma}{\wp_0}_{q_0}=\qb{\ell}{\wp_0}_{\zeta_0}\quad\te{on }W,
\end{align*}
is called \emph{smooth} if for each $w\in W$, $\exists N\in\N$ such that $\wt h_i(n)w=0=\psi_i^+(n)w=x_i^\pm(n)w$ for $n>N$,
and is called \emph{weighted} if
  $W=\oplus_{\lambda\in \Lambda}W_\lambda$, where
      \begin{align*}
      W_\lambda=\set{w\in W}{  k_iw=\zeta_i^{\lambda(h_i)}w,\,
        \qb{k_i}{\wp_i}_{q_i}w=\qb{\lambda(h_i)}{\wp_i}_{\zeta_i}w,
        \,\,i\in I}.
      \end{align*}
%
%

Let $\mathcal R_\zeta^\ell(\wh\g)$ be the category of smooth $\U_\zeta(\wh\g)$-modules of level $\ell$.
The main purpose of the rest of this section is to give another description of the object in $\mathcal R_\zeta^\ell(\wh\g)$.
Define
\begin{align}
  &\vvp{\,\cdot\,}:\Z[q,q\inv]\longrightarrow \Z\quad\te{by}\quad
  \sum_{n\in\Z}a_nq^n\mapsto \sum_{n\in\Z}a_{n\wp},\\
  &\vvinf{\,\cdot\,}:\Z[q,q\inv]\longrightarrow \Z\quad\te{by}\quad
  \sum_{n\in\Z}a_nq^n\mapsto a_0.
\end{align}

\begin{de}
Let $\mathcal R_\zeta'^\ell(\wh\g)$ be the category, whose objects are vector spaces over $\C$ equipped with fields
$(i\in I)$
\begin{align*}
  H_i(z)=\sum_{n\in\Z}H_i(n)z^{-n},\quad\Psi_i^\pm(z)=\sum_{n\in\Z}\Psi_i^\pm(n)z^{-n},\quad X_i^\pm(z)=\sum_{n\in\Z}X_i^\pm(n)z^{-n}\in \E(W),
\end{align*}
satisfying the relations
\begin{align}
  &\label{zeta1}\tag{$\zeta$1}[H_i(z_1),H_j(z_2)]\\
  &\quad\nonumber
  =\sum_{s\in \Z_\wp}\left(
  \vvp{ [a_{ij}]_{q_i}[r\ell/r_j]_{q_j}
  q^{-r\ell-s}}\iota_{z_1,z_2}
  -
  \vvp{ [a_{ij}]_{q_i}[r\ell/r_j]_{q_j}
  q^{r\ell-s}}\iota_{z_2,z_1}\right)
  \frac{\zeta^sz_2/z_1}{(1-\zeta^sz_2/z_1)^2},\\
  &\label{zeta2}\tag{$\zeta$2}[H_i(z_1),\Psi_j^\pm(z_2)]
  =\pm \Psi_j^\pm(z_2)
  \\
  &\quad\nonumber\times
  \sum_{s\in\Z_\wp}\left(
  \vvp{ [a_{ij}]_{q_i}(q^{-2r\ell}-1)q^{-s} }
  \iota_{z_1,z_2}
  -\vvp{ [a_{ij}]_{q_i}(1-q^{2r\ell})q^{-s} }
  \iota_{z_2,z_1}
  \right)
  \frac{1+\zeta^sz_2/z_1}{2-2\zeta^sz_2/z_1},\\
  &\label{zeta3}\tag{$\zeta$3}
  \iota_{z_1,z_2}\prod_{s\in\Z_\wp}(1-\zeta^sz_2/z_1)^{
  \epsilon_1\epsilon_2\vvp{
    (q_i^{a_{ij}}-q_i^{-a_{ij}})(1-q^{-2r\ell})q^{-s}
  }}
  \Psi_i^{\epsilon_1}(z_1)\Psi_j^{\epsilon_2}(z_2)\\
  &\quad\nonumber=
  \iota_{z_2,z_1}\prod_{s\in\Z_\wp}(1-\zeta^sz_2/z_1)^{
  \epsilon_1\epsilon_2\vvp{
    (q_i^{a_{ij}}-q_i^{-a_{ij}})(q^{2r\ell}-1)q^{-s}
  }}
  \Psi_j^{\epsilon_2}(z_2)\Psi_i^{\epsilon_1}(z_1),\\
  &\label{zeta4}\tag{$\zeta$4}
  \lim_{z_1\to z_2}\prod_{s\in\Z_\wp}(1-\zeta^sz_2/z_1)^{
  \vvp{
    (q_i^2-q_i^{-2})(q^{-2r\ell}-1)q^{-s}
  }}
  \Psi_i^\pm(z_1)\Psi_i^\mp(z_2)=1,\\
  &\label{zeta5}\tag{$\zeta$5}[H_i(z_1),X_j^\pm(z_2)]=\pm X_j^\pm(z_2)\\
  &\quad\nonumber\times
  \sum_{s\in\Z_\wp}
  \left(
  \vvp{[a_{ij}]_{q_i}q^{-r\ell-s}}\iota_{z_1,z_2}
  -\vvp{[a_{ij}]_{q_i}q^{r\ell-s}}\iota_{z_2,z_1}
  \right)
  \frac{1+\zeta^sz_2/z_1}{2-2\zeta^sz_2/z_1},\\
  &\label{zeta6}\tag{$\zeta$6}
  \iota_{z_1,z_2}\prod_{s\in\Z_\wp}(1-\zeta^sz_2/z_1)^{-\epsilon_1\epsilon_2\vvp{(q_i^{a_{ij}}-q_i^{-a_{ij}})q^{-r\ell-s}}}
  \Psi_i^{\epsilon_1}(z_1) X_j^{\epsilon_2}(z_2)\\
  &\quad\nonumber=
  \iota_{z_2,z_1}\prod_{s\in\Z_\wp}(1-\zeta^sz_2/z_1)^{-\epsilon_1\epsilon_2
  \vvp{(q_i^{a_{ij}}-q_i^{-a_{ij}})q^{r\ell-s}}}
  X_j^{\epsilon_2}(z_2)\Psi_i^{\epsilon_1}(z_1),\\
  &\label{zeta7}\tag{$\zeta$7}(1-\zeta_i^{a_{ij}}z_2/z_1)X_i^\pm(z_1)X_j^\pm(z_2)
    =\zeta_i^{a_{ij}}(1-\zeta_i^{-a_{ij}}z_2/z_1)X_j^\pm(z_2)X_i^\pm(z_1),\\
  &\label{zeta8}\tag{$\zeta$8}X_i^+(z_1)X_j^-(z_2)
    -\zeta_i^{-a_{ij}}\iota_{z_2,z_1}
  \frac{1-\zeta_i^{a_{ij}}z_2/z_1}{1-\zeta_i^{-a_{ij}}z_2/z_1}
  X_j^-(z_2)X_i^+(z_1)\\
  &\quad\nonumber=\delta_{ij} \left(\delta\left(\frac{z_2}{z_1}\right)-\Psi_j^+(z_2\zeta^{-r\ell}) \delta\left(\frac{z_2\zeta^{-2r\ell}}{z_1}\right)\right),\\
  &\label{zeta-weight-mod}\tag{$\zeta$9}\te{for each $i\in I$, $H_i(0)$ acts semi-simply on $W$ with eigenvalues in $\Z$},\\
  &\label{zeta-Psi-def2}\tag{$\zeta$10}\Psi_i^\pm(z)
    =\zeta_i^{\mp 2H_i(0)}\exp\left(\sum_{m<0}(\zeta_i^m-\zeta_i^{-m})H_i(m)\frac{z^{-m}}{\mp m}\right)
    \exp\left(\sum_{m>0}(\zeta_i^m-\zeta_i^{-m})H_i(m)\frac{z^{-m}}{\mp m}\right).
\end{align}
\end{de}

\begin{rem}
The relations \eqref{zeta2}, \eqref{zeta3}, \eqref{zeta4} and \eqref{zeta6} follows from the relations \eqref{zeta1}, \eqref{zeta5} and \eqref{zeta-Psi-def2}.
\end{rem}

Let $\phi(x,z)=xe^z=\exp(zx\pd x)x$ be an associate of $F_a(x,y)=x+y$, and let $\Z_\wp=\Z/\wp\Z$.
Define 
$\chi_\phi:\Z_\wp\to \C^\times$ by letting $s\mapsto\zeta^{-s}$.
For $(W,H_i(z),\Psi_i^\pm(z),X_i^\pm(z))\in \obj\mathcal R_\zeta'^\ell(\wh\g)$, we note that
\begin{align}\label{eq:U-W}
  U_W=\set{H_i(\zeta^s z),\Psi_i^\pm(\zeta^s z),X_i^\pm(\zeta^s z)}{i\in I,\,s\in\Z_\wp}
\end{align}
is a $\chi_\phi(\Z_\wp)$-compatible subset of $\E(W)$, and is $\chi_\phi(\Z_\wp)$-invariant.
From Theorem \ref{thm:G-mod-nonlocal-VA-gen} we have the following result.

\begin{prop}\label{prop:qaff-mod-to-module-nva}
$(\<U_W\>_\phi,R)$ is a $\Z_\wp$-module nonlocal vertex algebra with vacuum $1_W$, vertex operator map $Y_\E^\phi$ and group homomorphism
$R:\Z_\wp\to \End_\C(\<U_W\>_\phi)$ defined by $R(s)a(z)=a(\zeta^s z)$,
and $W$ is a $(\Z_\wp,\chi_\phi)$-equivariant $\phi$-coordinated quasi $\<U_W\>_\phi$-module with module action
$Y_W^\phi(a(z),z_0)=a(z_0)$.
\end{prop}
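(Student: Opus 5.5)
The plan is to obtain the proposition as a direct specialization of Theorem \ref{thm:G-mod-nonlocal-VA-gen}. We take $G=\chi_\phi(\Z_\wp)=\set{\zeta^s}{s\in\Z_\wp}\subset\C^\times$, with $\chi_\phi$ the inclusion, and $\chi$ the trivial character. For $\phi(x,z)=xe^z$ we have $p(z)=z$ in \eqref{eq:phi-exp}. Condition \eqref{eq:char-phi-compatible-alt} then reads $\chi_\phi(g)z=\chi(g)^{-1}\chi_\phi(g)z$, and this holds exactly when $\chi$ is trivial. So the hypotheses on the characters are met, and $(\<U_W\>_\phi,R)$ will be a $(G,1)$-module, i.e.\ a $G$-module, nonlocal vertex algebra. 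Pulling back along the isomorphism $\Z_\wp\cong G$, $s\mapsto\zeta^{-s}$, gives $R(s)a(z)=a(\chi_\phi(s)^{-1}z)=a(\zeta^sz)$, which is the stated formula. The module statement, with $Y_W^\phi(a(z),z_0)=a(z_0)$ and equivariance, comes directly from the same theorem together with Theorem \ref{thm:nonlocal-VA-gen}.

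The theorem has two hypotheses on $U_W$, and both need checking. \emph{Invariance} is immediate: $R(s)$ sends $H_i(\zeta^tz)$ to $H_i(\zeta^{s+t}z)$, and likewise for $\Psi_i^\pm$ and $X_i^\pm$. Since the indices run over $\Z_\wp$ and $\zeta^\wp=1$, the set $U_W$ is preserved.

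\emph{Quasi-compatibility} (in fact $\chi_\phi(\Z_\wp)$-compatibility) is the step that needs real work. For a finite sequence $a_1(z),\dots,a_k(z)$ from $U_W$, we must find $f\in\C_{\chi_\phi(G)}[z_1/z_2]$ such that $\prod_{i<j}f(z_i,z_j)\,a_1(z_1)\cdots a_k(z_k)\in\E^{(k)}(W)$. My approach is pairwise, starting from the defining relations. Relations \eqref{zeta1}, \eqref{zeta2} and \eqref{zeta5} say each commutator of $H$ with another field is a finite sum of terms $\frac{\zeta^sz_2/z_1}{(1-\zeta^sz_2/z_1)^2}$ or $\frac{1+\zeta^s z_2/z_1}{1-\zeta^s z_2/z_1}$ times fields, so multiplying by $\prod_s(1-\zeta^sz_2/z_1)^2$ makes the two orderings agree. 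Relations \eqref{zeta3} and \eqref{zeta6} are rational exchange relations whose factors have zeros and poles only at roots of unity. Relation \eqref{zeta7} is handled by the factor $(1-\zeta_i^{\pm a_{ij}}z_2/z_1)$. For \eqref{zeta8}, we multiply by $(1-z_2/z_1)(1-\zeta^{-2r\ell}z_2/z_1)(1-\zeta_i^{-a_{ij}}z_2/z_1)$, which kills the delta functions. In every case the factors lie in $\C_{\{\zeta^s\}}[z_2/z_1]$, since $\zeta_i$ and $\zeta^{r\ell}$ are powers of $\zeta$. Rescaling $z_j\mapsto\zeta^tz_j$ preserves this form.

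From these pairwise relations I get the following. For each pair, some polynomial $f$ of this type has $f(z_1/z_2)a(z_1)b(z_2)$ equal to the expansion of $f(z_1/z_2)b(z_2)a(z_1)$ times an $\iota$-expanded rational function. Smoothness (truncation of the modes on each $w$) then shows that $f(z_1/z_2)a(z_1)b(z_2)w$ lies in $W((z_1))((z_2))\cap W((z_2))((z_1))$ up to the rational factor. Hence it is in $W((z_1,z_2))$. The $k$-fold case follows by the standard argument of Li (\cite{Li-phi-coor}): products of pairwise quasi-local fields are quasi-compatible.

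The main obstacle I expect is this final passage from the pairwise exchange relations to $k$-fold compatibility, because \eqref{zeta3} and \eqref{zeta6} carry non-polynomial rational factors rather than plain locality. The first thing I would try is to clear denominators pairwise so that all relations become quasi-$S$-locality with coefficients in $\C_{\{\zeta^s\}}$. I would then invoke the quasi-version of the compatibility lemma in \cite{Li-phi-coor}, and take $f$ to be the product of all pairwise polynomials.
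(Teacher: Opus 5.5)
Your proposal is correct and follows the paper's argument. The paper simply observes that $U_W$ is $\chi_\phi(\Z_\wp)$-invariant and $\chi_\phi(\Z_\wp)$-compatible and then invokes Theorem~\ref{thm:G-mod-nonlocal-VA-gen}; your checks of the character condition (which forces $\chi$ to be trivial), of the pullback along $s\mapsto\zeta^{-s}$, and of invariance are exactly what that requires.

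One correction to your compatibility sketch. In \eqref{zeta1}, \eqref{zeta2} and \eqref{zeta5} the $\iota_{z_1,z_2}$- and $\iota_{z_2,z_1}$-coefficients are in general unequal, so multiplying by $\prod_s(1-\zeta^sz_2/z_1)^2$ does not make the two orderings agree. They still differ by a Laurent polynomial times $1_W$ (respectively, times the other field). This does not break your argument: the extra term is harmless for pairwise compatibility, and it fits the quasi-$\mathcal S$-local framework once $1_W$ is adjoined to $U_W$.
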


\begin{thm}\label{thm:presentation}
The category $\mathcal R_\zeta^\ell(\wh\g)$ is isomorphic to the full subcategory of $\mathcal R_\zeta'^\ell(\wh\g)$, whose objects
$(W,H_i(z),\Psi_i^\pm(z),X_i^\pm(z))$
satisfy the following additional conditions
\begin{align}
  &\label{zeta-serre}\tag{$\zeta$11}X_i^\pm(\zeta_i^{-a_{ij}}z)_0^\phi X_i^\pm(\zeta_i^{-2-a_{ij}}z)_0^\phi \cdots
  X_i^\pm(\zeta_i^{a_{ij}}z)_0^\phi X_j^\pm(z)=0
  \quad\te{for }i,j\in I, \,\,\te{with }a_{ij}\le 0,\\
  &\label{zeta-res-form}\tag{$\zeta$12}X_i^\pm(\zeta_i^{2\wp_i-2} z)_0^\phi X_i^\pm(\zeta_i^{2\wp_i-4} z)_0^\phi \cdots X_i^\pm(\zeta_i^2 z)_0^\phi X_i^\pm(z)=0\quad\te{for }i\in I.
\end{align}
\end{thm}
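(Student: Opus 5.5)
We construct functors in both directions and check that they are mutually inverse. Given $W\in\mathcal R_\zeta^\ell(\wh\g)$, define the fields
\begin{align*}
H_i(z)=\sum_n \wt h_i(n)z^{-n}\ (\text{with } H_i(0)\ \text{acting by } \lambda(h_i)\ \text{on } W_\lambda),\quad
X_i^\pm(z)=c_\pm\, x_i^\pm(\zeta^{a_\pm}z),\quad \Psi_i^\pm(z)=\psi_i^\pm(\zeta^{b_\pm}z)\,k_i^{\mp 1}\cdots.
\end{align*}
Here the normalizations (shifts by powers of $\gamma=\zeta_0^\ell$ and scalars $c_\pm$) are chosen so that \eqref{Q6} at $q\to\zeta$ becomes \eqref{zeta8}. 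In particular, the factor $1/(q_i-q_i\inv)$ must be absorbed by rescaling $X_i^+$ and using the restricted generators.

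The relations \eqref{zeta1}--\eqref{zeta8} should be derived over $\C[q,q\inv]$ in $\U_q^{\res}(\wh\g)$ and then specialized:
\begin{enumerate}
\item The commutator $[\wt h_i(m),\wt h_j(n)]$ is a Laurent polynomial in $q$. We rewrite its generating function, and split the exponent $q^k$ according to $k \bmod \wp$. This is exactly how $\vvp{\cdot}$ and the sum over $s\in\Z_\wp$ arise, since $\sum_{n}\zeta^{kn}n(z_2/z_1)^n$ depends only on $k \bmod\wp$.
\item \eqref{zeta5} follows from \eqref{Q4}/\eqref{Q5} in the same way.
\item \eqref{zeta7} and \eqref{zeta8} are direct specializations of \eqref{Q7} and \eqref{Q6}.
\item \eqref{zeta-Psi-def2} is \eqref{Q9} at $q=\zeta$, with $k_i=\zeta_i^{H_i(0)}$ on a weighted module.
\item \eqref{zeta-weight-mod} is the weight condition.
\end{enumerate}

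For \eqref{zeta-serre} and \eqref{zeta-res-form}, we use the $\phi$-coordinated products. By the definition \eqref{eq:def-Y-E} with $\phi(z,z_0)=ze^{z_0}$, the product $X_i^\pm(\zeta_i^{c}z)_0^\phi X(z)$ is the residue of the "point-splitting" at $z_1=\zeta_i^c z$ after clearing the pole $(1-\zeta_i^{\pm2}z/z_1)$ from \eqref{zeta7}. Iterating identifies the left side of \eqref{zeta-res-form} with the generating function of $(x_i^\pm)^{\wp_i}$-type normal-ordered products. One shows in $\U_q^{\res}$, via \eqref{Q7}, that
\[
x_i^\pm(z_1)\cdots x_i^\pm(z_k)\prod_{a<b}(z_a-q_i^{\pm2}z_b)
\]
is symmetric and evaluates at $z_a=q_i^{2(a-1)}z$ to $[k]_{q_i}!$ times the current of divided powers. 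Since $[\wp_i]_{\zeta_i}!=0$, this gives \eqref{zeta-res-form}. Similarly, the quantum Serre relation \eqref{Q8} specialized on the string $z_a=q_i^{a_{ij}+2(a-1)}w$ gives \eqref{zeta-serre}.

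Conversely, given an object of $\mathcal R_\zeta'^\ell(\wh\g)$ satisfying \eqref{zeta-serre} and \eqref{zeta-res-form}, we must define an action of $\U_\zeta(\wh\g)$. By Proposition \ref{prop:res-form}, it suffices to define the actions of $\U^{\res}(\wh\g)^\pm$, of $\U^{\res}(\wh\g)^0$, and of $k_i,\gamma$, and then verify the cross relations. The divided powers $(x_i^\pm(n))^{(m)}$ must be defined on $W$. We write $m=m_0+\wp_i m_1$ with $0\le m_0<\wp_i$; the $m_0$ part is an honest quotient by $[m_0]_{\zeta_i}!\neq 0$, while the $\wp_i$-th divided power $(x_i^\pm(n))^{(\wp_i)}$ is to be realized from the field as a suitable derivative/residue of the iterated $\phi$-products at the string $\zeta_i^{2a}z$, whose leading term vanishes by \eqref{zeta-res-form}. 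Similarly, $\qb{k_i}{\wp_i}$ and the $\wt h_i(n)$ are defined by $H_i(0)$ and the $H_i(n)$.

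The \textbf{main obstacle} is this reverse direction: showing that these operators satisfy all relations of $\U_q^{\res}(\wh\g)\otimes\C_\zeta$. Since $\U_\zeta$ is a specialization and not presented by generators and relations, we would show that the kernel of the $\U^{\res}$-action is generated by the specialized Drinfeld relations together with the vanishing of the $\wp_i$-th powers. We would establish this using the triangular decomposition (Proposition \ref{prop:res-form}) and the known PBW-type bases of $\U^{\res,\pm}$ and $\U^{\res,0}$ from \cite{CP-qaff-rut} in the style of Lusztig's Frobenius argument: the quantum Frobenius map shows that the Lusztig-type relations are consequences of \eqref{zeta-serre}, \eqref{zeta-res-form} and Lemma \ref{lem:qb-mult}. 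Finally, functoriality and the fact that morphisms on both sides are linear maps intertwining generators give the isomorphism of categories, including the fullness of the subcategory.
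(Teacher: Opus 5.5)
The step that fails is the core of your reverse direction: defining $(x_i^\pm(n))^{(\wp_i)}$ on $W$ from the fields and deriving Lusztig's relations from ($\zeta$11), ($\zeta$12) ``in the style of Lusztig's Frobenius argument''. More detail will not repair this, because quantum Frobenius shows the opposite: the $\wp_i$-th divided powers are extra data that $(H_i,\Psi_i^\pm,X_i^\pm)$ cannot see.

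Take $\g$ simply laced and $\wp$ odd. Pull back a smooth weight $\wh\g$-module $M$ of level $k$ (e.g.\ the vacuum module) along Lusztig's quantum Frobenius homomorphism. This map kills the Chevalley generators $x_j^\pm$ ($j\in\wh I$), sends $k_j$ to $1$, and sends $(x_j^\pm)^{(\wp)}$ to $e_j,f_j$.
\begin{itemize}
\item By Beck's construction each $x_i^\pm(n)$ is obtained from the $x_j^\pm$ by braid operators $T_j^{\pm1}$ and diagram automorphisms. Here these involve only $(x_j^\pm)^{(s)}$ with $s\le 2<\wp$, so they preserve the subalgebra generated by the $x_j^\pm,k_j^{\pm1}$.
\item Hence every $x_i^\pm(n)$ acts by $0$, and then \eqref{Q6} forces $\psi_i^\pm(z)=1$. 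So $X_i^\pm=0$ and $\Psi_i^\pm=1$, while $(x_i^+(0))^{(\wp)}$ acts as $e_i\neq 0$. This is an object of $\mathcal R_\zeta^{\wp k}(\wh\g)$.
\item Now twist $M$ by conjugation with a torus element: rescale $e_i\otimes t^n$ by $c\neq 1$ and $f_i\otimes t^n$ by $c\inv$, fixing the Cartan part. This gives a different $\U_\zeta(\wh\g)$-structure on the same space with identical $(H_i,\Psi_i^\pm,X_i^\pm)$.
\end{itemize}
So $W\mapsto (W,H_i,\Psi_i^\pm,X_i^\pm)$ is neither injective on objects nor full. Moreover, any recipe that builds the divided powers out of the $X_i^\pm$ returns $0$ on this object. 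That contradicts $[(x_i^+)^{(\wp)},(x_i^-)^{(\wp)}]=\qb{k_i}{\wp}_{q_i}$, which holds whenever $x_i^\pm$ act by $0$ and is nonzero on $M$.

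The same example breaks the paper's own route. Proposition \ref{prop:xiii6} takes $q_i^{\pm k}x_{i,k}^\pm(z)x_i^\pm(q_i^{\pm 2k}z)$, with $k=\wp_i-1$, as the value of $x^\pm_{i,\wp_i}(z)/[\wp_i]_{q_i}$ in $\overline\U_{\mathcal A}^\ell(\wh\g)$. But there $x^\pm_{i,\wp_i}=0$, so division by $[\wp_i]_{q_i}$ is not determined, and that element vanishes on Frobenius pullbacks. As stated, the theorem needs extra data or hypotheses for the Frobenius part before either argument can close.

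Independently, your forward functor is misidentified. No shift and rescaling of $x_i^\pm$ turns \eqref{Q6} into \eqref{zeta8}. For $i\neq j$ with $a_{ij}\neq 0$, \eqref{zeta8} is a twisted commutator, and its first delta term has coefficient $1$ rather than a Cartan current. The paper instead dresses $x_i^-$ by a Cartan current, $X_i^-(z)=(\zeta_i-\zeta_i\inv)x_i^-(z)\psi_i^+(\zeta^{-r\ell}z)\inv$, and takes $\Psi_i^\pm(z)=\psi_i^-(z)^{\pm1}\psi_i^+(z)^{\mp1}$. That choice of $\Psi_i^\pm$ is also what \eqref{zeta-Psi-def2} forces, since it contains modes of both signs and the factor $\zeta_i^{\mp 2H_i(0)}$.

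Defining $H_i(0)$ through the weight decomposition still leaves $[H_i(0),x_j^\pm(z)]=\pm a_{ij}x_j^\pm(z)$ to prove. That is, you must show $x_j^\pm(n)$ shifts the $\qb{k_i}{\wp_i}_{q_i}$-eigenvalue correctly; the paper obtains this from Lentner's element $\wt h_i(0)$ (Lemmas \ref{lem:h-rel} and \ref{lem:h-eigenvalues}).

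For \eqref{zeta-res-form}, your idea that $[\wp_i]_{\zeta_i}=0$ kills the string product is the right one. However, the normal-ordered product $x^\pm_{i,\wp_i}(z)$ is not $[\wp_i]_{q_i}!$ times a current of divided powers. The paper needs three further ingredients:
\begin{itemize}
\item the triangular expansion of Lemma \ref{lem:xiii3}, which is the only link between its modes and the $(x_i^\pm(n))^{(k)}$;
\item the $\zeta_i^2$-periodicity used in Proposition \ref{prop:xiii5}, to kill the modes with $\wp_i\nmid n$ separately;
\item Lemma \ref{lem:no=phi-0}, to pass between the $\phi$-products in \eqref{zeta-res-form} and normal ordering.
\end{itemize}
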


\subsection{Proof of Theorem \ref{thm:presentation}}

Let $\U_q^\ell(\wh\g)$ be the quotient algebra of $\U_q(\wh\g)$ modulo the ideal generated by $\gamma-q_0^\ell$ and $\U_q^{\ell,\res}(\wh\g)$ be the quotient algebra of $\U_q^{\res}(\wh\g)$ modulo the ideal generated by
\begin{align*}
  \gamma-q_0^\ell\quad\te{and}\quad\qb{\gamma}{\wp_0}_{q_0}-\qb{\ell}{\wp_0}_{\zeta_0}.
\end{align*}
We first rewrite the defining relations of $\U_q^\ell(\wh\g)$ in terms of $\wt h_i(m)$.
\begin{lem}\label{lem:wth-psi-x-rels}
Define $\wt h_i'(z)=\sum_{0\ne n\in\Z}\wt h_i(n)z^{-n}$. Then the following relations hold on both $\U_q^\ell(\wh\g)$ and $\U_q^{\ell,\res}(\wh\g)$:
\begin{align}
  &\tag{Q1-2$'$}\label{Q1-2'}[\wt h_i'(z_1),\wt h_j'(z_2)]
  =\sum_{s\in\Z}\vvinf{[a_{ij}]_{q_i}[r\ell/r_j]_{q_j}q^{-r\ell-s}}
  \left(\frac{q^sz_2/z_1}{(1-q^sz_2/z_1)^2}
  -\frac{q^sz_1/z_2}{(1-q^sz_1/z_2)^2}\right),\\
  &[\wt h_i'(z_1),\psi_j^-(z_2)]=\psi_j^-(z_2)
  \sum_{s\in\Z}\vvinf{
    [a_{ij}]_{q_i}(q^{-2r\ell}-1)q^{-s}
  } \frac{q^sz_2/z_1}{1-q^sz_2/z_1},\nonumber\\
  &[\wt h_i'(z_1),\psi_j^+(z_2)]=\psi_j^+(z_2)
  \sum_{s\in\Z}\vvinf{[a_{ij}]_{q_i}(q^{-2r\ell}-1)q^{-s}}
  \frac{q^sz_1/z_2}{1-q^sz_1/z_2},\nonumber\\
  &\tag{Q4-5$'$}\label{Q4-5'}[\wt h_i'(z_1),x_j^\pm(z_2)]=\pm x_j^\pm(z_2)\\
  &\quad\times\nonumber
  \sum_{s\in\Z}\left(\vvinf{[a_{ij}]_{q_i}q^{-r\ell-s}}
  \frac{q^sz_2/z_1}{1-q^sz_2/z_1}
  +
  \vvinf{[a_{ij}]_{q_i}q^{\mp r\ell-s}}\frac{q^sz_1/z_2}{1-q^sz_1/z_2}\right).
\end{align}
Conversely, the relations \eqref{Q0}, \eqref{Q3}, \eqref{Q9}, \eqref{Q1-2'} and \eqref{Q4-5'} yield the relations \eqref{Q1}, \eqref{Q2}, \eqref{Q4} and \eqref{Q5}.
\end{lem}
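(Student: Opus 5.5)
The plan is to work entirely with generating functions and logarithms. Set $\gamma=q_0^\ell=q^{r\ell}$, and note $q_0=q^r$. By \eqref{Q9}, for $\pm m>0$ we can write $\psi_i^\pm(z)=k_i^{\pm1}\exp(\pm\sum_{\pm m>0}(q_i^m-q_i^{-m})\wt h_i(m)z^{-m}/m)$. Each commutation relation \eqref{Q1}, \eqref{Q2}, \eqref{Q4}, \eqref{Q5} is then equivalent to a bracket relation for the modes $\wt h_i(m)$, because the exponents are linear in the $\wt h_i(m)$ and their commutators are central or scalar multiples of $x_j^\pm$. So the first step is to extract, from \eqref{Q2}, the classical formula
\[
[h_i(m),h_j(n)]=\delta_{m+n,0}\frac{[ma_{ij}]_{q_i}}{m}\frac{\gamma^m-\gamma^{-m}}{q_j-q_j^{-1}},
\]
and, from \eqref{Q4}–\eqref{Q5}, the formula $[h_i(m),x_j^\pm(n)]=\pm\frac{[ma_{ij}]_{q_i}}{m}\gamma^{\mp|m|/2}x_j^\pm(m+n)$. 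This is done by taking $\log$ of $\iota g_{ij}(\cdot)$ and comparing coefficients, which is standard Drinfeld–Beck bookkeeping. I will then convert these into statements about $\wt h_i(m)=mh_i(m)/[m]_{q_i}$ and check that they hold after imposing $\gamma=q^{r\ell}$.

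The second step is to rewrite these mode formulas as the generating-function identities in the statement. For this I will use the identity $[ma_{ij}]_{q_i}/[m]_{q_i}\cdots$ in the form of Laurent polynomials in $q^m$. Any Laurent polynomial $P(q)=\sum_s a_sq^s$ gives
\[
\sum_{m>0}P(q^m)w^m=\sum_s a_s\frac{q^sw}{1-q^sw},
\]
and applying $w\,\partial_w$ converts this to $\sum_s a_s\frac{q^sw}{(1-q^sw)^2}$. Writing the relevant coefficient $m[ma_{ij}]_{q_i}[mr\ell/r_j]_{q_j}q^{-mr\ell}/[m]_{q_i}[m]_{q_j}$ as $m\cdot P(q^m)$ with $P(q)=[a_{ij}]_{q_i}[r\ell/r_j]_{q_j}q^{-r\ell}$, I would extract the coefficients of $P$ via $\vvinf{P q^{-s}}$. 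This turns the sum over $m>0$ (coefficient of $z_2^m/z_1^m$) and over $m<0$ into the two rational terms of \eqref{Q1-2'}. The computations for $\psi_j^\pm$ and $x_j^\pm$ are the same but without the factor $m$, so they produce $\frac{q^sw}{1-q^sw}$. The $\psi^\pm$ case follows by exponentiating the $\wt h$–$\wt h$ relation, which gives the factor $(q^{-2r\ell}-1)$. All coefficients appearing lie in $\Z[q,q^{-1}]$, so these relations hold in the integral form $\U_q^{\ell,\res}(\wh\g)$ as well, since $\wt h_i(n)\in\U_q^{\res}(\wh\g)^0$ by Proposition \ref{prop:res-form}. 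The extra relation $\qb{\gamma}{\wp_0}=\cdots$ does not affect them.

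For the converse, I will reverse the argument. From \eqref{Q1-2'} the $\wt h$'s commute up to scalars, so \eqref{Q1} holds since both sides are exponentials of pairwise commuting-up-to-central elements within the same sign. For \eqref{Q2}, I will compute $\psi_i^+\psi_j^-$ via $e^Ae^B=e^Be^Ae^{[A,B]}$, with $[A,B]$ obtained from \eqref{Q1-2'}. The resulting exponent is the logarithm of $g_{ij}(z_2/z_1)^{-1}g_{ij}(\gamma^{-2}z_2/z_1)$, and $k_i$ commutes with everything by \eqref{Q0}. For \eqref{Q4} and \eqref{Q5}, I will use $e^{A}xe^{-A}=xe^{c}$ when $[A,x]=cx$, together with \eqref{Q3} for the $k_i^{\pm1}$ factor.

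The main obstacle I expect is the exact matching of signs and powers of $\gamma$. The $\psi^\pm$ relations must agree as rational functions, not just modes, and in the $x^\pm$ case the $\gamma^{\mp1}$ shifts in \eqref{Q4}, \eqref{Q5} produce the asymmetric $q^{-r\ell}$ versus $q^{\mp r\ell}$ in \eqref{Q4-5'}. I would verify these by checking the single identity $\log g_{ij}(w)=\sum_{m>0}\frac{(q_i^{-ma_{ij}}-q_i^{ma_{ij}})}{m}w^m$ and tracking each substitution carefully.
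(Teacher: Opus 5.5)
Your route is the paper's: read off the mode commutators of the $h_i(m)$ from \eqref{Q2}, \eqref{Q4}, \eqref{Q5}, rescale to $\wt h_i(m)=mh_i(m)/[m]_{q_i}$, and resum via $\sum_{m>0}P(q^m)w^m=\sum_s a_s\,q^sw/(1-q^sw)$, applying $w\partial_w$ in the $\wt h$--$\wt h$ case. Your converse via $e^Ae^B=e^Be^Ae^{[A,B]}$ and $e^Axe^{-A}=xe^{c}$ is what the paper means by ``follows similarly''.

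The problem is in the first step. The mode formulas you commit to are those of the symmetric Drinfeld--Beck presentation. The relations \eqref{Q2}, \eqref{Q4}, \eqref{Q5} used here are shifted: note the factor $g_{ij}(z_2/z_1)^{-1}g_{ij}(\gamma^{-2}z_2/z_1)$ in \eqref{Q2}, and $\gamma^{-1}$ in \eqref{Q4} versus $\gamma^{\mp1}$ in \eqref{Q5}. Taking logarithms of the actual relations gives, for $m>0$,
\begin{align*}
  &[h_i(m),h_j(-m)]=\frac{[ma_{ij}]_{q_i}}{m}\,\frac{1-\gamma^{-2m}}{q_j-q_j^{-1}},\\
  &[h_i(m),x_j^\pm(z)]=\pm\frac{[ma_{ij}]_{q_i}}{m}\,\gamma^{-m}z^{m}x_j^\pm(z),\\
  &[h_i(-m),x_j^\pm(z)]=\pm\frac{[ma_{ij}]_{q_i}}{m}\,\gamma^{\mp m}z^{-m}x_j^\pm(z).
\end{align*}
With your formulas, the $\wt h$--$\wt h$ coefficient would be $m[a_{ij}]_{q_i^m}[r\ell/r_j]_{q_j^m}$, without the factor $q^{-mr\ell}$. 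The $x$-relations would involve $\gamma^{\mp|m|/2}$, which does not even exist in this algebra.

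So the polynomial $P(q)=[a_{ij}]_{q_i}[r\ell/r_j]_{q_j}q^{-r\ell}$ in your second step, which is the right one, does not follow from your first step. The same holds for the factor $(q^{-2r\ell}-1)$ you quote for $\psi^\pm$ and the asymmetric $q^{-r\ell}$, $q^{\mp r\ell}$ in \eqref{Q4-5'}. They all come only from the shifted formulas above.

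You also have a sign error in the logarithm. In fact $\log g_{ij}(w)=a_{ij}\log q_i+\sum_{m>0}\frac{q_i^{ma_{ij}}-q_i^{-ma_{ij}}}{m}w^m$, the opposite of what you wrote. The constant term cancels in \eqref{Q2} and is absorbed by \eqref{Q3} in \eqref{Q4}, \eqref{Q5}.

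Once these are corrected by actually deriving the mode relations from the paper's relations, your argument coincides with the paper's proof.
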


\begin{proof}
From \eqref{Q0}, \eqref{Q1} and \eqref{Q2}, we get that for any $m,n\in\Z_+$,
\begin{align*}
  &[h_i(m),h_j(-n)]=\frac{\delta_{m,n}}{(q_i-q_i\inv)(q_j-q_j\inv)}\frac{1}{n}
  \left( (q_i^{na_{ij}}-q_i^{-na_{ij}})(1-q^{-2nr\ell}) \right)\\
  &\quad=\delta_{m,n}\frac{1}{n}[m]_{q_i}[n]_{q_j}[a_{ij}]_{q_i^n}[r\ell/r_j]_{q_j^n}q^{-r\ell n}.
\end{align*}
Set
\begin{align*}
  &\wt h_i(z)^\pm=\sum_{n=1}^\infty \wt h_i(\pm n)z^{\mp n}.
\end{align*}
Then we get the following relations
\begin{align}
  &\sum_{m,n=1}^\infty\left[\wt h_i(m),\wt h_j(-n)\right]z_1^{-m}z_2^n=\sum_{m,n=1}^\infty \frac{mn}{[m]_{q_i}[n]_{q_j}}[ h_i(m), h_j(-n)]z_1^{-m}z_2^n\nonumber\\
  &\quad=\sum_{n=1}^\infty n[a_{ij}]_{q_i^n}[r\ell/r_j]_{q_j^n}q^{-r\ell n}\frac{z_2^n}{z_1^n}
  =\sum_{s\in\Z}\vvinf{[a_{ij}]_{q_i}[r\ell/r_j]_{q_j}q^{-r\ell-s}}
  \frac{q^sz_2/z_1}{(1-q^sz_2/z_1)^2},
  \label{eq:wth-wth-rel}\\
  &\sum_{m,n=1}^\infty \left[\wt h_i(m),(q_j\inv-q_j)h_j(-n)\right]z_1^{-m}z_2^n=\sum_{m,n=1}^\infty \frac{m(q_j\inv-q_j)}{[m]_{q_i}}
  [h_i(m),h_j(-n)]z_1^{-m}z_2^n\nonumber\\
  &\quad=\sum_{n=1}^\infty [a_{ij}]_{q_i^n}(q^{-2nr\ell}-1)\frac{z_2^n}{z_1^n}
  =\sum_{s\in\Z}\vvinf{[a_{ij}]_{q_i}(q^{-2r\ell}-1)q^{-s}}
  \frac{q^sz_2/z_1}{1-q^sz_2/z_1},
  \label{eq:wth-psi-rel+}\\
  &\sum_{m,n=1}^\infty \left[\wt h_i(-m),(q_j-q_j\inv)h_j(n)\right]z_1^m z_2^{-n}=\sum_{m,n=1}^\infty \frac{m(q_j-q_j\inv)}{[m]_{q_i}}
  [h_i(-m),h_j(n)]z_1^m z_2^{-n}\nonumber\\
  &\quad=\sum_{n=1}^\infty [a_{ij}]_{q_i^n}(q^{-2r\ell n}-1)\frac{z_1^n}{z_2^n}
  =\sum_{s\in\Z}\vvinf{[a_{ij}]_{q_i}(q^{-2r\ell}-1)q^{-s}}
  \frac{q^sz_1/z_2}{1-q^sz_1/z_2}.
  \label{eq:wth-psi-rel-}
\end{align}
The first equation follows from \eqref{eq:wth-wth-rel}, the second equation follows from \eqref{eq:wth-psi-rel+} and the third equation follows from \eqref{eq:wth-psi-rel-}.
From \eqref{Q3}, \eqref{Q4} and \eqref{Q5}, we get the following relations
\begin{align*}
  &[h_i(m),x_j^\pm(z)]=\pm x_j^\pm(z)z^m \frac{1}{m}[m a_{ij}]_{q_i}q^{-mr\ell},\\
  &[h_i(-m),x_j^\pm(z)]=\pm x_j^\pm(z)z^{-m}\frac 1m q^{\mp mr\ell}[ma_{ij}]_{q_i}\quad\te{for }m\in\Z_+.
\end{align*}
Then the last equation follows from the identities below
\begin{align*}
  &\sum_{m=1}^\infty \left[\wt h_i(m),x_j^\pm(z_2)\right]z_1^{-m}
  =\sum_{m=1}^\infty \left[\frac{m h_i(m)}{[m]_{q_i}},x_j^\pm(z_2)\right]z_1^{-m}\\
  =&\pm x_j^\pm(z_2)\sum_{m=1}^\infty [ a_{ij}]_{q_i^{m}}q^{-mr\ell}\frac{z_2^m}{z_1^m}
  =\pm x_j^\pm(z_2)
  \sum_{s\in\Z}\vvinf{[a_{ij}]_{q_i}q^{-r\ell-s}}
  \frac{q^sz_2/z_1}{1-q^sz_2/z_1},\\
  &\sum_{m=1}^\infty \left[\wt h_i(-m),x_j^\pm(z_2)\right]z_1^{m}
  =\sum_{m=1}^\infty \left[\frac{m h_i(-m)}{[m]_{q_i}},x_j^\pm(z_2)\right]z_1^{m}\\
  =&\pm x_j^\pm(z_2)\sum_{m=1}^\infty [a_{ij}]_{q_i^{m}}q^{\mp mr\ell}\frac{z_1^m}{z_2^m}
  =\pm x_j^\pm(z_2)
  \sum_{s\in\Z}\vvinf{[a_{ij}]_{q_i}q^{\mp r\ell-s}}
  \frac{q^sz_1/z_2}{1-q^sz_1/z_2}.
\end{align*}
The converse statement follows similarly.
\end{proof}

In order to prove that every smooth weighted $\U_\zeta^\ell(\wh\g)$-module of level $\ell$ is an object of $\mathcal R_\zeta'^\ell(\wh\g)$, we need to construct the elements corresponding to $H_i(0)$ ($i\in I$) inside $\U_\zeta(\wh\g)$.
For $k\in\Z_+$, let
\begin{align*}
  \Phi_k(z)=\prod_{\substack{1\le a\le k\\ \gcd(a,k)=1}}(x-e^{2\pi \sqrt{-1}a/k})
\end{align*}
be the $k$-th cyclotomic polynomial.
We note that
\begin{align*}
  &z^k-1=\prod_{\substack{1\le d\le k\\ d\,|\,k}}\Phi_d(z),\quad \te{and }
  \Phi_d(\zeta_i^2)\ne 0\quad\te{for }d<\wp_i.
\end{align*}
For $0\le a<k$, define $C_{k,a}(z)$ as follows
\begin{align*}
  &w^k-1-\prod_{a=0}^{k-1}(w-z^a)=\sum_{a=0}^{k-1}C'_{k,a}(z)w^a,\quad C_{k,a}(z)=C'_{k,a}(z)/\Phi_k(z).
\end{align*}
Since $C'_{k,a}(e^{2\pi\sqrt{-1}/k})=0$ and $\Phi_k(z)$ in irreducible in $\Z[z]$, we have that $\Phi_k(z)\,|\,C'_{k,a}(z)$ and hence
\begin{align*}
  C_{k,a}(z)\in\Z[z].
\end{align*}
The following elements in $\U_q^{\res}(\wh\g)^0$ were introduced in \cite{Len-unrolled}:
\begin{align*}
  &\wt h_i'(0)=q_i^{-\wp_i}k_i^{\wp_i}\qb{k_i}{\wp_i}_{q_i}
  \prod_{a=1}^{\wp_i-1}(q_i^{2a}-1)\prod_{\substack{1\le d<\wp_i\\ d\,|\,\wp_i}}\Phi_d(q_i^2)+\sum_{a=0}^{\wp_i-1}C_{\wp_i,a}(q_i^2)k_i^{2a},\\
  &\wt h_i(0)=\wt h_i'(0)\prod_{\substack{1\le d<\wp_i\\ d\,|\,\wp_i}}\Phi_d(\zeta_i^2)\inv \quad\te{for }i\in I.
\end{align*}

\begin{lem}\label{lem:h-rel}
For each $i,j\in I$, the following relation holds in $\U_q^{\res}(\wh\g)$:
\begin{align}\label{eq:h-rel-1}
  &\wt h_i(0)x_j^\pm(z)-q_i^{\pm 2\wp_ia_{ij}}x_j^\pm(z)\wt h_i(0)\nonumber\\
  =&\pm\Sign(a_{ij})x_j^\pm(z)\sum_{a=0}^{|a_{ij}|-1}q_i^{2\wp_i a+(\pm a_{ij}-|a_{ij}|)\wp_i}\prod_{\substack{1\le d<\wp_i\\ d\,|\,\wp_i}}\left(\Phi_d(q_i^2)\Phi_d(\zeta_i^2)\inv\right).
\end{align}
Moreover, the following relation holds in $\U_\zeta(\wh\g)$
\begin{align}\label{eq:h-rel-2}
  [\wt h_i(0),x_j^\pm(z)]=\pm a_{ij}x_j^\pm(z).
\end{align}
\end{lem}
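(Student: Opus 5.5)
The plan is a direct computation in $\U_q^{\res}(\wh\g)$, using only the commutation of $k_i$ with $x_j^\pm(z)$ from \eqref{Q3}. Write $\kappa=q_i^{\pm a_{ij}}$, so that $k_ix_j^\pm(z)=\kappa^{2}x_j^\pm(z)k_i$; note $q^{\pm r_ia_{ij}}=q_i^{\pm a_{ij}}$ and the exponent is doubled because $k_i$ scales by $q_i^{\pm a_{ij}}$ per $k_i$ in the symmetrized normalization of \eqref{Q3}. Every element $f(k_i)$ of the Laurent polynomial ring in $k_i$ then satisfies
\begin{align*}
  f(k_i)x_j^\pm(z)=x_j^\pm(z)f(\kappa^{2}k_i)
\end{align*}
up to the precise power of $\kappa$, which I would fix at the start. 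I will write $\wt h_i'(0)$ as such a function $f$ of $k_i$ and compute $f(k_i)-q_i^{\pm2\wp_ia_{ij}}f(\kappa^2k_i)$ explicitly.

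\textbf{Step 1: collapse $\wt h_i'(0)$ to a polynomial in $k_i^2$.} Expand the Gaussian binomial $\qb{k_i}{\wp_i}_{q_i}=\prod_{s=1}^{\wp_i}\frac{k_iq_i^{1-s}-k_i\inv q_i^{s-1}}{q_i^s-q_i^{-s}}$. After multiplying by $q_i^{-\wp_i}k_i^{\wp_i}$, the product becomes, up to an explicit unit and the factor $\prod_{a}(q_i^{2a}-1)$, the product $\prod_{a=0}^{\wp_i-1}(k_i^2-q_i^{2a})$. The denominators $\prod(q_i^s-q_i^{-s})$ are cancelled by $\prod_{a=1}^{\wp_i-1}(q_i^{2a}-1)\prod_{d<\wp_i,\,d\mid\wp_i}\Phi_d(q_i^2)$, leaving the factor $q_i^{2\wp_i}-1$ written via the cyclotomic decomposition. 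Adding $\sum_aC_{\wp_i,a}(q_i^2)k_i^{2a}$ and using the definition of $C_{k,a}$ with $w=k_i^2$, $z=q_i^2$, I expect the result to simplify to
\begin{align*}
  \wt h_i'(0)=\frac{k_i^{2\wp_i}-1}{\Phi_{\wp_i}(q_i^2)}\cdot c
\end{align*}
for an explicit $c$, that is,
\begin{align*}
  \wt h_i(0)=\frac{k_i^{2\wp_i}-1}{q_i^{2\wp_i}-1}\prod_{d<\wp_i,\,d\mid\wp_i}\left(\Phi_d(q_i^2)\Phi_d(\zeta_i^2)\inv\right).
\end{align*}
This bookkeeping of signs and unit powers of $q_i$ is the main obstacle. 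I would check it first for $\wp_i$ small, for instance $\wp_i=1$ and $\wp_i=2$, to pin down the normalization.

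\textbf{Step 2: commute with $x_j^\pm(z)$.} With this closed form,
\begin{align*}
  k_i^{2\wp_i}x_j^\pm(z)=q_i^{\pm2\wp_ia_{ij}}x_j^\pm(z)k_i^{2\wp_i},
\end{align*}
so
\begin{align*}
  \wt h_i(0)x_j^\pm-q_i^{\pm2\wp_ia_{ij}}x_j^\pm\wt h_i(0)=x_j^\pm\,\frac{q_i^{\pm2\wp_ia_{ij}}-1}{q_i^{2\wp_i}-1}\prod_d(\cdots).
\end{align*}
The quotient $(Q^{\pm a_{ij}}-1)/(Q-1)$ with $Q=q_i^{2\wp_i}$ equals $\pm\Sign(a_{ij})\sum_{a=0}^{|a_{ij}|-1}Q^{a}\,Q^{(\pm a_{ij}-|a_{ij}|)/2}$, which matches the right-hand side of \eqref{eq:h-rel-1}. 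This also shows that the right-hand side lies in $\U_q^{\res}(\wh\g)$.

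\textbf{Step 3: specialize $q\mapsto\zeta$ in \eqref{eq:h-rel-1}.} Since $\zeta_i^{2\wp_i}=1$, the coefficient $q_i^{\pm2\wp_ia_{ij}}$ becomes $1$, the sum has $|a_{ij}|$ terms each equal to $1$, and the cyclotomic ratio becomes $1$. The right-hand side therefore reduces to $\pm\Sign(a_{ij})|a_{ij}|x_j^\pm(z)=\pm a_{ij}x_j^\pm(z)$, which gives \eqref{eq:h-rel-2}. The case $a_{ij}=0$ is trivial because then both sides vanish.
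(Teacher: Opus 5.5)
Your proposal is correct and follows essentially the paper's route: the paper quotes the closed form $\wt h_i'(0)=(k_i^{2\wp_i}-1)/\Phi_{\wp_i}(q_i^2)$ from Lentner instead of deriving it, then commutes $k_i^{2\wp_i}$ past $x_j^\pm(z)$, factors $(Q^{\pm a_{ij}}-1)/(Q-1)$ with $Q=q_i^{2\wp_i}$, and specializes using $\zeta_i^{2\wp_i}=1$ and $\Phi_d(\zeta_i^2)\neq 0$ for $d<\wp_i$. Your Step 1 derivation does recover that closed form with $c=1$, because the factor $q_i^{-\wp_i}$ in the definition exactly cancels the unit coming from the Gaussian binomial. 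One correction: by \eqref{Q3} one has $k_ix_j^\pm(z)=q_i^{\pm a_{ij}}x_j^\pm(z)k_i$, not a factor $\kappa^2$; this is exactly the scaling your Step 2 uses. Also, since the closed form divides by $\Phi_{\wp_i}(q_i^2)$, the computation is cleanest done in $\U_q(\wh\g)$ and then restricted to $\U_q^{\res}(\wh\g)$, which is legitimate because every element involved lies there.
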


\begin{proof}
The following identity in $\U_q(\wh\g)$ was given in \cite{Len-unrolled}:
\begin{align}\label{eq:wth-0}
  \wt h'_i(0)
  =&\frac{k_i^{2\wp_i}-1}{\Phi_{\wp_i}(q_i^2)}.
\end{align}
Then the following relation holds in $\U_q(\wh\g)$
\begin{align*}
  &\wt h'_i(0)x_j^\pm(z)- q_i^{\pm 2\wp_i a_{ij}} x_j^\pm(z)\wt h'_i(0)\\
  =&\Phi_{\wp_i}(q_i^2)\inv\left( q_i^{\pm 2\wp_ia_{ij}}x_j^\pm(z)k_i^{\wp_i}-x_j^\pm(z)-q_i^{\pm 2\wp_i a_{ij}} x_j^\pm(z)k_i^{2\wp_i}+q_i^{\pm 2\wp_i a_{ij}} x_j^\pm(z) \right)\\
  =&x_j^\pm(z)\frac{q_i^{\pm 2\wp_ia_{ij}}-1}{\Phi_{\wp_i}(q_i^2)}
  =x_j^\pm(z)\frac{q_i^{\pm 2\wp_ia_{ij}}-1}{q_i^{2\wp_i}-1}\prod_{\substack{1\le d<\wp_i\\ d\,|\,\wp_i}}\Phi_d(q_i^2)\\
  =&\pm\Sign(a_{ij})x_j^\pm(z)\sum_{a=0}^{|a_{ij}|-1}q_i^{2\wp_i a+(\pm a_{ij}-|a_{ij}|)/2}\prod_{\substack{1\le d<\wp_i\\ d\,|\,\wp_i}}\Phi_d(q_i^2).
\end{align*}
It shows that the relation \eqref{eq:h-rel-1} holds in $\U_q(\wh\g)$, and hence in $\U_q^{\res}(\wh\g)$,
since all the elements involved lie in $\U_q^{\res}(\wh\g)$.
The moreover statements follows from the fact that $\zeta_i^{2\wp_i}=1$ and $\Phi_d(\zeta_i^2)\ne 0$ for all $1\le d<\wp_i$.
\end{proof}

\begin{lem}\label{lem:h-eigenvalues}
Let $W\in\obj \mathcal R_\zeta^\ell(\wh\g)$, let $\lambda\in\Lambda$ and let $w\in W_\lambda$. Then
\begin{align*}
  \wt h_i(0)w=\lambda(h_i)w\quad\te{for }i\in I.
\end{align*}
\end{lem}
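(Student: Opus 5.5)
The idea is to reduce the statement to a computation with Laurent polynomials in $q$, using the identity \eqref{eq:wth-0} from \cite{Len-unrolled}. Fix $i\in I$ and put $n=\lambda(h_i)\in\Z$. By definition, $\wt h_i'(0)$ is a polynomial in $k_i^{\pm1}$ and $\qb{k_i}{\wp_i}_{q_i}$ with coefficients in $\C[q,q\inv]$. Denote this polynomial by $F(q;k,b)$, so that $\wt h_i'(0)=F(q;k_i,\qb{k_i}{\wp_i}_{q_i})$. On $W_\lambda$, the element $k_i$ acts by $\zeta_i^n$ and $\qb{k_i}{\wp_i}_{q_i}$ acts by $\qb{n}{\wp_i}_{\zeta_i}$, both scalars. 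Hence $\wt h_i'(0)$ acts on $w$ by the scalar $F(\zeta;\zeta_i^n,\qb{n}{\wp_i}_{\zeta_i})$. This scalar is the specialization at $q=\zeta$ of the Laurent polynomial
\begin{align*}
  P(q)=F\Big(q;\,q_i^n,\,\qb{q_i^n}{\wp_i}_{q_i}\Big)\in\C[q,q\inv].
\end{align*}
Here I use the standard fact that $\qb{q_i^n}{\wp_i}_{q_i}$, computed from the product formula, equals the Gaussian binomial $\qb{n}{\wp_i}_{q_i}$ (extended to all $n\in\Z$). This is a Laurent polynomial whose value at $q=\zeta$ is $\qb{n}{\wp_i}_{\zeta_i}$.

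Next I evaluate $P(q)$ generically. The identity \eqref{eq:wth-0} holds in $\U_q(\wh\g)$, where $k_i$ is invertible and commutes with everything involved. It is a formal identity between the two expressions in $k_i$ over $\C(q)$. Substituting $k_i=q_i^n$ therefore gives, in $\C(q)$,
\begin{align*}
  P(q)=\frac{q_i^{2\wp_in}-1}{\Phi_{\wp_i}(q_i^2)}.
\end{align*}
Put $x=q_i^2$. Since $\Phi_{\wp_i}(x)$ divides $x^{\wp_i}-1$, which divides $x^{\wp_i n}-1$ (up to a unit when $n<0$), the right-hand side is indeed a Laurent polynomial. So the equality holds in $\C[q,q\inv]$, and I may specialize $q\mapsto\zeta$.

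To evaluate at $\zeta$, note that $x_0=\zeta_i^2$ is a primitive $\wp_i$-th root of unity, because $\wp_i=\wp/\gcd(\wp,2r_i)$. Since $x_0$ is a simple root of $\Phi_{\wp_i}$ and of $x^{\wp_in}-1$ (for $n\ne0$; the case $n=0$ gives $0$ trivially), L'Hôpital's rule gives the value $\wp_i n x_0^{-1}/\Phi_{\wp_i}'(x_0)$. Differentiating $x^{\wp_i}-1=\Phi_{\wp_i}(x)\prod_{d<\wp_i,\,d|\wp_i}\Phi_d(x)$ at $x_0$ gives
\begin{align*}
  \wp_ix_0\inv=\Phi'_{\wp_i}(x_0)\prod_{d<\wp_i,\,d|\wp_i}\Phi_d(x_0).
\end{align*}
Hence $P(\zeta)=n\prod_{d<\wp_i,\,d|\wp_i}\Phi_d(\zeta_i^2)$. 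Dividing by this nonzero product, which is exactly the normalization in the definition of $\wt h_i(0)$, yields $\wt h_i(0)w=nw=\lambda(h_i)w$.

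The main point requiring care is the passage from the identity in $\U_q(\wh\g)$ to the action on $W_\lambda$. I would justify it as above: the action of the integral-form expression is a specialization of a Laurent polynomial identity obtained by substituting $k_i\mapsto q_i^n$. This avoids dividing by $\Phi_{\wp_i}(\zeta_i^2)=0$ inside $\U_\zeta(\wh\g)$.
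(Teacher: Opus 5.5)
Your proof is correct and follows the same route as the paper. Like the paper, you read off the eigenvalue as the value at $q=\zeta$ of the Laurent polynomial obtained by substituting $k_i=q_i^{\lambda(h_i)}$, which \eqref{eq:wth-0} reduces to $(q_i^{2\wp_i\lambda(h_i)}-1)/\Phi_{\wp_i}(q_i^2)$, and then evaluate. The differences are cosmetic: the paper re-derives that identity inline from the definition of $C_{\wp_i,a}$ and absorbs the normalizing factor $\prod_{d}\Phi_d(\zeta_i^2)$ into the limit, so its last step is just $(y^{n}-1)/(y-1)\to n$ with $y=q_i^{2\wp_i}$, whereas you use l'H\^{o}pital's rule together with the differentiated cyclotomic factorization.
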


\begin{proof}
From the definition of $\wt h_i(0)$ we have that
\begin{align*}
  &\wt h_i(0)w\\
  =&w\zeta_i^{-\wp_i}\zeta_i^{\wp_i\lambda(h_i)}\qb{\lambda(h_i)}{\wp_i}_{\zeta_i}\prod_{a=1}^{\wp_i-1}(\zeta_i^{2a}-1)
  +w\sum_{a=0}^{\wp_i-1}C_{\wp_i,a}(\zeta_i^2)\zeta_i^{2a\lambda(h_i)}
  \prod_{\substack{1\le d<\wp_i\\ d\,|\,\wp_i}}\Phi_d(\zeta_i^2)\inv\\
  =&w\lim_{q\mapsto \zeta} (q_i^{2\wp_i}-1)\inv \left( q_i^{\wp_i\lambda(h_i)-\wp_i} \qb{\lambda(h_i)}{\wp_i}_{q_i}\prod_{a=1}^{\wp_i}(q_i^{2a}-1)+\sum_{a=0}^{\wp_i-1}C_{\wp_i,a}'(q_i^2)q_i^{2a\lambda(h_i)} \right)\\
  =&w\lim_{q\mapsto \zeta} (q_i^{2\wp_i}-1)\inv \left( q_i^{\wp_i\lambda(h_i)-\wp_i} \qb{\lambda(h_i)}{\wp_i}_{q_i}\prod_{a=1}^{\wp_i}(q_i^{2a}-1)+q_i^{2\wp_i\lambda(h_i)}-1-\prod_{a=0}^{\wp_i-1}(q_i^{2\lambda(h_i)}-q_i^{2a}) \right)\\
  =&w\lim_{q\mapsto \zeta} (q_i^{2\wp_i}-1)\inv (q_i^{2\wp_i\lambda(h_i)}-1)
  =\lambda(h_i)w.
\end{align*}
We complete the proof of lemma.
\end{proof}

\begin{rem}\label{rem:level}
Let $W\in\obj \mathcal R_\zeta^\ell(\wh\g)$.
Similarly to the definition of $\wt h_i(0)$, we define
\begin{align*}
  &\wt \gamma'=q_0^{-\wp_0}\gamma^{\wp_0}\qb{\gamma}{\wp_0}_{q_0}
  \prod_{a=1}^{\wp_0-1}(q_0^{2a}-1)\prod_{\substack{1\le d<\wp_0\\ d\,|\,\wp_0}}\Phi_d(q_0^2)+\sum_{a=0}^{\wp_0-1}
  C_{\wp_0,a}(q_0^2)\gamma^{2a},\\
  &\wt \gamma=\wt \gamma\prod_{\substack{1\le d<\wp_0\\ d\,|\,\wp_0}}\Phi_d(\zeta_0^2)\inv.
\end{align*}
By an argument similar to the proof of Lemma \ref{lem:h-eigenvalues}, we have $\wt \gamma w=\ell w$.
Therefore, $\ell$ is uniquely determined by the action of $\gamma$ and $\displaystyle\qb{\gamma}{\wp_0}_{q_0}$.
\end{rem}

%
%
%
%

Combining Lemma \ref{lem:wth-psi-x-rels} and \eqref{eq:h-rel-2}, we immediately get the following result.

\begin{lem}\label{lem:wth-wth-x-rels}
Define $\wt h_i(z)=\wt h_i'(z)+\wt h_i(0)$.
Then the following relations hold on every $W\in\obj\mathcal R_\zeta^\ell(\wh\g)$:
\begin{align*}
  &[\wt h_i(z_1),\wt h_j(z_2)]=
  \sum_{s\in\Z_\wp}
  \vvp{[a_{ij}]_{q_i} [r\ell/r_j]_{q_j}q^{-r\ell-s} }
  \left( \frac{\zeta^sz_2/z_1}{(1-\zeta^sz_2/z_1)^2}
    -\frac{\zeta^sz_1/z_2}{(1-\zeta^sz_1/z_2)^2}
  \right),\\
  &[\wt h_i(z_1),\psi_j^-(z_2)^{\pm 1}\psi_j^+(z_2)^{\mp 1}]=\pm\psi_j^-(z_2)^{\pm 1}\psi_j^+(z_2)^{\mp 1} \\
  &\qquad\qquad\times  \sum_{s\in\Z_\wp}
    \vvp{[a_{ij}]_{q_i}(q^{-2r\ell}-1) q^{-s} }\left( \frac{1+\zeta^sz_2/z_1}{2-2\zeta^sz_2/z_1}
        -\frac{1+\zeta^sz_1/z_2}{2-2\zeta^sz_1/z_2}
    \right),\\
  &[\wt h_i(z_1),x_j^\pm(z_2)]=\pm x_j^\pm(z_2)\\
  &\qquad\qquad\times\sum_{s\in\Z_\wp}\left(\vvp{
    [a_{ij}]_{q_i}q^{-r\ell-s}
  }\frac{1+\zeta^sz_2/z_1}{2-2\zeta^sz_2/z_1}
  +\vvp{[a_{ij}]_{q_i}q^{\mp r\ell-s}}\frac{1+\zeta^sz_1/z_2}{2-2\zeta^sz_1/z_2}
  \right).
\end{align*}
\end{lem}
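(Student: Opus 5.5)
The plan is to specialize the relations of Lemma \ref{lem:wth-psi-x-rels}, which hold on $\U_q^{\ell,\res}(\wh\g)$, at $q=\zeta$. Then we add the zero mode $\wt h_i(0)$ using \eqref{eq:h-rel-2} and Lemma \ref{lem:h-eigenvalues}. The work splits into the three displayed relations. For each one, the nonzero modes come from Lemma \ref{lem:wth-psi-x-rels}, and the zero-mode contributions come from $[\wt h_i(0),\cdot]$. A smooth module of level $\ell$ is a module for $\U_\zeta(\wh\g)$ with $\gamma=\zeta_0^\ell$. The bracket coefficients $\vvinf{\cdot}$ are integer Laurent-polynomial identities, so the relations specialize without trouble.

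\textbf{Step 1: resumming at a root of unity.} Since $q^\wp=1$ after specialization, I will group the sum over $s\in\Z$ by residue class mod $\wp$. For a Laurent polynomial $f(q)$ this gives
\begin{align*}
\sum_{s\in\Z}\vvinf{f(q)q^{-s}}g(\zeta^s x)=\sum_{s\in\Z_\wp}\vvp{f(q)q^{-s}}g(\zeta^sx).
\end{align*}
This turns each right-hand side of Lemma \ref{lem:wth-psi-x-rels} into a finite sum over $\Z_\wp$. For instance, $\sum_{n\ge1}n[a_{ij}]_{\zeta_i^n}\cdots(z_2/z_1)^n$ becomes a finite combination of $\zeta^sx/(1-\zeta^sx)^2$.

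\textbf{Step 2: the $H$-$H$ relation.} Here $[\wt h_i(0),\wt h_j'(z)]=0$, because $\wt h_i(0)$ is a polynomial in $k_i$ and $\qb{k_i}{\wp_i}$, which commute with the $h_j(n)$. So only the Step 1 resummation is needed. It remains to check that the two $\iota$-expansions of the rational function are consistent. Using $\frac{x}{(1-x)^2}=\frac{x^{-1}}{(1-x^{-1})^2}$, the symmetric form in the statement follows once we verify $\vvp{f q^{-s}}$ against $\vvp{f q^{s}}$ terms. This uses that $[a_{ij}]_{q_i}[r\ell/r_j]_{q_j}$ is invariant under $q\mapsto q^{-1}$.

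\textbf{Step 3: the $H$-$x$ and $H$-$\psi$ relations.} I will rewrite $\frac{x}{1-x}=\frac{1+x}{2-2x}-\frac12$. The leftover constants $-\tfrac12\sum_s\vvp{[a_{ij}]_{q_i}q^{-r\ell-s}}$ and the analogous ones sum to $-\tfrac12[a_{ij}]_{1}\cdot 2=-a_{ij}$, since $\sum_{s\in\Z_\wp}\vvp{f q^{-s}}=f(1)$. This constant is exactly cancelled by the zero-mode term $[\wt h_i(0),x_j^\pm]=\pm a_{ij}x_j^\pm$ from \eqref{eq:h-rel-2}. That cancellation is the reason for introducing $\frac{1+x}{2-2x}$, and explains why the combined field $\wt h_i(z)$ obeys the clean formula. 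For $\psi$, the combination $\psi_j^{-\,\pm1}\psi_j^{+\,\mp1}$ contains no $k_j$, since $k_j$ cancels. Its commutator with $\wt h_i(0)$ therefore vanishes, and the two halves from Lemma \ref{lem:wth-psi-x-rels} combine to the antisymmetric expression. The constants cancel here because $\frac{1+x}{2-2x}+\frac{1+x^{-1}}{2-2x^{-1}}=0$.

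\textbf{Main obstacle.} I expect the delicate step to be the bookkeeping of signs and constants in Step 3. This means matching $\vvp{[a_{ij}]_{q_i}q^{\mp r\ell-s}}$ and confirming that the constant terms add to exactly $\pm a_{ij}$. I would check this first on the rank-one case $i=j$, $a_{ii}=2$.
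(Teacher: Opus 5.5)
Your route is the same as the paper's. You specialize Lemma~\ref{lem:wth-psi-x-rels} at $q=\zeta$, fold the sum over $\Z$ into a sum over $\Z_\wp$, and add the zero mode via \eqref{eq:h-rel-2}. Your constant bookkeeping for the $x_j^\pm$ relation is also correct: $\sum_{s\in\Z_\wp}\vvp{g(q)q^{-s}}=g(1)$ gives $-\tfrac12\cdot 2a_{ij}$, which is cancelled by $\pm a_{ij}$, and that is exactly what makes the combination work.

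Two justifications in Step 3 are wrong, though neither affects the conclusion.

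\emph{The $k_j$ factor.} $\psi_j^-(z)^{\pm1}\psi_j^+(z)^{\mp1}$ does contain $k_j^{\mp 2}$, as \eqref{zeta-Psi-def2} shows. The reason $\wt h_i(0)$ commutes with it is the one you already gave in Step 2: $\wt h_i(0)$ is a polynomial in $k_i$ and $\qb{k_i}{\wp_i}_{q_i}$, so it commutes with all $k_j$ and $h_j(n)$ by \eqref{Q0}.

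\emph{The constants in the $\psi$ relation.} They cancel simply because the $\psi_j^-$ and $\psi_j^+$ contributions carry the same coefficients with opposite signs. Hence the two $-\tfrac12$'s produced by $\frac{y}{1-y}=\frac{1+y}{2-2y}-\frac12$ cancel. The identity $\frac{1+x}{2-2x}+\frac{1+x^{-1}}{2-2x^{-1}}=0$ holds only as rational functions; the expansions in $x$ and in $x^{-1}$ sum to $\delta(x)$, and the identity plays no role here.

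\emph{Step 2.} The $q\mapsto q^{-1}$ symmetry check is unnecessary. Specializing \eqref{Q1-2'} already gives the stated form verbatim; that symmetry is only needed to rewrite the result as \eqref{zeta1}.
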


Combining Lemmas \ref{lem:h-eigenvalues}, \ref{lem:wth-wth-x-rels}, and \eqref{Q0}-\eqref{Q7}, we immediately have the following result.
\begin{prop}\label{prop:U-zeta-M}
Let $W\in \obj\mathcal R_\zeta^\ell(\wh\g)$. Then $W$ is an object of $\mathcal R_\zeta'^\ell(\wh\g)$ with
\begin{align*}
  &H_i(z)=\wt h_i(z),\quad \Psi_i^\pm(z)=\psi_i^-(z)^{\pm 1}\psi_i^+(z)^{\mp 1},\quad X_i^+(z)=x_i^+(z),\\ &X_i^-(z)=(\zeta_i-\zeta_i\inv)x_i^-(z)\psi_i^+(\zeta^{-r\ell}z)\inv,\quad \te{for }i\in I.
\end{align*}
\end{prop}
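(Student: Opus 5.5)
To prove Proposition~\ref{prop:U-zeta-M}, I will check the defining relations of $\mathcal R_\zeta'^\ell(\wh\g)$ one at a time for the proposed fields. Smoothness of $W$ guarantees that $\wt h_i(z)$, $\psi_i^\pm(z)^{\pm1}$ and $x_i^\pm(z)$ lie in $\E(W)$. For $\psi_i^+$, the field is $k_i\exp(\cdots)$ with $\psi_i^+(n)w=0$ for $n\gg0$. Its inverse is $k_i^{-1}\exp(-\cdots)$, so it is also a field.

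\textbf{Step 1: relations ($\zeta$9) and ($\zeta$1).} Relation ($\zeta$9) is Lemma~\ref{lem:h-eigenvalues}: $W$ is weighted, so $H_i(0)=\wt h_i(0)$ acts on $W_\lambda$ by $\lambda(h_i)\in\Z$. Relation ($\zeta$1) is the first identity of Lemma~\ref{lem:wth-wth-x-rels}. To match the forms, I rewrite $\frac{\zeta^sz_1/z_2}{(1-\zeta^sz_1/z_2)^2}$ in the $\iota_{z_2,z_1}$ expansion as a function of $\zeta^{-s}z_2/z_1$. I then reindex $s\mapsto -s$ and use the symmetry $\vvp{f(q)}=\vvp{f(q^{-1})}$, which holds because $[n]_{q}$ is symmetric in $q\leftrightarrow q^{-1}$. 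This turns $q^{-r\ell-s}$ into $q^{r\ell-s}$. Relations ($\zeta$2) and ($\zeta$5) follow from the same lemma in the same way. For ($\zeta$5) with $X_i^-$, I also need the commutator of $\wt h_i$ with $\psi_i^+(\zeta^{-r\ell}z)^{-1}$. This comes from the $\psi^+$-part of Lemma~\ref{lem:wth-psi-x-rels}, and it shifts $q^{+r\ell}$ to $q^{-r\ell}$, which explains the uniform shape of ($\zeta$5).

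\textbf{Step 2: relation ($\zeta$10).} I expand $\psi_i^-(z)\psi_i^+(z)^{-1}$ using \eqref{Q9}. This gives $k_i^{-2}$ times exponentials in $(\zeta_i^m-\zeta_i^{-m})\wt h_i(m)z^{-m}/m$ with the correct signs, with negative modes to the left. The remaining point is $k_i^{-2}=\zeta_i^{-2H_i(0)}$ on $W$, which holds because $k_i=\zeta_i^{\lambda(h_i)}$ on $W_\lambda$ and $H_i(0)=\lambda(h_i)$ there by Lemma~\ref{lem:h-eigenvalues}. By the Remark after the definition, ($\zeta$2), ($\zeta$3), ($\zeta$4) and ($\zeta$6) then follow from ($\zeta$1), ($\zeta$5) and ($\zeta$10).

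\textbf{Step 3: relations ($\zeta$7) and ($\zeta$8).} For $X^+X^+$, ($\zeta$7) is \eqref{Q7} specialized at $q=\zeta$, with the factor $z_1$ divided out. For $X^-X^-$, the extra factors $\psi_i^+(\zeta^{-r\ell}z)^{-1}$ must be commuted past $x_j^-$ via \eqref{Q4}. Their exchange factors $g_{ij}$ need to cancel so that \eqref{Q7} survives unchanged. I expect they cancel symmetrically, but this has to be checked carefully.

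Relation ($\zeta$8) is the main obstacle. Starting from \eqref{Q6}, I multiply on the right by $(\zeta_i-\zeta_i^{-1})\psi_j^+(\zeta^{-r\ell}z_2)^{-1}$. In the term $x_j^-x_i^+$, moving $\psi_j^+(\zeta^{-r\ell}z_2)^{-1}$ past $x_i^+(z_1)$ with \eqref{Q4} at $\gamma=\zeta_0^\ell=\zeta^{r\ell}$ produces exactly the rational factor $\zeta_i^{-a_{ij}}\frac{1-\zeta_i^{a_{ij}}z_2/z_1}{1-\zeta_i^{-a_{ij}}z_2/z_1}$, expanded in $\iota_{z_2,z_1}$. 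On the right-hand side, the $\delta(z_2/z_1)$ term gives $\psi^+(\gamma^{-1}z_2)\psi^+(\zeta^{-r\ell}z_2)^{-1}=1$. The other term becomes $\psi_j^-(\zeta^{-r\ell}z_2)\psi_j^+(\zeta^{-r\ell}z_2)^{-1}=\Psi_j^+(\zeta^{-r\ell}z_2)$, sitting against $\delta(\zeta^{-2r\ell}z_2/z_1)$. The delicate points are two: getting the order of these factors right, and the factor $(\zeta_i-\zeta_i^{-1})$ together with the overall minus sign. I will track these by carrying out the computation in $\U_q^{\ell}(\wh\g)$ first and only then specializing $q\to\zeta$.
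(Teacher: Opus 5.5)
Your overall route is the paper's. The paper obtains this proposition in one line by combining Lemma~\ref{lem:h-eigenvalues}, Lemma~\ref{lem:wth-wth-x-rels} and \eqref{Q0}--\eqref{Q7}. Your Steps~1--2 and your treatment of \eqref{zeta8} fill this in correctly:
\begin{itemize}
\item the reindexing $s\mapsto -s$ together with $\vvp{f(q)}=\vvp{f(q\inv)}$ works;
\item the $\psi_j^+$-correction does turn $q^{r\ell}$ into $q^{-r\ell}$ for $X_j^-$;
\item in \eqref{zeta8} the prefactor cancels the exchange factor from \eqref{Q4}, leaving $(\zeta_i-\zeta_i\inv)[x_i^+(z_1),x_j^-(z_2)]\psi_j^+(\zeta^{-r\ell}z_2)\inv$.
\end{itemize}

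However, your prediction for \eqref{zeta7} with $X^-$ is wrong, and if it held, that step would fail. For $x^-$, \eqref{Q7} reads $(1-\zeta_i^{-a_{ij}}z_2/z_1)x_i^-(z_1)x_j^-(z_2)=\zeta_i^{-a_{ij}}(1-\zeta_i^{a_{ij}}z_2/z_1)x_j^-(z_2)x_i^-(z_1)$. This has $-a_{ij}$ where \eqref{zeta7} has $a_{ij}$. So if the exchange factors cancelled and \eqref{Q7} ``survived unchanged'', you would land on the wrong relation.

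What actually happens is this. Since $\gamma=\zeta^{r\ell}$, the shift $\zeta^{-r\ell}$ makes the argument of $g$ in \eqref{Q4} exactly $x=z_2/z_1$. Put $c=(\zeta_i-\zeta_i\inv)(\zeta_j-\zeta_j\inv)$ and $\Pi=\psi_i^+(\zeta^{-r\ell}z_1)\inv\psi_j^+(\zeta^{-r\ell}z_2)\inv$. Using $g_{ji}(1/x)=g_{ij}(x)\inv$, which needs $\zeta_j^{a_{ji}}=\zeta_i^{a_{ij}}$, one gets
\[
X_i^-(z_1)X_j^-(z_2)=c\,\iota_{z_1,z_2}g_{ij}(x)\,x_i^-(z_1)x_j^-(z_2)\,\Pi,\qquad
X_j^-(z_2)X_i^-(z_1)=c\,\iota_{z_2,z_1}g_{ij}(x)\inv\,x_j^-(z_2)x_i^-(z_1)\,\Pi .
\]
Now $(1-\zeta_i^{a_{ij}}x)g_{ij}(x)=\zeta_i^{a_{ij}}-x$ and $\zeta_i^{a_{ij}}(1-\zeta_i^{-a_{ij}}x)g_{ij}(x)\inv=1-\zeta_i^{a_{ij}}x$. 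Hence \eqref{zeta7} for $X^-$ is equivalent to $(\zeta_i^{a_{ij}}-x)x_i^-x_j^-=(1-\zeta_i^{a_{ij}}x)x_j^-x_i^-$, which is $\zeta_i^{a_{ij}}$ times \eqref{Q7} for $x^-$. The two exchange factors do not cancel; they are exactly what turns $-a_{ij}$ into $a_{ij}$.

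A smaller point concerns your claim that $\psi_i^+(z)\inv=k_i\inv\exp(-\cdots)$ is a field ``because it is an exponential''. That is not enough: the finitely many modes $\wt h_i(m)$ that survive on $w$ need not act nilpotently a priori, so the exponential need not truncate. You need smoothness of $\psi_i^+$ as well. The positive modes commute, and only finitely many act on $\C[\wt h_i(m)\,|\,m>0]w$. Since $\exp$ of a nonzero polynomial in $z\inv$ is never a polynomial, each $(\zeta_i^m-\zeta_i^{-m})\wt h_i(m)$ vanishes on the zero locus of the annihilator of $w$. By the Nullstellensatz it therefore acts nilpotently on $w$, which gives $\psi_i^+(z)^{\pm1}w\in W[z\inv]$. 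The paper does not spell this out either, but it is needed for $\Psi_i^\pm(z)$ and $X_i^-(z)$ to lie in $\E(W)$.
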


Denote by $\mathcal A$ the localization of $\C[q,q\inv]$ at the ideal generated by $(q^\wp-1)$,
and define $\U_{\mathcal A}'^\ell(\wh\g)$ to be the $\mathcal A$-algebra generated by
\begin{align*}
  &k_i^{\pm 1},\quad \qb{k_i}{\wp_i}_{q_i},\quad \wt h_i(m),\quad x_i^\pm(n)\quad \te{for }i\in I,\,0\ne m\in\Z,\,n\in\Z.
\end{align*}
subject to relations \eqref{Q0}, \eqref{Q1-2'}, \eqref{Q3}, \eqref{Q4-5'}, \eqref{Q6}, \eqref{Q7} and \eqref{Q9}.
We define the notion of smooth weighted $\U_{\mathcal A}'^\ell(\wh\g)$ in the obvious way.
The following is the reverse of Proposition \ref{prop:U-zeta-M} that can be verified straightforwardly.

\begin{prop}\label{prop:U-zeta-M-reverse}
Let $(W,H_i(z),\Psi_i^\pm(z),X_i^\pm(z))$ be an object of $\mathcal R_\zeta'^\ell(\wh\g)$.
Then $W$ becomes a smooth weighted $\U_{\mathcal A}'^\ell(\wh\g)$-module with
\begin{align*}
  &k_i^{\pm 1}\mapsto \zeta_i^{\pm H_i(0)}, \quad \qb{k_i}{\wp_i}_{q_i}\mapsto \qb{H_i(0)}{\wp_i}_{\zeta_i}, \quad\wt h_i(m)\mapsto H_i(m),\\
  &\psi_i^\pm(z)\mapsto\zeta_i^{\pm H_i(0)}\exp\left( \sum_{\pm m>0}(\zeta_i^m-\zeta_i^{-m})H_i(m)z^{-m} \right),\\
  &x_i^+(z)\mapsto X_i^+(z),\quad x_i^-(z)\mapsto (\zeta_i-\zeta_i\inv)\inv X_i^-(z)\psi_i^+(\zeta^{-r\ell}z)
\end{align*}
for $i\in I$, $0\ne m\in\Z$.
\end{prop}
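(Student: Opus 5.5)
We have to check that the proposed assignments turn $W$ into a $\U_{\mathcal A}'^\ell(\wh\g)$-module, i.e.\ that the images satisfy \eqref{Q0}, \eqref{Q1-2'}, \eqref{Q3}, \eqref{Q4-5'}, \eqref{Q6}, \eqref{Q7} and \eqref{Q9}, and then that the module is smooth and weighted. The plan is to match each defining relation of $\U_{\mathcal A}'^\ell(\wh\g)$ with one of the relations ($\zeta$1)--($\zeta$10), in the reverse direction of Lemma \ref{lem:wth-wth-x-rels} and Proposition \ref{prop:U-zeta-M}.

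\textbf{Cartan part.} \eqref{Q0} is immediate: all images are functions of the commuting operators $H_i(m)$, and $\zeta_i^{\pm H_i(0)}$ is well defined because of \eqref{zeta-weight-mod}. For \eqref{Q9} we compare the defining formula for $\psi_i^\pm(z)$ with \eqref{Q9} specialized at $q=\zeta$, with $\wt h_i(m)\mapsto H_i(m)$. One has to check that the normalizations agree; this is a bookkeeping point about the factor $1/m$. For \eqref{Q1-2'} we expand the right-hand side of \eqref{zeta1} into modes. The point is that for $m\neq0$ the coefficient $\sum_s\vvp{\cdots q^{-s}}\zeta^{sm}$ equals the specialization at $q=\zeta$ of $\sum_s\vvinf{\cdots q^{-s}}q^{sm}$. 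Both equal the Laurent polynomial evaluated at $\zeta^m$, so the mode-by-mode coefficients of \eqref{zeta1} and \eqref{Q1-2'} coincide. Note that $H_i(0)$ is central in \eqref{zeta1}, because the $z_1^0$-coefficients of the two $\iota$-expansions cancel. The same identity $\sum_s\vvp{f q^{-s}}\zeta^{sm}=f(\zeta^m)$ turns \eqref{zeta5} into \eqref{Q4-5'} for $m\ne0$. For $m=0$, \eqref{zeta5} gives $[H_i(0),X_j^\pm(z)]=\pm a_{ij}X_j^\pm(z)$, which exponentiates to \eqref{Q3} with $k_i=\zeta_i^{H_i(0)}$. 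The same holds for $X_j^-$ twisted by $\psi_i^+$, since $\psi$ commutes with $H_i(0)$.

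\textbf{Root part.} \eqref{Q7} for $x_i^+$ is \eqref{zeta7}. For $x_i^-=(\zeta_i-\zeta_i^{-1})^{-1}X_i^-(z)\psi_i^+(\zeta^{-r\ell}z)$, we commute the $\psi^+$ factors past $X^-$ using the exponentiated form of ($\zeta$5). This produces the rational factor that converts the coefficient in \eqref{zeta7} into that of \eqref{Q7}. For \eqref{Q6}, we substitute the same expression into \eqref{zeta8} and multiply on the right by $\psi_j^+(\zeta^{-r\ell}z_2)$. The prefactor $\iota_{z_2,z_1}\frac{1-\zeta_i^{a_{ij}}z_2/z_1}{1-\zeta_i^{-a_{ij}}z_2/z_1}$ cancels against the factor obtained by moving $\psi^+$ past $X_i^+(z_1)$, which is computed from ($\zeta$5)/($\zeta$6). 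The delta terms then become $\psi^+_j(\zeta^{-r\ell}z_2)\delta(z_2/z_1)$ and $\Psi_j^+(z_2\zeta^{-r\ell})\psi_j^+\delta(\cdots)$. Using \eqref{zeta-Psi-def2} we have $\Psi^+=\psi^-\psi^{+-1}$, so the second term becomes the $\psi^-$ term of \eqref{Q6}, with $\gamma=\zeta^{r\ell}$.

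\textbf{Smoothness, weights, and the main obstacle.} Smoothness follows because the fields lie in $\E(W)$. The weighted property follows from \eqref{zeta-weight-mod}, using $\qb{H_i(0)}{\wp_i}_{\zeta_i}$ on each integer eigenspace. I expect the main obstacle to be \eqref{Q6}: keeping track of the $\iota$-expansion directions when moving $\psi^+(\zeta^{-r\ell}z)$ across $X_i^+(z_1)$, and matching the shifts $\zeta^{-2r\ell}$ in the delta functions exactly. I would first verify the $\psi^+$/$X^\pm$ exchange relation as a separate exponentiated lemma and only then do the substitution.
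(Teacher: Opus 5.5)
Your strategy of checking the defining relations of $\U_{\mathcal A}'^\ell(\wh\g)$ one by one against ($\zeta$1)--($\zeta$10) is what the paper means by ``verified straightforwardly''. Your treatment of \eqref{Q1-2'}, \eqref{Q3}, \eqref{Q7} and \eqref{Q6}, where you commute $\psi^+$ past $X^\pm$ using the exponentiated form of \eqref{zeta5}, goes through.

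\textbf{Gap at \eqref{Q4-5'} for $x_j^-$.} This step fails as you state it. The image of $x_j^-(z)$ is $(\zeta_j-\zeta_j\inv)\inv X_j^-(z)\psi_j^+(\zeta^{-r\ell}z)$, not $X_j^-(z)$, so \eqref{zeta5} alone does not give the relation. For $m\ge 1$, \eqref{Q4-5'} at $q=\zeta$ requires the following (note the $q^{\mp r\ell}$ in its second summand):
\begin{align*}
  [H_i(-m),x_j^-(z)]=-[a_{ij}]_{\zeta_i^m}\zeta^{mr\ell}z^{-m}x_j^-(z).
\end{align*}
However, \eqref{zeta5} only gives $[H_i(-m),X_j^-(z)]=-[a_{ij}]_{\zeta_i^m}\zeta^{-mr\ell}z^{-m}X_j^-(z)$, which differs unless $\zeta^{2mr\ell}=1$.

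The missing piece is the term $X_j^-(z)[H_i(-m),\psi_j^+(\zeta^{-r\ell}z)]$. By \eqref{zeta1}, $[H_j(m),H_i(-m)]=m[a_{ji}]_{\zeta_j^m}[r\ell/r_i]_{\zeta_i^m}\zeta^{-mr\ell}$. Together with $r_ia_{ij}=r_ja_{ji}$, this term equals $-[a_{ij}]_{\zeta_i^m}(\zeta^{mr\ell}-\zeta^{-mr\ell})z^{-m}X_j^-(z)\psi_j^+(\zeta^{-r\ell}z)$. That is exactly what converts $\zeta^{-mr\ell}$ into $\zeta^{mr\ell}$. The modes $H_i(m)$ with $m>0$ need no correction, since they commute with $\psi_j^+$.

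\textbf{Smoothness.} Smoothness of $\psi_i^+(n)$ and $x_i^-(n)$ does not follow from ``the fields lie in $\E(W)$'', because these are not among the given fields. In fact $x_i^-(z)w$ is only well defined once you know $\psi_i^+(z)w\in W[z\inv]$. This comes from \eqref{zeta-Psi-def2} for $\Psi_i^-$. Its right exponential factor is $\zeta_i^{-H_i(0)}\psi_i^+(z)$, and its left factor is an invertible power series in $z$. Hence $\zeta_i^{-H_i(0)}\psi_i^+(z)w\in W((z))\cap W[[z\inv]]=W[z\inv]$. This is the same argument that gives ($\mathcal O2$) in Corollary \ref{coro:R-to-O}.

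\textbf{Normalization of $\psi_i^\pm$.} The normalizations in \eqref{Q9} do not agree: the displayed image of $\psi_i^\pm(z)$ lacks the factor $\pm1/m$. Since $\psi_i^\pm$ is not a generator, its image is forced by \eqref{Q9}: it must be $\zeta_i^{\pm H_i(0)}\exp\big(\pm\sum_{\pm m>0}(\zeta_i^m-\zeta_i^{-m})H_i(m)z^{-m}/m\big)$. Only with this normalization does \eqref{zeta-Psi-def2} yield $\Psi_i^+(z)=\psi_i^-(z)\psi_i^+(z)\inv$, which your \eqref{Q6} step relies on.
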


Before proving every $W\in\obj\mathcal R_\zeta^\ell(\wh\g)$ satisfying \eqref{zeta-serre} and \eqref{zeta-res-form}, we need the following algebras.
Define $\U_{\mathcal A}^\ell(\wh\g)$ to be the quotient algebra of $\U_{\mathcal A}'^\ell(\wh\g)$ modulo the relation \eqref{Q8},
define 
\begin{align*}
  \U_{\mathcal A}^{\ell,\res}(\wh\g)
  =\U_q^{\ell,\res}(\wh\g)\ot_{\C[q,q\inv]}\mathcal A,
\end{align*}
and define $\U_\zeta^\ell(\wh\g)$ to be the quotient algebra of $\U_\zeta(\wh\g)$ modulo the ideal generated by
\begin{align*}
  \gamma-\zeta_0^\ell\quad\te{and}\quad \qb{\gamma}{\wp_0}_{q_0}-\qb{\ell}{\wp_0}_{\zeta_0}.
\end{align*}

For each $N\in\N$, we denote by $L_N$ (resp. $L_N'$, $L_N^{\res}$, $L_N(\zeta)$) the left ideal of $\U_{\mathcal A}^\ell(\wh\g)$
(resp. $\U_{\mathcal A}'^\ell(\wh\g)$, $\U_{\mathcal A}^{\ell,\res}(\wh\g)$, $\U_\zeta^\ell(\wh\g)$) generated by
\begin{align*}
  \wt h_i(m),\quad \psi_i^+(m),\quad x_i^\pm(m)\quad\te{for }i\in I,\,m\ge N.
\end{align*}
If $0>N\in\Z$, we set the ideal $L_N$ (resp. $L_N'$, $L_N^{\res}$, $L_N(\zeta)$) to be the whole algebra.

\begin{prop}\label{prop:topo-alg}
For each $N\in\N$ and $n\in \Z$, there exists $M\in\N$ such that
\begin{align*}
  L_M a(n)\in L_N,\quad  L_M' a(n)\in L_N',\quad L_M^{\res} a(n)\in L_N^{\res},\quad L_M(\zeta) a(n)\in L_N(\zeta)
\end{align*}
for $i\in I$, where $a=\wt h_i'$, $\psi_i^\pm$, $x_i^\pm$.
Moreover, view $\C$ as a topological field under discrete topology. Then
$\U_{\mathcal A}^\ell(\wh\g)$, $\U_{\mathcal A}'^\ell(\wh\g)$, $\U_{\mathcal A}^{\ell,\res}(\wh\g)$ and $\U_\zeta^\ell(\wh\g)$ equipped with topological algebra structures such that $\set{L_N}{N\in\N}$, $\set{L_N'}{N\in\N}$, $\set{L_N^{\res}}{N\in\N}$ and $\set{L_N(\zeta)}{N\in\N}$ form local bases at $0$, respectively.
\end{prop}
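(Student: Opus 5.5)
The plan is to reduce the statement to a commutator estimate for each generator $b(m)$ of the left ideals. Fix $N\in\N$ and $n\in\Z$, and let $a$ be one of $\wt h_i'$, $\psi_i^\pm$, $x_i^\pm$. Since $L_M$ is the left ideal generated by the elements $b(m)$ with $b\in\{\wt h_j,\psi_j^+,x_j^\pm\}$ and $m\ge M$, it suffices to find $M$ such that $b(m)a(n)\in L_N$ for all such $b$ and all $m\ge M$. Indeed, then $u\,b(m)a(n)\in L_N$ for every $u$, because $L_N$ is a left ideal.

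To get this, I will write $b(m)a(n)=a(n)b(m)+[b(m),a(n)]$, or the appropriate $q$-twisted commutator. I then read off the right-hand side from the defining relations \eqref{Q1-2'}, \eqref{Q4-5'}, \eqref{Q6}, \eqref{Q7} and the $\psi$-relations in Lemma \ref{lem:wth-psi-x-rels}, by extracting coefficients of $z_1^{-m}z_2^{-n}$.

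\textbf{Heisenberg-type cases.} For $b=\wt h_j$ and $a=\wt h_i'$, the commutator is a scalar supported on $m=-n$. It vanishes once $m>|n|$, so $b(m)a(n)=a(n)b(m)\in L_N$ for $m\ge N$.

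For $b=\wt h_j$ and $a=x_i^\pm$ or $\psi_i^\pm$, relation \eqref{Q4-5'} gives $[\wt h_j(m),x_i^\pm(n)]=c\,x_i^\pm(m+n)$ for a scalar $c$. This lies in $L_N$ once $m+n\ge N$, and $a(n)b(m)\in L_N$ for $m\ge N$. Hence $M=N+|n|$ works. The $\psi$ case is similar, since $[\wt h_j(m),\psi_i^\pm(n)]$ is a finite combination of terms $\psi_i^\pm(m+n-k)$ with $0\le k\le$ a bound. Here one should note that $\psi_i^+(p)=0$ for $p<0$, and that $\psi_i^-(p)$ is only nonzero for $p\le 0$. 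In the $\psi_i^-$ case $\wt h_j(m)$ with $m>0$ commutes with $\psi_i^-$ up to terms of the form $\psi^-_i(n+m)\cdot$scalar.

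\textbf{Cases with $b=x_j^\pm$ or $b=\psi_j^+$.} For $b=x_j^\pm$ and $a=x_i^\pm$, I use \eqref{Q7} in components:
\begin{align*}
x_j(m+1)x_i(n)-q^{c}x_j(m)x_i(n+1)=q^{c}x_i(n)x_j(m+1)-x_i(n+1)x_j(m).
\end{align*}
A descending induction on $n$ then expresses $x_j(m)x_i(n)$ modulo $L_N$ for large $m$. Concretely, I iterate to push the $x_i$ index up until it exceeds $N$, at the cost of a geometric-type sum of terms $x_i(\cdot)x_j(\ge N)$.

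For $b=x_i^+$ and $a=x_j^-$, I use \eqref{Q6}: the commutator is $\psi^\pm(m+n)$ up to $\gamma$-scalars, and $\psi^+(m+n)\in L_N$ for large $m$, while $\psi^-(m+n)=0$ once $m+n>0$.

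The $\psi_j^+$ cases go through \eqref{Q4}–\eqref{Q5} and \eqref{Q1}–\eqref{Q2} expanded in components. Each yields a finite sum in which every term has some factor with index at least $m-C$, with that factor on the right. Equivalently, I can write $\psi_j^+$ through $\wt h_j$ via \eqref{Q9}, and note that $\psi_j^+(m)$ is a polynomial in $k_j$ and the $\wt h_j(p)$ with $p>0$, $\sum p=m$. Each monomial then contains some $\wt h_j(p)$ with $p\ge m/\text{length}$, which reduces this case to the Heisenberg cases.

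\textbf{Main obstacle.} The hardest point is the case $a=x_i$, $b=x_j$ under \eqref{Q7}, and the analogous exponential $\psi$ cases, where the relations do not give a finite commutator. For these I will apply the standard argument of \cite{Li-phi-coor}: a relation $(z_1-cz_2)A(z_1,z_2)=(cz_1-z_2)B(z_2,z_1)$ is equivalent to truncation properties of $\iota$-expansions. This gives $x_j(m)x_i(n)\in \sum_k \U\, x_j(\ge N)+\U\,x_i(\ge N)$ once $m\ge N+|n|+1$.

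The same bounds transfer verbatim to $L'$, $L^{\res}$ and $L(\zeta)$, because they come from identities of generating functions that hold in all four algebras. For the restricted form, the divided powers and $\qb{k_i}{m}$ commute past $\wt h_i'$ and $x_i^\pm$ up to scalars. Finally, the topological-algebra statement follows formally. The $L_N$ are decreasing left ideals, so addition and left multiplication are continuous. Right multiplication by any fixed element $u$ is continuous by the first statement, applied inductively on a monomial expression of $u$ in the generators.
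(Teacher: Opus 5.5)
\textbf{Gap: the pair $b=\psi_j^+$, $a=x_i^\pm$.} Your reduction to $b(m)a(n)\in L_N$ for the generators $b\in\{\wt h_j,\psi_j^+,x_j^\pm\}$ is right. The pairs with a finite commutation rule are fine: $\wt h$ against anything, $x^+$ against $x^-$ via \eqref{Q6}, $\psi_j^+$ against $\psi_i^\pm$ or $\wt h_i'$, and $x_j^\pm$ against $\psi_i^\pm$. Your two-sided iteration of \eqref{Q7} also correctly handles $x_j^\pm$ against $x_i^\pm$. It gives the bound $m\ge 2N-n$ rather than the $N+|n|+1$ you quote later, but only existence matters.

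The gap is the pair $b=\psi_j^+$, $a=x_i^\pm$, where both of your arguments fail. In components \eqref{Q4} reads
\begin{align*}
  \psi_j^+(m)x_i^\pm(n)=\sum_{k=0}^{m}c_k\,x_i^\pm(n+k)\,\psi_j^+(m-k),
\end{align*}
where the $c_k$ are the coefficients of the geometric-type series $\iota_{z_1,z_2}g_{ji}(\gamma\inv z_2/z_1)^{\pm1}$. These in general do not vanish for large $k$. So the right-hand factor runs through $\psi_j^+(0),\dots,\psi_j^+(m)$, and there is no uniform $C$.

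The route through \eqref{Q9} fails too. $\psi_j^+(m)$ contains the term $k_j(q_j-q_j\inv)^m\wt h_j(1)^m/m!$, so ``$p\ge m/\text{length}$'' only gives $p\ge 1$. Writing $[\wt h_j(1),x_i^\pm(p)]=c\,x_i^\pm(p+1)$, one gets $\wt h_j(1)^mx_i^\pm(n)=\sum_k\binom{m}{k}c^k x_i^\pm(n+k)\wt h_j(1)^{m-k}$. This leaves $\wt h_j(1)$ on the right, where your Heisenberg-case argument does not apply once $N\ge 2$. The needed cancellation happens only in the full sum $\psi_j^+(m)$, not monomial by monomial.

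The repair is short. In the display above, the terms with $k\le m-N$ lie in $L_N$. For $k>m-N$, the inverse series $\iota_{z_1,z_2}g_{ji}(\gamma\inv z_2/z_1)^{\mp1}$ gives
\begin{align*}
  x_i^\pm(n+k)\psi_j^+(m-k)=\sum_{t=0}^{m-k}d_t\,\psi_j^+(m-k-t)\,x_i^\pm(n+k+t),
\end{align*}
which puts an $x_i^\pm$ of index $>n+m-N$ on the right. Hence $M=2N+|n|$ works. Iterating the polynomial form of \eqref{Q4}, exactly as you did for \eqref{Q7}, also works. Your ``main obstacle'' paragraph does invoke a truncation argument for the $\psi$ cases, but it contradicts the preceding paragraph and is only cited, not carried out.

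\textbf{Comparison with the paper.} Once repaired, your proof is a case-by-case, more elementary version of the paper's. The paper treats all pairs at once. For series $\al,\beta$ among $\wt h_i',\psi_i^\pm,x_i^\pm$ it uses a relation $\iota_{z_1,z_2}f(z_2/z_1)\al(z_1)\beta(z_2)=\iota_{z_2,z_1}g(z_1/z_2)\beta(z_2)\al(z_1)$ with $f(0),g(0)$ units. It projects to $\U/L_N$, where the common value of both sides is lower truncated in both $z_1$ and $z_2$. It deduces $\pi_N(\al(z_1)\beta(z_2))\in(\U/L_N)((z_1))((z_2))$, i.e.\ $\pi_N(\al(m)\beta(n))=0$ for $m\gg0$. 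That avoids the case split. Yours gives explicit $M$ and handles the additive relations directly; these, e.g.\ $\wt h_i'$ against $\wt h_j'$, do not literally fit the paper's multiplicative template.

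\textbf{Divided powers.} Divided powers do not commute past $x_i^\pm$ up to scalars, contrary to your last paragraph. For continuity of right multiplication by $(x_i^\pm(n))^{(k)}$ in $\U_{\mathcal A}^{\ell,\res}(\wh\g)$, you cannot just divide $L_M^{\res}(x_i^\pm(n))^k\subset L_N^{\res}$ by $[k]_{q_i}!$, which is not a unit in $\mathcal A$ for $k\ge\wp_i$. The paper, which calls these cases ``similar'', is no more explicit on this point.
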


\begin{proof}
From \eqref{Q1}, \eqref{Q2}, \eqref{Q4}-\eqref{Q7} and Lemma \ref{lem:wth-psi-x-rels}, we have that for any
\begin{align*}
  \al(z)=\sum_{n\in\Z}\al(n)z^{-n},\beta(z)=\sum_{n\in\Z}\beta(n)z^{-n}\in \set{\wt h_i'(z),\psi_i^\pm(z), x_i^\pm(z)}{i\in I},
\end{align*}
there exist
$0\ne f(z),g(z)\in \C[q,q\inv,z]$, such that $f(0),g(0)\in\C[q,q\inv]^\times$ and
\begin{align*}
  \iota_{z_1,z_2}f(z_2/z_1)\al(z_1)\beta(z_2)=\iota_{z_2,z_1}g(z_1/z_2)\beta(z_2)\al(z_1).
\end{align*}
For each $N\in N$, we denote by $\pi_N:\U_{\mathcal A}^\ell(\wh\g)\to \U_{\mathcal A}^\ell (\wh\g)/L_N$ the quotient $\U_{\mathcal A}^\ell(\wh\g)$-module map.
Then
\begin{align*}
  &\iota_{z_1,z_2}f(z_2/z_1)\pi_N\big(\al(z_1)\beta(z_2)\big)=\iota_{z_2,z_1}g(z_1/z_2)\pi_N\big(\beta(z_2)\al(z_1)\big)\in
  \left(\U_{\mathcal A}^\ell (\wh\g)/L_N\right)((z_1,z_2)).
\end{align*}
It follows that
\begin{align*}
  &\sum_{m,n\in\Z}\pi_N(\al(m)\beta(n))z_1^{-m}z_2^{-n}=\pi_N\big(\al(z_1)\beta(z_2)\big)\\
  =&\iota_{z_1,z_2}f(z_2/z_1)\inv \left(\iota_{z_2,z_1}g(z_1/z_2)\pi_N\big(\beta(z_2)\al(z_1)\big)\right)
  \in \left(\U_{\mathcal A}^\ell (\wh\g)/L_N\right)((z_1))((z_2)).
\end{align*}
It implies that for each $n\in\Z$, there exists $M\in\N$, such that
$\pi_N(\al(m)\beta(n))=0$ for all $m\ge M$.
Since $\set{\wt h_i(z),\psi_i^+(z), x_i^\pm(z)}{i\in I}$ is a finite set, we complete the proof of the first assertion.
The moreover statement follows immediate from the first one and the following fact
\begin{align*}
  L_N a\in L_N\quad \te{for }N\in\N,\,a=k_i^{\pm 1},\,\qb{k_i}{\wp_i}_{q_i}.
\end{align*}
The proofs for $\U_{\mathcal A}'^\ell(\wh\g)$, $\U_{\mathcal A}^{\ell,\res}(\wh\g)$ and $\U_\zeta^\ell(\wh\g)$ are similar.
\end{proof}

Note the following commutative diagram
\begin{align*}
  \begin{tikzcd}[column sep=2.5em,row sep=1.5em, ampersand replacement=\&]
  \U_{\mathcal A}'^\ell(\wh\g)/L_0'\ar[d] \& \U_{\mathcal A}'^\ell(\wh\g)/L_1'\ar[l]\ar[d] \& \U_{\mathcal A}'^\ell(\wh\g)/L_2'\ar[l]\ar[d] \&\cdots \ar[l]\\
  \U_{\mathcal A}^\ell(\wh\g)/L_0\ar[d] \& \U_{\mathcal A}^\ell(\wh\g)/L_1\ar[l]\ar[d] \& \U_{\mathcal A}^\ell(\wh\g)/L_2\ar[l]\ar[d] \&\cdots \ar[l]\\
  \U_{\mathcal A}^{\ell,\res}(\wh\g)/L_0^{\res}\ar[d] \& \U_{\mathcal A}^{\ell,\res}(\wh\g)/L_1^{\res}\ar[l]\ar[d] \& \U_{\mathcal A}^{\ell,\res}(\wh\g)/L_2^{\res}\ar[l]\ar[d] \&\cdots \ar[l]\\
  \U_\zeta^\ell(\wh\g)/L_0(\zeta) \& \U_\zeta^\ell(\wh\g)/L_1(\zeta)\ar[l] \& \U_\zeta^\ell(\wh\g)/L_2(\zeta)\ar[l] \&\cdots \ar[l]
  \end{tikzcd}
\end{align*}
%
Let
\begin{align*}
  &\wh\U_{\mathcal A}'^\ell(\wh\g)=\varprojlim_{N\ge 0} \U_{\mathcal A}'^\ell (\wh\g)/L_N',\quad
  \wh\U_{\mathcal A}^\ell(\wh\g)=\varprojlim_{N\ge 0} \U_{\mathcal A}^\ell (\wh\g)/L_N,\\
  &\wh\U_{\mathcal A}^{\ell,\res}(\wh\g)=\varprojlim_{N\ge 0} \U_{\mathcal A}^{\ell,\res}(\wh\g)/L_N^{\res}, \quad
  \wh\U_\zeta^\ell(\wh\g)=\varprojlim_{N\ge 0}\U_\zeta^\ell(\wh\g)/L_N(\zeta)
\end{align*}
be the completions of the topological algebras $\U_{\mathcal A}'^\ell(\wh\g)$, $\U_{\mathcal A}^\ell(\wh\g)$, $\U_{\mathcal A}^{\ell,\res}(\wh\g)$ and $\U_\zeta^\ell(\wh\g)$, respectively.
The universal property of inverse limit deduces the following commutative diagram
\begin{align*}
  \begin{tikzcd}[column sep=2.5em,row sep=1.5em, ampersand replacement=\&]
    \U_{\mathcal A}'^\ell(\wh\g)\ar[r]\ar[d]\&
    \U_{\mathcal A}^\ell(\wh\g)\ar[r]\ar[d]\&
    \U_{\mathcal A}^{\ell,\res}(\wh\g)\ar[r]\ar[d]\&
    \U_\zeta^\ell(\wh\g)\ar[d]\\
    \wh\U_{\mathcal A}'^\ell(\wh\g)\ar[r]\&
    \wh\U_{\mathcal A}^\ell(\wh\g)\ar[r]\&
    \wh\U_{\mathcal A}^{\ell,\res}(\wh\g)\ar[r]\&
    \wh\U_\zeta^\ell(\wh\g).
  \end{tikzcd}
\end{align*}
All the maps above are continuous algebra homomorphisms.
For any $a\in \U_{\mathcal A}'^\ell(\wh\g)$, we still denoted by $a$ the images of the above maps for convenience.
Let $\wh L_N$, $\wh L_N'$, $\wh L_N^{\res}$ and $\wh L_N(\zeta)$ be the closures of $L_N$, $L_N'$, $L_N^{\res}$ and $L_N(\zeta)$, respectively. Then they are all left ideals and form local bases at $0$.

We define the notion of smooth modules of $U_{\mathcal A}'^\ell(\wh\g)$, $\U_{\mathcal A}^\ell(\wh\g)$ and $\U_{\mathcal A}^{\ell,\res}(\wh\g)$ similar to that of $\U_\zeta^\ell(\wh\g)$.
Then we immediately have the following result.
\begin{prop}\label{prop:smooth-continuous}
Every smooth module of $\U_{\mathcal A}'^\ell(\wh\g)$, $\U_{\mathcal A}^\ell(\wh\g)$, $\U_{\mathcal A}^{\ell,\res}(\wh\g)$ and $\U_\zeta^\ell(\wh\g)$ is naturally a continuous module of $\wh \U_{\mathcal A}'^\ell(\wh\g)$, $\wh\U_{\mathcal A}^\ell(\wh\g)$, $\wh\U_{\mathcal A}^{\ell,\res}(\wh\g)$ and $\wh\U_\zeta^\ell(\wh\g)$.
\end{prop}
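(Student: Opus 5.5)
The plan is the standard argument that a smooth module over a linearly topologized algebra extends uniquely to its completion. Fix one of the four algebras; write it as $\U$, with left ideals $L_N$. Let $W$ be a smooth $\U$-module. The first step is to show that every $w\in W$ is annihilated by $L_N$ for some $N$. By smoothness there is $N$ with $\wt h_i(n)w=\psi_i^+(n)w=x_i^\pm(n)w=0$ for $n>N$. Since $L_{N+1}$ is the left ideal generated by these elements, $L_{N+1}w=0$. Hence the annihilator $\mathrm{Ann}(w)$ of each vector is a left ideal containing some $L_N$, i.e. it is open.

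Next I define the action of $\wh\U=\varprojlim \U/L_N$ on $W$. Take $\hat a=(a_N+L_N)_N\in\wh\U$ and $w\in W$, and choose $N$ with $L_Nw=0$. Set $\hat a\cdot w:=a_Nw$. This is well defined for two reasons. First, $a_N$ is determined modulo $L_N$, and $L_N$ kills $w$. Second, enlarging $N$ to $N'\ge N$ gives $a_{N'}-a_N\in L_N$ by compatibility of the projections $\U/L_{N'}\to \U/L_N$, so $a_{N'}w=a_Nw$.

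The remaining checks are that this is a module structure and that it is continuous. Linearity is clear. Continuity for the discrete topology on $W$ says exactly that the closure $\wh L_N$ kills $w$ whenever $L_Nw=0$, and this holds by construction. For associativity $(\hat a\hat b)w=\hat a(\hat b w)$, take $N$ with $L_Nw=0$, and set $v=b_Nw$. Take $M\ge N$ with $L_Mv=0$. By Proposition \ref{prop:topo-alg}, multiplication is continuous. So there is $M'\ge M$ such that the $N$-th component of $\hat a\hat b$ is represented by $a_{M'}b_N$ modulo $L_N$; this is the step where the left ideal property and the inclusion $L_{M'}b_N\subset L_N$ furnished by Proposition \ref{prop:topo-alg} are used. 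Then $(\hat a\hat b)w=a_{M'}b_Nw=a_{M'}v=\hat a(\hat b w)$. Compatibility with the original action is immediate, since the image of $a\in\U$ in $\wh\U$ is the constant sequence.

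The only step needing any care is associativity, because the product in $\wh\U$ is defined through the continuity statement of Proposition \ref{prop:topo-alg} rather than componentwise. I would write out that $(a_{M'}+L_{M'})(b_N+L_N)$ is well defined in $\U/L_N$ precisely because $L_{M'}b_N\subset L_N$. The argument is identical for all four algebras, so it suffices to do it once.
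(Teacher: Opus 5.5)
Your proposal is correct. It is exactly the standard verification that the paper leaves implicit, since the paper states the result as immediate from the definition of the completions and Proposition \ref{prop:topo-alg}. In your version, smoothness gives $L_{N+1}w=0$, which makes the action of $\varprojlim \U/L_N$ well defined on each vector, and the continuity of multiplication from Proposition \ref{prop:topo-alg} gives associativity as you describe.
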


Let $\Bbbk$ be a PID with a surjective ring homomorphism $\mathcal A\to \Bbbk$.
View $\Bbbk$ as a topological ring under the discrete topology.
Denote by $\mathscr L$ be the category of Hausdorff complete topological $\Bbbk$-modules, such that a family of submodules form a local basis at $0$.
For $U\in\mathscr L$ and $L$ be an open subspace of $U$,
the quotient map $U\to U/L$ induces the following map
\begin{align*}
  \wt \pi_L^{(k)}:U[[z_1^{\pm 1},\dots,z_k^{\pm 1}]]\to (U/L)[[z_1^{\pm 1},\dots,z_k^{\pm 1}]].
\end{align*}
Define $\wh\E^{(k)}(U;z_1,\dots,z_k)$ to be the set of elements $\al(z_1,\dots,z_k)\in \left(\End U\right)[[z_1^{\pm 1},\dots,z_k^{\pm 1}]]$
such that
\begin{align*}
  &\wt \pi_L^{(k)}(\al(z_1,\dots,z_k)w)\in (U/L)((z_1,\dots,z_k))\quad\te{for }w\in U,\,\,
  \te{and }L\,\,\te{be an open submodule}.
\end{align*}
We will also write $\wh\E^{(k)}(U)=\wh\E^{(k)}(U;z_1,\dots,z_k)$ and $\wh\E(U)=\wh\E^{(1)}(U)$ for convenience.
Moreover, if $U$ is discrete, then $U\in\mathscr L$ and $\wh\E^{(k)}(U)=\E^{(k)}(U)$.
In particular, if $U$ is a topological algebra with $1$, then we identical $\al(z_1,\dots,z_k)\in \wh\E^{(k)}(U)$ with
$\al(z_1,\dots,z_k)1\in U[[z_1^{\pm 1},\dots,z_k^{\pm 1}]]$.

We fix an invertible element $\qqq\in \Bbbk$, such that
$(\qqq^k-1)\inv\in\Bbbk$ for $0<k<\wp$.
For each $i\in I$, we set $\qqq_i=\qqq^{r_i}$.
We also fix a $W\in\mathscr L$ and fields
\begin{align*}
  e_i(z)=\sum_{n\in\Z}e_i(n)z^{-n}\in \wh\E(W)\quad\te{for }i\in I,
\end{align*}
satisfying the relation
\begin{align*}
  (z_1-\qqq_i^{a_{ij}}z_2)e_i(z_1)e_j(z_2)
  =(\qqq_i^{a_{ij}}z_1-z_2)e_j(z_2)e_i(z_1)\quad\te{for }i,j\in I.
\end{align*}

\begin{lem}\label{Q7ii}
For $i\in I$, the following relation holds:
\begin{align*}
  &(z_1-\qqq_i^{a_{ii}}z_2)(z_1-z_2)\inv e_i(z_1) e_i(z_2)\\
  =&(z_2-\qqq_i^{a_{ii}}z_1)(z_2-z_1)\inv e_i(z_2)e_i(z_1) \in\wh\E^{(2)}(W).
\end{align*}
\end{lem}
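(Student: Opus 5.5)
The plan is to reduce the statement to an elementary divisibility fact for the single doubly-truncated series
\begin{align*}
  A(z_1,z_2)=(z_1-\qqq_i^{2}z_2)e_i(z_1)e_i(z_2),
\end{align*}
using $a_{ii}=2$. Both sides of the asserted identity will then be identified with $(z_1-z_2)^{-1}A(z_1,z_2)$, understood as the unique quotient described below.

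First, I show that $A\in\wh\E^{(2)}(W)$. Fix $w\in W$ and an open submodule $L$. Because $e_i(z)\in\wh\E(W)$, the series $\wt\pi_L^{(2)}\big(e_i(z_1)e_i(z_2)w\big)$ lies in $(W/L)((z_1))((z_2))$; here I use that $W$ is complete and that, as in Proposition \ref{prop:topo-alg}, the action is continuous. Likewise $\wt\pi_L^{(2)}\big(e_i(z_2)e_i(z_1)w\big)$ lies in $(W/L)((z_2))((z_1))$. The defining relation with $j=i$ gives
\begin{align*}
  A(z_1,z_2)=(\qqq_i^2z_1-z_2)e_i(z_2)e_i(z_1)=-(z_2-\qqq_i^2z_1)e_i(z_2)e_i(z_1).
\end{align*}
So modulo $L$ the series $A$ lies in the intersection $(W/L)((z_1))((z_2))\cap(W/L)((z_2))((z_1))=(W/L)((z_1,z_2))$. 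Moreover, the same display shows $A(z_2,z_1)=(z_2-\qqq_i^2z_1)e_i(z_2)e_i(z_1)=-A(z_1,z_2)$, so $A$ is antisymmetric.

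Second, I divide by $z_1-z_2$. Modulo $L$, the series $A$ is an element of $(W/L)((z_1,z_2))$, so its restriction $A(z,z)$ to the diagonal is well defined. Antisymmetry gives $A(z,z)=-A(z,z)$, so $2A(z,z)=0$, and $2$ is invertible in $\Bbbk$ because $\Bbbk$ is a quotient of $\mathcal A\supset\C$. Hence $A(z,z)\equiv0$ modulo $L$. For an element of $(W/L)((z_1,z_2))$, vanishing on the diagonal is equivalent to being $(z_1-z_2)B_L$ for a unique $B_L\in(W/L)((z_1,z_2))$. To see this, substitute $z_1=z_2+z_0$ and use that the $z_0^0$-coefficient vanishes.

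These quotients $B_L$ are compatible under the projections $W/L'\to W/L$ for $L'\subset L$, by uniqueness. Since $W$ is Hausdorff and complete, they glue to a single $B(z_1,z_2)\in\wh\E^{(2)}(W)$ satisfying $(z_1-z_2)B=A$.

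Finally, I identify both sides with $B$. The left-hand side of Lemma \ref{Q7ii} is by definition $(z_1-z_2)^{-1}A$, which is the element $B$ just constructed. The right-hand side equals
\begin{align*}
  (z_2-z_1)^{-1}(z_2-\qqq_i^2z_1)e_i(z_2)e_i(z_1)=(z_2-z_1)^{-1}\big(-A\big)=(z_1-z_2)^{-1}A=B.
\end{align*}
This proves the identity and the membership in $\wh\E^{(2)}(W)$.

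The main obstacle is bookkeeping rather than algebra: one must work modulo each open $L$ to make the products $e_i(z_1)e_i(z_2)$ and the diagonal restriction meaningful. One must also make precise that the factor $(z_1-z_2)^{-1}$ denotes exact division rather than an $\iota$-expansion. With that interpretation the two expansions agree, which is exactly the content of the lemma.
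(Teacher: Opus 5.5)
Your proposal is correct and is essentially the paper's argument: both use the defining relation with $j=i$ to see that your $A(z_1,z_2)=(z_1-\qqq_i^{2}z_2)e_i(z_1)e_i(z_2)$ (the paper's $X$) lies in $\wh\E^{(2)}(W)$ and satisfies $A(z_2,z_1)=-A(z_1,z_2)$, hence $A(z,z)=0$, and both then divide by $z_1-z_2$. The difference is only in how the last step is phrased. The paper notes that the two expansions differ by $z_1^{-1}\delta(z_2/z_1)A(z_1,z_2)=z_1^{-1}\delta(z_2/z_1)A(z_2,z_2)=0$. You instead use exact divisibility in $(W/L)((z_1,z_2))$, which is best proved after clearing the monomial denominator so that you work with a power series vanishing on the diagonal, and you supply the modulo-$L$ bookkeeping that the paper leaves implicit.
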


\begin{proof}
From \eqref{Q7}, we have that
\begin{align*}
  &X(z_1,z_2):=(z_1-q_i^{\epsilon a_{ii}})e_i(z_1)e_i(z_2)=(q_i^{\epsilon a_{ii}}z_1-z_2)e_i(z_2)e_i(z_1)\\
  &\quad=-X(z_2,z_1)\in\wh\E^{(2)}(W).
\end{align*}
Then $X(z,z)=0$. It follows that
\begin{align*}
  &(z_1-z_2)\inv X(z_1,z_2)-(z_2-z_1)\inv X(z_2,z_1)=\left((z_1-z_2)\inv+(z_2-z_1)\inv\right)X(z_1,z_2)\\
  &\quad=z_1\inv\delta\left(\frac{z_2}{z_1}\right)X(z_1,z_2)
  =z_1\inv\delta\left(\frac{z_2}{z_1}\right)X(z_2,z_2)=0.
\end{align*}
Combing this with the definition of $X(z_1,z_2)$, we complete the proof.
\end{proof}

Set
\begin{align}
  &f_{ij}(z_1,z_2,\qqq)=(1-\qqq_i^{ a_{ij}}z_2/z_1)(1-z_2/z_1)^{-\delta_{ij}}.
\end{align}
For $i_1,\dots,i_k$, we define
\begin{align}
  &\:e_{i_1}(z_1)\cdots e_{i_k}(z_k)\;=\prod_{1\le s<t\le k}f_{i_s,i_k}(z_s,z_t,\qqq) e_{i_1}(z_1)\dots e_{i_k}(z_k).
\end{align}
The following result follows immediate from \eqref{Q7} and Lemma \ref{Q7ii}.

\begin{lem}
For $i_1,\dots,i_k$, we have
\begin{align*}
  &\:e_{i_1}(z_1)\dots e_{i_k}(z_k)\;\in \wh\E^{(k)}(W).
\end{align*}
Moreover,
\begin{align*}
  &\:e_{i_{\sigma(1)}}(z_{\sigma(1)})\cdots e_{i_{\sigma(k)}}(z_{\sigma(k)})\;
  = \Bigg(\prod_{\substack{1\le s<t\le k\\ \sigma(s)>\sigma(t)}}C_{i_s,i_t}
    (z_{i_s}/z_{i_t})^{1-\delta_{i_s,i_t}}\Bigg) \:e_{i_1}(z_1)\cdots e_{i_k}(z_k)\;,
\end{align*}
where
\begin{align*}
  &C_{ij}=-(-1)^{\delta_{ij}}.
\end{align*}
\end{lem}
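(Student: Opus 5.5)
The plan is to reduce everything to the two-variable case, fix $w\in W$ and an open submodule $L$, and work in $(W/L)[[z_1^{\pm1},\dots,z_k^{\pm1}]]$ via $\wt\pi_L^{(k)}$. The steps, in order: first the rank-two commutation identity, then the multiplicativity of the normal ordering under adjacent swaps (which gives the "moreover" formula), and finally the membership in $\wh\E^{(k)}(W)$ from a symmetry argument over all orderings.

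\emph{Rank-two identity.} Take $i\ne j$. By the symmetry \eqref{eq:sym}, $\qqq_i^{a_{ij}}=\qqq_j^{a_{ji}}$. The defining relation of the $e_i(z)$ then gives
\begin{align*}
f_{ij}(z_1,z_2,\qqq)e_i(z_1)e_j(z_2)
=z_1^{-1}(\qqq_i^{a_{ij}}z_1-z_2)e_j(z_2)e_i(z_1)
=-\frac{z_2}{z_1}f_{ji}(z_2,z_1,\qqq)e_j(z_2)e_i(z_1).
\end{align*}
For $i=j$, Lemma \ref{Q7ii} gives directly
\[
f_{ii}(z_1,z_2,\qqq)e_i(z_1)e_i(z_2)=f_{ii}(z_2,z_1,\qqq)e_i(z_2)e_i(z_1),
\]
with no sign. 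These two cases are exactly the constants $C_{ij}$ (together with the monomial factor when $i\ne j$) appearing in the statement. Since $e_i(z_1)e_j(z_2)w$ reduces modulo $L$ into $(W/L)((z_1))((z_2))$ and the reversed product into $(W/L)((z_2))((z_1))$, each two-variable normally ordered product lies in the intersection of these spaces, which is $(W/L)((z_1,z_2))$.

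\emph{Adjacent swaps.} For general $k$, I will show that swapping two adjacent factors in $\:e_{i_1}(z_1)\cdots e_{i_k}(z_k)\;$ multiplies it by the rank-two constant just computed. The factors $f_{i_s,i_t}$ involving the swapped pair and a third index are merely permuted among themselves, so only the pair's own factor changes. The one subtlety is that these are products of operators applied to $w$ in a topological module. Multiplying by a polynomial factor in $z_s^{\pm1},z_t^{\pm1}$ commutes with composing the outer fields, and this is legitimate because each $e_i(z)$ lies in $\wh\E(W)$. Writing an arbitrary permutation $\sigma$ as a product of adjacent transpositions and tracking which pairs get inverted yields the "moreover" formula: each inverted pair contributes its factor once.

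\emph{Membership in $\wh\E^{(k)}(W)$.} Denote the normally ordered product applied to $w$ and reduced modulo $L$ by $A$. Written in the order $e_{i_{\sigma(1)}}\cdots e_{i_{\sigma(k)}}$, this product lies in the iterated Laurent space $(W/L)((z_{\sigma(k)}))\cdots((z_{\sigma(1)}))$. Since $A$ differs from the product in order $\sigma$ only by an invertible Laurent monomial and a sign, $A$ lies in that space for every $\sigma\in S_k$. In such an iterated space the exponent of the outermost variable is bounded below uniformly. Choosing, for each $j$, a $\sigma$ that makes $z_j$ outermost gives uniform lower bounds on the exponents of all $k$ variables, hence $A\in (W/L)((z_1,\dots,z_k))$.

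The main obstacle I expect is making this last step rigorous. One needs that, for fixed $w$ and $L$, the product in each order genuinely lands in the corresponding iterated Laurent space. I would establish this by induction on $k$, using that each $e_i(z)$ lies in $\wh\E(W)$ and that the ideals $L$ form a local basis at $0$, which lets one truncate the inner factors modulo a suitable open submodule.
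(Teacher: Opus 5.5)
Your proposal is correct and is essentially the paper's argument, which consists only of the remark that the lemma follows from \eqref{Q7} and Lemma~\ref{Q7ii}. Your rank-two identities (using $\qqq_i^{a_{ij}}=\qqq_j^{a_{ji}}$ from \eqref{eq:sym}), the adjacent-transposition bookkeeping, and the intersection-of-iterated-Laurent-spaces argument for membership in $\wh\E^{(k)}(W)$ are exactly the details left implicit there.

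On the point you flag: the fact that ordered products of the $e_i$ land in the iterated spaces does not follow from $e_i(z)\in\wh\E(W)$ alone. It requires the family $\{e_i(m)\}_{m\in\Z}$ to be equicontinuous, so your truncation induction cannot produce it in general. The paper assumes it tacitly as well, already in Lemma~\ref{Q7ii}. It does hold where the lemma is used: for discrete $W$, and for left multiplication on the completed algebras, whose basic neighbourhoods of $0$ are left ideals.
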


\begin{lem}\label{lem:no=phi-0}
Suppose that $\Bbbk=\C$ and $W$ is discrete.
Then the fields $\set{e_i(z)}{i\in I}$ is quasi-compatible.
Moreover, for each $i,j\in I$ with $a_{ij}\le 0$ and $-1\le k\in\Z$, we have
\begin{align}
  &e_i(\qqq_i^{a_{ij}+2(k-1)}z)_0^\phi e_i(\qqq_i^{a_{ij}+2(k-2)}z)_0^\phi\cdots e_i(\qqq_i^{a_{ij}}z)_0^\phi e_j(z)\label{eq:serre-phi-0}\\
  =&\:e_i(\qqq_i^{a_{ij}+2(k-1)}z)
  e_i(\qqq_i^{a_{ij}+2(k-2)}z)\cdots
  e_i(\qqq_i^{a_{ij}}z) e_j(z)\;.\nonumber
\end{align}
Furthermore, for each $i\in I$ and $k\in \N$, we have that
\begin{align}
  &e_i(\qqq_i^{2k}z)_0^\phi e_i(\qqq_i^{2(k-1)}z)_0^\phi
  \cdots e_i(\qqq_i^2z)_0^\phi e_i(z)\label{eq:res-form-phi-0}\\
  =&\nonumber
  (\qqq_i-\qqq_i\inv)^k
  \qqq_i^{-k(k+1)/2}[k]_{\qqq_i}!
  \:e_i(\qqq_i^{2k}z)
  e_i(\qqq_i^{2(k-1)}z)
  \cdots e_i(\qqq_i^2z) e_i(z)\;.
\end{align}
\end{lem}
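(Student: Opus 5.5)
Quasi-compatibility comes straight from the relation \eqref{Q7}. For any $i_1,\dots,i_k$, multiplying $e_{i_1}(z_1)\cdots e_{i_k}(z_k)$ by $\prod_{s<t}(1-\qqq_{i_s}^{a_{i_si_t}}z_t/z_s)$ lands in $\E^{(k)}(W)$. One sees this by commuting adjacent factors with \eqref{Q7} and using that $W$ is discrete. This is exactly the statement of the preceding lemma, up to the polynomial factors. The factor $(1-z_t/z_s)^{-\delta}$ in $f_{ij}$ is absorbed by Lemma \ref{Q7ii}. So the set $\{e_i(z)\}$ is quasi-compatible, and $\phi$-coordinated products are defined via \eqref{eq:def-Y-E} with $\phi(z,z_0)=ze^{z_0}$.

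For the formulas \eqref{eq:serre-phi-0} and \eqref{eq:res-form-phi-0} I would argue by induction on $k$, using $a(z)_0^\phi b(z)=\Res_{z_0}Y_\E^\phi(a(z),z_0)b(z)$. Write $b(z)$ for the normal ordered product $\:e_i(\qqq_i^{a_{ij}+2(k-2)}z)\cdots e_j(z)\;$, which lies in $\E(W)$ by the lemma above. Let $a(z_1)=e_i(\qqq_i^{a_{ij}+2(k-1)}z_1)$.

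The product $a(z_1)b(z)$ becomes regular after multiplying by
\begin{align*}
  g(z_1,z)=\prod_{t}\bigl(1-\qqq_i^{a_{i i_t}}c_t z/(c_0z_1)\bigr),
\end{align*}
where $c_t$ are the scaling constants. The factor coming from $e_j$ vanishes at $z_1=z$, since $\qqq_i^{a_{ij}}\cdot 1/\qqq_i^{a_{ij}+2(k-1)}\cdot \qqq_i^{2(k-1)}=1$. Equivalently, exactly one factor of $g$ has a zero at $z_1=z$: the one pairing $a$ with the immediately previous $e_i(\qqq_i^{a_{ij}+2(k-2)}z)$ (or with $e_j$ when $k=1$). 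All other factors are invertible at $z_1=z$, because $\qqq_i^{m}\neq 1$ for $0<|m|<\wp$; this is where the hypothesis on $\wp$ and on $\qqq$ is used. So $Y^\phi_\E(a,z_0)b$ has at most a simple pole at $z_0=0$, and the residue is the regular product evaluated at $z_1=z$ times $\lim_{z_0\to0} z_0/(1-e^{-z_0})=1$.

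Comparing with $f_{ij}$ normalization, the remaining factors $(1-\qqq_i^{a}c_tz/(c_0z_1))|_{z_1=z}$ are exactly the constants appearing in the definition of $\:\cdots\;$. In the Serre case they match identically, giving \eqref{eq:serre-phi-0}. In the $e_i$-only case, each step contributes $(1-\qqq_i^{2}\qqq_i^{-2m})(1-\qqq_i^{-2m})^{-1}$-type factors from $f_{ii}$, because the $\delta_{ij}$ denominator does not cancel. Multiplying these over $m=1,\dots,k$ and simplifying gives $(\qqq_i-\qqq_i^{-1})^k\qqq_i^{-k(k+1)/2}[k]_{\qqq_i}!$.

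The main obstacle is this bookkeeping. One must check that the residue picks up exactly the right product of the constants $f_{ii}(\qqq_i^{2a}z,\qqq_i^{2b}z,\qqq)$ and that no additional pole arises. That requires $\qqq_i^{2m}\ne1$ for $0<m\le k$, which is why $k$ is tacitly bounded by $\wp_i$.
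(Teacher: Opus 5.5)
Your overall route is the paper's: induction on $k$, writing each new $(\cdot)_0^\phi$ as $\Res_{z_0}Y_\E^\phi(a(z),z_0)b(z)$ with $b(z)$ the normal ordered product already obtained, and reading off a simple pole at $z_0=0$. Your quasi-compatibility paragraph is also fine. The gap is your choice of regularizing polynomial.

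You multiply by all numerator factors $g(z_1,z)$ of the $f_{i,i_t}$. You then need every factor except the neighbour one to be nonzero at $z_1=z$, i.e.\ $\qqq_i^{2p}\ne1$ for $0<p<k$, and you conclude that $k$ is ``tacitly bounded by $\wp_i$''. It is not. The lemma is stated for all $k$ and is true for all $k$, including $\qqq=\zeta$, which is the case where it is used. As soon as $\qqq_i^{2p}=1$ for some $0<p<k$, your $g(ze^{z_0},z)$ has a zero of order $\ge2$ at $z_0=0$, and the simple-pole evaluation breaks down.

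The missing step is to multiply out the $f$-factors \emph{before} evaluating, because they telescope. Take $a(z_1)=e_i(\qqq_i^{a_{ij}+2(k-1)}z_1)$ and $b(z)=\:e_i(\qqq_i^{a_{ij}+2(k-2)}z)\cdots e_i(\qqq_i^{a_{ij}}z)e_j(z)\;$. Then
\[
\prod_{p=0}^{k-2}\frac{1-\qqq_i^{-2p}z/z_1}{1-\qqq_i^{-2p-2}z/z_1}\cdot\bigl(1-\qqq_i^{-2(k-1)}z/z_1\bigr)=1-\frac{z}{z_1}.
\]
So $(z_1-z)a(z_1)b(z)$ is $z_1$ times the two-variable normal ordered product, and
\[
Y_\E^\phi(a(z),z_0)b(z)=(1-e^{-z_0})\inv\:e_i(\qqq_i^{a_{ij}+2(k-1)}ze^{z_0})e_i(\qqq_i^{a_{ij}+2(k-2)}z)\cdots e_j(z)\;,
\]
whose residue is the claimed product. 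Now take $a(z_1)=e_i(\qqq_i^{2k}z_1)$ and $b(z)=\:e_i(\qqq_i^{2(k-1)}z)\cdots e_i(z)\;$. The same product equals $(1-z/z_1)/(1-\qqq_i^{-2k}z/z_1)$, which gives
\[
Y_\E^\phi(a(z),z_0)b(z)=(1-\qqq_i^{-2k}e^{-z_0})(1-e^{-z_0})\inv\:e_i(\qqq_i^{2k}ze^{z_0})e_i(\qqq_i^{2(k-1)}z)\cdots e_i(z)\;.
\]
Only the single factor $z_1-z$ is ever needed, and no condition on $\qqq$ is used in this lemma. This is exactly the paper's computation.

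The constant is also left uncomputed, and your description of it is wrong as written. Taken literally over $m=1,\dots,k$, a product of factors $(1-\qqq_i^{2-2m})(1-\qqq_i^{-2m})\inv$ vanishes, because of the $m=1$ numerator. The correct per-step contribution is the single number $1-\qqq_i^{-2k}$ at the step from $k-1$ to $k$. Then $\prod_{m=1}^{k}(1-\qqq_i^{-2m})=(\qqq_i-\qqq_i\inv)^k\qqq_i^{-k(k+1)/2}[k]_{\qqq_i}!$. Separately, your claim that the $e_j$-factor vanishes at $z_1=z$ holds only for $k=1$; in general it equals $1-\qqq_i^{-2(k-1)}$ there.

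For \eqref{eq:res-form-phi-0} your restricted range could be rescued afterwards. When $k$ equals the order of $\qqq_i^2$, the constant is $0$, so both sides vanish for all larger $k$. For \eqref{eq:serre-phi-0}, whose constant is always $1$, there is no such shortcut, and the telescoping identity above is genuinely needed.
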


\begin{proof}
We prove \eqref{eq:serre-phi-0} and \eqref{eq:res-form-phi-0} by using induction on $k$.
It is clear that \eqref{eq:serre-phi-0}
and \eqref{eq:res-form-phi-0} hold for $k=0$.
Suppose that \eqref{eq:serre-phi-0} and \eqref{eq:res-form-phi-0} hold for $k$.
Then
\begin{align*}
  &e_i(\qqq_i^{a_{ij}+2k}z)_0^\phi
  e_i(\qqq_i^{a_{ij}+2(k-1)}z)_0^\phi e_i(\qqq_i^{a_{ij}+2(k-2)}z)_0^\phi\cdots e_i(\qqq_i^{a_{ij}}z)_0^\phi e_j(z)\\
  =&\Res_{z_0}Y_\E^\phi(e_i(\qqq_i^{a_{ij}+2k}z),z_0)
  e_i(\qqq_i^{a_{ij}+2(k-1)}z)_0^\phi e_i(\qqq_i^{a_{ij}+2(k-2)}z)_0^\phi\cdots e_i(\qqq_i^{a_{ij}}z)_0^\phi e_j(z)\\
  =&\Res_{z_0}(1-e^{-z_0})\inv \:e_i(\qqq_i^{a_{ij}+2k}ze^{z_0})
  e_i(\qqq_i^{a_{ij}+2(k-1)}z)
  e_i(\qqq_i^{a_{ij}+2(k-2)}z)\cdots
  e_i(\qqq_i^{a_{ij}}z) e_j(z)\;\\
  =&\:e_i(\qqq_i^{a_{ij}+2k}z)
  e_i(\qqq_i^{a_{ij}+2(k-1)}z)
  e_i(\qqq_i^{a_{ij}+2(k-2)}z)\cdots
  e_i(\qqq_i^{a_{ij}}z) e_j(z)\;,
\end{align*}
which proves \eqref{eq:serre-phi-0} for $k+1$.
And
\begin{align*}
  &e_i(\qqq_i^{2(k+1)}z)_0^\phi
  e_i(\qqq_i^{2k}z)_0^\phi 
  \cdots e_i(\qqq_i^2z)_0^\phi e_i(z)\\
  =&\Res_{z_0}Y_\E^\phi(e_i(\qqq_i^{2(k+1)}z),z_0)
  e_i(\qqq_i^{2k}z)_0^\phi
  e_i(\qqq_i^{2(k-1)}z)_0^\phi
  \cdots e_i(\qqq_i^2z)_0^\phi e_i(z)\\
  =&\Res_{z_0}(\qqq_i-\qqq_i\inv)^k
  \qqq_i^{-k(k+1)/2}[k]_{\qqq_i}!
  Y_\E^\phi(e_i(\qqq_i^{2(k+1)}z),z_0)\\
  &\times
  \:e_i(\qqq_i^{2k}z)
  e_i(\qqq_i^{2(k-1)}z)
  \cdots e_i(\qqq_i^2z) e_i(z)\;\\
  =&(\qqq_i-\qqq_i\inv)^k
  \qqq_i^{-k(k+1)/2}[k]_{\qqq_i}!
  \Res_{z_0}\frac{1-\qqq_i^{-2k-2}e^{-z_0}}{1-e^{-z_0}}\\
  &\times \:
  e_i(\qqq_i^{2(k+1)}ze^{z_0})e_i(\qqq_i^{2k}z)
  e_i(\qqq_i^{2(k-1)}z)
  \cdots e_i(\qqq_i^2z) e_i(z)\;\\
  =&(\qqq_i-\qqq_i\inv)^{k+1}
  \qqq_i^{-(k+1)(k+2)/2}[k+1]_{\qqq_i}!\\
  &\times
  \:e_i(\qqq_i^{2(k+1)}z)e_i(\qqq_i^{2k}z)
  e_i(\qqq_i^{2(k-1)}z)
  \cdots e_i(\qqq_i^2z) e_i(z)\;,
\end{align*}
which proves \eqref{eq:res-form-phi-0} for $k+1$.
\end{proof}

We now prove that every $W\in\obj\mathcal R_\zeta^\ell(\wh\g)$ satisfying \eqref{zeta-serre}.
\begin{lem}\label{lem:xij-ind}
For $k\in\N$, we have that
\begin{align}
  &e_i(z_1)\:e_i(\qqq_i^{a_{ij}+ 2(k-1)}z_2)e_i(\qqq_i^{a_{ij}+ 2(k-2)}z_2)\cdots e_i(\qqq_i^{a_{ij}}z_2)e_j(z_2)\;\nonumber\\
  &+\frac{1-\qqq_i^{- a_{ij}+ 2}z_1/z_2}{1-\qqq_i^{- a_{ij}- 2(k-1)}z_1/z_2}
  \frac{1-\qqq_i^{a_{ij}}z_1/z_2}{\qqq_i^{a_{ij}+ 2k}-z_1/z_2}\frac{z_2}{z_1}\nonumber\\
  &\quad\times\:e_i(\qqq_i^{a_{ij}+ 2(k-1)}z_2)e_i(\qqq_i^{a_{ij}+ 2(k-2)}z_2)\cdots e_i(\qqq_i^{a_{ij}}z_2)e_j(z_2)\;
  e_i(z_1)\nonumber\\
  =&\:e_i(\qqq_i^{a_{ij}+ 2k}z_2)e_i(\qqq_i^{ a_{ij}+ 2(k-1)}z_2)\cdots e_i(\qqq_i^{a_{ij}}z_2)
  e_j(z_2)\;\delta\left(\frac{\qqq_i^{a_{ij}+ 2k}z_2}{z_1}\right).\label{eq:xij-ind}
\end{align}
Moreover, 
\begin{align}\label{eq:xij-k}
  \:e_i(\qqq_i^{a_{ij}+ 2(k-1)}z_2)e_i(\qqq_i^{ a_{ij}+ 2(k-2)}z_2)\cdots e_i(\qqq_i^{ a_{ij}}z_2)e_j(z_2)\;
  \in \wh\E(W).
\end{align}
\end{lem}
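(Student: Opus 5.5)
We argue by induction on $k$, proving \eqref{eq:xij-ind} and \eqref{eq:xij-k} together. For brevity write
\begin{align*}
  E_k(z_2)=\:e_i(\qqq_i^{a_{ij}+2(k-1)}z_2)\cdots e_i(\qqq_i^{a_{ij}}z_2)e_j(z_2)\;,
\end{align*}
so that $E_0(z_2)=e_j(z_2)$. The case $k=0$ of \eqref{eq:xij-k} is trivial. Throughout, the inputs are the defining relation of the $e_i$, its diagonal form in Lemma \ref{Q7ii}, and the lemma stating that normally ordered products lie in $\wh\E^{(k)}(W)$ and are permuted by the scalars $C_{ij}$.

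\emph{Step 1: the normally ordered product with one more factor.} Set
\begin{align*}
  F(z_1,z_2)=\:e_i(z_1)e_i(\qqq_i^{a_{ij}+2(k-1)}z_2)\cdots e_i(\qqq_i^{a_{ij}}z_2)e_j(z_2)\;,
\end{align*}
which lies in $\wh\E^{(2)}(W)$ because it is a specialization of an element of $\wh\E^{(k+2)}(W)$. By the definition of normal ordering,
\begin{align*}
  F(z_1,z_2)=P(z_1,z_2)\,e_i(z_1)E_k(z_2),
\end{align*}
where $P$ is the product of the factors $f_{i,i}(z_1,\qqq_i^{a_{ij}+2m}z_2,\qqq)$ for $0\le m<k$ and the factor $f_{ij}(z_1,z_2,\qqq)$. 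Most of these factors telescope: the numerators $(1-\qqq_i^{2-a_{ij}-2m}z_2/z_1)$ cancel against the denominators $(1-\qqq_i^{-a_{ij}-2m}z_2/z_1)$, and what remains is a single rational function of $z_2/z_1$.

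Using the permutation lemma, we move $e_i(z_1)$ to the right end, which costs only a sign, and obtain similarly
\begin{align*}
  F(z_1,z_2)=\pm P'(z_1,z_2)\,E_k(z_2)e_i(z_1),
\end{align*}
with an explicit rational function $P'$.

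\emph{Step 2: comparing the two orderings.} The expansions $\iota_{z_1,z_2}P^{-1}$ and $\iota_{z_2,z_1}P'^{-1}$ give
\begin{align*}
  e_i(z_1)E_k(z_2)+R(z_1/z_2)E_k(z_2)e_i(z_1)=\big(\iota_{z_1,z_2}-\iota_{z_2,z_1}\big)\big(P^{-1}\big)\,F(z_1,z_2),
\end{align*}
where $R$ is the coefficient appearing in \eqref{eq:xij-ind}. After the telescoping, $P^{-1}$ has a pole only at $z_1=\qqq_i^{a_{ij}+2k}z_2$, and it is a simple pole. All other potential poles are cancelled: those at $\qqq_i^{a_{ij}+2m}z_2$ for $m<k$ are cancelled by zeros of the numerator, and the pole at $z_1=z_2$ is cancelled by $a_{ij}\le 0$ together with the $\delta_{ij}$ factor. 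We will check that the hypothesis $(\qqq^k-1)^{-1}\in\Bbbk$ for $0<k<\wp$ keeps the residue invertible. The difference of the two expansions of a simple pole is a delta function, so the right-hand side becomes $c\,F(\qqq_i^{a_{ij}+2k}z_2,z_2)\,\delta\big(\qqq_i^{a_{ij}+2k}z_2/z_1\big)$, and $F(\qqq_i^{a_{ij}+2k}z_2,z_2)=E_{k+1}(z_2)$. Checking that the constant $c$ equals $1$, after the $z_2/z_1$ normalization in $R$, is a routine computation.

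\emph{Step 3: membership in $\wh\E(W)$.} The specialization $F(\qqq_i^{a_{ij}+2k}z_2,z_2)$ is well defined, and lies in $\wh\E(W)$, because $F\in\wh\E^{(2)}(W)$: modulo each open submodule $L$ it is a Laurent series in $(z_1,z_2)$ jointly. This gives \eqref{eq:xij-k} for $k+1$ and closes the induction.

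The main obstacle is the bookkeeping in Step 2. One must verify that after telescoping exactly one simple pole survives, and that it sits at $\qqq_i^{a_{ij}+2k}z_2$, including the degenerate case $i=j$, which is excluded here since $a_{ij}\le 0$ forces $i\ne j$. One must also identify the coefficient of the other ordering precisely as the factor $R$ written in the statement. We plan to do this by writing $P$ explicitly as
\begin{align*}
  P=\frac{1-\qqq_i^{-a_{ij}-2(k-1)}z_2/z_1\ \cdots}{\cdots}
\end{align*}
and reading off both expansions directly.
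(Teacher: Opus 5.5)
Your Step 2 is the natural way to carry out the verification of \eqref{eq:xij-ind}, which the paper only calls straightforward. The genuine difference from the paper is in \eqref{eq:xij-k}.

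\textbf{The route to \eqref{eq:xij-k}.} The paper applies $\Res_{z_1}z_1^{-1}$ to \eqref{eq:xij-ind}. This gives $E_{k+1}(z_2)=e_i(0)E_k(z_2)+\Res_{z_1}z_1^{-1}R\,E_k(z_2)e_i(z_1)$, and the paper concludes by induction from $E_k(z_2)\in\wh\E(W)$. You instead read off $E_{k+1}(z_2)$ as a specialization of $F\in\wh\E^{(2)}(W)$, i.e.\ ultimately of the $(k+2)$-variable normally ordered product.

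Notice that your argument never uses the inductive hypothesis. Every $E_k(z_2)$ is directly a specialization of an element of $\wh\E^{(k+1)}(W)$. So \eqref{eq:xij-k} follows for all $k$ at once from the lemma that normally ordered products lie in $\wh\E^{(n)}(W)$. You also need one fact that is easy but worth stating: substituting $z_s\mapsto c_sz$ into an element of $(W/L)((z_1,\dots,z_n))$ involves only finite sums in each degree, compatibly in $L$, so completeness of $W$ yields an element of $\wh\E(W)$.

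Your route is shorter and gives exactly the membership the statement asks for. The paper's recursion has a different advantage: it expresses the modes of $E_{k+1}$ explicitly through $e_i(0)$ and the modes of $e_i$ and $E_k$. The paper reuses this form in Lemma \ref{lem:xk-ind} and Proposition \ref{prop:xiii6}, where it needs coefficients to lie in a prescribed completed algebra rather than merely in $\wh\E(W)$.

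\textbf{Corrections to Step 2.} Two points are simpler than you indicate, and one needs correcting.
\begin{itemize}
\item The product $P$ telescopes completely to $P=1-\qqq_i^{a_{ij}+2k}z_2/z_1$. Hence $(\iota_{z_1,z_2}-\iota_{z_2,z_1})P^{-1}=\delta(\qqq_i^{a_{ij}+2k}z_2/z_1)$ on the nose. So $c=1$ with no residue computation, and the hypothesis on $(\qqq^k-1)^{-1}$ plays no role.
\item Moving $e_i(z_1)$ to the right does not cost only a sign. Passing $e_i$'s costs nothing, but passing $e_j(z_2)$ contributes $-z_2/z_1$, because $\qqq_i^{a_{ij}}=\qqq_j^{a_{ji}}$. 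Therefore $F=-\frac{z_2}{z_1}P'E_k(z_2)e_i(z_1)$ with
\begin{align*}
  P'=\frac{(1-\qqq_i^{2-a_{ij}}z_1/z_2)(1-\qqq_i^{a_{ij}}z_1/z_2)}{1-\qqq_i^{-a_{ij}-2(k-1)}z_1/z_2},
\end{align*}
and your argument forces $R=\frac{z_2}{z_1}P'/P$, expanded by $\iota_{z_2,z_1}$.
\end{itemize}

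This $R$ coincides with the displayed coefficient except for the last denominator. Yours is $1-\qqq_i^{a_{ij}+2k}z_2/z_1$, while the statement prints $\qqq_i^{a_{ij}+2k}-z_1/z_2$; the two differ by the factor $-z_2/z_1$. The printed version already fails at $k=0$. Multiplying the displayed identity by $z_1-\qqq_i^{a_{ij}}z_2$ and using the defining relation leaves $(\qqq_i^{a_{ij}}z_1-z_2)(1+z_2/z_1)e_j(z_2)e_i(z_1)=0$, which is false in general.

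So you will not be able to match the printed $R$ literally, as your plan promises. This is a typo in the statement, not a defect of your method, and it does not affect \eqref{eq:xij-k}.
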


\begin{proof}
The prove of \eqref{eq:xij-ind} is straightforward.
We prove \eqref{eq:xij-k} by using induction on $k$.
It is clear that \eqref{eq:xij-k} holds for $k=0$.
Suppose it is true for $k$. Then the induction assumption and \eqref{eq:xij-ind} yield that
\begin{align*}
  &\:e_i(\qqq_i^{ a_{ij}+ 2k}z_2)e_i(\qqq_i^{ a_{ij}+ 2(k-1)}z_2)\cdots e_i(\qqq_i^{ a_{ij}}z_2)e_j(z_2)\;\\
  =&e_i(0)\:e_i(\qqq_i^{ a_{ij}+ 2(k-1)}z_2)e_i(\qqq_i^{ a_{ij}+ 2(k-2)}z_2)\cdots e_i(\qqq_i^{ a_{ij}}z_2)e_j(z_2)\;\\
  &+\Res_{z_1}z_1\inv \frac{1-\qqq_i^{- a_{ij}+ 2}z_1/z_2}{1-\qqq_i^{- a_{ij}- 2(k-1)}z_1/z_2}
  \frac{1-\qqq_i^{ a_{ij}}z_1/z_2}{\qqq_i^{ a_{ij}+ 2k}-z_1/z_2}\frac{z_2}{z_1}\nonumber\\
  &\quad\times\:e_i(\qqq_i^{ a_{ij}+ 2(k-1)}z_2)e_i(\qqq_i^{ a_{ij}+ 2(k-2)}z_2)\cdots e_i(\qqq_i^{ a_{ij}}z_2)e_j(z_2)\;
  e_i(z_1)\in\wh\E(W),
\end{align*}
which proves \eqref{eq:xij-k} for $k+1$.
%
%
\end{proof}

The following result rewrite the relation \eqref{Q8} into a normal ordering form, whose proof is analogue to but much simpler than that of \cite[Theorem 5.16]{CJKT-qeala-II-twisted-qaffinization}.
\begin{prop}\label{prop:serre}
The relation
\begin{align}\label{eq:serre}
  &\sum_{\sigma\in S_{m_{ij}}}\sum_{k=0}^{m_{ij}}(-1)^k\qb{m_{ij}}{k}_{\qqq_i}
  e_i(z_{\sigma(1)})\cdots e_i(z_{\sigma(k)})e_j(w)\\
  &\quad\times e_i(z_{\sigma(k+1)})\cdots e_i(z_{\sigma(m_{ij})})=0\quad \te{for }i,j\in I\,\,\te{with }a_{ij}\le 0\nonumber
\end{align}
is equivalent to the following relation
\begin{align*}
  \:e_i(\qqq_i^{-a_{ij}}z)e_i(\qqq_i^{-2-a_{ij}}z)\cdots e_i(\qqq_i^{a_{ij}}z)e_j(z)\;\quad \te{for }i,j\in I\,\,\te{with }a_{ij}\le 0.
\end{align*}
\end{prop}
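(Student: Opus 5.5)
The relation \eqref{eq:serre} is equivalent to the vanishing of the normal ordered product
\begin{align*}
  \:e_i(\qqq_i^{-a_{ij}}z)e_i(\qqq_i^{-2-a_{ij}}z)\cdots e_i(\qqq_i^{a_{ij}}z)e_j(z)\;=0 ,
\end{align*}
and the plan is to pass through the normally ordered $(m_{ij}+1)$-point function. Set $m=m_{ij}$. By the lemma preceding Lemma \ref{lem:no=phi-0}, $F(z_1,\dots,z_m,w)=\:e_i(z_1)\cdots e_i(z_m)e_j(w)\;$ lies in $\wh\E^{(m+1)}(W)$, and it is symmetric in $z_1,\dots,z_m$ (the $i,i$ factors have $C_{ii}=1$). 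Each ordered product $e_i(z_{\sigma(1)})\cdots e_i(z_{\sigma(k)})e_j(w)e_i(z_{\sigma(k+1)})\cdots$ equals $F$ divided by the corresponding product of the $f$-factors, expanded in the region dictated by the ordering. This uses \eqref{Q7} to reorder, and the $C_{ij}$ factors from that lemma to account for moving $e_j$ past the $e_i$'s.

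The left side of \eqref{eq:serre} then becomes $F$ multiplied by a sum of differently expanded rational functions. I would write it as
\begin{align*}
  \sum_{\sigma,k}(-1)^k\qb{m}{k}_{\qqq_i}\iota_{\sigma,k}\,R_{\sigma,k}(z,w)\,F(z_1,\dots,z_m,w),
\end{align*}
where $\iota_{\sigma,k}$ denotes expansion in the region $|z_{\sigma(1)}|>\cdots>|z_{\sigma(k)}|>|w|>\cdots$.

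The key step is to compute this sum of expansions. The rational parts combine, through the symmetrization $\sum_\sigma$ and the $q$-binomial identity, into a $q$-Vandermonde-type cancellation. What survives is a combination of delta functions supported on $z_s=\qqq_i^{a_{ij}+2(k-1)}w$ configurations, following the pattern of the one-variable formula \eqref{eq:xij-ind} in Lemma \ref{lem:xij-ind}. I would proceed by induction on the number of $e_i$'s, using \eqref{eq:xij-ind} at each step: it expresses the commutator-type difference of $e_i(z_1)$ with the partially normal-ordered product as exactly the next normal ordered product times $\delta(\qqq_i^{a_{ij}+2k}z_2/z_1)$. Iterating $m$ times, the left side of \eqref{eq:serre} reduces to a nonzero constant times
\begin{align*}
  \:e_i(\qqq_i^{-a_{ij}}w)\cdots e_i(\qqq_i^{a_{ij}}w)e_j(w)\;
\end{align*}
multiplied by a product of delta functions pinning $z_{\sigma(s)}$ to $\qqq_i^{a_{ij}+2(s-1)}w$ and symmetrized over $\sigma$. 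The constant is a product of factors $(1-\qqq_i^{k})$ with $0<k<\wp$, which is invertible by the assumption on $\qqq$. Lemma \ref{lem:xij-ind} guarantees that each intermediate normal ordered product lies in $\wh\E(W)$, so the delta substitutions are legitimate.

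Both directions follow from this identity. If the normal ordered product vanishes, the left side of \eqref{eq:serre} is zero. Conversely, taking residues in $z_1,\dots,z_m$ against suitable monomials extracts the normal ordered product from \eqref{eq:serre} times an invertible constant. The main obstacle is the combinatorial bookkeeping in the inductive step: matching the rational prefactor in \eqref{eq:xij-ind} with the $q$-binomial coefficients after symmetrization, in the style of \cite[Theorem 5.16]{CJKT-qeala-II-twisted-qaffinization}. My first attempt would be to verify the case $m=1$ (that is, $a_{ij}=0$) and $m=2$ explicitly, and then fix the general sign and $\qqq$-power conventions from those cases.
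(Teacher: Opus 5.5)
Your plan is essentially the argument the paper intends: express each ordered product through the normally ordered function, then absorb the $e_i$'s one at a time into normally ordered products via \eqref{eq:xij-ind}, with the bookkeeping modeled on \cite[Theorem 5.16]{CJKT-qeala-II-twisted-qaffinization}; the paper itself offers nothing beyond Lemma~\ref{lem:xij-ind} and that reference. One correction concerns your expectation about the constant: working out $m=2$ at the operator level, the $e_j(w)e_i(z_1)e_i(z_2)$-terms cancel after symmetrization by the $i$--$i$ exchange relation, and the terms $\:e_i(\qqq_i^{-1}w)e_j(w)\;e_i(z)$ cancel because the two rational exchange coefficients add up to $[2]_{\qqq_i}$, leaving exactly $\sum_{\sigma\in S_2}\:e_i(\qqq_i w)e_i(\qqq_i^{-1}w)e_j(w)\;\,\delta(\qqq_i w/z_{\sigma(1)})\,\delta(\qqq_i^{-1}w/z_{\sigma(2)})$, so the constant is $1$ and no invertibility of factors $1-\qqq_i^{k}$ is needed (as you phrased it, that invertibility would not follow from the hypothesis on $\qqq$ anyway, since $\qqq_i^k=\qqq^{r_ik}$).
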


For $i\in I$ and $k\in \N$, we set
\begin{align*}
  &e_{i,k}(z)=\sum_{n\in\Z}e_{i,k}(n)z^{-n}=\: e_i(\qqq_i^{2(k-1)}z)e_i(\qqq_i^{2(k-2)}z)\cdots e_i(z)\;.
\end{align*}
We now express $e_i(n)^k$ as a sum of products of $e_{i,s}(m)$ for $s\le k$.

\begin{lem}\label{lem:xk-ind}
For $i\in I$ and $k\in \N$, we have the following equations
\begin{align*}
  &e_i(z)e_{i,k}(w)-\frac{w-\qqq_i^{ 2}z}{w-\qqq_i^{- 2(k-1)}z} \frac{w-z}{\qqq_i^{ 2k}w-z}e_{i,k}(w)e_i(z)
  =
  (1-\qqq_i^{- 2k})e_{i,k+1}(w)\delta\left(\frac{\qqq_i^{ 2k}w}{z}\right),\\
  &e_{i,k}(z)e_i(w)-\frac{w-\qqq_i^{ 2k}z}{w-z}\frac{\qqq_i^{- 2(k-1)}w-z}{\qqq_i^{ 2}w-z}e_i(w)e_{i,k}(z)
  =(1-\qqq_i^{- 2k})e_{i,k+1}(w)\delta\left(\frac{\qqq_i^{ 2}w}{z}\right).
\end{align*}
Moreover, if $([m]_{\qqq_i}!)\inv\in\Bbbk$, then $e_{i,m+1}(z)\in\wh\E(W)$.
\end{lem}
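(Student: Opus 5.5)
The plan is to expand both sides using the definition of the normal ordered products. By definition,
\begin{align*}
  e_{i,k}(w)=\prod_{0\le t<s\le k-1}\frac{1-\qqq_i^{2(t-s)+2}}{1-\qqq_i^{2(t-s)}}\; e_i(\qqq_i^{2(k-1)}w)\cdots e_i(w),
\end{align*}
read as a formal product in which the scalar prefactors are only meaningful after multiplying by the full $f_{ii}$-factors. Concretely, $e_{i,k}(w)$ is the restriction to $z_s=\qqq_i^{2(s-1)}w$ of the regular field
\begin{align*}
  F_k(z_k,\dots,z_1)=\prod_{s>t}f_{ii}(z_s,z_t,\qqq)\,e_i(z_k)\cdots e_i(z_1),
\end{align*}
which lies in $\wh\E^{(k)}(W)$ by the lemma following Lemma \ref{Q7ii}.

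For the first identity, I will consider $F_{k+1}(z,z_k,\dots,z_1)=\prod_t f_{ii}(z,z_t)\,e_i(z)F_k(z_k,\dots,z_1)$. By \eqref{Q7}, it also equals the analogous expression with $e_i(z)$ moved to the right, up to the scalar $\prod_t(\qqq_i^{2}z_t-z)/(z-\qqq_i^{2}z_t)$ combined with the $(1-z_t/z)^{-1}$ factors; this is exactly the situation of Lemma \ref{Q7ii}, applied once for each $t$. Specializing $z_t=\qqq_i^{2(t-1)}w$ gives
\begin{align*}
  \prod_{t}f_{ii}(z,\qqq_i^{2(t-1)}w)\,e_i(z)e_{i,k}(w)
\end{align*}
expanded in $\iota_{z,w}$, and similarly for the other order expanded in $\iota_{w,z}$. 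The products telescope: $\prod_{t=1}^{k}(1-\qqq_i^{2t}w/z)/(1-\qqq_i^{2(t-1)}w/z)=(1-\qqq_i^{2k}w/z)/(1-w/z)$. After dividing, one side is $e_i(z)e_{i,k}(w)$ and the other is the rational factor $\frac{w-\qqq_i^2z}{w-\qqq_i^{-2(k-1)}z}\frac{w-z}{\qqq_i^{2k}w-z}\,e_{i,k}(w)e_i(z)$. The difference of the two expansions of $(1-\qqq_i^{2k}w/z)^{-1}$ is the delta function $\delta(\qqq_i^{2k}w/z)$, while the factors at $z=\qqq_i^{-2(k-1)}w$ etc.\ cancel against regularity of $F_{k+1}$. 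This follows the same pattern as the proof of \eqref{eq:xij-ind} in Lemma \ref{lem:xij-ind}. The coefficient of the delta function is the regular field evaluated at $z=\qqq_i^{2k}w$, which is $e_{i,k+1}(w)$ times the residual scalar $(1-\qqq_i^{-2k})$. That scalar comes from the leftover $(1-z_t/z)$-type normalization at $t=k$: it equals the ratio $f_{ii}$ versus the plain $(1-\qqq_i^{2}z_t/z)$ evaluated there. The second identity is the same computation with $e_i$ on the right, yielding $\delta(\qqq_i^2w/z)$.

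For the moreover part, I will argue by induction on $m$, as in the proof of \eqref{eq:xij-k}. Take $\Res_z z^{-1}$-type coefficients in the first identity: the delta term reproduces the modes of $e_{i,k+1}(w)$ multiplied by $(1-\qqq_i^{-2k})$. The remaining terms are a finite combination, for each open $L$ and each $w$-mode, of $e_i(n)e_{i,k}(w)$ and of $e_{i,k}(w)e_i(n)$ with rational coefficients. These lie in $\wh\E(W)$ by induction and by the smoothness of $e_i$. Dividing by $1-\qqq_i^{-2k}=\qqq_i^{-k}(\qqq_i-\qqq_i^{-1})[k]_{\qqq_i}$ is legitimate because $(\qqq^j-1)^{-1}\in\Bbbk$ for $0<j<\wp$ and $([m]_{\qqq_i}!)^{-1}\in\Bbbk$, so $e_{i,k+1}(w)\in\wh\E(W)$ for all $k\le m$.

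The main obstacle is bookkeeping the exact scalar $(1-\qqq_i^{-2k})$ and confirming that no additional poles survive, for instance at $z=\qqq_i^{-2(k-1)}w$. I expect the diagonal $(1-z_t/z)^{-1}$ factors in $f_{ii}$ to cancel these, but this needs to be checked.
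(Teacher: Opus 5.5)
Your proposal is correct and follows essentially the paper's route. The paper dismisses the two identities as straightforward, and your computation (symmetry of the normal-ordered product, specialization, telescoping, and reading off the delta function) is that computation. The paper proves the moreover part as you do: take $\Res_z z^{-1}$ of the first identity, divide by $1-\qqq_i^{-2k}$, and induct on $k$.

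On the bookkeeping you left unchecked, set $G(z,w)=F_{k+1}(z,\qqq_i^{2(k-1)}w,\dots,w)$. Then $e_i(z)e_{i,k}(w)$ and the right-hand term equal $\frac{w-z}{\qqq_i^{2k}w-z}\,G(z,w)$, expanded by $\iota_{z,w}$ and $\iota_{w,z}$ respectively. This is because $\frac{w-\qqq_i^{2}z}{w-\qqq_i^{-2(k-1)}z}$ cancels identically against the reverse-order telescoped product, so the only pole is at $z=\qqq_i^{2k}w$. The scalar is the surviving $t=1$ factor $(1-w/z)$ evaluated there, namely $1-\qqq_i^{-2k}$; it is not a $t=k$ factor, which would give $(1-\qqq_i^{-2})^{-1}$.
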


\begin{proof}
The proofs of the first two relations are straightforward.
It is clear that $$e_{i,1}(z)=e_i(z)\in \wh\E(W).$$
Suppose that $e_{i,k}(z)\in \wh\E(W)$ and $k\le m$.
The assumption of $\qqq$ implies that $(\qqq_i-\qqq_i\inv)\inv\in \Bbbk$.
Then the hypotheses $([m]_{\qqq_i}!)\inv\in\Bbbk$
implies that $\prod_{s=1}^m(\qqq^{2s}-1)\inv\in \Bbbk$.
Under this, the first equation yields the following relation
\begin{align*}
  e_{i,k+1}(w)
  =&e_i(0)\frac{e_{i,k}(w)}{1-\qqq_i^{ 2k}}
  -\Res_zz\inv\frac{1-\qqq_i^{ 2}z/w}{1-\qqq_i^{- 2(k-1)}z/w}\frac{1-z/w}{\qqq_i^{ 2k}-z/w}
  \frac{e_{i,k}(w)}{1-\qqq_i^{ 2k}}e_i(z)\\
  =&e_i(0)\frac{e_{i,k}(w)}{1-\qqq_i^{ 2k}}-\qqq_i^{ 2k}\frac{e_{i,k}(w)}{1-\qqq_i^{ 2k}}e_i(0)\\
  &-\Res_zz\inv\frac{1+\qqq_i^{ 2k}-\qqq_i^{ 2}z/w-z/w}{(1-\qqq_i^{- 2k+ 2}z/w)(\qqq_i^{ 2k}-z/w)}e_{i,k}(w)e_i(z)
  \in\wh\E(W).
\end{align*}
By using induction on $k$, we complete the proof of the moreover statement.
\end{proof}

Let $J$ be an ordered finite set, and let $J_1,J_2,\dots,J_s$ be pair-wisely disjoint subsets of $J$ such that $\uplus_a J_a=J$.
We denote by $S_J$ the symmetric group on $J$ and
\begin{align*}
  &S_{J_1,\dots,J_s}=\set{\tau\in S_J}{\tau(u)<\tau(v),\,\,\te{for }u<v \in J_a\,\,\te{for some }a}.
\end{align*}
Suppose that each subset $J_a$ is non-empty.
For each $J_a$, we denote by $m(J_a)$ the maximal element in $J_a$.
Define $S^{J_1,\dots,J_s}$ to be the set of elements $\tau$ in $S_J$ such that
\begin{align*}
  \tau(m(J_1))>\tau(m(J_2))>\cdots \tau(m(J_s)),\,\,\te{and }\tau(m(J_a))\ge \tau(u),\,\,\te{for }u\in J_a,\,1\le a\le s.
\end{align*}
Let $K$ be a subset of $J$. We may view $S_K$ as a subgroup of $S_J$ such that for any $\sigma \in S_K$,
\begin{align*}
  \sigma(a)=a\quad \te{for }a\not\in K.
\end{align*}

Given two integers $a\le b$, we set $[a,b]=\set{k\in\Z}{a\le k\le b}$.
Let $k\in\Z_+$.
For convenience, we write $S_k=S_{[1,k]}$.
Set $\mathcal P_k=\set{(p_1,\dots,p_s)\in\Z_+^s}{\sum_{a=1}^s p_a=k}$.
Given $\overrightarrow{p}=(p_1,\dots,p_s)\in\mathcal P_k$, we write
\begin{align*}
  &p^t=\sum_{a=1}^t p_a\quad\te{for }0\le t\le s,\quad\te{and write }\\
  &S_{\overrightarrow{p}}=S_{[1,p^1],[p^1+1,p^2],\dots,[p^{s-1}+1,p^s]},\quad
  S^{\overrightarrow{p}}=S^{[1,p^1],[p^1+1,p^2],\dots,[p^{s-1}+1,p^s]},
\end{align*}
for convenience.

\begin{lem}\label{lem:sym-gps}
Let $k\in\Z_+$, $\overrightarrow{p}=(p_1,\dots,p_s)\in\mathcal P_k$, $\sigma\in S_k$.
For each pair $(\tau,\tau')$ such that
\begin{align}\label{eq:sym-gps-1}
  \tau\in S_{(p_1,k-p_1)},\quad \tau'\in S_{[p^1+1,p^2],\dots, [p^{s-1}+1,p^s]},\quad
  \sigma\tau(p_1)=k,\quad \sigma\tau\tau'\in S^{\overrightarrow{p}},
\end{align}
we have that $\tau\tau'\in S_{\overrightarrow{p}}\cap \sigma\inv S^{\overrightarrow{p}}$.
On the other hand, for each $\tau''\in S_{\overrightarrow{p}}\cap \sigma\inv S^{\overrightarrow{p}}$, there exists a unique pair
$(\tau,\tau')$ satisfying the condition \eqref{eq:sym-gps-1}.
\end{lem}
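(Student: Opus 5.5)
The plan is to treat both directions as elementary statements about shuffles, using only the definitions of $S_{\overrightarrow{p}}$ and $S^{\overrightarrow{p}}$ given above. Throughout, write $J_1=[1,p^1]$ and $J_a=[p^{a-1}+1,p^a]$, so $m(J_a)=p^a$, and regard $S_{[p^1+1,p^2],\dots,[p^{s-1}+1,p^s]}$ as a subgroup of $S_{[p^1+1,k]}\subset S_k$ fixing $J_1$ pointwise. There is no real obstacle here; the only point needing care is identifying the value $\sigma\tau(p_1)$ in the converse direction.

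\emph{First direction.} Let $(\tau,\tau')$ satisfy \eqref{eq:sym-gps-1}.
\begin{enumerate}
\item Show $\tau\tau'\in S_{\overrightarrow{p}}$. On $J_1$, $\tau'$ is the identity and $\tau$ is increasing (since $\tau\in S_{(p_1,k-p_1)}$), so $\tau\tau'$ is increasing on $J_1$.
\item For $a\ge 2$, $\tau'$ maps $J_a$ increasingly into $[p^1+1,k]$, and $\tau$ is increasing on $[p^1+1,k]$. Hence $\tau\tau'$ is increasing on $J_a$.
\item Since $\sigma\tau\tau'\in S^{\overrightarrow{p}}$ by hypothesis, $\tau\tau'\in\sigma^{-1}S^{\overrightarrow{p}}$.
\end{enumerate}

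\emph{Converse: existence.} Fix $\tau''\in S_{\overrightarrow{p}}\cap\sigma^{-1}S^{\overrightarrow{p}}$.
\begin{enumerate}
\item Let $A=\tau''(J_1)$. Define $\tau\in S_{(p_1,k-p_1)}$ to be the unique permutation mapping $J_1$ increasingly onto $A$ and $[p^1+1,k]$ increasingly onto the complement of $A$.
\item Put $\tau'=\tau^{-1}\tau''$. Because $\tau''$ and $\tau$ are both increasing on $J_1$ with the same image, $\tau'$ fixes $J_1$ pointwise.
\item For $a\ge 2$, $\tau''$ is increasing on $J_a$ with image in the complement of $A$, and $\tau^{-1}$ is increasing there. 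Hence $\tau'$ is increasing on $J_a$, i.e. $\tau'\in S_{[p^1+1,p^2],\dots,[p^{s-1}+1,p^s]}$.
\item The condition $\sigma\tau\tau'=\sigma\tau''\in S^{\overrightarrow{p}}$ holds by assumption.
\item It remains to check $\sigma\tau(p_1)=k$. Since $\tau(p_1)=\max A=\tau''(p_1)$, it suffices to show $\sigma\tau''(m(J_1))=k$.
\item This follows from the definition of $S^{\overrightarrow{p}}$ applied to $\sigma\tau''$. The value $\sigma\tau''(m(J_1))$ is at least every $\sigma\tau''(m(J_a))$, and each $\sigma\tau''(m(J_a))$ dominates $\sigma\tau''(u)$ for all $u\in J_a$. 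So $\sigma\tau''(m(J_1))$ is the maximum over all of $[1,k]$, namely $k$.
\end{enumerate}

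\emph{Converse: uniqueness.} Suppose $\tau''=\tau\tau'$ with $\tau,\tau'$ as in \eqref{eq:sym-gps-1}.
\begin{enumerate}
\item Since $\tau'$ fixes $J_1$, we get $\tau(J_1)=\tau''(J_1)=A$.
\item A permutation in $S_{(p_1,k-p_1)}$ is determined by the image of $J_1$, so $\tau$ is the one constructed above.
\item Then $\tau'=\tau^{-1}\tau''$ is forced, which gives uniqueness of the pair.
\end{enumerate}
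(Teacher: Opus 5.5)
Your proposal is correct and follows the paper's proof essentially step for step. The paper likewise takes $\tau\in S_{(p_1,k-p_1)}$ agreeing with $\tau''$ on $[1,p_1]$, which is equivalent to your choice via $A=\tau''(J_1)$, and deduces $\sigma\tau(p_1)=\sigma\tau''(p_1)=k$ from the maximality built into $S^{\overrightarrow{p}}$. It then sets $\tau'=\tau^{-1}\tau''$ and obtains uniqueness because a shuffle is determined by its values on $[1,p_1]$; the only cosmetic difference is that the paper checks monotonicity of $\tau'$ on the blocks $J_a$, $a\ge 2$, by contradiction, whereas you argue it directly.
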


\begin{proof}
It is straightforwardly to check that $\tau\tau'\in S_{\overrightarrow{p}}$ and $\sigma\tau\tau'\in S^{\overrightarrow{p}}$.
On the other hand, let $\tau\in S_{(p_1,k-p_1)}$ such that
\begin{align}\label{eq:sym-gps-2}
  \tau(a)=\tau''(a)\quad\te{for }1\le a\le p_1.
\end{align}
Since $\sigma\tau''\in S^{\overrightarrow{p}}$, we get that $\sigma\tau''(p_1)$ is maximal in $[1,k]$.
So $\sigma\tau(p_1)=\sigma\tau''(p_1)=k$.

Set $\tau'=\tau\inv\tau''$. From \eqref{eq:sym-gps-2}, we have that $\tau'\in S_{[p^1+1,k]}$.
Moreover, let $1\le t\le s-1$ and let $p^t<a<b\le p^{t+1}$. Suppose $\tau'(a)>\tau'(b)$.
As $\tau'\in S_{[p^1+1,k]}$, $a,b>p^1$ and $\tau\in S_{(p_1,k-p_1)}$, we get
\begin{align*}
  \tau''(a)=\tau\tau'(a)>\tau\tau'(b)=\tau''(b).
\end{align*}
But $\tau''(a)<\tau''(b)$, since $\tau''\in S_{\overrightarrow{p}}$.
This contradiction shows that $\tau'\in S_{[p^1+1,p^2],\dots, [p^{s-1}+1,p^s]}$.
Furthermore, the definition of $\tau'$ shows that $\tau\tau'=\tau''\in\sigma\inv S^{\overrightarrow{p}}$.

Finally, let $(\bar \tau,\bar\tau')$ be another pair such that $\tau''=\bar\tau\bar\tau'$ and satisfying the condition \eqref{eq:sym-gps-1}.
Then for each $1\le a\le p^1$, we have that
\begin{align*}
  &\tau(a)=\tau''(\tau')\inv(a)=\tau''(a)=\tau''(\bar\tau')\inv(a)=\bar\tau(a).
\end{align*}
Since both $\tau$ and $\bar\tau$ lie in $S_{(p_1,k-p_1)}$, we get that $\tau=\bar\tau$. This also implies that $\tau'=\bar\tau'$.
\end{proof}

\begin{lem}\label{lem:xiii1}
Let $i\in I$ and $k\in\Z_+$.
Then we have the following relation
\begin{align*}
  &e_i(z_1)e_i(z_2)\cdots e_i(z_k)\\
  =&
  \sum_{1\le p_1\le k}\sum_{\substack{\sigma\in S_{(p_1,k-p_1)} \\ \sigma(p_1)=k }}e_{i,p_1}(z_k)
  e_i(z_{\sigma(p_1+1)})e_i(z_{\sigma(p_1+2)})\cdots e_i(z_{\sigma(k)})\\
  &\times \prod_{\substack{ 1\le s\le p_1<t\le k\\ \sigma(s)>\sigma(t) }}\frac{\qqq_i^{ 2}z_{\sigma(t)}-z_{\sigma(s)}}
    {z_{\sigma(t)}-\qqq_i^{ 2}z_{\sigma(s)}}
    \prod_{s=1}^{p_1-1}(1-\qqq_i^{- 2s})
    \prod_{s=1}^{p_1-1}\delta\left(\frac{\qqq_i^{ 2}z_{\sigma(s+1)}}{z_{\sigma(s)}}\right).
\end{align*}
\end{lem}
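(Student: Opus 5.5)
We argue by induction on $k$; the base case $k=1$ is trivial, since only $p_1=1$ and $\sigma=\id$ occur and the right-hand side reduces to $e_{i,1}(z_1)=e_i(z_1)$. For the inductive step we write $e_i(z_1)\cdots e_i(z_{k+1})=e_i(z_1)\big(e_i(z_2)\cdots e_i(z_{k+1})\big)$ and apply the induction hypothesis to the product of the last $k$ factors (with variables shifted by one). This gives a sum of terms of the form
\begin{align*}
e_i(z_1)\,e_{i,p}(z_{k+1})\,e_i(z_{\sigma(p+1)})\cdots e_i(z_{\sigma(k)})
\end{align*}
times rational factors and delta functions. In each such term we move $e_i(z_1)$ to the right past $e_{i,p}(z_{k+1})$ using the first relation of Lemma \ref{lem:xk-ind}:
\begin{align*}
e_i(z)e_{i,p}(w)=\frac{w-\qqq_i^{2}z}{w-\qqq_i^{-2(p-1)}z}\frac{w-z}{\qqq_i^{2p}w-z}e_{i,p}(w)e_i(z)+(1-\qqq_i^{-2p})e_{i,p+1}(w)\delta\Big(\frac{\qqq_i^{2p}w}{z}\Big).
\end{align*}
The delta term produces a contribution with first block of size $p+1$, whose new constraint is $z_1=\qqq_i^{2p}z_{k+1}$. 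Using the chain of deltas already present, $z_{\sigma(s)}=\qqq_i^{2}z_{\sigma(s+1)}$, this is exactly the extra factor $\delta(\qqq_i^2 z_{\sigma'(s+1)}/z_{\sigma'(s)})$ for the enlarged permutation $\sigma'$ that puts $1$ at the front of the first block. The coefficient $(1-\qqq_i^{-2p})$ extends $\prod_{s=1}^{p-1}(1-\qqq_i^{-2s})$ by one factor.

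The non-delta term keeps $e_{i,p}(z_{k+1})$ in front and leaves $e_i(z_1)$ standing before the remaining $e_i$'s. Here I would exploit the delta constraints to rewrite the rational prefactor as a product over the inner variables of the first block. On the support of the deltas, $z_{\sigma(s)}=\qqq_i^{2(p-s)}z_{k+1}$, so
\begin{align*}
\frac{w-\qqq_i^{2}z}{w-\qqq_i^{-2(p-1)}z}\frac{w-z}{\qqq_i^{2p}w-z}
\end{align*}
telescopes into $\prod_{s\le p}\frac{\qqq_i^2 z_1-z_{\sigma(s)}}{z_1-\qqq_i^2z_{\sigma(s)}}$ (up to the orientation conventions in the statement). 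This is precisely the product over pairs $(s,t)$ with $s$ in the first block, $t$ the index of $z_1$, and $\sigma(s)>\sigma(t)$. The resulting term is therefore a term of the desired sum in which $1$ is placed as the first element of the second block.

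It then remains to check that every pair $(p_1,\sigma)$ for $k+1$ with $\sigma\in S_{(p_1,k+1-p_1)}$ and $\sigma(p_1)=k+1$ arises exactly once. It comes from the delta branch when $\sigma(1)=1$, and from the non-delta branch when $1$ lies in the second block, which forces $\sigma(p_1+1)=1$. This bijection is a shuffle bookkeeping in the spirit of Lemma \ref{lem:sym-gps}. The inversion factors also need care: pairs not involving $z_1$ are inherited unchanged from the induction hypothesis. The main obstacle I expect is this matching of rational factors, namely verifying that the telescoped coefficient from Lemma \ref{lem:xk-ind} agrees with the inversion product on the delta support. I would also have to justify that multiplying by the delta functions is legitimate, i.e. that the products are well defined in $\wh\E$. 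That is handled by the normal-ordering lemmas and Lemma \ref{lem:xk-ind}, which place $e_{i,p}(w)$ in $\wh\E(W)$.
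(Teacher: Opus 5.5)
Your proposal is correct and follows the paper's proof. Both argue by induction on $k$, apply the hypothesis to the last $k$ factors, and commute the new leading $e_i$ past $e_{i,p_1}$ using the first relation of Lemma \ref{lem:xk-ind}. The shuffles then split according to whether the new index sits at the front of the first block (the delta branch, the paper's $\sigma'$) or at the front of the second block (the non-delta branch, the paper's $\sigma''$), and the prefactor is matched to the inversion product by the same telescoping on the delta support, with no orientation issue. The only difference is cosmetic: the paper prepends $e_i(z_0)$ and relabels at the end instead of shifting indices.
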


\begin{proof}
We prove this lemma by using induction on $k$.
It is clear for $k=1$.
Suppose that this lemma holds true for $k$.
We get from Lemma \ref{lem:xk-ind} that
\begin{align}\label{eq:xiii1-temp1}
  &e_i(z_0)e_i(z_1)\cdots e_i(z_k)\nonumber\\
  =&e_i(z_0)\sum_{1\le p_1\le k}\sum_{\substack{\sigma\in S_{(p_1,k-p_1)} \\ \sigma(p_1)=k }}e_{i,p_1}(z_k)
  e_i(z_{\sigma(p_1+1)})e_i(z_{\sigma(p_1+2)})\cdots e_i(z_{\sigma(k)})\nonumber\\
  &\quad\times \prod_{\substack{ 1\le s\le p_1<t\le k\\ \sigma(s)>\sigma(t) }}
  \frac{\qqq_i^{\pm 2}z_{\sigma(t)}-z_{\sigma(s)}}
    {z_{\sigma(t)}-\qqq_i^{\pm 2}z_{\sigma(s)}}
    \prod_{s=1}^{p_1-1}(1-\qqq_i^{- 2s})
    \prod_{s=1}^{p_1-1}
    \delta\left(\frac{\qqq_i^{ 2}z_{\sigma(s+1)}}{z_{\sigma(s)}}\right)\nonumber\\
  =&\sum_{1\le p_1\le k}\sum_{\substack{\sigma\in S_{(p_1,k-p_1)} \\ \sigma(p_1)=k }}e_{i,p_1}(z_k)
  e_i(z_0)e_i(z_{\sigma(p_1+1)})e_i(z_{\sigma(p_1+2)})\cdots e_i(z_{\sigma(k)})\\
  &\quad\times
  \frac{z_k-\qqq_i^{ 2}z_0}{z_k-\qqq_i^{- 2(p_1-1)}z_0}\frac{z_k-z_0}{\qqq_i^{ 2p_1}z_k-z_0}
  \prod_{\substack{ 1\le s\le p_1<t\le k\\ \sigma(s)>\sigma(t) }}\frac{\qqq_i^{ 2}z_{\sigma(t)}-z_{\sigma(s)}}
    {z_{\sigma(t)}-\qqq_i^{ 2}z_{\sigma(s)}}\nonumber\\
  &\quad\times  \prod_{s=1}^{p_1-1}(1-\qqq_i^{- 2s})
    \prod_{s=1}^{p_1-1}\delta\left(\frac{\qqq_i^{ 2}z_{\sigma(s+1)}}{z_{\sigma(s)}}\right)\nonumber\\
  &+\sum_{1\le p_1\le k}\sum_{\substack{\sigma\in S_{(p_1,k-p_1)} \\ \sigma(p_1)=k }}e_{i,p_1+1}(z_k)
  e_i(z_{\sigma(p_1+1)})e_i(z_{\sigma(p_1+2)})\cdots e_i(z_{\sigma(k)})\nonumber\\
  &\quad\times \prod_{\substack{ 1\le s\le p_1<t\le k\\ \sigma(s)>\sigma(t) }}\frac{q_i^{ 2}z_{\sigma(t)}-z_{\sigma(s)}}
    {z_{\sigma(t)}-\qqq_i^{ 2}z_{\sigma(s)}}
    \prod_{s=1}^{p_1}(1-\qqq_i^{- 2s})
    \delta\left(\frac{\qqq_i^{ 2p_1}z_k}{z_0}\right)
    \prod_{s=1}^{p_1-1}\delta\left(\frac{\qqq_i^{ 2}z_{\sigma(s+1)}}{z_{\sigma(s)}}\right).\nonumber
\end{align}
For $\sigma\in S_{(p_1,k-p_1)}$, we define $\sigma',\sigma''\in S_{[0,k]}$ such that
\begin{align*}
  &\sigma'(s)=\sigma(s)\quad \te{for }1\le s\le k,\quad\te{and }\sigma'(0)=0,\\
  &\sigma''(s)=\sigma(s+1)\quad\te{for }0\le s<p_1,\quad \sigma''(p_1)=0,\quad\te{and }\sigma''(s)=\sigma(s)\quad\te{for }p_1<s\le k.
\end{align*}
We have
\begin{align}\label{eq:xiii1-temp2}
  &\sum_{1\le p_1\le k}\sum_{\substack{\sigma\in S_{(p_1,k-p_1)} \\ \sigma(p_1)=k }}e_{i,p_1}(z_k)
  e_i(z_0)e_i(z_{\sigma(p_1+1)})e_i(z_{\sigma(p_1+2)})\cdots e_i(z_{\sigma(k)})\nonumber\\
  &\times
  \frac{z_k-\qqq_i^{\pm 2}z_0}{z_k-\qqq_i^{- 2(p_1-1)}z_0}\frac{z_k-z_0}{\qqq_i^{ 2p_1}z_k-z_0}
  \prod_{\substack{ 1\le s\le p_1<t\le k\\ \sigma(s)>\sigma(t) }}\frac{\qqq_i^{ 2}z_{\sigma(t)}-z_{\sigma(s)}}
    {z_{\sigma(t)}-\qqq_i^{ 2}z_{\sigma(s)}}\nonumber\\
  &\times  \prod_{s=1}^{p_1-1}(1-\qqq_i^{- 2s})
    \prod_{s=1}^{p_1-1}\delta\left(\frac{\qqq_i^{ 2}z_{\sigma(s+1)}}{z_{\sigma(s)}}\right)\nonumber\\
  =&\sum_{1\le p_1\le k}\sum_{\substack{\sigma\in S_{(p_1,k-p_1)} \\ \sigma(p_1)=k }}e_{i,p_1}(z_k)
  e_i(z_{\sigma''(p_1)})e_i(z_{\sigma''(p_1+1)})\cdots e_i(z_{\sigma''(k)})\nonumber\\
  &\times
  \frac{z_{\sigma''(p_1-1)}-\qqq_i^{ 2}z_{\sigma''(p_1)}}{z_{\sigma''(p_1-1)}-\qqq_i^{- 2(p_1-1)}z_{\sigma''(p_1)}}
  \frac{z_{\sigma''(p_1-1)}-z_{\sigma''(p_1)}}{\qqq_i^{ 2p_1}z_{\sigma''(p_1-1)}-z_{\sigma''(p_1)}}\nonumber\\
  &\times
  \prod_{\substack{ 0\le s< p_1\\  p_1<t\le k\\ \sigma''(s)>\sigma''(t) }}\frac{\qqq_i^{ 2}z_{\sigma''(t)}-z_{\sigma''(s)}}
    {z_{\sigma''(t)}-\qqq_i^{ 2}z_{\sigma''(s)}}
    \prod_{s=1}^{p_1-1}(1-\qqq_i^{- 2s})
    \prod_{s=0}^{p_1-2}\delta\left(\frac{\qqq_i^{ 2}z_{\sigma''(s+1)}}{z_{\sigma''(s)}}\right)\nonumber\\
  =&\sum_{1\le p_1\le k}\sum_{\substack{\sigma\in S_{(p_1,k-p_1)} \\ \sigma(p_1)=k }}e_{i,p_1}(z_k)
  e_i(z_{\sigma''(p_1)})e_i(z_{\sigma''(p_1+1)})\cdots e_i(z_{\sigma''(k)})\\
  &\times
  \prod_{\substack{ 0\le s< p_1\le t\le k\\ \sigma''(s)>\sigma''(t) }}\frac{\qqq_i^{ 2}z_{\sigma''(t)}-z_{\sigma''(s)}}
    {z_{\sigma''(t)}-\qqq_i^{\pm 2}z_{\sigma''(s)}}
    \prod_{s=1}^{p_1-1}(1-\qqq_i^{- 2s})
    \prod_{s=0}^{p_1-2}\delta\left(\frac{\qqq_i^{ 2}z_{\sigma''(s+1)}}{z_{\sigma''(s)}}\right),\nonumber
\end{align}
since
\begin{align*}
  &\prod_{\substack{ 0\le s< p_1\\ \sigma''(s)>\sigma''(p_1) }}\frac{\qqq_i^{ 2}z_{\sigma''(p_1)}-z_{\sigma''(s)}}
    {z_{\sigma''(p_1)}-\qqq_i^{\pm 2}z_{\sigma''(s)}}\prod_{s=0}^{p_1-2}
    \delta\left(\frac{\qqq_i^{ 2}z_{\sigma''(s+1)}}{z_{\sigma''(s)}}\right)\\
  =&\prod_{ 0\le s< p_1}\frac{\qqq_i^{ 2}z_0-z_{\sigma''(s)}}
    {z_0-\qqq_i^{ 2}z_{\sigma''(s)}}\prod_{s=0}^{p_1-2}
    \delta\left(\frac{\qqq_i^{ 2(p_1-1-s)}z_k}{z_{\sigma''(s)}}\right)\\
  =&\prod_{ 0\le s< p_1}\frac{\qqq_i^{ 2}z_0-\qqq_i^{ 2(p_1-1-s)}z_k}
    {z_0-\qqq_i^{ 2}\qqq_i^{ 2(p_1-1-s)}z_k}\prod_{s=0}^{p_1-2}
    \delta\left(\frac{\qqq_i^{ 2(p_1-1-s)}z_k}{z_{\sigma''(s)}}\right)\\
  =&\frac{z_k-\qqq_i^{ 2}z_0}{z_k-\qqq_i^{- 2(p_1-1)}z_0}\frac{z_k-z_0}{\qqq_i^{ 2p_1}z_k-z_0}
    \prod_{s=0}^{p_1-2}\delta\left(\frac{\qqq_i^{ 2(p_1-1-s)}z_k}{z_{\sigma''(s)}}\right)\\
  =&\frac{z_{\sigma''(p_1-1)}-\qqq_i^{ 2}z_{\sigma''(p_1)}}{z_{\sigma''(p_1-1)}-\qqq_i^{- 2(p_1-1)}z_{\sigma''(p_1)}}
  \frac{z_{\sigma''(p_1-1)}-z_{\sigma''(p_1)}}{\qqq_i^{ 2p_1}z_{\sigma''(p_1-1)}-z_{\sigma''(p_1)}}
  \prod_{s=0}^{p_1-2}\delta\left(\frac{\qqq_i^{ 2(p_1-1-s)}z_k}{z_{\sigma''(s)}}\right).
\end{align*}
And
\begin{align}\label{eq:xiii1-temp3}
  &\sum_{1\le p_1\le k}\sum_{\substack{\sigma\in S_{(p_1,k-p_1)} \\ \sigma(p_1)=k }}e_{i,p_1+1}(z_k)
  e_i(z_{\sigma(p_1+1)})e_i(z_{\sigma(p_1+2)})\cdots e_i(z_{\sigma(k)})\nonumber\\
  &\times \prod_{\substack{ 1\le s\le p_1<t\le k\\ \sigma(s)>\sigma(t) }}\frac{\qqq_i^{ 2}z_{\sigma(t)}-z_{\sigma(s)}}
    {z_{\sigma(t)}-\qqq_i^{ 2}z_{\sigma(s)}}
    \prod_{s=1}^{p_1}(1-\qqq_i^{- 2s})
    \delta\left(\frac{\qqq_i^{ 2p_1}z_k}{z_0}\right)
    \prod_{s=1}^{p_1-1}\delta\left(\frac{\qqq_i^{ 2}z_{\sigma(s+1)}}{z_{\sigma(s)}}\right)\nonumber\\
  =&
  \sum_{1\le p_1\le k}\sum_{\substack{\sigma\in S_{(p_1,k-p_1)} \\ \sigma(p_1)=k }}e_{i,p_1+1}(z_k)
  e_i(z_{\sigma'(p_1+1)})e_i(z_{\sigma'(p_1+2)})\cdots e_i(z_{\sigma'(k)})\\
  &\times \prod_{\substack{ 0\le s\le p_1<t\le k\\ \sigma'(s)>\sigma'(t) }}\frac{\qqq_i^{ 2}z_{\sigma'(t)}-z_{\sigma'(s)}}
    {z_{\sigma'(t)}-\qqq_i^{ 2}z_{\sigma'(s)}}
    \prod_{s=1}^{p_1}(1-\qqq_i^{- 2s})
    \prod_{s=0}^{p_1-1}\delta\left(\frac{\qqq_i^{ 2}z_{\sigma'(s+1)}}{z_{\sigma'(s)}}\right),\nonumber
\end{align}
since $\sigma'(0)=0$.
Notice that
\begin{align*}
  S_{[0,p_1],[p_1+1,k]}=\{\sigma'\,|\,\sigma\in S_{(p_1,k-p_1)}\}\uplus \{\sigma''\,|\,\sigma\in S_{(p_1+1,k-p_1-1)}\}.
\end{align*}
Combining this with \eqref{eq:xiii1-temp1}, \eqref{eq:xiii1-temp2} and \eqref{eq:xiii1-temp3}, we get that
\begin{align*}
  &e_i(z_0)e_i(z_1)\cdots e_i(z_k)\\
  =&\sum_{0\le p_1\le k}\sum_{\substack{\sigma\in S_{[0,p_1],[p_1+1,k]} \\ \sigma(p_1)=k }}e_{i,p_1}(z_k)
  e_i(z_{\sigma(p_1+1)})e_i(z_{\sigma(p_1+2)})\cdots e_i(z_{\sigma(k)})\\
  &\times \prod_{\substack{ 0\le s\le p_1<t\le k\\ \sigma(s)>\sigma(t) }}\frac{\qqq_i^{ 2}z_{\sigma(t)}-z_{\sigma(s)}}
    {z_{\sigma(t)}-\qqq_i^{ 2}z_{\sigma(s)}}
    \prod_{s=1}^{p_1}(1-\qqq_i^{- 2s})
    \prod_{s=0}^{p_1-1}\delta\left(\frac{\qqq_i^{ 2}z_{\sigma(s+1)}}{z_{\sigma(s)}}\right).
\end{align*}
By relabeling the symbols we see that the lemma holds true for $k+1$.
\end{proof}

\begin{lem}\label{lem:xiii2}
For each $i\in I$, $k\in \Z_+$ and $\sigma\in S_k$, the following relation holds
\begin{align*}
  &e_i(z_{\sigma(1)})e_i(z_{\sigma(2)})\cdots e_i(z_{\sigma(k)})\\
  =&\sum_{\overrightarrow p=(p_1,\dots,p_s)\in\mathcal P_k}\sum_{\tau\in S_{\overrightarrow p}\cap \sigma\inv S^{\overrightarrow p}}(-1)^{|\tau|}
  \prod_{1\le a<b\le k}\frac{z_{\sigma\tau (a)}-\qqq_i^{ 2}z_{\sigma\tau (b)}}
    { z_{\sigma(a)}-\qqq_i^{ 2}z_{\sigma(b)}}\\
  &\times e_{i,p_1}(z_{\sigma\tau(p^1)}) e_{i,p_2}(z_{\sigma\tau(p^2)}) \cdots e_{i,p_s}(z_{\sigma\tau(p^s)})\\
  &\times \qqq_i^{- \sum_{t=1}^s p_t(p_t-1)/2}(\qqq_i-\qqq_i^{ -1})^{k-s}\prod_{t=1}^s \left([p_t-1]_{\qqq_i}!\right)\\
  &\times \prod_{t=1}^{p^1-1}\delta\left( \frac{ \qqq_i^{ 2} z_{\sigma\tau(t+1)}}{ z_{\sigma\tau(t)} } \right)
  \prod_{t=p^1+1}^{p^2-1}\delta\left( \frac{ \qqq_i^{ 2} z_{\sigma\tau(t+1)}}{ z_{\sigma\tau(t)} } \right)
  \cdots
  \prod_{t=p^{s-1}+1}^{p^s-1}\delta\left( \frac{ \qqq_i^{ 2} z_{\sigma\tau(t+1)}}{ z_{\sigma\tau(t)} } \right).
\end{align*}
\end{lem}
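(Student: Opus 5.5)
We argue by induction on $k$, using Lemma \ref{lem:xiii1} to peel off the first block and Lemma \ref{lem:sym-gps} to reorganize the resulting double sum. For $k=1$ only $\overrightarrow p=(1)$ and $\tau=\mathrm{id}$ contribute, and the identity is trivial.

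For the inductive step, relabel the variables $w_a=z_{\sigma(a)}$ and apply Lemma \ref{lem:xiii1} to $e_i(w_1)\cdots e_i(w_k)$. This writes the product as a sum over $p_1$ and over $\tau\in S_{(p_1,k-p_1)}$ with $\sigma\tau(p_1)=k$ of terms
\begin{align*}
e_{i,p_1}(z_k)\,e_i(z_{\sigma\tau(p_1+1)})\cdots e_i(z_{\sigma\tau(k)}),
\end{align*}
each carrying the cross-ratio factor over the inversions $s\le p_1<t$, the scalar $\prod_{s=1}^{p_1-1}(1-\qqq_i^{-2s})$, and the delta string $\prod_{s=1}^{p_1-1}\delta(\qqq_i^2 z_{\sigma\tau(s+1)}/z_{\sigma\tau(s)})$. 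We then apply the induction hypothesis to the remaining product of $k-p_1$ fields, indexed by $[p_1+1,k]$ with ordering permutation $\sigma\tau$ restricted there. This produces a sum over compositions $(p_2,\dots,p_s)$ of $k-p_1$ and over $\tau'\in S_{[p^1+1,p^2],\dots,[p^{s-1}+1,p^s]}$ with $\sigma\tau\tau'\in S^{\overrightarrow p}$ on the tail. The condition $\sigma\tau(p_1)=k$ makes the first block's maximal value the largest, so $\sigma\tau\tau'\in S^{\overrightarrow p}$ globally.

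By Lemma \ref{lem:sym-gps}, the pairs $(\tau,\tau')$ are in bijection with $\tau''=\tau\tau'\in S_{\overrightarrow p}\cap\sigma^{-1}S^{\overrightarrow p}$. The double sum therefore collapses to the single sum in the statement. It remains to match coefficients, in three parts.

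First, the scalar. Using $1-\qqq_i^{-2s}=\qqq_i^{-s}(\qqq_i-\qqq_i^{-1})[s]_{\qqq_i}$, we get
\begin{align*}
\prod_{s=1}^{p_1-1}(1-\qqq_i^{-2s})=\qqq_i^{-p_1(p_1-1)/2}(\qqq_i-\qqq_i^{-1})^{p_1-1}[p_1-1]_{\qqq_i}!,
\end{align*}
which multiplies correctly with the inductive scalar.

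Second, the delta strings concatenate blockwise.

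Third, the rational prefactor. The inversion factor of Lemma \ref{lem:xiii1} between block $1$ and the rest, times the inductive prefactor, must equal $(-1)^{|\tau''|}\prod_{a<b}\frac{z_{\sigma\tau''(a)}-\qqq_i^2z_{\sigma\tau''(b)}}{z_{\sigma(a)}-\qqq_i^2z_{\sigma(b)}}$. This is the main obstacle. The plan is to rewrite each inversion factor $\frac{\qqq_i^2 z_{v}-z_{u}}{z_v-\qqq_i^2 z_u}$ as $-\frac{z_u-\qqq_i^2z_v}{z_v-\qqq_i^2z_u}$. The sign contributions then combine into $(-1)^{|\tau|+|\tau'|}=(-1)^{|\tau''|}$. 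The resulting product telescopes to the ratio $\prod_{a<b}(z_{\sigma\tau''(a)}-\qqq_i^2z_{\sigma\tau''(b)})/(z_{\sigma(a)}-\qqq_i^2z_{\sigma(b)})$, because pairs not inverted by $\tau''$ cancel between numerator and denominator. Pairs inside block $1$ are untouched since $\tau$ is increasing there.

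The delicate point is that, inside the inductive formula, the denominator is indexed by $\sigma\tau$ rather than $\sigma$. One must check that the denominators from the induction cancel against numerators produced by the first step. I would verify this by splitting all pairs $a<b$ into three classes: both indices in the first block, one in the first block and one in the tail, and both in the tail.

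Finally, a remark on well-definedness. Products of rational functions with delta functions are legitimate here because every field involved lies in $\wh\E$ by Lemma \ref{lem:xk-ind}. The rational factors are expanded consistently, as in Lemma \ref{lem:xiii1}.
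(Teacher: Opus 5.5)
Your overall architecture matches the paper's: peel off the block containing $z_k$ with Lemma~\ref{lem:xiii1}, apply the induction hypothesis to the rest, and merge the pairs $(\tau,\tau')$ with Lemma~\ref{lem:sym-gps}. Your three coefficient checks are also correct: the scalar identity $1-\qqq_i^{-2s}=\qqq_i^{-s}(\qqq_i-\qqq_i\inv)[s]_{\qqq_i}$, the concatenation of delta strings, and the telescoping of cross-ratios with $(-1)^{|\tau|}(-1)^{|\tau'|}=(-1)^{|\tau\tau'|}$.

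However, the first step fails as written. Lemma~\ref{lem:xiii1} always forms its block at the \emph{last} factor of the product it is applied to. So if you set $w_a=z_{\sigma(a)}$ and apply it to the whole product $e_i(w_1)\cdots e_i(w_k)$, you get $e_{i,p_1}(w_k)=e_{i,p_1}(z_{\sigma(k)})$ under the constraint $\tau(p_1)=k$. You do not get $e_{i,p_1}(z_k)$ under $\sigma\tau(p_1)=k$, which is what you then write down. For $\sigma(k)\neq k$ these are genuinely different expansions. Iterating your literal step yields the $\sigma=\mathrm{id}$ case of the lemma in the variables $w_a$, with blocks ordered by position rather than by $z$-index, and this does not match the statement term by term. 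Already for $k=2$ and $\sigma=(12)$, the statement is the trivial identity $e_i(z_2)e_i(z_1)=e_i(z_2)e_i(z_1)$. Your literal first step instead rewrites the left side as a rational multiple of $e_i(z_1)e_i(z_2)$ plus a delta term in $e_{i,2}(z_1)$.

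The repair is what the paper does. Let $p=\sigma\inv(k)$ and apply Lemma~\ref{lem:xiii1} only to the prefix $e_i(z_{\sigma(1)})\cdots e_i(z_{\sigma(p)})$, leaving $e_i(z_{\sigma(p+1)})\cdots e_i(z_{\sigma(k)})$ untouched. This produces a sum over $p_1\le p$ and over $\tau\in S_{(p_1,p-p_1)}$ with $\tau(p_1)=p$, extended by $\tau(s)=s$ for $s>p$.

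You then need one extra observation. For $\tau\in S_{(p_1,k-p_1)}$, the condition $\sigma\tau(p_1)=k$ (equivalently $\tau(p_1)=p$) forces $\tau$ to fix $[p+1,k]$, because the second block is increasing and must contain the top $k-p$ values. Hence the two index sets coincide. It also follows that every inversion of $\tau$ is a pair $(s,t)$ with $s\le p_1<t\le p$. This is exactly what justifies rewriting the cross-ratio from Lemma~\ref{lem:xiii1} as $(-1)^{|\tau|}\prod_{a<b}(z_{\sigma\tau(a)}-\qqq_i^{2}z_{\sigma\tau(b)})/(z_{\sigma(a)}-\qqq_i^{2}z_{\sigma(b)})$. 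With this correction, the rest of your argument goes through.
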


\begin{proof}
We prove this lemma by using induction on $l(\sigma):=\set{(s,t)}{1\le s<t\le k,\,\sigma(s)<\sigma(t)}$.
It is clear that the lemma holds true if $l(\sigma)=0$.
Suppose $l(\sigma)>0$. Let $1\le p\le k$ such that $\sigma(p)=k$.
We get from Lemma \ref{lem:xiii1} that
\begin{align*}
  &e_i(z_{\sigma(1)})e_i(z_{\sigma(2)})\cdots e_i(z_{\sigma(k)})\\
  =&\sum_{1\le p_1\le p}\sum_{\substack{\tau\in S_{(p_1,p-p_1)} \\ \tau(p_1)=p }}e_{i,p_1}(z_k)
  e_i(z_{\sigma\tau(p_1+1)})e_i(z_{\sigma\tau(p_1+2)})\cdots e_i(z_{\sigma\tau(k)})\\
  &\times \prod_{\substack{ 1\le s\le p_1<t\le p\\ \tau(s)>\tau(t) }}\frac{\qqq_i^{ 2}z_{\sigma\tau(t)}-z_{\sigma\tau(s)}}
    {z_{\sigma\tau(t)}-\qqq_i^{ 2}z_{\sigma\tau(s)}}
    \prod_{s=1}^{p_1-1}(1-\qqq_i^{- 2s})
    \prod_{s=1}^{p_1-1}\delta\left(\frac{\qqq_i^{ 2}z_{\sigma\tau(s+1)}}{z_{\sigma\tau(s)}}\right),
\end{align*}
where $\tau\in S_{(p_1,k-p_1)}$ is viewed as an element in $S_k$ by letting $\tau(s)=s$ for $s>p$.
Notice that for each $\tau\in S_{(p_1,k-p_1)}$ such that $\tau(p_1)=p$, we have that
\begin{align*}
  \tau(s)=s\quad \te{for }s>p.
\end{align*}
Then $S_{(p_1,p-p_1)}=S_{(p_1,k-p_1)}$ viewed as subgroups of $S_k$.
It also implies that
\begin{align*}
  &\set{(s,t)}{1\le s\le p_1<t\le p,\,\, \tau(s)>\tau(t)}=\set{(s,t)}{1\le s<t\le k,\,\, \tau(s)>\tau(t)}.
\end{align*}
We need the following two facts
\begin{align*}
  & \prod_{\substack{ 1\le s\le p_1<t\le p\\ \tau(s)>\tau(t) }}\frac{\qqq_i^{ 2}z_{\sigma\tau(t)}-z_{\sigma\tau(s)}}
    {z_{\sigma\tau(t)}-\qqq_i^{ 2}z_{\sigma\tau(s)}}
  =\prod_{\substack{ 1\le s<t\le k\\ \tau(s)>\tau(t) }}\frac{\qqq_i^{ 2}z_{\sigma\tau(t)}-z_{\sigma\tau(s)}}
    {z_{\sigma\tau(t)}-\qqq_i^{ 2}z_{\sigma\tau(s)}}
  =(-1)^{|\tau|}
  \prod_{1\le a<b\le k}\frac{z_{\sigma\tau (a)}-\qqq_i^{ 2}z_{\sigma\tau (b)}}
    { z_{\sigma(a)}-\qqq_i^{ 2}z_{\sigma(b)}},
\end{align*}
and that
\begin{align*}
  &\set{(p_1,\tau)}{1\le p_1\le p,\,\,\tau\in S_{(p_1,k-p_1)},\,\,\tau(p_1)=p}\\
  =&\set{(p_1,\tau)}{1\le p_1\le k,\,\,\tau\in S_{(p_1,k-p_1)},\,\,\sigma\tau(p_1)=k}.
\end{align*}
Combining these with the induction assumption, we get that
\begin{align*}
  &e_i(z_{\sigma(1)})e_i(z_{\sigma(2)})\cdots e_i(z_{\sigma(k)})\\
  =&\sum_{1\le p_1\le k}\sum_{\substack{\tau\in S_{(p_1,k-p_1)} \\ \sigma\tau(p_1)=k }}e_{i,p_1}(z_k)
  e_i(z_{\sigma\tau(p_1+1)})e_i(z_{\sigma\tau(p_1+2)})\cdots e_i(z_{\sigma\tau(k)})\\
  &\times (-1)^{|\tau|}
  \prod_{1\le a<b\le k}\frac{z_{\sigma\tau (a)}-\qqq_i^{ 2}z_{\sigma\tau (b)}}
    { z_{\sigma(a)}-\qqq_i^{ 2}z_{\sigma(b)}}
    \prod_{s=1}^{p_1-1}(1-\qqq_i^{- 2s})
    \prod_{s=1}^{p_1-1}\delta\left(\frac{\qqq_i^{ 2}z_{\sigma\tau(s+1)}}{z_{\sigma\tau(s)}}\right)\\
  =&\sum_{1\le p_1\le k}\sum_{\substack{\tau\in S_{(p_1,k-p_1)} \\ \sigma\tau(p_1)=k }}
  \sum_{\substack{p_2,\dots,p_s\in\Z_+\\ p_1+p_2+\cdots +p_s=k}}\sum_{\substack{\tau'\in S_{[p^1+1,p^2],\cdots,[p^{s-1}+1,p^s]}\\ \sigma\tau\tau'\in S^{[p^1+1,p^2],\dots,[p^{s-1}+1,p^s]}}}(-1)^{|\tau\tau'|}\\
  &\times \prod_{1\le a<b\le k}\frac{z_{\sigma\tau (a)}-\qqq_i^{ 2}z_{\sigma\tau (b)}}
    { z_{\sigma(a)}-\qqq_i^{ 2}z_{\sigma(b)}}
    \prod_{p_1<a<b\le k}\frac{z_{\sigma\tau\tau'(a)}-\qqq_i^{ 2}z_{\sigma\tau\tau'(b)}}{z_{\sigma\tau(a)}-\qqq_i^{ 2}z_{\sigma\tau(b)}}\\
  &\times e_{i,p_1}(z_{\sigma\tau(p^1)})e_{i,p_2}(z_{\sigma\tau\tau'(p^2)})\cdots e_{i,p_s}(z_{\sigma\tau\tau'(p^s)})\\
  &\times \prod_{t=1}^{p_1-1}(1-\qqq_i^{- 2t})
    \prod_{t=1}^{p_1-1}\delta\left(\frac{\qqq_i^{ 2}z_{\sigma\tau(t+1)}}{z_{\sigma\tau(t)}}\right)\\
  &\times \qqq_i^{- \sum_{t=2}^sp_t(p_t-1)/2}(\qqq_i-\qqq_i^{- 1})^{k-s-1}\prod_{t=2}^s([p_t-1]_{\qqq_i}!)\\
  &\times
  \prod_{t=p^1+1}^{p^2-1}\delta\left(\frac{\qqq_i^{ 2}z_{\sigma\tau\tau'(t+1)}}{z_{\sigma\tau\tau'(t)}}\right)
  \prod_{t=p^2+1}^{p^3-1}\delta\left(\frac{\qqq_i^{ 2}z_{\sigma\tau\tau'(t+1)}}{z_{\sigma\tau\tau'(t)}}\right)
  \cdots
  \prod_{t=p^{s-1}+1}^{p^s-1}\delta\left(\frac{\qqq_i^{ 2}z_{\sigma\tau\tau'(t+1)}}{z_{\sigma\tau\tau'(t)}}\right)\\
  =&\sum_{\overrightarrow{p}=(p_1,\dots,p_s)\in\mathcal P_k}\sum_{\substack{\tau\in S_{(p_1,k-p_1)} \\ \sigma\tau(p_1)=k }}
  \sum_{\substack{\tau'\in S_{[p^1+1,p^2],\cdots,[p^{s-1}+1,p^s]}\\ \sigma\tau\tau'\in S^{[p^1+1,p^2],\dots,[p^{s-1}+1,p^s]}}}(-1)^{|\tau\tau'|}\\
  &\times \prod_{1\le a<b\le k}\frac{z_{\sigma\tau\tau'(a)}-\qqq_i^{ 2}z_{\sigma\tau\tau'(b)}}{z_{\sigma\tau(a)}-\qqq_i^{ 2}z_{\sigma\tau(b)}}
  e_{i,p_1}(z_{\sigma\tau\tau'(p^1)})
  e_{i,p_2}(z_{\sigma\tau\tau'(p^2)})\cdots e_{i,p_s}(z_{\sigma\tau\tau'(p^s)})
  \\
  &\times \qqq_i^{- \sum_{t=1}^sp_t(p_t-1)/2}(\qqq_i-\qqq_i^{- 1})^{k-s}\prod_{t=1}^s([p_t-1]_{\qqq_i}!)\\
  &\times
  \prod_{t=1}^{p^1-1}\delta\left(\frac{\qqq_i^{ 2}z_{\sigma\tau\tau'(t+1)}}{z_{\sigma\tau\tau'(t)}}\right)
  \prod_{t=p^1+1}^{p^2-1}\delta\left(\frac{\qqq_i^{ 2}z_{\sigma\tau\tau'(t+1)}}{z_{\sigma\tau\tau'(t)}}\right)
  \cdots
  \prod_{t=p^{s-1}+1}^{p^s-1}\delta\left(\frac{\qqq_i^{ 2}z_{\sigma\tau\tau'(t+1)}}{z_{\sigma\tau\tau'(t)}}\right),
\end{align*}
since
\begin{align*}
  &\tau'(a)=a\quad\te{for }1\le a\le p_1,\quad\te{and }\\
  &\prod_{1\le a<b\le k}\frac{z_{\sigma\tau (a)}-\qqq_i^{ 2}z_{\sigma\tau (b)}}
    { z_{\sigma(a)}-\qqq_i^{ 2}z_{\sigma(b)}}
    \prod_{p_1<a<b\le k}\frac{z_{\sigma\tau\tau'(a)}-\qqq_i^{ 2}z_{\sigma\tau\tau'(b)}}{z_{\sigma\tau(a)}-\qqq_i^{ 2}z_{\sigma\tau(b)}}\\
  =&\prod_{1\le a<b\le k}\frac{z_{\sigma\tau(a)}-\qqq_i^{ 2}z_{\sigma\tau(b)}}
    { z_{\sigma(a)}-\qqq_i^{ 2}z_{\sigma(b)}}
    \prod_{1\le a<b\le k}\frac{z_{\sigma\tau\tau'(a)}-\qqq_i^{ 2}z_{\sigma\tau\tau'(b)}}{z_{\sigma\tau(a)}-\qqq_i^{ 2}z_{\sigma\tau(b)}}\\
  =&\prod_{1\le a<b\le k}\frac{z_{\sigma\tau\tau' (a)}-\qqq_i^{ 2}z_{\sigma\tau\tau' (b)}}
    { z_{\sigma(a)}-\qqq_i^{ 2}z_{\sigma(b)}}.
\end{align*}
Combining this with Lemma \ref{lem:sym-gps}, we complete the proof of lemma.
\end{proof}

\begin{lem}\label{lem:xiii3}
Let $i\in I$, $k\in \Z_+$ and $n\in\Z$.
Suppose that
\begin{align*}
  \frac{e_{i,p}(z)}{[p]_{\qqq_i}}\in \wh\E(W)\quad\te{for any }p<k.
\end{align*}
Then
\begin{align*}
  (e_{i,k}(n))^{(k)}\in\End(W)\quad\te{if and only if}\quad \frac{e_{i,k}(nk)}{[k]_{\qqq_i}}\in \End(W),
\end{align*}
where $\End(W)$ denotes the set of all continuous $\mathcal A$-module maps on $W$,
and
\begin{align*}
  &(e_i(n))^{(k)}=(\qqq_i-\qqq_i^{- 1})^{k-1}\qqq_i^{- k(k-1)/2+ nk(k-1)}\frac{e_{i,k}(nk)}{k[k]_{\qqq_i}}\\
  &+\mathcal A\te{-linear combinations of }\frac{1}{k!}\frac{e_{i,p_1}(m_1)}{[p_1]_{\qqq_i}}\cdots \frac{e_{i,p_s}(m_s)}{[p_s]_{\qqq_i}}
  \,\,\te{with }p_1,\dots p_s<k.
\end{align*}
\end{lem}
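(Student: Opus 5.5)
We extract the coefficient of $z_1^{-n}\cdots z_k^{-n}$ from the normal-ordering expansion of Lemma \ref{lem:xiii2}, applied with $\sigma=\id$, and then divide by $[k]_{\qqq_i}!$. Note first that $(e_i(n))^k=\Res_{z_1,\dots,z_k}\prod_{a}z_a^{n-1}\,e_i(z_1)\cdots e_i(z_k)$. Lemma \ref{lem:xiii2} writes the right-hand side as a sum over compositions $\overrightarrow p\in\mathcal P_k$ and permutations $\tau\in S_{\overrightarrow p}\cap S^{\overrightarrow p}$. Each summand consists of the product $e_{i,p_1}(z_{\tau(p^1)})\cdots e_{i,p_s}(z_{\tau(p^s)})$, a rational prefactor, and delta functions that glue the variables inside each block.

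The composition $\overrightarrow p=(k)$ contributes a single term. There $\tau$ must be the identity, by the condition in $S^{(k)}$ with $m(J_1)=k$ together with $\tau\in S_{(k)}$, which forces $\tau=\id$. The deltas force $z_t=\qqq_i^{2(k-t)}z_k$, so taking residues in $z_1,\dots,z_{k-1}$ gives a factor $\prod_t\qqq_i^{2(k-t)n}=\qqq_i^{nk(k-1)}$. The remaining residue in $z_k$ picks out $e_{i,k}(nk)$. The coefficient is then $\qqq_i^{-k(k-1)/2}(\qqq_i-\qqq_i^{-1})^{k-1}[k-1]_{\qqq_i}!$. Dividing by $[k]_{\qqq_i}!$ gives the leading term
\[
(\qqq_i-\qqq_i^{-1})^{k-1}\qqq_i^{-k(k-1)/2+nk(k-1)}\frac{e_{i,k}(nk)}{[k]_{\qqq_i}}.
\]
This matches the stated form, once the extra $1/k$ is accounted for; see the last paragraph.

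For the compositions with $s\ge2$, every $p_t<k$. After the block-internal deltas are resolved by residues, each such term becomes a sum of modes $e_{i,p_1}(m_1)\cdots e_{i,p_s}(m_s)$. The coefficients come from expanding the rational prefactor $\prod_{a<b}(z_{\tau(a)}-\qqq_i^2z_{\tau(b)})/(z_a-\qqq_i^2z_b)$ on the block representatives. The denominators have the form $z-\qqq_i^{2c}w$ with $0<|c|<\wp$, and their expansion has $\mathcal A$-coefficients because $(\qqq^m-1)^{-1}\in\Bbbk$ for $0<m<\wp$. The scalar factors are $(\qqq_i-\qqq_i^{-1})^{k-s}\prod_t[p_t-1]_{\qqq_i}!$. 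Dividing by $[k]_{\qqq_i}!$, one needs to absorb $\prod_t[p_t]_{\qqq_i}!/[k]_{\qqq_i}!$, which is the inverse of a $\qqq$-multinomial. The plan is to symmetrize: sum over all $\tau$ with a fixed block type, since $(e_i(n))^k$ is symmetric in the $z_a$. The leftover denominator then becomes the ordinary multinomial times $\prod_t[p_t]$, giving the prefactor $\frac1{k!}\prod_t e_{i,p_t}(m_t)/[p_t]_{\qqq_i}$ up to $\mathcal A$-coefficients. Applying the same symmetrization to the single $s=1$ term converts $1/[k]!$ into $1/(k[k])$, which explains the $k$ in the statement.

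For the equivalence, note that by hypothesis every term with $s\ge2$ is a continuous operator, since $e_{i,p}(z)/[p]\in\wh\E(W)$ for $p<k$ and $1/k!\in\C$. Then $(e_i(n))^{(k)}$ lies in $\End(W)$ iff the leading term does. The leading coefficient is a unit in $\mathcal A$ times $1/k$, because $(\qqq_i-\qqq_i^{-1})$ is invertible. The main obstacle is the bookkeeping in the symmetrization step: one must show that the rational prefactors, summed over the shuffles, combine into $\qqq$-integer multiples that cancel $[k]!/\prod_t[p_t]!$ up to $\mathcal A$-units.
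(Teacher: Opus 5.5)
Your eventual plan (symmetrize over all orderings) is the paper's route: the paper sums Lemma~\ref{lem:xiii2} over all $\sigma\in S_k$, uses that each ordering has residue $(e_i(n))^k$ against $z_1^{n-1}\cdots z_k^{n-1}$ (hence the overall $\frac{1}{k!}$), and reparametrizes the pairs $(\sigma,\tau)$ by $\gamma=\sigma\tau\in S^{\overrightarrow p}$ and $\tau\in S_{\overrightarrow p}$. But you stop exactly where the proof has content. The step you call ``the main obstacle'' \emph{is} the lemma, so there is a genuine gap.

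To see why the gap matters: taken term by term, a composition with $s\ge 2$ carries the scalar $(\qqq_i-\qqq_i^{-1})^{k-s}\prod_t[p_t]_{\qqq_i}!/[k]_{\qqq_i}!$ in front of $\prod_t e_{i,p_t}/[p_t]_{\qqq_i}$. This scalar is not in $\mathcal A$ once $k\ge\wp_i$; for $k=\wp_i$ and $\overrightarrow p=(1,\wp_i-1)$ it is a unit times $[\wp_i]_{\qqq_i}^{-1}$. Until this is cancelled, neither the asserted shape of the remainder nor the ``if and only if'' in your last paragraph is established. Two side remarks:
\begin{itemize}
\item Your appeal to $(\qqq^m-1)^{-1}\in\Bbbk$ is beside the point. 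The cross-block denominators have unit leading coefficient; the obstruction is the scalar above.
\item Your $\sigma=\id$ leading term $e_{i,k}(nk)/[k]_{\qqq_i}$ (without the $1/k$) only makes sense together with the unsymmetrized remainder, and that remainder is not of the required form.
\end{itemize}

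The missing computation runs as follows.

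\emph{Step 1: the $\tau$-sum.} Fix $\gamma$ and relabel the variables by $\gamma$. The sum over $\tau$ becomes
\[
\prod_{a<b}(\qqq_i^2z_a-z_b)^{-1}\sum_{\tau\in S_{\overrightarrow p}}(-1)^{|\tau|}\prod_{\tau(a)<\tau(b)}(\qqq_i^2z_a-z_b),
\]
multiplied by the block deltas $\delta(\qqq_i^2z_{t+1}/z_t)$.

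\emph{Step 2: extend to all of $S_k$.} On the support of these deltas the sum may be taken over all $\tau\in S_k$. Indeed, any $\tau\notin S_{\overrightarrow p}$ reverses some consecutive pair $t,t+1$ inside a block, so its product contains the vanishing factor $\qqq_i^2z_{t+1}-z_t$.

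\emph{Step 3: the $q$-antisymmetrization identity.} Use
\[
\sum_{\tau\in S_k}(-1)^{|\tau|}\prod_{\tau(a)<\tau(b)}(z_a-\qqq_i^{-2}z_b)=\qqq_i^{-k(k-1)/2}[k]_{\qqq_i}!\prod_{a<b}(z_a-z_b).
\]
This turns the $\tau$-sum into $\qqq_i^{k(k-1)/2}[k]_{\qqq_i}!\prod_{a<b}\frac{z_a-z_b}{\qqq_i^2z_a-z_b}$.

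\emph{Step 4: evaluate on the delta support.}
\begin{itemize}
\item Same-block pairs give $\prod_t\qqq_i^{-p_t(p_t-1)/2}/[p_t]_{\qqq_i}!$.
\item Cross-block pairs telescope to $\prod_{u<v}\prod_{b=0}^{p_v-1}\frac{z_{p^u}-\qqq_i^{2b}z_{p^v}}{\qqq_i^{2p_u}z_{p^u}-\qqq_i^{2b}z_{p^v}}$. Its expansion in $z_{p^v}/z_{p^u}$ has coefficients in $\Z[\qqq_i^{\pm1}]$.
\end{itemize}

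\emph{Step 5: assemble.} Combined with the factor $\prod_t[p_t-1]_{\qqq_i}!$ already present in Lemma~\ref{lem:xiii2}, the scalar becomes $(\qqq_i-\qqq_i^{-1})^{k-s}\,[k]_{\qqq_i}!/\prod_t[p_t]_{\qqq_i}$ times a power of $\qqq_i$. Dividing by $[k]_{\qqq_i}!$ and by $k!$ then gives $\frac{1}{k!}\prod_t e_{i,p_t}/[p_t]_{\qqq_i}$ with $\Z[\qqq_i^{\pm1}]$-coefficients. For the composition $(k)$, the $(k-1)!$ elements $\gamma\in S^{(k)}$ (those with $\gamma(k)=k$) give the stated $\frac{1}{k[k]_{\qqq_i}}$ term.

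Only with this computation in hand does your equivalence argument apply.
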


%

\begin{proof}
From Lemma \ref{lem:xiii2}, we have that
\begin{align*}
  &\sum_{\sigma\in S_k}e_i(z_{\sigma(1)})e_i(z_{\sigma(2)})\cdots e_i(z_{\sigma(k)})\\
  =&\sum_{\overrightarrow p=(p_1,\dots,p_s)\in\mathcal P_k}\sum_{\sigma\in S_k}
  \sum_{\tau\in S_{\overrightarrow p}\cap \sigma\inv S^{\overrightarrow p}}
  (-1)^{|\tau|}
  \prod_{1\le a<b\le k}\frac{z_{\sigma\tau (a)}-\qqq_i^{ 2}z_{\sigma\tau (b)}}
    { z_{\sigma(a)}-\qqq_i^{ 2}z_{\sigma(b)}}\\
  &\times e_{i,p_1}(z_{\sigma\tau(p^1)}) e_{i,p_2}(z_{\sigma\tau(p^2)}) \cdots e_{i,p_s}(z_{\sigma\tau(p^s)})\\
  &\times \qqq_i^{- \sum_{t=1}^s p_t(p_t-1)/2}(\qqq_i-\qqq_i^{- 1})^{k-s}\prod_{t=1}^s \left([p_t-1]_{\qqq_i}!\right)\\
  &\times \prod_{t=1}^{p^1-1}\delta\left( \frac{ \qqq_i^{ 2} z_{\sigma\tau(t+1)}}{ z_{\sigma\tau(t)} } \right)
  \prod_{t=p^1+1}^{p^2-1}\delta\left( \frac{ \qqq_i^{ 2} z_{\sigma\tau(t+1)}}{ z_{\sigma\tau(t)} } \right)
  \cdots
  \prod_{t=p^{s-1}+1}^{p^s-1}\delta\left( \frac{ \qqq_i^{ 2} z_{\sigma\tau(t+1)}}{ z_{\sigma\tau(t)} } \right)\\
  =&\sum_{\overrightarrow p=(p_1,\dots,p_s)\in\mathcal P_k}\sum_{\tau\in S_{\overrightarrow{p}}}\sum_{\gamma\in S^{\overrightarrow{p}}}
  (-1)^{|\tau|}
  \prod_{1\le a<b\le k}\frac{z_{\gamma (a)}-\qqq_i^{ 2}z_{\gamma (b)}}
    { z_{\gamma\tau\inv(a)}-\qqq_i^{ 2}z_{\gamma\tau\inv(b)}}\\
  &\times e_{i,p_1}(z_{\gamma(p^1)}) e_{i,p_2}(z_{\gamma(p^2)}) \cdots e_{i,p_s}(z_{\gamma(p^s)})\\
  &\times \qqq_i^{- \sum_{t=1}^s p_t(p_t-1)/2}(\qqq_i-\qqq_i^{- 1})^{k-s}\prod_{t=1}^s \left([p_t-1]_{\qqq_i}!\right)\\
  &\times \prod_{t=1}^{p^1-1}\delta\left( \frac{ \qqq_i^{ 2} z_{\gamma(t+1)}}{ z_{\gamma(t)} } \right)
  \prod_{t=p^1+1}^{p^2-1}\delta\left( \frac{ \qqq_i^{ 2} z_{\gamma(t+1)}}{ z_{\gamma(t)} } \right)
  \cdots
  \prod_{t=p^{s-1}+1}^{p^s-1}\delta\left( \frac{ \qqq_i^{ 2} z_{\gamma(t+1)}}{ z_{\gamma(t)} } \right)\\
  =&\sum_{\overrightarrow p=(p_1,\dots,p_s)\in\mathcal P_k}\sum_{\gamma\in S^{\overrightarrow{p}}}\sum_{\tau\in S_{\overrightarrow{p}}}
  (-1)^{|\tau|}
  \prod_{1\le a<b\le k}\frac{z_{\gamma (a)}-\qqq_i^{ 2}z_{\gamma (b)}}
    { z_{\gamma\tau\inv(a)}-\qqq_i^{ 2}z_{\gamma\tau\inv(b)}}\\
  &\times e_{i,p_1}(z_{\gamma(p^1)}) e_{i,p_2}(z_{\gamma(p^2)}) \cdots e_{i,p_s}(z_{\gamma(p^s)})\\
  &\times \qqq_i^{- \sum_{t=1}^s p_t(p_t-1)/2}(\qqq_i-\qqq_i^{- 1})^{k-s}\prod_{t=1}^s \left([p_t-1]_{\qqq_i}!\right)\\
  &\times \prod_{t=1}^{p^1-1}\delta\left( \frac{ \qqq_i^{ 2} z_{\gamma(t+1)}}{ z_{\gamma(t)} } \right)
  \prod_{t=p^1+1}^{p^2-1}\delta\left( \frac{ \qqq_i^{ 2} z_{\gamma(t+1)}}{ z_{\gamma(t)} } \right)
  \cdots
  \prod_{t=p^{s-1}+1}^{p^s-1}\delta\left( \frac{ \qqq_i^{ 2} z_{\gamma(t+1)}}{ z_{\gamma(t)} } \right)\\
  =&\sum_{\overrightarrow p=(p_1,\dots,p_s)\in\mathcal P_k}\sum_{\gamma\in S^{\overrightarrow{p}}}\sum_{\tau\in S_{\overrightarrow{p}}}
  \prod_{\substack{1\le a<b\le k\\ \tau(a)>\tau(b)}}\frac{z_{\gamma (a)}-q_i^{\pm 2}z_{\gamma (b)}}
    {\qqq_i^{ 2} z_{\gamma(a)}-z_{\gamma(b)}}\\
  &\times e_{i,p_1}(z_{\gamma(p^1)}) e_{i,p_2}(z_{\gamma(p^2)}) \cdots e_{i,p_s}(z_{\gamma(p^s)})\\
  &\times \qqq_i^{- \sum_{t=1}^s p_t(p_t-1)/2}(\qqq_i-\qqq_i^{- 1})^{k-s}\prod_{t=1}^s \left([p_t-1]_{\qqq_i}!\right)\\
  &\times \prod_{t=1}^{p^1-1}\delta\left( \frac{ \qqq_i^{ 2} z_{\gamma(t+1)}}{ z_{\gamma(t)} } \right)
  \prod_{t=p^1+1}^{p^2-1}\delta\left( \frac{ \qqq_i^{ 2} z_{\gamma(t+1)}}{ z_{\gamma(t)} } \right)
  \cdots
  \prod_{t=p^{s-1}+1}^{p^s-1}\delta\left( \frac{ \qqq_i^{ 2} z_{\gamma(t+1)}}{ z_{\gamma(t)} } \right)\\
  =&\sum_{\overrightarrow p=(p_1,\dots,p_s)\in\mathcal P_k}\sum_{\gamma\in S^{\overrightarrow{p}}}\sum_{\tau\in S_{\overrightarrow{p}}}
  (-1)^{|\tau|}
  \prod_{1\le a<b\le k}\frac{\qqq_i^{ 2}z_{\gamma\tau\inv (a)}-z_{\gamma\tau\inv (b)}}
    {\qqq_i^{ 2} z_{\gamma(a)}-z_{\gamma(b)}}\\
  &\times e_{i,p_1}(z_{\gamma(p^1)}) e_{i,p_2}(z_{\gamma(p^2)}) \cdots e_{i,p_s}(z_{\gamma(p^s)})\\
  &\times \qqq_i^{- \sum_{t=1}^s p_t(p_t-1)/2}(\qqq_i-\qqq_i^{- 1})^{k-s}\prod_{t=1}^s \left([p_t-1]_{\qqq_i}!\right)\\
  &\times \prod_{t=1}^{p^1-1}\delta\left( \frac{ \qqq_i^{ 2} z_{\gamma(t+1)}}{ z_{\gamma(t)} } \right)
  \prod_{t=p^1+1}^{p^2-1}\delta\left( \frac{ \qqq_i^{ 2} z_{\gamma(t+1)}}{ z_{\gamma(t)} } \right)
  \cdots
  \prod_{t=p^{s-1}+1}^{p^s-1}\delta\left( \frac{ \qqq_i^{ 2} z_{\gamma(t+1)}}{ z_{\gamma(t)} } \right).
\end{align*}
Notice that
\begin{align*}
  &\prod_{1\le a<b\le k}(\qqq_i^{ 2}z_a-z_b)\inv \sum_{\sigma\in S_{\overrightarrow p}}(-1)^{|\sigma|}
  \prod_{1\le a<b\le k}(\qqq_i^{ 2}z_{\sigma\inv(a)}-z_{\sigma\inv(b)})\\
  &\times
  \prod_{t=1}^{p^1-1}\delta\left( \frac{ \qqq_i^{ 2} z_{t+1}}{ z_{t} } \right)
  \prod_{t=p^1+1}^{p^2-1}\delta\left( \frac{ \qqq_i^{ 2} z_{t+1}}{ z_{t} } \right)
  \cdots
  \prod_{t=p^{s-1}+1}^{p^s-1}\delta\left( \frac{ \qqq_i^{ 2} z_{t+1}}{ z_{t} } \right)\\
  =&\prod_{1\le a<b\le k}(\qqq_i^{ 2}z_a-z_b)\inv \sum_{\sigma\in S_{\overrightarrow p}}(-1)^{|\sigma|}
  \prod_{1\le \sigma (a)<\sigma(b)\le k}(\qqq_i^{ 2}z_a-z_b)\\
  &\times
  \prod_{t=1}^{p^1-1}\delta\left( \frac{ \qqq_i^{ 2} z_{t+1}}{ z_{t} } \right)
  \prod_{t=p^1+1}^{p^2-1}\delta\left( \frac{ \qqq_i^{ 2} z_{t+1}}{ z_{t} } \right)
  \cdots
  \prod_{t=p^{s-1}+1}^{p^s-1}\delta\left( \frac{ \qqq_i^{ 2} z_{t+1}}{ z_{t} } \right)\\
  =&\prod_{1\le a<b\le k}(z_a-\qqq_i^{- 2}z_b)\inv \sum_{\sigma\in S_k}(-1)^{|\sigma|}
  \prod_{1\le \sigma (a)<\sigma(b)\le k}(z_a-\qqq_i^{- 2}z_b)\\
  &\times
  \prod_{t=1}^{p^1-1}\delta\left( \frac{ \qqq_i^{ 2} z_{t+1}}{ z_{t} } \right)
  \prod_{t=p^1+1}^{p^2-1}\delta\left( \frac{ \qqq_i^{ 2} z_{t+1}}{ z_{t} } \right)
  \cdots
  \prod_{t=p^{s-1}+1}^{p^s-1}\delta\left( \frac{ \qqq_i^{ 2} z_{t+1}}{ z_{t} } \right)\\
  =&\qqq_i^{- k(k-1)/2}[k]_{\qqq_i}!\prod_{1\le a<b\le k}(z_a-\qqq_i^{- 2}z_b)\inv
  \prod_{1\le a<b\le k}(z_a-z_b)\\
  &\times
  \prod_{t=1}^{p^1-1}\delta\left( \frac{ \qqq_i^{ 2} z_{t+1}}{ z_{t} } \right)
  \prod_{t=p^1+1}^{p^2-1}\delta\left( \frac{ \qqq_i^{ 2} z_{t+1}}{ z_{t} } \right)
  \cdots
  \prod_{t=p^{s-1}+1}^{p^s-1}\delta\left( \frac{ \qqq_i^{ 2} z_{t+1}}{ z_{t} } \right)\\
  =&\qqq_i^{ k(k-1)/2}[k]_{\qqq_i}!\prod_{1\le a<b\le k}\frac{z_a-z_b}{ \qqq_i^{ 2} z_a- z_b}\\
  &\times
  \prod_{t=1}^{p^1-1}\delta\left( \frac{ \qqq_i^{ 2} z_{t+1}}{ z_{t} } \right)
  \prod_{t=p^1+1}^{p^2-1}\delta\left( \frac{ \qqq_i^{ 2} z_{t+1}}{ z_{t} } \right)
  \cdots
  \prod_{t=p^{s-1}+1}^{p^s-1}\delta\left( \frac{ \qqq_i^{ 2} z_{t+1}}{ z_{t} } \right)\\
  =&\qqq_i^{ k(k-1)/2 - \sum_{t=1}^s p_t(p_t-1)/2} \frac{ [k]_{\qqq_i}!}{\prod_{t=1}^s [p_t]_{\qqq_i}! }
  \prod_{1\le u<v\le s}\prod_{a=1}^{p_u}
  \prod_{b=1}^{p_v}
  \frac{\qqq_i^{ 2(p_u-a)} z_{p^u}- \qqq_i^{ 2(p_v-b)}z_{p^v} }{\qqq_i^{ 2(p_u-a)+ 2}z_{p^u}- \qqq_i^{ 2(p_v-b)}z_{p^v} }\\
  &\times
  \prod_{t=1}^{p^1-1}\delta\left( \frac{ \qqq_i^{ 2} z_{t+1}}{ z_{t} } \right)
  \prod_{t=p^1+1}^{p^2-1}\delta\left( \frac{ \qqq_i^{ 2} z_{t+1}}{ z_{t} } \right)
  \cdots
  \prod_{t=p^{s-1}+1}^{p^s-1}\delta\left( \frac{ \qqq_i^{ 2} z_{t+1}}{ z_{t} } \right)\\
  =&\qqq_i^{ k(k-1)/2 - \sum_{t=1}^s p_t(p_t-1)/2} \frac{ [k]_{\qqq_i}!}{\prod_{t=1}^s [p_t]_{\qqq_i}! }
  \prod_{1\le u<v\le s}\prod_{a=0}^{p_u-1}
  \prod_{b=0}^{p_v-1}
  \frac{\qqq_i^{ 2a} z_{p^u}- \qqq_i^{ 2b}z_{p^v} }{\qqq_i^{ 2a+ 2}z_{p^u}- \qqq_i^{ 2b}z_{p^v} }\\
  &\times
  \prod_{t=1}^{p^1-1}\delta\left( \frac{ \qqq_i^{ 2} z_{t+1}}{ z_{t} } \right)
  \prod_{t=p^1+1}^{p^2-1}\delta\left( \frac{ \qqq_i^{ 2} z_{t+1}}{ z_{t} } \right)
  \cdots
  \prod_{t=p^{s-1}+1}^{p^s-1}\delta\left( \frac{ \qqq_i^{ 2} z_{t+1}}{ z_{t} } \right)\\
  =&\qqq_i^{ k(k-1)/2 - \sum_{t=1}^s p_t(p_t-1)/2} \frac{ [k]_{\qqq_i}!}{\prod_{t=1}^s [p_t]_{\qqq_i}! }
  \prod_{1\le u<v\le s}\prod_{b=0}^{p_v-1}\frac{z_{p^u}-\qqq_i^{ 2b}z_{p^v}}{ \qqq_i^{ 2p_u}z_{p^u}-\qqq_i^{ 2b}z_{p^v}} \\
  &\times
  \prod_{t=1}^{p^1-1}\delta\left( \frac{ \qqq_i^{ 2} z_{t+1}}{ z_{t} } \right)
  \prod_{t=p^1+1}^{p^2-1}\delta\left( \frac{ \qqq_i^{ 2} z_{t+1}}{ z_{t} } \right)
  \cdots
  \prod_{t=p^{s-1}+1}^{p^s-1}\delta\left( \frac{ \qqq_i^{ 2} z_{t+1}}{ z_{t} } \right)\\
  =&\qqq_i^{ k(k-1)/2 - \sum_{t=1}^s p_t(p_t-1)/2} \frac{ [k]_{\qqq_i}!}{\prod_{t=1}^s [p_t]_{\qqq_i}! }\\
  &\times
  \prod_{1\le u<v\le s}\prod_{b=0}^{p_v-1}\left( \qqq_i^{- 2p_u}+(\qqq_i^{- 2p_u}-1)\sum_{n=1}^\infty \qqq_i^{ 2n(b-p_u)}(z_{p^v}/z_{p^u})^n  \right) \\
  &\times
  \prod_{t=1}^{p^1-1}\delta\left( \frac{ \qqq_i^{ 2} z_{t+1}}{ z_{t} } \right)
  \prod_{t=p^1+1}^{p^2-1}\delta\left( \frac{ \qqq_i^{ 2} z_{t+1}}{ z_{t} } \right)
  \cdots
  \prod_{t=p^{s-1}+1}^{p^s-1}\delta\left( \frac{ \qqq_i^{ 2} z_{t+1}}{ z_{t} } \right)\\
  =&\qqq_i^{ k(k-1)/2 - \sum_{t=1}^s p_t(p_t-1)/2}\qqq_i^{- 2\sum_{t=2}^sp^{t-1}p_t} \frac{ [k]_{\qqq_i}!}{\prod_{t=1}^s [p_t]_{\qqq_i}! }\\
  &\times
  \prod_{1\le u<v\le s}\prod_{b=0}^{p_v-1}\left( 1+(1-\qqq_i^{ 2p_u})\sum_{n=1}^\infty \qqq_i^{ 2n(b-p_u)}(z_{p^v}/z_{p^u})^n  \right) \\
  &\times
  \prod_{t=1}^{p^1-1}\delta\left( \frac{ \qqq_i^{ 2} z_{t+1}}{ z_{t} } \right)
  \prod_{t=p^1+1}^{p^2-1}\delta\left( \frac{ \qqq_i^{ 2} z_{t+1}}{ z_{t} } \right)
  \cdots
  \prod_{t=p^{s-1}+1}^{p^s-1}\delta\left( \frac{ \qqq_i^{ 2} z_{t+1}}{ z_{t} } \right).
\end{align*}
Combining these we get the following relation
\begin{align*}
  &\frac{1}{[k]_{\qqq_i}!}\sum_{\sigma\in S_k}e_i(z_{\sigma(1)})\cdots e_i(z_{\sigma(k)})\\
  =&\sum_{\overrightarrow p=(p_1,\dots,p_s)\in\mathcal P_k}\sum_{\gamma\in S^{\overrightarrow{p}}}
  \frac{e_{i,p_1}(z_{\gamma(p^1)})}{[p_1]_{\qqq_i}} \frac{e_{i,p_2}(z_{\gamma(p^2)})}{[p_2]_{\qqq_i}} \cdots \frac{e_{i,p_s}(z_{\gamma(p^s)})}{[p_s]_{\qqq_i}}\\
  &(\qqq_i-\qqq_i^{- 1})^{k-s}\qqq_i^{ k(k-1)/2 - \sum_{t=1}^s p_t(p_t-1)}\qqq_i^{- 2\sum_{t=2}^sp^{t-1}p_t}
  \\
  &\times \prod_{1\le u<v\le s}\prod_{b=0}^{p_v-1}\left( 1+(1-\qqq_i^{ 2p_u})\sum_{n=1}^\infty \qqq_i^{ 2n(b-p_u)}\frac{z_{\gamma(p^v)}^n}{z_{\gamma(p^u)}^n}  \right)\\
  &\times
  \prod_{t=1}^{p^1-1}\delta\left( \frac{ \qqq_i^{ 2} z_{\gamma(t+1)}}{ z_{\gamma(t)} } \right)
  \prod_{t=p^1+1}^{p^2-1}\delta\left( \frac{ \qqq_i^{ 2} z_{\gamma(t+1)}}{ z_{\gamma(t)} } \right)
  \cdots
  \prod_{t=p^{s-1}+1}^{p^s-1}\delta\left( \frac{ \qqq_i^{ 2} z_{\gamma(t+1)}}{ z_{\gamma(t)} } \right).
\end{align*}
By taking $\Res_{z_1,\dots,z_k}z_1^{n-1}\cdots z_k^{n-1}$ on the both hand sides of the relation above, we complete the proof of lemma.
\end{proof}

%

\begin{prop}\label{prop:xiii5}
The following relation holds in $\wh\U_\zeta^\ell(\wh\g)$
\begin{align*}
  &x_{i,\wp_i}^\pm(z)=0.
\end{align*}
\end{prop}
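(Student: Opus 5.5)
The plan is to work in the completed integral form $\wh\U_{\mathcal A}^{\ell,\res}(\wh\g)$ and then specialize to $q=\zeta$. I take $\Bbbk=\mathcal A$, $\qqq=q$ and $e_i(z)=x_i^\pm(z)$, with the topology given by $\{\wh L_N^{\res}\}$. Then $x_{i,k}^\pm(z)$ is the normally ordered product $e_{i,k}(z)$ of Lemma \ref{lem:xk-ind}. Relation \eqref{Q7} supplies the hypothesis on the $e_i$.

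First I check the hypotheses of Lemma \ref{lem:xiii3} for $k=\wp_i$. For $p<\wp_i$ the element $[p]_{q_i}!$ is invertible in $\mathcal A$, because its factors $q_i^{2s}-1$ with $s<\wp_i$ do not vanish at $\zeta$. So Lemma \ref{lem:xk-ind} gives $e_{i,p}(z)\in\wh\E(\wh\U_{\mathcal A}^{\ell,\res}(\wh\g))$, and hence $e_{i,p}(z)/[p]_{q_i}\in\wh\E$ for all $p<\wp_i$. In the expansion of Lemma \ref{lem:xiii3}:
\begin{itemize}
\item the divided power $(x_i^\pm(n))^{(\wp_i)}$ lies in $\U_q^{\res}(\wh\g)$ by Proposition \ref{prop:res-form};
\item every lower term is an $\mathcal A$-combination of products of the $e_{i,p_t}(m_t)/[p_t]_{q_i}$ with $p_t<\wp_i$, and the factor $1/k!$ is harmless over $\C$.
\end{itemize}
Hence $(q_i-q_i\inv)^{\wp_i-1}q_i^{\ast}\,e_{i,\wp_i}(n\wp_i)/(\wp_i[\wp_i]_{q_i})$ lies in the completed restricted form. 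Since $(q_i-q_i\inv)$ is a unit in $\mathcal A$, this gives $e_{i,\wp_i}(n\wp_i)\in[\wp_i]_{q_i}\,\wh\U_{\mathcal A}^{\ell,\res}(\wh\g)$. Now $[\wp_i]_{\zeta_i}=(\zeta_i^{\wp_i}-\zeta_i^{-\wp_i})/(\zeta_i-\zeta_i\inv)=0$, because $\zeta_i^{2\wp_i}=1$ and $\zeta_i^2\ne1$. So the image of $e_{i,\wp_i}(n\wp_i)$ in $\wh\U_\zeta^\ell(\wh\g)$ vanishes for every $n\in\Z$.

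Next I must treat the modes $e_{i,\wp_i}(m)$ with $\wp_i\nmid m$. Commuting with $\wt h_j(m)$ does not help here. The scalar it produces on $e_{i,\wp_i}$ contains the full geometric sum $\sum_{a=0}^{\wp_i-1}\zeta_i^{2am}$, which vanishes at $\zeta$ whenever $\wp_i\nmid m$. So I would instead repeat the computation of Lemma \ref{lem:xiii3} with mixed modes. I take $\Res_{z_1,\dots,z_k}z_1^{n_1-1}\cdots z_k^{n_k-1}$ of the symmetrized identity
\[
\frac{1}{[k]_{q_i}!}\sum_{\sigma}e_i(z_{\sigma(1)})\cdots e_i(z_{\sigma(k)})
\]
established in that proof. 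This expresses the quantum-symmetrized element $\frac{1}{[k]_{q_i}!}\sum_\sigma x_i^\pm(n_{\sigma(1)})\cdots x_i^\pm(n_{\sigma(k)})$ as follows. Because of the delta functions, the top term ($s=1$) is a nonzero multiple of $e_{i,k}(n_1+\cdots+n_k)/[k]_{q_i}$, twisted by the unit $q_i^{2\sum_t (t-1)n_t}$ up to a $q$-power. All other terms are lower products as before. Choosing $n_1=m$ and $n_2=\cdots=n_k=0$ reaches every mode $m$.

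The main obstacle is showing that these mixed symmetrized products lie in $\U_q^{\res}(\wh\g)$, or at least in its completion. For a single mode this is just the divided power. For mixed modes I would obtain it by ordering the modes and writing the symmetrized sum through $q$-commutator formulas in terms of products of divided powers $(x_i^\pm(n))^{(a)}$, in the spirit of Garland/Chari--Pressley. A fallback is available if that fails. Lemma \ref{lem:xk-ind} shows $e_{i,\wp_i}(w)$ is, modulo lower terms, a combination of $e_i(0)e_{i,\wp_i-1}(w)$ and $e_{i,\wp_i-1}(w)e_i(0)$. Conjugating the already-vanishing modes $e_{i,\wp_i}(n\wp_i)$ by the Beck translation automorphisms, which preserve $\U_q^{\res}(\wh\g)$ and shift $x_i^\pm(n)\mapsto x_i^\pm(n\pm1)$ up to units, would then propagate the vanishing to all modes. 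Collecting the residues gives $x_{i,\wp_i}^\pm(z)=0$ in $\wh\U_\zeta^\ell(\wh\g)$.
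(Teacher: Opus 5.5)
Your treatment of the modes $x_{i,\wp_i}^\pm(n\wp_i)$ is exactly the paper's argument: Lemma \ref{lem:xiii3} with $k=\wp_i$, the divided powers lie in $\U_q^{\res}(\wh\g)$, the lower terms are integral because $[\wp_i-1]_{q_i}!\in\mathcal A^\times$, and $[\wp_i]_{\zeta_i}=0$.

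The gap is in the modes $m$ with $\wp_i\nmid m$. Your main route needs the mixed symmetrized products $\frac{1}{[\wp_i]_{q_i}!}\sum_\sigma x_i^\pm(n_{\sigma(1)})\cdots x_i^\pm(n_{\sigma(\wp_i)})$ to lie in the completed restricted form. You do not prove this, and nothing in the paper supplies it. For $n=(1,0,\dots,0)$ it is easy, since $x_i^\pm(1)$ and $x_i^\pm(0)$ $q$-commute. For general $m$, however, reordering $x_i^\pm(m)$ past $x_i^\pm(0)$ produces correction terms in other modes, and their divisibility by $[\wp_i]_{q_i}$ is precisely what would have to be shown. The top coefficient is also not a single unit twist. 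It is a sum over the $(\wp_i-1)!$ elements of $S^{(\wp_i)}$, which for $n=(m,0,\dots,0)$ equals $(\wp_i-2)!\sum_{s=1}^{\wp_i-1}q_i^{\pm 2sm}$ up to a unit. You would have to check that this is nonzero at $\zeta$.

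Your fallback fails outright. $x_{i,\wp_i}^\pm(z)$ is a normally ordered product of $\wp_i$ copies of $x_i^\pm$ at arguments proportional to $z$. Hence any automorphism with $x_i^\pm(z)\mapsto c\,z^{\epsilon}x_i^\pm(z)$ sends $x_{i,\wp_i}^\pm(z)$ to a unit times $z^{\epsilon\wp_i}x_{i,\wp_i}^\pm(z)$. The translations $T_{\omega_j}$ with $j\ne i$ do not move $x_i^\pm$ at all. So translations only permute modes within a fixed residue class modulo $\wp_i$, and starting from $\wp_i\Z$ you never reach the others.

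The missing idea is the paper's cyclic-invariance trick. At $q=\zeta$ the normally ordered product of equal-index fields is symmetric in its arguments (the constant $C_{ii}=1$), and $\zeta_i^{\pm2\wp_i}=1$. Therefore
\begin{align*}
  x_{i,\wp_i}^\pm(\zeta_i^{\pm2}z)
  =\:x_i^\pm(z)\,x_i^\pm(\zeta_i^{\pm2(\wp_i-1)}z)\cdots x_i^\pm(\zeta_i^{\pm2}z)\;
  =x_{i,\wp_i}^\pm(z).
\end{align*}
Comparing coefficients of $z^{-n}$ gives $(\zeta_i^{\mp2n}-1)\,x_{i,\wp_i}^\pm(n)=0$. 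Since $\zeta_i^2$ has order exactly $\wp_i$, every mode with $\wp_i\nmid n$ vanishes in $\wh\U_\zeta^\ell(\wh\g)$, with no integrality statement needed for those modes.
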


\begin{proof}
From the definition of $x_{i,\wp_i}^\pm(z)$, we have that
\begin{align*}
  &x_{i,\wp_i}^\pm(\zeta_i^{\pm 2}z)=\:x_i^\pm(\zeta_i^{\pm 2\wp_i}z)x_i^\pm(\zeta_i^{\pm 2(\wp_i-1)}z)\cdots x_i^\pm(\zeta_i^{\pm 2}z)\;\\
  =&\:x_i^\pm(z)x_i^\pm(\zeta_i^{\pm 2(\wp_i-1)}z)x_i^\pm(\zeta_i^{\pm 2(\wp_i-2)}z)\cdots x_i^\pm(\zeta_i^{\pm 2}z)\;\\
  =&\:x_i^\pm(\zeta_i^{\pm 2(\wp_i-1)}z)x_i^\pm(\zeta_i^{\pm 2(\wp_i-2)}z)\cdots x_i^\pm(z)\;
  =x_{i,\wp_i}^\pm(z).
\end{align*}
It follows that
\begin{align*}
  &x_{i,\wp_i}^\pm(n)=0\quad\te{for }\wp_i\not|\,n.
\end{align*}
Notice that $[\wp_i-1]_{q_i}!$ is invertible in $\mathcal A$.
From Lemma \ref{lem:xiii3}, we get that
\begin{align*}
  &\frac{x_{i,\wp_i}^\pm(n\wp_i)}{[\wp_i]_{q_i}}
  =\frac{x_{i,\wp_i}^\pm(n\wp_i)}{[\wp_i]_{q_i}}1
  \in \wh\U_{\mathcal A}^{\ell,\res}(\wh\g) \quad\te{for }n\in \Z.
\end{align*}
Since $[\wp_i]_{\zeta_i}=0$, we get that for any $n\in\Z$
\begin{align*}
  x_{i,\wp_i}^\pm(n\wp_i)=\ev_\zeta\left([\wp_i]_{q_i}\frac{x_{i,\wp_i}^\pm(n\wp_i)}{[\wp_i]_{q_i}}\right)
  =[\wp_i]_{\zeta_i}\ev_\zeta\left(\frac{x_{i,\wp_i}^\pm(n\wp_i)}{[\wp_i]_{q_i}}\right)=0.
\end{align*}
Therefore, we complete the proof.
\end{proof}

On the other hand, let $\overline \U_{\mathcal A}^\ell(\wh\g)$ be the quotient algebra of $\wh \U_{\mathcal A}^\ell(\wh\g)$ modulo the
closed ideal generated by $x_{i,\wp_i}^\pm(n)$ for $i\in I$ and $n\in\Z$.
Note that the map $\ev_\zeta:\wh\U_{\mathcal A}^\ell(\wh\g)\to \wh\U_\zeta^\ell(\wh\g)$ factor through $\overline \U_{\mathcal A}^\ell(\wh\g)$.

\begin{prop}\label{prop:xiii6}
For each $k\in\Z_+$, we have that
\begin{align*}
  &\frac{x_{i,k}^\pm(z)}{[k]_{q_i}}\in\overline \U_{\mathcal A}^\ell(\wh\g)[[z,z\inv]].
\end{align*}
Moreover, we have that
\begin{align*}
  (x_i^\pm(n)^{(k)}\in \overline\U_{\mathcal A}^\ell(\wh\g),\quad i\in I,\quad n\in\Z,\quad k\in\Z_+.
\end{align*}
\end{prop}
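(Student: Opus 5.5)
We argue by induction on $k$, working in $\overline \U_{\mathcal A}^\ell(\wh\g)$ and applying the general machinery of the previous lemmas with $\Bbbk=\mathcal A$, $\qqq=q$, $W=\overline \U_{\mathcal A}^\ell(\wh\g)$ (a Hausdorff complete topological module with the closures of $L_N$ as a local basis), and $e_i(z)=x_i^\pm(z)$ acting by left multiplication. The relation \eqref{Q7} supplies the hypothesis on the $e_i$. For $q^{\pm}$ this is symmetric, so we treat $x_i^+$ only.

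The induction runs as follows. For $k\le \wp_i$ the element $[k-1]_{q_i}!$ is invertible in $\mathcal A$, because $[m]_{q_i}$ for $0<m<\wp_i$ does not vanish at $q=\zeta$. Lemma \ref{lem:xk-ind} then gives $x_{i,k}(z)\in\wh\E(W)$ for all $k\le \wp_i$. For $k<\wp_i$, $[k]_{q_i}$ is also a unit, so $x_{i,k}(z)/[k]_{q_i}$ lies in $\overline \U_{\mathcal A}^\ell(\wh\g)[[z,z\inv]]$ directly. For $k=\wp_i$ the element is zero in $\overline\U_{\mathcal A}^\ell(\wh\g)$ by construction, so $x_{i,\wp_i}(z)/[\wp_i]_{q_i}$ is to be interpreted as $0$, which is legitimate since the defining ideal kills $x_{i,\wp_i}(n)$.

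For $k>\wp_i$ we use the first relation of Lemma \ref{lem:xk-ind}, which expresses $(1-q_i^{-2k})x_{i,k+1}$ through $x_i(0)$, $x_{i,k}$ and $\Res_z$ of $x_{i,k}(w)x_i(z)$ against rational coefficients. This is exactly the displayed formula in its proof. Dividing by $[k+1]_{q_i}$, the only problem is when $(1-q_i^{2k})$ or $[k+1]_{q_i}$ fails to be a unit.

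Here we exploit periodicity. Write $k=a\wp_i+b$. Using the normal-ordering symmetry, as in the proof of Proposition \ref{prop:xiii5}, and the fact that $\zeta_i^{2\wp_i}=1$, one shows that $x_{i,k}(z)$ factors as a normally ordered product containing $x_{i,\wp_i}$ whenever $k\ge \wp_i$. Concretely, we would first show the identity
\[
x_{i,k}(z)=c(q)\,\:x_{i,\wp_i}(q_i^{2(k-\wp_i)}z)\,x_{i,k-\wp_i}(z)\;,
\]
with $c(q)$ collecting the normal-ordering factors. This identity follows from the definition of $\:\cdots\;$, with $x_{i,\wp_i}$ rather than its value at $\zeta$. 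Since $x_{i,\wp_i}$ vanishes in $\overline\U_{\mathcal A}^\ell(\wh\g)$ and the normally ordered product is a limit of products, $x_{i,k}(z)=0$ for $k\ge\wp_i$. Then $x_{i,k}/[k]_{q_i}$ is also interpreted as $0$, provided we show that divisibility holds in the lift. For this, we would rather prove that $x_{i,k}(z)$ lies in $\prod_s(q_i^{2s}-1)\cdot\overline\U[[z^{\pm1}]]$ by tracking the coefficient $(1-q_i^{-2s})$ factors produced by Lemma \ref{lem:xiii1}. This is the main obstacle: making the division by non-units rigorous. I expect the cleanest route is to prove the stronger statement that $x_{i,k}/[k]_{q_i}!$ lies in the completion for $k<\wp_i$, and that $x_{i,k}/[k]_{q_i}$ is a multiple of $x_{i,\wp_i}$ for $k\ge \wp_i$.

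Once the first assertion is established, the moreover statement follows from Lemma \ref{lem:xiii3}. By induction on $k$, its hypothesis holds, and it expresses $(x_i(n))^{(k)}$ as $\mathcal A$-combinations of products $\frac{1}{k!}\prod_t x_{i,p_t}(m_t)/[p_t]_{q_i}$ together with the top term $x_{i,k}(nk)/(k[k]_{q_i})$. The factors $1/k!$ and $1/k$ are harmless over $\C$, so all terms lie in $\overline\U_{\mathcal A}^\ell(\wh\g)$. This gives $(x_i^\pm(n))^{(k)}\in\overline\U_{\mathcal A}^\ell(\wh\g)$.
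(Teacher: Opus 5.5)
Your treatment of $k<\wp_i$, the vanishing $x_{i,k}=0$ in $\overline\U_{\mathcal A}^\ell(\wh\g)$ for $k\ge\wp_i$, and the final appeal to Lemma \ref{lem:xiii3} all match the paper. The gap is the step you yourself call the main obstacle, and that step is where the proposition has its content.

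Since $x_{i,k}=0$ there for $k\ge\wp_i$, the ring $\overline\U_{\mathcal A}^\ell(\wh\g)$ has $[\wp_i]_{q_i}$-torsion. So ``$x_{i,k}/[k]_{q_i}$'' is not determined by $x_{i,k}$: any $y$ with $[k]_{q_i}y=x_{i,k}(z)$ qualifies. Taking $y=0$ only proves the vacuous identity $[k]_{q_i}\cdot 0=0$. It also makes Lemma \ref{lem:xiii3} express $(x_i(n))^{(\wp_i)}$ entirely through the $x_{i,p}/[p]_{q_i}$ with $p<\wp_i$.

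The paper instead produces a specific element. For $k\ge\wp_i-1$ one has $x_{i,k+1}=0$, so the $\delta$-term in Lemma \ref{lem:xk-ind} disappears. Hence $x_{i,k}(z)x_i(z_1)$ itself lies in $\wh\E^{(2)}$, and the normally ordered product equals $\frac{z-q_i^2z_1}{z-q_i^{-2(k-1)}z_1}x_{i,k}(z)x_i(z_1)$. At $z_1=q_i^{2k}z$ the scalar becomes $\frac{1-q_i^{2k+2}}{1-q_i^2}=q_i^k[k+1]_{q_i}$. This exhibits the divisor explicitly and gives
\[
  \frac{x_{i,k+1}(z)}{[k+1]_{q_i}}:=q_i^k\,x_{i,k}(z)\,x_i(q_i^{2k}z).
\]
For $k+1=\wp_i$ this is $q_i^{\wp_i-1}x_{i,\wp_i-1}(z)x_i(q_i^{2(\wp_i-1)}z)$, which is nonzero in general; it vanishes only for $k+1>\wp_i$. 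So your claim that $x_{i,k}/[k]_{q_i}$ is a multiple of $x_{i,\wp_i}$ for all $k\ge\wp_i$ disagrees with the paper exactly at $k=\wp_i$. The proposed tracking of the factors $(1-q_i^{-2s})$ from Lemma \ref{lem:xiii1} is neither carried out nor what is needed.

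Be aware that the real requirement is not checked in your sketch or in the paper. That requirement is that the elements assigned to $(x_i^\pm(n))^{(k)}$ satisfy the relations of $\U_q^{\ell,\res}(\wh\g)$, which is how the proposition is used in the proof of Theorem \ref{thm:presentation}. It fails for any assignment built from the $x_j^\pm(m)$ alone, the paper's included. Take $\ell=0$, $W=\C$, $H_i(z)=\wp_i$, $\Psi_i^\pm(z)=1$, $X_i^\pm(z)=0$. Every relation \eqref{zeta1}--\eqref{zeta-res-form} holds:
\begin{itemize}
\item the right-hand sides of \eqref{zeta1}--\eqref{zeta4} carry $[0]_{q_j}$ or $q^{\pm2r\ell}-1$;
\item the relations involving $X_i^\pm$ are trivial because $\Psi_i^\pm=1$;
\item \eqref{zeta-Psi-def2} holds because $\zeta_i^{2\wp_i}=1$.
\end{itemize}
Both your choice and the paper's make $(x_i^\pm(0))^{(a)}$ act by $0$ for all $a\ge 1$, while $\qb{k_i}{\wp_i}_{q_i}$ acts by $\qb{\wp_i}{\wp_i}_{\zeta_i}=1$. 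Now use the divided-power relation of $\U_{q_i}(\mathfrak{sl}_2)$. It writes $(x_i^+(0))^{(\wp_i)}(x_i^-(0))^{(\wp_i)}-(x_i^-(0))^{(\wp_i)}(x_i^+(0))^{(\wp_i)}$ as $\qb{k_i}{\wp_i}_{q_i}$ plus terms ending in $(x_i^+(0))^{(a)}$ with $0<a<\wp_i$. On this module it therefore gives $0=1$.

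In fact, comparing $\qb{k_i}{\wp_i}_{q_i}$-eigenvalues shows this object comes from no $\U_\zeta(\wh\g)$-module at all. The values $\qb{3\wp_i}{\wp_i}_{\zeta_i}=3$ and $\qb{-\wp_i}{\wp_i}_{\zeta_i}=-1$ both differ from $1$, which forces $(x_i^\pm(0))^{(\wp_i)}=0$ under any $\U_\zeta(\wh\g)$-structure on this one-dimensional space.

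So the divided powers $(x_i^\pm(n))^{(\wp_i)}$ are not determined by the fields $X_i^\pm$. Completing your argument along the paper's lines can at best give the weak statement that suitable quotients exist, not the homomorphism used downstream.
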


\begin{proof}
From Lemma \ref{lem:xk-ind}, we get that
\begin{align*}
  &x_{i,k}^\pm(z)\in \overline \U_{\mathcal A}^\ell(\wh\g)[[z,z\inv]]\quad\te{for }1\le k\le \wp_i,
  \quad\te{and hence}\\
  &\frac{x_{i,k}^\pm(z)}{[k]_{q_i}}\in \overline \U_{\mathcal A}^\ell(\wh\g)[[z,z\inv]]\quad\te{for }1\le k<\wp_i,
  \,\,\te{since }[k]_{q_i}\inv\in\mathcal A.
\end{align*}
We then prove the first statement by using induction on $k\ge \wp_i-1$.
Suppose that
\begin{align*}
  \frac{x_{i,k}^\pm(z)}{[k]_{q_i}}\in \overline \U_{\mathcal A}^\ell(\wh\g)[[z,z\inv]].
\end{align*}
From Lemma \ref{lem:xk-ind}, we get that
\begin{align*}
  &x_{i,k+1}^\pm(w)=(q_i^{\mp 1}-q_i^{\pm 1})q_i^{\mp k}x_i^\pm(0)\frac{x_{i,k}^\pm(w)}{[k]_{q_i}}
  -(q_i^{\mp 1}-q_i^{\pm 1})q_i^{\pm k}\frac{x_{i,k}^\pm(w)}{[k]_{q_i}}x_i^\pm(0)\\
  &\quad-\Res_zz\inv\frac{1+q_i^{\pm 2k}-q_i^{\pm 2}z/w-z/w}{(1-q_i^{\mp 2k\pm 2r_i}z/w)(q_i^{\pm 2k}-z/w)}x_{i,k}^\pm(w)x_i^\pm(z)
  \in \overline \U_{\mathcal A}^\ell(\wh\g)[[z,z\inv]].
\end{align*}
Since $x_{i,\wp_i}^\pm(z)=0$, we get that,
\begin{align*}
  &x_{i,k+1}^\pm(z)=\:x_i^\pm(q^{\pm 2k}z)x_i^\pm(q^{\pm 2(k-1)}z)\cdots x_i^\pm(q^{\pm 2\wp_i}z)x_{i,\wp_i}^\pm(z)\;=0.
\end{align*}
Combining this with Lemma \ref{lem:xk-ind}, we get that
\begin{align*}
  &x_i^\pm(z)x_{i,k}^\pm(w)=\frac{w-q_i^{\pm 2}z}{w-q_i^{\mp 2(k-1)}}\frac{w-z}{q_i^{\pm 2k}w-z}x_{i,k}^\pm(w)x_i^\pm(z)\in\wh\E^{(2)}(\overline \U_{\mathcal A}^\ell(\wh\g)),\\
  &x_{i,k}^\pm(z)x_i^\pm(w)=\frac{w-q_i^{\pm 2k}z}{w-z}\frac{q_i^{\mp 2(k-1)}w-z}{q_i^{\pm 2}w-z}x_i^\pm(w)x_{i,k}^\pm(z)\in\wh\E^{(2)}(\overline \U_{\mathcal A}^\ell(\wh\g)).
\end{align*}
Then
\begin{align*}
  &\:x_i^\pm(z)x_{i,k}^\pm(w)\;=\:x_{i,k}^\pm(w)x_i^\pm(z)\;
  =\frac{w-q_i^{\pm 2}z}{w-q_i^{\mp 2(k-1)}z}x_{i,k}^\pm(w)x_i^\pm(z).
\end{align*}
It follows that
\begin{align*}
  \frac{x_{i,k+1}^\pm(z)}{[k+1]_{q_i}}=&\frac{1}{[k+1]_{q_i}}\left.\left(\frac{z-q_i^{\pm 2}z_1}{z-q_i^{\mp 2(k-1)}z_1}x_{i,k}^\pm(z)x_i^\pm(z_1)\right)\right|_{z_1=q_i^{\pm 2k}z}\\
  =&q_i^{\pm k}x_{i,k}^\pm(z)x_i^\pm(q_i^{\pm 2k}z)\in\overline \U_{\mathcal A}^\ell(\wh\g)[[z,z\inv]],
\end{align*}
which proves the first statements for $k+1$.
Finally, the moreover statement follows from
the first statement and Lemma \ref{lem:xiii3}.
\end{proof}

%

\begin{proof}[{\bf Proof of Theorem \ref{thm:presentation}}]
Let $W\in \obj\mathcal R_\zeta^\ell(\wh\g)$. From Proposition \ref{prop:U-zeta-M}, we get that $W\in\obj\mathcal R_\zeta'^\ell(\wh\g)$ by letting
\begin{align*}
  &H_i(z)=\wt h_i(z),\quad \Psi_i^\pm(z)=\psi_i^-(z)^{\pm 1}\psi_i^+(z)^{\mp 1},\quad X_i^+(z)=x_i^+(z),\\ &X_i^-(z)=(\zeta_i-\zeta_i\inv)x_i^-(z)\psi_i^+(\zeta^{-r\ell}z)\inv,\quad \te{for }i\in I.
\end{align*}
Proposition \ref{prop:smooth-continuous} yields that
$W$ is a $\wh\U_\zeta^\ell(\wh\g)$-module.
Applying Proposition \ref{prop:serre} to \eqref{Q8}, we get that
\begin{align}\label{eq:pf-serre-1}
  \:x_i^\pm(\zeta_i^{\mp a_{ij}}z)x_i^\pm(\zeta_i^{\mp 2\mp a_{ij}}z)\cdots x_i^\pm(\zeta_i^{\pm a_{ij}}z)
  x_j^\pm(z)\;=0\quad\te{for }i,j\in I,\,\te{with }a_{ij}\le 0.
\end{align}\label{eq:pf-res-form-1}
By Proposition \ref{prop:xiii5}, we have that
\begin{align}
  \:x_i^\pm(\zeta_i^{\pm 2(\wp_i-1)}z)x_i^\pm(\zeta_i^{\pm 2(\wp_i-2)}z)
  \cdots x_i^\pm(z)\;=0\quad\te{for }i\in I.
\end{align}
Note that for $i_1,\dots,i_k\in I$, one has that
\begin{align}\label{eq:pf-no-alt}
  &\:X_{i_1}^-(z_1)X_{i_2}^-(z_2)\cdots X_{i_k}^-(z_k)\;\\
  =&(\zeta_i-\zeta_i\inv)^k\:x_{i_1}^\pm(z_1)\cdots x_{i_k}^\pm(z_k)\;\psi_{i_1}^+(z_1q^{-r\ell})\inv \cdots \psi_{i_k}^+(z_kq^{-r\ell})\inv.\nonumber
\end{align}
Then \eqref{eq:pf-serre-1} is equivalent to the following relation
\begin{align*}
  \:X_i^\pm(\zeta_i^{- a_{ij}}z)X_i^\pm(\zeta_i^{- 2- a_{ij}}z)\cdots X_i^\pm(\zeta_i^{a_{ij}}z)
  x_j^\pm(z)\;=0\quad\te{for }i,j\in I,\,\te{with }a_{ij}\le 0,
\end{align*}
which is equivalent to \eqref{zeta-serre}, since \eqref{eq:serre-phi-0}.
And \eqref{eq:pf-res-form-1} is equivalent to the following relation
\begin{align*}
  \:X_i^\pm(\zeta_i^{2(\wp_i-1)}z)X_i^\pm(\zeta_i^{2(\wp_i-2)}z)
  \cdots X_i^\pm(z)\;=0\quad\te{for }i\in I,
\end{align*}
which yields \eqref{zeta-res-form}.

On the other hand, let $(W,H_i(z),\Psi_i^\pm(z),X_i^\pm(z))\in\obj\mathcal R_\zeta'^\ell(\wh\g)$ satisfying the conditions \eqref{zeta-serre} and \eqref{zeta-res-form}.
Proposition \ref{prop:U-zeta-M-reverse} shows that $W$ becomes a smooth $\U_{\mathcal A(\wp)}'^\ell(\wh\g)$-module by letting
\begin{align*}
  &k_i^{\pm 1}\mapsto \zeta_i^{\pm H_i(0)}, \quad \qb{k_i}{\wp_i}_{q_i}\mapsto \qb{H_i(0)}{\wp_i}_{\zeta_i},
  \quad  \wt h_i(m)\mapsto H_i(m), \\
  &\psi_i^\pm(z)\mapsto\zeta_i^{\pm H_i(0)}\exp\left( \sum_{\pm m>0}(\zeta_i^m-\zeta_i^{-m})H_i(m)z^{-m} \right),\\
  &x_i^+(z)\mapsto X_i^+(z),\quad x_i^-(z)\mapsto (\zeta_i-\zeta_i\inv)\inv X_i^-(z)\psi_i^+(\zeta^{-r\ell}z)
\end{align*}
for $i\in I$, $0\ne m\in\Z$,
and hence a $\wh\U_{\mathcal A(\wp)}'^\ell(\wh\g)$-module,
since Proposition \ref{prop:smooth-continuous}.
Applying Lemma \ref{lem:no=phi-0} to \eqref{zeta-serre} and \eqref{zeta-res-form}, we get that
\begin{align*}
  &\:X_i^\pm(\zeta_i^{- a_{ij}}z)X_i^\pm(\zeta_i^{- 2- a_{ij}}z)\cdots X_i^\pm(\zeta_i^{a_{ij}}z)
  x_j^\pm(z)\;=0\quad\te{for }i,j\in I,\,\te{with }a_{ij}\le 0,\\
  &\:X_i^\pm(\zeta_i^{2(\wp_i-1)}z)X_i^\pm(\zeta_i^{2(\wp_i-2)}z)
  \cdots X_i^\pm(z)\;=0\quad\te{for }i\in I,
\end{align*}
where the second relation needs an additional fact
\begin{align*}
  (\zeta_i-\zeta_i\inv)^{\wp_i-1}\zeta_i^{-\wp_i(\wp_i-1)/2}
  [\wp_i-1]_{\zeta_i}\ne 0.
\end{align*}
Note that the relation \eqref{eq:pf-no-alt} still holds.
Then the relations \eqref{eq:pf-serre-1} and \eqref{eq:pf-res-form-1} hold on $W$.
Combining these with Proposition \ref{prop:serre}, we get that $W$ becomes a $\overline\U_{\mathcal A}^\ell(\wh\g)$-module.
By using Proposition \ref{prop:xiii6}, we get that
$W$ naturally becomes a smooth $\U_q^{\ell,\res}(\wh\g)$-module, and hence a smooth weighted $\U_\zeta(\wh\g)$-module of level $\ell$, as desired.
%
\end{proof}

\section{Construction of quantum vertex algebras}
\label{sec:construct}

In this section, we construct the desired $\Z_\wp$-module 
quantum vertex algebra corresponding to the category $\mathcal R_\zeta^\ell(\wh\g)$.

\subsection{General construction}\label{subsec:gen-construct}

This subsection is devoted to constructing a weak quantum vertex algebra from a category. More precisely, we prove the following result.

\begin{prop}\label{prop:V-M-pre}
Let $\mathcal M$ be a category whose objects are vector spaces $W$ equipped with fields
\begin{align*}
  \{a_0(z)=1_W\}\cup\set{a_i(z)}{i\in J}\subset \E(W),
\end{align*}
satisfying the relations below
\begin{align}\label{eq:V-M-pre-def-rels}
  &(z_1-z_2)^{M_{ij}}a_i(z_1)a_j(z_2)
  -(-z_2+z_1)^{M_{ij}}\sum_{s,t\in \{0\}\uplus J}f_{ij}^{st}(z_2-z_1)a_s(z_2)a_t(z_1)\\
  &\quad=\sum_{k=0}^{N_{ij}}c(i,j,k)a_{p(i,j,k)}(z_2)\frac{1}{k!}
  \pdiff{z_2}{k}z_1\inv\delta\left(\frac{z_2}{z_1}\right)
  \quad\te{for }i,j\in J,\nonumber
\end{align}
where $M_{ij},N_{ij}\in\Z$, $f_{ij}^{st}(z)\in\C((z))$ and
\begin{align*}
  p(i,j,-):\{0,1,\dots,N_{ij}\}\to \{0\}\uplus J,\quad c(i,j,-):\{0,1,\dots,N_{ij}\}\to \C
\end{align*}
are independent of the choice of $W$.
Then there exists
\begin{align*}
  (V,a_i(z))\in \obj\mathcal M,\quad \vac\in V,\quad\te{and}\quad
  a_i\in V,\quad i\in I,
\end{align*}
such that $V$ carries the structure of a weak quantum vertex algebra with vacuum vector $\vac$, and the vertex operator map $Y$ uniquely determined by
\begin{align*}
  Y(a_i,z)=a_i(z)\quad\te{for }i\in J.
\end{align*}
Moreover, for each $V$-module $(W,Y_W)$,
\begin{align*}
  (W,Y_W(a_i,z))\in \obj\mathcal M.
\end{align*}
Furthermore, for each $(W,a_i(z))\in \obj\mathcal M$,
there exists a $V$-module structure $Y_W$ determined by
\begin{align*}
  Y_W(a_i,z)=a_i(z)\quad\te{for }i\in J.
\end{align*}
\end{prop}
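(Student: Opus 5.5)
The plan is the standard "universal object" construction: realize $V$ inside $\E(W)$ for a sufficiently large $W\in\obj\mathcal M$, using Theorem \ref{thm:nonlocal-VA-gen} and Remark \ref{rem:wquantum-VA-gen}.

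\textbf{Step 1: a universal object.} Consider the free associative algebra $T$ on symbols $a_i(n)$, $i\in J$, $n\in\Z$. Complete it with respect to the left ideals generated by $a_i(n)$, $n\ge N$, exactly as in Proposition \ref{prop:topo-alg}. Then impose the component relations coming from \eqref{eq:V-M-pre-def-rels}. Each relation, read coefficientwise in $z_1^{-m}z_2^{-n}$, is a finite combination modulo every $L_N$, because $f_{ij}^{st}(z)\in\C((z))$ has bounded-below powers. Let $\mathcal U$ be the resulting topological algebra, and set
\[
  \Omega=\mathcal U/\overline{\mathcal U\{a_i(n)\mid i\in J,\,n\ge 0\}}.
\]
Then $\Omega$ is a smooth module on which the fields $a_i(z)$ satisfy \eqref{eq:V-M-pre-def-rels}, so $\Omega\in\obj\mathcal M$.

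\textbf{Step 2: building $V$.} For any $W\in\obj\mathcal M$, the set $U_W=\{a_i(z)\}$ is $S$-local: multiplying \eqref{eq:V-M-pre-def-rels} by a further power of $(z_1-z_2)$ kills the delta-function terms, and what remains is the $S$-locality relation with coefficients $f_{ij}^{st}$. Hence, by Remark \ref{rem:wquantum-VA-gen}, $\<U_W\>$ is a weak quantum vertex algebra and $W$ is a faithful module for it. Take $W=\Omega$ and put $V=\<U_\Omega\>$, $\vac=1_\Omega$, $a_i=a_i(z)$. Then $Y(a_i,z)=Y_\E(a_i(z),z)$.

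The key check is that $V$ itself lies in $\obj\mathcal M$ via $a_i\mapsto Y(a_i,z)$. To see this, use the commutator formula in $\<U\>$, namely Li's formula for $Y_\E$ (\cite{Li-nonlocal}): the relation \eqref{eq:V-M-pre-def-rels} among the $a_i(z)$ on $\Omega$ transfers to the same relation among the $Y_\E(a_i(z),z_0)$ acting on $\E(\Omega)$. Here the coefficients $c(i,j,k)a_{p(i,j,k)}$ are recovered as $(a_i)_k a_j$, which lie in $V$ by the delta-function expansion. Uniqueness of $Y$ follows because $V$ is generated by the $a_i$.

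\textbf{Step 3: the module correspondence.} If $(W,Y_W)$ is a $V$-module, the same transfer argument, now applied via weak associativity \eqref{eq:weak-asso} in the module, shows that the fields $Y_W(a_i,z)$ satisfy \eqref{eq:V-M-pre-def-rels}.

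For the converse, start from $(W,a_i(z))\in\obj\mathcal M$. By Step 1, $W$ is a smooth $\mathcal U$-module, and each $w\in W$ determines a $\mathcal U$-map $\Omega\to W$ only when $w$ is annihilated by all $a_i(n)$, $n\ge 0$; this annihilation condition fails in general, so that map cannot be used. Instead, use $\<U_W\>$, which is a weak quantum vertex algebra with $W$ a module by Theorem \ref{thm:nonlocal-VA-gen}. Define a map $V\to\<U_W\>$ by
\[
  a_{i_1}(n_1)\cdots a_{i_r}(n_r)\vac\mapsto a_{i_1}(z)_{n_1}\cdots a_{i_r}(z)_{n_r}1_W .
\]
This is well defined because $V\cong\Omega$ (via $v\mapsto v_{-1}\vac$) is universal among vacuum-like modules, and $\<U_W\>$ is such a module whose generators satisfy the defining relations. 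The map is a nonlocal vertex algebra homomorphism. Pulling back the module structure along it gives $Y_W$ with $Y_W(a_i,z)=a_i(z)$.

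\textbf{Main obstacle.} The hardest step is well-definedness: showing that $\Omega$ has no further relations, so that $V\cong\Omega$ and $\Omega\to\<U_W\>$ factors. First I would prove $V\cong\Omega$ directly by checking that the state-field map $\Omega\to\E(\Omega)$, $u\mapsto u(z)$, is injective and that the vacuum vector $\vac$ generates $\Omega$. I would then compare the defining relations of $\Omega$ with the relations satisfied in $\<U_W\>$, which are the same by Step 3's transfer argument.
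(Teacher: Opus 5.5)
Your overall architecture agrees with the paper's: a universal vacuum module, the weak quantum vertex algebra $\<U_W\>$ obtained from $S$-locality, Li's transfer results \cite[Proposition 6.6, Theorem 6.7]{Li-nonlocal}, and pulling back the faithful $\<U_W\>$-module structure along a map out of the universal object. The paper puts the vertex algebra structure directly on the universal module via \cite[Theorem 2.9]{Li-constructing}, whereas you take $V=\<U_\Omega\>$; that difference is harmless.

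The genuine gap is Step 1, i.e.\ the existence of $\Omega$ with its universal property, on which all of Step 3 rests. The argument of Proposition \ref{prop:topo-alg} derives $L_Ma(n)\subset L_N$ from the defining relations. In the free algebra $T$ this fails ($a_i(M)a_j(n)\notin L_N$ for $n<N$, whatever $M$), so $\varprojlim T/L_N$ is only a left $T$-module and you cannot ``complete, then impose relations''.

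Nor can you impose the relations in $T$ directly, because your finiteness claim is false. The series $f^{st}_{ij}(z_2-z_1)$ lies in $\C((z_2))[[z_1]]$ with $z_2$-powers unbounded above. So the $z_1^{-m-1}z_2^{-n-1}$-coefficient of $f^{st}_{ij}(z_2-z_1)a_s(z_2)a_t(z_1)$ contains, for each fixed $k\ge 0$, infinitely many words $a_s(n+p)a_t(m+k)$ with $p\to\infty$; this happens already for $f(z)=(1-z)^{-1}$. The large mode sits on the left, so for $m+k<N$ these words are linearly independent modulo the left ideal $L_N$. The sum converges only once right multiplication is known to be continuous, which is exactly what the relations were supposed to provide. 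Smoothness of $\Omega$ is also asserted without argument.

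The paper avoids this by letting $\mathcal A$ be the closed subalgebra of $\End_\C(\mathcal F)$, where $\mathcal F$ is the forgetful functor, generated by the modes $a_i(n)$. There the relations hold automatically, and the infinite sums converge because they converge on every object. Then $V=\mathcal A/\mathcal A_+$ has the universal property by construction. Truncation follows since $\set{v}{a_i(z)v\in V((z))\text{ for all }i}$ is an $\mathcal A$-submodule containing $\vac$, by \eqref{eq:V-M-pre-def-rels} (Lemma \ref{lem:M-obj-V}).

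With such an $\Omega$ in hand, the step you flag as the main obstacle is formal. Your plan via a ``state-field map $\Omega\to\E(\Omega)$'' is circular unless that map is the universal one. Since $\<U_\Omega\>\in\obj\mathcal M$ with $1_\Omega$ vacuum-like, universality gives $\theta:\Omega\to\<U_\Omega\>$. The vacuum evaluation $\psi:\<U_\Omega\>\to\Omega$, $v(z)\mapsto\lim_{z\to0}v(z)\vac_\Omega$, is a module map. Both composites fix the vacuum and intertwine the $a_i$-modes, hence are identities.

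You do not even need the isomorphism. The composite $\theta_{\<U_W\>}\circ\psi:V\to\<U_W\>$ is already well defined and intertwines the generators. It is therefore a homomorphism by the weak-associativity argument used in Proposition \ref{prop:universal-nonlocal-va}.
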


We first prove the following result.

\begin{lem}\label{lem:M-obj-V}
Let $\mathcal M$ be a category satisfying the hypotheses of Proposition \ref{prop:V-M-pre}.
Then there exists an object $(V,a_i(z))$ in $\mathcal M$
and a vector $\vac\in V$, satisfying the following two conditions:

(i) $a_i(z)\vac\in V[[z]]$ for all $i\in J$;

(ii) for each $(W,a_{i,W}(z))\in\obj\mathcal M$ equipped with a vector $v_+\in W$ satisfying the condition
\begin{align*}
  a_{i,W}(z)v_+\in W[[z]],
\end{align*}
there exists a unique linear map $\theta_W:V\to W$ such that
$\theta_W(\vac)=v_+$ and
\begin{align*}
  \theta_W(a_i(z)v)=a_{i,W}(z)\theta_W(v)\quad\te{for any }v\in W.
\end{align*}
\end{lem}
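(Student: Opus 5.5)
We construct $V$ as a universal "vacuum module'' for the relations \eqref{eq:V-M-pre-def-rels}, by a free-algebra-modulo-relations construction. Let $T$ be the free associative algebra over $\C$ on generators $a_i(n)$, $i\in J$, $n\in\Z$. Form the generating series $a_i(z)=\sum_n a_i(n)z^{-n-1}$, and let $a_0(z)=1$. We will impose the relations in a topological completion.

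\textbf{Step 1: build $V$.} For $N\in\N$, let $T_N$ be the left ideal generated by all $a_i(n)$ with $n\ge N$. Let $\wh T=\varprojlim T/T_N$ be the completion. The left-hand side of \eqref{eq:V-M-pre-def-rels}, expanded using $\iota_{z_1,z_2}$ and $\iota_{z_2,z_1}$, has coefficients lying in $\wh T$. Each coefficient is a finite sum modulo each $T_N$, because $f_{ij}^{st}(z)\in\C((z))$ has bounded-below powers. The same holds for the right-hand side. Let $R$ be the closed two-sided ideal generated by the coefficients of the differences of the two sides.

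Set $V=\wh T/(R+\overline{T_0})$, the quotient by the closed left ideal generated by $R$ and $a_i(n)$ for $n\ge0$, and let $\vac$ be the image of $1$.

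\textbf{Step 2: check conditions (i) and smoothness.} $V$ is a left $T$-module, and every $v\in V$ is killed by $a_i(n)$ for $n\gg0$. This holds because $V$ is spanned by monomials $a_{i_1}(n_1)\cdots a_{i_r}(n_r)\vac$, and $\wh T/\overline{T_0}$ is a discrete quotient of the completion. So each $a_i(z)\in\E(V)$. Moreover $a_i(n)\vac=0$ for $n\ge0$, which gives (i). The relations hold on $V$ by construction, so $(V,a_i(z))\in\obj\mathcal M$.

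\textbf{Step 3: universal property (ii).} Given $(W,a_{i,W})$ and $v_+$ with $a_{i,W}(z)v_+\in W[[z]]$, the action of $T$ on $W$ extends continuously to $\wh T$, since $W$ is smooth. It kills $R$, because the relations hold on $W$, and it kills $T_0$ applied to $v_+$. So $x\mapsto xv_+$ descends to $\theta_W:V\to W$, which intertwines the fields and sends $\vac\mapsto v_+$. Uniqueness follows because $V$ is spanned by monomials applied to $\vac$.

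\textbf{Main obstacle.} The delicate point is Step 1's well-definedness: the products $a_s(z_2)a_t(z_1)$ multiplied by $\iota_{z_2,z_1}f_{ij}^{st}(z_2-z_1)$ must give coefficients that make sense in $\wh T$. This must be done before any module exists, in contrast to working on a given smooth $W$. The first thing to check is that for each fixed coefficient and each $N$, only finitely many terms survive modulo $T_N$. This uses the truncation of $(z_1-z_2)^{M_{ij}}$ and the fact that the $\iota$-expansion is in nonnegative powers of the second variable.
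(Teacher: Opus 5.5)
There is a genuine gap, in two places. First, Step 1 is not well defined. $\wh T=\varprojlim T/T_N$ is only a completion of $T$ as a \emph{left} module, not an algebra: in a free algebra right multiplication is discontinuous, since there is no $M$ with $T_M\,a_l(-1)\subset T_N$. For instance, $r=\sum_{k\ge 0}a_i(-k)a_j(k)$ converges in $\wh T$; it has the type of coefficient produced by the terms $\iota_{z_2,z_1}f_{ij}^{st}(z_2-z_1)a_s(z_2)a_t(z_1)$. But $r\,a_l(-1)=\sum_{k\ge 0}a_i(-k)a_j(k)a_l(-1)$ does not converge. So ``the closed two-sided ideal generated by $R$'' has no meaning. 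If you use only the left ideal generated by $R$, you impose the relations on $\vac$ alone, not on the vectors $x\vac$. Indeed $\wh T/\overline{T_0}\cong T/T_0$, and there each $r$ only says $r\vac=0$. The resulting $(V,a_i(z))$ therefore need not lie in $\obj\mathcal M$. Continuity of multiplication, as in Proposition~\ref{prop:topo-alg}, is something one derives \emph{from} the relations, so it cannot be used to build the quotient by them.

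Second, the truncation claim in Step 2 is wrong as justified. The class of $a_i(n)a_j(-1)$ in $T/T_0$ is nonzero for every $n$, and discreteness of $\wh T/\overline{T_0}$ says nothing about $a_i(n)x\vac$ vanishing for $n\gg 0$. Truncation of the fields on $V$ is exactly the nontrivial content of the lemma, and it must come from the $S$-locality shape of \eqref{eq:V-M-pre-def-rels}.

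The paper handles both issues as follows. It takes $\mathcal A$ to be the closed subalgebra of $\End_\C(\mathcal F)$ generated by the modes, where $\mathcal F$ is the forgetful functor and the topology is induced from the finite-support topologies on the $\End_\C(W)$. Since every object is smooth, all the infinite sums converge on each $W$, so the relations hold identically in $\mathcal A$ with no quotient needed. The universal property is then just evaluation at $v_+$, which kills the closed left ideal $\mathcal A_+$. With $V=\mathcal A/\mathcal A_+$, the paper shows that the subspace $V'$ of vectors $v\in V$ with $a_i(z)v\in V((z))$ for all $i\in J$ contains $\vac$ and is stable under every $a_j(n)$. For $v\in V'$ the relation gives $(z_1-z_2)^N a_i(z_1)a_j(z_2)v\in V((z_1,z_2))$, hence $a_i(z_1)a_j(z_2)v\in V((z_1))((z_2))$, i.e.\ $a_j(n)v\in V'$.

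Your argument needs both ingredients, or an equivalent. One option is to define $V$ as $T$ modulo the common annihilator of all vacuum-like vectors $v_+$ in all objects, and then run the $V'$ argument.
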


\begin{proof}
Let $\mathcal F$ be the forgetful functor from $\mathcal M$ to the category of vector spaces.
Define $\End_\C(\mathcal F)$ to be the algebra of endomorphisms of the functor $\mathcal F$.
For each $W\in\obj\mathcal M$, $\End_\C(W)$ is a topological algebra over $\C$ such that
\begin{align*}
  \set{(K)}{K\subset W,\,|K|<\infty}
\end{align*}
forms a local basis at $0$, where
\begin{align*}
  (K)=\set{f\in \End_\C(W)}{f(K)=0}.
\end{align*}
Equip $\End_\C(\mathcal F)$ with the coarsest topology such that for any $W\in\obj\mathcal M$ the canonical algebra epimorphism from $\End_\C(\mathcal F)$ to $\End_\C(W)$ is continuous.
For each $i\in \{0\}\uplus J$, $n\in\Z$, we define endomorphisms $a_i(n)$ of $\mathcal F$ as follows
\begin{align*}
  \sum_{n\in\Z}a_i(n).vz^{-n-1}=a_i(z)v\quad\te{where }v\in W,\,W\in \obj\mathcal M.
\end{align*}
It is easy to see that $a_0(n)=\delta_{n+1,0}1_{\mathcal F}$.
We denote by $\mathcal A$ the closed subalgebra of $\End_\C(\mathcal F)$ generated by $\set{a_i(n)}{i\in J,\,n\in\Z}$.
Let $\mathcal A_+$ be the minimal closed left ideal of $\mathcal A$ containing $a_i(n)$ $(i\in J,\,n\ge 0)$. Define
\begin{align*}
  V=\mathcal A/\mathcal A_+\quad\te{as a left module of }\mathcal A,
\end{align*}
and define $a_i(z)\in \End(V)[[z,z\inv]]$ by the very module action.
Set $\vac=1+\mathcal A_+$.
The construction of $V$ directly yields the two required conditions.

Observe that the fields $a_i(z)$ ($i\in J$) satisfy the defining relations \eqref{eq:V-M-pre-def-rels}. To complete the proof, it therefore suffices to show that $a_i(z)\in\mathcal{E}(V)$ for every $i\in J$.
Let $V'$ be the maximal subspace of $V$ consisting of vectors $v\in V$ such that
\begin{align*}
  a_i(z)v\in V((z))\quad \te{for all }i\in J.
\end{align*}
By the construction of $V$, $\vac$ lies in $V'$.
Now take arbitrary $i,j\in J$ and $v\in V'$. Using the relation \eqref{eq:V-M-pre-def-rels}, there exists $N\in\N$, such that
\begin{align*}
  (z_1-z_2)^Na_i(z_1)a_j(z_2)v=(z_1-z_2)^N\sum_{s,t\in \{0\}\uplus J}f_{ij}^{st}(z_2-z_1)a_s(z_2)a_t(z_1)v\in V((z_1,z_2)).
\end{align*}
It follows that
\begin{align*}
  a_i(z_1)a_j(z_2)v=(z_1-z_2)^{-N}\left((z_1-z_2)^Na_i(z_1)a_j(z_2)v\right)
  \in V((z_1))((z_2)).
\end{align*}
From this we deduce that $a_j(z)v\in V'((z))$, which shows that $V'$ is an $\mathcal{A}$-submodule of $V$. Since $V$ is generated by $\vac$ and $\vac\in V'$, we conclude that $V'=V$. Hence $a_i(z)\in\E(V)$ for all $i\in J$, as required.
\end{proof}

\begin{proof}[{\bf Proof of Proposition \ref{prop:V-M-pre}}]
For each $(W,a_i(z))\in\obj\mathcal M$, we set
\begin{align*}
  U_W=\{a_i(z) \mid i\in \{0\}\uplus J\}\subset\E(W).
\end{align*}
Note that $U_W$ is a $S$-local subset.
It follows from Theorem \ref{thm:nonlocal-VA-gen} and Remark \ref{rem:wquantum-VA-gen} that $U_W$ generates a weak quantum vertex algebra $(\<U_W\>,Y_\E,1_W)$ and $W$ becomes a faithful $\<U_W\>$-module with module map
$Y_W(a(z),z_0)=a(z_0)$ for $a(z)\in \<U_W\>$.
From \cite[Proposition 6.6]{Li-nonlocal}, we have that $\<U_W\>$ becomes an object in $\mathcal M$ with
fields $Y_\E(a_i(x),z)$.
From the vacuum property \eqref{eq:vacuum-prop}, we have that
\begin{align*}
  Y_\E(a_i(z),z_0)1_W\in \<U_W\>[[z_0]]\quad\te{for }i\in J.
\end{align*}
Using Lemma \ref{lem:M-obj-V}, we get a unique linear map $\theta_{\<U_W\>}:V\to \<U_W\>$, such that $\theta_{\<U_W\>}(\vac)=1_W$, and
\begin{align*}
  \theta_{\<U_W\>}(a_i(z)v)=Y_\E(a_i(z_1),z)\theta_{\<U_W\>}(v)
  \quad\te{for }i\in I,\,v\in V.
\end{align*}

Define $a_i=a_i(-1)\vac\in V$ for $i\in J$.
Recall from Lemma \ref{lem:M-obj-V} that $V\in\obj\mathcal M$.
Then
\begin{align*}
  \theta_{\<V\>}(a_i(z_0)v)=Y_\E(a_i(z),z_0)\theta_{\<V\>}(v)\quad\te{for }i\in J.
\end{align*}
Taking $v=\vac$, we get that
\begin{align*}
  &\theta_{V}(a_i(z_0)\vac)=Y_\E(a_i(z),z_0)\theta_{V}(\vac)=Y_\E(a_i(z),z_0)1_{V}=a_i(z+z_0).
\end{align*}
It follows that
\begin{align*}
  &\theta_{V}(a_i)=\Res_{z_0}z_0\inv \theta_{V}(a_i(z_0)\vac)=a_i(z).
\end{align*}
Applying \cite[Theorem 2.9]{Li-constructing}, the map $a_i\mapsto a_i(z)$ extends uniquely to a linear map $Y:V\to \E(V)$ such that
$(V, Y,\vac)$ carries the structure of a weak quantum vertex algebra.

By using \cite[Theorem 6.7]{Li-nonlocal}, we have that every $V$-module is an object in $\mathcal M$.
On the other hand, note that
\begin{align*}
  &\theta_{\<W\>}(Y(a_i,z_0)v)=\theta_{\<W\>}(a_i(z_0)v)
  =a_i(z_0)\theta_{\<W\>}(v)=Y_\E(a_i(z),z_0)\theta_{\<W\>}(v)
\end{align*}
for any $i\in J$. Hence, $\theta_{\<W\>}$ is a weak quantum vertex algebra homomorphism. Since $W$ is a faithful $\<U_W\>$-module,
we get that $W$ is a $V$-module.
\end{proof}

\begin{rem}
Note that for $f(z)\in\C((z))$, one has that
\begin{align*}
  f(z_1-z_2)-f(-z_2+z_1)=\sum_{k=0}^nc_k\frac{1}{k!}\pdiff{z_2}{k}
  z_1\inv\delta\left(\frac{z_2}{z_1}\right)
  \quad\te{for some }c_k\in \C,\,n\in\N.
\end{align*}
Then the relation
\begin{align*}
  [a_i(z_1),a_j(z_2)]=c^+f(z_1-z_2)-c^-f(-z_2+z_1)
\end{align*}
is equivalent to the following relation
\begin{align*}
  &a_i(z_1)a_j(z_2)-a_j(z_2)a_i(z_1)+ c^-f(-z_2+z_1)a_0(z_2)a_0(z_1)
  -c^+f(-z_2+z_1)a_0(z_2)a_0(z_1)\\
  &\qquad=\sum_{k=0}^nc_kc^+\frac{1}{k!}\pdiff{z_2}{k}
  z_1\inv\delta\left(\frac{z_2}{z_1}\right).
\end{align*}
And the relation
\begin{align*}
  [a_i(z_1),a_j(z_2)]=c^+a_j(z_2)f(z_1-z_2)-c^-a_j(z_2)f(-z_2+z_1)
\end{align*}
is equivalent to the following relation
\begin{align*}
  &a_i(z_1)a_j(z_2)-a_j(z_2)a_i(z_1)+c^-f(-z_2+z_1)a_j(z_2)a_0(z_1)
  -c^+f(-z_2+z_1)a_j(z_2)a_0(z_1)\\
  &\qquad=\sum_{k=0}^nc_kc^+a_j(z_2)\frac{1}{k!}\pdiff{z_2}{k}
  z_1\inv\delta\left(\frac{z_2}{z_1}\right).
\end{align*}
\end{rem}

\begin{prop}\label{prop:universal-nonlocal-va}
Let $(\bar V,Y,\vac)$ be a nonlocal vertex algebra.
Suppose that there exist $\bar a_i\in \bar V$ ($i\in J$), such that
\begin{align*}
  (\bar V,Y(\bar a_i,z))\in\obj\mathcal M.
\end{align*}
Then there exists a unique nonlocal vertex algebra homomorphism $V\to \bar V$ such that
$a_i\mapsto \bar a_i$.
\end{prop}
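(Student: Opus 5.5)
Since $(\bar V,Y(\bar a_i,z))$ is an object of $\mathcal M$ and $\vac\in\bar V$ satisfies $Y(\bar a_i,z)\vac\in \bar V[[z]]$ by the vacuum property \eqref{eq:vacuum-prop}, I will apply Lemma \ref{lem:M-obj-V}(ii) with $W=\bar V$, $a_{i,W}(z)=Y(\bar a_i,z)$ and $v_+=\vac$. This yields a unique linear map $\theta:V\to\bar V$ with $\theta(\vac)=\vac$ and
\begin{align*}
  \theta(Y(a_i,z)v)=\theta(a_i(z)v)=Y(\bar a_i,z)\theta(v)\quad\te{for }i\in J,\,v\in V.
\end{align*}
In particular $\theta(a_i)=\theta(a_i(-1)\vac)=\bar a_i(-1)\vac=\bar a_i$.

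Next I check that $\theta$ is a homomorphism of nonlocal vertex algebras. Let $K=\set{u\in V}{\theta(Y(u,z)v)=Y(\theta(u),z)\theta(v)\ \forall v\in V}$. It contains $\vac$ and all $a_i$. Closure under $u_n w$ for $u\in K$: using weak associativity \eqref{eq:weak-asso} in both $V$ and $\bar V$, $Y(u_nw,z)$ is expressed through coefficients of $Y_\E(Y(u,z),z_0)Y(w,z)$, i.e. through iterates $(z_1-z)^k Y(u,z_1)Y(w,z)$ restricted to $z_1=z+z_0$. Applying $\theta$ and using $u,w\in K$ termwise gives $\theta(Y(u_nw,z)v)=Y(\theta(u)_n\theta(w),z)\theta(v)$, and $\theta(u_nw)=\theta(u)_n\theta(w)$ (take $v=\vac$ and $z\to0$). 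Since $V$ is generated by the $a_i$ (by construction $V=\mathcal A/\mathcal A_+$ is spanned by monomials $a_{i_1}(n_1)\cdots a_{i_k}(n_k)\vac$, which are iterated products $(a_{i_1})_{n_1}\cdots (a_{i_k})_{n_k}\vac$), $K=V$. This is the step needing most care: the compatibility bound $k$ must be chosen uniformly so that the substitution $z_1=z+z_0$ commutes with applying $\theta$, but since $\theta$ is linear and acts coefficientwise this works.

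Uniqueness: any nonlocal vertex algebra homomorphism $\psi$ with $a_i\mapsto\bar a_i$ satisfies $\psi(\vac)=\vac$ and $\psi(a_i(z)v)=Y(\bar a_i,z)\psi(v)$, so $\psi=\theta$ by the uniqueness in Lemma \ref{lem:M-obj-V}(ii) (alternatively, since the $a_i$ generate $V$).
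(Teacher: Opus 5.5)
Your proposal is correct and follows essentially the same route as the paper. You obtain $\theta$ from Lemma \ref{lem:M-obj-V}(ii) with $v_+=\vac$, check $\theta(a_i)=\bar a_i$, and use weak associativity to show that the subspace of $u$ with $\theta(Y(u,z)v)=Y(\theta(u),z)\theta(v)$ is a subalgebra containing the generators. You differ only in verifying $\theta(a_i)=\bar a_i$ coefficientwise instead of viewing $\bar V$ as a $V$-module, and in spelling out the uniqueness that the paper leaves implicit.
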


\begin{proof}
From Lemma \ref{lem:M-obj-V}, we get a unique linear map $\theta_{\bar V}:V\to \bar V$
such that $\theta_{\bar V}(\vac)=\vac$,
and
\begin{align*}
  \theta_{\bar V}(Y(a_i,z)v)=\theta_{\bar V}(a_i(z)v)
  =Y(\bar a_i,z)\theta_{\bar V}(v)\quad\te{for }i\in J,\,v\in V.
\end{align*}
Combining this with Proposition \ref{prop:V-M-pre}, $\bar V$ can be viewed as a $V$-module and $\theta_{\bar V}$ is a $V$-module map. Then
\begin{align}\label{eq:universal-nonlocal-va-theta-gens}
  &\theta_{\bar V}(a_i)=\lim_{z\to 0}\theta_{\bar V}(Y(a_i,z)\vac)=\lim_{z\to 0}Y(\bar a_i,z)\vac=\bar a_i\quad i\in J.
\end{align}
Let $U$ be the subspace of $V$ consisting of elements $u$ such that
\begin{align*}
  \theta_{\bar V}(Y(u,z)v)=Y(\theta_{\bar V}(u),z)\theta_{\bar V}(v)\quad\te{for any }v\in V.
\end{align*}
Since $\bar V$ being a $V$-module, we get from \eqref{eq:universal-nonlocal-va-theta-gens} that
\begin{align}\label{eq:universal-nonlocal-va-1}
  \set{a_i}{i\in J}\cup\{\vac\}\subset U.
\end{align}
For any $u,v\in U$, we get from the weak associativity \eqref{eq:weak-asso} that there exists $k\in\N$, such that
\begin{align*}
  z^kY(Y(u,z)v,y)w=&\left( (x-y)^k Y(u,x)Y(v,y)w \right)|_{x=y+z},\\
  z^kY(Y(\theta_{\bar V}(u),z)\theta_{\bar V}(v),y)\theta_{\bar V}(w)=&\left( (x-y)^k Y(\theta_{\bar V}(u),x)Y(\theta_{\bar V}(v),y)\theta_{\bar V}(w) \right)|_{x=y+z}
\end{align*}
Then
\begin{align*}
  \theta_{\bar V}(Y(Y(u,z)v,y)w)=&z^{-k}\left( (x-y)^k \theta_{\bar V}(Y(u,x)Y(v,y)w) \right)|_{x=y+z}\\
  =&z^{-k}\left( (x-y)^k Y(\theta_{\bar V}(u),x)Y(\theta_{\bar V}(v),y)\theta_{\bar V}(w) \right)|_{x=y+z},
\end{align*}
which implies that
\begin{align*}
  Y(u,z)v\in U((z)).
\end{align*}
It follows that $U$ is a subalgebra of $V$.
Combining this with \eqref{eq:universal-nonlocal-va-1}, we get that $U=V$.
Therefore, $\theta_{\bar V}$ is the nonlocal vertex algebra homomorphism as desired.
\end{proof}

%

%

\subsection{Free quantum vertex algebras}\label{subsec:free-qva}

This subsection is devoted to constructing a quantum vertex algebra from a category. More precisely, we prove the following result.

\begin{prop}\label{prop:V-M-quantumVA}
Let $\mathcal M$ be a category whose objects are vector spaces $W$ equipped with fields
\begin{align*}
  \set{a_i^0(z)}{i\in J^0}\uplus\set{a_i^\pm(z)}{i\in J^\pm= J_0^\pm\uplus J_1^\pm}\subset \E(W),
\end{align*}
satisfying the relations below
\begin{align}
  &[a_i^0(z_1),a_j^0(z_2)]
  =\vartheta_{ij}^{0,0}(z_1-z_2)-\vartheta_{ji}^{0,0}(z_2-z_1),
  \label{eq:vartheta1}\\
  &[a_i^0(z_1),a_j^\pm(z_2)]=\pm a_j^\pm(z_2)\left(\vartheta_{ij}^{0,\pm}(z_1-z_2)
    +\vartheta_{ji}^{\pm,0}(z_2-z_1)\right),\label{eq:vartheta2}\\
  &\vartheta_{ij}^{\epsilon_1,\epsilon_2}(z_1-z_2)
  a_i^{\epsilon_1}(z_1)a_j^{\epsilon_2}(z_2)
  =(-1)^{|a_i^{\epsilon_1}|\cdot |a_j^{\epsilon_2}|}
  \vartheta_{ji}^{\epsilon_2,\epsilon_1}(z_2-z_1)
  a_j^{\epsilon_2}(z_1)a_i^{\epsilon_1}(z_1).\label{eq:vartheta3}
\end{align}
where $J^a$ are countable sets,
$\vartheta_{ij}^{s,t}(z)\in\C((z))$ with
$\vartheta_{ij}^{\pm,\pm}(z),
\vartheta_{ij}^{\pm,\mp}(z)\ne 0$,
and
\begin{align*}
  |a_i^\pm|=
  \begin{cases}
    0, & \mbox{if }i\in J_0^\pm,\\
    1, & \mbox{if }i\in J_1^\pm
  \end{cases}
\end{align*}
are all independent of the choice of $W$.
Then the weak quantum vertex algebra $F(\mathcal M)$ corresponding to $\mathcal M$ (see Proposition \ref{prop:V-M-pre})
is a quantum vertex algebra with quantum Yang-Baxter operator
$S_\vartheta(z)$ determined by
\begin{align*}
  &S_\vartheta(z)(a_j^0\ot a_i^0)=a_j^0\ot a_i^0
  +\vac\ot\vac\ot (\vartheta_{ij}^{0,0}(-z)-\vartheta_{ji}^{0,0}(z)),\\
  &S_\vartheta(z)(a_j^0\ot a_i^\pm)=a_j^0\ot a_i^\pm\mp\vac\ot a_i^\pm\ot(\vartheta_{ij}^{\pm,0}(-z)
    +\vartheta_{ji}^{0,\pm}(z)),\\
  &S_\vartheta(z)(a_j^{\pm}\ot a_i^0)=a_j^\pm\ot a_i^0
  \pm a_j^\pm\ot\vac\ot( \vartheta_{ij}^{0,\pm}(-z)+\vartheta_{ji}^{\pm,0}(z) ),\\
  &S_\vartheta(z)(a_j^{\epsilon_1}\ot a_i^{\epsilon_2})
  =(-1)^{|a_i^{\epsilon_2}|\cdot |a_j^{\epsilon_1}|}a_j^{\epsilon_1}\ot a_i^{\epsilon_2}\ot
  \vartheta_{ij}^{\epsilon_2,\epsilon_1}(-z)\inv \vartheta_{ji}^{\epsilon_1,\epsilon_2}(z).
\end{align*}
\end{prop}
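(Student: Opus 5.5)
To prove the statement, I would realize $F(\mathcal M)$ as a twisted tensor product, i.e. a twistor deformation, of a simpler quantum vertex algebra whose structure is transparent, and then apply Theorem \ref{thm:qva-twistor}. The relations \eqref{eq:vartheta1}--\eqref{eq:vartheta3} split naturally: the $a_i^0$ form a Heisenberg-type (generalized free boson) system, and the $a_i^\pm$ satisfy multiplicative $\vartheta$-commutation among themselves. The $a_i^0$ act on the $a_j^\pm$ by "charge" operators.

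\emph{Step 1.} Check that $F(\mathcal M)$ exists. The defining relations are of the form \eqref{eq:V-M-pre-def-rels}: for \eqref{eq:vartheta1}--\eqref{eq:vartheta2} this is by the Remark following Proposition \ref{prop:V-M-pre}, and for \eqref{eq:vartheta3} one multiplies by $\vartheta_{ij}^{\epsilon_1,\epsilon_2}(z)^{-1}$ up to a power of $(z_1-z_2)$. Hence Proposition \ref{prop:V-M-pre} gives a weak quantum vertex algebra generated by $a_i^\epsilon$.

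\emph{Step 2.} Identify $F(\mathcal M)$ concretely. Let $B$ be the vertex algebra generated by the $a_i^0$ with relation \eqref{eq:vartheta1}; it is a (possibly degenerate) Heisenberg vertex algebra, hence a quantum vertex algebra with $S_B$ given by the first formula. Let $C$ be the nonlocal vertex algebra generated by the $a_i^\pm$ subject only to \eqref{eq:vartheta3}. Then $C$ is a quantum vertex algebra whose braiding is the diagonal operator in the fourth formula. I would check \eqref{eq:qyb-unitary-qyb} directly here. Unitarity is immediate from $\vartheta(-z)^{-1}\vartheta(z)$ times its flip. Because the operator is diagonal on generators, QYB reduces to a commutativity check, and the operator extends from generators multiplicatively via \eqref{eq:qyb-hex1}--\eqref{eq:qyb-hex2}.

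Next, define a twisting operator $S(z)$ for $(B,C)$ by $S(z)(a_j^0\otimes a_i^\pm)=a_j^0\otimes a_i^\pm\mp \vac\otimes a_i^\pm\otimes(\cdots)$, matching \eqref{eq:vartheta2}. This comes from a $B$-module structure on $C$ by derivations, or equivalently from a deforming triple with $H$ the commutative bialgebra on the $a^0$'s (Lemma \ref{lem:def-twisting-op}). Set $F'=B\otimes_S C$ via Theorem \ref{thm:twisted-tensor-prod}.

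By Lemma \ref{lem:twistor-twisting-op}, $F'=\mathfrak D_T(B\otimes C)$. So Theorem \ref{thm:qva-twistor} (or Proposition \ref{prop:qva-double-twisted} with $S_1=S$, $S_2=1$) shows that $F'$ is a quantum vertex algebra. Its braiding is $T^{21}(-z)^{-1}(S_B^{13}S_C^{24})T(z)$, and on generators this evaluates to exactly the four formulas for $S_\vartheta(z)$.

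\emph{Step 3.} Show $F'\cong F(\mathcal M)$. First, $F'$ with its generators lies in $\obj\mathcal M$, so Proposition \ref{prop:universal-nonlocal-va} gives a homomorphism $F(\mathcal M)\to F'$. It is surjective, since $F'$ is generated by the images of the generators. For injectivity, I would use the PBW-type spanning: in $F(\mathcal M)$, relations \eqref{eq:vartheta1}--\eqref{eq:vartheta2} let one order the $a^0$-modes to the left. This shows $F(\mathcal M)$ is spanned by products $b\cdot c$, and this spanning set maps onto a basis of $B\otimes C$. Once the isomorphism is established, the braiding transfers, and \eqref{eq:qyb-locallity} holds because it holds on generators and both sides are determined by the operator.

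The main obstacle is Step 2's construction of $C$ and the verification that it is nondegenerate enough for $S_C$ to be well-defined, i.e. that the $\vartheta$-relations generate a quantum vertex algebra whose braiding is genuinely given by the stated formula on all of $C\otimes C$. I would first try to build $C$ as a twistor deformation of a free (super)commutative vertex algebra on the generators, using a diagonal twistor with multiplicative factors $\vartheta$. Since $\vartheta^{\pm,\pm},\vartheta^{\pm,\mp}\ne0$, the twistor is invertible, and Theorem \ref{thm:qva-twistor} then applies.
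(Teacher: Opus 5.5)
\paragraph{The gap: a single twist cannot produce the relation for the mixed fields.}
Your Steps 2--3 rest on a twisted tensor product built from a \emph{single} twisting operator, and that cannot realize relation \eqref{eq:vartheta2} for general $\vartheta$.

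In $B\ot_S C$, the operator $S$ enters $Y_S(u\ot v,z)(u'\ot v')$ only through $S(-z)(u'\ot v)$. So by \eqref{eq:qyb-vac} you always have $Y_S(b\ot\vac,z)(\vac\ot c)=e^{z\partial}b\ot c$. In particular $(a_i^0\ot\vac)_k(\vac\ot a_j^\pm)=0$ for all $k\ge 0$.

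In $F(\mathcal M)$ the situation is different. Write $\vartheta_{ij}^{0,\pm}(z)=\sum_{k\ge0}c_kz^{-k-1}+(\text{regular part})$. Split $\iota_{z_1,z_2}\vartheta_{ij}^{0,\pm}(z_1-z_2)$ in \eqref{eq:vartheta2} into its $\iota_{z_2,z_1}$-expansion plus delta-function terms, and apply the commutator formula. This gives $(a_i^0)_ka_j^\pm=\pm c_k a_j^\pm$.

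Consequences:
\begin{itemize}
\item As soon as some $\vartheta_{ij}^{0,\pm}$ has a pole, the generators of $F'$ do not satisfy \eqref{eq:vartheta2}. Then $F'\notin\obj\mathcal M$, and the universal map $F(\mathcal M)\to F'$ in your Step 3 does not exist.
\item Twisting in the other order, $C\ot_{S'}B$, only moves the problem: it forces $\vartheta^{\pm,0}$ to be regular instead.
\item This is not an edge case. In \eqref{tau2} and \eqref{tau3}, both $\vartheta^{0,a}$ and $\vartheta^{a,0}$ generally have simple poles.
\end{itemize}

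Checking the braiding on generators does not reveal the problem. The operator $S_\vartheta(z)(a_j^0\ot a_i^\pm)$ only sees the sum $\vartheta_{ij}^{\pm,0}(-z)+\vartheta_{ji}^{0,\pm}(z)$. The category $\mathcal M$, by contrast, depends on how the poles are split between the two summands. What your $F'$ actually satisfies are the relations of $\mathcal M_{\vartheta'}$ with $\vartheta'^{0,\pm}=0$ and $\vartheta'^{\pm,0}_{ij}(z)=\vartheta^{\pm,0}_{ij}(z)+\vartheta^{0,\pm}_{ji}(-z)$. This agrees with $\mathcal M_\vartheta$ only when every $\vartheta^{0,\pm}$ is regular.

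\paragraph{How to repair it, and the paper's route.}
You need to twist in both directions. Use $T(z)=S_1^{14}(z)S_2^{32}(-z)$ from Lemma \ref{lem:double-twisted} and Proposition \ref{prop:qva-double-twisted} with $S_2\neq 1$. Here $S_1$ carries $\vartheta^{\pm,0}$ and $S_2$ carries $\vartheta^{0,\pm}$. Both can be obtained from Proposition \ref{prop:double-twisted-vba}, taking:
\begin{itemize}
\item $C$ as the comodule, via $a_i^\pm\mapsto a_i^\pm\ot\wh a_i^\pm$;
\item $B$ as the module, on which the $\wh a_i^\pm$ act by pseudo-endomorphisms.
\end{itemize}
Note that Lemma \ref{lem:def-twisting-op} makes the \emph{first} tensor factor the module. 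So your ``$H$ on the $a^0$'s acting on $C$'' produces a twisting operator for $(C,B)$, not for $(B,C)$.

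There is a second issue with $B$: it is not a Heisenberg vertex algebra in general. The first formula for $S_\vartheta$ is trivial only if $\vartheta^{0,0}_{ij}(-z)=\vartheta^{0,0}_{ji}(z)$. So $B$, like $C$, must itself be obtained by a twistor deformation. That twistor has to be constructed on the whole tensor square of the free algebra, via pseudo-derivations and pseudo-endomorphisms as in Lemma \ref{lem:pseudos}; prescribing it on generators alone does not define it.

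Once all this machinery is needed anyway, the paper's route is leaner:
\begin{itemize}
\item It identifies $F(\mathcal M_\varepsilon)$ with the free supercommutative algebra.
\item It builds a single deforming triple $(H',\rho,\bar\vartheta)$ in which both $a^0$ and $a^\pm$ are comodule and module generators. Then $\mathfrak D_T(Y)(a_i^0,z)=Y(a_i^0,z)+\bar\vartheta_i^0(z)$ and $\mathfrak D_T(Y)(a_i^\pm,z)=Y(a_i^\pm,z)\bar\vartheta_i^\pm(z)$ create poles on both sides at once.
\item It proves $F(\mathcal M_\vartheta)\cong\mathfrak D_{T_\vartheta}(F(\mathcal M_\varepsilon))$ by homomorphisms in both directions (deforming back by $\bar\vartheta^{-1}$), rather than by a PBW argument.
\item It then applies Theorem \ref{thm:qva-twistor} once.
\end{itemize}
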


Throughout this subsection, we fix the countable sets $J^0$, $J_0^\pm$ and $J_1^\pm$ and denote by $\mathcal M_\vartheta$
the category dependent on the tuple of series $\vartheta=(\vartheta_{ij}^{st}(z)\,|\,i\in J^s,\,j\in J^t\,s,t\in\{0,\pm\})$.

We note that the set $\mathfrak V$ of tuples
\begin{align*}
  &\vartheta=(\vartheta_{ij}^{st}(z)\,|\,i\in J^s,\,j\in J^t\,s,t\in\{0,\pm\})\in \prod_{s,t\in\{0,\pm\}}\C((z))^{J_s\times J_t}
\end{align*}
such that $\vartheta_{ij}^{st}(z)\ne 0$ if $s,t\in\{\pm\}$,
carries an abelian group structure with multiplication
$\vartheta\ast\bar\vartheta$ defined by
\begin{align*}
  (\vartheta\ast\bar\vartheta)_{ij}^{st}(z)=
  \begin{cases}
    \vartheta_{ij}^{st}(z)+\bar\vartheta_{ij}^{st}(z), & \mbox{if } 0\in\{s,t\},\\
    \vartheta_{ij}^{st}(z)\bar\vartheta_{ij}^{st}(z), & \mbox{otherwise},
  \end{cases}
\end{align*}
the identity $\varepsilon$ defined by
\begin{align*}
  \varepsilon_{ij}^{st}(z)=
  \begin{cases}
    0, & \mbox{if }0\in\{s,t\},\\
    1, & \mbox{otherwise},
  \end{cases}
\end{align*}
and the inverse $\vartheta\inv$ defined by
\begin{align*}
  (\vartheta\inv)_{ij}^{st}(z)=
  \begin{cases}
    -\vartheta_{ij}^{st}(z), & \mbox{if }0\in \{s,t\},\\
    \vartheta_{ij}^{st}(z)\inv, & \mbox{otherwise}.
  \end{cases}
\end{align*}

We first prove Proposition \ref{prop:V-M-quantumVA} for $\mathcal M_\varepsilon$ by realizes $F(\mathcal M_\varepsilon)$ as a supercommutative algebra with a derivation.
Then we construct a deforming triple of $F(\mathcal M_\varepsilon)$, and realizes $F(\mathcal M_\vartheta)$ as a deformation of $F(\mathcal M_\varepsilon)$.
Finally, we prove Proposition \ref{prop:V-M-quantumVA} for
general $\mathcal M_\vartheta$ by using Theorem \ref{thm:qva-twistor}.

\begin{prop}\label{prop:V-M-sp-epsilon}
Proposition \ref{prop:V-M-quantumVA} holds for $\mathcal M_\varepsilon$.
\end{prop}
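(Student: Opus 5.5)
The plan is to identify $F(\mathcal M_\varepsilon)$ explicitly with a supercommutative vertex superalgebra. For $\vartheta=\varepsilon$ we have $\vartheta^{0,0}_{ij}=\vartheta^{0,\pm}_{ij}=\vartheta^{\pm,0}_{ij}=0$ and $\vartheta^{\pm,\pm}_{ij}=\vartheta^{\pm,\mp}_{ij}=1$. The defining relations \eqref{eq:vartheta1}--\eqref{eq:vartheta3} then say exactly two things: the fields $a_i^0(z)$ commute with every generating field, and the fields $a_i^\pm(z)$ supercommute among themselves with sign $(-1)^{|a_i^{\epsilon_1}|\cdot|a_j^{\epsilon_2}|}$. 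The formulas for $S_\vartheta(z)$ reduce to the super flip, with no correction terms. So it suffices to show that $F(\mathcal M_\varepsilon)$ is a supercommutative vertex superalgebra, since the super flip is then a quantum Yang--Baxter operator.

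\textbf{Model.} Let $A$ be the free supercommutative algebra generated by symbols $\partial^{(n)}a_i^s$ for $n\in\N$ and all $i,s$, where the parity of $\partial^{(n)}a_i^\pm$ is $|a_i^\pm|$ and each $\partial^{(n)}a_i^0$ is even. Let $\partial$ be the even derivation with $\partial\,\partial^{(n)}a=(n+1)\partial^{(n+1)}a$. As in Remark \ref{rem:bialg-der}, with signs added, $Y(u,z)v=(e^{z\partial}u)v$ makes $A$ a vertex superalgebra. All its vertex operators supercommute exactly: $Y(u,z_1)Y(v,z_2)=\pm Y(v,z_2)Y(u,z_1)$. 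In particular $(A,Y(a_i^s,z))\in\obj\mathcal M_\varepsilon$, so Proposition \ref{prop:universal-nonlocal-va} gives a nonlocal vertex algebra homomorphism $\theta\colon F(\mathcal M_\varepsilon)\to A$ with $a_i^s\mapsto a_i^s$.

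\textbf{Surjectivity and injectivity.} $\theta$ is surjective because $A$ is generated by the $a_i^s$ under $\partial$ and the $(-1)$-products. For injectivity, recall from the proof of Lemma \ref{lem:M-obj-V} that $V=F(\mathcal M_\varepsilon)$ is spanned by monomials $a_{i_1}^{s_1}(n_1)\cdots a_{i_k}^{s_k}(n_k)\vac$. The modes supercommute on $V$, and the nonnegative modes kill $\vac$ by construction. Hence every monomial containing a nonnegative mode vanishes, and $V$ is spanned by ordered monomials in negative modes, with odd modes appearing at most once each. These ordered monomials map to $\prod \partial^{(m_t)}a_{i_t}^{s_t}$ up to nonzero scalars, and such products are linearly independent in $A$. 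Therefore $\theta$ is an isomorphism.

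\textbf{Quantum Yang--Baxter operator.} Through $\theta$, all vertex operators of $F(\mathcal M_\varepsilon)$ supercommute. The super flip $S(z)(u\ot v)=(-1)^{|u||v|}u\ot v$ satisfies \eqref{eq:qyb-vac}--\eqref{eq:qyb-hex2}, because $Y$ is compatible with the sign rule. It also satisfies \eqref{eq:qyb-locallity} with $k=0$, and unitarity and the Yang--Baxter equation \eqref{eq:qyb-unitary-qyb} hold trivially. On generators it agrees with the stated $S_\varepsilon(z)$, and a twisting operator is determined by its values on generators via \eqref{eq:qyb-hex1}--\eqref{eq:qyb-hex2}.

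I expect the main obstacle to be bookkeeping rather than conceptual. One must state carefully that the constructions of Subsection \ref{subsec:gen-construct} carry over to the super setting with the sign $(-1)^{|a_i||a_j|}$, and verify the spanning argument, in particular that the nonnegative modes act as zero on all of $V$ and not only on $\vac$.
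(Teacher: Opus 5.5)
Your proposal is correct and follows essentially the same route as the paper. Both arguments realize $F(\mathcal M_\varepsilon)$ as a free supercommutative algebra with derivation via the universal property (Proposition \ref{prop:universal-nonlocal-va}), obtain surjectivity from generation, obtain injectivity from the supercommutativity of the generating fields (so that they lie in $\End(F(\mathcal M_\varepsilon))[[z]]$), and then transport the super flip as the quantum Yang--Baxter operator. The only differences are cosmetic. The paper phrases injectivity by letting the free algebra act on $F(\mathcal M_\varepsilon)$ through $\partial^{(n)}a_i^s\mapsto a_i^s(-n-1)$ and checking $u=f(u)\vac$, which is equivalent to your spanning and linear-independence argument. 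The super bookkeeping you flag as the main obstacle is automatic, since the signs $(-1)^{|a_i||a_j|}$ simply enter as coefficients in the relations of Proposition \ref{prop:V-M-pre}.
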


\begin{proof}
Let $F$ be a free supercommutative algebra generated by the set of even generators
\begin{align*}
  \set{\partial^n \bar a_i^0,\,\partial^n \bar a_j^+,\,\partial^n \bar a_k^-}{i\in J^0,\,j\in J_0^+,\,k\in J_0^-,\,n\in-\Z_+}
\end{align*}
and the set of odd generators
\begin{align*}
  \set{\partial^n \bar a_i^+,\,\partial^n \bar a_i^-}{i\in J_1^+,\,j\in J_1^-,\,n\in-\Z_+}.
\end{align*}
For a homogeneous element $v\in F$, we denote by $|v|$ the parity of $v$.
There is a derivation $\partial$ on $F$ defined by
\begin{align*}
  \partial (\partial^n \bar a_i^s)=\partial^{n+1} \bar a_i^s\quad \te{for }n\in-\Z_+,\,i\in J^s,\,s\in\{0,\pm\}.
\end{align*}
Then $(F,\partial)$ carries a quantum vertex algebra structure, with vacuum vector $1\ot 1$, vertex operator map $Y(a,z)b=(e^{z\partial }a)b$ and quantum Yang-Baxter operator defined by
\begin{align*}
  &S(z)(v\ot u)=(-1)^{|v|\cdot|u|}v\ot u\quad\te{for }u,v\in F\,\,\te{homogeneous}.
\end{align*}
Note that $F$ equipped with
\begin{align*}
  \bar a_i^s(z)=\sum_{n\ge 0}\frac{z^n}{n!}L_{\partial^n \bar a_i^s}\in \E(F)
\end{align*}
is an object in $\mathcal M_\varepsilon$, where
$L_u$ denotes the left multiplication of $u$.
By using Proposition \ref{prop:universal-nonlocal-va} we get a unique nonlocal vertex algebra homomorphism $f:F(\mathcal M_\varepsilon)\to F$ such that
\begin{align*}
  f(a_i^s)=\bar a_i^s\quad\te{for }i\in J^s,\,s\in\{0,\pm\}.
\end{align*}
Since $F$ is generated by $\set{\bar a_i^s}{i\in J^s,\,s\in\{0,\pm\}}$ as a nonlocal vertex algebra,
$f$ must be surjective.

On the other hand, from Proposition \ref{prop:V-M-pre}, we have that
\begin{align*}
  (F(\mathcal M_\varepsilon),Y(a_i^s,z))\in \obj\mathcal M_\varepsilon.
\end{align*}
Then $Y(a_i^s,z)$ ($i\in J^s$, $s\in\{0,\pm\}$)
are supercommutative.
Combining this with the vacuum property \eqref{eq:vacuum-prop}
and the fact that $F(\mathcal M_\varepsilon)$ is generated by
$\set{a_i^s}{i\in J^s,\,s\in\{0,\pm\}}$, we have that
\begin{align*}
  Y(a_i^s,z)\in \End(F(\mathcal M_\varepsilon))[[z]].
\end{align*}
It follows that
\begin{align*}
  \set{a_i^s(-n-1)}{i\in J^s,\,s\in\{0,\pm\},\,n\in\N}
  \subset \End(F(\mathcal M_\varepsilon)
\end{align*}
generates a supercommutative subalgebra.
Since $F$ is free, we get an algebra homomorphism $F\to \End(F(\mathcal M_\varepsilon))$, and hence a left
$F$-module structure on $F(\mathcal M_\varepsilon)$ defined by
\begin{align*}
  (\partial^n \bar a_i^s)v=n!a_i^s(-n-1)v.
\end{align*}
Moreover, $F(\mathcal M_\varepsilon)$ is generated by $\vac$.
Let $u\in \ker f$.
Write $u$ as a linear combination of elements of the following
form
\begin{align}\label{eq:V-M-sp-epsilon-temp}
  a_{i_1}^{s_1}(-n_1-1)\cdots a_{i_k}^{s_k}(-n_k-1)\vac.
\end{align}
Define $\bar u\in F$ by replacing \eqref{eq:V-M-sp-epsilon-temp}
with
\begin{align*}
  \frac{1}{n_1!}\partial^{n_1}a_{i_1}^{s_1}\cdots \frac{1}{n_k!}\partial^{n_k} a_{i_k}^{s_k}.
\end{align*}
It is obvious that $u=\bar u\vac$.
Since $f$ is a nonlocal vertex algebra homomorphism, we have that
$\bar u=f(u)$.
Consequently, $u=f(u)\vac=0$ as $u\in \ker f$.
Therefore, $f$ is also injective,
and hence a nonlocal vertex algebra isomorphism.
The quantum vertex algebra structure on $F$ therefore induces the required quantum Yang-Baxter operator on $F(\mathcal M_\varepsilon)$.
\end{proof}

Next, we construct the needed deforming triple.
The following result is a consequence of Proposition \ref{prop:universal-nonlocal-va}.

\begin{lem}\label{lem:map-from-V}
Let $U$ be a commutative vertex algebra, and let $\wh a_i^a\in U$, $i\in J^a$, $a\in\{0,\pm\}$.
Then there exists a unique nonlocal vertex algebra homomorphism $\rho:F(\mathcal M_\vartheta)\to F(\mathcal M_\vartheta)\ot U$
such that
\begin{align*}
  &\rho(a_i^0)=a_i^0\ot \vac+\vac\ot \wh a_i^0\quad\te{for }i\in J^0,\\
  &\rho(a_i^a)=a_i^a\ot \wh a_i^a\quad\te{for }i\in J^a,\, a\in\{\pm\}.
\end{align*}
\end{lem}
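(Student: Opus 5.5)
The plan is to apply Proposition \ref{prop:universal-nonlocal-va} with $\bar V=F(\mathcal M_\vartheta)\ot U$, the tensor product nonlocal vertex algebra, taking
\begin{align*}
  &\bar a_i^0=a_i^0\ot\vac+\vac\ot\wh a_i^0\quad (i\in J^0),\\
  &\bar a_i^\pm=a_i^\pm\ot\wh a_i^\pm\quad (i\in J^\pm).
\end{align*}
For $\bar a_i^\pm$ with $i\in J_1^\pm$, the tensor product must be read with $U$ even, so that $\bar a_i^\pm$ keeps the parity of $a_i^\pm$. Uniqueness and existence of $\rho$ then follow at once, provided we check that $(\bar V, Y(\bar a_i^s,z))\in\obj\mathcal M_\vartheta$.

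Write $Y(\bar a_i^0,z)=Y(a_i^0,z)\ot 1+1\ot Y(\wh a_i^0,z)$ and $Y(\bar a_i^\pm,z)=Y(a_i^\pm,z)\ot Y(\wh a_i^\pm,z)$. These are fields on $\bar V$. Since $U$ is a commutative vertex algebra, its vertex operators commute with each other and have no singular part; in particular $Y(u,z_1)Y(v,z_2)=Y(v,z_2)Y(u,z_1)$ on $U$ and both sides lie in $\E^{(2)}(U)$.

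\eqref{eq:vartheta1} holds because the $U$-parts commute, so the commutator reduces to $[Y(a_i^0,z_1),Y(a_j^0,z_2)]\ot 1$, which is given by \eqref{eq:vartheta1} on $F(\mathcal M_\vartheta)$.

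For \eqref{eq:vartheta2}, the commutator splits into $[Y(a_i^0,z_1),Y(a_j^\pm,z_2)]\ot Y(\wh a_j^\pm,z_2)$ plus $Y(a_j^\pm,z_2)\ot[Y(\wh a_i^0,z_1),Y(\wh a_j^\pm,z_2)]$. The second term vanishes, and the first is $\pm Y(\bar a_j^\pm,z_2)$ times the required scalar.

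For \eqref{eq:vartheta3} with $\epsilon_1,\epsilon_2\in\{\pm\}$, multiply the relation on the first factor by the identity $Y(\wh a_i^{\epsilon_1},z_1)Y(\wh a_j^{\epsilon_2},z_2)=Y(\wh a_j^{\epsilon_2},z_2)Y(\wh a_i^{\epsilon_1},z_1)$ on the second factor. Both sides of the relation on $F(\mathcal M_\vartheta)$, once multiplied by a suitable power of $(z_1-z_2)$, are equal elements of $\E^{(2)}$. Tensoring with the $U$-identity, which holds in $\E^{(2)}(U)$, is therefore legitimate, and the sign $(-1)^{|a_i||a_j|}$ is unchanged because $U$ is even. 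If $0$ occurs among $\epsilon_1,\epsilon_2$, relation \eqref{eq:vartheta3} is understood only for the $\pm$ fields, which is the case just treated.

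The main obstacle is the formal-series bookkeeping in \eqref{eq:vartheta3}. The relation is an equality of expansions in different domains ($\iota_{z_1,z_2}$ versus $\iota_{z_2,z_1}$), so tensoring it with the $U$-identity requires that identity to hold in both expansions. I would settle this by noting that commutativity of $U$ gives $Y(u,z_1)Y(v,z_2)w\in U[[z_1,z_2]]$-type regularity, hence equality of the two orderings as elements of $\E^{(2)}(U)$. The remaining point is the parity convention for the tensor product with odd generators.
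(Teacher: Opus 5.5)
Your proposal is correct and follows the paper's proof. Like the paper, you apply Proposition \ref{prop:universal-nonlocal-va} to the tensor product $F(\mathcal M_\vartheta)\ot U$ and verify \eqref{eq:vartheta1}--\eqref{eq:vartheta3} using the commutativity of $U$; you simply write out the verification that the paper leaves implicit.

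One small point: \eqref{eq:vartheta3} is already an exact identity. A series lying in both $W((z_1))((z_2))$ and $W((z_2))((z_1))$ lies in $W((z_1,z_2))$, so both sides are automatically in $\E^{(2)}$ and can be tensored directly with the $U$-identity. You therefore do not need to multiply by powers of $(z_1-z_2)$, which you could not cancel afterwards across the two expansions.
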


\begin{proof}
Denote by $Y_\ot$ the vertex operator map of the tensor product nonlocal vertex algebra $F(\mathcal M)\ot U$.
Since $U$ is commutative, the relations \eqref{eq:vartheta1}-\eqref{eq:vartheta3} hold with
\begin{align*}
  a_i^s(z)=Y_\ot(\rho(a_i^s),z)\quad\te{for }i\in J^s,\,s\in \{0,\pm\}.
\end{align*}
By using Proposition \ref{prop:universal-nonlocal-va}, we get the
nonlocal vertex algebra homomorphism as desired.
\end{proof}

Recall from \cite{Li-smash} that a \emph{pseudo-endomorphism} of a nonlocal vertex algebra $V$
is a linear map $A(z):V\to V\ot\C((z))$, such that
\begin{align*}
  A(z)\vac=\vac\ot 1,\quad
  A(z_1)Y(u,z_2)v=Y(A(z_1-z_2)u,z_2)v\quad\te{for } u,v\in V.
\end{align*}
And a \emph{pseudo-derivation} of $V$ is a linear map $D(z):V\to V\ot\C((z))$, such that
\begin{align*}
  [D(z_1),Y(u,z_2)]=Y(D(z_1-z_2)u,z_2)\quad\te{for } u\in V.
\end{align*}
The following result was given in \cite[Proposition 2,11]{Li-pseudo}.
\begin{prop}\label{prop:pseudo}
Let $V$ be a nonlocal vertex algebra,
and view $\C((z))$ as a vertex algebra with the vertex operator map
$$Y(f(z),z_0)g(z)=f(z-z_0)g(z).$$
Suppose that $A(z)$ is a nonlocal vertex algebra homomorphism from $V$ to the tensor product nonlocal vertex algebra $V\ot \C((z))$.
Then $A(z)$ is a pseudo-endomorphism of $V$.
Moreover, let $$D(z):V\to V\ot \C((z))$$ be a liner map.
We view $V\ot(\C[\delta]/\delta^2\C[\delta])$ as a nonlocal vertex algebra.
If $1+\delta D(z)$ is a $\C[\delta]$-linear pseudo-endomorphism of $V\ot(\C[\delta]/\delta^2\C[\delta])$,
then $D(z)$ is a pseudo-derivation of $V$.
\end{prop}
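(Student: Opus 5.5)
The plan is to unwind both claims directly from the definitions, treating the expansion conventions with care. For the first claim, I would begin by writing out the tensor product nonlocal vertex algebra $V\ot\C((z))$. With the vertex operator map $Y(f(z),z_0)g(z)=f(z-z_0)g(z)$ on $\C((z))$, it reads
\begin{align*}
  Y_\ot(u\ot f(z),z_0)(v\ot g(z))=Y(u,z_0)v\ot f(z-z_0)g(z),
\end{align*}
where $f(z-z_0)$ is expanded in nonnegative powers of $z_0$. Its vacuum is $\vac\ot 1$.

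Since $A(z)$ is a homomorphism, it preserves the vacuum, so $A(z)\vac=\vac\ot 1$; this is the first axiom. For the second axiom I would write $A(z)u=\sum_i u_i\ot f_i(z)$ and $A(z)v=\sum_j v_j\ot g_j(z)$. The homomorphism property $A(z)Y(u,z_0)v=Y_\ot(A(z)u,z_0)A(z)v$ then becomes
\begin{align*}
  A(z)Y(u,z_0)v=\sum_{i,j}Y(u_i,z_0)v_j\ot f_i(z-z_0)g_j(z)=Y\big(A(z-z_0)u,z_0\big)A(z)v.
\end{align*}
Renaming $z\mapsto z_1$ and $z_0\mapsto z_2$ gives the pseudo-endomorphism identity. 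The right-hand side should carry $A(z_1)v$, which is Li's definition; I read the displayed definition as having $A(z_1)$ acting on $v$. The only point needing care is that both sides lie in the same completion. The left side lies in $V\ot\C((z_1))((z_2))$, and on the right the substitution $A(z_1-z_2)u$ is understood via $\iota_{z_1,z_2}$. This is exactly the expansion built into the vertex operator of $\C((z))$, so the two conventions agree.

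For the second claim, I would extend $Y$ $\C[\delta]$-linearly to $V\ot(\C[\delta]/\delta^2\C[\delta])$. Applying the pseudo-endomorphism identity to $1+\delta D(z)$ gives
\begin{align*}
  (1+\delta D(z_1))Y(u,z_2)v=Y\big((1+\delta D(z_1-z_2))u,z_2\big)(1+\delta D(z_1))v.
\end{align*}
Expanding and using $\delta^2=0$, the $\delta^0$ terms cancel. Comparing $\delta^1$ coefficients yields
\begin{align*}
  D(z_1)Y(u,z_2)v=Y(D(z_1-z_2)u,z_2)v+Y(u,z_2)D(z_1)v,
\end{align*}
which is $[D(z_1),Y(u,z_2)]=Y(D(z_1-z_2)u,z_2)$. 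The vacuum condition for $1+\delta D$ additionally gives $D(z)\vac=0$, although the stated definition of pseudo-derivation does not require it.

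I expect the main obstacle to be bookkeeping rather than substance: confirming that the $\iota_{z_1,z_2}$-expansions produced by the vertex algebra $\C((z))$ match those implicit in the definition of pseudo-endomorphism, and that all products are well defined in $V((z_1))((z_2))$. Everything else is a direct coefficient comparison.
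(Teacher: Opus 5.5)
Your proof is correct. The paper gives no argument of its own here; it simply cites \cite[Proposition 2.11]{Li-pseudo}. Your direct verification is the routine one behind that citation: expand the homomorphism identity in $V\ot\C((z))$ using $Y(f,z_0)g=f(z-z_0)g$, then compare the coefficients of $\delta$ in $V\ot(\C[\delta]/\delta^2\C[\delta])$. You are also right that the paper's displayed definition of pseudo-endomorphism is missing a factor and should read $A(z_1)Y(u,z_2)v=Y(A(z_1-z_2)u,z_2)A(z_1)v$, which is Li's definition.
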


\begin{lem}\label{lem:pseudos}
Given $\vartheta,\bar\vartheta\in \mathfrak V$,
there exist a pseudo-derivation $\bar\vartheta_i^0(z)$ and pseudo-endomorphisms $\bar\vartheta_i^\pm(z)$
on $F(\mathcal M_\vartheta)$ defined by
\begin{align}
  &\bar\vartheta_i^0(z)a_j^0=\vac\ot \bar\vartheta_{ij}^{0,0}(z),\quad \bar\vartheta_i^0(z)a_j^{\pm}=\pm a_j^{\pm}\ot \bar\vartheta_{ij}^{0,\pm}(z),\label{eq:sigma-0}\\
  &\bar\vartheta_i^{\pm}(z)a_j^0=a_j^0\ot 1\mp\vac\ot \bar\vartheta_{ij}^{\pm,0}(z),\quad
  \bar\vartheta_i^{\pm}(z)a_j^{\epsilon}=a_j^{\epsilon}\ot \bar\vartheta_{ij}^{\pm,\epsilon}(z)\inv.\label{eq:sigma-a}
\end{align}
\end{lem}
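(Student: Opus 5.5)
The plan is to realize each map via Proposition \ref{prop:pseudo}, building the corresponding homomorphism out of $F(\mathcal M_\vartheta)$ with the universal property Proposition \ref{prop:universal-nonlocal-va}.

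\textbf{Pseudo-endomorphisms $\bar\vartheta_i^\pm(z)$.} Fix $i\in J^\pm$ and view $\C((z))$ as a (commutative) vertex algebra with $Y(f(z),z_0)g(z)=f(z-z_0)g(z)$. On the nonlocal vertex algebra $F(\mathcal M_\vartheta)\ot\C((z))$ consider the elements
\begin{align*}
  b_j^0=a_j^0\ot 1\mp\vac\ot\bar\vartheta_{ij}^{\pm,0}(z),\qquad
  b_j^\epsilon=a_j^\epsilon\ot\bar\vartheta_{ij}^{\pm,\epsilon}(z)\inv .
\end{align*}
I will check that the fields $Y_\ot(b_j^s,z_0)$ satisfy \eqref{eq:vartheta1}--\eqref{eq:vartheta3}.
\begin{itemize}
\item The relation \eqref{eq:vartheta3} involves only the $b^\pm$. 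These are $a^\pm$ multiplied by scalar series in the commuting variables. The same prefactors appear on both sides, with the arguments shifted as $z-z_1$ versus $z-z_2$, and the sign factor is unchanged. So \eqref{eq:vartheta3} is preserved.
\item In \eqref{eq:vartheta1} the added term $\vac\ot\bar\vartheta$ is central. Its fields $\bar\vartheta_{ij}^{\pm,0}(z-z_0)$ are scalars, so the commutator is unchanged.
\item In \eqref{eq:vartheta2} the extra central term again contributes nothing to the commutator. The scalar factor on $b^\pm$ commutes with everything, so the right-hand side is reproduced exactly.
\end{itemize}
Hence $(F(\mathcal M_\vartheta)\ot\C((z)),Y_\ot(b_j^s,\cdot))\in\obj\mathcal M_\vartheta$. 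Proposition \ref{prop:universal-nonlocal-va} then gives a nonlocal vertex algebra homomorphism $F(\mathcal M_\vartheta)\to F(\mathcal M_\vartheta)\ot\C((z))$ with $a_j^s\mapsto b_j^s$. By Proposition \ref{prop:pseudo} this homomorphism is a pseudo-endomorphism, and it satisfies \eqref{eq:sigma-a}. The condition $A(z)\vac=\vac\ot1$ holds automatically for a homomorphism.

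\textbf{Pseudo-derivation $\bar\vartheta_i^0(z)$.} Here I work over $\C[\delta]/\delta^2$. Set $K=\C((z))\ot\C[\delta]/\delta^2$, a commutative vertex algebra, and use the elements
\begin{align*}
  a_j^0\ot1+\delta\,\vac\ot\bar\vartheta_{ij}^{0,0}(z),\qquad
  a_j^\pm\ot\bigl(1\pm\delta\,\bar\vartheta_{ij}^{0,\pm}(z)\bigr).
\end{align*}
The relation checks are the same as in the previous case. The only changes are that $1\pm\delta\bar\vartheta$ is invertible and that $\delta^2=0$. This produces a $\C[\delta]$-linear homomorphism $F(\mathcal M_\vartheta)\ot\C[\delta]/\delta^2\to F(\mathcal M_\vartheta)\ot K$, extended $\C[\delta]$-linearly. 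By construction it has the form $1+\delta D(z)$.

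To apply the second half of Proposition \ref{prop:pseudo}, I need to know that $1+\delta D(z)$ is a pseudo-endomorphism of $V\ot\C[\delta]/\delta^2$. This follows from the first half of Proposition \ref{prop:pseudo} applied with base $\C[\delta]/\delta^2$. Proposition \ref{prop:pseudo} then says that $D(z)=\bar\vartheta_i^0(z)$ is a pseudo-derivation, and \eqref{eq:sigma-0} can be read off from the images of the generators.

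\textbf{Expected main obstacle.} The delicate step is the verification of \eqref{eq:vartheta3} and \eqref{eq:vartheta2} for the twisted generators. There one must track that $Y_\ot(1\ot f(z),z_0)$ acts as multiplication by $f(z-z_0)$, which is a scalar in the formal variable $z$, so every factor commutes past every field. Once this is set up carefully, the relations reduce to the original ones term by term. A secondary point is that Proposition \ref{prop:universal-nonlocal-va} must be applicable over the base ring $\C((z))$, or over $\C[\delta]/\delta^2$. I will handle this by regarding $F(\mathcal M_\vartheta)\ot K$ simply as a complex nonlocal vertex algebra.
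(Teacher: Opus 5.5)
Your proposal is correct and is essentially the paper's proof. The paper likewise constructs the homomorphism $F(\mathcal M_\vartheta)\to F(\mathcal M_\vartheta)\ot\C((z))$, and over $\C[\delta]/\delta^2$ the map $1+\delta\bar\vartheta_i^0(z)$, and then invokes Proposition \ref{prop:pseudo}. The only difference is that you verify \eqref{eq:vartheta1}--\eqref{eq:vartheta3} directly and apply Proposition \ref{prop:universal-nonlocal-va}, whereas the paper cites Lemma \ref{lem:map-from-V} (with $U=\C((z))$, resp.\ $U=\C((z))\ot\C[\delta]/\delta^2$), which packages exactly that verification.

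One step deserves to be stated explicitly: the $\delta^0$-component of your second homomorphism is $v\mapsto v\ot 1$. This follows from the uniqueness in Proposition \ref{prop:universal-nonlocal-va}, and it is what justifies writing the map as $1+\delta D(z)$.
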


\begin{proof}
View $\C((z))$ as a vertex algebra with vertex operator map
\begin{align*}
  Y(f(z),z_0)g(z)=f(z-z_0)g(z),
\end{align*}
and denote by $Y_\ot$ the tensor product nonlocal vertex algebra $F(\mathcal M_\vartheta)\ot \C((z))$.
Note that $\C((z))$ is a commutative vertex algebra.
By using Lemma \ref{lem:map-from-V}, we get nonlocal vertex algebra a homomorphism $\bar\vartheta_i^\pm(z):F(\mathcal M_\vartheta)\to F(\mathcal M_\vartheta)\ot \C((z))$
determined by \eqref{eq:sigma-a}.
Set $\C_\delta=\C[\delta]/\delta^2\C[\delta]$.
View $V\ot\C_\delta$ as a nonlocal vertex algebra over $\C_\delta$ for any nonlocal vertex algebra $V$ over $\C$.
By using Lemma \ref{lem:map-from-V} again, we get another
nonlocal vertex algebra homomorphism
\begin{align*}
  (1+\delta\bar\vartheta_i^0(z)):F(\mathcal M_\vartheta)\ot \C_\delta
  \to \big(F(\mathcal M_\vartheta) \ot \C_\delta\big)
  \ot_{\C_\delta}
  \big( \C((z))\ot \C_\delta\big)
\end{align*}
determined by
\begin{align*}
  &(1+\delta\bar\vartheta_i^0(z))a_j^0=a_j^0\ot 1+\vac\ot \delta \bar\vartheta_{ij}^{0,0}(z),\quad
  (1+\delta\bar\vartheta_i^0(z))a_j^\pm=a_j^\pm\ot (1
  \pm \delta \bar\vartheta_{ij}^{0,\pm}(z)).
\end{align*}
It follows from Proposition \ref{prop:pseudo} that
$\bar\vartheta_i^0(z)$ and $\bar\vartheta_i^\pm(z)$ are the
pseudo-derivations and pseudo-endomorphisms as desired.
\end{proof}

Let $H'$ be the symmetric algebra of the following vector space:
\begin{align*}
  \bigoplus_{b\in\{0,\pm\}}\bigoplus_{i\in J^a}\bigoplus_{n\in\N}\C\partial^n\wh a_i^b.
\end{align*}
Then $H'$ is a commutative and cocommutative bialgebra with $\Delta$ and $\varepsilon$ uniquely determined by
($i\in J$, $n\in\N$):
\begin{align*}
  &\Delta(\partial^n \wh a_i^0)=\partial^n \wh a_i^0\ot 1+1\ot \partial^n \wh a_i^0,\quad \varepsilon(\partial^n \wh a_i^0)=0,\\
  &\Delta(\partial^n \wh a_i^{\pm})=\sum_{k=0}^n\binom{n}{k} \partial^k \wh a_i^{\pm}\ot \partial^{n-k} \wh a_i^{\pm},\quad
  \varepsilon(\partial^n \wh a_i^{\pm})=\delta_{n,0}.
\end{align*}
Let $\partial$ be the derivation on $H'$ such that
\begin{align*}
  \partial (\partial^n \wh a_i^b)=\partial^{n+1}\wh a_i^b\quad\te{for }n\in\N,\,
  i\in J^a,\,b\in\{0,\pm\}.
\end{align*}
It is straightforward to see that $\Delta\circ\partial=(\partial\ot 1+1\ot\partial)\circ\Delta$ and $\varepsilon\circ \partial=0$.
From Remark \ref{rem:bialg-der} we have that $(H',\partial,\Delta,\varepsilon)$ carries a vertex bialgebra structure.
The following result defines an $H'$-comodule nonlocal vertex algebra structure on $F(\mathcal M_\vartheta)$.

\begin{lem}\label{lem:comod-nonlocalVA}
For each $\vartheta\in \mathfrak V$,
there is a unique nonlocal vertex algebra map $\rho:F(\mathcal M_\vartheta)\to F(\mathcal M_\vartheta) \ot H'$, such that
\begin{align}\label{eq:comod-nonlocalVA-rho}
  \rho(a_i^0)=a_i^0\ot 1+\vac\ot \wh a_i^0,\quad
  \rho(a_i^{\pm})=a_i^{\pm}\ot \wh a_i^{\pm}.
\end{align}
Moreover, $(F(\mathcal M_\vartheta),\rho)$ is an $H'$-comodule nonlocal vertex algebra.
\end{lem}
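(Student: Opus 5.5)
Existence and uniqueness of $\rho$ come from Lemma \ref{lem:map-from-V}, applied with $U=H'$. Here $H'$ is commutative and carries the derivation $\partial$, so by Remark \ref{rem:bialg-der} it is a commutative vertex algebra with $Y(a,z)b=(e^{z\partial}a)b$. The lemma then gives a unique nonlocal vertex algebra homomorphism $\rho:F(\mathcal M_\vartheta)\to F(\mathcal M_\vartheta)\ot H'$ satisfying \eqref{eq:comod-nonlocalVA-rho}. The odd generators $a_i^\pm$ with $i\in J_1^\pm$ need a remark. The sign in \eqref{eq:vartheta3} is unaffected, because the second tensor factors $\wh a_i^\pm$ are even and commute. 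Consequently the fields $Y_\ot(\rho(a_i^s),z)$ satisfy \eqref{eq:vartheta1}--\eqref{eq:vartheta3} with the same $\vartheta$. In particular, for \eqref{eq:vartheta1} and \eqref{eq:vartheta2} the extra terms $Y(\wh a_i^0,z)$ commute with everything and contribute nothing to the brackets.

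For the comodule axioms, I would compare two nonlocal vertex algebra homomorphisms $F(\mathcal M_\vartheta)\to F(\mathcal M_\vartheta)\ot H'\ot H'$, namely $(\rho\ot 1)\rho$ and $\sigma^{23}(1\ot\Delta)\rho$. Both are homomorphisms: $\Delta$ is a vertex algebra homomorphism because $H'$ is a vertex bialgebra, $\sigma^{23}$ is an isomorphism of tensor products, and tensor products and compositions of homomorphisms are again homomorphisms. By the uniqueness in Proposition \ref{prop:universal-nonlocal-va}, it suffices to check that the two maps agree on the generators $a_i^s$.

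For $a_i^0$, both maps give $a_i^0\ot1\ot1+\vac\ot\wh a_i^0\ot 1+\vac\ot1\ot\wh a_i^0$, since $\rho(\vac)=\vac\ot1$ and $\wh a_i^0$ is primitive. For $a_i^\pm$, both give $a_i^\pm\ot\wh a_i^\pm\ot\wh a_i^\pm$, since $\Delta(\wh a_i^\pm)=\wh a_i^\pm\ot\wh a_i^\pm$, which is the $n=0$ case of the coproduct formula. The counit axiom is handled the same way. The map $(1\ot\varepsilon)\rho$ is a homomorphism $F(\mathcal M_\vartheta)\to F(\mathcal M_\vartheta)$, and it sends $a_i^0\mapsto a_i^0$ (because $\varepsilon(\wh a_i^0)=0$) and $a_i^\pm\mapsto a_i^\pm$ (because $\varepsilon(\wh a_i^\pm)=1$). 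Uniqueness then forces it to be the identity.

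The main point to verify carefully is that $\Delta$ and $\varepsilon$ really are vertex algebra homomorphisms. This means $\Delta\circ e^{z\partial}=e^{z(\partial\ot1+1\ot\partial)}\circ\Delta$, which follows from $\Delta\partial=(\partial\ot1+1\ot\partial)\Delta$ and $\varepsilon\partial=0$, both already noted in the text. The uniqueness statement of Proposition \ref{prop:universal-nonlocal-va} also has to be applied with a target that is merely a nonlocal vertex algebra, $F(\mathcal M_\vartheta)\ot H'\ot H'$. For this, I would observe that a homomorphism out of $F(\mathcal M_\vartheta)$ is determined by its values on the generators, since $F(\mathcal M_\vartheta)$ is generated by the $a_i^s$.
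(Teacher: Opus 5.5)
Your proposal is correct and follows essentially the same route as the paper: $\rho$ comes from Lemma \ref{lem:map-from-V} with $U=H'$, and coassociativity follows by checking that $(\rho\ot 1)\rho$ and $(1\ot\Delta)\rho$ agree on the generators $a_i^s$ and invoking uniqueness. The differences are cosmetic. The paper routes the comparison through an auxiliary homomorphism $\rho^{(2)}$, obtained from Lemma \ref{lem:map-from-V} with $U=H'\ot H'$, whereas you compare the two composites directly using that $F(\mathcal M_\vartheta)$ is generated by the $a_i^s$; you also verify the counit axiom, which the paper leaves implicit.
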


\begin{proof}
By using Lemma \ref{lem:map-from-V}, we get unique nonlocal vertex algebra homomorphisms $\rho:F(\mathcal M_\vartheta)\to F(\mathcal M_\vartheta)\ot H'$ determined by \eqref{eq:comod-nonlocalVA-rho},
and $\rho^{(2)}:F(\mathcal M_\vartheta)\to F(\mathcal M_\vartheta)\ot H'\ot H'$ determined by
\begin{align*}
  \rho^{(2)}(a_i^0)=a_i^0\ot 1\ot 1+\vac\ot \wh a_i^0\ot 1+ \vac\ot 1\ot \wh a_i^0,\quad
  \rho^{(2)}(a_i^\pm)=a_i^\pm\ot \wh a_i^\pm\ot \wh a_i^\pm.
\end{align*}
Note that $(1\ot \Delta)\circ\rho$, $(\rho\ot 1)\circ \rho$ are both nonlocal vertex algebra homomorphisms and
\begin{align*}
  &(1\ot \Delta)\circ\rho(a_i^0)=a_i^0\ot 1\ot 1+\vac\ot \wh a_i^0\ot 1+\vac\ot 1\ot \wh a_i^0
  =\rho^{(2)}(a_i^0)=(\rho\ot 1)\circ\rho(a_i^0),\\
  &(1\ot \Delta)\circ\rho(a_i^\pm)=a_i^\pm\ot \wh a_i^\pm\ot \wh a_i^\pm=\rho^{(2)}(a_i^\pm)=(\rho\ot 1)\circ\rho(a_i^\pm).
\end{align*}
Then the moreover statement follows immediate from the uniqueness.
\end{proof}

\begin{lem}\label{lem:deform-tri-nonlocalVA}
There is an $H'$-module nonlocal vertex algebra structure $\bar\vartheta$ on $F(\mathcal M_\vartheta)$ defined by
\begin{align*}
  \bar\vartheta(\wh a_i^a,z)=\bar\vartheta_i^a(z)\quad\te{for }i\in J^a,\,a\in\{0,\pm\}.
\end{align*}
Moreover, $(H',\rho,\bar\vartheta)$ becomes a deforming triple of $F(\mathcal M_\vartheta)$.
\end{lem}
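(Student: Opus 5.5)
We will construct $\bar\vartheta$ in two stages: first as a module structure for the commutative vertex algebra underlying $H'$, and then check the bialgebra and compatibility axioms on generators. The first stage goes as follows. By Lemma \ref{lem:pseudos}, each $\bar\vartheta_i^0(z)$ is a pseudo-derivation and each $\bar\vartheta_i^\pm(z)$ is a pseudo-endomorphism of $F(\mathcal M_\vartheta)$. Since $H'$ is the free commutative differential algebra on the $\wh a_i^b$, a module structure $\tau(\cdot,z)$ for the nonlocal vertex algebra $H'$ is determined by mutually commuting operators $\tau(\wh a_i^b,z)$ acting on $V\ot\C((z))$, with $\tau(\partial^n h,z)=\pdiff{z}{n}\tau(h,z)$ and $\tau(hk,z)=\tau(h,z)\tau(k,z)$. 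So the first step is to show that all the $\bar\vartheta_i^a(z_1)$ and $\bar\vartheta_j^b(z_2)$ commute. For this, note that a pseudo-endomorphism, or a pseudo-derivation, is determined by its values on the generators $a_k^c$ via $A(z_1)Y(u,z_2)\vac=Y(A(z_1-z_2)u,z_2)\vac$, and on generators each of these operators acts diagonally (multiplying $a_k^\pm$ by a scalar series) or by a triangular shift into $\vac$. Composites of such operators are therefore again pseudo-endomorphisms determined by the composite on generators, and on generators the two orders agree because they multiply scalar series. Then define $\bar\vartheta(h,z)$ for monomials $h$ by products of derivatives; this gives an $H'$-module, because $H'$ is free commutative and $Y(h,z)k=(e^{z\partial}h)k$.

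The second stage checks the conditions for an $H'$-module nonlocal vertex algebra. For \eqref{eq:mod-va-for-vertex-bialg1-2}, the pseudo-endomorphism property gives $\bar\vartheta_i^\pm(z)\vac=\vac$, consistent with $\varepsilon(\wh a_i^\pm)=1$. A pseudo-derivation kills $\vac$, consistent with $\varepsilon(\wh a_i^0)=0$. For \eqref{eq:mod-va-for-vertex-bialg3} on the generators: for the group-like $\wh a_i^\pm$ it is exactly the pseudo-endomorphism identity, and for the primitive $\wh a_i^0$ it is exactly the pseudo-derivation identity. Next, the set of $h\in H'$ satisfying \eqref{eq:mod-va-for-vertex-bialg3} is closed under products, since $\Delta$ is an algebra map and the operators commute. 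It is also closed under $\partial$: differentiate in $z_1$, and use $\Delta\partial=(\partial\ot1+1\ot\partial)\Delta$ together with $\pd{z_1}$ acting on both $z_1-z_2$ and $z_1$. Hence the identity holds on all of $H'$.

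Finally comes the deforming-triple compatibility \eqref{eq:rho-tau-compatible}, namely $\rho(\bar\vartheta(h,z)v)=(\bar\vartheta(h,z)\ot1)\rho(v)$. It suffices to treat $h=\wh a_i^a$, since both sides are multiplicative and compatible with $\partial$ in $h$. For fixed $h$, both sides are pseudo-endomorphisms (resp.\ pseudo-derivations) of $F(\mathcal M_\vartheta)$ into $F(\mathcal M_\vartheta)\ot H'\ot\C((z))$, relative to the homomorphism $\rho$. By the uniqueness part of Proposition \ref{prop:universal-nonlocal-va} and Lemma \ref{lem:map-from-V}, applied over the commutative vertex algebra $H'\ot\C((z))$ (with $\C_\delta$ in the derivation case), it suffices to compare the two sides on the generators $a_j^b$. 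There the check is direct: for instance $\rho(\bar\vartheta_i^+(z)a_j^0)=a_j^0\ot1+\vac\ot\wh a_j^0-\vac\ot1\ot\bar\vartheta^{+,0}_{ij}(z)$, which equals $(\bar\vartheta_i^+(z)\ot1)(a_j^0\ot1+\vac\ot\wh a_j^0)$.

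I expect the main obstacle to be the first stage, namely rigorously establishing commutativity of the $\bar\vartheta_i^a(z_1)$ and $\bar\vartheta_j^b(z_2)$ with two independent variables, and making precise that a pseudo-endomorphism is determined by its values on generators. I would handle this by realizing each composite as a nonlocal vertex algebra homomorphism $F(\mathcal M_\vartheta)\to F(\mathcal M_\vartheta)\ot\C((z_1))\ot\C((z_2))$ via Lemma \ref{lem:map-from-V}, and then invoking the uniqueness of that homomorphism.
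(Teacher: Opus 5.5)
Your proposal is correct and follows essentially the same route as the paper. Both arguments build the $H'$-module from the mutually commuting $\bar\vartheta_i^a(z)$ using that $H'$ is the free commutative differential algebra, verify the module-vertex-algebra axioms on the generators $\wh a_i^a$ and extend through products and $\partial$, and then check $\rho$-compatibility on generators and extend. The only differences are in detail: you actually justify the commutativity of the $\bar\vartheta_i^a(z)$ (which the paper merely asserts) by comparing composite homomorphisms into $F(\mathcal M_\vartheta)\ot\C((z_1))\ot\C((z_2))$ on generators, and you extend the compatibility via uniqueness of homomorphisms out of $F(\mathcal M_\vartheta)$ rather than citing \cite[Lemma 2.28]{JKLT-Defom-va}.
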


\begin{proof}
Note that
\begin{align*}
  &\bar\vartheta_i^s(z)\in
    \Hom(F(\mathcal M_\vartheta),F(\mathcal M_\vartheta)\ot \C((z)))\subset
    \E(F(\mathcal M_\vartheta))\quad\te{for }i\in J^s,\,s\in\{0,\pm\},\\
  &\quad\te{and}\quad
  [\bar\vartheta_i^s(z_1),\bar\vartheta_j^t(z_2)]=0\quad\te{for }i\in J^s,\,j\in J^t,\,s,t\in\{0,\pm\}.
\end{align*}
From Theorem \ref{thm:nonlocal-VA-gen}, we have that
\begin{align*}
  U=\set{\bar\vartheta_i^s(z)}{i\in J^s,\,s\in\{0,\pm\}}
\end{align*}
generates a commutative vertex algebra.
By using Lemma \ref{lem:map-from-V}, we get a nonlocal vertex algebra homomorphism $f_{\bar\vartheta}:F(\mathcal M_\vartheta)\to F(\mathcal M_\vartheta)\ot \<U\>$ determined by
\begin{align*}
  f_{\bar\vartheta}(a_i^0)=a_i^0\ot 1+\vac\ot \bar\vartheta_i^0(z),\quad
  f_{\bar\vartheta}(a_i^\pm)=a_i^\pm\ot \bar\vartheta_i^\pm(z).
\end{align*}
The construction of $H'$ implies an algebra homomorphism $g:H'\to \<U\>$ determined by
\begin{align*}
  g(\partial^n\wh a_i^s,z)=\frac{d^n}{dz^n}
  \bar\vartheta_i^s(z)\quad\te{for }i\in J^s,\,s\in\{0,\pm\}.
\end{align*}
It is easy to see that
\begin{align*}
  g(\partial h,z)=\frac{d}{dz} g(h,z)\quad\te{for }h\in H'.
\end{align*}
Then $g$ is a vertex algebra homomorphism.
Hence, $F(\mathcal M_\vartheta)$ is an $H'$-module, we denote this module action by $\bar\vartheta(\cdot,z)$.

Let
\begin{align*}
  S=\set{a_i^s}{i\in I,\,s\in\{0,\pm\}}\uplus\{\vac\},\quad
  T=\set{\wh a_i^s}{i\in I,\,s\in\{0,\pm\}}\uplus\{1\}
\end{align*}
Since $\bar\vartheta_i^0(z)$ is a pseudo-derivation and $\bar\vartheta_i^\pm(z)$ are pseudo-endomorphisms,
we obtain
\begin{align*}
  &\bar\vartheta(h,z)\vac=\varepsilon(h)\vac,
  \quad
  \bar\vartheta(h,z_1)Y(u,z_2)v=\sum Y(\bar\vartheta(h_{(1)},z_1-z_2)u,z_2)\bar\vartheta(h_{(2)},z_1)v
\end{align*}
for all $u,v\in F(\mathcal M_\vartheta)$ and $h\in T$, where $\Delta(h)=\sum h_{(1)}\ot h_{(2)}$.
As $H'$ is generated by $T$, these relations extend to the whole space $H'$.
Consequently, $(F(\mathcal M_\vartheta),\bar\vartheta)$ forms an $H'$-module nonlocal vertex algebra.


Finally, we prove the compatibility of $\rho$ and $\bar\vartheta$. A straightforward verification shows that
\begin{align*}
  \bar\vartheta(h,z)v\in S\ot \C((z)),\quad
  \rho(\bar\vartheta(h,z)v)=(\bar\vartheta(h,z)\ot 1)\rho(v)
  \quad\te{for }h\in T,\,v\in S.
\end{align*}
The desired compatibility then follows from \cite[Lemma 2.28]{JKLT-Defom-va}.
\end{proof}

%

Applying Proposition \ref{prop:deforming-triple-twistor}
to Lemma \ref{lem:deform-tri-nonlocalVA}, we immediately get the following result by \eqref{eq:T-tau}.
\begin{lem}\label{lem:qva-twistor}
Let $\vartheta,\bar\vartheta\in\mathfrak V$.
Then there exists a unique twistor $T_{\bar\vartheta}(z)$ of $F(\mathcal M_\vartheta)$, such that \eqref{eq:qyb-com1} holds and
\begin{align*}
  &T_{\bar\vartheta}(z)(a_j^0\ot a_i^0)=a_j^0\ot a_i^0+\vac\ot\vac\ot\bar\vartheta_{ij}^{0,0}(-z),\\
  &T_{\bar\vartheta}(z)(a_j^0\ot a_i^{\pm})=a_j^0\ot a_i^{\pm}\mp \vac\ot a_i^{\pm}\ot {\bar\vartheta}_{ij}^{\pm,0}(-z),\\
  &T_{\bar\vartheta}(z)(a_j^{\pm}\ot a_i^0)= a_j^{\pm}\ot a_i^0\pm a_j^{\pm}\ot \vac\ot {\bar\vartheta}_{ij}^{0,\pm}(-z),\\
  &T_{\bar\vartheta}(z)(a_j^{\epsilon_1}\ot a_i^{\epsilon_2})=a_j^{\epsilon_1}\ot a_i^{\epsilon_2}
  \ot {\bar\vartheta}_{ij}^{\epsilon_2,\epsilon_1}(-z)\inv.
\end{align*}
\end{lem}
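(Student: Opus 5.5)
The plan is to take $T_{\bar\vartheta}(z):=T_\tau(z)$ from \eqref{eq:T-tau}, with $\tau=\bar\vartheta$ the $H'$-module structure of Lemma \ref{lem:deform-tri-nonlocalVA} and $\rho$ the comodule map of Lemma \ref{lem:comod-nonlocalVA}. Lemma \ref{lem:deform-tri-nonlocalVA} gives $\bar\vartheta\in\mathfrak L_{H'}^\rho(F(\mathcal M_\vartheta))$, so by Proposition \ref{prop:deforming-triple-twistor} $T_{\bar\vartheta}(z)$ is a twistor. For the moreover clause of that proposition: $H'$ is cocommutative by construction. The module map $\bar\vartheta$ commutes with itself because the generating operators $\bar\vartheta_i^s(z)$ mutually commute, as noted in the proof of Lemma \ref{lem:deform-tri-nonlocalVA}, and they generate a commutative vertex algebra $\<U\>$ through which $\bar\vartheta$ factors. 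Hence \eqref{eq:qyb-com1} holds for $(T_{\bar\vartheta},T_{\bar\vartheta})$.

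Next I compute the action on generators directly from \eqref{eq:T-tau}:
\begin{align*}
T(z)(u\ot v)=\sum\bar\vartheta(v_{(2)},-z)u\ot v_{(1)}.
\end{align*}
For $v=a_i^0$ we have $\rho(a_i^0)=a_i^0\ot 1+\vac\ot\wh a_i^0$, and $\bar\vartheta(1,-z)=\id$. This gives
\begin{align*}
T(z)(u\ot a_i^0)=u\ot a_i^0+\bar\vartheta_i^0(-z)u\ot\vac.
\end{align*}
For $v=a_i^\pm$ we have $\rho(a_i^\pm)=a_i^\pm\ot\wh a_i^\pm$, which gives
\begin{align*}
T(z)(u\ot a_i^\pm)=\bar\vartheta_i^\pm(-z)u\ot a_i^\pm.
\end{align*}
Substituting $u=a_j^0$ or $u=a_j^\pm$ and using \eqref{eq:sigma-0} and \eqref{eq:sigma-a} then yields the four displayed formulas, up to the index and sign conventions. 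In particular:
\begin{itemize}
\item $\bar\vartheta_i^0(-z)a_j^0=\vac\ot\bar\vartheta_{ij}^{0,0}(-z)$;
\item $\bar\vartheta_i^\pm(-z)a_j^0=a_j^0\mp\vac\ot\bar\vartheta_{ij}^{\pm,0}(-z)$;
\item $\bar\vartheta_i^0(-z)a_j^\pm=\pm a_j^\pm\ot\bar\vartheta_{ij}^{0,\pm}(-z)$;
\item $\bar\vartheta_i^{\epsilon_2}(-z)a_j^{\epsilon_1}=a_j^{\epsilon_1}\ot\bar\vartheta_{ij}^{\epsilon_2,\epsilon_1}(-z)\inv$.
\end{itemize}
This bookkeeping of which tensor slot carries $\vac$ is the only place requiring care.

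Uniqueness follows from the vacuum condition \eqref{eq:qyb-vac} together with the hexagon identities \eqref{eq:qyb-hex1} and \eqref{eq:qyb-hex2}. These express $T(z)$ on $Y(u,z_2)v\ot w$ and on $u\ot Y(v,z_2)w$ in terms of its values on smaller pieces. Since $F(\mathcal M_\vartheta)$ is generated by the $a_i^s$ (Proposition \ref{prop:V-M-pre}), $T(z)$ is determined by its values on pairs of generators. The main point to verify is this induction on generators, which I would handle exactly as in the uniqueness part of Proposition \ref{prop:universal-nonlocal-va}: consider the subspace on which two twistors agree and show it is closed under all products.
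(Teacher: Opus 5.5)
Your proposal is correct and is exactly the paper's argument. The paper's proof is just ``apply Proposition \ref{prop:deforming-triple-twistor} to the deforming triple $(H',\rho,\bar\vartheta)$ of Lemma \ref{lem:deform-tri-nonlocalVA} and read off the values from \eqref{eq:T-tau}''; your computations on generators, and your argument that $\bar\vartheta$ commutes with itself because it factors through the commutative vertex algebra $\<U\>$, supply precisely the details it leaves implicit. For the uniqueness clause, which the paper does not spell out, the induction through \eqref{eq:qyb-hex1} and \eqref{eq:qyb-hex2} only closes because the prescribed values send $G\ot G$ into $G\ot G\ot\C((z))$, where $G=\set{a_i^s}{i\in J^s,\,s\in\{0,\pm\}}\cup\{\vac\}$: you first get agreement on $F(\mathcal M_\vartheta)\ot G$ via \eqref{eq:qyb-hex1}, and then on everything via \eqref{eq:qyb-hex2}, rather than using a one-variable agreement subspace as in Proposition \ref{prop:universal-nonlocal-va}.
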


\begin{lem}\label{lem:deform-M-obj}
Let $\vartheta,\bar\vartheta\in \mathfrak V$.
Then
\begin{align*}
  (\mathfrak D_{T_{\bar\vartheta}}(F(\mathcal M_\vartheta)),
  \mathfrak D_{T_{\bar\vartheta}}(Y)(a_i^s,z))\in \obj\mathcal M_{\vartheta\ast\bar\vartheta}.
\end{align*}
\end{lem}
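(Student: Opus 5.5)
We compute the deformed vertex operators explicitly and then verify the three families of relations \eqref{eq:vartheta1}--\eqref{eq:vartheta3} for the parameter $\vartheta\ast\bar\vartheta$ directly. By Corollary \ref{coro:D-T-tau-Y} together with the formula \eqref{eq:comod-nonlocalVA-rho} for $\rho$, the deformed fields on the generators are
\begin{align*}
  &\mathfrak D_{T_{\bar\vartheta}}(Y)(a_i^0,z)=Y(a_i^0,z)+\bar\vartheta_i^0(z),\\
  &\mathfrak D_{T_{\bar\vartheta}}(Y)(a_i^\pm,z)=Y(a_i^\pm,z)\bar\vartheta_i^\pm(z),
\end{align*}
where $\bar\vartheta_i^0(z)$ is the pseudo-derivation and $\bar\vartheta_i^\pm(z)$ are the pseudo-endomorphisms of Lemma \ref{lem:pseudos}. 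Write $\wt a_i^s(z)$ for these fields. They lie in $\E(F(\mathcal M_\vartheta))$, since $\bar\vartheta_i^s(z)$ maps into $F(\mathcal M_\vartheta)\ot\C((z))$. The task is then to check that the $\wt a_i^s(z)$ satisfy \eqref{eq:vartheta1}--\eqref{eq:vartheta3} with $\vartheta\ast\bar\vartheta$ in place of $\vartheta$.

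The computations rest on the commutation rules of $\bar\vartheta_i^s(z_1)$ with $Y(a_j^t,z_2)$. For the pseudo-derivation,
\[
  [\bar\vartheta_i^0(z_1),Y(a_j^t,z_2)]=Y(\bar\vartheta_i^0(z_1-z_2)a_j^t,z_2).
\]
By \eqref{eq:sigma-0} this equals $\bar\vartheta_{ij}^{0,0}(z_1-z_2)$ when $t=0$, and $\pm Y(a_j^\pm,z_2)\bar\vartheta_{ij}^{0,\pm}(z_1-z_2)$ when $t=\pm$. For the pseudo-endomorphisms,
\[
  \bar\vartheta_i^\pm(z_1)Y(a_j^t,z_2)=Y(\bar\vartheta_i^\pm(z_1-z_2)a_j^t,z_2)\bar\vartheta_i^\pm(z_1),
\]
and \eqref{eq:sigma-a} makes the right side explicit. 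We also use that all $\bar\vartheta_i^s(z)$ commute with one another, which was noted in the proof of Lemma \ref{lem:deform-tri-nonlocalVA}.

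\textbf{Case $(0,0)$.} Expanding $[\wt a_i^0(z_1),\wt a_j^0(z_2)]$ produces four terms. The first is $[Y(a_i^0,z_1),Y(a_j^0,z_2)]$, which contributes $\vartheta_{ij}^{0,0}(z_1-z_2)-\vartheta_{ji}^{0,0}(z_2-z_1)$. The two cross terms contribute $\bar\vartheta_{ij}^{0,0}(z_1-z_2)-\bar\vartheta_{ji}^{0,0}(z_2-z_1)$. The fourth term $[\bar\vartheta_i^0,\bar\vartheta_j^0]$ vanishes. The sum is exactly \eqref{eq:vartheta1} for the parameter $\vartheta+\bar\vartheta$.

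\textbf{Case $(0,\pm)$.} Expanding $[\wt a_i^0(z_1),\wt a_j^\pm(z_2)]$, the term $[Y(a_i^0,z_1),Y(a_j^\pm,z_2)]\bar\vartheta_j^\pm(z_2)$ gives $\pm\wt a_j^\pm(z_2)(\vartheta_{ij}^{0,\pm}(z_1-z_2)+\vartheta_{ji}^{\pm,0}(z_2-z_1))$. The term involving $\bar\vartheta_i^0(z_1)$ gives $\pm\wt a_j^\pm(z_2)\bar\vartheta_{ij}^{0,\pm}(z_1-z_2)$. Pushing $\bar\vartheta_j^\pm(z_2)$ past $Y(a_i^0,z_1)$ gives the remaining term $\pm\wt a_j^\pm(z_2)\bar\vartheta_{ji}^{\pm,0}(z_2-z_1)$, where the sign $\mp$ in \eqref{eq:sigma-a} combines with the commutator orientation to produce $\pm$. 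Together these give \eqref{eq:vartheta2} for the product parameter.

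\textbf{Case $(\epsilon_1,\epsilon_2)$.} First move $\bar\vartheta_i^{\epsilon_1}(z_1)$ to the right of $Y(a_j^{\epsilon_2},z_2)$; this produces the factor $\bar\vartheta_{ij}^{\epsilon_1,\epsilon_2}(z_1-z_2)^{-1}$. Multiplying by $(\vartheta\ast\bar\vartheta)_{ij}^{\epsilon_1,\epsilon_2}(z_1-z_2)$ cancels this factor and leaves $\vartheta_{ij}^{\epsilon_1,\epsilon_2}(z_1-z_2)Y(a_i,z_1)Y(a_j,z_2)\bar\vartheta_i\bar\vartheta_j$. Relation \eqref{eq:vartheta3} for $\vartheta$ then swaps the $Y$'s. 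The symmetric manipulation on the other side returns the factor $\bar\vartheta_{ji}^{\epsilon_2,\epsilon_1}(z_2-z_1)$, and the $\bar\vartheta$ operators commute, so \eqref{eq:vartheta3} for $\vartheta\ast\bar\vartheta$ follows.

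The main obstacle is bookkeeping: expansion directions, the signs $\pm,\mp$, and parity signs in \eqref{eq:vartheta3}. The substitutions $\bar\vartheta(z_1-z_2)$ must be expanded as $\iota_{z_1,z_2}$ or $\iota_{z_2,z_1}$ consistently with the side of the equation on which they appear, so that the equalities hold on the nose. Relation \eqref{eq:vartheta3} is stated for products only after multiplication by the $\vartheta$ factors, so these should be treated as identities in $\E^{(2)}$ after clearing the rational factors.
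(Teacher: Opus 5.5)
Your proposal is correct and takes the same approach as the paper. The paper writes $\mathfrak D_{T_{\bar\vartheta}}(Y)(a_i^0,z)=Y(a_i^0,z)+\bar\vartheta_i^0(z)$ and $\mathfrak D_{T_{\bar\vartheta}}(Y)(a_i^\pm,z)=Y(a_i^\pm,z)\bar\vartheta_i^\pm(z)$ and leaves the check of \eqref{eq:vartheta1}--\eqref{eq:vartheta3} for $\vartheta\ast\bar\vartheta$ as a direct verification; your case analysis carries out that verification correctly, using the pseudo-derivation and pseudo-endomorphism commutation rules and the mutual commutativity of the $\bar\vartheta_i^s(z)$.
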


\begin{proof}
Note that
\begin{align*}
  &\mathfrak D_{T_{\bar\vartheta}}(Y)(a_i^0,z)=Y(a_i^0,z)+\bar\vartheta_i^0(z),\quad
   \mathfrak D_{T_{\bar\vartheta}}(Y)(a_i^{\pm},z)=Y(a_i^{\pm},z)
   \bar\vartheta_i^{\pm}(z).
\end{align*}
The lemma now follows by a direct verification.
\end{proof}

\begin{lem}\label{lem:F-J-gen}
$\mathfrak D_{T_{\bar\vartheta}}(F(\mathcal M_\vartheta))$ is generated by
$\set{a_i^s}{i\in J^s,\,s\in\{0,\pm\}}$.
\end{lem}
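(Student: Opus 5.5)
The plan is to show that the nonlocal vertex subalgebra $V'$ of $\mathfrak D_{T_{\bar\vartheta}}(F(\mathcal M_\vartheta))$ generated by the $a_i^s$ is the whole space. The approach is to express the original vertex operators $Y(a_i^s,z)$ of $F(\mathcal M_\vartheta)$ through the deformed ones and then use the generation statement for $F(\mathcal M_\vartheta)$ itself. That generation statement comes from the construction in Lemma \ref{lem:M-obj-V}: $F(\mathcal M_\vartheta)=\mathcal A/\mathcal A_+$ is spanned by monomials $a_{i_1}^{s_1}(n_1)\cdots a_{i_k}^{s_k}(n_k)\vac$.

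First I would record the formulas from the proof of Lemma \ref{lem:deform-M-obj}:
\begin{align*}
  \mathfrak D_{T_{\bar\vartheta}}(Y)(a_i^0,z)=Y(a_i^0,z)+\bar\vartheta_i^0(z),\quad
  \mathfrak D_{T_{\bar\vartheta}}(Y)(a_i^{\pm},z)=Y(a_i^{\pm},z)\bar\vartheta_i^{\pm}(z).
\end{align*}
Here $\bar\vartheta_i^0(z)$ is a pseudo-derivation and $\bar\vartheta_i^\pm(z)$ are invertible pseudo-endomorphisms, with inverses given by the data $\bar\vartheta\inv$ in the group $\mathfrak V$. So $Y(a_i^\pm,z)=\mathfrak D_{T_{\bar\vartheta}}(Y)(a_i^\pm,z)\bar\vartheta_i^\pm(z)\inv$, and $Y(a_i^0,z)=\mathfrak D_{T_{\bar\vartheta}}(Y)(a_i^0,z)-\bar\vartheta_i^0(z)$.

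The key step is to show that $V'$ is stable under each $\bar\vartheta_j^t(z)$ and its inverse, meaning $\bar\vartheta_j^t(z)V'\subset V'\ot\C((z))$. I would use an induction argument like the one in Proposition \ref{prop:universal-nonlocal-va}. Let $U$ be the set of $u\in V'$ such that $\bar\vartheta_j^t(z)^{\pm1}u\in V'\ot\C((z))$ for all $j,t$. Then $U$ contains $\vac$ and all $a_i^s$, because by \eqref{eq:sigma-0}--\eqref{eq:sigma-a} these operators send each generator to a combination of itself and $\vac$ with Laurent series coefficients. Next I would check that $U$ is closed under the deformed products $u_n v$. Here I would use the $H'$-module nonlocal vertex algebra identity \eqref{eq:mod-va-for-vertex-bialg3} transported to $\mathfrak D_{T_{\bar\vartheta}}(F(\mathcal M_\vartheta))$. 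Proposition \ref{prop:L-H-rho-V-compostition} gives $\bar\vartheta\in\mathfrak L_{H'}^\rho(\mathfrak D_{T_{\bar\vartheta}}(V))$, since $\bar\vartheta$ commutes with itself. Because $\Delta$ is primitive or grouplike on the generators, this identity rewrites $\bar\vartheta_j^t(z_1)\,\mathfrak D(Y)(u,z_2)v$ as a finite combination of deformed products of elements $\bar\vartheta(\cdot,z_1-z_2)u$ and $\bar\vartheta(\cdot,z_1)v$.

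Once stability holds, the formulas above show that $V'$ is invariant under all modes $a_i^s(n)$ of the original $Y$. Since $V'$ contains $\vac$, it contains every monomial $a_{i_1}^{s_1}(n_1)\cdots a_{i_k}^{s_k}(n_k)\vac$, and these span $F(\mathcal M_\vartheta)$. Hence $V'$ is everything.

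The main obstacle is the stability step, specifically the bookkeeping of the formal variables: the expansion $z_1-z_2$ must produce finite sums inside $V'\ot\C((z_1))((z_2))$, so that taking coefficients stays inside $V'$. If that becomes messy, a fallback is to apply the deformation by $\bar\vartheta\inv$. By Proposition \ref{prop:L-H-rho-V-compostition2} this recovers $F(\mathcal M_\vartheta)$. The generators $a_i^s$ generate $F(\mathcal M_\vartheta)$, and deformed operators of generated elements stay inside the generated subalgebra by Corollary \ref{coro:D-T-tau-Y} applied to the comodule map $\rho$, which preserves the span of monomials.
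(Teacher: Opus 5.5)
Your argument is correct and is essentially the paper's: the paper checks $\rho(S)\subset S\ot T$, $\Delta(T)\subset T\ot T$ and $\bar\vartheta\inv(T,z)S\subset S\ot\C((z))$, then invokes \cite[Lemma 3.7]{JKLT-Defom-va}, which packages essentially this argument (the subalgebra generated in the deformed algebra is stable under the inverse twist, so the undeformed operators preserve it); your fallback is the closer match, since there $\rho$ handles the bookkeeping for general elements. Note that only stability under the operators $\bar\vartheta\inv(\wh a_j^t,z)$ is actually needed, because $Y(a_i^0,z)=\mathfrak D_{T_{\bar\vartheta}}(Y)(a_i^0,z)+\bar\vartheta\inv(\wh a_i^0,z)$ and $Y(a_i^\pm,z)=\mathfrak D_{T_{\bar\vartheta}}(Y)(a_i^\pm,z)\bar\vartheta\inv(\wh a_i^\pm,z)$.
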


\begin{proof}
Let
\begin{align*}
  &S=\set{a_i^s}{i\in J^s,\,s\in\{0,\pm\}}\cup\{\vac\}\quad
  T=\set{\wh a_i^s}{i\in J^s,\,s\in\{0,\pm\}}\cup\{\vac\}.
\end{align*}
Then $F(\mathcal M_\vartheta)$ is generated by $S$ and $H'$ is generated by $T$.
Note that
\begin{align*}
  &\rho(S)\subset S\ot T,\quad \Delta(T)\subset T\ot T,\quad \bar\vartheta\inv(T,z)S\subset S\ot \C((z)).
\end{align*}
We complete the proof by using \cite[Lemma 3.7]{JKLT-Defom-va}.
\end{proof}

\begin{prop}\label{prop:F-deform}
We have that
\begin{align*}
  \mathfrak D_{T_{\bar\vartheta}}(F(\mathcal M_\vartheta))\cong F(\mathcal M_{\vartheta\ast\bar\vartheta}).
\end{align*}
\end{prop}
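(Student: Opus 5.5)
We construct mutually inverse nonlocal vertex algebra homomorphisms between $\mathfrak D_{T_{\bar\vartheta}}(F(\mathcal M_\vartheta))$ and $F(\mathcal M_{\vartheta\ast\bar\vartheta})$, each sending generator $a_i^s$ to generator $a_i^s$.

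\emph{Forward map.} By Lemma \ref{lem:deform-M-obj}, $\mathfrak D_{T_{\bar\vartheta}}(F(\mathcal M_\vartheta))$, with the fields $\mathfrak D_{T_{\bar\vartheta}}(Y)(a_i^s,z)$, is an object of $\mathcal M_{\vartheta\ast\bar\vartheta}$. It is a nonlocal vertex algebra by Theorem \ref{thm:nonlocal-VA-twistor-deform}. Proposition \ref{prop:universal-nonlocal-va} then gives a unique nonlocal vertex algebra homomorphism
\begin{align*}
  \Phi:F(\mathcal M_{\vartheta\ast\bar\vartheta})\to \mathfrak D_{T_{\bar\vartheta}}(F(\mathcal M_\vartheta)),\qquad a_i^s\mapsto a_i^s.
\end{align*}
By Lemma \ref{lem:F-J-gen}, the target is generated by the $a_i^s$, so $\Phi$ is surjective.

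\emph{Backward map.} We run the same argument with the roles reversed, using the inverse twistor. Since $\mathfrak V$ is an abelian group, $\bar\vartheta\inv\in\mathfrak V$. Lemma \ref{lem:deform-tri-nonlocalVA} gives the deforming triple $(H',\rho,\overline{\vartheta\inv})$ on $F(\mathcal M_{\vartheta\ast\bar\vartheta})$. Lemma \ref{lem:deform-M-obj}, applied with $\vartheta\ast\bar\vartheta$ and $\bar\vartheta\inv$, shows that $\mathfrak D_{T_{\bar\vartheta\inv}}(F(\mathcal M_{\vartheta\ast\bar\vartheta}))$ is an object of $\mathcal M_\vartheta$. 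Proposition \ref{prop:universal-nonlocal-va} then gives a homomorphism
\begin{align*}
  \Psi:F(\mathcal M_\vartheta)\to \mathfrak D_{T_{\bar\vartheta\inv}}(F(\mathcal M_{\vartheta\ast\bar\vartheta})),\qquad a_i^s\mapsto a_i^s.
\end{align*}

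\emph{Transporting deformations.} We want $\Phi$ to intertwine the deformations, i.e.
\begin{align*}
  \Phi:\ \mathfrak D_{T_{\bar\vartheta\inv}}(F(\mathcal M_{\vartheta\ast\bar\vartheta}))\longrightarrow \mathfrak D_{T_{\bar\vartheta\inv}}\mathfrak D_{T_{\bar\vartheta}}(F(\mathcal M_\vartheta))
\end{align*}
should also be a homomorphism. Both deformations are built from $\rho$ and the $H'$-actions $\overline{\vartheta\inv}$, which are determined by their values on generators via Lemmas \ref{lem:comod-nonlocalVA} and \ref{lem:pseudos}. These values agree under $\Phi$, and $\Phi$ is a homomorphism. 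Hence $(\Phi\ot 1)\circ\rho=\rho\circ\Phi$, since both sides are homomorphisms agreeing on generators, and $\Phi$ intertwines the $H'$-actions, since both are pseudo-derivations or pseudo-endomorphisms determined on generators. By Corollary \ref{coro:D-T-tau-Y}, $\mathfrak D_{T_\tau}(Y)(u,z)=\sum Y(u_{(1)},z)\tau(u_{(2)},z)$, so $\Phi$ remains a homomorphism between the deformed algebras.

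\emph{Undoing the deformation.} The actions $\bar\vartheta$ and $\overline{\vartheta\inv}$ commute; their pseudo-operators act by scalars or translations and commute. Also $\bar\vartheta\ast\overline{\vartheta\inv}=\varepsilon$ on generators, hence everywhere. By Propositions \ref{prop:L-H-rho-V-compostition} and \ref{prop:L-H-rho-V-compostition2},
\begin{align*}
  \mathfrak D_{T_{\bar\vartheta\inv}}\mathfrak D_{T_{\bar\vartheta}}(F(\mathcal M_\vartheta))=\mathfrak D_{T_\varepsilon}(F(\mathcal M_\vartheta))=F(\mathcal M_\vartheta).
\end{align*}

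\emph{Conclusion.} The composite $\Phi\circ\Psi$ is a nonlocal vertex algebra endomorphism of $F(\mathcal M_\vartheta)$ fixing the generators $a_i^s$, so it is the identity by the uniqueness in Proposition \ref{prop:universal-nonlocal-va}. Thus $\Phi$ is injective, and together with surjectivity it is the desired isomorphism.

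The main obstacle is the transport step: checking carefully that $\Phi$ intertwines both the comodule map $\rho$ and the $H'$-module actions $\overline{\vartheta\inv}$ on the two sides. This needs the uniqueness statements for homomorphisms and for pseudo-operators determined on generators, including for the $\C_\delta$-linear extension used for the pseudo-derivation $\bar\vartheta_i^0(z)$.
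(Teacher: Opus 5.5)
Your setup follows the paper's argument closely. The forward map $\Phi$ comes from Lemma \ref{lem:deform-M-obj} and Proposition \ref{prop:universal-nonlocal-va}, and its surjectivity from Lemma \ref{lem:F-J-gen}. You then transport $\Phi$ through the deformation by $\bar\vartheta^{-1}$, checking compatibility with $\rho$ and with the $H'$-action on generators. For the $\rho$-part you should cite Proposition \ref{prop:L-H-rho-V-comosition-pre}: it is what makes $\rho$ a homomorphism out of the deformed algebra $\mathfrak D_{T_{\bar\vartheta}}(F(\mathcal M_\vartheta))$. Finally you use $\mathfrak D_{T_{\bar\vartheta^{-1}}}\mathfrak D_{T_{\bar\vartheta}}(F(\mathcal M_\vartheta))=F(\mathcal M_\vartheta)$.

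The gap is the last inference, where you compose in the wrong order. The identity $\Phi\circ\Psi=\mathrm{id}_{F(\mathcal M_\vartheta)}$ says that $\Psi$ is a right inverse of $\Phi$ on the underlying spaces. That gives that $\Phi$ is surjective, which you already knew, and that $\Psi$ is injective. It does not give that $\Phi$ is injective, since a surjection with a right inverse need not be injective. So ``Thus $\Phi$ is injective'' does not follow, and as written you have not shown that $\Phi$ is an isomorphism.

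The repair is short, and you can make it in either of two ways.

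\emph{First option.} Apply Lemma \ref{lem:F-J-gen} to the pair $(\vartheta\ast\bar\vartheta,\bar\vartheta^{-1})$. It shows $\mathfrak D_{T_{\bar\vartheta^{-1}}}(F(\mathcal M_{\vartheta\ast\bar\vartheta}))$ is generated by the $a_i^s$, so $\Psi$ is surjective. Together with $\Phi\circ\Psi=\mathrm{id}$, this makes $\Psi$ bijective, so $\Phi=\Psi^{-1}$ is bijective.

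\emph{Second option, the paper's route.} Run your transport argument for $\Psi$ instead, with $(\vartheta,\bar\vartheta)$ replaced by $(\vartheta\ast\bar\vartheta,\bar\vartheta^{-1})$. Then $\Psi$ becomes a homomorphism $\mathfrak D_{T_{\bar\vartheta}}(F(\mathcal M_\vartheta))\to\mathfrak D_{T_{\bar\vartheta}}\mathfrak D_{T_{\bar\vartheta^{-1}}}(F(\mathcal M_{\vartheta\ast\bar\vartheta}))=F(\mathcal M_{\vartheta\ast\bar\vartheta})$. Now $\Psi\circ\Phi$ is an endomorphism of $F(\mathcal M_{\vartheta\ast\bar\vartheta})$ fixing the generators, hence the identity, and this gives injectivity of $\Phi$.
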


\begin{proof}
Combining Lemma \ref{lem:deform-M-obj} and Proposition \ref{prop:universal-nonlocal-va}, we get a nonlocal vertex algebra homomorphism $f_{\vartheta,\bar\vartheta}:F(\mathcal M_{\vartheta\ast\bar\vartheta})\to\mathfrak D_{T_{\bar\vartheta}}(F(\mathcal M_\vartheta))$ uniquely determined by
\begin{align*}
  f_{\vartheta,\bar\vartheta}(a_i^s)=a_i^s\quad\te{for }i\in J^s,\,s\in\{0,\pm\}.
\end{align*}
We deduce from Proposition \ref{prop:L-H-rho-V-comosition-pre} that
$(\mathfrak D_{T_{\bar\vartheta}}(F(\mathcal M_\vartheta)),\rho)$ is an $H'$-comodule nonlocal vertex algebra.
In addition, we get from Proposition \ref{prop:L-H-rho-V-compostition} that $(H',\rho,\bar\vartheta\inv)$ is a deforming triple of $\mathfrak D_{T_{\bar\vartheta}}(F(\mathcal M_\vartheta))$.
It is straightforward to check that
\begin{align*}
  &\rho\circ f_{\vartheta,\bar\vartheta}=(f_{\vartheta,\bar\vartheta}\ot 1)\circ \rho,
\end{align*}
Let
\begin{align*}
  S=\set{a_i^s}{i\in J^s,\,s\in\{0,\pm\}},\quad
  T=\set{\wh a_i^s}{i\in J^s,\,s\in\{0,\pm\}}.
\end{align*}
A straightforward verification shows that
\begin{align}\label{eq:f-bartheta-com}
  f_{\vartheta,\bar\vartheta}(\bar\vartheta\inv(h,z)v)=\bar\vartheta\inv(h,z)f_{\vartheta,\bar\vartheta}(v)
  \quad\te{for }h\in T,\,v\in S.
\end{align}
Since $F(\mathcal M_{\vartheta\ast\bar\vartheta})$ is generated by $S$ and $H'$ by $T$, the relations \eqref{eq:mod-va-for-vertex-bialg1-2},
\eqref{eq:mod-va-for-vertex-bialg3} together with the fact that
$f_{\vartheta,\bar\vartheta}$ is a nonlocal vertex algebra homomorphism, imply that \eqref{eq:f-bartheta-com} extends to all $v\in F(\mathcal M_{\vartheta\ast\bar\vartheta})$ and $h\in H'$.
Combined with
\cite[Remark 4.5]{K-Quantum-aff-va}, this yields that $f_{\vartheta,\bar\vartheta}$ is also a nonlocal vertex algebra homomorphism $$\mathfrak D_{T_{\bar\vartheta\inv}}(F(\mathcal M_{\vartheta\ast\bar\vartheta}))\to F(\mathcal M_\vartheta).$$
Replacing $\vartheta$ and $\bar\vartheta$ by $\vartheta\ast\bar\vartheta$
and $\bar\vartheta\inv$, respectively, we obtain a nonlocal vertex algebra homomorphism
\begin{align*}
  &f_{\vartheta\ast\bar\vartheta,\bar\vartheta\inv}:
  \mathfrak D_{T_{\bar\vartheta}}(F(\mathcal M_\vartheta))\to
  F(\mathcal M_{\vartheta\ast\bar\vartheta}).
\end{align*}
Since $f_{\vartheta\ast\bar\vartheta,\bar\vartheta\inv}\circ f_{\vartheta,\bar\vartheta}$
maps each generator $a_i^s$ to $a_i^s$ for $i\in J^s$, $s\in\{0,\pm\}$, we have
\begin{align*}
  &f_{\vartheta\ast\bar\vartheta,\bar\vartheta\inv}\circ f_{\vartheta,\bar\vartheta}
  =1_{F(\mathcal M_{\vartheta\ast\bar\vartheta})}.
\end{align*}
Hence $f_{\vartheta,\bar\vartheta}$ is injective.
By Lemma \ref{lem:F-J-gen}, we get that
$\mathfrak D_{T_{\bar\vartheta}}(F(\mathcal M_\vartheta))$ is generated by $S$, which implies that $f_{\vartheta,\bar\vartheta}$ is also surjective.
Therefore, $f_{\vartheta,\bar\vartheta}$ is an isomorphism, as desired.
\end{proof}

Since $\varepsilon$ is the identity of $\mathfrak V$,
we get the immediate consequence of Proposition \ref{prop:F-deform}.
\begin{coro}\label{coro:F-deform}
For any $\vartheta\in \mathfrak V$, we have that
\begin{align*}
  F(\mathcal M_\vartheta)\cong \mathfrak D_{T_\vartheta}(F(\mathcal M_\varepsilon)).
\end{align*}
\end{coro}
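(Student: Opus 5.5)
The corollary is Proposition \ref{prop:F-deform} applied with the base tuple $\varepsilon$ and the deforming tuple $\vartheta$. The plan has two steps, both direct substitutions into results already proved.

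First, take $\vartheta$ to be $\varepsilon$ and $\bar\vartheta$ to be $\vartheta$ in Proposition \ref{prop:F-deform}. The required ingredients are the following.
\begin{itemize}
\item The twistor $T_\vartheta(z)$ of $F(\mathcal M_\varepsilon)$ from Lemma \ref{lem:qva-twistor}. It comes from the deforming triple $(H',\rho,\vartheta)$ of Lemma \ref{lem:deform-tri-nonlocalVA}, built on $F(\mathcal M_\varepsilon)$.
\item The hypotheses that $\varepsilon,\vartheta\in\mathfrak V$. These are automatic.
\end{itemize}
The proposition then gives
\begin{align*}
  \mathfrak D_{T_\vartheta}(F(\mathcal M_\varepsilon))\cong F(\mathcal M_{\varepsilon\ast\vartheta}).
\end{align*}

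Second, since $\varepsilon$ is the identity of the abelian group $(\mathfrak V,\ast)$, we have $\varepsilon\ast\vartheta=\vartheta$. This identifies the right-hand side with $F(\mathcal M_\vartheta)$, which yields the stated isomorphism.

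The isomorphism is the one constructed in the proof of Proposition \ref{prop:F-deform}, so it can be described concretely. It is the map $f_{\varepsilon,\vartheta}:F(\mathcal M_\vartheta)\to\mathfrak D_{T_\vartheta}(F(\mathcal M_\varepsilon))$, which sends each generator $a_i^s$ to $a_i^s$.

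The only point needing care is to confirm that the objects on the two sides are literally the same. The category $\mathcal M_{\varepsilon\ast\vartheta}$ is defined by the series $(\varepsilon\ast\vartheta)_{ij}^{st}$, so it suffices to check that these equal $\vartheta_{ij}^{st}$ componentwise. This holds because $\varepsilon$ contributes additively $0$ when $0\in\{s,t\}$ and multiplicatively $1$ otherwise. Hence the two categories coincide, and so do their universal weak quantum vertex algebras from Proposition \ref{prop:V-M-pre}.

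There is no real obstacle here; all the substantive work lies in Proposition \ref{prop:F-deform} itself.
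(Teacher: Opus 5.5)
Your proposal is correct and is exactly the paper's argument. The paper applies Proposition \ref{prop:F-deform} with $(\vartheta,\bar\vartheta)$ replaced by $(\varepsilon,\vartheta)$, and then uses that $\varepsilon$ is the identity of $(\mathfrak V,\ast)$, so that $\varepsilon\ast\vartheta=\vartheta$.
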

%
%
%

\begin{proof}[{\bf Proof of Proposition \ref{prop:V-M-quantumVA}}]
By using Lemma \ref{lem:qva-twistor}, a direct verification shows that the relations
\begin{align}\label{eq:V-M-quantumVA-temp}
  &A^{ij}(z_1)B^{ab}(z_2)=B^{ab}(z_2)A^{ij}(z_1),\\
  &\nonumber
  \quad \te{for }(i,j)\not\in\{ (a,b),\,(b,a)\}\,\,\te{and}\,\,A,B\in\{S_\varepsilon,T_{\vartheta}\}
  \,\,\te{with}\,\,(A,B)\ne (S_\varepsilon,S_\varepsilon)\nonumber
\end{align}
hold on the generating subset
\begin{align*}
  \set{a_i^s}{i\in J^s,\,s\in\{0,\pm\}}\subset F(\mathcal M_\varepsilon).
\end{align*}
Using \eqref{eq:qyb-hex1} and \eqref{eq:qyb-hex2}, these relations extend to the whole space $F(\mathcal M_\varepsilon)$.
Applying Theorem \ref{thm:qva-twistor} then completes the proof.
\end{proof}

\begin{coro}\label{coro:countable-dim}
For each $\vartheta\in \mathfrak V$, $F(\mathcal M_\vartheta)$ has countable dimension.
\end{coro}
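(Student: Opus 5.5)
The plan is to reduce to the undeformed case $\vartheta=\varepsilon$ via Corollary \ref{coro:F-deform}. The deformation $\mathfrak D_{T_\vartheta}$ changes only the vertex operator map, never the underlying vector space, so $F(\mathcal M_\vartheta)\cong \mathfrak D_{T_\vartheta}(F(\mathcal M_\varepsilon))$ has the same underlying space as $F(\mathcal M_\varepsilon)$. It therefore suffices to show that $F(\mathcal M_\varepsilon)$ has countable dimension.

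For $\vartheta=\varepsilon$, the proof of Proposition \ref{prop:V-M-sp-epsilon} gives a nonlocal vertex algebra isomorphism $F(\mathcal M_\varepsilon)\cong F$. Here $F$ is the free supercommutative algebra on the generators $\partial^n\bar a_i^s$, with $i\in J^s$, $s\in\{0,\pm\}$ and $n$ ranging over a countable index set.

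Since each $J^s$ (namely $J^0$, $J_0^\pm$, $J_1^\pm$) is countable by hypothesis, this generating set is countable. A free supercommutative algebra on a countable set has a basis of (super)monomials, which are finite multisets of generators (with odd generators appearing at most once). The set of such monomials is a countable union over lengths $k$ of countable sets, hence countable. Therefore $\dim F$ is countable, and so is $\dim F(\mathcal M_\vartheta)$.

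The only point requiring care is that the isomorphism in Corollary \ref{coro:F-deform} is a linear isomorphism of the underlying spaces. This holds because it is a nonlocal vertex algebra isomorphism, in particular a linear bijection, and $\mathfrak D_T(V)=V$ as vector spaces by Theorem \ref{thm:nonlocal-VA-twistor-deform}. I expect no real obstacle here; the statement is essentially a bookkeeping consequence of the earlier results.
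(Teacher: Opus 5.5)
Your proposal is correct and follows the paper's proof: $F$ has countable dimension because it is free supercommutative on countably many generators, $F(\mathcal M_\varepsilon)\cong F$ by Proposition \ref{prop:V-M-sp-epsilon}, and the result transfers to arbitrary $\vartheta$ via Corollary \ref{coro:F-deform}. You also make explicit the point the paper leaves implicit, namely that $\mathfrak D_{T_\vartheta}$ does not change the underlying vector space.
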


\begin{proof}
It is obvious that the supercommutative algebra $F$ constructed in the proof of
Proposition \ref{prop:V-M-sp-epsilon} has countable dimension.
Then $F(\mathcal M_\varepsilon)\cong F$ also has countable dimension.
Applying Corollary \ref{coro:F-deform} then complete the proof.
\end{proof}

\subsection{Associated to $\mathcal R_\zeta^\ell(\wh\g)$}\label{subsec:construct-V}

In this subsection, we construct the quantum vertex algebra associated to $\mathcal R_\zeta^\ell(\wh\g)$.
Define the following integers
\begin{align*}
  &\Apsc{\ell}_{ijmn}^{0,0}
  =\vvp{[a_{ij}]_{q_i}[r\ell/r_j]_{q_j}q^{-r\ell+n-m}},\quad
  \Apsc{\ell}_{ijmn}^{2,2}=\vvp{q_i^{a_{ij}}q^{n-m}},\\
  &\Apsc{\ell}_{ijmn}^{0,1}
    =\vvp{[a_{ij}]_{q_i}(q^{-r\ell}-q^{r\ell})
    q^{-r\ell+n-m}},\quad
   \Apsc{\ell}_{ijmn}^{1,0}
    =\vvp{[a_{ji}]_{q_j}(q^{r\ell}-q^{-r\ell})
    q^{-r\ell+n-m}},\\
  &\Apsc{\ell}_{ijmn}^{0,2}
    =\vvp{[a_{ij}]_{q_i}q^{-r\ell+n-m}},\quad
  \Apsc{\ell}_{ijmn}^{2,0}
    =\vvp{[a_{ji}]_{q_j}q^{-r\ell+n-m}},\\
  &\Bpsc{\ell}_{ijmn}^{1,1,+}
    =\vvp{q_i^{a_{ij}}(1-q^{-2r\ell})q^{n-m}},\quad
  \Bpsc{\ell}_{ijmn}^{1,1,-}
    =\vvp{q_i^{-a_{ij}}(1-q^{-2r\ell})q^{n-m}},\\
  &\Bpsc{\ell}_{ijmn}^{1,2}
    =\vvp{q_i^{a_{ij}}q^{-r\ell+n-m}},\quad
  \Bpsc{\ell}_{ijmn}^{2,1}
    =\vvp{q_i^{-a_{ij}}q^{-r\ell+n-m}},\\
  &\Apsc{\ell}_{ijmn}^{1,1}
    =\Bpsc{\ell}_{ijmn}^{1,1,+}+\Bpsc{\ell}_{jinm}^{1,1,-}
  =-\vvp{ q_i^{a_{ij}}(q^{r\ell}-q^{-r\ell})^2q^{n-m} },\\
  &\Apsc{\ell}_{ijmn}^{2,1}=-\Apsc{\ell}_{ijmn}^{1,2}=\Bpsc{\ell}_{ijmn}^{1,2}-\Bpsc{\ell}_{jinm}^{2,1}
  =\vvp{q_i^{a_{ij}}(q^{-r\ell}-q^{r\ell})q^{n-m}},
\end{align*}
where $i,j\in I$ and $m,n\in\Z_\wp$.
Let $\mathfrak T$ be the set of tuples
\begin{align*}
  \tau=\left(\tau_{ijmn}^{a,b}(z)\right)_{i,j\in I,m,n\in\Z_\wp}^{a,b\in \{0,1,2\}}\in \C[[z]]^{9\wp^2|I|},
\end{align*}
such that $\tau_{ijmn}^{a,b}(z)\in \C[[z]]^\times$ for $a,b\in \{1,2\}$, for $i,j\in I$, $m,n\in\Z_\wp$,
\begin{align*}
  &\tau_{ijmn}^{0,1}(z)
    =\tau_{ijm,n-r\ell}^{0,2}(z)-\tau_{ijm,n+r\ell}^{0,2}(z),\quad
   \tau_{ijmn}^{1,0}(z)=\tau_{ij,m-r\ell,n}^{2,0}(z)-\tau_{ij,m+r\ell,n}^{2,0}(z),\\
  &\tau_{ijmn}^{1,1}(z)=\tau_{ijm,n-r\ell}^{1,2}(z)\tau_{ijm,n+r\ell}^{1,2}(z)\inv,\quad
   \tau_{ijmn}^{1,1}(z)=\tau_{ij,m-r\ell,n}^{2,1}(z)\tau_{ij,m+r\ell,n}^{2,1}(z)\inv,\\
  &\tau_{ijmn}^{2,1}(z)=\tau_{ijm,n-r\ell}^{2,2}(z)\tau_{ijm,n+r\ell}^{2,2}(z)\inv,\quad
   \tau_{ijmn}^{1,2}(z)=\tau_{ij,m-r\ell,n}^{2,2}(z)\tau_{ij,m+r\ell,n}^{2,2}(z)\inv,\\
  &\pd z \tau_{ijmn}^{0,1}(z)=\tau_{ijm,n+r_j}^{0,0}(z)-\tau_{ijm,n-r_j}^{0,0}(z),\quad
   \pd z \tau_{ijmn}^{1,0}(z)=\tau_{ij,m+r_i,n}^{0,0}(z)-\tau_{ij,m-r_i,n}^{0,0}(z),\\
  &\pd z \tau_{ijmn}^{a,1}(z)=\left(\tau_{ijm,n+r_j}^{a,0}(z)
        -\tau_{ijm,n-r_j}^{a,0}(z)\right)\tau_{ijmn}^{a,1}(z)
    \quad\te{for }a=1,2,\\
  &\pd z \tau_{ijmn}^{1,a}(z)=\left(\tau_{ij,m+r_i,n}^{0,a}(z)
    -\tau_{ij,m-r_i,n}^{0,a}(z)\right)\tau_{ijmn}^{1,a}(z)
    \quad\te{for }a=1,2.
\end{align*}

We note that $\mathfrak T$ carries an abelian group structure with the identity
$\varepsilon$ defined by
\begin{align*}
  \varepsilon_{ijmn}^{a,b}(z)=\begin{cases}
                                0, & \mbox{if }0\in\{a,b\},\\
                                1, & \mbox{otherwise},
                              \end{cases}
\end{align*}
the multiplication $\tau\ast \tau'$ defined by
\begin{align*}
  (\tau\ast \tau')_{ijmn}^{a,b}(z)=
  \begin{cases}
    \tau_{ijmn}^{a,b}(z)+\tau_{ijmn}'^{a,b}(z), & \mbox{if }0\in \{a,b\},\\
    \tau_{ijmn}^{a,b}(z)\tau_{ijmn}'^{a,b}(z), &\mbox{otherwise},
  \end{cases}
\end{align*}
and the inverse $\tau\inv$ defined by
\begin{align*}
  (\tau\inv)_{ijmn}^{a,b}(z)=
  \begin{cases}
    -\tau_{ijmn}^{a,b}(z), & \mbox{if }0\in \{a,b\},\\
    \tau_{ijmn}^{a,b}(z)\inv, &\mbox{otherwise}.
  \end{cases}
\end{align*}

\begin{de}\label{de:V-pre}
For $\tau\in\mathfrak T$ and $\ell\in\Z$ with $0\le \ell<\wp$, we define
$\mathcal M_{\wp,\tau}^\ell(\wh\g)$ to be the category, whose objects are vector spaces over $\C$ equipped with fields
\begin{align*}
  \xi_{i,m}^a(z)=\sum_{n\in\Z}\xi_{i,m}^a(n)z^{-n-1}\quad\te{for }i\in I,\,m\in\Z_\wp,\,a\in \{0,1^\pm,2^\pm\},
\end{align*}
satisfying the relations below
\begin{align}
  \label{tau1}\tag{$\tau$1}
  &[\xi_{i,m}^0(z_1),\xi_{j,n}^0(z_2)]
  =\frac{\Apsc{\ell}_{ijmn}^{0,0}}{(z_1-z_2)^2}
  -\frac{\Apsc{\ell}_{jinm}^{0,0}}{(z_2-z_1)^2}
  +\tau_{ijmn}^{0,0}(z_1-z_2)-\tau_{jinm}^{0,0}(z_2-z_1),\nonumber\\
  \label{tau2}\tag{$\tau$2}
  &[\xi_{i,m}^0(z_1),\xi_{j,n}^{1^\pm}(z_2)]=\pm \xi_{j,n}^{1^\pm}(z_2)
  \left(\frac{\Apsc{\ell}_{ijmn}^{0,1}}{z_1-z_2}+
  \frac{\Apsc{\ell}_{jinm}^{1,0}}{z_2-z_1}+\tau_{ijmn}^{0,1}(z_1-z_2)+\tau_{jinm}^{1,0}(z_2-z_1)\right),\\
  \label{tau3}\tag{$\tau$3}
  &[\xi_{i,m}^0(z_1),\xi_{j,n}^{2^\pm}(z_2)]=\pm \xi_{j,n}^{2^\pm}(z_2)
  \left(\frac{\Apsc{\ell}_{ijmn}^{0,2}}{z_1-z_2}+
  \frac{\Apsc{\ell}_{jinm}^{2,0}}{z_2-z_1}+\tau_{ijmn}^{0,2}(z_1-z_2)+\tau_{jinm}^{2,0}(z_2-z_1)\right),\\
  \label{tau4}\tag{$\tau$4}
  &
  (z_1-z_2)^{\epsilon_1\epsilon_2 \Bpsc{\ell}_{ijmn}^{1,1,+}}
  (z_2-z_1)^{-\epsilon_1\epsilon_2 \Bpsc{\ell}_{ijmn}^{1,1,-}}
  \tau_{ijmn}^{1,1}(z_1-z_2)^{\epsilon_1\epsilon_2 1}
  \xi_{i,m}^{1^{\epsilon_1}}(z_1)\xi_{j,n}^{1^{\epsilon_2}}(z_2)\\
  &\quad=
  (z_2-z_1)^{\epsilon_1\epsilon_2 \Bpsc{\ell}_{jinm}^{1,1,+}}
  (z_1-z_2)^{-\epsilon_1\epsilon_2 \Bpsc{\ell}_{jinm}^{1,1,-}}
  \tau_{jinm}^{1,1}(z_2-z_1)^{\epsilon_1\epsilon_21}
  \xi_{j,n}^{1^{\epsilon_2}}(z_2)\xi_{i,m}^{1^{\epsilon_1}}(z_1),\nonumber\\
  \label{tau5}\tag{$\tau$5}
  &
  (z_1-z_2)^{-\epsilon_1\epsilon_2 \Bpsc{\ell}_{ijmn}^{1,2}}
  (z_2-z_1)^{\epsilon_1\epsilon_2 \Bpsc{\ell}_{ijmn}^{2,1}}
  \tau_{ijmn}^{1,2}(z_1-z_2)^{\epsilon_1\epsilon_2 1}
  \xi_{i,m}^{1^{\epsilon_1}}(z_1)\xi_{j,n}^{2^{\epsilon_2}}(z_2)\\
  &\quad=
  (z_2-z_1)^{\epsilon_1\epsilon_2  \Bpsc{\ell}_{jinm}^{1,2}}
  (z_1-z_2)^{-\epsilon_1\epsilon_2 \Bpsc{\ell}_{jinm}^{2,1}}
  \tau_{jinm}^{2,1}(z_2-z_1)^{\epsilon_1\epsilon_2  1}
  \xi_{j,n}^{2^{\epsilon_2}}(z_2)\xi_{i,m}^{1^{\epsilon_1}}(z_1),\nonumber\\
  \label{tau6}\tag{$\tau$6}
  &(z_1-z_2)^{\Apsc{\ell}_{ijmn}^{2,2}}\tau_{ijmn}^{2,2}(z_1-z_2)
  \xi_{i,m}^{2^\pm}(z_1)\xi_{j,n}^{2^\pm}(z_2)\\
  &\quad=-(z_2-z_1)^{\Apsc{\ell}_{jinm}^{2,2}}
  \tau_{jinm}^{2,2}(z_2-z_1)
  \xi_{j,n}^{2^\pm}(z_2)\xi_{i,m}^{2^\pm}(z_1),\nonumber\\
  \label{tau7}\tag{$\tau$7}
  &(z_1-z_2)^{\delta_{ij}\max\{\delta_{mn},\delta_{m+2r\ell,n}\}}
  \xi_{i,m}^{2^+}(z_1)\xi_{j,n}^{2^-}(z_2)
  \\
  &\quad=
  -\iota_{z_2,z_1}\frac{(z_1-z_2)^{\delta_{ij}\max\{\delta_{mn},\delta_{m+2r\ell,n}\}}
    (z_1-z_2)^{\Apsc{\ell}_{ijnm}^{2,2}}
    \tau_{ijmn}^{2,2}(z_1-z_2)
    }{(z_2-z_1)^{\Apsc{\ell}_{jinm}^{2,2}} \tau_{jinm}^{2,2}(z_2-z_1)}
  \xi_{j,n}^{2^-}(z_2)\xi_{i,m}^{2^+}(z_1).\nonumber
\end{align}
And denote by $V_{\wp,\tau}'^\ell(\g)$ the quantum vertex algebra $F(\mathcal M_{\wp,\tau}^\ell(\g))$ obtained in Proposition \ref{prop:V-M-quantumVA}.
\end{de}

By utilizing Proposition \ref{prop:universal-nonlocal-va}, we immediately have the following result.
\begin{prop}\label{prop:universal-V}
Let $V$ be a nonlocal vertex algebra, and let $Y$ be the vertex operator map of $V$.
Suppose there exists $\bar \xi_{i,m}^a\in V$, such that
\begin{align*}
  (V,Y(\bar\xi_{i,m}^a,z))\in\obj\mathcal M_{\wp,\tau}^\ell(\wh\g).
\end{align*}
Then $\xi_{i,m}^a\mapsto \bar\xi_{i,m}^a$ defines a unique nonlocal vertex algebra homomorphism $V_{\wp,\tau}'^\ell(\g)\to V$.
\end{prop}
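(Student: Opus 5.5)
This is a direct specialization of Proposition \ref{prop:universal-nonlocal-va}, so the plan is to check that $\mathcal M_{\wp,\tau}^\ell(\wh\g)$ fits the framework of Proposition \ref{prop:V-M-pre}. In that framework $F(\mathcal M_{\wp,\tau}^\ell(\wh\g))=V_{\wp,\tau}'^\ell(\g)$ is the universal object with generators $\xi_{i,m}^a$. The index set is $J=\set{(i,m,a)}{i\in I,\,m\in\Z_\wp,\,a\in\{0,1^\pm,2^\pm\}}$, with $a_{(i,m,a)}(z)=\xi_{i,m}^a(z)$.

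First I will verify that each of the relations \eqref{tau1}--\eqref{tau7} has the shape \eqref{eq:V-M-pre-def-rels}.
\begin{itemize}
\item The commutator relations \eqref{tau1}--\eqref{tau3} are handled by the remark following the proof of Proposition \ref{prop:V-M-pre}. The rational terms $A/(z_1-z_2)^k$ and the $\tau$-terms, expanded in the two orders, differ by finite sums of derivatives of $z_1\inv\delta(z_2/z_1)$. The coefficients are constants times $a_0$ or $\xi_{j,n}^{1^\pm},\xi_{j,n}^{2^\pm}$.
\item The exchange relations \eqref{tau4}--\eqref{tau7} become, after multiplying both sides by a suitable power $(z_1-z_2)^{M}$, equalities with zero right-hand side. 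The functions $f^{st}_{ij}$ are ratios of monomials in $z$ and the invertible series $\tau^{a,b}$, which lie in $\C((z))$.
\end{itemize}
In all cases the data $M_{ij},N_{ij},f^{st}_{ij},p,c$ are independent of $W$.

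With this in place, Proposition \ref{prop:V-M-pre} and Proposition \ref{prop:V-M-quantumVA} identify $V_{\wp,\tau}'^\ell(\g)$ as $F(\mathcal M_{\wp,\tau}^\ell(\wh\g))$, generated by the $\xi_{i,m}^a$. Then I apply Proposition \ref{prop:universal-nonlocal-va} with $\bar V=V$ and $\bar a_{(i,m,a)}=\bar\xi_{i,m}^a$. This gives a unique nonlocal vertex algebra homomorphism sending $\xi_{i,m}^a\mapsto\bar\xi_{i,m}^a$. Uniqueness holds because any homomorphism is determined on a generating set.

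The only step needing care is that the multiplicative relations \eqref{tau4}--\eqref{tau7} carry signs and integer exponents; they must be rewritten in the additive normal form $(z_1-z_2)^{M}(\cdots)-(-z_2+z_1)^{M}(\cdots)=0$, with the $\iota$-expansions in \eqref{tau7} matching the convention $f(z_2-z_1)$. This is a bookkeeping check, not a genuine obstacle.
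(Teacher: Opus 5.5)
Your proposal is correct and is exactly the paper's argument. The paper obtains this statement immediately from Proposition \ref{prop:universal-nonlocal-va}, since by Definition \ref{de:V-pre} $V_{\wp,\tau}'^\ell(\g)$ is the universal object $F(\mathcal M_{\wp,\tau}^\ell(\wh\g))$. Your check that \eqref{tau1}--\eqref{tau7} have the shape \eqref{eq:V-M-pre-def-rels} is bookkeeping the paper leaves implicit.
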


\begin{rem}\label{rem:S-wp-tau}
Denote by $S_{\wp,\tau}(z)$ the quantum Yang-Baxter operator of $V_{\wp,\tau}'^\ell(\g)$.
We have that
\begin{align}
  &S_{\wp,\tau}(z)(\xi_{j,n}^0\ot \xi_{i,m}^0)=\xi_{j,n}^0\ot \xi_{i,m}^0
  +\vac\ot\vac\ot
  \frac{\Apsc{\ell}_{ijmn}^{0,0}-\Apsc{\ell}_{jinm}^{0,0}}{z^2}\\
  &\quad\nonumber+\vac\ot\vac\ot(\tau_{ijmn}^{0,0}(-z)-\tau_{jinm}^{0,0}(z)),\\
  &S_{\wp,\tau}(z)(\xi_{j,n}^0\ot \xi_{i,m}^{a^\pm})=\xi_{j,n}^0\ot \xi_{i,m}^{a^\pm}
  \pm\vac\ot \xi_{i,m}^{a^\pm} \ot \frac{\Apsc{\ell}_{ijmn}^{a,0}-\Apsc{\ell}_{jinm}^{0,a}}{z}\\
  &\quad\nonumber\mp \vac\ot \xi_{i,m}^{a^\pm} \ot (\tau_{ijmn}^{a,0}(-z)+\tau_{jinm}^{0,a}(z)),\\
  &S_{\wp,\tau}(z)(\xi_{j,n}^{a^\pm}\ot \xi_{i,m}^0)=\xi_{j,n}^{a^\pm}\ot\xi_{i,m}^0
  \mp \xi_{j,n}^{a^\pm}\ot \vac\ot \frac{\Apsc{\ell}_{ijmn}^{0,a}-\Apsc{\ell}_{jinm}^{a,0}}{z}\\
  &\quad\nonumber\pm \xi_{j,n}^{a^\pm}\ot \vac\ot (\tau_{ijmn}^{0,a}(-z)+\tau_{jinm}^{a,0}(z)),\\
  &S_{\wp,\tau}(z)(\xi_{j,n}^{b^{\epsilon_2}}\ot \xi_{i,m}^{a^{\epsilon_1}})
  =(-1)^{\delta_{a,2}\delta_{b,2}}\xi_{j,n}^{b^{\epsilon_2}}\ot \xi_{i,m}^{a^{\epsilon_1}}
  \ot (-z)^{-\epsilon_1\epsilon_2 \Apsc{\ell}_{ijmn}^{a,b}}
  z^{\epsilon_1\epsilon_2 \Apsc{\ell}_{jinm}^{b,a}}\\
  &\quad\nonumber\times
  \tau_{ijmn}^{a,b}(-z)^{-\epsilon_1\epsilon_21}\tau_{jinm}^{b,a}(z)^{\epsilon_1\epsilon_21}.
\end{align}
\end{rem}

\begin{de}\label{de:V}
Define $V_{\wp,\tau}^\ell(\g)$ to be the quotient nonlocal vertex algebra of $V_{\wp,\tau}'^\ell(\g)$ modulo the ideal $R_{\wp,\tau}^\ell(\g)$ generated by
\begin{align}
  &\tag{$\tau$8}\xi_{i,m}^{2^+}(0)\xi_{j,n}^{2^-}-\delta_{ij}\delta_{mn}\vac
  +\delta_{ij}\delta_{m+2r\ell,n}\xi_{i,m+r\ell}^{1^+}\label{eq:va-e+e-},\\
  &\tag{$\tau$9}(\pm \xi_{i,m-r_i}^0(-1)\mp\xi_{i,m+r_i}^0(-1))\xi_{i,m}^{1^\pm}-\partial \xi_{i,m}^{1^\pm}\label{eq:va-rel-der}
  -(\tau_{ii,m-r_i,m}^{0,1}(0)-\tau_{ii,m+r_i,m}^{0,1}(0))\xi_{i,m}^{1^\pm} \\
  &\quad\te{for }i\in I,\,m\in\Z_\wp,\nonumber\\
  &\tag{$\tau$10}\xi_{i,m}^{1^\pm}\big(-1-\Bpsc{\ell}_{iimm}^{1,1,+}+\Bpsc{\ell}_{iimm}^{1,1,-}\big)
    \xi_{i,m}^{1^\mp}-
    (-1)^{\Bpsc{\ell}_{iimm}^{1,1,-}}\tau_{ii,m+r\ell,m}^{1,2}(0)
    \tau_{iim,m+r\ell}^{2,1}(0)\inv\vac,\label{eq:va-rel-varphi+-}\\
  &\tag{$\tau$11}\xi_{i,m-r_ia_{ij}}^{2^\pm}(0)\xi_{i,m-r_ia_{ij}-2r_i}^{2^\pm}(0)\cdots \xi_{i,m+r_ia_{ij}}^{2^\pm}(0)\xi_{j,m}^{2^\pm}\quad\te{for }m\in\Z_\wp,\,i,j\in I,\,\te{with }a_{ij}\le 0,\label{eq:va-rel-serre}\\
  &\tag{$\tau$12}\xi_{i,m+2r_i(\wp_i-1)}^{2^\pm}(0)\xi_{i,m+2r_i(\wp_i-2)}^{2^\pm}(0)\cdots \xi_{i,m+2r_i}^{2^\pm}(0)\xi_{i,m}^{2^\pm}\quad\te{for }i\in I,\,m\in\Z_\wp.
  \label{eq:va-rel-rtu}
\end{align}
\end{de}

\begin{rem}\label{rem:def-Y-V}
From Proposition \ref{prop:V-M-pre}, and the Definitions \ref{de:V-pre}, \ref{de:V}, we get that
the vertex operator map $Y$ of $V_{\wp,\tau}^\ell(\g)$
is determined by
\begin{align*}
  Y(\xi_{i,m}^a,z)=\xi_{i,m}^a(z)\quad\te{for }i\in I,\,m\in\Z_\wp,\,a\in\{0,1^\pm,2^\pm\}.
\end{align*}
\end{rem}

\begin{rem}\label{rem:countible-dim}
From Corollary \ref{coro:countable-dim}, it follows that
$V_{\wp,\tau}'^\ell(\g)$ has countable dimension.
Since $V_{\wp,\tau}^\ell(\g)$ is a quotient space of $V_{\wp,\tau}'^\ell(\g)$, we conclude that $V_{\wp,\tau}^\ell(\g)$ also has countable dimension.
\end{rem}

The proofs of the following five results are similar to the proof of \cite[Lemma 6.17]{K-Quantum-aff-va}.

\begin{lem}\label{lem:compatible-lower-bound}
Let $V$ be a nonlocal vertex algebra and let $u,v\in V$.
Suppose there exists $N\in\Z$, such that
\begin{align*}
  (z_1-z_2)^NY(u,z_1)Y(v,z_2)\in\Hom(V,V((z_1,z_2))).
\end{align*}
Then we have that
\begin{align*}
  &z^NY(u,z)v\in V[[z]]\quad\te{and}\quad
  Y(u_{N-1}v,z)=\lim_{z_1\to z}(z_1-z)^NY(u,z_1)Y(v,z).
\end{align*}
\end{lem}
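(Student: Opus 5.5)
Both claims should follow from the weak associativity \eqref{eq:weak-asso} and the vacuum property \eqref{eq:vacuum-prop}; I plan to read the $z_0\to 0$ behaviour of $Y(Y(u,z_0)v,z)$ off the hypothesis.

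\emph{Step 1: $z^NY(u,z)v\in V[[z]]$.} Set
\begin{align*}
  A(z_1,z_2)=(z_1-z_2)^NY(u,z_1)Y(v,z_2),
\end{align*}
which by assumption lies in $\Hom(V,V((z_1,z_2)))$. Applying it to $\vac$ and using $Y(v,z_2)\vac\in V[[z_2]]$, we get $A(z_1,z_2)\vac\in V((z_1))[[z_2]]$, and this series lies in $V((z_1,z_2))$. So setting $z_2=0$ is legitimate and gives $z_1^NY(u,z_1)v$. Hence $z_1^NY(u,z_1)v\in V((z_1))$. To get the stronger statement $z^NY(u,z)v\in V[[z]]$, apply \eqref{eq:weak-asso} to $\vac$. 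In the definition \eqref{eq:def-Y-E} of $Y_\E$ one may use $f=(z_1-z_2)^N$ (if $N<0$, multiply $A$ by a further power of $(z_1-z_2)$ and divide it out afterwards). This gives
\begin{align*}
  z_0^NY(Y(u,z_0)v,z)\vac=A(z+z_0,z)\vac .
\end{align*}
Since $A\vac\in V((z_1,z_2))$ and $A\vac$ has only nonnegative powers of $z_2$, the substitution $z_1=z+z_0$ produces a series in $V[[z_0]]((z))$; more precisely, it is in $V((z))[[z_0]]$ with bounded-below $z$-powers. Setting $z=0$ on the left gives $z_0^NY(u,z_0)v$, so this lies in $V[[z_0]]$.

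\emph{Step 2: the formula for $Y(u_{N-1}v,z)$.} Again by \eqref{eq:weak-asso} with $f=(z_1-z_2)^N$,
\begin{align*}
  Y\big(z_0^NY(u,z_0)v,z\big)=A(z_1,z)|_{z_1=z+z_0}.
\end{align*}
By Step 1 the left side lies in $\Hom(V,V((z)))[[z_0]]$, and its constant term in $z_0$ is $Y(u_{N-1}v,z)$. Because $A\in\Hom(V,V((z_1,z)))$, the substitution $z_1=z+z_0$ is a genuine Taylor expansion, so taking $z_0=0$ yields $A(z,z)=\lim_{z_1\to z}(z_1-z)^NY(u,z_1)Y(v,z)$.

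The main obstacle is bookkeeping, namely justifying that the substitution and the limit commute with applying the operators. The key fact needed is that $A$ lies in $V((z_1,z_2))$, not merely in $V((z_1))((z_2))$. This is exactly what makes $z_0\to 0$ well defined, and it is the same reasoning as in \cite[Lemma 6.17]{K-Quantum-aff-va}.
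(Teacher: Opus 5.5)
Your proof is correct and follows the paper's route. Weak associativity gives $z_0^NY(Y(u,z_0)v,z)=\big((z_1-z)^NY(u,z_1)Y(v,z)\big)|_{z_1=z+z_0}$, the right side has only nonnegative powers of $z_0$, and both claims are read off from this; you also make explicit (by acting on $\vac$ and setting $z=0$) the passage to $z^NY(u,z)v\in V[[z]]$, which the paper leaves implicit. One small slip: the substituted series need not lie in $V[[z_0]]((z))$, because negative powers of $z_1$ produce $z$-powers unbounded below as the $z_0$-degree grows, but you only use membership in $V((z))[[z_0]]$, so the argument is unaffected.
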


\begin{proof}
From \eqref{eq:def-Y-E} and \eqref{eq:weak-asso}, we have that
\begin{align*}
  &z_0^NY(Y(u,z_0)v,z_2)=z_0^NY_\E(Y(u,z_2),z_0)Y(v,z_2)\\
  =&\left.\left((z_1-z_2)^NY(u,z_1)Y(v,z_2)\right)\right|_{z_1=z_2+z_0}
  \in \Hom(V,V((z_2))[[z_0]]).
\end{align*}
Consequently, $z^NY(u,z)v\in V[[z]]$, and
\begin{align*}
  &Y(u_{N-1}v,z)=\lim_{z_0\to 0}z_0^NY(Y(u,z_0)v,z)\\
  =&\lim_{z_0\to 0}\left.\left((z_1-z_2)^NY(u,z_1)Y(v,z_2)\right)\right|_{z_1=z_2+z_0}\\
  =&\lim_{z_1\to z}(z_1-z)^NY(u,z_1)Y(v,z).
\end{align*}
We complete the proof of lemma.
\end{proof}

\begin{lem}\label{lem:S-A}
Set
\begin{align*}
  &A_{i,m,n}(z)=Y(\xi_{i,m}^{2^+},z)^-\xi_{i,n}^{2^-}
  -\delta_{mn}\vac z\inv+\delta_{m+2r\ell,n}\xi_{i,m+r\ell}^{1^+}z\inv.
\end{align*}
Then
\begin{align*}
  &S_{\wp,\tau}(z)(A_{j,m,n}(z_1)\ot \xi_{i,k}^a)
  \equiv 0 \mod R_{\wp,\tau}^\ell(\g)\ot V_{\wp,\tau}'^\ell(\g)\ot \C((z))\\
  &S_{\wp,\tau}(z)(\xi_{j,k}^a\ot A_{i,m,n}(z_1))
  \equiv 0 \mod V_{\wp,\tau}'^\ell(\g)\ot R_{\wp,\tau}^\ell(\g)\ot \C((z))\\
  &\qquad\qquad\te{for }i,j\in I,\,m,n,k\in \Z_\wp,\,a\in\{0,1^\pm,2^\pm\}.
\end{align*}
\end{lem}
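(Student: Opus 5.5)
We will compute $S_{\wp,\tau}(z)$ applied to $A_{j,m,n}(z_1)\ot\xi_{i,k}^a$ directly from the hexagon identities, reducing to the explicit formulas of Remark \ref{rem:S-wp-tau} on generators. Write $Y(\xi_{j,m}^{2^+},z_1)^-\xi_{j,n}^{2^-}=\sum_{p\ge 0}\xi_{j,m}^{2^+}(p)\xi_{j,n}^{2^-}z_1^{-p-1}$. By \eqref{tau7} and Lemma \ref{lem:compatible-lower-bound}, $(z_1-z_2)^{\delta_{mn}+\delta_{m+2r\ell,n}}Y(\xi^{2^+}_{j,m},z_1)Y(\xi^{2^-}_{j,n},z_2)$ is regular, so $\xi_{j,m}^{2^+}(p)\xi_{j,n}^{2^-}=0$ for $p\ge 1$. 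Hence $A_{j,m,n}(z_1)=z_1^{-1}$ times the element \eqref{eq:va-e+e-}, and it suffices to show that $S_{\wp,\tau}(z)$ maps $u\ot\xi_{i,k}^a$ into $R\ot V\ot\C((z))$, where $u$ is that element (similarly in the second slot).

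Next, apply \eqref{eq:qyb-hex1} with $z_2$ extracted by the residue giving the $0$-th product: $S(z)(\xi^{2^+}_{j,m}(0)\xi^{2^-}_{j,n}\ot w)=\Res_{z_2}Y^{12}(z_2)S^{23}(z)S^{13}(z+z_2)(\xi^{2^+}_{j,m}\ot\xi^{2^-}_{j,n}\ot w)$. Remark \ref{rem:S-wp-tau} shows that, for $w=\xi^{b^{\epsilon}}_{i,k}$ with $b\in\{1,2\}$, $S$ acts on each generator pair diagonally by a scalar series. Thus the $0$-th product term becomes $\xi^{2^+}_{j,m}(0)\xi^{2^-}_{j,n}\ot w$ times $f^{+}(z)f^{-}(z)$, plus terms from expanding $f^+(z+z_2)$ in $z_2$, which produce higher products $\xi^{2^+}_{j,m}(p)\xi^{2^-}_{j,n}$, $p\ge1$; these vanish. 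For $w=\xi^0_{i,k}$, $S$ acts unipotently, adding $\vac$-terms with coefficients, and the same bookkeeping applies. The $\vac$ and $\xi^{1^+}_{j,m+r\ell}$ terms of $u$ are handled directly by Remark \ref{rem:S-wp-tau} and \eqref{eq:qyb-vac}. Collecting the terms, $S(z)(u\ot w)=u\ot w\ot g(z)+(\text{terms in }R\ot V)$ provided the scalar factors coincide: the product of the $(2^+,b)$ and $(2^-,b)$ factors must equal the $(1^+,b)$ factor with index shifted by $r\ell$ (and analogously additively for $b=0$).

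The main obstacle is this coincidence of scalars. It must come from the defining identities of $\mathfrak T$ — namely $\tau^{1,b}_{ijmn}=\tau^{2,b}_{ij,m-r\ell,n}\tau^{2,b}_{ij,m+r\ell,n}{}^{-1}$ and $\tau^{1,0}=\tau^{2,0}(m-r\ell)-\tau^{2,0}(m+r\ell)$ — together with the parallel identities for the integers $A,B$, e.g. $A^{2,1}=B^{1,2}-B^{2,1}$. In the cases $m=n$ or $n=m+2r\ell$ the shifts by $\pm r\ell$ match up. I would check this case by case for $b=0,1,2$, using that the $\Z_\wp$ indices of $\xi^{2^-}$ enter with opposite sign. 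The second congruence follows by the same argument via \eqref{eq:qyb-hex2}, or from the first one by unitarity $S(z)S^{21}(-z)=1$.
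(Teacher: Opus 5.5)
Your proposal is correct and follows the paper's route: \eqref{tau7} together with Lemma \ref{lem:compatible-lower-bound} reduces $A_{i,m,n}(z)$ to $z^{-1}$ times the generator \eqref{eq:va-e+e-}, and \eqref{eq:qyb-vac}, \eqref{eq:qyb-hex1}, \eqref{eq:qyb-hex2} and Remark \ref{rem:S-wp-tau} then reduce the claim to scalar coincidences, which follow from the shift identities defining $\mathfrak T$ (in both the third and fourth index) and the matching identities for the integers $A,B$. One small correction: the exponent in \eqref{tau7} is $\delta_{ij}\max\{\delta_{mn},\delta_{m+2r\ell,n}\}$, not $\delta_{mn}+\delta_{m+2r\ell,n}$, and you need the max to conclude that $\xi^{2^+}_{j,m}(p)\xi^{2^-}_{j,n}=0$ for all $p\ge 1$ when $2r\ell\equiv 0 \pmod{\wp}$ (for instance $\ell=0$).
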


\begin{proof}
Applying Lemma \ref{lem:compatible-lower-bound} to \eqref{tau7}, we have that
\begin{align*}
  Y(\xi_{i,m}^{2^+},z)^-\xi_{i,m}^{2^-}\in z\inv V_{\wp,\tau}'^\ell(\g).
\end{align*}
Combining this with \eqref{eq:qyb-vac}, \eqref{eq:qyb-hex1}, \eqref{eq:qyb-hex2} and Remark \ref{rem:S-wp-tau}, the lemma follows by a straightforward verification.
\end{proof}

\begin{lem}\label{lem:S-B}
Set
\begin{align*}
  B_{i,m}^\pm=&(\pm \xi_{i,m-r_i}^0(-1)\mp\xi_{i,m+r_i}^0(-1))\xi_{i,m}^{1^\pm}-\partial \xi_{i,m}^{1^\pm}
  -(\tau_{ii,m-r_i,m}^{0,1}(0)-\tau_{ii,m+r_i,m}^{0,1}(0))\xi_{i,m}^{1^\pm}.
\end{align*}
Then
\begin{align*}
  &S_{\wp,\tau}(z)(B_{j,n}^\pm(z_1)\ot \xi_{i,m}^a)
  \equiv 0 \mod R_{\wp,\tau}^\ell(\g)\ot V_{\wp,\tau}'^\ell(\g)\ot \C((z))\\
  &S_{\wp,\tau}(z)(\xi_{j,n}^a\ot B_{i,m}^\pm(z_1))
  \equiv 0 \mod V_{\wp,\tau}'^\ell(\g)\ot R_{\wp,\tau}^\ell(\g)\ot \C((z))\\
  &\qquad\qquad\te{for }i,j\in I,\,m,n\in \Z_\wp,\,a\in\{0,1^\pm,2^\pm\}.
\end{align*}
\end{lem}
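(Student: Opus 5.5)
The plan follows the template of Lemma \ref{lem:S-A}. We compute $S_{\wp,\tau}(z)$ on the tensor products involving $B_{j,n}^\pm$ term by term, using the hexagon identities \eqref{eq:qyb-hex1}, \eqref{eq:qyb-hex2}, the vacuum property \eqref{eq:qyb-vac}, the derivation property \eqref{eq:qyb-der-shift} and the explicit formulas of Remark \ref{rem:S-wp-tau}. We then show that the result is again a multiple of $B_{j,n}^\pm$ (or of $B_{i,m}^\pm$ in the second factor) with coefficients in $V_{\wp,\tau}'^\ell(\g)\ot\C((z))$, plus terms that cancel identically. Since $B_{j,n}^\pm$ is a single vector (there is no $z_1$-dependence to speak of), both congruences reduce to showing that $S_{\wp,\tau}(z)(B_{j,n}^\pm\ot\xi_{i,m}^a)$ lies in $B_{j,n}^\pm\ot V'\ot\C((z))\subset R_{\wp,\tau}^\ell(\g)\ot V'\ot \C((z))$, and symmetrically for the second factor.

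The first step treats the three summands of $B_{j,n}^\pm$ separately.
\begin{itemize}
\item For $\partial\xi_{j,n}^{1^\pm}$ we use \eqref{eq:qyb-der-shift}: $S(z)(\partial u\ot v)=(\partial\ot1)S(z)(u\ot v)+\frac{d}{dz}S(z)(u\ot v)$. Remark \ref{rem:S-wp-tau} expresses $S(z)(\xi_{j,n}^{1^\pm}\ot \xi_{i,m}^a)$ as $\xi_{j,n}^{1^\pm}\ot\xi_{i,m}^a\ot g(z)$, plus, when $a=0$, a term $\xi_{j,n}^{1^\pm}\ot\vac\ot h(z)$. Hence the $\partial$-term produces $\partial\xi_{j,n}^{1^\pm}\ot\xi_{i,m}^a\ot g(z)+\xi_{j,n}^{1^\pm}\ot\xi_{i,m}^a\ot g'(z)$, together with the analogous terms involving $\vac$.
\item For $\xi_{j,n\mp r_j}^0(-1)\xi_{j,n}^{1^\pm}$, which is the residue of $Y(\xi^0,z_2)\xi^{1^\pm}$, we apply \eqref{eq:qyb-hex1}. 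This gives $Y^{12}(z_2)S^{23}(z)S^{13}(z+z_2)$ acting on $\xi^0\ot\xi^{1^\pm}\ot\xi^a_{i,m}$, from which we take $\Res_{z_2}z_2^{-1}$. The factor $S^{23}$ multiplies by $g(z)$. The factor $S^{13}(z+z_2)$ acting on $\xi^0\ot\xi_{i,m}^a$ adds a $\vac\ot\xi^a$ term with coefficient $\pm(\Apsc{\ell}/(z+z_2)+\tau(\cdot))$, where the sign comes from the $\pm$ charge of $\xi^a$. Extracting the $z_2^0$ coefficient evaluates this at $z_2=0$.
\end{itemize}

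Collecting terms, we obtain $B_{j,n}^\pm\ot\xi_{i,m}^a\ot g(z)$ plus an error $\xi_{j,n}^{1^\pm}\ot\xi_{i,m}^a\ot E(z)$. Here $E(z)$ is the difference between $g'(z)/g(z)$ and the combination of $S^{0,a}$-coefficients for the indices $n- r_j$ and $n+r_j$. The second step is to show $E(z)=0$. With $g(z)=(-z)^{-\epsilon\Apsc{\ell}}z^{\epsilon\Apsc{\ell}}\tau(-z)^{-\epsilon}\tau(z)^{\epsilon}$, the logarithmic derivative is a sum of $1/z$ terms and of $\tau'/\tau$ terms. Two sources of cancellation are needed:
\begin{itemize}
\item the $1/z$ parts cancel by the integer identities $\Apsc{\ell}^{0,1}$, $\Apsc{\ell}^{1,1}$, $\Apsc{\ell}^{1,2}$ obtained from $\vvp{\cdot}$ with shifts $q^{\pm r_j}$, for example $\vvp{[a]_{q_i}X q^{-s-r_j}}-\vvp{[a]_{q_i}Xq^{-s+r_j}}=\vvp{(q_i^{a}-q_i^{-a})\cdots}$ type telescoping;
\item the $\tau$ parts cancel precisely by the defining differential equations of $\mathfrak T$, namely $\pd z\tau^{a,1}_{ijmn}=(\tau^{a,0}_{ijm,n+r_j}-\tau^{a,0}_{ijm,n-r_j})\tau^{a,1}_{ijmn}$ and $\pd z\tau^{0,1}=\tau^{0,0}_{\cdot,n+r_j}-\tau^{0,0}_{\cdot,n-r_j}$.
\end{itemize}
The constant $\tau^{0,1}_{ii,m\mp r_i,m}(0)$ in $B$ compensates the residual constant term when $a=0$. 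That case requires its own check: the $\vac$ term $\xi^{1^\pm}\ot\vac\ot h(z)$ must match $\frac{d}{dz}$ of the $\vac$ coefficient, which uses $\pd z\tau^{1,0}=\tau^{0,0}_{m+r_i}-\tau^{0,0}_{m-r_i}$ together with \eqref{tau1}.

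The second congruence follows in the same way using \eqref{eq:qyb-hex2} and the second half of \eqref{eq:qyb-der-shift}. Alternatively, it can be deduced from the first via unitarity $S(z)S^{21}(-z)=1$ in \eqref{eq:qyb-unitary-qyb}, provided one first checks that $S$ is invertible and that the first congruence also holds for $S^{21}(-z)\inv$. I would do the direct computation.

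I expect the main obstacle to be the bookkeeping of $E(z)=0$: matching each $\vvp{\cdot}$-integer and each $\tau$-component with the correct index shifts $n\pm r_j$ versus $m\pm r_i$ for each of the five choices $a\in\{0,1^\pm,2^\pm\}$. I would first do the case $a=2^{\epsilon}$, where only the multiplicative $\tau^{1,2}$, $\tau^{2,1}$ relations enter, and then adapt it to the remaining cases.
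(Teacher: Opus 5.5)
Your proposal is correct and is essentially the paper's own proof: a direct verification via \eqref{eq:qyb-vac}--\eqref{eq:qyb-der-shift} and the explicit formulas of Remark \ref{rem:S-wp-tau}. It shows that $S_{\wp,\tau}(z)(B_{j,n}^\pm\ot\xi_{i,m}^a)\in B_{j,n}^\pm\ot V_{\wp,\tau}'^\ell(\g)\ot\C((z))$ (and symmetrically in the second factor), with the $\tau$-parts cancelling by the differential relations defining $\mathfrak T$ and the pole parts by the $\langle\cdot\rangle_\wp$-identities. One small correction: the constant multiple of $\xi^{1^\pm}$ in $B^\pm$ does not compensate any residual term, since it transforms exactly like $\xi^{1^\pm}$ and is absorbed into $B^\pm$ automatically; the error term vanishes identically, including for $a=0$.
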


\begin{proof}
A direct verification using \eqref{eq:qyb-vac}, \eqref{eq:qyb-hex1}, \eqref{eq:qyb-hex2}, \eqref{eq:qyb-der-shift} and Remark \ref{rem:S-wp-tau} then establishes the lemma.
\end{proof}

\begin{lem}\label{lem:S-C}
Set
\begin{align*}
  C_{i,m}^\pm=\xi_{i,m}^{1^\pm}
  (-1-B_{iimm}^{1,1,+}+B_{iimm}^{1,1,-})\xi_{i,m}^{1^{\mp}}
  -(-1)^{\Bpsc{\ell}_{iimm}^{1,1,-}}\tau_{ii,m+r\ell,m}^{1,2}(0)
    \tau_{iim,m+r\ell}^{2,1}(0)\inv\vac.
\end{align*}
Then
\begin{align*}
  &S_{\wp,\tau}(z)(C_{j,n}^\pm(z_1)\ot \xi_{i,m}^a)
  \equiv 0 \mod R_{\wp,\tau}^\ell(\g)\ot V_{\wp,\tau}'^\ell(\g)\ot \C((z))\\
  &S_{\wp,\tau}(z)(\xi_{j,n}^a\ot C_{i,m}^\pm(z_1))
  \equiv 0 \mod V_{\wp,\tau}'^\ell(\g)\ot R_{\wp,\tau}^\ell(\g)\ot \C((z))\\
  &\qquad\qquad\te{for }i,j\in I,\,m,n\in \Z_\wp,\,a\in\{0,1^\pm,2^\pm\}.
\end{align*}
\end{lem}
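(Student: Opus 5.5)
The plan is to follow the pattern of Lemmas \ref{lem:S-A} and \ref{lem:S-B}. We compute $S_{\wp,\tau}(z)$ on $C_{i,m}^\pm\otimes\xi_{j,n}^a$ and on $\xi_{j,n}^a\otimes C_{i,m}^\pm$ directly from Remark \ref{rem:S-wp-tau}, and show that the result is a scalar multiple (in $\C((z))$) of $C_{i,m}^\pm\otimes\xi_{j,n}^a$, respectively $\xi_{j,n}^a\otimes C_{i,m}^\pm$. Since $C_{i,m}^\pm\in R_{\wp,\tau}^\ell(\g)$, this gives the congruences. Write $N=-1-\Bpsc{\ell}_{iimm}^{1,1,+}+\Bpsc{\ell}_{iimm}^{1,1,-}$, so that
\begin{align*}
C_{i,m}^\pm=\xi_{i,m}^{1^\pm}(N)\xi_{i,m}^{1^\mp}-c\,\vac,
\end{align*}
where $c$ is the displayed constant.

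First we treat the term $\xi_{i,m}^{1^\pm}(N)\xi_{i,m}^{1^\mp}=\Res_{z_1}z_1^N Y(\xi_{i,m}^{1^\pm},z_1)\xi_{i,m}^{1^\mp}$. By \eqref{eq:qyb-hex1} we have
\begin{align*}
S(z)(Y(u,z_1)v\otimes w)=Y^{12}(z_1)S^{23}(z)S^{13}(z+z_1)(u\otimes v\otimes w).
\end{align*}
Take $u=\xi_{i,m}^{1^\pm}$, $v=\xi_{i,m}^{1^\mp}$ and $w=\xi_{j,n}^a$. In each case $a\in\{0,1^\pm,2^\pm\}$, Remark \ref{rem:S-wp-tau} gives explicit formulas.
\begin{itemize}
\item For $a\in\{1^\pm,2^\pm\}$ both factors act by multiplication by series $g_\pm(z+z_1)$ and $g_\mp(z)$. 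These come from opposite signs $\epsilon_1=\pm,\epsilon_1=\mp$ in the exponent $\epsilon_1\epsilon_2$, so $g_\mp=g_\pm^{-1}$ (same $i,m$).
\item For $a=0$ the operator adds a term with $\vac$ in place of $\xi_{j,n}^0$, with coefficients $\pm h(z+z_1)$ and $\mp h(z)$. The $\vac$-terms then give $Y(u,z_1)v$ times $h(z+z_1)-h(z)$, and similarly on the other side.
\end{itemize}
In the multiplicative case we get $Y(u,z_1)v\otimes w\otimes g(z+z_1)g(z)^{-1}$. Expanding $g(z+z_1)/g(z)=\sum_k z_1^k\,\phi_k(z)$ and taking $\Res_{z_1}z_1^N$ produces $\sum_k \phi_k(z)\,u(N+k)v\otimes w$.

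The key point, and the main obstacle, is the following. By \eqref{tau4} and Lemma \ref{lem:compatible-lower-bound}, with exponent total $-N-1$, we have $z_1^{-N-1}Y(u,z_1)v\in V[[z_1]]$. Hence $u(N+k)v=0$ for $k\geq1$, so only the $k=0$ term survives and we get $\phi_0=1$. Similarly, in the additive case only the constant term of $h(z+z_1)-h(z)$ contributes, and it vanishes. So $S(z)$ fixes $\xi_{i,m}^{1^\pm}(N)\xi_{i,m}^{1^\mp}\otimes w$.

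Next, $S(z)(\vac\otimes w)=\vac\otimes w$ by \eqref{eq:qyb-vac}. Therefore
\begin{align*}
S(z)(C_{i,m}^\pm\otimes w)=C_{i,m}^\pm\otimes w,
\end{align*}
which lies in $R\otimes V'\otimes\C((z))$. The second congruence follows in the same way using \eqref{eq:qyb-hex2}, where $S^{12}(z-z_1)S^{13}(z)$ again produces a ratio of series equal to $1$ at $z_1=0$ (or a difference vanishing at $z_1=0$), and the same truncation argument applies.

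The delicate check is the lower bound from \eqref{tau4}. We must verify that the exponent of $(z_1-z_2)$ appearing in \eqref{tau4} for $\epsilon_1\epsilon_2=-1$ is exactly $N+1$ up to a unit in $\C[[z]]$. This is where the factors $(z_2-z_1)^{\cdots}$ must be rewritten as $\pm(z_1-z_2)^{\cdots}$, and where $\tau^{1,1}_{iimm}$ is invertible in $\C[[z]]$. With that established, the residue computation reduces as described.
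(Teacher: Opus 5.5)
Your proposal is correct and is the paper's own argument. The paper derives $z^{-B_{iimm}^{1,1,+}+B_{iimm}^{1,1,-}}Y(\xi_{i,m}^{1^\pm},z)\xi_{i,m}^{1^\mp}\in V_{\wp,\tau}'^\ell(\g)[[z]]$ from \eqref{tau4} via Lemma \ref{lem:compatible-lower-bound}, and leaves as a ``straightforward verification'' the hexagon computation with \eqref{eq:qyb-vac}, \eqref{eq:qyb-hex1}, \eqref{eq:qyb-hex2} and Remark \ref{rem:S-wp-tau} that you carry out, showing that $S_{\wp,\tau}(z)$ actually fixes $C\otimes\xi$ and $\xi\otimes C$. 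One slip: the bound must read $z_1^{N+1}Y(u,z_1)v\in V[[z_1]]$ (exponent $N+1$, as your last paragraph correctly says), not $z_1^{-N-1}$, because only the former gives $u(N+k)v=0$ for $k\geq 1$.
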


\begin{proof}
Applying Lemma \ref{lem:compatible-lower-bound} to \eqref{tau4}, we have that
\begin{align*}
  z^{-\Bpsc{\ell}_{iimm}^{1,1,+}+\Bpsc{\ell}_{iimm}^{1,1,-}}
  Y(\xi_{i,m}^{1^\pm},z)\xi_{i,m}^{1^\mp}
  \in V_{\wp,\tau}'^\ell(\g)[[z]].
\end{align*}
Combining this with \eqref{eq:qyb-vac}, \eqref{eq:qyb-hex1}, \eqref{eq:qyb-hex2} and Remark \ref{rem:S-wp-tau}, the lemma follows by a straightforward verification.
\end{proof}

\begin{lem}\label{lem:S-D}
For $i,j\in I$ with $a_{ij}\le 0$ and $m\in\Z_\wp$, set
\begin{align*}
  &D_{i,j,m}^\pm=\xi_{i,m-r_ia_{ij}}^{2^\pm}(0)\xi_{i,m-r_ia_{ij}-2r_i}^{2^\pm}(0)\cdots \xi_{i,m+r_ia_{ij}}^{2^\pm}(0)\xi_{j,m}^{2^\pm}.
\end{align*}
Then
\begin{align*}
  &S_{\wp,\tau}(z)(D_{i,j,m}^\pm(z_1)\ot \xi_{k,n}^a)
  \equiv 0 \mod R_{\wp,\tau}^\ell(\g)\ot V_{\wp,\tau}'^\ell(\g)\ot \C((z))\\
  &S_{\wp,\tau}(z)(\xi_{k,n}^a\ot D_{i,j,m}^\pm(z_1))
  \equiv 0 \mod V_{\wp,\tau}'^\ell(\g)\ot R_{\wp,\tau}^\ell(\g)\ot \C((z))\\
  &\qquad\qquad\te{for }i,j,k\in I,\,m,n\in \Z_\wp,\,a\in\{0,1^\pm,2^\pm\}.
\end{align*}
\end{lem}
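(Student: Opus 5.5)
The plan follows the same pattern as Lemmas \ref{lem:S-A}, \ref{lem:S-B}, \ref{lem:S-C}, which in turn follow \cite[Lemma 6.17]{K-Quantum-aff-va}. Here $D_{i,j,m}^\pm(z_1)$ denotes $Y(D_{i,j,m}^\pm,z_1)$ or its expansion. The claim concerns the vector $D_{i,j,m}^\pm$ placed in one tensor slot. It reduces to understanding $S_{\wp,\tau}(z)$ on iterated products $u^{(1)}(0)u^{(2)}(0)\cdots u^{(k)}(0)w$, where each $u^{(s)}=\xi_{i,\cdot}^{2^\pm}$ and $w=\xi_{j,m}^{2^\pm}$.

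The key step is to apply \eqref{eq:qyb-hex1} repeatedly, in its component form. From $S(z_1)Y^{12}(z_2)=Y^{12}(z_2)S^{23}(z_1)S^{13}(z_1+z_2)$, taking $\Res_{z_2}$ gives
\[
S(z)\bigl(u(0)v\otimes x\bigr)=\Res_{z_2}Y^{12}(z_2)S^{23}(z)S^{13}(z+z_2)(u\otimes v\otimes x).
\]
By Remark \ref{rem:S-wp-tau}, each generator pair $\xi^{2^\pm}_{i,\cdot}\otimes\xi^a_{k,n}$ is sent either to a scalar multiple of itself (when $a\in\{1^\pm,2^\pm\}$) or to itself plus a vacuum term (when $a=0$).

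The case $a\in\{1^\pm,2^\pm\}$ is handled first. Iterating the identity, $S_{\wp,\tau}(z)(D_{i,j,m}^\pm\otimes\xi_{k,n}^a)$ becomes $\Res_{z_2,\dots}$ of the nested product $Y(\xi^{2^\pm},z_2)\cdots$ applied to $\xi_{j,m}^{2^\pm}$, multiplied by a product of scalar series $g_s(z+z_2+\cdots)$, all tensored with $\xi_{k,n}^a$. I would then use Lemma \ref{lem:compatible-lower-bound} together with the $S$-locality \eqref{tau6} to see that the nested products have only finite poles. Expanding the scalar factors in $z_2,\dots$ turns the residue into a $\C((z))$-linear combination of vectors of the form $\xi^{2^\pm}(p_1)\cdots\xi^{2^\pm}(p_k)\xi^{2^\pm}_{j,m}$ with $p_s\ge0$.

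The main obstacle is showing these vectors lie in $R_{\wp,\tau}^\ell(\g)$ when some $p_s>0$. My approach is the normal-ordering picture of Lemma \ref{lem:no=phi-0}, transported to the vertex algebra setting with the ordinary associate. The iterated $0$-products are residues of a normally ordered product $:\!\xi^{2^\pm}(z_1)\cdots\xi^{2^\pm}(z_k)\xi^{2^\pm}(w)\!:$ multiplied by factors $(z_s-z_t)^{-A}$. The higher modes are then $\partial$-derivatives and products with lower-order singular terms. Because the exponents $A^{2,2}$ make the relevant product regular except at the single pole used in the $0$-mode, the higher modes should be expressible as $D_{i,j,m'}^\pm$, its derivatives, and $\xi^{2^\pm}(-n)$ applied to these. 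All of these lie in the ideal, since $R_{\wp,\tau}^\ell(\g)$ is $\partial$-stable and left-ideal closed. If this direct bookkeeping fails, the fallback is to note that the full $\C((z))$-expansion of $Y(\xi^{2^\pm},z_2)D'$, where $D'$ is the shorter Serre vector, is controlled by the same $z_2^{-1}$ structure. That would let one induct on $k$.

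The case $a=0$ is handled the same way, but the vacuum terms appear instead of scalars. Using \eqref{eq:qyb-vac}, one gets $D\otimes\xi^0$ plus $\sum c(z)\,D\otimes\vac$, which is already in $R\otimes V'\otimes\C((z))$.

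The second congruence, with $D$ in the right-hand slot, follows symmetrically from \eqref{eq:qyb-hex2}. It can also be obtained from unitarity $S(z)S^{21}(-z)=1$ in \eqref{eq:qyb-unitary-qyb}, combined with invertibility of the scalar factors.
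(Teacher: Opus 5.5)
Your architecture is the paper's. You push the nested $0$-products through \eqref{eq:qyb-hex1} (resp.\ \eqref{eq:qyb-hex2}) one factor at a time, evaluate generator pairs by Remark \ref{rem:S-wp-tau}, and control poles with \eqref{tau6} and Lemma \ref{lem:compatible-lower-bound}.

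The gap is at the step you flag as the main obstacle. After expanding the scalar factors $g(z+z_2)$ in nonnegative powers of $z_2$, you are left with vectors $\xi^{2^\pm}(p_1)\cdots\xi^{2^\pm}(p_k)\xi^{2^\pm}_{j,m}$ with some $p_s>0$. You propose to place them in $R_{\wp,\tau}^\ell(\g)$ by rewriting them via derivatives of $D$ and negative modes applied to $D$. That cannot work. Such a vector is a positive mode applied to a \emph{partial} Serre product, not something produced from $D$ by vertex-algebra operations. So the ideal property and $\partial$-stability of $R_{\wp,\tau}^\ell(\g)$ say nothing about it. If higher-order poles really occurred, the resulting vectors would be genuinely new and would have no reason to lie in $R_{\wp,\tau}^\ell(\g)$. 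Your $a=0$ paragraph and your second-slot argument silently assume the same collapse, so they inherit the gap.

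What is needed, and what the paper proves, is that these terms are zero already in $V_{\wp,\tau}'^\ell(\g)$. Put $\wt D_{-1}=\xi^{2^\pm}_{j,m}$ and $\wt D_N=\xi^{2^\pm}_{i,m+r_ia_{ij}+2r_iN}(0)\wt D_{N-1}$. Prove by induction on $N$ that
\[ Y(\xi^{2^\pm}_{i,m+r_ia_{ij}+2r_iN},z)\,\wt D_{N-1}\in z^{-1}V_{\wp,\tau}'^\ell(\g)[[z]], \]
together with an expression of $Y(\wt D_N,z)$ as an iterated limit $z_s\to z$ of products of the generating fields.

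The point is the exponents in \eqref{tau6}: $A^{2,2}_{ijm'n'}=\vvp{q_i^{a_{ij}}q^{n'-m'}}\in\{0,1\}$. The new field $\xi^{2^\pm}_{i,m+r_ia_{ij}+2r_iN}$ has nonzero exponent only against the immediately preceding factor. That factor is $\xi^{2^\pm}_{j,m}$ for $N=0$ and $\xi^{2^\pm}_{i,m+r_ia_{ij}+2r_i(N-1)}$ for $N\ge1$. Against every other factor, the relevant shift is a nonzero multiple of $2r_i$ of size at most $-2r_ia_{ij}\le 2r<\wp$, so the exponent is $0$.

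Hence there is a single simple pole, Lemma \ref{lem:compatible-lower-bound} gives the displayed inclusion, and $\xi^{2^\pm}(p)\wt D_{N-1}=0$ for every $p\ge1$. Each residue then collapses to its $0$-mode, giving:
\begin{itemize}
\item $S_{\wp,\tau}(z)(D\ot\xi^a_{k,n})=D\ot\xi^a_{k,n}\ot g(z)$ for $a\in\{1^\pm,2^\pm\}$;
\item $S_{\wp,\tau}(z)(D\ot\xi^0_{k,n})=D\ot\xi^0_{k,n}+D\ot\vac\ot c(z)$ for $a=0$.
\end{itemize}
Your ``fallback'' sentence gestures at this simple-pole structure. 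It is the entire content of the proof, not a backup, and it has to be established as above.
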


\begin{proof}
For each $-1\le N\in\Z$, we set
\begin{align*}
  &\wt D_{i,j,m,-1}^\pm=\xi_{j,m}^\pm,\quad
  \wt D_{i,j,m,N+1}^\pm= \xi_{i,m+r_ia_{ij}+2r_i(N+1)}^\pm(0)
  \wt D_{i,j,m,N}^\pm,\quad
  Y_{i,j,m,-1}^\pm(z)=\xi_{j,m}^\pm(z),\\
  &\te{and}\quad Y_{i,j,m,N+1}^\pm(z_1,\dots,z_{N+2},z)
  =\prod_{t=2}^{N+2}(z_1-z_t)\xi_{i,m+r_ia_{ij}+2r_i(N+1)}^\pm(z_1)
  Y_{i,j,m,N}^\pm(z_2,\dots, z_{N+2},z).
\end{align*}
From \eqref{tau6}, we have that
\begin{align}\label{eq:S-D-temp}
  Y_{i,j,m,N}^\pm(z_1,\dots,z_{N+1},z)\in \Hom(V_{\wp,\tau}'^\ell(\g),
    V_{\wp,\tau}'^\ell(\g)((z_1,\dots,z_{N+1},z))).
\end{align}
We first prove the following two results by using induction on $N$:
\begin{align}
  &Y(\xi_{i,m+r_ia_{ij}+2r_iN}^\pm,z)\wt D_{i,j,m,N-1}^\pm
  \in z\inv V_{\wp,\tau}'^\ell(\g)[[z]],\label{eq:S-D-cond1}\\
  &Y(\wt D_{i,j,m,N}^\pm,z)
  =\prod_{s=1}^{N+1}\lim_{z_s\to z}
  Y_{i,j,m,N}^\pm(z_1,\dots,z_{N+1},z)\label{eq:S-D-cond2}.
\end{align}
When $N=0$, \eqref{eq:S-D-cond1} and \eqref{eq:S-D-cond2} hold upon applying Lemma \ref{lem:compatible-lower-bound} to \eqref{tau6}.
Suppose that \eqref{eq:S-D-cond1} and \eqref{eq:S-D-cond2} hold  for $N$.
Combining the induction assumption with \eqref{eq:S-D-temp}, we have that
\begin{align*}
  (z_1-z_2)Y(\xi_{i,m,r_ia_{ij}+2r_i(N+1)}^\pm,z_1)
  Y(\wt D_{i,j,m,N},z_2)\in \Hom(V_{\wp,\tau}'^\ell(\g),
    V_{\wp,\tau}'^\ell(\g)((z_1,z_2))).
\end{align*}
By applying Lemma \ref{lem:compatible-lower-bound}, we complete the proof of \eqref{eq:S-D-cond1} and \eqref{eq:S-D-cond2} for $N+1$.

From \eqref{eq:S-D-cond1}, we have that
\begin{align*}
  &Y(\xi_{i,m-r_ia_{ij}}^{2^\pm},z_1)^-
  Y(\xi_{i,m-r_ia_{ij}-2r_i}^{2^\pm},z_2)^-\cdots Y(\xi_{i,m+r_ia_{ij}}^{2^\pm},z_{m_{ij}})^-\xi_{j,m}^{2^\pm}\\
  =&\xi_{i,m-r_ia_{ij}}^{2^\pm}(0)\xi_{i,m-r_ia_{ij}-2r_i}^{2^\pm}(0)\cdots \xi_{i,m+r_ia_{ij}}^{2^\pm}(0)\xi_{j,m}^{2^\pm}
  z_1\inv z_2\inv\cdots z_{m_{ij}}\inv.
\end{align*}
Combining this with \eqref{eq:qyb-vac}, \eqref{eq:qyb-hex1}, \eqref{eq:qyb-hex2} and Remark \ref{rem:S-wp-tau}, the lemma follows by a straightforward verification.
\end{proof}

A proof similar to that of Lemma \ref{lem:S-D} yields the following result.

\begin{lem}\label{lem:S-E}
Set
\begin{align*}
  &E_{i,m}^\pm=\xi_{i,m+2r_i(\wp_i-1)}^{2^\pm}(0)\xi_{i,m+2r_i(\wp_i-2)}^{2^\pm}(0)\cdots \xi_{i,m+2r_i}^{2^\pm}(0)\xi_{i,m}^{2^\pm}.
\end{align*}
Then
\begin{align*}
  &S_{\wp,\tau}(z)(E_{j,n}^\pm(z_1)\ot \xi_{i,m}^a)
  \equiv 0 \mod R_{\wp,\tau}^\ell(\g)\ot V_{\wp,\tau}'^\ell(\g)\ot \C((z))\\
  &S_{\wp,\tau}(z)(\xi_{j,n}^a\ot E_{i,m}^\pm(z_1))
  \equiv 0 \mod R_{\wp,\tau}^\ell(\g)\ot V_{\wp,\tau}'^\ell(\g)\ot \C((z))\\
  &\qquad\qquad\te{for }i,j\in I,\,m,n\in \Z_\wp,\,a\in\{0,1^\pm,2^\pm\}.
\end{align*}
\end{lem}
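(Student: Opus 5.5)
The argument follows the proof of Lemma \ref{lem:S-D} with $m_{ij}$ replaced by $\wp_i$ and the index $j$ replaced by $i$. The quantity to control is the iterated $0$-th product $E_{i,m}^\pm$ built from the fields $\xi_{i,m+2r_is}^{2^\pm}$, $0\le s\le \wp_i-1$.

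For $-1\le N\le \wp_i-2$ I set $\wt E_{i,m,-1}^\pm=\xi_{i,m}^{2^\pm}$ and $\wt E_{i,m,N+1}^\pm=\xi_{i,m+2r_i(N+1)}^{2^\pm}(0)\wt E_{i,m,N}^\pm$. In parallel I define
\begin{align*}
Y_{i,m,N+1}^\pm(z_1,\dots,z_{N+2},z)=\prod_{t=2}^{N+2}(z_1-z_t)\,\xi_{i,m+2r_i(N+1)}^{2^\pm}(z_1)Y_{i,m,N}^\pm(z_2,\dots,z_{N+2},z),
\end{align*}
with $Y_{i,m,-1}^\pm(z)=\xi_{i,m}^{2^\pm}(z)$.

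\textbf{Step 1 (simple poles).} I use \eqref{tau6} with $j=i$ and indices $m+2r_i(s+1)$, $m+2r_is$. The exponent $\Apsc{\ell}_{ii,m+2r_i(s+1),m+2r_is}^{2,2}=\vvp{q_i^{2}q^{-2r_i}}=1$. Each adjacent pair therefore has exactly a simple pole, and all other pairs are regular because their exponents vanish. Hence $Y_{i,m,N}^\pm\in\Hom(V,V((z_1,\dots,z_{N+1},z)))$, where $V=V_{\wp,\tau}'^\ell(\g)$.

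\textbf{Step 2 (induction on $N$).} Using Lemma \ref{lem:compatible-lower-bound}, I prove by induction on $N$ that
\begin{align*}
Y(\xi_{i,m+2r_iN}^{2^\pm},z)\wt E_{i,m,N-1}^\pm\in z^{-1}V[[z]]
\end{align*}
and that $Y(\wt E_{i,m,N}^\pm,z)$ equals the iterated limit $z_s\to z$ of $Y_{i,m,N}^\pm$.

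\textbf{Step 3 (factorised singular part).} From Step 2, the iterated singular parts factor:
\begin{align*}
Y(\xi_{i,m+2r_i(\wp_i-1)}^{2^\pm},z_1)^-\cdots Y(\xi_{i,m+2r_i}^{2^\pm},z_{\wp_i-1})^-\xi_{i,m}^{2^\pm}=E_{i,m}^\pm\,z_1^{-1}\cdots z_{\wp_i-1}^{-1}.
\end{align*}

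\textbf{Step 4 (apply $S_{\wp,\tau}$).} I apply $S_{\wp,\tau}(z)$ to $E_{i,m}^\pm(z_1)\ot\xi_{j,n}^a$, and symmetrically to $\xi_{j,n}^a\ot E_{i,m}^\pm(z_1)$. Using \eqref{eq:qyb-hex1} and \eqref{eq:qyb-hex2}, I move $S$ past each $Y(\xi^{2^\pm},z_s)$. By Remark \ref{rem:S-wp-tau}, $S_{\wp,\tau}$ acts on a pair of generators either diagonally, multiplying by a series in $\C((z))$, or, for the $\xi^0$ cases, by adding a term of the form $\vac\ot(\cdot)$. Consequently $S(z)(E_{i,m}^\pm\ot\xi_{j,n}^a)$ is the singular-part coefficient of the same product of modes, times a scalar function, plus terms in which one factor $\xi^0$ has been replaced by $\vac$. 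The first contribution is a multiple of $E_{i,m}^\pm\ot\xi_{j,n}^a$. For the second, $\vac$ paired with $\xi^{2^\pm}$ only rescales by a scalar, so this contribution is again a multiple of $E_{i,m}^\pm$ tensored with something. Both therefore lie in $R\ot V\ot\C((z))$, respectively $V\ot R\ot\C((z))$.

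\textbf{Main obstacle.} The delicate point is the bookkeeping in Step 4. The scalar factors coming from $S$ depend on the shifted arguments $z-z_s$, and one must check that extracting residues at $z_s\to 0$ still yields only multiples of $E_{i,m}^\pm$. I would handle this as in Lemma \ref{lem:S-D}: expand in $z_s$, and use that the singular parts are exactly simple poles, so only the lowest term survives. This works because Step 1 shows that every adjacent pair has pole order exactly one.
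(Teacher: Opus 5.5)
Your proposal is correct and is the same argument the paper intends; the paper proves this lemma only by saying it is ``similar to'' Lemma \ref{lem:S-D}. In both, adjacent factors $\xi_{i,m+2r_i(s+1)}^{2^\pm},\xi_{i,m+2r_is}^{2^\pm}$ have a pole of order at most one and non-adjacent ones have none, so each partial product has a pure $x^{-1}$ singular part, and under the residue the scalar factors produced by \eqref{eq:qyb-hex1}, \eqref{eq:qyb-hex2} are just evaluated at $x=0$. Two small slips: your recursion should read $\wt E_{i,m,N+1}^\pm=\xi_{i,m+2r_i(N+2)}^{2^\pm}(0)\wt E_{i,m,N}^\pm$ (as written it gives $\wt E_{i,m,0}^\pm=\xi_{i,m}^{2^\pm}(0)\xi_{i,m}^{2^\pm}$), and for the second congruence you actually prove it modulo $V_{\wp,\tau}'^\ell(\g)\ot R_{\wp,\tau}^\ell(\g)\ot\C((z))$, which is the intended version as in Lemma \ref{lem:S-D} (the $R\ot V$ in that line of the statement is a misprint).
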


The following is the main result of this subsection.

\begin{thm}\label{thm:V-wp-tau-qva}
$V_{\wp,\tau}^\ell(\g)$ is a quantum vertex algebra with quantum Yang-Baxter operator induced by $S_{\wp,\tau}(z)$, which is still denoted by $S_{\wp,\tau}(z)$.
\end{thm}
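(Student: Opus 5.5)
The proof reduces to showing that the ideal $R=R_{\wp,\tau}^\ell(\g)$ of Definition~\ref{de:V} is stable under the quantum Yang--Baxter operator of $V'=V_{\wp,\tau}'^\ell(\g)$. Concretely, I will show that
\begin{align*}
  S_{\wp,\tau}(z)\big(R\ot V'\big)\subset R\ot V'\ot\C((z)),\qquad
  S_{\wp,\tau}(z)\big(V'\ot R\big)\subset V'\ot R\ot\C((z)).
\end{align*}
Once this holds, $S_{\wp,\tau}(z)$ descends to a linear map on $V\ot V$, where $V=V'/R$. The axioms \eqref{eq:qyb-vac}, \eqref{eq:qyb-hex1}, \eqref{eq:qyb-hex2}, the $S$-locality \eqref{eq:qyb-locallity}, and the unitarity and Yang--Baxter relations \eqref{eq:qyb-unitary-qyb} then pass to the quotient. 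Each of them is an identity among compositions of $Y$ and $S$, and these compositions are compatible with the projection $V'\to V$. The quantum vertex algebra structure of $V'$ itself is supplied by Proposition~\ref{prop:V-M-quantumVA}, which is how $V'$ was defined in Definition~\ref{de:V-pre}.

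To establish the invariance, I use the fact that $R$ is the ideal generated by the finite family of vectors
\begin{align*}
  A_{i,m,n}\ (\text{the coefficients}),\quad B_{i,m}^\pm,\quad C_{i,m}^\pm,\quad D_{i,j,m}^\pm,\quad E_{i,m}^\pm.
\end{align*}
Lemmas~\ref{lem:S-A}, \ref{lem:S-B}, \ref{lem:S-C}, \ref{lem:S-D} and \ref{lem:S-E} give the invariance for each generator paired against each generator $\xi_{k,n}^a$ of $V'$. It remains to extend from generators to all of $R\ot V'$ and $V'\ot R$.

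I first extend in the second tensor factor. Fix a generator $g$ of $R$ and let $W$ be the set of $v\in V'$ such that $S(z)(g\ot v)\in R\ot V'\ot\C((z))$. Applying \eqref{eq:qyb-hex2} in the form
\begin{align*}
  S(z_1)\big(g\ot Y(u,z_2)v\big)=Y^{23}(z_2)S^{12}(z_1-z_2)S^{13}(z_1)(g\ot u\ot v),
\end{align*}
and iterating, shows that $W$ is closed under $u_{n}$ for each generator $u=\xi_{k,n}^a$. Since $\vac\in W$ by \eqref{eq:qyb-vac}, and $V'$ is generated by the $\xi$'s, this gives $W=V'$.

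Next I extend in the first factor. Using \eqref{eq:qyb-hex1},
\begin{align*}
  S(z_1)\big(Y(u,z_2)r\ot v\big)=Y^{12}(z_2)S^{23}(z_1)S^{13}(z_1+z_2)(u\ot r\ot v).
\end{align*}
This shows that the set of $r$ with $S(z)(r\ot V')\subset R\ot V'\ot\C((z))$ is closed under left multiplication $u_n$ for generators $u$. For right products inside the ideal, I use the skew-symmetry coming from $S$-locality, so that the left ideal generated by these vectors is already two-sided. The same arguments give the invariance of $V'\ot R$ symmetrically.

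The main obstacle I expect is checking that the ideal generated by $A,\dots,E$ coincides with the left ideal spanned by the elements $u^1_{n_1}\cdots u^k_{n_k}g$, including the translates $\partial^n g$, which are handled by \eqref{eq:qyb-der-shift}. A related subtlety is that the second part of Lemma~\ref{lem:S-E} is stated modulo $R\ot V'$; this must be read as $V'\ot R$, in parallel with the other lemmas. To handle the ideal question, I would invoke the standard fact, as in \cite[Lemma 6.17]{K-Quantum-aff-va}, that for a weak quantum vertex algebra the left ideal generated by an $S$-stable subspace containing its $\partial$-images is a two-sided ideal.
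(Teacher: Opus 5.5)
Your strategy is the paper's. Its proof consists of the assertion that (QYB1)--(QYB3) together with Lemmas~\ref{lem:S-A}--\ref{lem:S-E} give $S_{\wp,\tau}(z)(R\ot V')$, $S_{\wp,\tau}(z)(V'\ot R)\subset (R\ot V'+V'\ot R)\ot\C((z))$, after which $S_{\wp,\tau}(z)$ and all the axioms descend. Your separate containments are stronger but serve the same purpose, and you are right that the second half of Lemma~\ref{lem:S-E} should read modulo $V'\ot R$.

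\textbf{The gap: your first extension step is circular as ordered.} Fix a generator $g$ and take $v\in W$. The factor $S^{13}(z_1)$ in \eqref{eq:qyb-hex2} produces $\sum g_i\ot u\ot v_i$, where the $g_i$ are arbitrary elements of $R$. The next factor $S^{12}(z_1-z_2)$ then requires $S(z)(g_i\ot u)\in R\ot V'\ot\C((z))$ for these $g_i$. The lemmas give this only when $g_i$ is one of the generators of $R$. Iterating along a monomial $u^1_{n_1}\cdots u^k_{n_k}\vac$ fails the same way after the first factor. So the base case on which your second step rests is never established.

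\textbf{The repair is to reverse the order.} Let $R_0$ be the set of $r\in R$ with $S(z)(r\ot u)\in R\ot V'\ot\C((z))$ for every generator $u=\xi_{k,n}^a$ and for $u=\vac$. You first show $R_0=R$.
\begin{enumerate}
\item $R_0$ contains the generators of $R$, by Lemmas~\ref{lem:S-A}--\ref{lem:S-E} and \eqref{eq:qyb-vac}.
\item For a generator $w$, apply \eqref{eq:qyb-hex1} to $Y(w,z_2)r\ot u$. The factor $S^{13}(z_1+z_2)$ acts on a pair of generators. By Remark~\ref{rem:S-wp-tau} it returns only generators or $\vac$ in the third slot, so $R_0$ is stable under $w_n$.
\item For a right product $Y(r,z_2)b$ with $b$ arbitrary, $S^{13}$ acts on $r\ot u$. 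Afterwards $Y(r_i,z_2)b_j\in R((z_2))$ simply because $R$ is two-sided.
\item The iterate formula turns stability under $w_n$ for generators $w$ into stability under all left products. Hence $R_0$ is a two-sided ideal containing the generators, so $R_0=R$.
\end{enumerate}

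After this, your first step works with all of $R$ in the first slot at once, because $S^{12}(z_1-z_2)$ now acts on an element of $R$ tensored with a generator. Right products are absorbed directly by \eqref{eq:qyb-hex1}. So the skew-symmetry argument and the comparison of $R$ with a left ideal, which you single out as the main obstacle, are not needed.

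The containment for $V'\ot R$ follows by the mirror argument. Induct over $R$ in the second slot against generator or vacuum first slots via \eqref{eq:qyb-hex2}. There, left products are automatic. Right products by generators use Remark~\ref{rem:S-wp-tau}, and by weak associativity right products by generators suffice. Then extend the first slot via \eqref{eq:qyb-hex1}.
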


\begin{proof}
Note that $V_{\wp,\tau}'^\ell(\g)$ is generated by
\begin{align*}
  \set{\xi_{i,m}^a}{i\in I,\,m\in\Z_\wp,\,a\in \{0,1^\pm,2^\pm\}}.
\end{align*}
Together with \eqref{eq:qyb-vac}, \eqref{eq:qyb-hex1}, \eqref{eq:qyb-hex2}, and Lemmas \ref{lem:S-A}--\ref{lem:S-E}, a straightforward verification yields
\begin{align*}
  &S_{\wp,\tau}(z)(R_{\wp,\tau}^\ell(\g)\ot V_{\wp,\tau}'^\ell(\g)),\quad
  S_{\wp,\tau}(z)(V_{\wp,\tau}'^\ell(\g)\ot R_{\wp,\tau}^\ell(\g))\\
  &\quad\subset R_{\wp,\tau}^\ell(\g)\ot V_{\wp,\tau}'^\ell(\g)\ot\C((z))+V_{\wp,\tau}'^\ell(\g)\ot R_{\wp,\tau}^\ell(\g)\ot\C((z)).
\end{align*}
This completes the proof of the theorem.
\end{proof}

\subsection{$\Z_\wp$-module structure}\label{subsec:Z-wp-mod}

In this subsection, we give a special $\tau\in \mathfrak T$,
and construct a $\Z_\wp$-module structure on $V_{\wp,\tau}^\ell(\g)$.
For $g(q)\in \Z[q,q\inv]$, we define
\begin{align}\label{eq:def-C-g-zeta}
  C(g(q))=\prod_{0\not\equiv s\in\Z_\wp}
  (1-\zeta^s)^{\vvp{g(q)q^{-s}}}\in\C^\times.
\end{align}
and define
\begin{align}\label{eq:def-E-g-zeta}
  E(z,g(q))=C(g(q))
  \prod_{s\in\Z_\wp}\exp\big( \vvp{g(q)q^{-s}}\vartheta_s(z) \big)\in\C[[z]]^\times,
\end{align}
where
\begin{align}\label{eq:def-vartheta}
  &\vartheta_s(z)=\begin{cases}
                    \log \left((e^{z/2}-e^{-z/2})/z\right),&\te{if }s\equiv 0 \mod \wp,\\
                    \log \left((e^{z/2}-\zeta^s e^{-z/2})/(1-\zeta^s)\right),&\te{if }s\not\equiv 0 \mod \wp.
                  \end{cases}
\end{align}

\begin{rem}
For $s\in\Z_\wp$, we have that
\begin{align*}
  &\vartheta_s(z)\in z\C[[z]],\quad \vartheta_s(-z)=\vartheta_{-s}(z),\\
  &\pd z\vartheta_0(z)=
  \frac{1+e^{-z}}{2-2e^{-z}}-\frac{1}{z}, \quad
  \pdiff{z}{2}\vartheta_0(z)=-\frac{e^{-z}}{(1-e^{-z})^2}+\frac{1}{z^2},\\
  &\pd z\vartheta_s(z)=  \frac{1+\zeta^s e^{-z}}{2-2\zeta^se^{-z}},\quad
  \pdiff{z}{2}\vartheta_s(z)=-\frac{\zeta^s e^{-z}}{(1-\zeta^s e^{-z})^2},\quad\mbox{if }s\not\equiv 0\mod \wp.
\end{align*}
\end{rem}

\begin{rem}
For $f(q),g(q)\in \Z[q,q\inv]$, we have that
\begin{align*}
  &C(g(q))\inv=C(-g(q)),\quad C(f(q)+g(q))=C(f(q))C(g(q)),\\
  &C(g(q\inv))=(-1)^{g(1)-\vvp{g(q)}}\zeta^{-dg(1)}C(g(q)),
  \quad\te{where }dg(q)=q\pd qg(q),\\
  &E(-z,g(q))=(-1)^{g(1)-\vvp{g(q)}}\zeta^{dg(1)}
  E(z,g(q\inv)),\\
  & E(z,g(q))\inv=E(z,-g(q)),\quad
  E(0,g(q))=C(g(q)),\\
  &E(z,f(q)+g(q))=E(z,f(q))E(z,g(q)),\\
  &\pd z \log E(z,g(q))=\sum_{s\in\Z_\wp}\vvp{g(q)q^{-s}}\pd z \vartheta_s(z),\\
  &\prod_{s\in\Z_\wp}(1-\zeta^se^{-z})^{\vvp{g(q)q^{-s}}}
  =e^{-g(1)z/2}z^{\vvp{g(q)}}
  E(z,g(q)).
\end{align*}
\end{rem}

Set
\begin{align}
  &\tau_{ijmn}^{0,0}(z)= -\sum_{s\in\Z_\wp}\vvp{  q_i^{-a_{ij}}[r\ell/r_j]_{q_j}[r\ell/r_i]_{q_i}
  q^{n-m-s} }
  \pdiff{z}{2}\vartheta_s(z),\\
  &\tau_{ijmn}^{0,1}(z)=\sum_{s\in\Z_\wp}\vvp{ q_i^{-a_{ij}}[r\ell/r_i]_{q_i}(q^{-r\ell}-q^{r\ell})
  q^{n-m-s} }
  \pd z \vartheta_s(z),\\
  &\tau_{ijmn}^{1,0}(z)= \sum_{s\in\Z_\wp}\vvp{ q_i^{-a_{ij}}[r\ell/r_j]_{q_j}(q^{r\ell}-q^{-r\ell})
  q^{n-m-s} }
  \pd z \vartheta_s(z),\\
  &\tau_{ijmn}^{0,2}(z)=\sum_{s\in\Z_\wp}\vvp{ q_i^{-a_{ij}}[r\ell/r_i]_{q_i}q^{n-m-s} }
  \pd z \vartheta_s(z),\\
  &\tau_{ijmn}^{2,0}(z)=\sum_{s\in\Z_\wp}\vvp{ q_i^{-a_{ij}}[r\ell/r_j]_{q_j}q^{n-m-s} }
  \pd z \vartheta_s(z),\\
  &\tau_{ijmn}^{1,1}(z)=
  (-1)^{\vvp{q_i^{a_{ij}}(q^{r\ell}-q^{-r\ell})^2
  q^{n-m}}}
  E(z,q_i^{-a_{ij}}(q^{r\ell}-q^{-r\ell})^2q^{n-m}),\\
  &\tau_{ijmn}^{1,2}(z)=(-1)^{\vvp{ q_i^{a_{ij}}(q^{-r\ell}-q^{r\ell})q^{n-m}} }
   E(z,q_i^{-a_{ij}} (q^{-r\ell}-q^{r\ell})q^{n-m}),\\
  &\tau_{ijmn}^{2,1}(z)=(-1)^{\vvp{ q_i^{a_{ij}}(q^{-r\ell}-q^{r\ell})q^{n-m}} }\zeta^{-2r\ell}
   E(z,q_i^{-a_{ij}}(q^{r\ell}-q^{-r\ell})q^{n-m}),\\
  &\tau_{ijmn}^{2,2}(z)=(-1)^{\vvp{q_i^{a_{ij}}q^{n-m}}}\zeta^{n}
   E(z,-q_i^{-a_{ij}}q^{n-m}).
\end{align}

It is straightforward to verify the following result.
\begin{lem}\label{lem:tau-alt}
The relations \eqref{tau1}-\eqref{eq:va-e+e-} and \eqref{eq:va-rel-varphi+-} are equivalent to the relations below
\begin{align}
  &\label{tau1-alt}\tag{$\tau1'$}
  \xi_{i,m}^0(z_1)\xi_{j,n}^0(z_2)
  -\sum_{s\in\Z_\wp}
  \vvp{[a_{ij}]_{q_i}[r\ell/r_j]_{q_j}q^{-r\ell+n-m-s}}
  \frac{\zeta^se^{-z_1+z_2}}{(1-\zeta^se^{-z_1+z_2})^2}\\
  &\quad\nonumber=\xi_{j,n}^0(z_2)\xi_{i,m}^0(z_1)
  -\sum_{s\in\Z_\wp}
  \vvp{[a_{ij}]_{q_i}[r\ell/r_j]_{q_j}q^{r\ell+n-m-s}}
  \frac{\zeta^se^{z_2-z_1}}{(1-\zeta^se^{z_2-z_1})^2},\\
  &\label{tau2-alt}\tag{$\tau2'$}
  \xi_{i,m}^0(z_1)\xi_{j,n}^{1^\pm}(z_2)
  \mp \xi_{j,n}^{1^\pm}(z_2)\sum_{s\in\Z_\wp}
  \vvp{[a_{ij}]_{q_i}(q^{-2r\ell}-1)q^{n-m-s}}
  \frac{1+\zeta^se^{-z_1+z_2}}{2-2\zeta^se^{-z_1+z_2}}\\
  &\quad\nonumber=
  \xi_{j,n}^{1^\pm}(z_2)\xi_{i,m}^0(z_1)
  \mp \xi_{j,n}^{1^\pm}(z_2)\sum_{s\in\Z_\wp}
  \vvp{[a_{ij}]_{q_i}(1-q^{2r\ell})q^{n-m-s}}
  \frac{1+\zeta^se^{z_2-z_1}}{2-2\zeta^se^{z_2-z_1}},\\
  &\label{tau3-alt}\tag{$\tau3'$}
  \prod_{s\in\Z_\wp}(1-\zeta^se^{-z_1+z_2})^{
  \epsilon_1\epsilon_2\vvp{
    (q_i^{a_{ij}}-q_i^{-a_{ij}})(1-q^{-2r\ell})q^{n-m-s}
  }}\xi_{i,m}^{1^{\epsilon_1}}(z_1)\xi_{j,n}^{1^{\epsilon_2}}(z_2)\\
  &\quad\nonumber=
  \prod_{s\in\Z_\wp}(1-\zeta^se^{z_2-z_1})^{
    \epsilon_1\epsilon_2\vvp{
    (q_i^{a_{ij}}-q_i^{-a_{ij}})(q^{2r\ell}-1)q^{n-m-s}
  }}\xi_{j,n}^{1^{\epsilon_2}}(z_2)\xi_{i,m}^{1^{\epsilon_1}}(z_1),\\
  &\label{tau4-alt}\tag{$\tau4'$}
  \lim_{z_1\to z_2}\prod_{s\in\Z_\wp}(1-\zeta^se^{-z_1+z_2})^{
    \vvp{
    (q_i^2-q_i^{-2})(q^{-2r\ell}-1)q^{-s}
  }}\xi_{i,m}^{1^\pm}(z_1)\xi_{j,m}^{1^\mp}(z_2)=1,\\
  &\label{tau5-alt}\tag{$\tau5'$}
  \xi_{i,m}^0(z_1)\xi_{j,n}^{2^\pm}(z_2)
  \mp \xi_{j,n}^{2^\pm}(z_2)\sum_{s\in\Z_\wp}
  \vvp{[a_{ij}]_{q_i}q^{-r\ell+n-m-s}}
  \frac{1+\zeta^se^{-z_1+z_2}}{2-2\zeta^se^{-z_1+z_2}}\\
  &\quad\nonumber=\xi_{j,n}^{2^\pm}(z_2)\xi_{i,m}^0(z_1)
  \mp \xi_{j,n}^{2^\pm}(z_2)\sum_{s\in\Z_\wp}
  \vvp{[a_{ij}]_{q_i}q^{r\ell+n-m-s}}
  \frac{1+\zeta^se^{z_2-z_1}}{2-2\zeta^se^{z_2-z_1}},\\
  &\label{tau6-alt}\tag{$\tau6'$}
  \prod_{s\in\Z_\wp}(1-\zeta^se^{-z_1+z_2})^{
  -\epsilon_1\epsilon_2\vvp{
    (q_i^{a_{ij}}-q_i^{-a_{ij}})q^{-r\ell+n-m-s}
  }}\xi_{i,m}^{1^{\epsilon_1}}(z_1)\xi_{j,n}^{2^{\epsilon_2}}(z_2)\\
  &\quad\nonumber=
  \prod_{s\in\Z_\wp}(1-\zeta^se^{z_2-z_1})^{
  -\epsilon_1\epsilon_2\vvp{
    (q_i^{a_{ij}}-q_i^{-a_{ij}})q^{r\ell+n-m-s}
  }}\xi_{j,n}^{2^{\epsilon_2}}(z_2)\xi_{i,m}^{1^{\epsilon_1}}(z_1),\\
  &\label{tau7-alt}\tag{$\tau7'$}
  \prod_{s\in\Z_\wp}(1-\zeta^se^{-z_1+z_2})^{
    \vvp{q_i^{a_{ij}}q^{n-m-s}}
  }\xi_{i,m}^{2^\pm}(z_1)\xi_{j,n}^{2^\pm}(z_2)\\
  &\quad\nonumber=\zeta_i^{a_{ij}}
  \prod_{s\in\Z_\wp}(1-\zeta^se^{z_2-z_1})^{
    \vvp{q_i^{-a_{ij}}q^{n-m-s}}
  }\xi_{j,n}^{2^\pm}(z_2)\xi_{i,m}^{2^\pm}(z_1),\\
  &\label{tau8-alt}\tag{$\tau8'$}
  \xi_{i,m}^{2^+}(z_1)\xi_{j,n}^{2^-}(z_2)
  -\zeta_i^{-a_{ij}}\prod_{s\in\Z_\wp}(1-\zeta^se^{z_2-z_1})^{
    \vvp{
    (q_i^{a_{ij}}-q_i^{-a_{ij}})q^{n-m-s}
  }}\xi_{j,n}^{2^-}(z_2)\xi_{i,m}^{2^+}(z_1)\\
  &\quad\nonumber=
  \delta_{ij}\delta_{mn}z_1\inv\delta\left(\frac{z_2}{z_1}\right)
  -\delta_{ij}\delta_{m+2r\ell,n}\xi_{j,m+r\ell}^{1^+}(z_2)
  z_1\inv\delta\left(\frac{z_2}{z_1}\right).
\end{align}
\end{lem}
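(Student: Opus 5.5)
The plan is to verify the equivalence relation by relation, using the identities for $E(z,g(q))$ and $C(g(q))$ from the second Remark above. The key identity is
\begin{align*}
\prod_{s\in\Z_\wp}(1-\zeta^se^{-z})^{\vvp{g(q)q^{-s}}}=e^{-g(1)z/2}z^{\vvp{g(q)}}E(z,g(q)).
\end{align*}
Together with its logarithmic derivatives, $\pd z\log E=\sum_s\vvp{g(q)q^{-s}}\pd z\vartheta_s$, and the explicit formulas for $\pd z\vartheta_s$ and $\pdiff{z}{2}\vartheta_s$, it converts each rational-in-$e^{z}$ coefficient in ($\tau1'$)--($\tau8'$) into a pole part $z^{-k}$ times a unit in $\C[[z]]$. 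The unit is then matched with the prescribed $\tau_{ijmn}^{a,b}$.

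\emph{Additive relations.} For ($\tau1'$), ($\tau2'$) and ($\tau5'$), I would rewrite $\frac{\zeta^se^{-z}}{(1-\zeta^se^{-z})^2}$ as $-\pdiff{z}{2}\vartheta_s(z)$ for $s\ne0$, and as $\frac1{z^2}-\pdiff z2\vartheta_0(z)$ for $s=0$. Similarly, $\frac{1+\zeta^se^{-z}}{2-2\zeta^se^{-z}}$ becomes $\pd z\vartheta_s(z)$, plus $\frac1z$ when $s=0$. Moving the right-hand scalar terms to the left then produces a commutator. The pole coefficients are $\vvp{\cdots}$ evaluated at $s=0$, which are exactly the $A^{0,0}$, $A^{0,1}$, $A^{1,0}$, $A^{0,2}$, $A^{2,0}$ constants. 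The regular parts, after $z\mapsto -z$ via $\vartheta_s(-z)=\vartheta_{-s}(z)$ and reindexing $s$, should reproduce $\tau^{0,0}$, $\tau^{0,1}$, etc. This step also needs the symmetry $r_ia_{ij}=r_ja_{ji}$ to rewrite $[a_{ij}]_{q_i}[r\ell/r_j]_{q_j}q^{-r\ell}$ in the form $q_i^{-a_{ij}}[r\ell/r_j]_{q_j}[r\ell/r_i]_{q_i}$ up to the difference giving the two sides. The $2\times2$ difference should collapse by an elementary identity in $\Z[q,q^{-1}]$.

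\emph{Multiplicative relations.} For ($\tau3'$), ($\tau6'$) and ($\tau7'$), I would apply the key identity on both sides with $z=z_1-z_2$ and $z=z_2-z_1$, respectively. The exponential factors $e^{-g(1)z/2}$ should cancel, or combine into constants, since $g(1)=0$ whenever a factor $(q^{r\ell}-q^{-r\ell})$ is present. The powers $z^{\vvp g}$ give $B^{\ast}$ and $A^{2,2}$. The $E$-factors are related by $E(-z,g(q))=(\pm)\zeta^{dg(1)}E(z,g(q^{-1}))$, which yields exactly the $\tau^{1,1}$, $\tau^{1,2}$, $\tau^{2,1}$, $\tau^{2,2}$ together with their signs and the $\zeta^n$ and $\zeta^{-2r\ell}$ prefactors. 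Since the relations are multiplicative, exponents $\epsilon_1\epsilon_2$ pass through harmlessly.

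\emph{Relations involving $\delta$.} For ($\tau8'$), I would combine ($\tau7$) with (\ref{eq:va-e+e-}) through Lemma \ref{lem:compatible-lower-bound}. The order-$\le1$ pole of $\xi^{2^+}_{i,m}(z)\xi^{2^-}_{j,n}$ is determined by $\xi^{2^+}_{i,m}(0)\xi^{2^-}_{j,n}$, and the standard Jacobi-type argument converts "rational relation plus residue" into the $\delta$-function form. For ($\tau4'$), I would apply Lemma \ref{lem:compatible-lower-bound} to ($\tau4$), so that the limit equals $Y$ of $\xi_{i,m}^{1^\pm}(-1-B^{1,1,+}+B^{1,1,-})\xi^{1^\mp}_{i,m}$ times $C(\cdot)$. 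The constant in (\ref{eq:va-rel-varphi+-}) must then be exactly the inverse of $C(\cdot)$ together with the sign.

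\emph{Main obstacle.} I expect the main obstacle to be this last constant bookkeeping, together with the signs $(-1)^{\vvp{\cdot}}$ and the $\zeta$-powers in ($\tau7'$) and ($\tau8'$). I would handle it by evaluating everything at $z=0$ using $E(0,g)=C(g)$ and $C(g(q^{-1}))=(-1)^{g(1)-\vvp g}\zeta^{-dg(1)}C(g)$.
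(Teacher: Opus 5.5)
Your proposal is correct and is essentially the paper's own argument: the paper only says the equivalence is "straightforward to verify". Your relation-by-relation check is exactly that verification, namely the pole/regular splitting via $\vartheta_s$ for $(\tau1')$, $(\tau2')$, $(\tau5')$; the $E$/$C$ identities for $(\tau3')$, $(\tau6')$, $(\tau7')$; and Lemma~\ref{lem:compatible-lower-bound} for $(\tau4')$ and $(\tau8')$. The constants do work out; for example, the constant in \eqref{eq:va-rel-varphi+-} equals $C\big((q_i^2-q_i^{-2})(1-q^{-2r\ell})\big)$, which is the inverse of the value $E(0,\cdot)$ that appears in $(\tau4')$.

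One point of bookkeeping concerns the right-hand factors $(1-\zeta^se^{z_2-z_1})$. These are the same functions of $z_1-z_2$ as the left-hand factors, only expanded the other way. So apply the key identity with $z=z_1-z_2$ on both sides; then the factors $e^{-g(1)z/2}$ cancel identically, including in $(\tau7')$ where $g(1)=1$. Also, for the match to hold you must read the exponent $A^{2,2}_{ijnm}$ in \eqref{tau7} as $A^{2,2}_{ijmn}$, and the index $j$ in $(\tau4')$ as $i$.
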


The following result is an immediate consequence of Proposition \ref{prop:universal-V} and Lemmas \ref{lem:tau-alt}.

\begin{prop}\label{prop:universal-V-sp}
Let $V$ be a nonlocal vertex algebra, and let $Y$ be the vertex operator map of $V$. Suppose that there exists $\bar\zeta_{i,m}^a\in V$ ($i\in I$, $m\in\Z_\wp$, $a\in \{0,1^\pm,2^\pm\}$), such that
$Y(\bar\zeta_{i,m}^a,z)$ satisfy the relations \eqref{tau1-alt}-\eqref{tau8-alt}, \eqref{eq:va-rel-der},
\eqref{eq:va-rel-serre} and \eqref{eq:va-rel-rtu} with
\begin{align*}
  \xi_{i,m}^a(z)=Y(\bar \xi_{i,m}^a,z)\quad i\in I,\,m\in\Z_\wp,\,a\in \{0,1^\pm,2^\pm\}.
\end{align*}
Then there is a unique nonlocal vertex operator algebra homomorphism from $V_{\wp,\tau}^\ell(\g)\to V$ determined by $\zeta_{i,m}^a\mapsto\bar \zeta_{i,m}^a$ ($i\in I$, $m\in\Z_\wp$, $a\in \{0,1^\pm,2^\pm\}$).
\end{prop}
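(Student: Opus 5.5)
The natural route is to reduce the statement to the universal property of $V_{\wp,\tau}'^\ell(\g)$ (Proposition \ref{prop:universal-V}) and then check that the ideal $R_{\wp,\tau}^\ell(\g)$ is killed.

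\textbf{Step 1: the primed relations.} By Lemma \ref{lem:tau-alt}, the hypotheses \eqref{tau1-alt}--\eqref{tau8-alt} on the fields $Y(\bar\xi_{i,m}^a,z)$ are equivalent to \eqref{tau1}--\eqref{eq:va-e+e-} together with \eqref{eq:va-rel-varphi+-}. So the relations \eqref{tau1}--\eqref{tau7} hold, and the pair $(V,Y(\bar\xi_{i,m}^a,z))$ is an object of $\mathcal M_{\wp,\tau}^\ell(\wh\g)$.

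Proposition \ref{prop:universal-V} then gives a unique nonlocal vertex algebra homomorphism
\[
\theta:V_{\wp,\tau}'^\ell(\g)\to V,\qquad \xi_{i,m}^a\mapsto\bar\xi_{i,m}^a .
\]

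\textbf{Step 2: the generators of the ideal.} It suffices to show that $\theta$ annihilates each generator of $R_{\wp,\tau}^\ell(\g)$. Since $\theta$ is a homomorphism, the kernel of $\theta$ is an ideal, so it then contains $R_{\wp,\tau}^\ell(\g)$ and $\theta$ factors through $V_{\wp,\tau}^\ell(\g)$. Uniqueness follows because $V_{\wp,\tau}^\ell(\g)$ is generated by the $\xi_{i,m}^a$.

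Because $\theta$ intertwines $Y$ and $\partial$, the generator images are obtained by replacing each $\xi$ with $\bar\xi$.
\begin{itemize}
\item \eqref{eq:va-rel-der}, \eqref{eq:va-rel-serre} and \eqref{eq:va-rel-rtu} hold for the $\bar\xi$ directly by hypothesis.
\item For \eqref{eq:va-e+e-}, apply Lemma \ref{lem:compatible-lower-bound} to \eqref{tau7} in $V$. Then $Y(\bar\xi^{2^+}_{i,m},z)\bar\xi^{2^-}_{j,n}$ has at most a simple pole. Its residue $\bar\xi^{2^+}_{i,m}(0)\bar\xi^{2^-}_{j,n}$ is read off from the delta-function terms in \eqref{tau8-alt}: apply both sides to $\vac$ and take $\Res_{z_1}$ at $z_2=0$.
\item For \eqref{eq:va-rel-varphi+-}, the equivalence in Lemma \ref{lem:tau-alt} already encodes this relation through \eqref{tau4-alt}. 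Concretely, Lemma \ref{lem:compatible-lower-bound} applied to \eqref{tau4} identifies the coefficient $\bar\xi^{1^\pm}_{i,m}(-1-B^{1,1,+}_{iimm}+B^{1,1,-}_{iimm})\bar\xi^{1^\mp}_{i,m}$ with the limit in \eqref{tau4-alt}, up to the constant $E(0,\cdot)=C(\cdot)$ factors.
\end{itemize}

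\textbf{Main obstacle.} The only real work is the bookkeeping in the last item. One must check that the normalizing constant $(-1)^{B^{1,1,-}_{iimm}}\tau^{1,2}_{ii,m+r\ell,m}(0)\tau^{2,1}_{iim,m+r\ell}(0)^{-1}$ matches the value $1$ in \eqref{tau4-alt}. I expect this to follow from the identities for $C$ and $E$ in the preceding remarks, namely $E(0,g)=C(g)$ and $\prod_s(1-\zeta^se^{-z})^{\langle g q^{-s}\rangle}=e^{-g(1)z/2}z^{\langle g\rangle}E(z,g)$, together with the explicit form of $\tau^{1,2}$ and $\tau^{2,1}$.
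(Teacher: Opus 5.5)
Your proposal is correct and is essentially the paper's argument: Lemma~\ref{lem:tau-alt} turns \eqref{tau1-alt}--\eqref{tau8-alt} into \eqref{tau1}--\eqref{eq:va-e+e-} together with \eqref{eq:va-rel-varphi+-}, Proposition~\ref{prop:universal-V} gives the map from $V_{\wp,\tau}'^\ell(\g)$, and the map descends to $V_{\wp,\tau}^\ell(\g)$ because its kernel is an ideal containing the generators of $R_{\wp,\tau}^\ell(\g)$. Your separate treatment of \eqref{eq:va-e+e-} and \eqref{eq:va-rel-varphi+-} in Step 2, including the constant-matching you call the main obstacle, is already covered by Lemma~\ref{lem:tau-alt} as invoked in Step 1 (and the constants do agree), so it is redundant rather than additional work.
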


The following result defines a $\Z_\wp$-module structure on $V_{\wp,\tau}^\ell(\g)$.

\begin{prop}\label{prop:Z-module}
For each $s\in\Z_\wp$, there is a quantum vertex algebra homomorphism $R(s)$ on $V_{\wp,\tau}^\ell(\g)$ determined by
\begin{align*}
  R(s)\xi_{i,m}^a=\xi_{i,m+s}^a\quad\te{for }i\in I,\,m\in\Z_\wp,\,a\in \{0,1^\pm,2^\pm\}.
\end{align*}
Moreover, $(V_{\wp,\tau}^\ell(\g),R)$ is a $\Z_\wp$-module quantum vertex algebra.
\end{prop}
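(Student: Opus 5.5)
The plan is to obtain $R(s)$ from the universal property in Proposition \ref{prop:universal-V-sp}, with $V=V_{\wp,\tau}^\ell(\g)$ and $\bar\xi_{i,m}^a=\xi_{i,m+s}^a$. This requires showing that the shifted fields $Y(\xi_{i,m+s}^a,z)$ satisfy \eqref{tau1-alt}--\eqref{tau8-alt}, \eqref{eq:va-rel-der}, \eqref{eq:va-rel-serre} and \eqref{eq:va-rel-rtu}. These relations already hold for the unshifted generators, so it suffices to check that each relation, written in terms of the indices $(m,n)$, depends on $m,n$ only through the difference $n-m$ or is otherwise invariant under the simultaneous shift $(m,n)\mapsto(m+s,n+s)$.

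For \eqref{tau1-alt}--\eqref{tau8-alt} this is clear by inspection. Every structure constant there has the form $\vvp{g(q)q^{n-m-s'}}$, which is invariant under the shift. The right-hand side of \eqref{tau8-alt} involves $\delta_{mn}$, $\delta_{m+2r\ell,n}$ and $\xi_{j,m+r\ell}^{1^+}$, and these shift compatibly. For \eqref{eq:va-rel-der}, the shifted relation involves $\xi^0_{i,m+s\mp r_i}$ together with the constants $\tau_{ii,m+s\mp r_i,m+s}^{0,1}(0)$. With the explicit choice of $\tau$ given above, $\tau^{a,b}_{ijmn}$ depends only on $n-m$, except for $\tau^{2,2}_{ijmn}$, which carries the extra factor $\zeta^{n}$. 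The constants in \eqref{eq:va-rel-der} are therefore shift invariant. The Serre relation \eqref{eq:va-rel-serre} and the root-of-unity relation \eqref{eq:va-rel-rtu} are stated for all $m\in\Z_\wp$, and their index patterns shift uniformly, so they are invariant too.

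The main obstacle is the single $\tau$-component that is not translation invariant. In \eqref{tau7-alt} the factor $\zeta^{n}$ in $\tau^{2,2}$ has already been absorbed in the passage to Lemma \ref{lem:tau-alt}, since that relation carries only the constant $\zeta_i^{a_{ij}}$. I would verify carefully that the combination $\tau_{ijmn}^{2,2}(z)/\tau_{jinm}^{2,2}(-z)$, which is what actually enters, has $\zeta^{n}\cdot\zeta^{-m}$-type dependence collapsing to a function of $n-m$. I would also check that the constant in \eqref{eq:va-rel-varphi+-}, which involves $\tau^{1,2}$ and $\tau^{2,1}$, depends only on differences. If the $\zeta^n$ factor survived somewhere, it could be compensated by rescaling $\xi^{2^\pm}_{i,m}$ by a power of $\zeta$. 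Since the equivalent form in Lemma \ref{lem:tau-alt} is manifestly shift invariant, however, I expect no rescaling to be needed.

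Given the homomorphisms $R(s)$, uniqueness in Proposition \ref{prop:universal-V-sp} yields $R(s)R(t)=R(s+t)$ and $R(0)=\mathrm{id}$, since both sides agree on generators. Hence each $R(s)$ is an automorphism and $R$ is a group homomorphism $\Z_\wp\to\Aut(V_{\wp,\tau}^\ell(\g))$ fixing $\vac$. With trivial character $\chi$, this is exactly a $\Z_\wp$-module nonlocal vertex algebra structure. Finally, $R(s)$ must be compatible with $S_{\wp,\tau}(z)$, that is, $(R(s)\ot R(s))S_{\wp,\tau}(z)=S_{\wp,\tau}(z)(R(s)\ot R(s))$. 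By \eqref{eq:qyb-hex1} and \eqref{eq:qyb-hex2}, it suffices to check this on pairs of generators, where it follows from Remark \ref{rem:S-wp-tau} and the same shift invariance of the coefficients.
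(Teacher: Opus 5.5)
Your proposal is correct and follows essentially the same route as the paper. You obtain $R(s)$ from Proposition \ref{prop:universal-V-sp} by noting that the shifted fields $Y(\xi_{i,m+s}^a,z)$ satisfy \eqref{tau1-alt}--\eqref{tau8-alt}, \eqref{eq:va-rel-der}, \eqref{eq:va-rel-serre} and \eqref{eq:va-rel-rtu}, then check $R(s)R(t)=R(s+t)$, $R(0)=\id$ and compatibility with $S_{\wp,\tau}(z)$ on generators; your observation that the non-translation-invariant factor $\zeta^{n}$ in $\tau_{ijmn}^{2,2}$ only enters through the ratio $\tau_{ijmn}^{2,2}(z)/\tau_{jinm}^{2,2}(-z)$, and hence collapses to $\zeta^{n-m}$, usefully fills in a detail the paper leaves as ``straightforward''.
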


\begin{proof}
Note that the relations \eqref{tau1-alt}-\eqref{tau8-alt}, \eqref{eq:va-rel-der},
\eqref{eq:va-rel-serre} and \eqref{eq:va-rel-rtu} hold with
\begin{align*}
  \xi_{i,m}^a(z)=Y(\xi_{i,m+s}^a,z),\quad i\in I,\,a\in \{0,1^\pm,2^\pm\},\,m,s\in\Z_\wp.
\end{align*}
By using Proposition \ref{prop:universal-V-sp}, we get a
nonlocal vertex algebra homomorphism $R(s)$ on $V_{\wp,\tau}^\ell(\g)$ determined by
\begin{align}\label{eq:Z-module-R-s-def}
  \xi_{i,m}^a\mapsto\xi_{i,m+s}^a,\quad i\in I,\,m\in\Z_\wp,\,a\in\{0,1^\pm,2^\pm\}.
\end{align}
It is straightforward to verify that $R(s)$ preserves the quantum Yang-Baxter operator $S_{\wp,\tau}(z)$.
Note that
\begin{align*}
  R(s)R(t)\xi_{i,m}^a=\xi_{i,m+t+s}^a=R(s+t)\xi_{i,m}^a,
  \quad i\in I,\,a\in \{0,1^\pm,2^\pm\},\,m,s,t\in\Z_\wp.
\end{align*}
Then $R(s)R(t)=R(s+t)$ since $V_{\wp,\tau}^\ell(\g)$ generated by
\begin{align*}
  \set{\xi_{i,m}^a}{ i\in I,\,m\in\Z_\wp,\,a\in\{0,1^\pm,2^\pm\}}.
\end{align*}
From \eqref{eq:Z-module-R-s-def}, we have that
$R(0)=\id_{V_{\wp,\tau}^\ell(\g)}$.
Then $R(s)$ is invertible for each $s\in\Z_\wp$.
Therefore, $R$ is a group homomorphism from $\Z_\wp$ to the automorphism group of $V_{\wp,\tau}^\ell(\g)$,
and hence $(V_{\wp,\tau}^\ell(\g),R)$ is a $(\Z_\wp,\chi)$-module quantum vertex algebra.
\end{proof}

\section{$\phi$-coordinated modules}\label{sec:phi-mod}

This section is devoted to establish a connection between $\mathcal R_\zeta^\ell(\wh\g)$ and the category of $(\Z_\wp,\chi_\phi)$ equivariant $\phi$-coordinated quasi-modules for $V_{\wp,\tau}^\ell(\g)$.
Throughout this section, we fix an associate $\phi(x,z)=xe^z$ of $F_a(x,y)=x+y$.

\subsection{From $\mathcal R_\zeta^\ell(\wh\g)$ to $\phi$-coordinated quasi-modules}\label{subsec:rtu-mod-to-phi-mod}

Let $(W,H_i(z),\Psi_i^\pm(z),X_i^\pm(z))$ be an object in $\mathcal R_\zeta^\ell(\wh\g)$.
Recall the quasi compatible subset from \eqref{eq:U-W}
\begin{align*}
  U_W=\set{H_i(\zeta^sz),\,\Psi_i^\pm(\zeta^sz),\,X_i^\pm(\zeta^sz)}{i\in I,\,s\in\Z_\wp},
\end{align*}
and the $\Z_\wp$-module nonlocal vertex algebra $(\<U_W\>_\phi,R)$, the $(\Z_\wp,\chi_\phi)$-equivariant $\phi$-coordinated quasi $\<U_W\>_\phi$-module structure on $W$ obtained in Proposition
\ref{prop:qaff-mod-to-module-nva}.
The following result shows that $W$ is a $\phi$-coordinated quasi-module of $V_{\wp,\tau}^\ell(\g)$.

\begin{prop}\label{prop:V-U-W-hom}
There is a nonlocal vertex algebra homomorphism from $V_{\wp,\tau}^\ell(\g)$ to $\<U_W\>_\phi$ uniquely determined by
\begin{align*}
  \xi_{i,m}^0\mapsto H_i(\zeta^m z),\quad
  \xi_{i,m}^{1^\pm}\mapsto \Psi_i^\pm(\zeta^m z),\quad
  \xi_{i,m}^{2^\pm}\mapsto X_i^\pm(\zeta^m z).
\end{align*}
\end{prop}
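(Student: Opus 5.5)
The plan is to apply Proposition \ref{prop:universal-V-sp} with $V=\<U_W\>_\phi$ (vertex operator map $Y_\E^\phi$) and
\begin{align*}
  \bar\xi_{i,m}^0=H_i(\zeta^mz),\quad \bar\xi_{i,m}^{1^\pm}=\Psi_i^\pm(\zeta^mz),\quad \bar\xi_{i,m}^{2^\pm}=X_i^\pm(\zeta^mz).
\end{align*}
It then suffices to check that the fields $Y_\E^\phi(\bar\xi_{i,m}^a,z_0)$ satisfy \eqref{tau1-alt}--\eqref{tau8-alt}, \eqref{eq:va-rel-der}, \eqref{eq:va-rel-serre} and \eqref{eq:va-rel-rtu}. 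Uniqueness is automatic, because $V_{\wp,\tau}^\ell(\g)$ is generated by the $\xi_{i,m}^a$.

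For the two-field relations \eqref{tau1-alt}--\eqref{tau8-alt}, I would use the $\phi$-coordinated locality machinery of \cite{Li-phi-coor}. Concretely: if $W$ satisfies a relation of the form
\begin{align*}
  \iota_{z_1,z_2}f(z_1/z_2)a(z_1)b(z_2)-\iota_{z_2,z_1}g(z_1/z_2)b(z_2)a(z_1)=\sum_k c_k(z_2)\tfrac{1}{k!}\left(z_2\pd{z_2}\right)^k\delta\left(\tfrac{z_2}{z_1}\right)
\end{align*}
with $f,g\in\C(z)$, then inside $\<U_W\>_\phi$ the vertex operators satisfy the same relation, with $f(z_1/z_2)$ replaced by $f(e^{z_1-z_2})$ and each $(z_2\partial_{z_2})^k\delta(z_2/z_1)$ replaced by $\partial_{z_2}^k z_1^{-1}\delta(z_2/z_1)$. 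This is the converse direction of Lemma \ref{lem:phi-mod-rel-pre}, i.e.\ the analogue of \cite[Lemma 7.1]{K-Quantum-aff-va}, proved by evaluating at $z_1=\phi(z,z_0)$.

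In $W$, the fields are $a(\zeta^mz)$. So I would first rewrite \eqref{zeta1}, \eqref{zeta2}, \eqref{zeta3}, \eqref{zeta5}, \eqref{zeta6}, \eqref{zeta7} and \eqref{zeta8} for the shifted fields $H_i(\zeta^mz_1),\dots$. Replacing $z_2/z_1$ by $\zeta^{n-m}z_2/z_1$ shifts the summation index $s$ by $n-m$, and this produces exactly the factors $q^{n-m-s}$ inside $\vvp{\cdot}$. The delta terms in \eqref{zeta8} arise as $\delta(\zeta^{n-m}z_2/z_1)$ and $\delta(\zeta^{n-m-2r\ell}z_2/z_1)$. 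Only the cases $n=m$ and $n=m+2r\ell$ survive as genuine $\delta(z_2/z_1)$ terms; the remaining ones are harmless because $1-\zeta^se^{z}$ is invertible in $\C[[z]]$ for $s\ne0$. This gives \eqref{tau8-alt}, with $\Psi_j^+(\zeta^{m-r\ell}z)$ matching $\xi^{1^+}_{j,m+r\ell}$ up to the sign convention in $\chi_\phi$. Relation \eqref{tau4-alt} follows the same way from \eqref{zeta4}, by taking the limit after substitution.

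The remaining relations come from \eqref{zeta-Psi-def2}, \eqref{zeta-serre} and \eqref{zeta-res-form}:
\begin{itemize}
  \item[(a)] \eqref{eq:va-rel-serre} and \eqref{eq:va-rel-rtu} are literally \eqref{zeta-serre} and \eqref{zeta-res-form} after replacing $z$ by $\zeta^mz$, since $a(z)_0^\phi$ is the $0$-mode of $Y_\E^\phi$.
  \item[(b)] For \eqref{eq:va-rel-der}: the canonical derivation on $\<U_W\>_\phi$ is $\partial=z\frac{d}{dz}$, because $\phi(z,z_0)=ze^{z_0}$. Differentiating \eqref{zeta-Psi-def2} gives
  \begin{align*}
    z\tfrac{d}{dz}\Psi_i^\pm(z)=\pm\:(H_i(\zeta_i\inv z)-H_i(\zeta_iz))\Psi_i^\pm(z)\;.
  \end{align*}
  I would then identify the normally ordered product with $H_i(\zeta^{m\mp r_i}z)_{-1}^\phi\Psi_i^\pm(\zeta^mz)$ minus the constant correction coming from $\tau^{0,1}(0)$. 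That correction equals the regular part at $z_0=0$ of the commutator \eqref{tau2-alt}, and I would compute it using the explicit $\vartheta_s$.
\end{itemize}

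I expect two steps to be the main obstacle. The first is matching the exact normalizations: checking that the constants $\tau^{0,1}_{ii,m\mp r_i,m}(0)$ in \eqref{eq:va-rel-der}, and $\tau^{1,2}(0)\tau^{2,1}(0)\inv$ together with the sign in \eqref{eq:va-rel-varphi+-}, agree with what $\phi$-coordinated normal ordering produces. The second is the precise form of \eqref{zeta4}. For both, I would first compute the singular and regular parts of $Y_\E^\phi(H_i(\zeta^az),z_0)\Psi_i^\pm(\zeta^mz)$ directly from the exponential formula \eqref{zeta-Psi-def2}, using the expansion identities for $E(z,g(q))$ recorded above.
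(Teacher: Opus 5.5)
Your proposal is correct and follows the paper's proof: apply Proposition~\ref{prop:universal-V-sp}, obtain \eqref{tau1-alt}--\eqref{tau8-alt} by transferring \eqref{zeta1}--\eqref{zeta8} from $W$ to $\<U_W\>_\phi$, read off \eqref{eq:va-rel-serre} and \eqref{eq:va-rel-rtu} from \eqref{zeta-serre} and \eqref{zeta-res-form}, and derive \eqref{eq:va-rel-der} as in your step (b), which is exactly the content of Lemmas~\ref{lem:phi-neg-one} and~\ref{lem:zeta10-to-tau-9}; at the transfer step the paper cites Proposition~\ref{prop:phi-mod-rel-f}, but, as you say, the direction actually needed is from relations on $W$ to relations in $\<U_W\>_\phi$. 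Two small slips: in (b) the $(-1)$-product should be $(\pm H_i(\zeta^{m-r_i}z)\mp H_i(\zeta^{m+r_i}z))_{-1}^\phi\Psi_i^\pm(\zeta^mz)$, and for $n=m+2r\ell$ the surviving delta term coming from \eqref{zeta8} carries $\Psi_j^+(\zeta^{m+r\ell}z_2)$ directly, so it matches $\xi_{j,m+r\ell}^{1^+}$ with no adjustment to the convention for $\chi_\phi$.
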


In order to prove Proposition \ref{prop:V-U-W-hom} by utilizing Proposition \ref{prop:universal-V-sp}, we need to verify the relations \eqref{tau1-alt}-\eqref{tau8-alt}, \eqref{eq:va-rel-der}, \eqref{eq:va-rel-serre} and \eqref{eq:va-rel-rtu} with
\begin{align*}
  \xi_{i,m}^0(z)=Y_\E^\phi(H_i(\zeta^m z_1),z),\quad \xi_{i,m}^{1^\pm}(z)=Y_\E^\phi(\Psi_i^\pm(\zeta^m z_1),z), \quad
  \xi_{i,m}^{2^\pm}(z)=Y_\E^\phi(X_i^\pm(\zeta^m z_1),z).
\end{align*}
The relations \eqref{tau1-alt}-\eqref{tau8-alt} follow directly from Lemma \ref{lem:tau-alt}, Proposition \ref{prop:phi-mod-rel-f} and Theorem \ref{thm:presentation}; the relations \eqref{eq:va-rel-serre} and \eqref{eq:va-rel-rtu} follow immediately from \eqref{zeta-serre} and \eqref{zeta-res-form}. Thus it remains to verify the relation \eqref{eq:va-rel-der}. To this end, we first establish the following result.

\begin{lem}\label{lem:phi-neg-one}
Fix an associate $\phi(x,z)=\exp(zp(x)\pd{x})x\ne x$ of $F_a(x,y)$.
Let $W$ be a vector space, let
$U$ be a quasi-compatible subset of $\E(W)$,
and let $\al^\pm(z),\beta(z)\in U$, such that
\begin{align*}
  &[\al^\pm(z_1),\al^\pm(z_2)]=0,\quad \al^+(z)\in \Hom(W,W[[z]]),\quad\te{and }
   [\al^-(z_1),\beta(z_2)]=\beta(z_2)\iota_{z_1,z_2}f(z_1,z_2)
\end{align*}
for some $f(z_1,z_2)\in \C(z_1,z_2)$.
Then
\begin{align*}
  (\al^-(z)+\al^+(z))_{-1}^\phi\beta(z)=\al^+(z)\beta(z)+\beta(z)\al^-(z)+\beta(z)\Res_{z_0}z_0\inv \iota_{z,z_0}f(\phi(z,z_0),z).
\end{align*}
\end{lem}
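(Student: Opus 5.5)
We will compute $(\al^-+\al^+)_{-1}^\phi\beta$ directly from the definition \eqref{eq:def-Y-E} of $Y_\E^\phi$, treating the two summands separately by linearity:
\begin{align*}
(\al^-+\al^+)_{-1}^\phi\beta=\Res_{z_0}z_0\inv Y_\E^\phi(\al^+(z),z_0)\beta(z)+\Res_{z_0}z_0\inv Y_\E^\phi(\al^-(z),z_0)\beta(z).
\end{align*}

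\emph{The $\al^+$ term.} Since $\al^+(z_1)\in\Hom(W,W[[z_1]])$, the product $\al^+(z_1)\beta(z)$ already lies in $\E^{(2)}(W)$ with no correcting polynomial, i.e.\ we may take $f=1$ in \eqref{eq:def-Y-E}. Then $Y_\E^\phi(\al^+(z),z_0)\beta(z)=\al^+(\phi(z,z_0))\beta(z)$. This has only nonnegative powers of $z_0$ and $\phi(z,0)=z$, so $\Res_{z_0}z_0\inv$ just sets $z_0=0$. We obtain $\al^+(z)\beta(z)$.

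\emph{The $\al^-$ term.} We use the commutator hypothesis to write
\[
\al^-(z_1)\beta(z)=\beta(z)\al^-(z_1)+\beta(z)\iota_{z_1,z}f(z_1,z).
\]
Write $f=g/h$ with $h$ a polynomial. Multiplying by $h(z_1,z)$ (together with a compatibility polynomial for $(\al^-,\beta)$ from quasi-compatibility of $U$) gives an element of $\E^{(2)}(W)$.
\begin{itemize}
\item For the term $\beta(z)\al^-(z_1)$, we use that $\beta(z)\al^-(z_1)\in\Hom(W,W((z)))((z_1))$. Because the substitution $z_1=\phi(z,z_0)$ and the division by $h(\phi(z,z_0),z)$ are compatible with this expansion, this piece contributes $\beta(z)\al^-(\phi(z,z_0))$, whose $\Res_{z_0}z_0\inv$ is $\beta(z)\al^-(z)$.
\item The scalar term contributes $\beta(z)\iota_{z,z_0}f(\phi(z,z_0),z)$, and applying $\Res_{z_0}z_0\inv$ yields the stated correction.
\end{itemize}

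\emph{Main obstacle.} The delicate point is justifying the splitting inside $Y_\E^\phi$. The definition requires a single polynomial $f_0$ with $f_0(z_1,z)\al^-(z_1)\beta(z)\in\E^{(2)}(W)$, followed by substitution and division by $\iota_{z,z_0}f_0(\phi(z,z_0),z)$. We must check that each summand separately is well defined after multiplying by $f_0$, so that $\iota_{z,z_0}$ commutes with the decomposition. The first summand is the delicate one: $\beta(z)\al^-(z_1)$ lies in $\Hom(W,W((z)))((z_1))$ rather than $\E^{(2)}$, so its substitution must be read via the $\iota_{z,z_0}$ expansion, where $\phi(z,z_0)\in\C((z))[[z_0]]$.

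Our first attempt at this is to take $f_0$ to be the common denominator of $\iota_{z_1,z}f$. We would then verify $f_0(z_1,z)\beta(z)\al^-(z_1)\in\E^{(2)}(W)$ using $[\al^-(z_1),\al^-(z_2)]=0$ plus quasi-compatibility, apply \eqref{eq:def-Y-E} termwise, and cancel $f_0$ against its $\iota_{z,z_0}$-inverse.
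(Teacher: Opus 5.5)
Your route is the paper's. There, $\al(z_1)\beta(z)$ is written as the normal-ordered product $\al^+(z_1)\beta(z)+\beta(z)\al^-(z_1)$ plus the contraction $\beta(z)\iota_{z_1,z}f(z_1,z)$. A polynomial clearing the denominator of $f$ (your $h$) is used as the polynomial in \eqref{eq:def-Y-E}, then one substitutes $z_1=\phi(z,z_0)$ and applies $\Res_{z_0}z_0\inv$. Splitting off the $\al^+$ term by linearity is cosmetic, and that part of your argument is fine.

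The gap is at the step you yourself flag, and your proposed way of handling it would fail. Substitution $z_1=\phi(z,z_0)$ is not defined on general elements of $\Hom(W,W((z)))((z_1))$, however one expands. For $\phi=ze^{z_0}$, the element $\sum_{m\ge 0}z^{-m}z_1^m w$ becomes $\sum_{m\ge0}e^{mz_0}w$, whose $z_0^0$-coefficient diverges. Even if you found $f_0$ with $f_0(z_1,z)\beta(z)\al^-(z_1)\in\E^{(2)}(W)$, cancelling $f_0(\phi(z,z_0),z)$ against $\iota_{z,z_0}f_0(\phi(z,z_0),z)\inv$ requires $\beta(z)\al^-(z_1)$ itself to be substitutable. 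The hypothesis $[\al^-(z_1),\al^-(z_2)]=0$ says nothing about $\beta$ and cannot supply this. A priori it is not even clear that $\beta(z)\al^-(z)$ on the right-hand side is defined.

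The missing observation is that the commutator hypothesis itself forces $\beta(z)\al^-(z_1)\in\E^{(2)}(W)$. For $w\in W$,
\[
\beta(z)\al^-(z_1)w=\al^-(z_1)\beta(z)w-\beta(z)\iota_{z_1,z}f(z_1,z)w\in W((z_1))((z)),
\]
while by its form $\beta(z)\al^-(z_1)w\in W((z))((z_1))$, and $W((z_1))((z))\cap W((z))((z_1))=W((z_1,z))$. Hence, with $f=g/h$,
\[
h(z_1,z)\al^-(z_1)\beta(z)=h(z_1,z)\beta(z)\al^-(z_1)+g(z_1,z)\beta(z)\in\E^{(2)}(W),
\]
so $h$ alone serves in \eqref{eq:def-Y-E} and no extra compatibility polynomial is needed. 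Since substitution is multiplicative on $\E^{(2)}(W)$, the factor $h(\phi(z,z_0),z)$ cancels. Then $\beta(z)\al^-(\phi(z,z_0))$ lies in $\Hom(W,W((z))[[z_0]])$, so its $\Res_{z_0}z_0\inv$ is its value at $z_0=0$, namely $\beta(z)\al^-(z)$. The contraction term gives $\beta(z)\Res_{z_0}z_0\inv\iota_{z,z_0}f(\phi(z,z_0),z)$, as you say. The paper's one-line computation relies on this same fact implicitly, when it uses the denominator of $f$ as the compatibility polynomial for $(\al,\beta)$.
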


\begin{proof}
Set $\al(z)=\al^-(z)+\al^+(z)$ and set
\begin{align*}
  &\:\al(z)\beta(w)\;=\al^+(z)\beta(w)+\beta(w)\al^-(z).
\end{align*}
Then we have that
\begin{align*}
  \al(z_1)\beta(z)=\:\al(z_1)\beta(z)\;+\beta(z)\iota_{z_1,z}f(z_1,z).
\end{align*}
Let $g(z_1,z_2)\in \C[z_1,z_2]$ such that $g(z_1,z_2)f(z_1,z_2)\in \C[z_1,z_2]$.
Then we have that
\begin{align*}
  &\al(z)_{-1}^\phi\beta(z)=\Res_{z_0}z_0\inv Y_\E^\phi(\al(z),z_0)\beta(z)\nonumber\\
  =&\Res_{z_0}z_0\inv \iota_{z,z_0}g(\phi(z,z_0),z)\inv \left( g(z_1,z) \al(z_1)\beta(z) \right)|_{z_1=\phi(z,z_0)}\nonumber\\
  =&\:\al(z)\beta(z)\;+\Res_{z_0}z_0\inv \iota_{z,z_0}g(\phi(z,z_0),z)\inv \left(g(\phi(z,z_0),z)f(\phi(z,z_0),z)\right)\beta(z)\nonumber\\
  =&\:\al(z)\beta(z)\;+\beta(z)\Res_{z_0}z_0\inv \iota_{z,z_0}f(\phi(z,z_0),z),
\end{align*}
which completes the proof of lemma.
\end{proof}

The relation \eqref{eq:va-rel-der} then follows immediately from the fact that $z\pd z$ is the canonical derivation of $\< U_W\>_\phi$ and the following result, completing the proof of Proposition \ref{prop:V-U-W-hom}.

\begin{lem}\label{lem:zeta10-to-tau-9}
For each $i\in I$ and $m\in\Z_\wp$, we have that
\begin{align*}
  &(\pm H_i(\zeta^{m-r_i}z)\mp H_i(\zeta^{m+r_i}z))_{-1}^\phi \Psi_i^\pm(\zeta^m z)\\
  =&z\pd z\Psi_i^\pm(\zeta^mz)
  +(\tau_{ii,m-r_i,m}^{0,1}(0)-\tau_{ii,m+r_i,m}^{1,0}(0))\Psi_i^\pm(\zeta^mz).
\end{align*}
\end{lem}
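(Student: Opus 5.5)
We will apply Lemma \ref{lem:phi-neg-one} with $\phi(x,z)=xe^z$, $\beta(z)=\Psi_i^\pm(\zeta^mz)$ and $\al(z)=\pm H_i(\zeta^{m-r_i}z)\mp H_i(\zeta^{m+r_i}z)$. We split $\al=\al^-+\al^+$: $\al^+(z)$ collects the modes $H_i(n)z^{-n}$ with $n\le 0$ (the part regular at $z=0$, including the zero mode), and $\al^-(z)$ collects the modes with $n>0$. Both families commute among themselves by \eqref{zeta1}, since the only nontrivial brackets there pair $H_i(n)$ with $H_j(-n)$, $n\ne0$. The commutator $[\al^-(z_1),\beta(z_2)]$ is obtained from \eqref{zeta-Psi-def2}. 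Conjugating the exponential factors of $\Psi_i^\pm$ by the positive modes, using \eqref{zeta1}, gives $\beta(z_2)$ times a rational function $f(z_1,z_2)$. This $f$ is a finite sum of terms $c_s\,\zeta^s z_2/z_1\,(1-\zeta^s z_2/z_1)^{-1}$, with coefficients $c_s$ given by the $\vvp{\cdot}$ integers from the $\Psi$-part of \eqref{zeta2}. The zero mode contributes nothing to the bracket because it sits in $\al^+$. With these choices the hypotheses of Lemma \ref{lem:phi-neg-one} hold.

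Lemma \ref{lem:phi-neg-one} then gives
\[\al(z)_{-1}^\phi\beta(z)=\al^+(z)\beta(z)+\beta(z)\al^-(z)+\beta(z)\Res_{z_0}z_0^{-1}\iota_{z,z_0}f(ze^{z_0},z).\]

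The next step identifies the normally ordered product with $z\pd z\beta(z)$. We differentiate \eqref{zeta-Psi-def2} at $\zeta^mz$. Applying $z\pd z$ to each exponential brings down $\pm\sum_{n\neq0}(\zeta_i^{n}-\zeta_i^{-n})H_i(n)(\zeta^mz)^{-n}$, up to the sign conventions. Since $\zeta_i^{\pm n}\zeta^{-mn}=\zeta^{-(m\mp r_i)n}$, this is exactly the nonzero-mode part of $\pm H_i(\zeta^{m-r_i}z)\mp H_i(\zeta^{m+r_i}z)$, with the sign matching after checking $\zeta_i=\zeta^{r_i}$. The zero-mode parts of the two $H_i$ shifts cancel, because $H_i(0)z^0$ is invariant under $z\mapsto\zeta^{k}z$. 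Hence $\al^+\beta+\beta\al^-$ equals $z\pd z\beta(z)$ plus a commutator correction. That correction arises when the negative-mode derivative, which appears on the left of the positive exponential, is reordered past it. It is a scalar multiple of $\beta$, namely a constant term coming from the bracket of $H_i(-n)$ with $H_i(n)$.

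The remaining, and main, obstacle is bookkeeping of the scalars. We must show that the reordering constant plus $\Res_{z_0}z_0^{-1}f(ze^{z_0},z)$ equals $\tau_{ii,m-r_i,m}^{0,1}(0)-\tau_{ii,m+r_i,m}^{1,0}(0)$. For this we expand $f(ze^{z_0},z)$ as $\sum_s c_s\,\zeta^se^{-z_0}/(1-\zeta^se^{-z_0})$. The terms with $\zeta^s\ne1$ are regular at $z_0=0$ and contribute $c_s\zeta^s/(1-\zeta^s)$. The $s=0$ term has a simple pole, and its $z_0^0$-coefficient is $-\tfrac12 c_0$. We would then compare these values with $\pd z\vartheta_s(z)|_{z=0}$, using the remark after \eqref{eq:def-vartheta}: $\pd z\vartheta_s(0)=\frac{1+\zeta^s}{2(1-\zeta^s)}$ for $s\not\equiv0$, and $\pd z\vartheta_0(0)=0$. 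These values, combined with the explicit $\vvp{\cdot}$ coefficients in the definitions of $\tau^{0,1}$ and $\tau^{1,0}$, should give exactly the required constant. The check uses that $\tfrac12\sum_s c_s$ is absorbed by the reordering constant. We expect this to reduce to an identity among $\vvp{\cdot}$-coefficients, verified by rewriting $[a_{ii}]_{q_i}(q^{-2r\ell}-1)$ and shifting $q^{\pm r_i}$.
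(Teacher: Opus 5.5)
Your route is the paper's: apply Lemma \ref{lem:phi-neg-one} to the split of $\pm H_i(\zeta^{m-r_i}z)\mp H_i(\zeta^{m+r_i}z)$ into negative and positive modes, identify the normally ordered part with $z\pd z\Psi_i^\pm$, and evaluate the residue through $\vartheta_s$. The middle step is wrong as you state it, however: there is no reordering correction. In \eqref{zeta-Psi-def2} the exponential of the modes $H_i(n)$, $n<0$, stands to the left of the $n>0$ exponential. It commutes with its own $z\pd z$-derivative and with $\zeta_i^{\mp2H_i(0)}$. The exponential of the modes $n>0$ stands on the far right and commutes with its derivative. Differentiating therefore gives directly $z\pd z\Psi_i^\pm(z)=\pm A^+(z)\Psi_i^\pm(z)\pm\Psi_i^\pm(z)A^-(z)$. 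Here $A^+$ (resp.\ $A^-$) is the part of $H_i(\zeta_i\inv z)-H_i(\zeta_iz)$ made of the modes with $n<0$ (resp.\ $n>0$). So $\al^+\beta+\beta\al^-=z\pd z\beta$ exactly, and no bracket $[H_i(-n),H_i(n)]$ ever enters.

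Consequently the constant you planned to absorb has nowhere to go. Your residue is $\sum_{s\ne0}c_s\zeta^s/(1-\zeta^s)-\frac12c_0=\sum_sc_s\vartheta_s'(0)-\frac12\sum_sc_s$, so the argument needs $\sum_{s\in\Z_\wp}c_s=0$. This is the missing observation: with $c_s=\vvp{g(q)q^{-s}}$ and $g(q)=(q_i^2-q_i^{-2})(q^{-2r\ell}-1)$, one has $\sum_sc_s=g(1)=0$. The paper uses this implicitly. It writes the bracket of the positive modes with $\Psi_i^\pm$ using the kernel $\frac{1+\zeta^sz_2/z_1}{2-2\zeta^sz_2/z_1}$, which differs from yours by the constant $\frac12\sum_sc_s=0$. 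After substituting $z_1=ze^{z_0}$, $z_2=z$, this kernel equals $\vartheta_s'(z_0)+\delta_{s,0}z_0\inv$, so the residue is $\sum_sc_s\vartheta_s'(0)$ at once.

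You also need to aim at the right target. The second term in the statement should be $\tau_{ii,m+r_i,m}^{0,1}(0)$, as in \eqref{eq:va-rel-der} and in the paper's proof. Since $\tau^{1,0}_{iimn}=-\tau^{0,1}_{iimn}$, the literal version is a different constant. For $\g=\ssl_2$, $\ell=1$, $\wp=5$ it equals $\vartheta_1'(0)$, whereas the left-hand side produces $-\vartheta_1'(0)-2\vartheta_2'(0)$. So comparing with the coefficients of $\tau^{1,0}$, as you propose, cannot succeed.

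With the corrected target, the identity you defer is short. Using $[r\ell/r_i]_{q_i}(q_i-q_i\inv)=q^{r\ell}-q^{-r\ell}$, the difference $\tau_{ii,m-r_i,m}^{0,1}(0)-\tau_{ii,m+r_i,m}^{0,1}(0)$ becomes $\sum_s\vvp{h(q)q^{-s}}\vartheta_s'(0)$ with $h(q)=-q_i^{-2}(q^{r\ell}-q^{-r\ell})^2$. Since $\vartheta_{-s}'(0)=-\vartheta_s'(0)$, a sum of this form depends only on $h(q)-h(q\inv)$. Finally, $h(q)-h(q\inv)=g(q)-g(q\inv)=(q_i^2-q_i^{-2})(q^{r\ell}-q^{-r\ell})^2$.
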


\begin{proof}
Set
\begin{align}\label{eq:def-wtH}
  \wt H_i^\pm(z)=\sum_{\mp m>0}H_i(m)(z\zeta_i\inv)^{-m}-
  \sum_{\mp m>0}H_i(m)(z\zeta_i)^{-m}.
\end{align}
We deduce from \eqref{zeta-Psi-def2} that
\begin{align*}
  &z\pd z \Psi_i^\pm(z)=\pm \wt H_i^+(z)\Psi_i^\pm(z)\pm \Psi_i^\pm(z)\wt H_i^-(z).
\end{align*}
Note that
\begin{align*}
  \iota_{z_1,z_2}\frac{1+z_2/z_1}{2-2z_2/z_1}=\frac 12+\sum_{m>0}\frac{z_2^m}{z_1^m},\quad
  \iota_{z_2,z_1}\frac{1+z_2/z_1}{2-2z_2/z_1}=-\frac 12-\sum_{m<0}\frac{z_2^m}{z_1^m}.
\end{align*}
From \eqref{zeta2}, we get that
\begin{align}
  [\wt H_i^-(z_1),&\Psi_i^\pm(z_2)]
  =
  \pm\Psi_i^\pm(z_2)\sum_{s\in\Z_\wp}\vvp{(q_i^2-q_i^{-2})
  (q^{-2r\ell}-1) q^{-s}}\frac{1+\zeta^sz_2/z_1}{2-2\zeta^sz_2/z_1}.
  \label{eq:wtH-Psi-rel+}
\end{align}
By using Lemma \ref{lem:phi-neg-one}, we get that
\begin{align*}
  &(\wt H_i^+(z)+\wt H_i^-(z))_{-1}^\phi\Psi_i^\pm(z)
  =\wt H_i^+(z)\Psi_i^\pm(z)+\Psi_i^\pm(z)\wt H_i^-(z)
  +c\Psi_i^\pm(z),
\end{align*}
where
\begin{align*}
  c=&\pm \Res_{z_0}z_0\inv \sum_{s\in\Z_\wp}\vvp{(q_i^2-q_i^{-2})(q^{-2r\ell}-1) q^{-s}}\left( \pd{z_0}\vartheta_s(z_0)+\delta_{s,0}\frac{1}{z_0} \right)\\
  =&\pm \Res_{z_0}z_0\inv \sum_{s\in\Z_\wp}\vvp{(q_i^2-q_i^{-2})(q^{-2r\ell}-1) q^{-s}} \pd{z_0}\vartheta_s(z_0)\\
  =&\pm \sum_{s\in\Z_\wp}\vvp{(q_i^2-q_i^{-2})(q^{-2r\ell}-1) q^{-s}} \lim_{z_0\to 0}\pd{z_0}\vartheta_s(z_0).
\end{align*}
A straightforward verification yields
\begin{align}
  &\tau_{ii,m-r_i,m}^{0,1}(0)-\tau_{ii,m+r_i,m}^{0,1}(0)
  =\sum_{s\in\Z_\wp}\vvp{
    (q_i^2-q_i^{-2})(q^{-2r\ell}-1)q^{-s}
  }\lim_{z\to 0}\pd z\vartheta_s(z).\label{eq:zeta10-to-tau-9-temp}
\end{align}
This completes the proof of the lemma.
\end{proof}

The following is the main result of this subsection.
\begin{thm}\label{thm:R-to-O}
For each $(W,H_i(z),\Psi_i^\pm(z),X_i^\pm(z))\in\obj\mathcal R_\zeta^\ell(\wh\g)$,
there is a $(\Z_\wp,\chi_\phi)$-equivariant $\phi$-coordinated quasi $V_{\wp,\tau}^\ell(\g)$-module structure $Y_W^\phi$ on $W$ uniquely determined by ($i\in I,\,m\in\Z_\wp$)
\begin{align*}
  &Y_W^\phi(\xi_{i,m}^0,z)=H_i(\zeta^m z),\quad
  Y_W^\phi(\xi_{i,m}^{1^\pm},z)=\Psi_i^\pm(\zeta^m z),\quad
  Y_W^\phi(\xi_{i,m}^{2^\pm},z)=X_i^\pm(\zeta^m z).
\end{align*}
%
\end{thm}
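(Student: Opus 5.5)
The plan is to compose the homomorphism of Proposition \ref{prop:V-U-W-hom} with the canonical module structure of Proposition \ref{prop:qaff-mod-to-module-nva}. Let $(W,H_i(z),\Psi_i^\pm(z),X_i^\pm(z))$ be an object of $\mathcal R_\zeta^\ell(\wh\g)$; by Theorem \ref{thm:presentation} it is an object of $\mathcal R_\zeta'^\ell(\wh\g)$ satisfying \eqref{zeta-serre} and \eqref{zeta-res-form}. Proposition \ref{prop:qaff-mod-to-module-nva} makes $(\<U_W\>_\phi,R)$ a $\Z_\wp$-module nonlocal vertex algebra, with $R(s)a(z)=a(\zeta^sz)$, and makes $W$ a $(\Z_\wp,\chi_\phi)$-equivariant $\phi$-coordinated quasi $\<U_W\>_\phi$-module via $Y_W^\phi(a(z),z_0)=a(z_0)$. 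Proposition \ref{prop:V-U-W-hom} (whose proof was just completed via Lemma \ref{lem:zeta10-to-tau-9}) gives a nonlocal vertex algebra homomorphism $\pi:V_{\wp,\tau}^\ell(\g)\to\<U_W\>_\phi$ sending $\xi_{i,m}^0,\xi_{i,m}^{1^\pm},\xi_{i,m}^{2^\pm}$ to $H_i(\zeta^mz),\Psi_i^\pm(\zeta^mz),X_i^\pm(\zeta^mz)$. I then define
\begin{align*}
  Y_W^\phi(v,z_0)=\pi(v)(z_0),\qquad v\in V_{\wp,\tau}^\ell(\g).
\end{align*}

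Pulling back along a homomorphism preserves the $\phi$-coordinated quasi-module axioms. The vacuum goes to $1_W$. Quasi-compatibility (indeed $\chi_\phi(\Z_\wp)$-compatibility) is inherited, since $\pi(V)\subset\<U_W\>_\phi$. For weak associativity,
\[
Y_\E^\phi(Y_W^\phi(u,z),z_0)Y_W^\phi(v,z)=Y_\E^\phi(\pi(u),z_0)\pi(v)=\pi(Y(u,z_0)v),
\]
and evaluating the right-hand side at $z$ gives $Y_W^\phi(Y(u,z_0)v,z)$.

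For equivariance, it suffices to show $\pi\circ R(s)=R(s)\circ\pi$, where the left $R$ is the one from Proposition \ref{prop:Z-module}. Both sides are nonlocal vertex algebra homomorphisms $V_{\wp,\tau}^\ell(\g)\to\<U_W\>_\phi$. This uses that $R(s)$ on $\<U_W\>_\phi$ is a homomorphism, which holds because $\chi$ is trivial for $\phi=xe^z$. On generators both sides give $\xi_{i,m}^a\mapsto(\text{field})(\zeta^{m+s}z)$, so they agree, since $V_{\wp,\tau}^\ell(\g)$ is generated by the $\xi_{i,m}^a$. Then
\[
Y_W^\phi(R(s)v,z)=(R(s)\pi(v))(z)=\pi(v)(\zeta^sz),
\]
which is the required identity \eqref{eq:phi-mod-equiv} once one checks it matches the convention for $\chi_\phi$ in use.

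Uniqueness follows because the generators determine $Y_W^\phi$ through the weak associativity relation, which expresses $Y_W^\phi$ of iterated $n$-th products in terms of $Y_E^\phi$ of the generator fields. The main obstacle is this sign/convention bookkeeping: the paper uses both $s\mapsto\zeta^{-s}$ and $s\mapsto\zeta^s$ for $\chi_\phi$. One must verify that $R(s)$ acting as $a(z)\mapsto a(\zeta^sz)$ matches $Y_W^\phi(R(s)u,z)=Y_W^\phi(u,\chi_\phi(s)^{-1}z)$ with the chosen $\chi_\phi$, adjusting the definition of $R$ if necessary.
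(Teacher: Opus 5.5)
Your proposal is correct and is the paper's argument: compose the homomorphism $V_{\wp,\tau}^\ell(\g)\to\<U_W\>_\phi$ of Proposition \ref{prop:V-U-W-hom} with the equivariant quasi-module structure of Proposition \ref{prop:qaff-mod-to-module-nva}, then check on generators that this homomorphism intertwines the two $\Z_\wp$-actions. Your worry about conventions does not arise: once $\pi\circ R(s)=R(s)\circ\pi$, equivariance transfers directly from Proposition \ref{prop:qaff-mod-to-module-nva}, where $R(s)a(z)=a(\zeta^s z)$ and $\chi_\phi(s)=\zeta^{-s}$ are already consistent, so $R$ needs no adjustment.
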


\begin{proof}
From Proposition \ref{prop:V-U-W-hom}, we get a nonlocal vertex algebra homomorphism $\varphi_W:V_{\wp,\tau}^\ell(\g)\to \<U_W\>_\phi$.
Note that $W$ is a $(\Z_\wp,\chi_\phi)$-equivariant $\phi$-coordinated quasi $\<U_W\>_\phi$-module.
To complete the proof, it suffices to show $\varphi_W$
commutes with the $\Z_\wp$-module action.

Let $i\in I$ and $m,s\in\Z_\wp$. Then we have
\begin{align*}
  &R(s)\varphi_W(\xi_{i,m}^0)=R(s)H_i(\zeta^m z)=H_i(\zeta^{m+s}z)=\varphi_W(\xi_{i,m+s}^0)=\varphi_W(R(s)\xi_{i,m}^0),\\
  &R(s)\varphi_W(\xi_{i,m}^{1^\pm})=R(s)\Psi_i^\pm(\zeta^m z)=\Psi_i^\pm(\zeta^{m+s} z)=\varphi_W(\xi_{i,m+s}^{1^\pm})=\varphi_W(R(s)\xi_{i,m}^{1^\pm}),\\
  &R(s)\varphi_W(\xi_{i,m}^{2^\pm})=R(s)X_i^\pm(\zeta^m z)=X_i^\pm(\zeta^{m+s} z)=\varphi_W(\xi_{i,m+s}^{2^\pm})=\varphi_W(R(s)\xi_{i,m}^{2^\pm}).
\end{align*}
Since $V_{\wp,\tau}^\ell(\g)$ generated by
\begin{align*}
  \set{\xi_{i,m}^a}{i\in I,\,m\in\Z_\wp,\,a\in\{0,1^\pm,2^\pm\}},
\end{align*}
this completes the proof of the theorem.
\end{proof}

The $(\Z_\wp,\chi_\phi)$-equivariant $\phi$-coordinated quasi $V_{\wp,\tau}^\ell(\g)$-module given by Theorem \ref{thm:R-to-O}
satisfies more conditions.

\begin{de}
Let $\mathcal O^\phi(V_{\wp,\tau}^\ell(\g))$ be the full subcategory of the category of $(\Z_\wp,\chi_\phi)$-equivariant $\phi$-coordinated quasi-modules $(W,Y_W^\phi)$ for $V_{\wp,\tau}^\ell(\g)$, such that
\begin{align}
  \label{O1}\tag{$\mathcal O1$}&Y_W^\phi(\xi_{i,0}^0,0)\,\,\te{acts semisimple on }W,\\
  \label{O2}\tag{$\mathcal O2$}&\exp\left(\sum_{m>0}Y_W^\phi(\xi_{i,-r_i}^0-\xi_{i,r_i}^0,m)
    \frac{z^{-m}}{m}\right)\in \Hom(W,W[z\inv]),
\end{align}
where
\begin{align*}
  Y_W^\phi(a,z)=\sum_{m\in\Z}Y_W^\phi(a,m)z^{-m}\quad\te{for }a\in V_{\wp,\tau}^\ell(\g).
\end{align*}
\end{de}

\begin{coro}\label{coro:R-to-O}
Theorem \ref{thm:R-to-O} defines a fully faithful functor $\mathcal Q:\mathcal R_\zeta^\ell(\wh\g)\to \mathcal O^\phi(V_{\wp,\tau}^\ell(\g))$.
\end{coro}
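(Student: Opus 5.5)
The plan has three parts: show that the image of Theorem \ref{thm:R-to-O} lands in $\mathcal O^\phi(V_{\wp,\tau}^\ell(\g))$, define $\mathcal Q$ on morphisms, and check full faithfulness.

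\emph{Objects.} Let $W\in\obj\mathcal R_\zeta^\ell(\wh\g)$. By Theorem \ref{thm:R-to-O}, $W$ carries the structure $Y_W^\phi$ with $Y_W^\phi(\xi_{i,m}^0,z)=H_i(\zeta^mz)$. Writing $Y_W^\phi(a,z)=\sum_m Y_W^\phi(a,m)z^{-m}$, the zero mode $Y_W^\phi(\xi_{i,0}^0,0)$ equals $H_i(0)$. By Lemma \ref{lem:h-eigenvalues} (equivalently \eqref{zeta-weight-mod}), $H_i(0)$ acts semisimply, so \eqref{O1} holds. For \eqref{O2}, we have
\[
Y_W^\phi(\xi_{i,-r_i}^0-\xi_{i,r_i}^0,m)=(\zeta_i^{-m}-\zeta_i^{m})H_i(m).
\]
For $m>0$ these modes mutually commute by \eqref{zeta1}, since the central term only pairs modes of opposite sign. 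Hence the exponential in \eqref{O2} is the positive-mode factor of $\Psi_i^\pm(z)$ in \eqref{zeta-Psi-def2}, possibly inverted depending on the sign. By smoothness, each $w\in W$ is killed by $H_i(m)$ for $m\gg0$. The exponential of a series in the commuting, locally nilpotent-in-degree operators $H_i(m)z^{-m}$ therefore applied to $w$ gives a polynomial in $z^{-1}$. This requires the smoothness condition that $H_i(m)w=0$ for large $m$, together with the fact that the $H_i(m)$ with $m>0$ act locally finitely on the span generated from $w$. I expect this to follow from the fact that $\prod_m\exp(\cdot)$ involves only finitely many nonzero factors on $w$, each acting through finitely many terms.

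\emph{Morphisms and faithfulness.} A $\U_\zeta(\wh\g)$-module map $f:W\to W'$ intertwines $H_i(z),\Psi_i^\pm(z),X_i^\pm(z)$ by Proposition \ref{prop:U-zeta-M}. Since $V_{\wp,\tau}^\ell(\g)$ is generated by the $\xi_{i,m}^a$, the set of $v$ with $fY_W^\phi(v,z)=Y_{W'}^\phi(v,z)f$ contains the generators. By \eqref{eq:phi-mod-asso} and the definition of $Y_\E^\phi$, this set is closed under $u_n^\phi v$. It is therefore the whole algebra, so we set $\mathcal Q(f)=f$. Faithfulness is then trivial, since $\mathcal Q$ is the identity on underlying linear maps.

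\emph{Fullness.} Conversely, let $f$ be a $V_{\wp,\tau}^\ell(\g)$-module map. Then $f$ commutes with $Y^\phi(\xi_{i,0}^a,z)$, hence with $H_i(z),\Psi_i^\pm(z),X_i^\pm(z)$. By Theorem \ref{thm:presentation}, the $\U_\zeta(\wh\g)$-action is determined by these fields via Proposition \ref{prop:U-zeta-M-reverse} and Proposition \ref{prop:xiii6}. In particular the divided powers $(x_i^\pm(n))^{(k)}$ and $\qb{k_i}{m}_{q_i}$ are expressed through the modes of these fields, with $\qb{k_i}{m}_{q_i}$ acting via $H_i(0)$. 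Hence $f$ is a $\U_\zeta(\wh\g)$-module map.

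The main obstacle is this last point: one must make sure the $\U_\zeta$-structure recovered in Theorem \ref{thm:presentation} is a function of the fields alone. Only then is a linear map commuting with the fields $\U_\zeta$-linear. I would argue this via the isomorphism of categories in Theorem \ref{thm:presentation}, which by construction is identity on underlying spaces and morphisms.
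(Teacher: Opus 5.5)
Your treatment of \eqref{O1} and of morphisms is sound. The paper leaves full faithfulness implicit. Your closure argument for intertwiners, together with the fact that Theorem \ref{thm:presentation} identifies $\mathcal R_\zeta^\ell(\wh\g)$ with a \emph{full} subcategory of $\mathcal R_\zeta'^\ell(\wh\g)$, is what is needed there.

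The gap is in \eqref{O2}. You try to get polynomiality in $z^{-1}$ of $\exp\bigl(\sum_{m>0}c_mH_i(m)z^{-m}\bigr)w$ from $H_i(m)w=0$ for $m\gg0$ plus local finiteness. Commutativity of the positive modes does reduce this to finitely many factors. But a surviving factor $\exp(c_mH_i(m)z^{-m})$ contributes only finitely many terms if $H_i(m)$ acts \emph{nilpotently} on the relevant span. Neither smoothness of the $H_i(m)$ nor local finiteness gives that. For example, a vector with $H_i(1)w=\lambda w$, $\lambda\ne0$, and $H_i(m)w=0$ for $m\ge2$ satisfies everything you invoke. Yet $\exp(cH_i(1)z^{-1})w=e^{c\lambda z^{-1}}w\notin W[z^{-1}]$.

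What excludes such vectors is the other half of smoothness: $\psi_i^+(n)w=0$ for $n\gg0$. Equivalently, you can use $\Psi_i^\pm(z)\in\E(W)$ together with \eqref{zeta-Psi-def2}, which is what the paper does.

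You should also fix your sign. Since $Y_W^\phi(\xi_{i,\mp r_i}^0,z)=H_i(\zeta_i^{\mp1}z)$, one has $Y_W^\phi(\xi_{i,-r_i}^0-\xi_{i,r_i}^0,m)=(\zeta_i^m-\zeta_i^{-m})H_i(m)$, not $(\zeta_i^{-m}-\zeta_i^{m})H_i(m)$. So the exponential in \eqref{O2} is exactly the positive-mode factor of $\Psi_i^-(z)$ in \eqref{zeta-Psi-def2}. By \eqref{Q9} and Proposition \ref{prop:U-zeta-M}, this factor is $k_i^{-1}\psi_i^+(z)$. On each $w$ it is therefore the finite sum $\sum_{n=0}^{N}k_i^{-1}\psi_i^+(n)w\,z^{-n}\in W[z^{-1}]$.

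With your sign you would get the inverse of this operator. Polynomiality in $z^{-1}$ on each vector is not automatically inherited by inverses, so your hedge ``possibly inverted'' would need its own argument. With the correct sign, none is needed.
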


\begin{proof}
Let $(W,H_i(z),\Psi_i^\pm(z),X_i^\pm(z))\in \obj\mathcal R_\zeta^\ell(\wh\g)$, and let $Y_W^\phi$ be the $(\Z_\wp,\chi_\phi)$-equivariant $\phi$-coordinated quasi $V_{\wp,\tau}^\ell(\g)$-module structure given by Theorem \ref{thm:R-to-O}.
The condition \eqref{O1} follows from \eqref{zeta-weight-mod},
and the condition \eqref{O2} follows from \eqref{zeta-Psi-def2},
the condition $\Psi_i^\pm(z)\in \E(W)$ and the fact that
\begin{align*}
  Y_W^\phi(\xi_{i,-r_i}^0-\xi_{i,r_i}^0,m)=(\zeta_i^m-\zeta_i^{-m})
  Y_W^\phi(\xi_{i,0}^0,m)=(\zeta_i^m-\zeta_i^{-m})H_i(m),
\end{align*}
where $m\in\Z$.
\end{proof}

\subsection{From $\phi$-coordinated quasi-modules to $\mathcal R_\zeta^\ell(\wh\g)$}\label{subsec:phi-mod-to-rtu-mod}

We now turn to the converse of Theorem \ref{thm:R-to-O}.
Let $(W, Y_W^\phi)$ be a fixed $(\Z_\wp,\chi_\phi)$-equivariant $\phi$-coordinated quasi-module of $V_{\wp,\tau}^\ell(\g)$.
To apply Theorem \ref{thm:presentation}, we need to determine the conditions under which the relations \eqref{zeta1}-\eqref{zeta-res-form} hold with
\begin{align*}
  H_i(z)=Y_W^\phi(\xi_{i,0}^0,z),\quad
  \Psi_i^\pm(z)=Y_W^\phi(\xi_{i,0}^{1^\pm},z),\quad
  X_i^\pm(z)=Y_W^\phi(\xi_{i,0}^{2^\pm},z)\quad\te{for } i\in I.
\end{align*}
From Proposition \ref{prop:phi-mod-rel-f}, Remark \ref{rem:def-Y-V}, and Lemma \ref{lem:tau-alt}, it follows immediately that the relations \eqref{zeta1}–\eqref{zeta8}, \eqref{zeta-serre}, and \eqref{zeta-res-form} hold.
Moreover, condition \eqref{zeta-weight-mod} follows directly from condition \eqref{O1}.
Note that relation \eqref{zeta-Psi-def2} is equivalent to the following condition:
\begin{align}
  &\mho_{i,W}^{\phi,\pm}(z)=E_{i,W}^{\phi,+}(z)^{\mp 1}
   \zeta_i^{\pm 2 Y_W^\phi(\xi_{i,0}^0,0) }
   Y_W^\phi(\xi_{i,0}^{1^\pm},z)
   E_{i,W}^{\phi,-}(z)^{\mp 1}=1_W\quad\te{for }i\in I,
   \label{eq:zeta-Psi-def3}\tag{$\mathcal O3$}\\
  &\quad\te{where}\quad
  E_{i,W}^{\phi,\pm}(z)=\exp\left(\sum_{\mp m>0}
   Y_W^\phi(\xi_{i,r_i}^0-\xi_{i,-r_i}^0,m)\frac{z^{-m}}{ m}   \right).\nonumber
\end{align}
We immediately obtain the following converse of Theorem \ref{thm:R-to-O}.
\begin{thm}\label{thm:O-to-R}
For each $(W,Y_W^\phi)\in \obj \mathcal O^\phi(V_{\wp,\tau}^\ell(\g))$ satisfying the condition \eqref{eq:zeta-Psi-def3}, we have that
\begin{align*}
  (W,Y_W^\phi(\xi_{i,0}^0,z),Y_W^\phi(\xi_{i,0}^{1^\pm},z),
  Y_W^\phi(\xi_{i,0}^{2^\pm},z))\in \obj\mathcal R_\zeta^\ell(\wh\g).
\end{align*}
\end{thm}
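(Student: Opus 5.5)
The plan is to check, one by one, the defining conditions of the full subcategory of $\mathcal R_\zeta'^\ell(\wh\g)$ in Theorem \ref{thm:presentation} for the fields
\begin{align*}
  H_i(z)=Y_W^\phi(\xi_{i,0}^0,z),\quad \Psi_i^\pm(z)=Y_W^\phi(\xi_{i,0}^{1^\pm},z),\quad X_i^\pm(z)=Y_W^\phi(\xi_{i,0}^{2^\pm},z),
\end{align*}
and then invoke Theorem \ref{thm:presentation} to identify $W$ with an object of $\mathcal R_\zeta^\ell(\wh\g)$. Each relation of $V_{\wp,\tau}^\ell(\g)$ is written in the form required by Proposition \ref{prop:phi-mod-rel-f}. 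The key input is the equivariance \eqref{eq:phi-mod-equiv}, which gives $Y_W^\phi(\xi_{i,m}^a,z)=Y_W^\phi(R(m)\xi_{i,0}^a,z)=Y_W^\phi(\xi_{i,0}^a,\zeta^{m}z)$ up to the sign convention of $\chi_\phi$. Thus every generator field on $W$ is a rotated copy of $H_i,\Psi_i^\pm,X_i^\pm$.

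For the commutation relations \eqref{zeta1}--\eqref{zeta8}, I would take the equivalent forms \eqref{tau1-alt}--\eqref{tau8-alt} from Lemma \ref{lem:tau-alt}. These hold in $V_{\wp,\tau}^\ell(\g)$ by Remark \ref{rem:def-Y-V} and \eqref{eq:va-e+e-}, \eqref{eq:va-rel-varphi+-}. Their coefficient functions are rational functions of $e^{z_1-z_2}$, so they have exactly the shape \eqref{eq:phi-mod-rel-f-1}, with the $\zeta^s$-shifts supplied by the $\Z_\wp$-action $R(a)$. Proposition \ref{prop:phi-mod-rel-f} then transports them to relations on $W$ with $e^{z_1-z_2}$ replaced by $z_1/z_2$ and $z_1^{-1}\delta(z_2/z_1)$ replaced by a sum of $\delta(\zeta^{s}z_2/z_1)$. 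Summing over $\Z_\wp$, these are precisely \eqref{zeta1}--\eqref{zeta8}; for instance the last term in \eqref{tau8-alt} with $n=m+2r\ell$ gives the $\Psi_j^+(z_2\zeta^{-r\ell})\delta(z_2\zeta^{-2r\ell}/z_1)$ term. Relation \eqref{zeta4} needs the limit form \eqref{tau4-alt}, which comes from \eqref{eq:va-rel-varphi+-} through Lemma \ref{lem:compatible-lower-bound} and its $\phi$-coordinated analogue.

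For the remaining conditions: the Serre-type relation \eqref{zeta-serre} and the root-of-unity relation \eqref{zeta-res-form} should follow from \eqref{eq:va-rel-serre} and \eqref{eq:va-rel-rtu}. Applying $Y_W^\phi$ and using that $Y_W^\phi(u_0v,z)=Y_W^\phi(u,z)_0^\phi Y_W^\phi(v,z)$, which holds by \eqref{eq:phi-mod-asso}, together with equivariance, turns each iterated $0$-product into the iterated $(\cdot)_0^\phi$ products with the required shifts $\zeta_i^{-a_{ij}},\dots$. The weight condition \eqref{zeta-weight-mod} is \eqref{O1}, with integrality of the eigenvalues coming from the weighted/level setup. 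Relation \eqref{zeta-Psi-def2} is exactly hypothesis \eqref{eq:zeta-Psi-def3}: using
\begin{align*}
  Y_W^\phi(\xi_{i,r_i}^0-\xi_{i,-r_i}^0,m)=(\zeta_i^{-m}-\zeta_i^{m})H_i(m),
\end{align*}
which follows from equivariance, one rewrites $E_{i,W}^{\phi,\pm}$ as the exponentials in \eqref{zeta-Psi-def2}. Condition \eqref{O2} guarantees that these exponentials are well defined in $\E(W)$, so the rearrangement is legitimate.

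The main obstacle I expect is bookkeeping rather than conceptual: matching exponents and constants. The sign of $\chi_\phi$ must be consistent with the shift $\xi_{i,m}\mapsto\zeta^{\pm m}$, and the normalizing constants $C(g)$ and $E(z,g)$ in $\tau$ must reduce, after $z\mapsto$ the $\phi$-coordinate, to the clean products $\prod_s(1-\zeta^s z_2/z_1)^{\langle\cdot\rangle_\wp}$. I would check this first on \eqref{tau7-alt}$\Rightarrow$\eqref{zeta7} and on the delta-term of \eqref{zeta8}, and then treat the other relations uniformly.
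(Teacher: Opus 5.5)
Your proposal follows the paper's argument: \eqref{zeta1}--\eqref{zeta8} come from Lemma~\ref{lem:tau-alt} and Proposition~\ref{prop:phi-mod-rel-f} via equivariance, \eqref{zeta-serre} and \eqref{zeta-res-form} from \eqref{eq:va-rel-serre} and \eqref{eq:va-rel-rtu}, \eqref{zeta-weight-mod} from \eqref{O1}, \eqref{zeta-Psi-def2} as a rewriting of \eqref{eq:zeta-Psi-def3}, and then Theorem~\ref{thm:presentation} finishes; your separate treatment of the limit relation \eqref{zeta4} through a $\phi$-coordinated version of Lemma~\ref{lem:compatible-lower-bound} is more careful than the paper, which lumps it in with Proposition~\ref{prop:phi-mod-rel-f}. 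The one point to correct is your justification that the eigenvalues of $H_i(0)$ are integers: in this direction $W$ has no weighted or level structure and the relations do not force integrality, so it has to be read as part of \eqref{O1}, which is how the paper uses that condition.
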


The following result shows that the condition \eqref{eq:zeta-Psi-def3} is natural and not overly restrictive.
\begin{prop}\label{prop:mho}
Let $(W,Y_W^\phi)$ be a $(\Z_\wp,\chi_\phi)$-equivariant $\phi$-coordinated quasi module for $V_{\wp,\tau}^\ell(\g)$
satisfying the condition \eqref{O2}.
Then we have that
\begin{align}\label{eq:mho}
  &\mho_{i,W}^{\phi,\pm}(z)\in \End_\C(W)\quad\te{for }i\in I.
\end{align}
For brevity, we denote $\mho_{i,W}^{\phi,\pm}(z)$ simply by $\mho_{i,W}^{\phi,\pm}$.
Moreover, $\mho_{i,W}^{\phi,\pm}$ ($i\in I$) lie in the centroid of $Y_W^\phi(V_{\wp,\tau}^\ell(\g),z)$, that is,
\begin{align}\label{eq:centroid}
  [\mho_{i,W}^{\phi,\pm},Y_W^\phi(a,z)]=0\quad\te{for }i\in I,\,a\in V_{\wp,\tau}^\ell(\g).
\end{align}
Furthermore,
\begin{align}\label{eq:mho-invertible}
  \mho_{i,W}^{\phi,+}\mho_{i,W}^{\phi,-}=1_W.
\end{align}
\end{prop}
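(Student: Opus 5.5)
Set $\Psi^\pm(z)=Y_W^\phi(\xi_{i,0}^{1^\pm},z)$, $H(z)=Y_W^\phi(\xi_{i,0}^0,z)$, and write $\wt H^\pm(z)$ for the two halves appearing in $E_{i,W}^{\phi,\pm}(z)$. The plan is to show $z\pd z\mho_{i,W}^{\phi,\pm}(z)=0$ and then read off the remaining claims from commutation relations.

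\textbf{Step 1: relations on $W$.} First transfer \eqref{tau2-alt} (the $\xi^0$--$\xi^{1^\pm}$ relation) and \eqref{tau1-alt} to $W$ via Proposition \ref{prop:phi-mod-rel-f}. This gives \eqref{zeta2} and \eqref{zeta1} for $H_i(z),\Psi_i^\pm(z)$. From these one gets the commutators of the modes $Y_W^\phi(\xi_{i,r_i}^0-\xi_{i,-r_i}^0,m)$ with $\Psi^\pm(z)$ and with each other. The latter are central, being scalars by \eqref{zeta1}.

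\textbf{Step 2: $E_{i,W}^{\phi,\pm}(z)$ are well defined.} Condition \eqref{O2} makes $E_{i,W}^{\phi,-}(z)^{\mp1}$ a well-defined operator on $W$ with values in $W[z\inv]$. The factor $E_{i,W}^{\phi,+}$ only involves modes $m<0$ and is a formal power series in $z$, so the product $\mho_{i,W}^{\phi,\pm}(z)$ is a well-defined element of $\End(W)[[z,z\inv]]$.

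\textbf{Step 3: the derivative vanishes.} Apply $Y_W^\phi$ to relation \eqref{eq:va-rel-der}. Since $z\pd z$ corresponds to $\partial$ for $\phi(x,z)=xe^z$, and using Lemma \ref{lem:phi-neg-one} exactly as in Lemma \ref{lem:zeta10-to-tau-9} but read backwards, we get
\[
z\pd z\Psi^\pm(z)=\pm \wt H^+(z)\Psi^\pm(z)\pm\Psi^\pm(z)\wt H^-(z)+(c-c')\Psi^\pm(z),
\]
where $c$ is the residue constant and $c'=\tau_{ii,-r_i,0}^{0,1}(0)-\tau_{ii,r_i,0}^{0,1}(0)$. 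These cancel by \eqref{eq:zeta10-to-tau-9-temp}. The zero-mode part $\zeta_i^{\pm2H(0)}$ must also be handled: Step 1 gives $[H(0),\Psi^\pm(z)]=0$, since the $s$-sum of the residues collapses. Moreover $z\pd z E^{\phi,\pm}$ produces exactly $\pm\wt H^\pm$ factors. Differentiating the product then gives $z\pd z\mho_{i,W}^{\phi,\pm}(z)=0$, hence \eqref{eq:mho}.

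This is the main obstacle: we must make sure the normal-ordering constant from Lemma \ref{lem:phi-neg-one} and the $\tau(0)$ constant in \eqref{eq:va-rel-der} match exactly. We also must justify manipulating $E^{\phi,-}(z)^{\mp1}$ as a series acting on $W$, using \eqref{O2}.

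\textbf{Step 4: centroid property.} It suffices to check \eqref{eq:centroid} on the generators $\xi_{i',m}^a$, thanks to the $\Z_\wp$-equivariance \eqref{eq:phi-mod-equiv} and weak associativity \eqref{eq:phi-mod-asso}. The latter implies that an operator commuting with all $Y_W^\phi(u,z)$ for generators $u$ commutes with $Y_W^\phi(u_n^\phi v,z)$.

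For each generator, compute the conjugation of $Y_W^\phi(\xi^a,z_2)$ by $E^{\phi,\pm}$, $\zeta_i^{2H(0)}$ and $\Psi^\pm(z_1)$, using the transferred relations \eqref{tau2-alt}, \eqref{tau5-alt}, \eqref{tau3-alt}, \eqref{tau6-alt} and \eqref{tau1-alt}. The exponential factors produce the rational functions $\prod_s(1-\zeta^s z_2/z_1)^{\cdots}$, which cancel against those from $\Psi^\pm$. Since $\mho$ is independent of $z$, we may specialize freely.

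\textbf{Step 5: inverse relation.} Finally, \eqref{eq:mho-invertible} follows from \eqref{tau4-alt} transferred to $W$ (giving \eqref{zeta4}). Multiply $\mho^+\mho^-$ and take $\lim_{z_1\to z_2}$. The $E$-factors cancel after commuting through $\Psi$, and the commutator factors are exactly those inside the limit in \eqref{zeta4}; together with \eqref{eq:va-rel-varphi+-} this gives $1_W$.
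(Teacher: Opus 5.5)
Your proposal is correct and follows essentially the same route as the paper. You get constancy of $\mho_{i,W}^{\phi,\pm}(z)$ by applying $Y_W^\phi$ to \eqref{eq:va-rel-der}, using Lemma \ref{lem:phi-neg-one} and the constant identity \eqref{eq:zeta10-to-tau-9-temp} (the paper packages this step as Lemma \ref{lem:phi-exp}). You then get the centroid property on generators from the exchange relations of Lemma \ref{lem:E-rel} with \eqref{zeta2}, \eqref{zeta3}, \eqref{zeta6}, and \eqref{eq:mho-invertible} from \eqref{zeta4} by letting $z_1\to z_2$.

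The one point to tighten is your Step 2. Condition \eqref{O2} literally gives only $E_{i,W}^{\phi,-}(z)^{-1}\in\Hom(W,W[z\inv])$. So for $\mho_{i,W}^{\phi,-}$ to be well defined you need a line showing that $E_{i,W}^{\phi,-}(z)$ itself also maps $W$ into $W[z\inv]$. For example, \eqref{O2} forces the commuting modes $Y_W^\phi(\xi_{i,-r_i}^0-\xi_{i,r_i}^0,m)$, $m>0$, to act nilpotently on the cyclic submodule they generate from any $w\in W$. The paper likewise takes this for granted in the hypotheses of Lemma \ref{lem:phi-exp}.
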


Before proving this proposition, we present some remarks.

\begin{rem}
From the definition of $\mho_{i,W}^{\phi,\pm}$, we conclude from \eqref{eq:centroid} that
\begin{align*}
  \big[\mho_{i,W}^{\phi,\epsilon_1},\mho_{j,W}^{\phi,\epsilon_2}\big]=0
  \quad\te{for all }i,j\in I,\,\epsilon_1,\epsilon_2\in\{\pm\}.
\end{align*}
\end{rem}

\begin{rem}
Let $(W,Y_W^\phi)$ be a simple $(\Z_\wp,\chi_\phi)$-equivariant $\phi$-coordinated quasi module for $V_{\wp,\tau}^\ell(\g)$ satisfying the condition \eqref{O2}.
It follows from Remark \ref{rem:countible-dim} that $V_{\wp,\tau}^\ell(\g)$ has countable dimension.
Since $W$ is simple, $W$ also has countable dimension.
Consequently, each $\mho_{i,W}^{\phi,\pm}$ must act as a nonzero scalar multiple on $W$.
\end{rem}

\begin{rem}
Let $(W,Y_W^\phi)$ be a $(\Z_\wp,\chi_\phi)$-equivariant $\phi$-coordinated quasi $V_{\wp,\tau}^\ell(\g)$-module of finite length satisfying the condition \eqref{O2}.
Then there exist polynomials $P_i(t)\in \C[t]$ with nonzero constant terms, such that $P_i(\mho_{i,W}^{\phi,\pm})=0$ on $W$ for all $i\in I$.
\end{rem}

In the remainder of this subsection, we prove the results of Proposition \ref{prop:mho} sequentially.
To prove \eqref{eq:mho}, we need the following two technical results.

\begin{lem}\label{lem:E-rel}
Suppose that the condition \eqref{O2} holds for $W$.
Then
\begin{align}
  &[Y_W^\phi(\xi_{i,0}^0,z_1),E_{j,W}^{\phi,+}(z_2)]
  =E_{j,W}^{\phi,+}(z_2)\sum_{s\in\Z_\wp}
    \vvp{ [a_{ij}]_{q_i}(q^{- r\ell}-q^{r\ell})q^{-r\ell-s}}
    \frac{\zeta^sz_2/z_1}{1-\zeta^sz_2/z_1},\label{eq:E-rel-1}\\
  &[Y_W^\phi(\xi_{i,0}^0,z_1),E_{j,W}^{\phi,-}(z_2)]
  =E_{j,W}^{\phi,-}(z_2)\sum_{s\in\Z_\wp}
    \vvp{ [a_{ij}]_{q_i}(q^{ -r\ell}-q^{r\ell})q^{r\ell-s}}
    \frac{1}{\zeta^sz_2/z_1-1},\label{eq:E-rel-2}\\
  &E_{i,W}^{\phi,-}(z_1)E_{j,W}^{\phi,+}(z_2)=E_{j,W}^{\phi,+}(z_2)
  E_{i,W}^{\phi,-}(z_1)
  \prod_{s\in\Z_\wp}(1-\zeta^sz_2/z_1)^{\vvp{
    (q_i^{a_{ij}}-q_i^{-a_{ij}})(q^{-r\ell}-q^{r\ell})q^{-r\ell-s}
  }}
  ,\label{eq:E-rel-E}\\
  &E_{i,W}^{\phi,-}(z_1)Y_W^\phi(\xi_{j,0}^{1^\pm},z_2)
  =Y_W^\phi(\xi_{j,0}^{1^\pm},z_2)E_{i,W}^{\phi,-}(z_1)
  \prod_{s\in\Z_\wp}(1-\zeta^sz_2/z_1)^{\pm \vvp{
    (q_i^{a_{ij}}-q_i^{-a_{ij}})(q^{-2r\ell}-1)q^{-s}
  }},\label{eq:E-rel-3}\\
  &E_{i,W}^{\phi,+}(z_1)Y_W^\phi(\xi_{j,0}^{1^\pm},z_2)
  =Y_W^\phi(\xi_{j,0}^{1^\pm},z_2)E_{i,W}^{\phi,+}(z_1)
  \prod_{s\in\Z_\wp}(1-\zeta^s z_1/z_2)^{\mp\vvp{
    (q_i^{a_{ij}}-q_i^{-a_{ij}})(q^{-2r\ell}-1)q^{-s}
  }},\label{eq:E-rel-4}\\
  &E_{i,W}^{\phi,-}(z_1)Y_W^\phi(\xi_{j,0}^{2^\pm},z_2)
  =Y_W^\phi(\xi_{j,0}^{2^\pm},z_2)E_{i,W}^{\phi,-}(z_1)
  \prod_{s\in\Z_\wp}(1-\zeta^sz_2/z_1)^{\pm\vvp{
    (q_i^{a_{ij}}-q_i^{-a_{ij}})q^{-r\ell-s}
  }},\label{eq:E-rel-5}\\
  &E_{i,W}^{\phi,+}(z_1)Y_W^\phi(\xi_{j,0}^{2^\pm},z_2)
  =Y_W^\phi(\xi_{j,0}^{2^\pm},z_2)E_{i,W}^{\phi,+}(z_1)
  \prod_{s\in\Z_\wp}(1-\zeta^s z_1/z_2)^{\pm\vvp{
    (q_i^{a_{ij}}-q_i^{-a_{ij}})q^{-r\ell-s}
  }}.\label{eq:E-rel-6}
\end{align}
\end{lem}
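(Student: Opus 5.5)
Every identity in Lemma \ref{lem:E-rel} will be derived from commutation relations between the modes of $Y_W^\phi(\xi_{i,0}^0,z)$ and the fields $Y_W^\phi(\xi_{j,0}^{a},z)$. These relations hold on $W$ as a consequence of Proposition \ref{prop:phi-mod-rel-f} together with Lemma \ref{lem:tau-alt}. In concrete terms, the relations \eqref{zeta1}, \eqref{zeta2} and \eqref{zeta5} are satisfied by $H_i(z)=Y_W^\phi(\xi_{i,0}^0,z)$, $\Psi_j^\pm(z)=Y_W^\phi(\xi_{j,0}^{1^\pm},z)$ and $X_j^\pm(z)=Y_W^\phi(\xi_{j,0}^{2^\pm},z)$. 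Equivariance \eqref{eq:phi-mod-equiv} gives $Y_W^\phi(\xi_{i,\pm r_i}^0,z)=H_i(\zeta^{\mp r_i}z)$. Consequently $Y_W^\phi(\xi_{i,r_i}^0-\xi_{i,-r_i}^0,m)=(\zeta_i^{-m}-\zeta_i^{m})H_i(m)$, and $E_{i,W}^{\phi,\pm}(z)$ is the exponential of the negative-mode or positive-mode half of $\wt H_i$, where $\wt H_i$ is the combination \eqref{eq:def-wtH} with $\zeta_i^{\pm1}$-shifted arguments. Condition \eqref{O2} makes $E_{i,W}^{\phi,+}(z)$, the exponential of the positive modes, a well-defined element of $\Hom(W,W[z\inv])$. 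The factor $E_{i,W}^{\phi,-}$ involves only modes $H_i(m)$ with $m<0$, so it lies in $\End(W)[[z]]$ and needs no such condition.

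The first step is to read off the commutator of a single mode with the field. Expanding \eqref{zeta1} in the region $|z_2|<|z_1|$ and taking the coefficient of $z_2^{m}$ gives $[H_i(z_1),H_j(m)]$ as a scalar multiple of $z_1^{m}$. Summing against $(\zeta_j^{-m}-\zeta_j^{m})z_2^{-m}/m$ over $m<0$, and separately over $m>0$, produces $[H_i(z_1),\log E_{j,W}^{\phi,\pm}(z_2)]$ as explicit rational functions. The quantity $\vvp{\cdot}$ absorbs the shift $q^{\pm r_j}$, and this converts the factor $[r\ell/r_j]_{q_j}$ into $(q^{-r\ell}-q^{r\ell})/(q_j-q_j\inv)$. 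Because the commutator is central, we have $[A,e^B]=[A,B]e^B$, which yields \eqref{eq:E-rel-1} and \eqref{eq:E-rel-2}. Next, \eqref{eq:E-rel-E} follows by applying $e^Ae^B=e^Be^Ae^{[A,B]}$ to $A=\log E_{i,W}^{\phi,-}$ and $B=\log E_{j,W}^{\phi,+}$. Here $[A,B]$ is a scalar series that is obtained by integrating \eqref{eq:E-rel-1} in $z_1$ over negative modes, and it exponentiates to the stated product of factors $(1-\zeta^sz_2/z_1)^{\vvp{\cdot}}$. This uses the elementary identity $\sum_{m>0}\frac{1}{m}\sum_s c_s\zeta^{sm}x^m=-\sum_s c_s\log(1-\zeta^sx)$.

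For \eqref{eq:E-rel-3}–\eqref{eq:E-rel-6}, the relations \eqref{zeta2} and \eqref{zeta5} have the form $[H_i(m),F(z_2)]=c_m z_2^{m}F(z_2)$. Therefore $\mathrm{ad}_B F=\gamma(z_1,z_2)F$, where $\gamma$ is a scalar series, and this gives $e^BFe^{-B}=e^{\gamma}F$. We then compute $\gamma$ mode by mode and identify $e^{\gamma}$ with the stated products by the same logarithm identity. The signs $\pm$ come directly from the $\pm$ in \eqref{zeta2} and \eqref{zeta5}.

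The main obstacle is convergence and the use of \eqref{O2} in the identities that involve $E^{\phi,+}$. Since $E_{j,W}^{\phi,+}(z_2)$ contains arbitrarily high positive modes, the manipulations above have to be carried out on each vector $w$ separately. Condition \eqref{O2} guarantees that $E_{j,W}^{\phi,+}(z_2)w$ is a polynomial in $z_2\inv$, so each coefficient is a finite sum and the formal exponential identities are legitimate. To be safe, I would first prove each identity with $\log E$ truncated at a finite number $N$ of modes. I would then let $N\to\infty$ coefficientwise in $z_2\inv$, since modes beyond $N$ contribute only to powers of $z_2$ below $-N$. The remaining work is bookkeeping with $\vvp{\cdot}$ and the $\zeta_i^{\pm m}$ factors, to check that the exponents come out exactly as displayed.
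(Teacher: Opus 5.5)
Your computational core is the paper's argument. You split $Y_W^\phi(\xi_{i,-r_i}^0-\xi_{i,r_i}^0,z)$ into its two halves and read off their commutators with $Y_W^\phi(\xi_{i,0}^0,z)$, $Y_W^\phi(\xi_{j,0}^{1^\pm},z)$ and $Y_W^\phi(\xi_{j,0}^{2^\pm},z)$ from \eqref{zeta1}, \eqref{zeta2} and \eqref{zeta5}. You then recover $\log E_{i,W}^{\phi,\pm}$ through $z\partial_z$, and exponentiate using that these commutators are scalars (respectively scalar multiples of the field). The surrounding remarks, however, contain errors that you should correct.

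\textbf{The labels $E_{i,W}^{\phi,\pm}$ are interchanged.} In your first and last paragraphs you swap the two factors.
\begin{itemize}
\item By definition $E_{i,W}^{\phi,\pm}(z)$ exponentiates the sum over $\mp m>0$. So $E_{i,W}^{\phi,+}$ is built from the modes $H_i(m)$ with $m<0$ and lies in $\End(W)[[z]]$; it is a power series in $z$, not a polynomial in $z^{-1}$.
\item $E_{i,W}^{\phi,-}$ is built from the modes with $m>0$. Condition \eqref{O2} is the statement $E_{i,W}^{\phi,-}(z)^{-1}\in\Hom(W,W[z^{-1}])$.
\item This labelling is why the scalar factors in \eqref{eq:E-rel-E}, \eqref{eq:E-rel-3} and \eqref{eq:E-rel-5} are series in $z_2/z_1$, while those in \eqref{eq:E-rel-4} and \eqref{eq:E-rel-6} are series in $z_1/z_2$. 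With your labels every expansion direction would flip, and the bookkeeping would not reproduce the displayed formulas.
\item For \eqref{eq:E-rel-E}, the operator $A=\log E_{i,W}^{\phi,-}(z_1)$ involves the modes $H_i(m)$ with $m>0$, not negative modes.
\item You also need the $\zeta_i^{\pm1}$-shift in $z_1$, which turns $[a_{ij}]_{q_i}$ into $q_i^{a_{ij}}-q_i^{-a_{ij}}$, just as the shift in $z_2$ turned $[r\ell/r_j]_{q_j}$ into $q^{-r\ell}-q^{r\ell}$. The paper does this by computing $[Y_W^\phi(\xi_{i,-r_i}^0-\xi_{i,r_i}^0,z_1),Y_W^\phi(\xi_{j,-r_j}^0-\xi_{j,r_j}^0,z_2)]$ directly.
\end{itemize}

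\textbf{The convergence issue is not an obstacle.}
\begin{itemize}
\item $[H_i(m),H_j(n)]$ is a scalar that vanishes unless $m+n=0$.
\item $[H_i(m),F(z_2)]$ is a scalar multiple of $z_2^{m}F(z_2)$ for $F=\Psi_j^\pm, X_j^\pm$.
\item Hence every coefficient of both sides of \eqref{eq:E-rel-1}--\eqref{eq:E-rel-6} is a finite sum. The exponential identities therefore hold as formal identities with no truncation and without \eqref{O2}; the paper's proof never invokes \eqref{O2}.
\end{itemize}

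\textbf{Equivariance sign.} With the paper's conventions, $Y_W^\phi(\xi_{i,m}^0,z)=H_i(\zeta^m z)$ (compare Theorem \ref{thm:R-to-O}), not $H_i(\zeta^{-m}z)$. The formula you actually use next, $Y_W^\phi(\xi_{i,r_i}^0-\xi_{i,-r_i}^0,m)=(\zeta_i^{-m}-\zeta_i^{m})H_i(m)$, is the correct one, so your two statements are inconsistent with each other.
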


\begin{proof}
Similar to the definition of $\wt H_i^\pm(z)$ (see \eqref{eq:def-wtH}), we set
\begin{align*}
  Y_W^\phi(\xi_{i,-r_i}^0-\xi_{i,r_i}^0,z)^\pm
  =\sum_{\mp m>0}Y_W^\phi(\xi_{i,-r_i}^0-\xi_{i,r_i}^0,m)z^{-m}.
\end{align*}
Note that
\begin{align}
  &Y_W^\phi(\xi_{i,-r_i}^0- \xi_{i,r_i}^0,z)
  =Y_W^\phi(\xi_{i,-r_i}^0-\xi_{i,r_i}^0,z)^+
  +Y_W^\phi(\xi_{i,-r_i}^0-\xi_{i,r_i}^0,z)^-,\label{eq:def-wtH-1}\\
  &z\pd z \sum_{\mp m>0}Y_W^\phi(\xi_{i,r_i}^0-\xi_{i,-r_i}^0,m)\frac{z^{-m}}{m}
  =Y_W^\phi(\xi_{i,-r_i}^0-\xi_{i,r_i}^0,z)^\pm.\label{eq:def-wtH-1}
\end{align}
From \eqref{zeta1}, we have that
\begin{align*}
  &[Y_W^\phi(\xi_{i,0}^0,z_1),
    Y_W^\phi(\xi_{j,-r_j}^0-\xi_{j,r_j}^0,z_2)]\\
  =&[Y_W^\phi(\xi_{i,0}^0,z_1),
    Y_W^\phi(\xi_{j,0}^0,\zeta_j\inv z_2)
    -Y_W^\phi(\xi_{j,0}^0,\zeta_j z_2)]\\
  =&\sum_{s\in\Z_\wp}\Big(
    \vvp{ [a_{ij}]_{q_i}(q^{ -r\ell}-q^{r\ell})q^{-r\ell-s}}
    \iota_{z_1,z_2}
    -\vvp{ [a_{ij}]_{q_i}(q^{-r\ell}-q^{r\ell})q^{r\ell-s}}
    \iota_{z_2,z_1}
  \Big)\frac{\zeta^sz_2/z_1}{(1-\zeta^sz_2/z_1)^2},\\
  &[Y_W^\phi(\xi_{i,-r_i}^0-\xi_{i,r_i}^0,z_1),
  Y_W^\phi(\xi_{j,-r_j}^0-\xi_{j,r_j}^0,z_2)]\\
  =&[Y_W^\phi(\xi_{i,0}^0,\zeta_i\inv z_1)-Y_W^\phi(\xi_{i,0}^0,\zeta_i z_1),
  Y_W^\phi(\xi_{j,0}^0,\zeta_j\inv z_2)
    -Y_W^\phi(\xi_{j,0}^0,\zeta_j z_2)]\\
  =&\sum_{s\in\Z_\wp}\Big(
    \vvp{ (q_i^{a_{ij}}-q_i^{-a_{ij}})(q^{ -r\ell}-q^{r\ell})q^{-r\ell-s}}
    \iota_{z_1,z_2}\\
    &\quad
    -\vvp{ (q_i^{a_{ij}}-q_i^{-a_{ij}})(q^{-r\ell}-q^{r\ell})q^{r\ell-s}}
    \iota_{z_2,z_1}
  \Big)\frac{\zeta^sz_2/z_1}{(1-\zeta^sz_2/z_1)^2}
\end{align*}
Then
\begin{align*}
  &[Y_W^\phi(\xi_{i,0}^0,z_1),Y_W^\phi(\xi_{j,-r_j}^0-\xi_{j,r_j}^0,z_2)^+]
  =\sum_{s\in\Z_\wp}
    \vvp{ [a_{ij}]_{q_i}(q^{ -r\ell}-q^{r\ell})q^{-r\ell-s}}
    \frac{\zeta^sz_2/z_1}{(1-\zeta^sz_2/z_1)^2},\\
  &[Y_W^\phi(\xi_{i,0}^0,z_1),Y_W^\phi(\xi_{j,-r_j}^0-\xi_{j,r_j}^0,z_2)^-]
  =-\sum_{s\in\Z_\wp}
    \vvp{ [a_{ij}]_{q_i}(q^{-r\ell}-q^{r\ell})q^{r\ell-s}}
    \iota_{z_2,z_1}
  \frac{\zeta^sz_2/z_1}{(\zeta^sz_2/z_1-1)^2},\\
  &[Y_W^\phi(\xi_{i,-r_i}^0-\xi_{i,r_i}^0,z_1)^-,
  Y_W^\phi(\xi_{j,-r_j}^0-\xi_{j,r_j}^0,z_2)^+]\\
  &\qquad=\sum_{s\in\Z_\wp}
    \vvp{ (q_i^{a_{ij}}-q_i^{-a_{ij}})(q^{ -r\ell}-q^{r\ell})q^{-r\ell-s}}
    \frac{\zeta^sz_2/z_1}{(1-\zeta^sz_2/z_1)^2}.
\end{align*}
From \eqref{eq:def-wtH-1}, we complete the proof of \eqref{eq:E-rel-1}, \eqref{eq:E-rel-2} and \eqref{eq:E-rel-E}.

From \eqref{zeta2}, we have that
\begin{align*}
  &[Y_W^\phi(\xi_{i,-r_i}^0-\xi_{i,r_i}^0,z_1),
    Y_W^\phi(\xi_{j,0}^{1^\pm},z_2)]
  =[Y_W^\phi(\xi_{i,0}^0,\zeta_i\inv z_1)
    -Y_W^\phi(\xi_{i,0}^0,\zeta_i z_1),
    Y_W^\phi(\xi_{j,0}^{1^\pm},z_2)]\\
  =&\pm Y_W^\phi(\xi_{j,0}^{1^\pm},z_2)
  \sum_{s\in\Z_\wp}
    \vvp{(q_i^{a_{ij}}-q_i^{-a_{ij}})(q^{-2r\ell}-1)q^{-s}}
    \frac{1+\zeta^sz_2/z_1}{2-2\zeta^sz_2/z_1}\\
  &\pm Y_W^\phi(\xi_{j,0}^{1^\pm},z_2)
  \sum_{s\in\Z_\wp}
  \vvp{(q_i^{a_{ij}}-q_i^{-a_{ij}})(1-q^{2r\ell})q^{-s}}
  \frac{1+\zeta^sz_2/z_1}{2\zeta^sz_2/z_1-2}.
\end{align*}
Then
\begin{align}
  &[Y_W^\phi(\xi_{i,-r_i}^0-\xi_{i,r_i}^0,z_1)^-,
  Y_W^\phi(\xi_{j,0}^{1^\pm},z_2)]\label{eq:E-rel-temp}\\
  =&\pm Y_W^\phi(\xi_{j,0}^{1^\pm},z_2)
  \sum_{s\in\Z_\wp}
  \vvp{(q_i^{a_{ij}}-q_i^{-a_{ij}})(q^{-2r\ell}-1)q^{-s}}
    \frac{\zeta^sz_2/z_1}{1-\zeta^sz_2/z_1}\nonumber\\
  &[Y_W^\phi(\xi_{i,-r_i}^0-\xi_{i,r_i}^0,z_1)^+,
    Y_W^\phi(\xi_{j,0}^{1^\pm},z_2)]\nonumber\\
  =&\pm Y_W^\phi(\xi_{j,0}^{1^\pm},z_2)
  \sum_{s\in\Z_\wp}
  \vvp{(q_i^{a_{ij}}-q_i^{-a_{ij}})(1-q^{2r\ell})q^{-s}}
  \frac{1}{\zeta^sz_2/z_1-1}.\nonumber
\end{align}
From \eqref{eq:def-wtH-1}, we complete the proof of \eqref{eq:E-rel-3} and \eqref{eq:E-rel-4}.

From \eqref{zeta5}, we have that
\begin{align*}
  &[Y_W^\phi(\xi_{i,-r_i}^0-\xi_{i,r_i}^0,z_1),
  Y_W^\phi(\xi_{j,0}^{2^\pm},z_2)]
  =[Y_W^\phi(\xi_{i,0}^0,\zeta_i\inv z_1)-Y_W^\phi(\xi_{i,0}^0,\zeta_i z_1),Y_W^\phi(\xi_{j,0}^{2^\pm},z_2)]\\
  =&\pm Y_W^\phi(\xi_{j,0}^{2^\pm},z_2)
  \sum_{s\in\Z_\wp}
    \vvp{ (q_i^{a_{ij}}-q_i^{-a_{ij}})q^{-r\ell-s} }
  \frac{1+\zeta^sz_2/z_1}{2-2\zeta^sz_2/z_1}\\
  &\pm Y_W^\phi(\xi_{j,0}^{2^\pm},z_2)
  \sum_{s\in\Z_\wp}
    \vvp{ (q_i^{a_{ij}}-q_i^{-a_{ij}})q^{r\ell-s} }
    \frac{1+\zeta^sz_2/z_1}{2\zeta^sz_2/z_1-2}.
\end{align*}
Then
\begin{align*}
  &[Y_W^\phi(\xi_{i,-r_i}^0-\xi_{i,r_i}^0,z_1)^-,
  Y_W^\phi(\xi_{j,0}^{2^\pm},z_2)]\\
  =&\pm Y_W^\phi(\xi_{j,0}^{2^\pm},z_2)
  \sum_{s\in\Z_\wp}
    \vvp{ (q_i^{a_{ij}}-q_i^{-a_{ij}})q^{-r\ell-s} }
  \frac{\zeta^sz_2/z_1}{1-\zeta^sz_2/z_1},\\
  &[Y_W^\phi(\xi_{i,-r_i}^0-\xi_{i,r_i}^0,z_1)^+,
  Y_W^\phi(\xi_{j,0}^{2^\pm},z_2)]\\
  =&\pm Y_W^\phi(\xi_{j,0}^{2^\pm},z_2)
  \sum_{s\in\Z_\wp}
  \vvp{ (q_i^{a_{ij}}-q_i^{-a_{ij}})q^{r\ell-s} }
    \frac{1}{\zeta^sz_2/z_1-1}.
\end{align*}
From \eqref{eq:def-wtH-1}, we complete the proof of \eqref{eq:E-rel-5} and \eqref{eq:E-rel-6}.
\end{proof}

\begin{lem}\label{lem:phi-exp}
Fix an associate $\phi(x,z)=\exp(zp(x)\pd{x})x\ne x$ of $F_a(x,y)$.
Let $W$ be a vector space, let
$U$ be a quasi-compatible subset of $\E(W)$,
and let $\al^\pm(z),\beta(z)\in U$ such that
\begin{align*}
  &p(z)\inv\al^+(z)=\sum_{n<0}\al^+(n)z^{-n-1}\in \Hom(W,W[[z]]),\\
  &p(z)\inv \al^-(z)=\sum_{n\in\Z}\al^-(n)z^{-n-1}\in \Hom(W,W[z,z\inv]).
\end{align*}
Set $\al(z)=\al^-(z)+\al^+(z)$.
Suppose that
\begin{align*}
  &[\al^\pm(z_1),\al^\pm(z_2)]=0,\quad \exp\left(\mp\sum_{0\ne n\in\Z}\frac{\al^-(n)}{n}z^{-n}\right)z^{\pm\al^-(0)}\in \Hom(W,W[z,z\inv]),\\
  &[\al^-(z_1),\beta(z_2)]=\beta(z_2)\iota_{z_1,z_2}f(z_1,z_2)\quad\te{for some }f(z_1,z_2)\in \C(z_1,z_2),\\
  &\te{and}\quad \al(z)_{-1}^\phi\beta(z)=p(z)\pd{z}\beta(z)+\beta(z)\Res{z_0}z_0\inv \iota_{z,z_0}f(\phi(z,z_0),z).
\end{align*}
Then
\begin{align*}
  &\exp\left( \sum_{n<0}\frac{\al^+(n)}{n}z^{-n} \right)\beta(z)\exp\left( \sum_{0\ne n\in\Z}\frac{\al^-(n)}{n}z^{-n} \right)z^{-\al^-(0)}
  \in\End(W).
\end{align*}
\end{lem}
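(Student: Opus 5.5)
The plan is to show that the series
\begin{align*}
  X(z)=A^+(z)\,\beta(z)\,A^-(z)z^{-\al^-(0)},\quad
  A^+(z)=\exp\Big(\sum_{n<0}\frac{\al^+(n)}{n}z^{-n}\Big),\quad
  A^-(z)=\exp\Big(\sum_{0\ne n\in\Z}\frac{\al^-(n)}{n}z^{-n}\Big),
\end{align*}
is annihilated by the derivation $p(z)\pd z$, and then to conclude that it is independent of $z$.

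First I would check that $X(z)$ is a well-defined element of $\E(W)$.
\begin{itemize}
  \item Since $p(z)\inv\al^+(z)\in\Hom(W,W[[z]])$, only $n<0$ occurs, so $A^+(z)\in\End(W)[[z]]$.
  \item By hypothesis, $A^-(z)z^{-\al^-(0)}$ maps $W$ into $W[z,z\inv]$.
  \item Hence $\beta(z)A^-(z)z^{-\al^-(0)}w\in W((z))$ for each $w\in W$, and left multiplication by $A^+(z)$ keeps it in $W((z))$.
\end{itemize}

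Next I would differentiate the three factors separately. Because the $\al^\pm(n)$ mutually commute, the exponentials can be differentiated termwise.
\begin{itemize}
  \item For $A^+$: since $\pd z \sum_{n<0}\al^+(n)z^{-n}/n=-\sum_{n<0}\al^+(n)z^{-n-1}=-p(z)\inv\al^+(z)$, we get $p(z)\pd z A^+(z)=-\al^+(z)A^+(z)$.
  \item For $A^-z^{-\al^-(0)}$: the $n=0$ term is supplied by $\pd z z^{-\al^-(0)}=-\al^-(0)z^{-1}z^{-\al^-(0)}$. Combining it with the terms $n\neq0$ gives $p(z)\pd z\big(A^-(z)z^{-\al^-(0)}\big)=-A^-(z)z^{-\al^-(0)}\,\al^-(z)$.
\end{itemize}
The second identity uses that $\al^-(z)$ commutes with $A^-(z)$ and with $\al^-(0)$, which follows from $[\al^-(z_1),\al^-(z_2)]=0$. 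The Leibniz rule then gives
\begin{align*}
  p(z)\pd z X(z)=A^+(z)\Big(p(z)\pd z\beta(z)-\al^+(z)\beta(z)-\beta(z)\al^-(z)\Big)A^-(z)z^{-\al^-(0)}.
\end{align*}

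The key input is Lemma \ref{lem:phi-neg-one}. Applied to $\al^\pm,\beta$, it gives
\[
\al(z)_{-1}^\phi\beta(z)=\al^+(z)\beta(z)+\beta(z)\al^-(z)+\beta(z)\Res_{z_0}z_0\inv\iota_{z,z_0}f(\phi(z,z_0),z).
\]
Comparing with the last hypothesis, the residue terms cancel and we obtain $p(z)\pd z\beta(z)=\al^+(z)\beta(z)+\beta(z)\al^-(z)$. Hence $p(z)\pd z X(z)=0$. Because $0\ne p(z)\in\C((z))$ is invertible, this gives $\pd z X(z)w=0$ in $W((z))$ for every $w\in W$. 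So only the $z^0$ coefficient survives, i.e.\ $X(z)\in\End(W)$.

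The step I expect to be delicate is applying Lemma \ref{lem:phi-neg-one}. That lemma asks for $\al^+(z)\in\Hom(W,W[[z]])$, whereas here we only know $p(z)\inv\al^+(z)\in\Hom(W,W[[z]])$. In the intended application $p(z)=z$, so this hypothesis holds automatically. In general I would rerun the proof of Lemma \ref{lem:phi-neg-one} directly, using only that $\al^+(z_1)$ has no singularity along $z_1=z$ after multiplying by $p(z_1)\inv$. Alongside this, I would verify carefully that the formal manipulations with $z^{-\al^-(0)}$ are legitimate; this is exactly where the assumed membership in $\Hom(W,W[z,z\inv])$ is needed.
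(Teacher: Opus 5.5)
Your proposal is correct and follows the paper's proof essentially verbatim. Both arguments use Lemma \ref{lem:phi-neg-one} together with the last hypothesis to obtain $p(z)\pd z\beta(z)=\al^+(z)\beta(z)+\beta(z)\al^-(z)$, check that $p(z)\pd z$ annihilates the product, and conclude from $p(z)\ne 0$.

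Your caveat about the hypothesis $\al^+(z)\in\Hom(W,W[[z]])$ of Lemma \ref{lem:phi-neg-one} is a fair point that the paper passes over silently. It is harmless: $\al^+(z_1)\beta(z)w$ still lies in $W((z_1,z))$ when $p(z_1)\inv\al^+(z_1)$ has no negative powers, and in the application $p(z)=z$ anyway.
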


\begin{proof}
From Lemma \ref{lem:phi-neg-one}, we have that
\begin{align*}
  \al(z)_{-1}^\phi \beta(z)=\al^+(z)\beta(z)+\beta(z)\al^-(z)+\beta(z)\Res_{z_0}z_0\inv \iota_{z,z_0}f(\phi(z,z_0),z).
\end{align*}
It follows that
\begin{align*}
  &p(z)\pd z\beta(z)=\al^+(z)\beta(z)+\beta(z)\al^-(z).
\end{align*}
Then
\begin{align*}
  &p(z)\pd z \left(\exp\left( \sum_{n<0}\frac{\al^+(n)}{n}z^{-n} \right)\beta(z)\exp\left( \sum_{0\ne n\in\Z}\frac{\al^-(n)}{n}z^{-n} \right)z^{-\al^-(0)}\right)\\
  =&-\exp\left( \sum_{n<0}\frac{\al^+(n)}{n}z^{-n} \right)\al^+(z)\beta(z)\exp\left( \sum_{n>0}\frac{\al^-(n)}{n}z^{-n} \right)z^{-\al^-(0)}\\
  &+\exp\left( \sum_{n<0}\frac{\al^+(n)}{n}z^{-n} \right)(\al^+(z)\beta(z)+\beta(z)\al^-(z))\exp\left( \sum_{n>0}\frac{\al^-(n)}{n}z^{-n} \right)z^{-\al^-(0)}\\
  &-\exp\left( \sum_{n<0}\frac{\al^+(n)}{n}z^{-n} \right)\beta(z)\al^-(z)\exp\left( \sum_{n>0}\frac{\al^-(n)}{n}z^{-n} \right)z^{-\al^-(0)}\\
  =&0.
\end{align*}
Since $p(x)\ne 0$, we complete the proof of lemma.
\end{proof}

\begin{proof}[{\bf Proof of \eqref{eq:mho}}]
From \eqref{eq:va-rel-der}, we have that
\begin{align*}
  &Y_W^\phi(\pm\xi_{i,-r_i}^0\mp \xi_{i,r_i}^0,z)_{-1}^\phi Y_W^\phi(\xi_{i,0}^{1^\pm},z)
  =z\pd z Y_W^\phi(\xi_{i,0}^{1^\pm},z)
  +(\tau_{ii,-r_i,0}^{0,1}(0)-\tau_{ii,r_i,0}^{0,1}(0))Y_W^\phi(\xi_{i,0}^{1^\pm},z).
\end{align*}
From \eqref{eq:E-rel-temp}, we get that
\begin{align*}
  &[Y_W^\phi(\xi_{i,\mp r_i}^0-\xi_{i,\pm r_i}^0,z_1)^-,Y_W^\phi(\xi_{i,0}^{1^\pm},z_2)]\\
  =&Y_W^\phi(\xi_{i,0}^{1^\pm},z_2)
  \sum_{s\in\Z_\wp}\vvp{
  (q_i^2-q_i^{-2})(q^{-2r\ell}-1)q^{-s}}
  \frac{\zeta^sz_2/z_1}{1-\zeta^sz_2/z_1}.
\end{align*}
Note that
\begin{align*}
   &\Res_zz\inv\sum_{s\in\Z_\wp}
   \vvp{(q_i^2-q_i^{-2})(q^{-2r\ell}-1)q^{-s}}
   \frac{\zeta^se^{-z}}{1-\zeta^se^{-z}}\\
   =&\Res_zz\inv\sum_{s\in\Z_\wp}
   \vvp{(q_i^2-q_i^{-2})(q^{-2r\ell}-1)q^{-s}}
   \frac{1+\zeta^se^{-z}}{2-2\zeta^se^{-z}}\\
   &-\sum_{s\in\Z_\wp}
   \vvp{(q_i^2-q_i^{-2})(q^{-2r\ell}-1)q^{-s}}\\
   =&\Res_zz\inv\sum_{s\in\Z_\wp}
   \vvp{(q_i^2-q_i^{-2})(q^{-2r\ell}-1)q^{-s}}
   \frac{1+\zeta^se^{-z}}{2-2\zeta^se^{-z}}\\
   =&\sum_{s\in\Z_\wp}
   \vvp{(q_i^2-q_i^{-2})(q^{-2r\ell}-1)q^{-s}}
   \lim_{z\to 0}\pd z\vartheta_s(z)\\
   =&\tau_{ii,-r_i,0}^{0,1}(0)-\tau_{ii,r_i,0}^{0,1}(0),
\end{align*}
where the last equation follows from \eqref{eq:zeta10-to-tau-9-temp}.
By using Lemma \ref{lem:phi-exp}, we complete the proof.
\end{proof}

\begin{proof}[{\bf Proof of \eqref{eq:centroid}}]
Since $V_{\wp,\tau}^\ell(\g)$ is generated by
\begin{align*}
  \set{\xi_{i,m}^a}{i\in I,m\in\Z_\wp,a\in\{0,1^\pm,2^\pm\}},
\end{align*}
it sufficient to show that
\begin{align*}
  [Y_W^\phi(\zeta_{i,0}^a,z),\mho_{j,W}^{\phi,\pm}]=0\quad\te{for }i,j\in I,\,a\in\{0,1^\pm,2^\pm\}.
\end{align*}
From \eqref{zeta2},
\eqref{eq:E-rel-1} and \eqref{eq:E-rel-2}, we have that
\begin{align*}
  &[Y_W^\phi(\xi_{i,0}^0,z_1),\mho_{j,W}^{\phi,\pm}]\\
  =&[Y_W^\phi(\xi_{i,0}^0,z_1),E_{j,W}^{\phi,+}(z_2)^{\mp 1}
  \zeta_j^{\pm 2Y_W^\phi(\zeta_{i,0}^0,0)}
  Y_W^\phi(\xi_{j,0}^{1^\pm},z_2)E_{j,W}^{\phi,-}(z_2)^{\mp 1}]\\
  =&\mp \mho_{j,W}^{\phi,\pm}
  \sum_{s\in\Z_\wp}\vvp{ [a_{ij}]_{q_i}(q^{-2r\ell}-1)q^{-s} }
  \frac{\zeta^sz_2/z_1}{1-\zeta^sz_2/z_1}\\
  &\pm \mho_{j,W}^{\phi,\pm} \sum_{s\in\Z_\wp}
  \vvp{ [a_{ij}]_{q_i}(q^{-2r\ell}-1)q^{-s} }
  \frac{1+\zeta^sz_2/z_1}{2-2\zeta^sz_2/z_1}\\
  &\pm \mho_{j,W}^{\phi,\pm}\sum_{s\in\Z_\wp}
  \vvp{ [a_{ij}]_{q_i}(1-q^{2r\ell})q^{-s} }
  \frac{1+\zeta^sz_2/z_1}{2\zeta^sz_2/z_1-2}\\
  &\mp \mho_{j,W}^{\phi,\pm}\sum_{s\in\Z_\wp}
  \vvp{ [a_{ij}]_{q_i}(1-q^{2r\ell})q^{-s} }
  \frac{1}{\zeta^sz_2/z_1-1}\\
  =&0.
\end{align*}
From \eqref{zeta3}, \eqref{eq:E-rel-3} and \eqref{eq:E-rel-4}, we have that
\begin{align*}
  &Y_W^\phi(\xi_{i,0}^{1^{\epsilon_1}},z_1)\mho_{j,W}^{\phi,\epsilon_2}
  =Y_W^\phi(\xi_{i,0}^{1^{\epsilon_1}},z_1)
  E_{j,W}^{\phi,+}(z_2)^{-\epsilon_2}
  \zeta_j^{2\epsilon_2Y_W^\phi(\xi_{j,0}^0,0)}
  Y_W^\phi(\xi_{j,0}^{1^{\epsilon_2}},z_2)
  E_{j,W}^{\phi,-}(z_2)^{-\epsilon_2}\\
  \sim&E_{j,W}^{\phi,+}(z_2)^{-\epsilon_2}
  \zeta_j^{2\epsilon_2Y_W^\phi(\xi_{j,0}^0,0)}
  Y_W^\phi(\xi_{j,0}^{1^{\epsilon_2}},z_2)
  E_{j,W}^{\phi,-}(z_2)^{-\epsilon_2}
  Y_W^\phi(\xi_{i,0}^{1^{\epsilon_1}},z_1)\\
  &\times
  \prod_{s\in\Z_\wp}(1-\zeta^s z_2/z_1)^{-\epsilon_1\epsilon_2
  \vvp{
    (q_i^{a_{ij}}-q_i^{-a_{ij}})(q^{-2r\ell}-1)q^{-s}
  }}\\
  &\times
  \prod_{s\in\Z_\wp}
  (1-\zeta^sz_2/z_1)^{\epsilon_1\epsilon_2\vvp{
    (q_i^{a_{ij}}-q_i^{-a_{ij}})(q^{2r\ell}+q^{-2r\ell}-2)q^{-s}
  }}\\
  &\times
  \prod_{s\in\Z_\wp}
  (1-\zeta^sz_1/z_2)^{\epsilon_1\epsilon_2\vvp{
    (q_i^{a_{ij}}-q_i^{-a_{ij}})(q^{-2r\ell}-1)q^{-s}
  }}\\
  =&E_{j,W}^{\phi,+}(z_2)^{-\epsilon_2}
  \zeta_j^{2\epsilon_2Y_W^\phi(\xi_{j,0}^0,0)}
  Y_W^\phi(\xi_{j,0}^{1^{\epsilon_2}},z_2)
  E_{j,W}^{\phi,-}(z_2)^{-\epsilon_2}
  Y_W^\phi(\xi_{i,0}^{1^{\epsilon_1}},z_1)\\
  &\times
  \prod_{s\in\Z_\wp}(1-\zeta^s z_2/z_1)^{-\epsilon_1\epsilon_2
  \vvp{
    (q_i^{a_{ij}}-q_i^{-a_{ij}})(q^{-2r\ell}-1)q^{-s}
  }}\\
  &\times
  \prod_{s\in\Z_\wp}
  (1-\zeta^sz_2/z_1)^{\epsilon_1\epsilon_2\vvp{
    (q_i^{a_{ij}}-q_i^{-a_{ij}})(q^{2r\ell}+q^{-2r\ell}-2)q^{-s}
  }}\\
  &\times
  \prod_{s\in\Z_\wp}
  (1-\zeta^{-s}z_1/z_2)^{-\epsilon_1\epsilon_2\vvp{
    (q_i^{a_{ij}}-q_i^{-a_{ij}})(q^{2r\ell}-1)q^{-s}
  }}\\
  =&E_{j,W}^{\phi,+}(z_2)^{-\epsilon_2}
  \zeta_j^{2\epsilon_2Y_W^\phi(\xi_{j,0}^0,0)}
  Y_W^\phi(\xi_{j,0}^{1^{\epsilon_2}},z_2)
  E_{j,W}^{\phi,-}(z_2)^{-\epsilon_2}
  Y_W^\phi(\xi_{i,0}^{1^{\epsilon_1}},z_1)\\
  &\times
  \zeta^{\epsilon_1\epsilon_2 \sum_s s\vvp{
    (q_i^{a_{ij}}-q_i^{-a_{ij}})(q^{2r\ell}-1)q^{-s}
  }}\\
  =&\mho_{j,W}^{\phi,\epsilon_2}Y_W^\phi(\xi_{i,0}^{1^{\epsilon_1}},z_1).
\end{align*}
By using \eqref{eq:mho}, we have that
\begin{align*}
  [Y_W^\phi(\xi_{i,0}^{1^{\epsilon_1}},z),\mho_{j,W}^{\phi,\epsilon_2}]=0.
\end{align*}
From \eqref{zeta6}, \eqref{eq:E-rel-5} and \eqref{eq:E-rel-6}, we have that
\begin{align*}
  &Y_W^\phi(\xi_{i,0}^{2^{\epsilon_1}},z_1)\mho_{j,W}^{\phi,\epsilon_2}
  =Y_W^\phi(\xi_{i,0}^{2^{\epsilon_1}},z_1)
  E_{j,W}^{\phi,+}(z_2)^{-\epsilon_2}
  \zeta_j^{2\epsilon_2Y_W^\phi(\xi_{j,0}^0,0)}
  Y_W^\phi(\xi_{j,0}^{1^{\epsilon_2}},z_2)
  E_{j,W}^{\phi,-}(z_2)^{-\epsilon_2}\\
  \sim&E_{j,W}^{\phi,+}(z_2)^{-\epsilon_2}
  \zeta_j^{2\epsilon_2Y_W^\phi(\xi_{j,0}^0,0)}
  Y_W^\phi(\xi_{j,0}^{1^{\epsilon_2}},z_2)
  E_{j,W}^{\phi,-}(z_2)^{-\epsilon_2}
  Y_W^\phi(\xi_{i,0}^{2^{\epsilon_1}},z_1)\\
  &\times \prod_{s\in\Z_\wp}(1-\zeta^{-s}z_2/z_1)^{-\epsilon_1\epsilon_2\vvp{
    (q_i^{a_{ij}}-q_i^{-a_{ij}})q^{r\ell-s}
  }}\\
  &\times
  \prod_{s\in\Z_\wp}(1-\zeta^sz_1/z_2)^{\epsilon_1\epsilon_2\vvp{
    (q_i^{a_{ij}}-q_i^{-a_{ij}})(q^{r\ell}-q^{-r\ell})q^{-s}
  }}\\
  &\times
  \prod_{s\in\Z_\wp}
  (1-\zeta^sz_1/z_2)^{\epsilon_1\epsilon_2\vvp{
    (q_i^{a_{ij}}-q_i^{-a_{ij}})q^{-r\ell-s}
  }}
  \zeta_j^{-2\epsilon_1\epsilon_2a_{ji}}\\
  =&E_{j,W}^{\phi,+}(z_2)^{-\epsilon_2}
  \zeta_j^{2\epsilon_2Y_W^\phi(\xi_{j,0}^0,0)}
  Y_W^\phi(\xi_{j,0}^{1^{\epsilon_2}},z_2)
  E_{j,W}^{\phi,-}(z_2)^{-\epsilon_2}
  Y_W^\phi(\xi_{i,0}^{2^{\epsilon_1}},z_1)\\
  &\times \zeta^{\epsilon_1\epsilon_2\sum_ss\vvp{
    (q_i^{a_{ij}}-q_i^{-a_{ij}})q^{r\ell-s}
  }}
  \zeta^{-2\epsilon_1\epsilon_2r_ia_{ij}}\\
  =&\mho_{j,W}^{\phi,\epsilon_2}Y_W^\phi(\xi_{i,0}^{2^{\epsilon_1}},z_1).
\end{align*}
By using \eqref{eq:mho}, we have that
\begin{align*}
  [Y_W^\phi(\xi_{i,0}^{2^{\epsilon_1}},z),\mho_{j,W}^{\phi,\epsilon_2}]=0.
\end{align*}
We complete the proof.
\end{proof}

\begin{proof}[{\bf Proof of \eqref{eq:mho-invertible}}]
From \eqref{zeta1} and \eqref{zeta2}, we have that
\begin{align*}
  [Y_W^\phi(\xi_{i,0}^0,0),E_{i,W}^{\phi,\pm}(z)]=0=[Y_W^\phi(\xi_{i,0}^0,0),
  Y_W^\phi(\xi_{i,0}^{1^\pm},z)].
\end{align*}
Then we have that
\begin{align*}
  &\mho_i^+\mho_i^-
  =E_{i,W}^{\phi,+}(z_1)^{- 1}
   \zeta_i^{ 2 Y_W^\phi(\xi_{i,0}^0,0) }
   Y_W^\phi(\xi_{i,0}^{1^+},z_1)
   E_{i,W}^{\phi,-}(z_1)^{- 1}\\
   &\quad\times
   E_{i,W}^{\phi,+}(z_2)
   \zeta_i^{- 2 Y_W^\phi(\xi_{i,0}^0,0) }
   Y_W^\phi(\xi_{i,0}^{1^-},z_2)
   E_{i,W}^{\phi,-}(z_2)\\
   =&
   \prod_{s\in\Z_\wp}
   (1-\zeta^sz_2/z_1)^{\vvp{
    (q_i^2-q_i^{-2})(q^{-2r\ell}-1)q^{-s}
   }}
   E_{i,W}^{\phi,+}(z_1)^{- 1}E_{i,W}^{\phi,+}(z_2)\\
   &\quad\times
   Y_W^\phi(\xi_{i,0}^{1^+},z_1)
   Y_W^\phi(\xi_{i,0}^{1^-},z_2)
   E_{i,W}^{\phi,-}(z_1)^{- 1}   E_{i,W}^{\phi,-}(z_2)\\
   =&\lim_{z_1\to z_2}
   \prod_{s\in\Z_\wp}
   (1-\zeta^sz_2/z_1)^{\vvp{
    (q_i^2-q_i^{-2})(q^{-2r\ell}-1)q^{-s}
   }}
   Y_W^\phi(\xi_{i,0}^{1^+},z_1)
   Y_W^\phi(\xi_{i,0}^{1^-},z_2)\\
    =&1,
\end{align*}
where the second equation follows from \eqref{eq:E-rel-E},
\eqref{eq:E-rel-3} and \eqref{eq:E-rel-4},
the third equation follows from \eqref{eq:mho},
and the last equation follows from \eqref{zeta4}.
\end{proof}

\section{Structure of $V_{\wp,\tau}^\ell(\g)$}\label{sec:struct}

In this section, we study the structure of $V_{\wp,\tau}^\ell(\g)$. We first show that $V_{\wp,\tau}^\ell(\g)$ can be realized as a deformation of $V_{\wp,\varepsilon}^\ell(\g)$ via deforming triples.
Next, we decompose $V_{\wp,\varepsilon}^\ell(\g)$ into a Heisenberg vertex algebra and a quantum vertex algebra determined by a quiver.

\subsection{Realization of $V_{\wp,\tau}^\ell(\g)$ as deformation of $V_{\wp,\varepsilon}^\ell(\g)$}\label{subsec:V-deform-triple}

This subsection is devoted to constructing a deforming triple for $V_{\wp,\varepsilon}^\ell(\g)$ that allows us to realize
$V_{\wp,\tau}^\ell(\g)$ as a deformation of $V_{\wp,\varepsilon}^\ell(\g)$.

Recall the abelian group $\mathfrak T$ with identity $\varepsilon$ given in Section \ref{subsec:construct-V}.
Let $\tau,\bar\tau\in \mathfrak T$.
Recall from Section \ref{subsec:free-qva}, we obtain a deforming triple $(H',\rho,\bar\tau)$ on $V_{\wp,\tau}'^\ell(\g)$.
To be more precise, $H'$ is the commutative cocommutative vertex bialgebra determined by a bialgebra with derivation $\partial$.
As an associative algebra, $H'$ is the
symmetric algebra of the following vector space
\begin{align*}
  \bigoplus_{a\in\{0,1^\pm,2^\pm\}}\bigoplus_{i\in I}\bigoplus_{m\in\Z_\wp}\bigoplus_{n\in\N}\C\partial^n\wh \xi_{i,m}^a,
\end{align*}
the coproduct $\Delta$ and counit $\varepsilon$ is determined by
\begin{align*}
  &\Delta(\partial^n \wh \xi_{i,m}^0)=\partial^n \wh \xi_{i,m}^0\ot 1+1\ot \partial^n \wh \xi_{i,m}^0,\quad \varepsilon(\partial^n \wh \xi_{i,m}^0)=0,\\
  &\Delta(\partial^n \wh \xi_{i,m}^{a^\pm})=\sum_{k=0}^n\binom{n}{k} \partial^k \wh \xi_{i,m}^{a^\pm}\ot \partial^{n-k} \wh \xi_{i,m}^{a^\pm},\quad
  \varepsilon(\partial^n \wh \xi_{i,m}^{a^\pm})=\delta_{n,0}
  \quad\te{for }a=1,2,
\end{align*}
and the derivation $\partial$ is determined by
\begin{align*}
  \partial (\partial^n \wh \xi_{i,m}^a)=\partial^{n+1}\wh \xi_{i,m}^a\quad\te{for }n\in\N,\,
  i\in J^a,\,a\in\{0,1^\pm,2^\pm\}.
\end{align*}
The $H'$-comodule nonlocal vertex algebra structure $\rho$ on $V_{\wp,\tau}'^\ell(\g)$ is defined by
\begin{align*}
  \rho(\xi_{i,m}^0)=\xi_{i,m}^0\ot 1+\vac\ot \wh \xi_{i,m}^0,\quad
  \rho(\xi_{i,m}^{a^\pm})=\xi_{i,m}^{a^\pm}\ot \wh\xi_{i,m}^{a^\pm}\quad\te{for }a=1,2.
\end{align*}
The $H'$-module nonlocal vertex algebra structure $\bar\tau$ on $V_{\wp,\tau}'^\ell(\g)$ is defined by
\begin{align*}
  \bar\tau(\wh\xi_{i,m}^a,z)=\bar\tau_{i,m}^a(z)\quad \te{for }a\in\{0,1^\pm,2^\pm\},
\end{align*}
where $\bar\tau_{i,m}^0(z)$ is the pseudo-derivation on $V_{\wp,\tau}'^\ell(\g)$ defined by
\begin{align}
  &\bar\tau_{i,m}^0(z)\xi_{j,n}^0=\vac\ot \bar\tau_{ijmn}^{0,0}(z),\quad
  \bar\tau_{i,m}^0(z)\xi_{j,n}^{a^\pm}=\pm \xi_{j,n}^{a^\pm}\ot \bar\tau_{ijmn}^{0,a}(z),\label{eq:bartau-0}
\end{align}
and $\bar\tau_{i,m}^{a^\pm}(z)$ ($a=1,2$) is a pseudo-endomorphism on $V_{\wp,\tau}'^\ell(\g)$ defined by
\begin{align}
  &\bar\tau_{i,m}^{a^\pm}(z)\xi_{j,n}^0=\xi_{j,n}^0\ot 1\mp\vac\ot \bar\tau_{ijmn}^{a,0}(z),\quad
  \bar\tau_{i,m}^{a^\pm}(z)\xi_{j,n}^{b^\epsilon}=\xi_{j,n}^{b^\epsilon}\ot \bar\tau_{ijmn}^{a,b}(z)^{\mp\epsilon 1}\label{eq:bartau-1}
  \quad\te{for }b=1,2.
\end{align}
We now show that the deforming triple $(H',\rho,\bar\tau)$ of $V_{\wp,\tau}'^\ell(\g)$ induces a deforming triple of $V_{\wp,\tau}^\ell(\g)$.

It is straightforward to verify that the relations \eqref{eq:bartau-0} and \eqref{eq:bartau-1} are compatible with relations \eqref{eq:va-e+e-}, \eqref{eq:va-rel-der}, \eqref{eq:va-rel-varphi+-},
\eqref{eq:va-rel-serre} and \eqref{eq:va-rel-rtu}.
Then we immediately get the following result.

\begin{lem}
For each $i\in I$ and $m\in\Z_\wp$,
there exist a pseudo-derivation $\bar\tau_{i,m}^0(z)$ and pseudo-endomorphisms $\bar\tau_{i,m}^{1^\pm}(z)$, $\bar\tau_{i,m}^{2^\pm}(z)$ on $V_{\wp,\tau}^\ell(\g)$ determined by the relations \eqref{eq:bartau-0} and \eqref{eq:bartau-1}.
\end{lem}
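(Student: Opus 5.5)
The plan is to descend the pseudo-derivation $\bar\tau_{i,m}^0(z)$ and the pseudo-endomorphisms $\bar\tau_{i,m}^{a^\pm}(z)$ on $V_{\wp,\tau}'^\ell(\g)$ (constructed via Lemma \ref{lem:pseudos} and Lemma \ref{lem:deform-tri-nonlocalVA}) to the quotient $V_{\wp,\tau}^\ell(\g)=V_{\wp,\tau}'^\ell(\g)/R_{\wp,\tau}^\ell(\g)$. It therefore suffices to show that each of these maps sends the ideal $R_{\wp,\tau}^\ell(\g)$ into $R_{\wp,\tau}^\ell(\g)\ot\C((z))$. Once this is established, the defining identities $A(z_1)Y(u,z_2)v=Y(A(z_1-z_2)u,z_2)A(z_1)v$ (resp. $[D(z_1),Y(u,z_2)]=Y(D(z_1-z_2)u,z_2)$), together with $A(z)\vac=\vac\ot1$, pass to the quotient automatically. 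The stated values on the generators are then just \eqref{eq:bartau-0} and \eqref{eq:bartau-1} read modulo $R_{\wp,\tau}^\ell(\g)$. Uniqueness follows because $V_{\wp,\tau}^\ell(\g)$ is generated by the $\xi_{j,n}^b$, and a pseudo-endomorphism or pseudo-derivation is determined by its values on generators.

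\textbf{Reduction to generators.} Since $R_{\wp,\tau}^\ell(\g)$ is the ideal generated by the elements in \eqref{eq:va-e+e-}–\eqref{eq:va-rel-rtu}, the pseudo-endomorphism property makes the preimage $\{v : A(z)v\in R\ot\C((z))\}$ closed under all $u_n$ and $v_n$ products with $R$ on either side. The same holds for pseudo-derivations via the commutator formula. So the task reduces to checking that $A(z)$ maps each generating relator $A_{i,m,n}$-type element (i.e. $\xi^{2^+}_{i,m}(0)\xi^{2^-}_{j,n}-\dots$), $B^\pm_{i,m}$, $C^\pm_{i,m}$, $D^\pm_{i,j,m}$ and $E^\pm_{i,m}$ into $R\ot\C((z))$.

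\textbf{Checks on the relators.} For a pseudo-endomorphism $A$, we have $A(z)(u_k v)=\Res_{z_0}z_0^k\,Y(A(z-z_0)u,z_0)A(z)v$. Since $A$ acts diagonally on the generators $\xi^{b^\epsilon}$ by scalar series, the image of a monomial relator such as $D^\pm_{i,j,m}$ or $E^\pm_{i,m}$ is a product of series. The first step is to show that this image equals the relator itself times a scalar, up to lower modes that are still in the ideal. The lower-mode control comes from the pole bounds in the proof of Lemma \ref{lem:S-D}, in particular \eqref{eq:S-D-cond1}. For \eqref{eq:va-e+e-} and \eqref{eq:va-rel-varphi+-}, the image involves the constants $\bar\tau^{a,b}_{ijmn}(0)$. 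Here the constraints defining $\mathfrak T$ should force the $\vac$-term and the $\xi^{1^+}_{i,m+r\ell}$-term to scale compatibly with the leading product. For example, $\bar\tau^{1,1}=\bar\tau^{1,2}_{\cdot,n-r\ell}(\bar\tau^{1,2}_{\cdot,n+r\ell})^{-1}$ matches the factor on $\xi^{1^+}_{i,m+r\ell}$ with $\bar\tau^{2,2}$ factors. For \eqref{eq:va-rel-der}, the derivation relations $\partial_z\bar\tau^{a,1}=(\bar\tau^{a,0}_{\cdot,n+r_j}-\bar\tau^{a,0}_{\cdot,n-r_j})\bar\tau^{a,1}$ are exactly what absorb the extra terms produced by the commutation $[\partial,A(z)]=-\frac{d}{dz}A(z)$ and by $\xi^0(-1)$ acting on the $\bar\tau^{0,\cdot}$ shifts. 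For the pseudo-derivation $\bar\tau^0_{i,m}$, the analogous linear computation applies, and the additive constraints $\partial_z\bar\tau^{0,1}=\bar\tau^{0,0}_{\cdot,n+r_j}-\bar\tau^{0,0}_{\cdot,n-r_j}$ are used in the same way.

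\textbf{Main obstacle.} I expect the hardest part to be the verification for \eqref{eq:va-rel-der} and \eqref{eq:va-rel-varphi+-}. There the constant terms $\tau^{0,1}_{ii,m\mp r_i,m}(0)$ and the normalizing constant $\tau^{1,2}(0)\tau^{2,1}(0)^{-1}$ must transform consistently under the group law of $\mathfrak T$. My approach would be to expand both sides to first order in $z$ and use the $\mathfrak T$-constraints evaluated at $z=0$. I would also keep track of the fact that, after deformation, the relators of $V_{\wp,\tau}$ correspond to those of $V_{\wp,\tau\ast\bar\tau}$, in the spirit of Proposition \ref{prop:F-deform}.
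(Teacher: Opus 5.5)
Your plan is correct and is the paper's argument: the maps on $V_{\wp,\tau}'^\ell(\g)$ descend because each generator of $R_{\wp,\tau}^\ell(\g)$ is carried into $R_{\wp,\tau}^\ell(\g)\ot\C((z))$, a check the paper simply asserts is straightforward. Your anticipated obstacle is simpler than you expect: by the pole bounds each relator is sent exactly to a scalar series times itself, so the $\tau$-dependent constants in \eqref{eq:va-rel-der} and \eqref{eq:va-rel-varphi+-} never need to transform (the leading product $\xi^{1^\pm}(\cdot)\xi^{1^\mp}$ is scaled by $\bar\tau^{a,1}(z)^{\mp1}\bar\tau^{a,1}(z)^{\pm1}=1$ and $\vac$ is fixed), and no expansion at $z=0$ or comparison with $\tau\ast\bar\tau$ is needed---only the $\mathfrak T$-constraints on $\bar\tau$, such as $\bar\tau_{ijm,n+r\ell}^{a,1}=\bar\tau_{ijmn}^{a,2}(\bar\tau_{ijm,n+2r\ell}^{a,2})^{-1}$ for \eqref{eq:va-e+e-} and the differential constraints for \eqref{eq:va-rel-der}.
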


\begin{de}
Let $H$ be the quotient algebra of $H'$ modulo the ideal generated by
\begin{align*}
  &\partial^n\left(\wh \xi_{i,m}^{2^+}\wh \xi_{i,m}^{2^-}-1\right),\quad
  \partial^n\left(\wh \xi_{i,m-r\ell}^{2^+}\wh \xi_{i,m+r\ell}^{2^-}-\wh\xi_{i,m}^{1^+}\right),\quad
  \partial^n\left(\wh \xi_{i,m}^{1^+}\wh\xi_{i,m}^{1^-}-1\right),\\
  &\partial^n\left(\partial\wh\xi_{i,m}^{1^\pm}
  \mp\left(\wh\xi_{i,m-r_i}^0-\wh\xi_{i,m+r_i}^0\right)\wh\xi_{i,m}^{1^\pm}\right)
  \quad\te{for }i\in I,\,m\in\Z_\wp,\,n\in\N.
\end{align*}
\end{de}


It is immediate to verify the following three results.

\begin{lem}
The coproduct $\Delta$, counit $\varepsilon$ and derivation $\partial$ on $H'$ naturally induce corresponding structures on $H$, which we continue to denote by $\Delta$, $\varepsilon$ and $\partial$, respectively.
Moreover, $(H,\Delta,\varepsilon,\partial)$ defines a commutative cocommutative vertex bialgebra structure on $H$.
\end{lem}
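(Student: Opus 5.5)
The plan is to show that the ideal $K\subset H'$ generated by the listed elements is a $\partial$-stable bi-ideal. Then the quotient $H=H'/K$ inherits a bialgebra structure with derivation, and Remark \ref{rem:bialg-der} makes it a vertex bialgebra. Commutativity and cocommutativity pass to quotients automatically, since $H'$ has both properties.

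First, $\partial$-stability. The generating set of $K$ is by definition closed under $\partial$: it consists of all elements $\partial^n g$, $n\in\N$, for the four basic families $g$. Since $\partial$ is a derivation, for any $h\in H'$ we have $\partial(h\,\partial^n g)=(\partial h)\partial^n g+h\,\partial^{n+1}g\in K$. So $\partial K\subset K$, and $\partial$ descends to $H$.

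Second, the counit and coproduct. Because $\varepsilon$ and $\Delta$ are algebra maps, it suffices to check the generators. The identities $\Delta\circ\partial=(\partial\ot1+1\ot\partial)\circ\Delta$ and $\varepsilon\circ\partial=0$ reduce this further to $n=0$: if $\Delta(g)\in K\ot H'+H'\ot K$, applying $\partial\ot1+1\ot\partial$ keeps it there by $\partial$-stability, and $\varepsilon(\partial^n g)=0$ for $n\geq1$.

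Counit check for $n=0$. We have $\varepsilon(\wh\xi^{2^+}_{i,m}\wh\xi^{2^-}_{i,m}-1)=1-1=0$, and likewise for the $1^\pm$ product and for $\wh\xi^{2^+}\wh\xi^{2^-}-\wh\xi^{1^+}$. For the last family, $\varepsilon(\partial\wh\xi^{1^\pm})=0$ and $\varepsilon(\wh\xi^0)=0$.

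Coproduct check for the grouplike-type generators. Write $x=\wh\xi^{2^+}_{i,m}$, $y=\wh\xi^{2^-}_{i,m}$; both are grouplike. Then
\begin{align*}
\Delta(xy-1)=xy\ot xy-1\ot1=(xy-1)\ot xy+1\ot(xy-1)\in K\ot H'+H'\ot K.
\end{align*}
The same identity handles $\wh\xi^{1^+}\wh\xi^{1^-}-1$. For the mixed generator $a-b$ with $a=\wh\xi^{2^+}_{i,m-r\ell}\wh\xi^{2^-}_{i,m+r\ell}$ and $b=\wh\xi^{1^+}_{i,m}$, both $a$ and $b$ are grouplike, so
\begin{align*}
\Delta(a-b)=(a-b)\ot a+b\ot(a-b).
\end{align*}

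Coproduct check for the derivation generator $g^\pm=\partial u\mp(p-p')u$, where $u=\wh\xi^{1^\pm}_{i,m}$ is grouplike and $p=\wh\xi^0_{i,m-r_i}$, $p'=\wh\xi^0_{i,m+r_i}$ are primitive. This is the one substantive computation. We have $\Delta(\partial u)=\partial u\ot u+u\ot\partial u$, and $\Delta((p-p')u)=(p-p')u\ot u+u\ot(p-p')u$. Hence
\begin{align*}
\Delta(g^\pm)=g^\pm\ot u+u\ot g^\pm,
\end{align*}
which lies in $K\ot H'+H'\ot K$.

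With $\varepsilon(K)=0$ and $\Delta(K)\subset K\ot H'+H'\ot K$, the maps $\Delta,\varepsilon,\partial$ descend to $H$, making it a commutative cocommutative bialgebra with derivation. The identities $\Delta\partial=(\partial\ot1+1\ot\partial)\Delta$ and $\varepsilon\partial=0$ pass to the quotient. Remark \ref{rem:bialg-der} then gives the vertex bialgebra structure.

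I expect no real obstacle: the main care needed is the bookkeeping in the first-order (primitive-times-grouplike) computation above and in the reduction of the $\partial^n$-generators to the case $n=0$.
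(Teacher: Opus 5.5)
Your proof is correct and is precisely the routine check the paper leaves to the reader (it only says the lemma is immediate to verify). You show the defining ideal is $\partial$-stable, annihilated by $\varepsilon$, and a coideal, reducing to the $n=0$ generators via $\Delta\partial=(\partial\ot 1+1\ot\partial)\Delta$ and the grouplike/primitive computations; everything then descends and the remark on bialgebras with derivation yields the commutative cocommutative vertex bialgebra structure, exactly as intended.
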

%
%
%

\begin{lem}
The $H'$-comodule nonlocal vertex algebra structure $\rho$ on $V_{\wp,\tau}'^\ell(\g)$ naturally induces an $H$-comodule nonlocal vertex algebra structure on $V_{\wp,\tau}^\ell(\g)$, which is still denoted by $\rho$.
\end{lem}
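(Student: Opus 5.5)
The map $\rho$ is already a nonlocal vertex algebra homomorphism $V_{\wp,\tau}'^\ell(\g)\to V_{\wp,\tau}'^\ell(\g)\ot H'$, and it satisfies the coassociativity and counit axioms (Lemma \ref{lem:comod-nonlocalVA}). Compose it with the quotient maps $V_{\wp,\tau}'^\ell(\g)\to V_{\wp,\tau}^\ell(\g)$ and $H'\to H$. This gives a nonlocal vertex algebra homomorphism $\bar\rho:V_{\wp,\tau}'^\ell(\g)\to V_{\wp,\tau}^\ell(\g)\ot H$. The whole task is then to show that $\bar\rho$ kills the ideal $R_{\wp,\tau}^\ell(\g)$. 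Once that is done, $\bar\rho$ descends to $V_{\wp,\tau}^\ell(\g)$. The comodule identities $(\rho\ot 1)\rho=\sigma^{23}(1\ot\Delta)\rho$ and $(1\ot\varepsilon)\rho=\mathrm{Id}$ then pass to the quotient automatically, because the coproduct and counit on $H$ are induced from those on $H'$ by the preceding lemma.

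The kernel of $\bar\rho$ is an ideal, since $\bar\rho$ is a homomorphism. It is therefore enough to check that $\bar\rho$ sends each generator \eqref{eq:va-e+e-}--\eqref{eq:va-rel-rtu} of $R_{\wp,\tau}^\ell(\g)$ to zero. I would compute $\rho$ on each generator using that $\rho$ is a homomorphism into the tensor product, namely $\rho(u_nv)=\sum (u_{(1)})_k v_{(1)}\ot (u_{(2)})_{n-k-1}v_{(2)}$ with $H$ commutative. This gives the following cases.

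For \eqref{eq:va-e+e-}: $\rho(\xi^{2^+}_{i,m}(0)\xi^{2^-}_{j,n})$ equals $\xi^{2^+}_{i,m}(0)\xi^{2^-}_{j,n}\ot \wh\xi^{2^+}_{i,m}\wh\xi^{2^-}_{j,n}$ plus terms involving $\partial$-derivatives of the $H$-factor. Here $\xi^{2^+}_{i,m}(z)\xi^{2^-}_{j,n}$ has at most a simple pole, so only the $(0)$-mode survives. In $H$ we have $\wh\xi^{2^+}_{i,m}\wh\xi^{2^-}_{i,m}=1$ and $\wh\xi^{2^+}_{i,m}\wh\xi^{2^-}_{i,m+2r\ell}=\wh\xi^{1^+}_{i,m+r\ell}$. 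Using these, the image becomes $(\text{generator})\ot(\cdots)$, which vanishes in the quotient.

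For \eqref{eq:va-rel-varphi+-}: the relation $\wh\xi^{1^+}\wh\xi^{1^-}=1$ handles it in the same way. Only the leading mode occurs, by the lower bound from Lemma \ref{lem:compatible-lower-bound}.

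For \eqref{eq:va-rel-serre} and \eqref{eq:va-rel-rtu}: the $\xi^{2^\pm}$ are grouplike, so both expressions map to (the same element)$\ot$(a product of $\wh\xi^{2^\pm}$). This is zero in the quotient.

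For \eqref{eq:va-rel-der}: here $\rho(\xi^0_{i,m\mp r_i}(-1)\xi^{1^\pm}_{i,m})$ produces an extra term $\xi^{1^\pm}_{i,m}\ot \wh\xi^0\wh\xi^{1^\pm}$, while $\rho(\partial\xi^{1^\pm}_{i,m})$ produces $\xi^{1^\pm}_{i,m}\ot\partial\wh\xi^{1^\pm}_{i,m}$. These cancel precisely because of the last defining relation of $H$.

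The main obstacle is \eqref{eq:va-e+e-} in the case $m+2r\ell=n$. There the index bookkeeping must match the second defining relation of $H$ exactly, including the shift of $m$ by $r\ell$, and one must confirm that no higher-order pole term contributes a stray $\partial$-term. I would first verify the pole order using \eqref{tau7} and Lemma \ref{lem:compatible-lower-bound}, and then track the indices carefully.
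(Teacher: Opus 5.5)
Your proposal is correct and is exactly the verification the paper leaves as ``immediate'': you push $\rho$ through the quotients $V_{\wp,\tau}'^\ell(\g)\to V_{\wp,\tau}^\ell(\g)$ and $H'\to H$, then check that each generator of $R_{\wp,\tau}^\ell(\g)$ lands in $R_{\wp,\tau}^\ell(\g)\ot H$, and the defining relations of $H$ are tailored precisely to \eqref{eq:va-e+e-}, \eqref{eq:va-rel-der} and \eqref{eq:va-rel-varphi+-}. The one step you assert without justification is for \eqref{eq:va-rel-serre}--\eqref{eq:va-rel-rtu}: you need the terms $\xi(k)w\ot\frac{1}{k!}(\partial^kh)h'$ with $k\ge 1$ to vanish at every stage of the iterated $(0)$-products, and this is the simple-pole bound \eqref{eq:S-D-cond1} from the proof of Lemma \ref{lem:S-D}, the same mechanism you already use for \eqref{eq:va-e+e-}.
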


\begin{lem}
There is an $H$-module nonlocal vertex algebra structure $\bar\tau(\cdot,z)$ on $V_{\wp, \tau}^\ell(\g)$ uniquely defined by
\begin{align*}
  \bar\tau(\wh\xi_{i,m}^0,z)=\bar\tau_{i,m}^0(z),\quad \bar\tau(\wh\xi_{i,m}^{a^\pm},z)\tau_{i,m}^{a^\pm}(z).
\end{align*}
Moreover, $(H,\rho,\bar\tau)$ becomes a deforming triple of $V_{\wp,\tau}^\ell(\g)$.
\end{lem}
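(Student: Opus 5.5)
The statement to be proved has two parts. First, $\bar\tau$ induces an $H$-module nonlocal vertex algebra structure on $V_{\wp,\tau}^\ell(\g)$. Second, $(H,\rho,\bar\tau)$ is a deforming triple.

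The plan is to descend the structure already available on $V_{\wp,\tau}'^\ell(\g)$, where Lemma \ref{lem:deform-tri-nonlocalVA} gives the deforming triple $(H',\rho,\bar\tau)$. By the preceding lemma, each $\bar\tau_{i,m}^a(z)$ descends to a pseudo-derivation or pseudo-endomorphism of the quotient $V_{\wp,\tau}^\ell(\g)$. These operators mutually commute, since they act diagonally (up to the vacuum term) on generators. So, as in the proof of Lemma \ref{lem:deform-tri-nonlocalVA}, the set
\[
U=\set{\bar\tau_{i,m}^a(z)}{i\in I,\,m\in\Z_\wp,\,a\in\{0,1^\pm,2^\pm\}}\subset\E(V_{\wp,\tau}^\ell(\g))
\]
generates a commutative vertex algebra $\<U\>$ by Theorem \ref{thm:nonlocal-VA-gen}. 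There is then a vertex algebra homomorphism $g:H'\to\<U\>$ sending $\partial^n\wh\xi_{i,m}^a$ to $\frac{d^n}{dz^n}\bar\tau_{i,m}^a(z)$.

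The key step is to show that $g$ kills the defining ideal of $H$. Because $g$ commutes with $\partial$, it suffices to check the four generators with $n=0$:
\begin{align*}
  &\bar\tau_{i,m}^{2^+}(z)\bar\tau_{i,m}^{2^-}(z)=1,\quad
  \bar\tau_{i,m-r\ell}^{2^+}(z)\bar\tau_{i,m+r\ell}^{2^-}(z)=\bar\tau_{i,m}^{1^+}(z),\quad
  \bar\tau_{i,m}^{1^+}(z)\bar\tau_{i,m}^{1^-}(z)=1,\\
  &\tfrac{d}{dz}\bar\tau_{i,m}^{1^\pm}(z)=\pm\big(\bar\tau_{i,m-r_i}^0(z)-\bar\tau_{i,m+r_i}^0(z)\big)\bar\tau_{i,m}^{1^\pm}(z).
\end{align*}
Each side is a pseudo-endomorphism, or a product of commuting pseudo-endomorphisms and pseudo-derivations. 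It is therefore enough to compare both sides on the generators $\xi_{j,n}^b$ via \eqref{eq:bartau-0} and \eqref{eq:bartau-1}.
\begin{itemize}
\item The $\pm$ exponents $\bar\tau_{ijmn}^{a,b}(z)^{\mp\epsilon}$ give the first and third identities directly.
\item The middle identity is exactly the relation $\tau_{ijmn}^{1,b}=\tau_{ij,m-r\ell,n}^{2,b}\,(\tau_{ij,m+r\ell,n}^{2,b})^{-1}$ (and its additive version for $b=0$) in the definition of $\mathfrak T$.
\item The derivative identity is the relation $\pd z\tau_{ijmn}^{1,a}=(\tau_{ij,m+r_i,n}^{0,a}-\tau_{ij,m-r_i,n}^{0,a})\tau_{ijmn}^{1,a}$ of $\mathfrak T$, after matching sign conventions.
\end{itemize}
This bookkeeping of indices and signs (the $m\pm r\ell$ versus $m\pm r_i$ shifts and the orientation of the $\pm$) is the main obstacle, and I would carry it out generator type by generator type. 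Once done, $g$ factors through $H$, giving the module map $\bar\tau(\cdot,z)$. Uniqueness holds because $H$ is generated by the images of the $\wh\xi_{i,m}^a$.

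For the $H$-module nonlocal vertex algebra axioms \eqref{eq:mod-va-for-vertex-bialg1-2}–\eqref{eq:mod-va-for-vertex-bialg3}, the argument follows Lemma \ref{lem:deform-tri-nonlocalVA}. These axioms hold for the generators $\wh\xi_{i,m}^a$ because they are pseudo-derivations (primitive) or pseudo-endomorphisms (grouplike), and they extend to all of $H$ since $H$ is generated by them.

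Finally, the compatibility \eqref{eq:rho-tau-compatible} with the induced comodule map $\rho$ is inherited from $V_{\wp,\tau}'^\ell(\g)$. Alternatively, it can be checked on the generators $h\in\{\wh\xi_{i,m}^a\}$ and $v\in\{\xi_{j,n}^b\}$, where both sides are explicit, and then extended by \cite[Lemma 2.28]{JKLT-Defom-va}. Together these show that $(H,\rho,\bar\tau)$ is a deforming triple.
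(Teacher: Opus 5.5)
Your proposal is correct and is essentially the verification the paper intends (it only asserts the result is immediate). You descend the deforming triple $(H',\rho,\bar\tau)$ from $V_{\wp,\tau}'^\ell(\g)$ and check that the generators of the defining ideal of $H$ act by zero; on the generators $\xi_{j,n}^b$ this reduces exactly to the defining relations of $\mathfrak T$ for $\bar\tau$, including $\pd z\bar\tau_{ijmn}^{1,0}=\bar\tau_{ij,m+r_i,n}^{0,0}-\bar\tau_{ij,m-r_i,n}^{0,0}$ for the $\xi_{j,n}^0$ case of the derivative relation. One small point of precision: $\frac{d}{dz}\bar\tau_{i,m}^{1^\pm}(z)$ is not itself a pseudo-endomorphism, but it and $\pm(\bar\tau_{i,m-r_i}^0(z)-\bar\tau_{i,m+r_i}^0(z))\bar\tau_{i,m}^{1^\pm}(z)$ satisfy the same $\bar\tau_{i,m}^{1^\pm}$-twisted Leibniz rule, so checking equality on generators is still enough.
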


As an immediate consequence of Corollary \ref{coro:D-T-tau-Y}, we have the following result.
\begin{coro}
For $i\in I$, $m\in\Z_\wp$ and $a=1,2$,
\begin{align*}
  &\mathfrak D_{T_{\bar\tau}}(Y)(\xi_{i,m}^0,z)=Y(\xi_{i,m}^0,z)+\bar\tau_{i,m}^0(z),\quad
  \mathfrak D_{T_{\bar\tau}}(Y)(\xi_{i,m}^{a^\pm},z)=Y(\xi_{i,m}^{a^\pm},z)
    \bar\tau_{i,m}^{a^\pm}(z).
\end{align*}
\end{coro}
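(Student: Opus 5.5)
The corollary should follow directly from Corollary \ref{coro:D-T-tau-Y}, applied to the deforming triple $(H,\rho,\bar\tau)$ of $V_{\wp,\tau}^\ell(\g)$ constructed in the preceding lemma. That result gives, for $\bar\tau\in\mathfrak L_H^\rho(V_{\wp,\tau}^\ell(\g))$,
\begin{align*}
  \mathfrak D_{T_{\bar\tau}}(Y)(u,z)=\sum Y(u_{(1)},z)\bar\tau(u_{(2)},z),\quad \rho(u)=\sum u_{(1)}\ot u_{(2)}.
\end{align*}
So the only task is to substitute the explicit values of $\rho$ on the generators.

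First take $u=\xi_{i,m}^0$. Here $\rho(\xi_{i,m}^0)=\xi_{i,m}^0\ot 1+\vac\ot\wh\xi_{i,m}^0$, so the formula gives
\begin{align*}
  Y(\xi_{i,m}^0,z)\bar\tau(1,z)+Y(\vac,z)\bar\tau(\wh\xi_{i,m}^0,z).
\end{align*}
Since $\bar\tau(\cdot,z)$ is a module structure for $H$ viewed as a nonlocal vertex algebra, its vacuum property gives $\bar\tau(1,z)=1$. We also have $Y(\vac,z)=1$ and $\bar\tau(\wh\xi_{i,m}^0,z)=\bar\tau_{i,m}^0(z)$, which yields the first formula.

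Next take $u=\xi_{i,m}^{a^\pm}$ with $a=1,2$. Here $\rho(\xi_{i,m}^{a^\pm})=\xi_{i,m}^{a^\pm}\ot\wh\xi_{i,m}^{a^\pm}$, so the formula gives $Y(\xi_{i,m}^{a^\pm},z)\bar\tau(\wh\xi_{i,m}^{a^\pm},z)=Y(\xi_{i,m}^{a^\pm},z)\bar\tau_{i,m}^{a^\pm}(z)$, which is the second formula.

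The only point needing care is that $\rho$ and $\bar\tau$ on the quotient $V_{\wp,\tau}^\ell(\g)$ and on $H$ are the ones induced from $V_{\wp,\tau}'^\ell(\g)$ and $H'$, so their values on generators are unchanged. This is guaranteed by the three preceding lemmas. Beyond that there is no real obstacle.
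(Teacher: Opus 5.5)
Your proof is correct and follows the paper's argument: the paper states this corollary as an immediate consequence of Corollary~\ref{coro:D-T-tau-Y}, applied to the deforming triple $(H,\rho,\bar\tau)$ of $V_{\wp,\tau}^\ell(\g)$. Your substitution of $\rho(\xi_{i,m}^0)$ and $\rho(\xi_{i,m}^{a^\pm})$, using $\bar\tau(1,z)=1$ and $Y(\vac,z)=1$, is exactly the computation the paper leaves implicit.
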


By using the Corollary above, one can straightforwardly verify the following result by using the fact that
\begin{align*}
  \bar\tau_{ijmn}^{st}(z)\in\C[[z]]\quad\te{for }i,j\in I,\,m,n\in\Z_\wp,\,s,t\in\{0,1^\pm,2^\pm\}.
\end{align*}

\begin{prop}
We have that
\begin{align*}
  V_{\wp,\tau\ast\bar\tau}^\ell(\g)\cong\mathfrak D_{T_{\bar\tau}}(V_{\wp,\tau}^\ell(\g))
\end{align*}
defined by
\begin{align*}
  &\xi_{i,m}^0\mapsto\xi_{i,m}^0,\quad
  \xi_{i,m}^{1^+}\mapsto
  \bar\tau_{iim,m+r\ell}^{2,1}(0)\inv
  \xi_{i,m}^{1^+},\quad
  \xi_{i,m}^{1^-}\mapsto
  \bar\tau_{ii,m+r\ell,m}^{1,2}(0)\bar\tau_{iimm}^{1,1}(0)\inv \xi_{i,m}^{1^-},\\
  &
  \xi_{i,m}^{2^+}\mapsto 
  \bar\tau_{iimm}^{2,2}(0)\inv
  \xi_{i,m}^{2^+},\quad
  \xi_{i,m}^{2^-}\mapsto \xi_{i,m}^{2^-}
\end{align*}
\end{prop}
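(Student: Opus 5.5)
We would follow the same pattern as Proposition \ref{prop:F-deform}, now working modulo the ideal $R_{\wp,\tau}^\ell(\g)$. There are two steps. First, build a homomorphism $f:V_{\wp,\tau\ast\bar\tau}^\ell(\g)\to\mathfrak D_{T_{\bar\tau}}(V_{\wp,\tau}^\ell(\g))$ from the universal property. Second, build an inverse by deforming back with $\bar\tau\inv$, using Proposition \ref{prop:L-H-rho-V-compostition2}.

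For the first step, set $\eta_{i,m}^a$ equal to the prescribed rescaled images: $\eta_{i,m}^0=\xi_{i,m}^0$, $\eta_{i,m}^{1^+}=\bar\tau_{iim,m+r\ell}^{2,1}(0)\inv\xi_{i,m}^{1^+}$, and so on. By the Corollary preceding the statement,
\[
\mathfrak D_{T_{\bar\tau}}(Y)(\eta_{i,m}^{a},z)=c\,Y(\xi_{i,m}^a,z)\bar\tau_{i,m}^a(z)\qquad(a=1^\pm,2^\pm),
\]
with $c$ the prescribed scalar, and $\mathfrak D_{T_{\bar\tau}}(Y)(\eta_{i,m}^{0},z)=Y(\xi_{i,m}^0,z)+\bar\tau_{i,m}^0(z)$. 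These fields satisfy a set of relations, and we first check \eqref{tau1}--\eqref{tau7} with parameter $\tau\ast\bar\tau$. This goes exactly as in Lemma \ref{lem:deform-M-obj}. One commutes $\bar\tau_{i,m}^a(z_1)$ past $Y(\xi_{j,n}^b,z_2)$ using the pseudo-derivation/pseudo-endomorphism property. This produces the factors $\bar\tau_{ijmn}^{a,b}(z_1-z_2)$, which are additive or multiplicative as appropriate and combine with $\tau_{ijmn}^{a,b}$ into $(\tau\ast\bar\tau)_{ijmn}^{a,b}$. The constants $c$ cancel in \eqref{tau1}--\eqref{tau7}, since those relations are homogeneous in each generator.

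Next we check \eqref{eq:va-e+e-}--\eqref{eq:va-rel-rtu} for the new fields. These are the non-homogeneous relations, and this is where the rescalings enter. Since $\bar\tau_{ijmn}^{st}(z)\in\C[[z]]$, applying the deformed vertex operator and extracting the relevant mode amounts to evaluating the power series at $z=0$. For example, the singular term $\mathfrak D(Y)(\eta^{2^+}_{i,m},z)^-\eta^{2^-}_{j,n}$ equals the old one times $\bar\tau^{2,2}_{iimn}(0)^{\pm1}$ together with the scalars. The constants in \eqref{eq:va-e+e-} and \eqref{eq:va-rel-varphi+-} then fix the normalizations of $\xi^{1^\pm},\xi^{2^+}$ as stated. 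In \eqref{eq:va-rel-der} the extra term $\tau^{0,1}(0)$ shifts to $(\tau\ast\bar\tau)^{0,1}(0)$ because the $\bar\tau_{i,m\mp r_i}^0$ act on $\xi^{1^\pm}_{i,m}$. In \eqref{eq:va-rel-serre} and \eqref{eq:va-rel-rtu}, each iterated $(0)$-product only acquires nonzero scalar factors, so the relations are preserved. Proposition \ref{prop:universal-V-sp} (or \ref{prop:universal-V} plus the ideal) then yields $f$.

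For the inverse, we use the lemmas just proved: $(H,\rho,\bar\tau)$ is a deforming triple on $V^\ell_{\wp,\tau}(\g)$, and by Proposition \ref{prop:L-H-rho-V-compostition} so is $\bar\tau\inv$ on the deformed algebra. Proposition \ref{prop:L-H-rho-V-compostition2} then gives $\mathfrak D_{T_{\bar\tau\inv}}\mathfrak D_{T_{\bar\tau}}=\mathrm{id}$. Applying the first step with $(\tau\ast\bar\tau,\bar\tau\inv)$, we check as in Proposition \ref{prop:F-deform} that $f$ intertwines $\rho$ and the $\bar\tau\inv$-actions on generators. This makes $f$ a map of deformed algebras, and $\mathfrak D_{T_{\bar\tau\inv}}(f)$ gives a map back. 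Both composites fix generators up to mutually inverse scalars, so they are the identity, and $f$ is an isomorphism. Surjectivity alternatively follows from generation, by the analogue of Lemma \ref{lem:F-J-gen}.

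We expect the main obstacle to be the bookkeeping in the inhomogeneous relations \eqref{eq:va-e+e-} and \eqref{eq:va-rel-varphi+-}. One must verify that the stated scalars, such as $\bar\tau^{1,2}_{ii,m+r\ell,m}(0)\bar\tau^{1,1}_{iimm}(0)\inv$, exactly absorb the constant terms, using the constraints defining $\mathfrak T$ (e.g.\ $\bar\tau^{1,1}=\bar\tau^{1,2}_{\cdot,n-r\ell}/\bar\tau^{1,2}_{\cdot,n+r\ell}$). We would first test this on the $\xi^{2^+}(0)\xi^{2^-}$ relation with $n=m+2r\ell$.
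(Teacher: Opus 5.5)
Your proposal is essentially the paper's argument. The paper only asserts that the proposition follows by a straightforward verification from the preceding Corollary ($\mathfrak D_{T_{\bar\tau}}(Y)(\xi^0_{i,m},z)=Y(\xi^0_{i,m},z)+\bar\tau^0_{i,m}(z)$ and $\mathfrak D_{T_{\bar\tau}}(Y)(\xi^{a^\pm}_{i,m},z)=Y(\xi^{a^\pm}_{i,m},z)\bar\tau^{a^\pm}_{i,m}(z)$) together with $\bar\tau^{st}_{ijmn}(z)\in\C[[z]]$, and your two steps (universal property, then an inverse obtained by deforming with $\bar\tau^{-1}$ as in Proposition \ref{prop:F-deform}) are exactly how that verification is organized.

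One caution for the test case you single out. \eqref{eq:va-e+e-} with $n=m$ does give the stated scalar $\bar\tau^{2,2}_{iimm}(0)^{-1}$ on $\xi^{2^+}_{i,m}$. With $n=m+2r\ell$, however, the $\mathfrak T$-constraint $\bar\tau^{2,1}_{ijmn}=\bar\tau^{2,2}_{ijm,n-r\ell}\big(\bar\tau^{2,2}_{ijm,n+r\ell}\big)^{-1}$ forces the scalar $\bar\tau^{2,1}_{iim,m+r\ell}(0)^{-1}$ on $\xi^{1^+}_{i,m+r\ell}$, i.e.\ $\bar\tau^{2,1}_{ii,m-r\ell,m}(0)^{-1}$ on $\xi^{1^+}_{i,m}$. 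So the index in the stated formula is shifted by $r\ell$; this is harmless when $\bar\tau^{2,1}_{iimn}(0)$ depends only on $n-m$, as for the $\tau$ of Section \ref{subsec:Z-wp-mod}. The scalar on $\xi^{1^-}_{i,m}$ must then be read off from \eqref{eq:va-rel-varphi+-}.
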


Since $\varepsilon\ast\tau=\tau$, we get the following immediate consequence.

\begin{coro}
For $\tau\in\mathfrak T$, we have that
\begin{align*}
  V_{\wp,\tau}^\ell(\g)\cong\mathfrak D_{T_\tau}(V_{\wp,\varepsilon}^\ell(\g)).
\end{align*}
\end{coro}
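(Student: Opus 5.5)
The statement is the corollary $V_{\wp,\tau}^\ell(\g)\cong\mathfrak D_{T_\tau}(V_{\wp,\varepsilon}^\ell(\g))$. I would obtain it by specializing the preceding proposition, $V_{\wp,\tau\ast\bar\tau}^\ell(\g)\cong\mathfrak D_{T_{\bar\tau}}(V_{\wp,\tau}^\ell(\g))$, to the pair $(\varepsilon,\tau)$. That is, I replace the base parameter by the identity $\varepsilon$ of $\mathfrak T$ and the deforming parameter $\bar\tau$ by $\tau$. Since $\varepsilon\ast\tau=\tau$ in the abelian group $\mathfrak T$, the left side becomes $V_{\wp,\tau}^\ell(\g)$ and the right side becomes $\mathfrak D_{T_\tau}(V_{\wp,\varepsilon}^\ell(\g))$.

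Before invoking the proposition, I would check that its hypotheses hold in this case. The first point is that $\tau$ really defines a deforming triple $(H,\rho,\tau)$ on $V_{\wp,\varepsilon}^\ell(\g)$. The lemmas preceding the proposition construct the pseudo-derivations $\tau_{i,m}^0(z)$ and the pseudo-endomorphisms $\tau_{i,m}^{a^\pm}(z)$ on the quotient $V_{\wp,\varepsilon}^\ell(\g)$. They also show that $\rho$ descends to the quotient bialgebra $H$. Both constructions are stated for arbitrary elements of $\mathfrak T$, so they apply with base $\varepsilon$ and deformer $\tau$.

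The second point is the explicit isomorphism. The proposition gives the map $\xi_{i,m}^0\mapsto\xi_{i,m}^0$ together with rescalings of $\xi_{i,m}^{1^\pm}$ and $\xi_{i,m}^{2^+}$ by nonzero constants such as $\tau_{iimm}^{2,2}(0)^{-1}$. These constants are invertible precisely because $\tau_{ijmn}^{a,b}(z)\in\C[[z]]^\times$ for $a,b\in\{1,2\}$. So the rescaled generators are again a generating set, and the map is well defined.

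The only step needing real care is whether the quotient relations \eqref{eq:va-rel-der} and \eqref{eq:va-rel-varphi+-} carry the right constants. Their normalizing terms involve $\tau^{0,1}(0)$ and $\tau^{1,2}(0)\tau^{2,1}(0)^{-1}$, and for the base $\varepsilon$ these equal $0$ and $1$. I would check this directly from the definition of $\varepsilon$. I would then confirm that the rescalings in the proposition absorb exactly the constants produced by $\tau$. Once that matches, the corollary follows immediately from the proposition.
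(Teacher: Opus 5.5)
Your proposal is correct and follows the paper's argument: the corollary is obtained by specializing the preceding proposition $V_{\wp,\tau\ast\bar\tau}^\ell(\g)\cong\mathfrak D_{T_{\bar\tau}}(V_{\wp,\tau}^\ell(\g))$ to base $\varepsilon$ and deformer $\tau$, using $\varepsilon\ast\tau=\tau$. Your extra checks are all accurate but redundant, because the proposition is already stated for arbitrary $\tau,\bar\tau\in\mathfrak T$. These checks were the descent of the deforming triple to the quotient, the invertibility of the rescaling constants, and the values $0$ and $1$ of the $\varepsilon$-normalizations in \eqref{eq:va-rel-der} and \eqref{eq:va-rel-varphi+-}.
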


Applying Proposition \ref{prop:L-H-rho-V-comosition-pre} to this, we immediately get the following corollary.
\begin{coro}
For $\tau\in\mathfrak T$, we have the quantum vertex algebra injection $V_{\wp,\tau}^\ell(\g)\to V_{\wp,\varepsilon}^\ell(\g)\sharp H$ defined by
\begin{align*}
  &\xi_{i,m}^0\mapsto\xi_{i,m}^0\ot\vac+\vac\ot\wh\xi_{i,m}^0,\quad
  \xi_{i,m}^{1^+}\mapsto
  \tau_{iim,m+r\ell}^{2,1}(0)\inv
  \xi_{i,m}^{1^+}\ot\wh\xi_{i,m}^{1^+},\\
  &\xi_{i,m}^{1^-}\mapsto
  \tau_{ii,m+r\ell,m}^{1,2}(0)\tau_{iimm}^{1,1}(0)\inv \xi_{i,m}^{1^-}\ot \wh\xi_{i,m}^{1^-},\\
  &
  \xi_{i,m}^{2^+}\mapsto 
  \tau_{iimm}^{2,2}(0)\inv
  \xi_{i,m}^{2^+}\ot \wh\xi_{i,m}^{2^+},\quad
  \xi_{i,m}^{2^-}\mapsto \xi_{i,m}^{2^-}\ot\wh\xi_{i,m}^{2^-}.
\end{align*}
\end{coro}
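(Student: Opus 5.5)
The plan is to compose the two structural results already available: the isomorphism $V_{\wp,\tau}^\ell(\g)\cong\mathfrak D_{T_\tau}(V_{\wp,\varepsilon}^\ell(\g))$ from the preceding corollary, and the comodule map $\rho$ furnished by Proposition \ref{prop:L-H-rho-V-comosition-pre}. Concretely, I will apply the proposition giving $V_{\wp,\tau\ast\bar\tau}^\ell(\g)\cong\mathfrak D_{T_{\bar\tau}}(V_{\wp,\tau}^\ell(\g))$ with the base parameter equal to $\varepsilon$ and the deforming parameter $\bar\tau=\tau$. This gives an explicit isomorphism $\psi:V_{\wp,\tau}^\ell(\g)\to\mathfrak D_{T_\tau}(V_{\wp,\varepsilon}^\ell(\g))$. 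It sends $\xi_{i,m}^0\mapsto\xi_{i,m}^0$ and $\xi_{i,m}^{2^-}\mapsto\xi_{i,m}^{2^-}$, and it rescales $\xi_{i,m}^{1^\pm}$ and $\xi_{i,m}^{2^+}$ by the constants $\tau_{iim,m+r\ell}^{2,1}(0)\inv$, $\tau_{ii,m+r\ell,m}^{1,2}(0)\tau_{iimm}^{1,1}(0)\inv$ and $\tau_{iimm}^{2,2}(0)\inv$. These are exactly the scalars appearing in the statement.

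Next, $H$ is cocommutative and $(H,\rho,\tau)$ is a deforming triple of $V_{\wp,\varepsilon}^\ell(\g)$ (the lemma above, specialized to base $\varepsilon$). Proposition \ref{prop:L-H-rho-V-comosition-pre} therefore shows that $\rho:\mathfrak D_{T_\tau}(V_{\wp,\varepsilon}^\ell(\g))\to V_{\wp,\varepsilon}^\ell(\g)\sharp H$ is a nonlocal vertex algebra homomorphism. Since $(1\ot\varepsilon)\rho=\mathrm{Id}$, the map $\rho$ is injective. The composite $\rho\circ\psi$ is then an injective nonlocal vertex algebra homomorphism. Evaluating it on generators, using $\rho(\xi_{i,m}^0)=\xi_{i,m}^0\ot 1+\vac\ot\wh\xi_{i,m}^0$ and $\rho(\xi_{i,m}^{a^\pm})=\xi_{i,m}^{a^\pm}\ot\wh\xi_{i,m}^{a^\pm}$, reproduces the displayed formulas. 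Since $V_{\wp,\tau}^\ell(\g)$ is generated by the $\xi_{i,m}^a$, the map is uniquely determined by these values.

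The remaining and main point is the qualifier ``quantum vertex algebra'' injection, which requires the map to intertwine the quantum Yang--Baxter operators. By Theorem \ref{thm:qva}, $\mathfrak D_{T_\tau}(V_{\wp,\varepsilon}^\ell(\g))$ carries the operator $T_\tau^{21}(-z)\inv S_{\wp,\varepsilon}(z)T_\tau(z)$, and $\psi$ transports $S_{\wp,\tau}$ to it. I will then equip the image $\rho(\mathfrak D_{T_\tau}(V_{\wp,\varepsilon}^\ell(\g)))\subset V_{\wp,\varepsilon}^\ell(\g)\sharp H$ with the pushed-forward operator. It has to be checked that this is compatible with the $S$-locality of the smash product vertex operators, i.e.\ that it satisfies \eqref{eq:qyb-locallity} with respect to $Y^\sharp$. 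I expect this verification to be the main obstacle. I would first try to check it on pairs of generators, using Remark \ref{rem:S-wp-tau} together with the explicit $Y^\sharp$ (in which $\tau(h_{(1)},z)$ reproduces the twisting factors of $T_\tau$). I would then extend from generators to all of the image via the hexagon identities \eqref{eq:qyb-hex1}, \eqref{eq:qyb-hex2} and the fact that $\rho\circ\psi$ is a homomorphism.
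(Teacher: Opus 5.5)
Your argument is correct and is the paper's own: you compose the isomorphism $V_{\wp,\tau}^\ell(\g)\cong\mathfrak D_{T_\tau}(V_{\wp,\varepsilon}^\ell(\g))$ (the preceding proposition with base $\varepsilon$ and deforming parameter $\tau$) with the homomorphism $\rho$ of Proposition~\ref{prop:L-H-rho-V-comosition-pre}, and injectivity follows from $(1\ot\varepsilon)\rho=\mathrm{Id}$. The step you flag as the main obstacle is not one, and the paper neither does it nor defines a separate quantum Yang--Baxter operator on $V_{\wp,\varepsilon}^\ell(\g)\sharp H$: since $\rho\circ\psi$ is an injective nonlocal vertex algebra homomorphism, its image is a nonlocal vertex subalgebra isomorphic to $V_{\wp,\tau}^\ell(\g)$, so the pushed-forward operator satisfies all the quantum vertex algebra axioms there (including \eqref{eq:qyb-locallity} and \eqref{eq:qyb-unitary-qyb}) by transport of structure, with no check on generators required.
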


%

\subsection{Decomposition of $V_{\wp,\varepsilon}^\ell(\g)$}\label{subsec:structure}

In this subsection, we study the structure of $V_{\wp,\varepsilon}^\ell(\g)$.
We prove that $V_{\wp,\varepsilon}^\ell(\g)$ can be deformed from a tensor product quantum vertex algebra of a Heisenberg vertex algebra and a quantum vertex algebra
$V_\wp^\ell(Q,L)$ defined by a quiver $Q$ with a $\Z_\wp$-action, a subset $L$ of directed loops, and an integer $\ell$ (see Definition \ref{de:qva-quiver}).
Define
\begin{align*}
  h_{i,m}=\sum_{s\in\Z_\wp}\zeta^{ms}\xi_{i,s}^0,\quad
  \wt \xi_{i,m}^0=\xi_{i,m-r_i}^0-\xi_{i,m+r_i}^0
  \quad\te{for }i\in I,\,m\in\Z_\wp.
\end{align*}
The following result rewrite the relations \eqref{tau1}-\eqref{tau3}, \eqref{tau9} in terms of $h_{i,\wp_im}(z)$ and $\wt\xi_{i,m}^0(z)$.
\begin{lem}\label{lem:equiv-rels}
Let $W$ be a vector space equipped with fields $\xi_{i,m}^a(z)\in\E(W)$, $i\in I$, $m\in\Z_\wp$, $a\in\{0,1^\pm,2^\pm\}$.
Set
\begin{align*}
  h_{i,m}(z)=\sum_{s\in\Z_\wp}\zeta^{ms}\xi_{i,s}^0(z),\quad
  \wt \xi_{i,m}^0(z)=\xi_{i,m-r_i}^0(z)-\xi_{i,m+r_i}^0(z).
\end{align*}
Then the relation \eqref{tau1} holds if and only if
\begin{align}
  &\label{eq:h-h-rel}
  [h_{i,\wp_im}(z_1),h_{j,\wp_jn}(z_2)]
  =\delta_{\wp_im+\wp_jn,0}\wp [a_{ij}]_{\zeta_i^{\wp_im}}[r\ell/r_j]_{\zeta_j^{\wp_jn}}
  \zeta^{\wp_i mr\ell}\pd{z_2}z_1\inv\delta\left(\frac{z_2}{z_1}\right),\\
  &[h_{i,\wp_im}(z_1),\wt\xi_{j,n}^0(z_2)]
  =0,\nonumber\\
  &[\wt \xi_{i,m}^0(z_1),\wt \xi_{j,n}^0(z_2)]
  =\frac{\vvp{ (q_i^{a_{ij}}-q_i^{-a_{ij}})(q^{-2r\ell}-1)q^{n-m} }}{(z_1-z_2)^2}
  -\frac{\vvp{
  (q_i^{a_{ij}}-q_i^{-a_{ij}})(1-q^{2r\ell})q^{n-m}
  }}{(z_2-z_1)^2}.\nonumber
\end{align}
The relation \eqref{tau2} holds if and only if
\begin{align*}
  &[h_{i,\wp_im}(z_1),\xi_{j,n}^{1^\pm}(z_2)]=0,\\
  &[\wt\xi_{i,m}^0(z_1),\xi_{j,n}^{1^\pm}(z_2)]
  =\pm\xi_{j,n}^{1^\pm}(z_2)\frac{\vvp{
    (q_i^{a_{ij}}-q_i^{-a_{ij}})(q^{-2r\ell}-1)q^{n-m}
  }}{z_1-z_2}\\
  &\qquad\pm\xi_{j,n}^{1^\pm}(z_2)\frac{\vvp{
    (q_i^{a_{ij}}-q_i^{-a_{ij}})(1-q^{2r\ell})q^{n-m}
  }}{z_2-z_1}.
\end{align*}
The relation \eqref{tau3} holds if and only if
\begin{align*}
  &[h_{i,\wp_im}(z_1),\xi_{j,n}^{2^\pm}(z_2)]
  =\pm \xi_{j,n}^{2^\pm}(z_2)[a_{ij}]_{\zeta_i^{\wp_im}}
  \zeta^{\wp_im(n+r\ell)}
  z_1\inv\delta\left(\frac{z_2}{z_1}\right),\\
  &[\wt\xi_{i,m}^0(z_1),\xi_{j,n}^{2^\pm}(z_2)]
  =\pm \xi_{j,n}^{2^\pm}(z_2)\left(\frac{\vvp{
    (q_i^{a_{ij}}-q_i^{-a_{ij}})q^{-r\ell+n-m}
  }}{z_1-z_2}
  +\frac{\vvp{
    (q_i^{a_{ij}}-q_i^{-a_{ij}})q^{r\ell+n-m}
  }}{z_2-z_1}
  \right).
\end{align*}
Moreover, if $(W,\xi_{i,m}^a(z))\in \obj\mathcal M_{\wp,\varepsilon}^\ell(\wh\g)$, then
the relation \eqref{eq:va-rel-der} holds if and only if
\begin{align*}
  &\pm \wt \xi_{i,m}^0(-1)\xi_{i,m}^{1^\pm}=\partial \xi_{i,m}^{1^\pm}+(\tau_{ii,m-r_i,m}^{0,1}(0)
    -\tau_{ii,m+r_i,m}^{0,1}(0))\xi_{i,m}^{1^\pm}.
\end{align*}
\end{lem}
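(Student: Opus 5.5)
The plan is a linear change of basis in the Heisenberg-type generators, followed by a discrete Fourier decomposition over $\Z_\wp$. The map $(\xi_{i,s}^0)_{s\in\Z_\wp}\mapsto(h_{i,m})_{m\in\Z_\wp}$ is invertible, since $\xi_{i,s}^0=\wp\inv\sum_m\zeta^{-ms}h_{i,m}$. Hence the span of the $\xi_{i,s}^0(z)$ equals the span of the $h_{i,m}(z)$. I would split the index set $m\in\Z_\wp$ into the multiples of $\wp_i$ and the rest. The elements $h_{i,m}$ with $\wp_i\nmid m$ should be expressible through the $\wt\xi_{i,n}^0$. Indeed, $\sum_n\zeta^{mn}\wt\xi_{i,n}^0=(\zeta^{mr_i}-\zeta^{-mr_i})h_{i,m}$, and $\zeta^{mr_i}-\zeta^{-mr_i}=\zeta_i^m-\zeta_i^{-m}$ vanishes exactly when $\wp_i\mid m$, because $\zeta_i^2$ has order $\wp_i$. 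So $\{h_{i,\wp_im}\}\cup\{\wt\xi_{i,n}^0\}$ spans the same space of fields as $\{\xi_{i,s}^0\}$. A bracket relation holds for one family if and only if the transformed relation holds for the other. Both directions of every ``if and only if'' then reduce to computing the transformed right-hand sides, plus checking that the new relations determine all brackets of the original fields.

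The first step is to write \eqref{tau1} with $\tau=\varepsilon$ (all $\tau^{0,b}=0$) as $A^{0,0}_{ijmn}(z_1-z_2)^{-2}-A^{0,0}_{jinm}(z_2-z_1)^{-2}$. I would then apply $\sum_{s,t}\zeta^{ms+nt}$ to this expression. The key computation is the identity
\[
\sum_{s,t}\zeta^{as+bt}\vvp{g(q)q^{t-s}}=\wp\,\delta_{a+b,0}\,g(\zeta^{b})\quad\text{(up to sign conventions)}
\]
for $g\in\Z[q,q\inv]$. It follows from $\sum_{k\in\Z_\wp}\zeta^{bk}\vvp{g(q)q^{k}}=g(\zeta^{-b})$ after re-indexing. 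For $h$ versus $h$ this produces $\delta_{\wp_im+\wp_jn,0}$ times $[a_{ij}]_{q_i}[r\ell/r_j]_{q_j}q^{-r\ell}$ evaluated at $q=\zeta^{\pm\wp_im}$. The difference $(z_1-z_2)^{-2}-(z_2-z_1)^{-2}$ gives $\pd{z_2}z_1\inv\delta(z_2/z_1)$, and the parity of $g$ should make the two $\vvp{\cdot}$ coefficients agree after evaluation. For $h_{i,\wp_im}$ against $\wt\xi_{j,n}^0$ the Fourier factor becomes $(\zeta_j^{\wp_im}-\zeta_j^{-\wp_im})$-type. I expect this factor to vanish because $\zeta_j^{\wp_im}$ is a $\wp_i$-th order object; this is where $\wp_i$ versus $\wp_j$ and the symmetry $r_ia_{ij}=r_ja_{ji}$ must be used. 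The $\wt\xi$ versus $\wt\xi$ bracket is a direct substitution: $[a_{ij}]_{q_i}(q_j-q_j\inv)\cdots$ telescopes to $(q_i^{a_{ij}}-q_i^{-a_{ij}})$, and $[r\ell/r_j]_{q_j}(q_j-q_j\inv)q^{-r\ell}=q^{-2r\ell}\cdot(\ldots)$ gives $(q^{-2r\ell}-1)$ and $(1-q^{2r\ell})$.

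Relations \eqref{tau2} and \eqref{tau3} are handled identically. They are linear in $\xi^0$, with coefficients $A^{0,b}$ and simple poles. The simple-pole sum $\frac{1}{z_1-z_2}+\frac{1}{z_2-z_1}$ produces $z_1\inv\delta$ for the $h$ components. For \eqref{tau2} the $h$ coefficient contains the factor $(q^{-r\ell}-q^{r\ell})$ paired with $[a_{ij}]$, and it should vanish at the relevant root of unity; I would verify this explicitly.

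For the ``moreover'' part, I would substitute $\tau=\varepsilon$ in \eqref{eq:va-rel-der} and note that $\pm(\xi^0_{i,m-r_i}(-1)-\xi^0_{i,m+r_i}(-1))=\pm\wt\xi^0_{i,m}(-1)$. One subtlety remains: with $\tau=\varepsilon$ the constant term seems to be $0$, while the displayed version keeps the special $\tau$ constants. I would reconcile this by reading those constants as written in the statement. The main obstacle I expect is the vanishing of the mixed $h$–$\wt\xi$ and $h$–$\xi^{1}$ brackets, together with bookkeeping the exact scalar $\zeta^{\wp_imr\ell}$ and the sign conventions in the Fourier identity.
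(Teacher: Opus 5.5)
Your framework matches the paper's. The paper writes $\xi^0_{i,m}$ explicitly as a linear combination of the $h_{i,s\wp_i}$ and the $\wt\xi^0_{i,m+(2t+1)r_i}$ (this is your Fourier-inversion spanning argument) and then checks the brackets directly. But the one non-formal input the paper names is missing from your plan: $\zeta^{r\wp_i}=\zeta^{-r\wp_i}$, i.e.\ $\zeta^{2r\wp_i}=1$, which holds because $\gcd(\wp,2r_i)\mid 2r_i\mid 2r$. In two places you substitute a different reason, and both are wrong.

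In the $[h,h]$ computation, your Fourier identity produces the coefficients $\wp\,\delta_{a+b,0}\,G(\zeta^{b})\zeta^{br\ell}$ and $\wp\,\delta_{a+b,0}\,G(\zeta^{b})\zeta^{-br\ell}$ in front of $(z_1-z_2)^{-2}$ and $(z_2-z_1)^{-2}$. Here $G=[a_{ij}]_{q_i}[r\ell/r_j]_{q_j}$ is symmetric in $i,j$ (by $r_ia_{ij}=r_ja_{ji}$) and invariant under $q\mapsto q\inv$. That parity reconciles the $G$-factors, but not $\zeta^{br\ell}$ versus $\zeta^{-br\ell}$. These agree for $b=\wp_jn$ only because $\zeta^{2r\wp_j}=1$. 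For general Fourier modes the bracket is not a multiple of $\pd{z_2}z_1\inv\delta(z_2/z_1)$, which is exactly why only the $h_{i,\wp_im}$ are Heisenberg fields. The same fact turns the two simple poles in \eqref{tau3} into $z_1\inv\delta(z_2/z_1)$ with coefficient $\zeta^{\wp_im(n+r\ell)}$. It also kills $(q^{-r\ell}-q^{r\ell})$ in \eqref{tau2}; there your guess is right.

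For $[h_{i,\wp_im},\wt\xi^0_{j,n}]$ your mechanism fails outright: the Fourier factor $\zeta_j^{\wp_im}-\zeta_j^{-\wp_im}$ need not vanish. Take $r=2$, $\wp=8$, $i$ long, $j$ short and $m=1$. Then $\wp_i=2$ and $\zeta_j^{2}-\zeta_j^{-2}=2\sqrt{-1}\neq 0$, while $[a_{ij}]_{\zeta_i^{2}}=[-1]_{-1}=-1\neq0$. What vanishes is the product of the Fourier factor with the $[r\ell/r_j]_{q_j}$ factor coming from $A^{0,0}$. With $x=\zeta_j^{\wp_im}$ one gets $[r\ell/r_j]_x(x\inv-x)=-(\zeta^{\wp_imr\ell}-\zeta^{-\wp_imr\ell})=0$, again by $\zeta^{2r\wp_i}=1$; in the example it is $[2\ell]_{\sqrt{-1}}=0$. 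The symmetry $r_ia_{ij}=r_ja_{ji}$ is not what makes this vanish; it is only used to identify $A^{0,0}_{jinm}$ with $\vvp{G\,q^{r\ell+n-m}}$.

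Once this single fact is inserted at these spots, your change-of-basis argument goes through. That covers both directions of each equivalence and the tautological ``moreover'' part.
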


\begin{proof}
Note that
\begin{align*}
  \xi_{i,m}^0(z)=\frac{1}{\wp}\sum_{s=0}^{\wp/\wp_i-1}
  \zeta^{-\wp_ims}h_{i,s\wp_i}(z)
  +\frac{1}{\wp_i}\sum_{s=0}^{\wp_i-2}\sum_{t=0}^s
  \wt\xi_{i,m+(2t+1)r_i}^0(z).
\end{align*}
Then the lemma follows from a straightforward verification and the fact that
$\zeta^{r\wp_i}=\zeta^{-r\wp_i}$.
\end{proof}

From Remark \ref{rem:S-wp-tau}, one can straightforwardly verify the following result.
\begin{lem}\label{lem:S-h}
For $i\in I$ and $m\in\Z_\wp$, we have that
\begin{align*}
  S_{\wp,\varepsilon}(z)(h_{i,\wp_im}\ot u)=h_{i,\wp_im}\ot u\quad\te{for }u\in V_{\wp,\varepsilon}^\ell(\g).
\end{align*}
\end{lem}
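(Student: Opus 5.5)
We need to show that $S_{\wp,\varepsilon}(z)(h_{i,\wp_im}\ot u)=h_{i,\wp_im}\ot u$ for every $u$. The approach is to prove it first on generators and then extend to all $u$ through the hexagon relation \eqref{eq:qyb-hex2}.

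\textbf{Step 1: generators.} Since $\tau=\varepsilon$, all $\tau$-terms vanish in Remark \ref{rem:S-wp-tau}. For a generator $u=\xi_{j,n}^a$, we compute $S_{\wp,\varepsilon}(z)(h_{i,\wp_im}\ot \xi_{j,n}^a)$ by expanding
\begin{align*}
h_{i,\wp_im}=\sum_{s\in\Z_\wp}\zeta^{\wp_ims}\xi_{i,s}^0 .
\end{align*}
By Remark \ref{rem:S-wp-tau}, the correction terms are:
\begin{itemize}
\item for $a=0$: $\vac\ot\vac$ times $\sum_s\zeta^{\wp_ims}\bigl(A^{0,0}_{jins}-A^{0,0}_{ijsn}\bigr)z^{-2}$;
\item for $a=b^\pm$: the analogous sums $\sum_s\zeta^{\wp_ims}\bigl(A^{b,0}_{jins}-A^{0,b}_{ijsn}\bigr)z^{-1}$.
\end{itemize}
The key identity is that for $g(q)\in\Z[q,q\inv]$,
\begin{align*}
\sum_{s\in\Z_\wp}\zeta^{ks}\vvp{g(q)q^{\pm s}}
\end{align*}
equals $g(\zeta^{\mp k})$ up to normalization. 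Each correction sum therefore becomes a difference of evaluations at $\zeta^{\pm\wp_im}$.

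Since $\zeta_i^{\wp_im}$ satisfies $\zeta_i^{2\wp_im}=1$, and $\zeta^{r\wp_i}=\zeta^{-r\wp_i}$ (the fact used in Lemma \ref{lem:equiv-rels}), the two evaluations agree. Concretely:
\begin{itemize}
\item $[a_{ij}]_{x}$ at $x=\zeta_i^{\pm\wp_im}$ is symmetric;
\item the factors $q^{\pm r\ell}$ coincide;
\item the factors $(q^{-2r\ell}-1)$ and $(q^{-r\ell}-q^{r\ell})$ vanish.
\end{itemize}
So all correction sums cancel. This is the same computation that underlies the cancellations in Lemma \ref{lem:equiv-rels}, e.g. $[h,\wt\xi^0]=0$ and $[h,\xi^{1^\pm}]=0$. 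Alternatively one can read the cancellation off Lemma \ref{lem:equiv-rels} directly: the singular parts of $[h_{i,\wp_im}(z_1),\xi_{j,n}^a(z_2)]$ are pure delta functions, with no $(z_1-z_2)^{-k}$ rational part, and $S$ records precisely those rational parts. Either way, $S(z)(h\ot\xi^a_{j,n})=h\ot\xi^a_{j,n}$.

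\textbf{Step 2: vacuum and extension.} The case $u=\vac$ is \eqref{eq:qyb-vac}. Let $K$ be the subspace of $u$ for which the identity holds for all $i,m$. For $u,v\in K$, apply \eqref{eq:qyb-hex2}:
\begin{align*}
S(z_1)Y^{23}(z_2)(h\ot u\ot v)=Y^{23}(z_2)S^{12}(z_1-z_2)S^{13}(z_1)(h\ot u\ot v).
\end{align*}
Because $h$ is fixed by both $S^{13}$ and $S^{12}$ against elements of $K$, the right side equals $h\ot Y(u,z_2)v$. Hence $Y(u,z_2)v\in K((z_2))$, so $K$ is a nonlocal vertex subalgebra. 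It contains the generators, so $K=V_{\wp,\varepsilon}^\ell(\g)$.

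The main obstacle is the root-of-unity bookkeeping in Step 1. The $\vvp{\cdot}$ averages have to be converted carefully to evaluations, and the $2^\pm$ case uses $A^{2,0}$ with $a_{ji}$ versus $A^{0,2}$ with $a_{ij}$. I would handle that mismatch using the symmetrization $r_ia_{ij}=r_ja_{ji}$ from \eqref{eq:sym} together with the factor $r\ell$ shifts.
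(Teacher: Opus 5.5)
Your proposal is correct and follows the same route as the paper, whose entire proof is that the lemma is a straightforward verification from Remark \ref{rem:S-wp-tau}. That verification is what you carry out: on generators, the $\Z_\wp$-weighted sums of the integers $A^{a,b}$ become evaluations at $\zeta^{\pm\wp_im}$, where $\zeta^{2r\wp_i}=1$ cancels every correction term, and \eqref{eq:qyb-vac} and \eqref{eq:qyb-hex2} then extend the identity to all $u$. One small correction: the $a_{ij}$ versus $a_{ji}$ mismatch you anticipate, which needs \eqref{eq:sym}, actually arises in the $(0,0)$ case, not the $2^\pm$ case, since both $A^{2,0}_{jins}$ and $A^{0,2}_{ijsn}$ already involve $[a_{ij}]_{q_i}$.
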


Let $\mathcal H$ and $v_{\wp,\varepsilon}^\ell(\g)$ be the nonlocal vertex subalgebras of $V_{\wp,\varepsilon}^\ell(\g)$ generated by
\begin{align*}
  \set{h_{i,m}}{m\in\wp_i\Z_\wp,\,i\in I}
  \quad\te{and}\quad \set{\wt \xi_{i,m}^0,\,\xi_{i,m}^{1^\pm},\,\xi_{i,m}^{2^\pm}}{i\in I,\,m\in\Z_\wp},\,\,\te{respectively}.
\end{align*}
We will give another description of $\mathcal H$ and $v_{\wp,\varepsilon}^\ell(\g)$.
Let
\begin{align*}
  \h=\bigoplus_{i\in I}\bigoplus_{m\in \wp_i\Z_\wp}\C \al^\vee_{i,m}
\end{align*}
be a finite-dimensional abelian Lie algebra with a symmetric bilinear form
\begin{align*}
  (\al^\vee_{i,\wp_im}| \al^\vee_{j,\wp_jn})=\delta_{\wp_im+\wp_jn,0}
  \wp [a_{ij}]_{\zeta_i^{\wp_im}}[r\ell/r_j]_{\zeta_j^{\wp_jn}}
  \zeta^{\wp_imr\ell}.
\end{align*}
Form the corresponding affine Lie algebra $\wh{\h}$ as the vector space
\begin{align*}
  \wh\h=\h\ot\C[t,t\inv]\oplus\C c
\end{align*}
where $c$ is central and the commutation relations are given by
\begin{align*}
  [\al^\vee_{i,\wp_is}(m),\al^\vee_{j,\wp_jt}(n)]=m\delta_{m+n,0}
  (\al^\vee_{i,\wp_is}|\al^\vee_{j,\wp_jt})c
  \quad\te{for }i,j\in I,\,s,t\in\Z_\wp.
\end{align*}
with $\al^\vee_{i,\wp_is}(m)$ denoting $\al^\vee_{i,\wp_is} \otimes t^m$.
Define
\begin{align*}
  \wh\h^+=\bigoplus_{i\in I}\bigoplus_{s\in\wp_i\Z_\wp}\bigoplus_{m\ge 0}\C \al^\vee_{i,s}(m).
\end{align*}
For $a\in\C$, let $\C_a = \C$ be a $\wh\h^+ \oplus \C c$-module where $\wh{\h}^+$ acts trivially and $c$ acts as the scalar $a$. Then we set
\begin{align*}
  V_{\wh\h}^a=U(\wh\h)\ot_{U(\wh\h^+\oplus\C c)}\C_a.
\end{align*}
Denote the vacuum vector by $\vac = 1 \ot 1 \in V_{\wh\h}^a$, and identify $\al^\vee_{i,\wp_is}$ with $\al^\vee_{i,\wp_is}(-1)\vac$. Then $V_{\wh\h}^a$ carries a vertex algebra structure with vacuum $\vac$ and vertex operator map $Y$ uniquely determined by
\begin{align*}
  Y(\al^\vee_{i,\wp_is},z)=\al^\vee_{i,\wp_is}(z)
  =\sum_{m\in\Z}\al^\vee_{i,\wp_is}(m)z^{-m-1}\quad\te{for }i\in I,\,s\in\Z_\wp.
\end{align*}
From the commutation relations \eqref{eq:h-h-rel}, we immediately obtain the following lemma.
\begin{lem}\label{lem:Heisenberg-hom}
There is a vertex algebra epimorphism from $V_{\wh\h}^1$ to $\mathcal H$ given by $\al^\vee_{i,\wp_is}\mapsto h_{i,\wp_is}$.
\end{lem}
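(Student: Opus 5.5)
The plan is to invoke the universal property of the Heisenberg vertex algebra $V_{\wh\h}^1$: a vertex algebra map out of it is determined by images of the generators $\al^\vee_{i,\wp_is}$ whose vertex operators satisfy the Heisenberg commutation relations at level $1$. Concretely, $V_{\wh\h}^1$ is the universal vertex algebra generated by fields $\al^\vee_{i,\wp_is}(z)$ subject to
\begin{align*}
  [\al^\vee_{i,\wp_is}(z_1),\al^\vee_{j,\wp_jt}(z_2)]=(\al^\vee_{i,\wp_is}|\al^\vee_{j,\wp_jt})\pd{z_2}z_1\inv\delta\left(\frac{z_2}{z_1}\right).
\end{align*}
This is the situation of Proposition \ref{prop:V-M-pre}, with $\mathcal M$ the category of vector spaces carrying such fields, together with Proposition \ref{prop:universal-nonlocal-va}. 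One can alternatively use the standard fact that $V_{\wh\h}^1$ is a vacuum $\wh\h$-module.

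First, by Lemma \ref{lem:equiv-rels} applied in $V_{\wp,\varepsilon}^\ell(\g)$ (whose generating fields satisfy \eqref{tau1}), the fields $Y(h_{i,\wp_im},z)=h_{i,\wp_im}(z)$ satisfy \eqref{eq:h-h-rel}. The right-hand side of \eqref{eq:h-h-rel} is exactly $(\al^\vee_{i,\wp_im}|\al^\vee_{j,\wp_jn})\pd{z_2}z_1\inv\delta(z_2/z_1)$. Hence the operators $h_{i,\wp_im}(n)$ on $V_{\wp,\varepsilon}^\ell(\g)$ (or on $\mathcal H$) give a representation of $\wh\h$ with $c\mapsto 1$.

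Next we construct the map. Because $h_{i,\wp_im}(z)\vac\in V[[z]]$, the vacuum is annihilated by $\wh\h^+$. The universal property of the induced module therefore gives an $\wh\h$-module map $V_{\wh\h}^1\to\mathcal H$ with $\vac\mapsto\vac$, sending $\al^\vee_{i,\wp_is}(-1)\vac$ to $h_{i,\wp_is}$. Since the $h$'s are mutually local (the commutator is a delta derivative), the standard argument (as in Proposition \ref{prop:universal-nonlocal-va}) shows that this module map intertwines vertex operators on generators, and then on all of $V_{\wh\h}^1$ by weak associativity. So it is a vertex algebra homomorphism. Equivalently, $\mathcal H$ with the fields $Y(h_{i,\wp_is},z)$ is an object of the Heisenberg category, and Proposition \ref{prop:universal-nonlocal-va} applies directly. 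Surjectivity is immediate because $\mathcal H$ is generated by the $h_{i,\wp_is}$, which lie in the image.

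The only genuine check is the first step: \eqref{eq:h-h-rel} must match the bilinear form exactly, including the factor $\wp$ and the twist $\zeta^{\wp_imr\ell}$. This is supplied by Lemma \ref{lem:equiv-rels}. A secondary point is that $\mathcal H$ is a vertex algebra rather than merely a nonlocal one, which follows from the commutator formula, i.e. locality of the generators.
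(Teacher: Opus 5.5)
Your proposal is correct and follows the same route as the paper. The paper simply observes that \eqref{eq:h-h-rel} (Lemma \ref{lem:equiv-rels} with $\tau=\varepsilon$) is exactly the level-one Heisenberg relation for the form $(\cdot|\cdot)$ on $\h$, so the universal property of $V_{\wh\h}^1$ gives the map, and surjectivity holds because the $h_{i,\wp_is}$ generate $\mathcal H$. The details you add beyond the paper are all sound: the vacuum is annihilated by $\wh\h^+$, the map extends from generators by weak associativity, and locality of $\mathcal H$ is inherited through the surjection.
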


Next, we show that $v_{\wp,\varepsilon}^\ell(\g)$ admits a definition depending on a quiver.
Let $Q=(Q_0,Q_1)$ be a quiver with vertex set $Q_0$ and arrow set $Q_1$.
Suppose that there exists a $\Z_\wp$-action on $Q$.
Let $L$ be a set of directed loops.
For $u,v\in Q_0$, define
\begin{align*}
  |u\to v|=
  \begin{cases}
    1, & \mbox{if }u\to v\in Q_1,\\
    0, & \mbox{otherwise}.
  \end{cases}
\end{align*}

\begin{de}\label{de:qva-quiver}
Let $\ell\in\Z$, and let $\mathcal M_\wp^\ell(Q,L)$ be the category whose objects are vector spaces $W$ equipped with fields
\begin{align*}
  &y_i^a(z)=\sum_{n\in\Z}y_i^a(n)z^{-n-1}\quad i\in Q_0,\,a\in\{0,1^\pm,2^\pm\},
\end{align*}
satisfying the relations below
\begin{align}
  &\label{Qv1}\tag{Qv1}
  [y_i^0(z_1),y_j^0(z_2)]
  =\left( |2\ell\cdot i\to j|-|j\to 2\ell\cdot i|
  +|j\to i|-|i\to j| \right)\frac{1}{(z_1-z_2)^2}\\
  &\quad \nonumber
  -\left( |2\ell\cdot j\to i|-|i\to 2\ell\cdot j|
  +|i\to j|-|j\to i| \right)\frac{1}{(z_2-z_1)^2},\\
  &\label{Qv2}\tag{Qv2}
  [y_i^0(z_1),y_j^{1^\pm}(z_2)]
  =\pm y_j^{1^\pm}(z_2)
  \left( |2\ell\cdot i\to j|-|j\to 2\ell\cdot i|
  +|j\to i|-|i\to j| \right)\frac{1}{z_1-z_2}\\
  &\quad \nonumber
  \pm y_j^{1^\pm}(z_2)
  \left( |2\ell\cdot j\to i|-|i\to 2\ell\cdot j|
  +|i\to j|-|j\to i| \right)\frac{1}{z_2-z_1},\\
  &\label{Qv3}\tag{Qv3}
  [y_i^0(z_1),y_j^{2^\pm}(z_2)]
  =\pm y_j^{2^\pm}(z_2)
  \left( |\ell\cdot i\to j|-|j\to \ell\cdot i| \right)\frac{1}{z_1-z_2}\\
  &\quad\nonumber
  \pm y_j^{2^\pm}(z_2)
  \left( |i\to \ell\cdot j|-|\ell\cdot j\to i| \right)
  \frac{1}{z_2-z_1},\\
  &\label{Qv4}\tag{Qv4}
  (z_1-z_2)^{\epsilon_1\epsilon_2\left( |i\to j|-|2\ell\cdot i\to j| \right)}
  (z_2-z_1)^{\epsilon_1\epsilon_2\left( |j\to 2\ell\cdot i|-|j\to i| \right)}y_i^{1^{\epsilon_1}}(z_1)y_j^{1^{\epsilon_2}}(z_2)\\
  &\quad\nonumber=
  (z_2-z_1)^{\epsilon_1\epsilon_2\left( |j\to i|-|2\ell\cdot j\to i| \right)}
  (z_1-z_2)^{\epsilon_1\epsilon_2\left( |i\to 2\ell\cdot j|-|i\to j| \right)}y_j^{1^{\epsilon_2}}(z_2)y_i^{1^{\epsilon_1}}(z_1),\\
  &\label{Qv5}\tag{Qv5}
  (z_1-z_2)^{-\epsilon_1\epsilon_2|\ell\cdot i\to j|}
  (z_2-z_1)^{\epsilon_1\epsilon_2|j\to \ell\cdot i|}
  y_i^{1^{\epsilon_1}}(z_1)y_j^{2^{\epsilon_2}}(z_2)\\
  &\quad\nonumber
  =(z_2-z_1)^{\epsilon_1\epsilon_2|\ell\cdot j\to i|}
  (z_1-z_2)^{-\epsilon_1\epsilon_2|i\to \ell\cdot j|}
  y_j^{2^{\epsilon_2}}(z_2)y_i^{1^{\epsilon_1}}(z_1),\\
  &\label{Qv6}\tag{Qv6}
  (z_1-z_2)^{|i\to j|}y_i^{2^\pm}(z_1)y_j^{2^\pm}(z_2)
  =-(z_2-z_1)^{|j\to i|}y_j^{2^\pm}(z_2)y_i^{2^\pm}(z_1),\\
  &\label{Qv7}\tag{Qv7}
  (z_1-z_2)^{\max\{\delta_{ij}, \delta_{2\ell\cdot i,j}\}}
  y_i^{2^+}(z_1)y_j^{2^-}(z_2)\\
  &\quad\nonumber
  =-(z_1-z_2)^{\max\{\delta_{ij}, \delta_{2\ell\cdot i,j}\}}
  \frac{(z_1-z_2)^{|i\to j|}}{(z_2-z_1)^{|j\to i|}}
  y_j^{2^-}(z_2)y_i^{2^+}(z_1).
\end{align}
Denote by $V_\wp'^\ell(Q,L)$ the quantum vertex algebra $F(\mathcal M_\wp^\ell(Q,L))$ obtained in Proposition \ref{prop:V-M-quantumVA}.
Define $V_\wp^\ell(Q,L)$ to be the quotient nonlocal vertex algebra of $V_\wp^\ell(Q,L)$ modulo the ideal generated by
the following elements
\begin{align}
  &\label{Qv8}\tag{Qv8}
  y_i^{2^+}(0)y_j^{2^-}-\delta_{ij}\vac+\delta_{2\ell\cdot i,j}y_{\ell\cdot i}^{1^+},\\
  &\label{Qv9}\tag{Qv9}
  \pm y_i^0(-1)y_i^{1^\pm}-\partial y_i^{1^\pm},\\
  &\label{Qv10}\tag{Qv10}
  y_i^{1^\pm}(-1-|2\ell\cdot i\to i|+|i\to 2\ell\cdot i|)y_i^{1^\mp}-(-1)^{|2\ell\cdot i\to i|}\vac,\\
  &\label{Qv11}\tag{Qv11}
  y_{i_1}^{2^\pm}(0)y_{i_2}^{2^\pm}(0)\cdots y_{i_{m-1}}^{2^\pm}(0)y_{i_m}^{2^\pm}
  \quad\te{for
  \begin{tikzcd}[column sep=1.5em, ampersand replacement=\&]
    i_1\arrow[r]\&i_2\arrow[r]\&\cdots\arrow[r]\&i_m\in L.
    \arrow[lll, bend left]
  \end{tikzcd}}
\end{align}
\end{de}

The proof follows by the same argument as in Theorem \ref{thm:V-wp-tau-qva}.
\begin{thm}\label{thm:V-Q-L-qva}
$V_\wp^\ell(Q,L)$ is a quantum vertex algebra with quantum Yang-Baxter operator $S_Q(z)$ determined by
\begin{align*}
  S_Q(z)(y_j^0\ot y_i^0)=&y_j^0\ot y_i^0+
  \vac\ot \vac\ot \big( |2\ell\cdot i\to j|-|j\to 2\ell\cdot i|-|2\ell\cdot j\to i|\\
  &\nonumber+|i\to 2\ell\cdot j|-2|i\to j|+2|j\to i| \big)z^{-2},\\
  S_Q(z)(y_j^0\ot y_i^{1^\pm})=&y_j^0\ot y_i^{1^\pm}
  \pm\vac\ot y_i^{1^\pm}\ot \big(
    |i\to 2\ell\cdot j|-|2\ell\cdot j\to i|
    -|j\to 2\ell\cdot i|\\
   &\nonumber  +|2\ell\cdot i\to j|
   -2|i\to j|+2|j\to i|
  \big)z\inv,\\
  S_Q(z)(y_j^0\ot y_i^{2^\pm})=&y_j^0\ot y_i^{2^\pm}
  \pm\vac\ot y_i^{2^\pm}\ot \big(
    |i\to \ell\cdot j|-|\ell\cdot j\to i|\\
  &\nonumber  +|j\to \ell\cdot i|-|\ell\cdot i\to j|
  \big)z\inv,\\
  S_Q(z)(y_j^{1^{\epsilon_2}}\ot y_i^{1^{\epsilon_1}})
  =&y_j^{1^{\epsilon_2}}\ot y_i^{1^{\epsilon_1}}
  \ot (-z)^{\epsilon_1\epsilon_2\left( |i\to 2\ell\cdot j|+|2\ell\cdot i\to j|-2|i\to j| \right)}\\
  &\nonumber\times
  z^{-\epsilon_1\epsilon_2\left( |j\to 2\ell\cdot i|+|2\ell\cdot j\to i|-2|j\to i| \right)},\\
  S_Q(z)(y_j^{2^{\epsilon_2}}\ot y_i^{1^{\epsilon_1}})
  =&y_j^{2^{\epsilon_2}}\ot y_i^{1^{\epsilon_1}}
  \ot (-z)^{\epsilon_1\epsilon_2\left(|\ell\cdot i\to j|-|i\to\ell\cdot j|\right)}
  z^{\epsilon_1\epsilon_2\left( |\ell\cdot j\to i|-|j\to\ell\cdot i| \right)},\\
  S_Q(z)(y_j^{2^{\epsilon_2}}\ot y_i^{2^{\epsilon_1}})
  =&-y_j^{2^{\epsilon_2}}\ot y_i^{2^{\epsilon_1}}
  \ot (-z)^{-\epsilon_1\epsilon_2|i\to j|}z^{\epsilon_1\epsilon_2|j\to i|}.
\end{align*}
\end{thm}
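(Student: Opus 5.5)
The plan is to mirror the proof of Theorem \ref{thm:V-wp-tau-qva} in two stages. First we show that $V_\wp'^\ell(Q,L)=F(\mathcal M_\wp^\ell(Q,L))$ is a quantum vertex algebra with the displayed operator $S_Q(z)$. Then we show that the ideal generated by \eqref{Qv8}--\eqref{Qv11} is stable under $S_Q(z)$, so that $S_Q(z)$ descends to the quotient.

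For the first stage we put the relations \eqref{Qv1}--\eqref{Qv7} into the shape \eqref{eq:vartheta1}--\eqref{eq:vartheta3} of Proposition \ref{prop:V-M-quantumVA}. We set $J^0=Q_0$ for the fields $y_i^0$. The fields $y_i^{1^\pm}$ go into $J_0^\pm$ as even generators and the fields $y_i^{2^\pm}$ go into $J_1^\pm$ as odd generators; the oddness accounts for the sign in \eqref{Qv6}. The functions $\vartheta$ are read off from the relations:
\begin{itemize}
\item $\vartheta^{0,0}_{ij}(z)$ is the integer coefficient times $z^{-2}$;
\item $\vartheta^{0,a}_{ij}(z)$ is the integer coefficient times $z^{-1}$;
\item $\vartheta^{a,b}_{ij}(z)$ are the monomials $z^{\pm(\cdots)}$ from \eqref{Qv4}--\eqref{Qv7}, which are nonzero as required.
\end{itemize}
Relation \eqref{Qv7} needs care, since in that form it is not literally of type \eqref{eq:vartheta3}. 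We treat it as in Definition \ref{de:V-pre}, where the factor $(z_1-z_2)^{\max\{\cdot\}}$ plays the role of the locality order. Proposition \ref{prop:V-M-quantumVA} then yields the quantum Yang–Baxter operator $S_\vartheta(z)$. Substituting the chosen $\vartheta$ into its formula reproduces the formulas for $S_Q(z)$ listed in the theorem; checking this is a routine comparison of the integer combinations $|u\to v|$.

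For the second stage we set $A_{i,j}=Y(y_i^{2^+},z)^-y_j^{2^-}-\delta_{ij}\vac z^{-1}+\delta_{2\ell\cdot i,j}y^{1^+}_{\ell\cdot i}z^{-1}$, and we let $B_i^\pm$, $C_i^\pm$ and $D_\gamma^\pm$ denote the elements \eqref{Qv9}, \eqref{Qv10} and \eqref{Qv11}, the last indexed by loops $\gamma\in L$. For each of these we prove the analogue of Lemmas \ref{lem:S-A}--\ref{lem:S-E}: applying $S_Q(z)$ to $X\otimes y_k^a$ or to $y_k^a\otimes X$ lands in $R\otimes V'+V'\otimes R$, where $R$ is the ideal. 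The argument runs as follows.
\begin{itemize}
\item Lemma \ref{lem:compatible-lower-bound}, applied to \eqref{Qv7} and \eqref{Qv4}, controls the pole orders. This identifies $y_i^{2^+}(0)y_j^{2^-}$ and the relevant product $y_i^{1^\pm}(n)y_i^{1^\mp}$ as the leading singular coefficients.
\item For \eqref{Qv11} we use the induction of Lemma \ref{lem:S-D} along the loop. The adjacency $|i_t\to i_{t+1}|=1$ and \eqref{Qv6} give a simple pole at each step. This expresses $Y(D_\gamma^\pm,z)$ as an iterated limit of normal-ordered products.
\item The hexagon identities \eqref{eq:qyb-hex1} and \eqref{eq:qyb-hex2}, together with \eqref{eq:qyb-der-shift} for the derivative term in \eqref{Qv9}, then transport $S_Q$ through these expressions.
\end{itemize}
Since $V'$ is generated by the $y_i^a$, the hexagon identities propagate the stability statement to all of $R$. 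The unitarity and Yang–Baxter equations \eqref{eq:qyb-locallity}, \eqref{eq:qyb-unitary-qyb} then pass to the quotient.

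The main obstacle is this stability verification. The most delicate cases are \eqref{Qv8} and \eqref{Qv10}, where one must check that the scalar factors in $S_Q$ acting on $y_i^{2^+}\otimes y_j^{2^-}$ agree with those on $y^{1^+}_{\ell\cdot i}$. This requires the combinatorial identity among $|u\to v|$, $|\ell\cdot u\to v|$ and $|2\ell\cdot u\to v|$, coming from $\Z_\wp$-equivariance of the quiver. The same identity is needed for the constant term $(-1)^{|2\ell\cdot i\to i|}$ in \eqref{Qv10}. We would settle this by direct computation on generators, exactly as in Lemma \ref{lem:S-A} and Lemma \ref{lem:S-C}.
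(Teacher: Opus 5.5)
Your two-stage plan is the paper's own: its proof of this theorem is the single sentence that the argument of Theorem~\ref{thm:V-wp-tau-qva} carries over. Stage one is fine, with one correction. \eqref{Qv7} \emph{is} of type \eqref{eq:vartheta3}: take $\vartheta^{2^+,2^-}_{ij}(z)=z^{M}$ with $M=\max\{\delta_{ij},\delta_{2\ell\cdot i,j}\}$. This factor cancels in $\vartheta^{2^+,2^-}_{ij}(-z)^{-1}\vartheta^{2^-,2^+}_{ji}(z)$, which is why it does not appear in $S_Q$. The real problem is stage two. The step you describe as a direct computation rests on pole-order claims that you assert but do not check, and two of them fail as the definitions are printed.

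Here is why these claims matter. Suppose $S_Q(z)(u\ot y)=g(z)\,u\ot y$ and $S_Q(z)(v\ot y)=h(z)\,v\ot y$. Then \eqref{eq:qyb-hex1} gives
\begin{align*}
  S_Q(z_1)\big(u(k)v\ot y\big)=h(z_1)\sum_{j\ge 0}\frac{1}{j!}\,g^{(j)}(z_1)\,u(k+j)v\ot y .
\end{align*}
So the check reduces to comparing scalars only when $u(k)v$ is the most singular coefficient of $Y(u,z)v$. Otherwise terms $u(k+j)v\ot y$ with $j\ge 1$ appear, and there is no reason for them to lie in the ideal.

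(a) Relation \eqref{Qv10}. For $i=j$ and $\epsilon_1\epsilon_2=-1$, relation \eqref{Qv4} has total exponent $N=|2\ell\cdot i\to i|-|i\to 2\ell\cdot i|$. By Lemma~\ref{lem:compatible-lower-bound}, the leading coefficient is therefore $y_i^{1^\pm}(N-1)y_i^{1^\mp}$, whereas \eqref{Qv10} uses the mode $-1-N$.
\begin{itemize}
\item The two modes agree only when $N=0$. For $N<0$ the printed generator is a nonzero multiple of $\vac$, so the quotient collapses.
\item The index should be $-1+|2\ell\cdot i\to i|-|i\to 2\ell\cdot i|$, which matches \eqref{eq:va-rel-varphi+-}.
\item With the corrected index, $S_Q$ fixes the generator outright, because the data for $y_i^{1^+}$ and $y_i^{1^-}$ differ only by sign. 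Contrary to your last paragraph, no equivariance identity and no property of the constant term is needed there.
\item Equivariance enters only in \eqref{Qv8} with $j=2\ell\cdot i$, and you have that case right.
\end{itemize}

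(b) Relation \eqref{Qv11}. Adjacency alone does not give a simple pole at each step. Every arrow $i_s\to i_t$ with $t>s$ contributes to the pole of $Y(y^{2^\pm}_{i_s},z)$ on $y^{2^\pm}_{i_{s+1}}(0)\cdots y^{2^\pm}_{i_m}$, not only the arrow $i_s\to i_{s+1}$. A chord $i_s\to i_t$ with $t>s+1$ therefore makes the $(0)$-product non-leading. You need the loops in $L$ to have no such chords. This holds for the loops used in Lemma~\ref{lem:small-map}, but it is not among the hypotheses of the statement.
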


Define $Q=(Q_0,Q_1)$ to be the quiver with vertex set
\begin{align*}
  Q_0=\set{ p_{i,m}}{i\in I,m\in\Z_\wp}
\end{align*}
and arrow set
\begin{align*}
  Q_1=\set{p_{i,m}\to p_{j,n}}{\vvp{q_i^{a_{ij}}q^{n-m}}=1}.
\end{align*}
There is a $\Z_\wp$-action on $Q$ defined by
\begin{align*}
  s\cdot p_{i,m}=p_{i,m+s}\quad\te{for }m,s\in\Z_\wp,\,i\in I.
\end{align*}
Let $L$ be the set consisting of the following directed loops
\begin{align*}
  &\begin{tikzcd}[column sep=2em, ampersand replacement=\&]
    p_{i,m-r_ia_{ij}}\arrow[r]\&p_{i,m-r_ia_{ij}-2r_i}
    \arrow[r]\&\cdots \arrow[r]\& p_{i,m+r_ia_{ij}}
    \arrow[lld]\\
    \&p_{j,m}\arrow[lu]\&\&
  \end{tikzcd}&&\te{for }i,j\in I\,\,\te{with}\,\,a_{ij}< 0,\\
  &\quad\,\,\begin{tikzcd}[column sep=2em, ampersand replacement=\&]
    p_{i,m}\arrow[rrr, bend right]\& p_{i,m+2r_i}\arrow[l]\&\cdots\arrow[l]\&
    p_{i,m+2r_i(\wp_i-1)}\arrow[l]
  \end{tikzcd}&&\te{for }i\in I.
\end{align*}
It is straightforward to verify that $v_{\wp,\varepsilon}^\ell(\g)$ satisfies the relations \eqref{Qv1}-\eqref{Qv11}.
Combining this with Proposition \ref{prop:universal-nonlocal-va}, we immediately get the following result.
\begin{lem}\label{lem:small-map}
There exists a quantum vertex algebra epimorphism from $V_\wp^{r\ell}(Q,L)$ to $v_{\wp,\varepsilon}^\ell(\g)$ determined by
\begin{align*}
  y_{p_{i,m}}^0\mapsto \wt \xi_{i,m}^0,\quad
  y_{p_{i,m}}^a\mapsto \xi_{i,m}^a\quad\te{for }i\in I,\,m\in\Z_\wp,\,a\in\{1^\pm,2^\pm\}.
\end{align*}
\end{lem}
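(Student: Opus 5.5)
The plan is to go through Proposition \ref{prop:universal-nonlocal-va}, applied to the category $\mathcal M_\wp^{r\ell}(Q,L)$, with the target quantum vertex algebra $v_{\wp,\varepsilon}^\ell(\g)$. Put
\begin{align*}
  \bar y_{p_{i,m}}^0=\wt\xi_{i,m}^0,\qquad \bar y_{p_{i,m}}^a=\xi_{i,m}^a\quad (a\in\{1^\pm,2^\pm\}).
\end{align*}
We must check that $(v_{\wp,\varepsilon}^\ell(\g),Y(\bar y_{p}^a,z))$ is an object of $\mathcal M_\wp^{r\ell}(Q,L)$, i.e.\ that the fields satisfy \eqref{Qv1}--\eqref{Qv7}. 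Then the universal property gives a nonlocal vertex algebra homomorphism $V_\wp'^{r\ell}(Q,L)\to v_{\wp,\varepsilon}^\ell(\g)$. Next we check that the images of the generators \eqref{Qv8}--\eqref{Qv11} of the defining ideal vanish, so the map factors through $V_\wp^{r\ell}(Q,L)$. Surjectivity is immediate, because $v_{\wp,\varepsilon}^\ell(\g)$ is by definition generated by the images. Compatibility with the quantum Yang--Baxter operators should then follow by comparing $S_Q(z)$ of Theorem \ref{thm:V-Q-L-qva} with $S_{\wp,\varepsilon}(z)$ of Remark \ref{rem:S-wp-tau} on the generators, and extending through \eqref{eq:qyb-hex1} and \eqref{eq:qyb-hex2}.

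The heart of the argument is a dictionary between the integers $\vvp{\cdot}$ and arrow counts. By definition $|p_{i,m}\to p_{j,n}|=\vvp{q_i^{a_{ij}}q^{n-m}}$. Since $\vvp{\cdot}$ is linear, any expression $\vvp{g(q)q^{n-m}}$ with $g$ a signed sum of terms $q_i^{\pm a_{ij}}q^{c}$ becomes a signed sum of arrow indicators between shifted vertices $c\cdot p$. For instance, $\vvp{q_i^{-a_{ij}}q^{n-m}}=\vvp{q_j^{a_{ji}}q^{m-n}}$ because $r_ia_{ij}=r_ja_{ji}$ by \eqref{eq:sym}, and this equals $|p_{j,n}\to p_{i,m}|$. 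Multiplying by $q^{\pm r\ell}$ shifts a vertex by $\mp r\ell$, which is exactly how the $\Z_\wp$-action terms $2\ell\cdot i$ and $\ell\cdot i$ (with level $r\ell$) enter.

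With $\tau=\varepsilon$ all the $\tau$-series vanish or equal $1$. Relations \eqref{Qv1}--\eqref{Qv3} then follow from Lemma \ref{lem:equiv-rels}: the commutators of $\wt\xi^0$ with $\wt\xi^0$, $\xi^{1^\pm}$ and $\xi^{2^\pm}$ have coefficients $\vvp{(q_i^{a_{ij}}-q_i^{-a_{ij}})(\dots)q^{n-m}}$, and the dictionary rewrites these as the stated arrow combinations. Relations \eqref{Qv4}--\eqref{Qv7} are \eqref{tau4}--\eqref{tau7} with the constants $B^{\cdot}$ and $A^{2,2}$ translated in the same way. The ideal generators are handled as follows.
\begin{itemize}
  \item \eqref{Qv8} is \eqref{eq:va-e+e-}.
  \item \eqref{Qv9} is the reformulation of \eqref{eq:va-rel-der} in Lemma \ref{lem:equiv-rels}, using that the constant $\tau^{0,1}(0)$ term vanishes for $\varepsilon$.
  \item \eqref{Qv10} is \eqref{eq:va-rel-varphi+-} with the $\tau(0)$ factors equal to $1$.
  \item \eqref{Qv11} for the two families of loops in $L$ is \eqref{eq:va-rel-serre} and \eqref{eq:va-rel-rtu}. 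Here one must confirm that consecutive vertices really are joined by arrows: $\vvp{q_i^{2}q^{-2r_i}}=1$, and the closing arrows come from $a_{ij}$.
\end{itemize}

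The main obstacle is the bookkeeping in this translation. Each $\vvp{\cdot}$ expression in \eqref{tau1}--\eqref{tau7} and in Lemma \ref{lem:equiv-rels} has to land on precisely the combination of $|u\to v|$ appearing in \eqref{Qv1}--\eqref{Qv7}, with the correct orientation and the correct shift by $\ell$ versus $2\ell$. I would first prove a small lemma expressing $\vvp{q_i^{\pm a_{ij}}q^{n-m+c}}$ as a single arrow indicator. After that, every relation reduces to expanding products such as $(q^{-2r\ell}-1)$ and matching terms.
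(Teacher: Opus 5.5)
Your proposal is correct and follows the paper's own argument. The paper checks \eqref{Qv1}--\eqref{Qv7} for the generating fields of $v_{\wp,\varepsilon}^\ell(\g)$, applies Proposition \ref{prop:universal-nonlocal-va}, and observes that \eqref{Qv8}--\eqref{Qv11} map into the ideal generated by \eqref{eq:va-e+e-}--\eqref{eq:va-rel-rtu}; your arrow dictionary and Yang--Baxter compatibility check just spell out what the paper calls straightforward.

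One caution for the bookkeeping. Since $|p_{i,m}\to p_{i,m}|=\vvp{q^{2r_i}}=0$, at $\tau=\varepsilon$ relation \eqref{eq:va-rel-varphi+-} reads
\begin{align*}
  \xi_{i,m}^{1^\pm}\bigl(-1+|2r\ell\cdot p_{i,m}\to p_{i,m}|-|p_{i,m}\to 2r\ell\cdot p_{i,m}|\bigr)\xi_{i,m}^{1^\mp}
  =(-1)^{|p_{i,m}\to 2r\ell\cdot p_{i,m}|}\vac .
\end{align*}
This is \eqref{Qv10} with the two arrows transposed, and it is the mode dictated by \eqref{Qv4}. So the claimed match holds for \eqref{Qv10} read with that correction, not as printed; taken literally, the printed \eqref{Qv10} can set to $\pm\vac$ a mode that already vanishes by \eqref{Qv4}.
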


In the rest of this subsection, we construct a twistor of the tensor product quantum vertex algebra $V_{\wh\h}^1\ot V_\wp^{r\ell}(Q,L)$, and prove that the quantum vertex algebra homomorphisms $V_{\wh\h}^1\to \mathcal H$ and $V_\wp^{r\ell}(Q,L)\to v_{\wp,\varepsilon}^\ell(\g)$ given by Lemmas \ref{lem:Heisenberg-hom} and \ref{lem:small-map}, lift
to a quantum vertex algebra isomorphism from the deformed quantum vertex algebra of $V_{\wh\h}^1\ot V_\wp^{r\ell}(Q,L)$ to $V_{\wp,\varepsilon}^\ell(\g)$.
It is straightforward to verify the following result.

\begin{lem}\label{lem:Heisenberg-comod}
$V_{\wh\h}^0$ carries a cocommutative commutative vertex bialgebra structure with coproduct $\Delta$ determined by
\begin{align*}
  \Delta(\al^\vee_{i,\wp_is})=\al^\vee_{i,\wp_is}\ot \vac+\vac\ot \al^\vee_{i,\wp_is}\quad\te{for }i\in I,\,s\in\Z_\wp,
\end{align*}
and counit $\varepsilon$ determined by
\begin{align*}
  \varepsilon(\al^\vee_{i,s\wp_i})=0\quad\te{for }i\in I,\,s\in\wp_i\Z_\wp.
\end{align*}
Moreover, there exists a $V_{\wh\h}^0$-comodule nonlocal vertex algebra structure $\rho$ on $V_{\wh\h}^1$ determined by
\begin{align*}
  \rho(\al^\vee_{i,\wp_is})=\al^\vee_{i,\wp_is}\ot \vac+\vac\ot \al^\vee_{i,\wp_is}\quad\te{for }i\in I,\,s\in\Z_\wp.
\end{align*}
\end{lem}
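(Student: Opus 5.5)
The plan is to realize $V_{\wh\h}^0$ as the bialgebra-with-derivation of Remark \ref{rem:bialg-der}, and then to get $\rho$ from a universal property rather than by direct computation.

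\textbf{The vertex bialgebra.} Since the form on $\h$ is used only through the central extension, the level-$0$ Heisenberg vertex algebra $V_{\wh\h}^0$ is commutative. Concretely, at $c=0$ the modes $\al^\vee_{i,\wp_is}(m)$ pairwise commute. Hence $V_{\wh\h}^0\cong S(\h\ot t^{-1}\C[t^{-1}])$ is a polynomial algebra, with derivation $\partial$ given by $\al(-n)\mapsto n\,\al(-n-1)$, and with $Y(a,z)b=(e^{z\partial}a)b$.

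On this polynomial algebra, let $\Delta$ and $\varepsilon$ be the unique algebra homomorphisms for which every $\al^\vee_{i,\wp_is}(-n)$ is primitive, i.e. $\Delta(\al(-n))=\al(-n)\ot1+1\ot\al(-n)$ and $\varepsilon(\al(-n))=0$. Coassociativity, counitality and cocommutativity only need to be checked on generators, where they are immediate. Because $\partial$ sends primitive generators to primitive elements and is a derivation, we get $\Delta\partial=(\partial\ot1+1\ot\partial)\Delta$ and $\varepsilon\partial=0$. Remark \ref{rem:bialg-der} then shows that $(V_{\wh\h}^0,\partial,\Delta,\varepsilon)$ is a commutative, cocommutative vertex bialgebra with the stated coproduct and counit on the $\al^\vee_{i,\wp_is}=\al^\vee_{i,\wp_is}(-1)\vac$.

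\textbf{The coaction $\rho$.} I will use the universal property of $V_{\wh\h}^1$: it is the universal vertex algebra generated by fields $\al(z)$ with $[\al(z_1),\beta(z_2)]=(\al|\beta)\pd{z_2}z_1\inv\delta(z_2/z_1)$. This is, for example, the case of Proposition \ref{prop:universal-nonlocal-va} for a category of type $\mathcal M_\vartheta$ with $J^\pm=\emptyset$ and $\vartheta^{0,0}_{ij}(z)=(\al_i|\al_j)z^{-2}$; the standard argument that $V_{\wh\h}^1$ is this universal object is the one in the proof of Proposition \ref{prop:V-M-sp-epsilon}.

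In the tensor product vertex algebra $V_{\wh\h}^1\ot V_{\wh\h}^0$, set $\bar\al=\al\ot\vac+\vac\ot\al$. Since the second factor is commutative, the commutator of $Y_\ot(\bar\al,z)$ with $Y_\ot(\bar\beta,z)$ equals the commutator computed in the first factor, so the level-$1$ relations hold. This gives a unique homomorphism $\rho\colon V_{\wh\h}^1\to V_{\wh\h}^1\ot V_{\wh\h}^0$ with the stated values. The argument is the same as Lemma \ref{lem:map-from-V}, with the only change that the first factor is now the level-$1$ Heisenberg algebra.

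\textbf{Comodule axioms.} These follow from uniqueness, as in Lemma \ref{lem:comod-nonlocalVA}. The maps $(\rho\ot1)\rho$ and $\sigma^{23}(1\ot\Delta)\rho$ are both homomorphisms $V_{\wh\h}^1\to V_{\wh\h}^1\ot V_{\wh\h}^0\ot V_{\wh\h}^0$; for $(1\ot\Delta)\rho$ this uses that $\Delta$ is a vertex algebra homomorphism. Both send $\al$ to $\al\ot\vac\ot\vac+\vac\ot\al\ot\vac+\vac\ot\vac\ot\al$, so they agree on generators and hence everywhere. Similarly, $(1\ot\varepsilon)\rho$ is a homomorphism that fixes the generators, so it is the identity.

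\textbf{Main obstacle.} The only delicate point is justifying the universal property of $V_{\wh\h}^1$. I will handle it by noting that $V_{\wh\h}^1$ is generated by the $\al^\vee_{i,\wp_is}$ and that the $\al(z)$ act on any vertex algebra satisfying the commutator relation, which makes it a level-$1$ $\wh\h$-module. The vacuum-generated submodule is then a quotient of the induced module $V_{\wh\h}^1$, and this quotient map is the required homomorphism.
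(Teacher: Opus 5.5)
Your argument is correct and is essentially what the paper has in mind. The paper only says the lemma is straightforward to verify, and your verification follows the template it uses for $H'$ in Section~\ref{subsec:free-qva}: a polynomial algebra with primitive generators made into a vertex bialgebra via Remark~\ref{rem:bialg-der}, the coaction obtained from a universal property as in Lemma~\ref{lem:map-from-V}, and the comodule axioms deduced from uniqueness on generators as in Lemma~\ref{lem:comod-nonlocalVA}. The induced-module argument in your last paragraph is a sufficient justification of the universal property of $V_{\wh\h}^1$ on its own, so you do not need to cite Proposition~\ref{prop:V-M-sp-epsilon}, which only handles the trivial $\vartheta$.
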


\begin{lem}
There exist pseudo-derivations $\mu_{i,\wp_is}^\pm(z)$ on $V_\wp^{r\ell}(Q,L)$ defined by ($\epsilon\in\{\pm\}$)
\begin{align}\label{eq:sigma-pesudo-derivation}
  &\mu_{i,\wp_is}^\epsilon(z)y_{p_{j,n}}^0=0
  =\mu_{i,\wp_is}^\epsilon(z)y_{p_{j,n}}^{1^\pm},\quad
   \mu_{i,\wp_is}^\epsilon(z)y_{p_{j,n}}^{2^\pm}
   =\pm \epsilon [a_{ij}]_{\zeta_i^{\wp_is}}\zeta^{\wp_is(n+r\ell)}y_{p_{j,n}}^{2^\pm}\ot z\inv.
\end{align}
\end{lem}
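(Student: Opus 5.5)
The plan is to follow the proof of Lemma \ref{lem:pseudos}: build a $\C_\delta$-linear nonlocal vertex algebra homomorphism $1+\delta\mu_{i,\wp_is}^\epsilon(z)$, where $\C_\delta=\C[\delta]/\delta^2\C[\delta]$, and then apply Proposition \ref{prop:pseudo}. For each $p_{j,n}\in Q_0$ put
\begin{align*}
c_{j,n}=\epsilon[a_{ij}]_{\zeta_i^{\wp_is}}\zeta^{\wp_is(n+r\ell)},\qquad g_{j,n}^\pm(z)=1\pm\delta c_{j,n}z\inv\in\C((z))\ot\C_\delta .
\end{align*}
The target assignment is
\begin{align*}
y^0_{p_{j,n}}\mapsto y^0_{p_{j,n}}\ot 1,\qquad y^{1^\pm}_{p_{j,n}}\mapsto y^{1^\pm}_{p_{j,n}}\ot 1,\qquad y^{2^\pm}_{p_{j,n}}\mapsto y^{2^\pm}_{p_{j,n}}\ot g^\pm_{j,n}(z),
\end{align*}
with values in $\big(V_\wp^{r\ell}(Q,L)\ot\C_\delta\big)\ot_{\C_\delta}\big(\C((z))\ot\C_\delta\big)$, where $\C((z))$ is the commutative vertex algebra of Proposition \ref{prop:pseudo}.

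First, I get a homomorphism from the free algebra $V_\wp'^{r\ell}(Q,L)$ as in Lemma \ref{lem:map-from-V}. The relations \eqref{Qv1}--\eqref{Qv7} are homogeneous in each generator. Multiplying the generators by elements of a commutative vertex algebra therefore preserves them, since the $\C((z))$-factors commute and pass through. Proposition \ref{prop:universal-nonlocal-va} then gives the homomorphism on $V_\wp'^{r\ell}(Q,L)$.

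Second, I check that the ideal generated by \eqref{Qv8}--\eqref{Qv11} is sent into the corresponding ideal. Relations \eqref{Qv9} and \eqref{Qv10} involve only $y^0$ and $y^{1^\pm}$, which are fixed, so they are preserved. For \eqref{Qv8}, compute
\begin{align*}
Y\big(y^{2^+}_{p_{j,n}}\ot g^+_{j,n},z_0\big)\big(y^{2^-}_{p_{k,t}}\ot g^-_{k,t}\big)=Y\big(y^{2^+}_{p_{j,n}},z_0\big)y^{2^-}_{p_{k,t}}\ot g^+_{j,n}(z-z_0)g^-_{k,t}(z).
\end{align*}
Since $g^+(z-z_0)$ expands in nonnegative powers of $z_0$, the $0$-mode produces $y^{2^+}(0)y^{2^-}\ot g^+(z)g^-(z)$ plus terms $y^{2^+}(k)y^{2^-}$ with $k\ge1$. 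The terms with $k\ge1$ vanish by the pole bound from \eqref{Qv7} and Lemma \ref{lem:compatible-lower-bound}. The remaining term $y^{2^+}(0)y^{2^-}\ot(1+\delta(c_{j,n}-c_{k,t})z\inv)$ is compatible with \eqref{Qv8} in three cases:
\begin{itemize}
\item for $p_{k,t}\ne p_{j,n}$ and $p_{k,t}\ne 2r\ell\cdot p_{j,n}$, the element is already $0$ in the quotient;
\item for $p_{k,t}=p_{j,n}$ the constants cancel;
\item for $p_{k,t}=p_{j,n+2r\ell}$ we need $c_{j,n+2r\ell}=c_{j,n}$, i.e. $\zeta^{2r\ell\wp_is}=1$. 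This holds because $\zeta^{2r_i\wp_i}=1$ and $r_i\mid r\ell$, and it matches the image $y^{1^+}\ot1$ of $y^{1^+}_{\ell\cdot p}$.
\end{itemize}
For \eqref{Qv11} I argue as in Lemma \ref{lem:S-D}. The iterated $0$-modes give the same element times a product of $g$'s, which is $1+\delta(\text{constant})z\inv$. The correction terms involve higher modes, and these vanish by the pole bounds from \eqref{Qv6}. So the image lies in the ideal.

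Finally, the induced map $1+\delta\mu^\epsilon_{i,\wp_is}(z)$ is a $\C_\delta$-linear homomorphism into the tensor product with $\C((z))$. Proposition \ref{prop:pseudo} then shows that $\mu^\epsilon_{i,\wp_is}(z)$ is a pseudo-derivation, with the values \eqref{eq:sigma-pesudo-derivation} read off from the $\delta$-coefficient. The main obstacle is the \eqref{Qv8} case $j=2\ell\cdot i$ and the bookkeeping of the higher-mode terms. If the pole-order argument there becomes awkward, I will fall back on the direct route: check the pseudo-derivation identity $[\mu(z_1),Y(u,z_2)]=Y(\mu(z_1-z_2)u,z_2)$ on generators and extend it by weak associativity.
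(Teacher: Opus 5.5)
Your proposal is correct and follows essentially the paper's route: build $1+\delta\mu_{i,\wp_is}^\epsilon(z)$ on $V_\wp'^{r\ell}(Q,L)$ exactly as in Lemma \ref{lem:pseudos}, check compatibility with \eqref{Qv8}--\eqref{Qv11} and descend to $V_\wp^{r\ell}(Q,L)$, then use Proposition \ref{prop:pseudo} to get the pseudo-derivation property. Your case analysis for \eqref{Qv8} (in particular $\zeta^{2r\ell\wp_is}=1$) and the first-order pole bounds for \eqref{Qv11} supply the details the paper leaves as ``easy to verify''.
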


\begin{proof}
Similar to the proof of Lemma \ref{lem:pseudos}, we get pseudo-derivations $\mu_{i,\wp_is}^\epsilon(z)$ on $V_\wp'^{r\ell}(Q,L)$
satisfying conditions \eqref{eq:sigma-pesudo-derivation}.
It is easy to verify that $\mu_{i,\wp_is}^\epsilon(z)$ is compatible with \eqref{Qv8}-\eqref{Qv11}.
Therefore $\mu_{i,\wp_is}^\epsilon(z)$ induces pseudo-derivations on $V_\wp^{r\ell}(Q,L)$ as desired.
\end{proof}

\begin{lem}\label{lem:Heisenberg-mod}
There exists a unique invertible $V_{\wh\h}^0$-module nonlocal vertex algebra structure $\mu(\cdot,z)$ on $V_\wp^{r\ell}(Q,L)$ defined by
\begin{align*}
  \mu(h_{i,\wp_im},z)=\mu_{i,\wp_im}^+(z)\quad\te{for }i\in I,\,m\in\Z_\wp.
\end{align*}
\end{lem}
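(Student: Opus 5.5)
The plan is to build $\mu$ in the same way as $\bar\vartheta$ in Lemma \ref{lem:deform-tri-nonlocalVA}. The module map $\mu(h,z)$ for $h\in V_{\wh\h}^0$ is obtained from a vertex algebra homomorphism from $V_{\wh\h}^0$ into a commutative vertex algebra of operators on $V_\wp^{r\ell}(Q,L)$. Note the generators of $V_{\wh\h}^0$ should be read as $\al^\vee_{i,\wp_im}$, acting by $\mu_{i,\wp_im}^+(z)$.

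\textbf{Step 1 (commutative vertex algebra of operators).} Set
\begin{align*}
  U=\set{\mu_{i,\wp_is}^+(z)}{i\in I,\,s\in\Z_\wp}\subset \Hom\big(V_\wp^{r\ell}(Q,L),V_\wp^{r\ell}(Q,L)\ot\C((z))\big)\subset \E\big(V_\wp^{r\ell}(Q,L)\big).
\end{align*}
Each $\mu^+_{i,\wp_is}(z)$ is a pseudo-derivation that acts on generators by scalars times $z\inv$. Two such pseudo-derivations therefore commute on generators. Using the pseudo-derivation property $[D(z_1),Y(u,z_2)]=Y(D(z_1-z_2)u,z_2)$, I will show by induction on the length of a word in generators that the commutator vanishes on all of $V_\wp^{r\ell}(Q,L)$. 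It follows that $U$ is commutative, so by Theorem \ref{thm:nonlocal-VA-gen} it generates a commutative vertex algebra $\<U\>$.

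\textbf{Step 2 (homomorphism from $V_{\wh\h}^0$).} Since $V_{\wh\h}^0$ is the level-zero Heisenberg vertex algebra, it is the free commutative vertex algebra on the $\al^\vee_{i,\wp_is}$. Its defining relations are only $[\al^\vee(z_1),\beta^\vee(z_2)]=0$, because $c$ acts by $0$. The universal property of $V_{\wh\h}^0$ (via Proposition \ref{prop:universal-nonlocal-va}, or directly via the symmetric algebra description) therefore gives a vertex algebra homomorphism $g:V_{\wh\h}^0\to\<U\>$ with $\al^\vee_{i,\wp_is}\mapsto \mu^+_{i,\wp_is}(z)$. Setting $\mu(h,z)=g(h)(z)$ makes $V_\wp^{r\ell}(Q,L)$ a $V_{\wh\h}^0$-module with $\mu(h,z)v\in V_\wp^{r\ell}(Q,L)\ot\C((z))$.

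\textbf{Step 3 (module nonlocal vertex algebra axioms).} I need $\mu(h,z)\vac=\varepsilon(h)\vac$ and the Sweedler compatibility \eqref{eq:mod-va-for-vertex-bialg3}. For generators $h=\al^\vee_{i,\wp_is}$, which are primitive, both identities are exactly the statement that $\mu^+_{i,\wp_is}(z)$ is a pseudo-derivation killing $\vac$. Both identities are multiplicative in $h$ because $\Delta$ is an algebra map, and they are stable under $\partial$ because $\mu(\partial h,z)=\frac{d}{dz}\mu(h,z)$. They therefore extend to all of $V_{\wh\h}^0$, which is generated by the $\al^\vee$'s. This is the same argument as in Lemma \ref{lem:deform-tri-nonlocalVA}.

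\textbf{Step 4 (invertibility and uniqueness).} Define $\mu^{-1}$ by the same construction using $\mu^-_{i,\wp_is}(z)=-\mu^+_{i,\wp_is}(z)$, which is still a pseudo-derivation. The convolution $\mu\ast\mu^{-1}$ is then a module structure sending each primitive generator to $\mu^+ +\mu^-=0$. Since both $\mu\ast\mu\inv$ and $\varepsilon$ are module structures determined by their values on generators, $\mu\ast\mu\inv=\varepsilon$. Uniqueness holds because $V_{\wh\h}^0$ is generated by the $\al^\vee$'s and module maps are determined by generators.

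The main obstacle is Step 1. There one must check that the commutativity of the pseudo-derivations, and the fact that they descend through the quotient relations \eqref{Qv8}--\eqref{Qv11}, are consistent. The descent is already given by the preceding lemma. What remains is to verify the induction on words carefully, including the $\C((z))$-coefficient bookkeeping.
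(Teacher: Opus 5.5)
Your proposal is correct and follows essentially the paper's proof. You build $\mu^\pm$ from the pseudo-derivations $\mu^\pm_{i,\wp_is}(z)$ as in Lemma \ref{lem:deform-tri-nonlocalVA}, show $\mu^+\ast\mu^-=\varepsilon$ by reducing to generators (your observation $\mu^-=-\mu^+$ is exactly why the generator check gives $0$), and get uniqueness from the fact that the $\al^\vee_{i,\wp_is}$ generate $V_{\wh\h}^0$; you also spell out two points the paper leaves implicit, namely that the pseudo-derivations commute and that the module axioms extend from primitive generators.
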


\begin{proof}
The uniqueness follows immediate from the fact that
$V_{\wh\h}^0$ is generated by $$T=\set{h_{i,\wp_im}}{i\in I,m\in\Z_\wp}.$$
Similar to the proof of Lemma \ref{lem:deform-tri-nonlocalVA}, we get two $V_{\wh\h}^0$-module nonlocal vertex algebra structure $\mu^\pm(\cdot,z)$ on $V_\wp^{r\ell}(Q,L)$ determined by
\begin{align*}
  \mu^\pm(h_{i,\wp_is},z)=\mu_{i,\wp_is}^\pm(z)\quad\te{for }i\in I,\,s\in\Z_\wp.
\end{align*}
Then
\begin{align}\label{eq:Heisenberg-mod-inv}
  &(\mu^+\ast\mu^-)(h_{i,\wp_im},z)y_{p_{j,n}}^a
  =\mu^+(h_{i,\wp_im},z)y_{p_{j,n}}^a+\mu^-(h_{i,\wp_im},z)y_{p_{j,n}}^a
  =0=\varepsilon(h_{i,\wp_im},z)y_{p_{j,n}}^a
\end{align}
for $i,j\in I$ and $m,n\in\Z_\wp$.
Since $V_{\wh\h}^0$ is generated by $T$ and $V_\wp^{r\ell}(Q,L)$ is generated by
\begin{align*}
  S=\set{y_{p_{i,m}}^a}{i\in I,\,m\in\Z_\wp,\,a\in\{0,1^\pm,2^\pm\}},
\end{align*}
the relation \eqref{eq:Heisenberg-mod-inv} can be extend to the whole space $V_{\wh\h}^0$ and $V_\wp^{r\ell}(Q,L)$.
It follows that $\mu^+\ast\mu^-=\varepsilon$,
which means that $\mu^+$ is invertible.
Take $\mu=\mu^+$. We complete the proof.
\end{proof}

\begin{lem}\label{lem:sigma-S-com}
We have
\begin{align}
  &(\mu(h,z_1)\ot 1)S(z_2)(u\ot v)=S(z_2)(\mu(h,z_1)u\ot v),\label{eq:sigma-S-com1}\\
  &(1\ot \mu(h,z_1))S(z_2)(u\ot v)=S(z_2)(u\ot \mu(h,z_1)v)\label{eq:sigma-S-com2}
\end{align}
for any $h\in V_{\wh\h}^0$ and $u,v\in V_\wp^{r\ell}(Q,L)$.
\end{lem}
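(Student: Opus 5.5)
We reduce \eqref{eq:sigma-S-com1} and \eqref{eq:sigma-S-com2} to generators and then extend. Both sides of each identity are maps $V_{\wh\h}^0\ot V_\wp^{r\ell}(Q,L)\ot V_\wp^{r\ell}(Q,L)\to V_\wp^{r\ell}(Q,L)\ot V_\wp^{r\ell}(Q,L)\ot\C((z_1))((z_2))$. We prove \eqref{eq:sigma-S-com1} in three passes: first on the generating set
\[T=\set{h_{i,\wp_im}}{i\in I,\,m\in\Z_\wp}\]
of $V_{\wh\h}^0$ together with the generating set $S=\set{y_{p_{j,n}}^a}{j\in I,\,n\in\Z_\wp,\,a\in\{0,1^\pm,2^\pm\}}\cup\{\vac\}$ for both $u$ and $v$; then for all $u,v$; and finally for all $h$. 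The identity \eqref{eq:sigma-S-com2} is handled the same way.

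\textbf{Generators.} Take $h=h_{i,\wp_is}$, so that $\mu(h,z_1)=\mu_{i,\wp_is}^+(z_1)$ is the pseudo-derivation \eqref{eq:sigma-pesudo-derivation}. On $S$ it acts diagonally: it kills $y^0$, $y^{1^\pm}$ and $\vac$, and rescales $y_{p_{j,n}}^{2^\pm}$ by $\pm[a_{ij}]_{\zeta_i^{\wp_is}}\zeta^{\wp_is(n+r\ell)}z_1\inv$. By Theorem \ref{thm:V-Q-L-qva}, $S_Q(z_2)$ sends $y_j^b\ot y_i^a$ to
\begin{itemize}
\item a scalar multiple of $y_j^b\ot y_i^a$, plus
\item terms in which $y^0$ is replaced by $\vac$.
\end{itemize}
In each such term the surviving $y^{2^\pm}$ factors, with their indices and signs, are the same as in the input. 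Hence $\mu(h,z_1)\ot 1$ commutes with $S_Q(z_2)$ on $S\ot S$, by direct inspection case by case.

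\textbf{Extending in $u$ and $v$.} We use the hexagon identities \eqref{eq:qyb-hex1} and \eqref{eq:qyb-hex2} for $S_Q$, together with the pseudo-derivation property
\[[\mu(h,z_1),Y(w,z)]=Y(\mu(h,z_1-z)w,z).\]
For $u=Y(w,z)w'$ with $w,w'$ already satisfying the identity, write
\[S_Q(z_2)(\mu(h,z_1)\ot1)(Y^{12}(z)(w\ot w'\ot v)).\]
Expand $\mu(h,z_1)Y(w,z)w'$ as $Y(\mu(h,z_1-z)w,z)w'+Y(w,z)\mu(h,z_1)w'$. Then apply \eqref{eq:qyb-hex1}, which moves $S_Q(z_2)$ through $Y^{12}(z)$ as $S^{23}(z_2)S^{13}(z_2+z)$. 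The inductive hypothesis then commutes $\mu$ past each factor; only the shift of the variable changes. The same induction in $v$ uses \eqref{eq:qyb-hex2}.

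\textbf{Extending in $h$.} Finally we extend from $T$ to all of $V_{\wh\h}^0$. Since $V_{\wh\h}^0$ is a commutative vertex algebra generated by $T$ and $\mu$ is a module action, $\mu(Y(h,z)h',z_1)$ is expressed through normally ordered products of $\mu(h,z_1+z)$ and $\mu(h',z_1)$. The set of $h$ satisfying the identity is therefore closed under all $n$-th products; it contains $T$ and $\vac$, so it is all of $V_{\wh\h}^0$.

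The main obstacle is the bookkeeping in the $u,v$ extension: \eqref{eq:qyb-hex1} involves $S^{13}(z_2+z)$, while the pseudo-derivation produces $\mu(h,z_1-z)$. One must check that the commutation for generators holds with arbitrary independent formal variables, so that shifted variables pose no problem. It also has to be confirmed that these formal expansions stay in a common space such as $\C((z_1))((z_2))((z))$. Once the generator-level identity is stated with independent variables, this is routine.
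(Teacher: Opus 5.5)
Your overall route is the paper's. The paper checks the identities on the generators $T$ of $V_{\wh\h}^0$ and $S$ of $V_\wp^{r\ell}(Q,L)$, then extends by generation, citing exactly generation, $\Delta(T)\subset T\ot T$, and $S_Q(z)(S\ot S)\subset S\ot S\ot\C((z))$. Your generator check is fine. Your extension in $h$ is also fine, and it correctly comes last, since only for $h\in T$ is $\mu(h,z)$ a pseudo-derivation.

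The extension in $u,v$, however, does not close as you set it up, and the real issue is not the variable shift. Write $X(z_1,z_2)=(\mu(h,z_1)\ot 1)S_Q(z_2)-S_Q(z_2)(\mu(h,z_1)\ot 1)$ for $h\in T$. The hexagon identities and the pseudo-derivation property give
\begin{align*}
X(z_1,z_2)Y^{12}(z)&=Y^{12}(z)\big(S_Q^{23}(z_2)X^{13}(z_1-z,z_2+z)+X^{23}(z_1,z_2)S_Q^{13}(z_2+z)\big),\\
X(z_1,z_2)Y^{23}(z)&=Y^{23}(z)\big(X^{12}(z_1,z_2-z)S_Q^{13}(z_2)+S_Q^{12}(z_2-z)X^{13}(z_1,z_2)\big).
\end{align*}
In the first identity, $X^{23}$ is evaluated on $w'\ot v_k$, where the $v_k$ are the second tensor components of $S_Q(z_2+z)(w\ot v)$. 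In the second, $X^{12}$ is evaluated on $u_k\ot w$, where the $u_k$ are the first components of $S_Q(z_2)(u\ot w')$. A hypothesis of the form ``the identity holds for $(w,v)$ and $(w',v)$'' says nothing about these new pairs. So the step ``the inductive hypothesis commutes $\mu$ past each factor'' is not justified as stated.

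The missing ingredient is the closure property the paper invokes. By Theorem \ref{thm:V-Q-L-qva}, $S_Q(z)$ maps $\mathrm{span}\,S\ot\mathrm{span}\,S$ into $\mathrm{span}\,S\ot\mathrm{span}\,S\ot\C((z))$, since only the input generators and $\vac$ appear in the outputs. Then \eqref{eq:qyb-hex2} propagates this to $S_Q(z)(\mathrm{span}\,S\ot V)\subset\mathrm{span}\,S\ot V\ot\C((z))$.

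With this in hand, the order of the two extensions matters:
\begin{enumerate}
\item First extend in $v$ with $u$ restricted to generators. By the second identity, $\{v: X(u\ot v)=0\ \text{for all } u\in S\}$ contains $\vac$ and is stable under $w_n$ for $w\in S$. The $X^{12}$ term now vanishes by your generator check, because the $u_k$ lie in $\mathrm{span}\,S$. Hence this set is all of $V$.
\item Then extend in $u$ with $v$ arbitrary. By the first identity, $\{u: X(u\ot v)=0\ \text{for all } v\in V\}$ is closed under all $n$-th products and contains $S$, hence is all of $V$. The components $v_k$ are now irrelevant, since the hypothesis covers every second argument.
\end{enumerate}

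The shifts $z_1\mapsto z_1-z$ and $z_2\mapsto z_2+z$ that you flagged are harmless, because $X(z_1,z_2)(w\ot v)$ is a finite sum in $V\ot V\ot\C((z_1))\ot\C((z_2))$. The same argument, with the obvious changes in the shifts, gives \eqref{eq:sigma-S-com2}.
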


\begin{proof}
Set $T=\set{h_{i,\wp_is}}{i\in I,\,s\in\Z_\wp}\cup \{\vac\}$ and set
\begin{align*}
  S=\set{y_{p_{i,m}}^a}{i\in I,\,m\in\Z_\wp,\,a\in\{0,1^\pm,2^\pm\}}\cup\{\vac\}.
\end{align*}
It is straightforward to check that \eqref{eq:sigma-S-com1} and \eqref{eq:sigma-S-com2} hold for any $h\in T$ and $u,v\in S$.
Note that $V_{\wh\h}^0$ is generated by $T$, $V_\wp^{r\ell}(Q,L)$ is generated by $S$, and
\begin{align*}
  \Delta(T)\subset T\ot T,\quad
  S(z)(S\ot S)\subset S\ot S\ot \C((z)).
\end{align*}
Then the relations \eqref{eq:sigma-S-com1} and \eqref{eq:sigma-S-com2} can be extended to the whole space $V_{\wh\h}^0$ and $V_\wp^{r\ell}(Q,L)$, which completes the proof of lemma.
\end{proof}

Applying Lemma \ref{lem:def-twisting-op} to Lemmas \ref{lem:Heisenberg-comod} and \ref{lem:Heisenberg-mod},
we get an invertible twisting operator $T_\mu(z)$ for the ordered pair $(V_\wp^{r\ell}(Q,L),V_{\wh\h}^1)$ (see \eqref{eq:T-tau} for the definition of $T_\mu(z)$).
The follow result follows immediate from the definition of $T_\mu(z)$.

\begin{lem}\label{lem:T-sigma}
For $i,j\in I$, $n\in\Z_\wp$ and $a\in \{0,1^\pm,2^\pm\}$, we have that
\begin{align*}
  &T_\mu(z)(y_{p_{j,n}}^a\ot h_{i,\wp_im})=y_{p_{j,n}}^a\ot \al^\vee_{i,\wp_im}
  -( \delta_{a,2^+}-\delta_{a,2^-}) [a_{ij}]_{\zeta_i^{\wp_im}}\zeta^{\wp_im(n+r\ell)} y_{p_{j,n}}^{2^\pm}\ot \vac \ot z\inv.
\end{align*}
\end{lem}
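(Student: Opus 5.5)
The statement to prove is Lemma~\ref{lem:T-sigma}, which records the values of $T_\mu(z)$ on tensors of generators. I will compute directly from the definition \eqref{eq:T-tau}:
\begin{align*}
T_\mu(z)(u\ot v)=\sum \mu(v_{(2)},-z)u\ot v_{(1)},
\end{align*}
where $U=V_\wp^{r\ell}(Q,L)$, $V=V_{\wh\h}^1$, and $\rho(v)=\sum v_{(1)}\ot v_{(2)}\in V_{\wh\h}^1\ot V_{\wh\h}^0$ is the comodule map of Lemma~\ref{lem:Heisenberg-comod}. The computation has three steps.

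\emph{Coaction.} Take $v=\al^\vee_{i,\wp_im}$ (the element written $h_{i,\wp_im}$ in the statement). Since $\rho$ is primitive on generators,
\begin{align*}
\rho(\al^\vee_{i,\wp_im})=\al^\vee_{i,\wp_im}\ot\vac+\vac\ot\al^\vee_{i,\wp_im}.
\end{align*}
Hence
\begin{align*}
T_\mu(z)(u\ot \al^\vee_{i,\wp_im})=\mu(\vac,-z)u\ot \al^\vee_{i,\wp_im}+\mu(\al^\vee_{i,\wp_im},-z)u\ot\vac .
\end{align*}

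\emph{Module action.} The vacuum acts trivially: $\mu(\vac,-z)=\varepsilon(\vac)\,\mathrm{id}=\mathrm{id}$. By Lemma~\ref{lem:Heisenberg-mod}, the generator acts by $\mu(\al^\vee_{i,\wp_im},-z)=\mu^+_{i,\wp_im}(-z)$. I then evaluate this using \eqref{eq:sigma-pesudo-derivation} with $\epsilon=+$ and $z$ replaced by $-z$:
\begin{itemize}
\item for $a=0,1^\pm$ the action is zero;
\item for $a=2^\pm$ it gives $\pm[a_{ij}]_{\zeta_i^{\wp_im}}\zeta^{\wp_im(n+r\ell)}\,y^{2^\pm}_{p_{j,n}}\ot(-z)^{-1}$.
\end{itemize}

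\emph{Combine.} The factor $(-z)^{-1}=-z^{-1}$ supplies the overall minus sign. The $\pm$ from the two cases is encoded by $(\delta_{a,2^+}-\delta_{a,2^-})$, which also vanishes for $a\in\{0,1^\pm\}$. Adding the two terms gives exactly the displayed formula.

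There is no real obstacle. The only care needed is bookkeeping: the sign from $-z$ in \eqref{eq:T-tau}, the Sweedler ordering ($v_{(1)}$ stays in the tensor factor, $v_{(2)}$ acts), and the fact that $\mu$ is defined via $\mu^+$.
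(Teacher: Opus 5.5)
Your computation is correct and is exactly the paper's argument: the paper just says the lemma follows immediately from the definition \eqref{eq:T-tau} of $T_\mu(z)$, and you carry this out by combining the primitive coaction from Lemma~\ref{lem:Heisenberg-comod} with the values of $\mu^+_{i,\wp_im}(-z)$ from \eqref{eq:sigma-pesudo-derivation}, where the factor $(-z)^{-1}$ produces the overall minus sign. One minor point: $\mu(\vac,-z)=\mathrm{id}$ holds because of the module vacuum axiom rather than because of the counit, but this does not affect the computation.
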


Since $V_{\wh\h}^0$ is commutative,
we have that
\begin{align*}
  [\mu(h,z_1),\mu(k,z_2)]=0\quad\te{for }h,k\in V_{\wh\h}^0.
\end{align*}
Note that $V_{\wh\h}^0$ is cocommutative and the quantum Yang-Baxter of $V_{\wh\h}^1$ is trivial.
By applying Propositions \ref{prop:qva-double-twisted} 
and \ref{prop:double-twisted-vba} to these and Lemma \ref{lem:sigma-S-com}, we get a twistor $T^\mu(z)=T_\mu^{14}(z)T_\mu^{32}(-z)$ of the tensor product quantum vertex algebra $V_\wp^{r\ell}(Q,L)\ot V_{\wh\h}^1$, and the deformed nonlocal vertex algebra
$\mathfrak D_{T^\mu}(V_\wp^{r\ell}(Q,L)\ot V_{\wh\h}^1)$
is a quantum vertex algebra with quantum Yang-Baxter operator $S_Q^{24}(z)$.
In this subsection, we denote this quantum vertex algebra by
\begin{align*}
  V_\wp^{r\ell}(Q,L)\rtimes_\mu V_{\wh\h}^1.
\end{align*}
From Remark \ref{rem:double-twisted} we see that
$V_\wp^{r\ell}(Q,L)\rtimes_\mu V_{\wh\h}^1$
  contains $V_{\wh\h}^1$ and $V_\wp^{r\ell}(Q,L)$ as quantum vertex subalgebras.

\begin{prop}\label{prop:decomp}
The vertex algebra epimorphism $V_{\wh\h}^1\to \mathcal H$ given in Lemma \ref{lem:Heisenberg-hom} and
the quantum vertex algebra epimorphism $V_\wp^{r\ell}(Q,L)\to v_{\wp,\varepsilon}^\ell(\g)$ given in Lemma \ref{lem:small-map}
can be extend to a quantum vertex algebra homomorphism
\begin{align*}
  V_{\wp,\varepsilon}^\ell(\g)\longrightarrow V_\wp^{r\ell}(Q,L)\rtimes_\mu V_{\wh\h}^1.
\end{align*}
\end{prop}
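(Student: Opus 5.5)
The plan is to construct the homomorphism using the universal property of $V_{\wp,\varepsilon}^\ell(\g)$, namely Proposition \ref{prop:universal-V-sp}, which holds with $\tau=\varepsilon$. Inside $\mathcal V:=V_\wp^{r\ell}(Q,L)\rtimes_\mu V_{\wh\h}^1$ I would first invert the change of generators from Lemma \ref{lem:equiv-rels} and set
\begin{align*}
  \bar\xi_{i,m}^0=\frac{1}{\wp}\sum_{s=0}^{\wp/\wp_i-1}\zeta^{-\wp_ims}\al^\vee_{i,s\wp_i}
  +\frac{1}{\wp_i}\sum_{s=0}^{\wp_i-2}\sum_{t=0}^s y^0_{p_{i,m+(2t+1)r_i}},\qquad
  \bar\xi_{i,m}^{a}=y^{a}_{p_{i,m}}\quad (a\in\{1^\pm,2^\pm\}).
\end{align*}
With this choice, $\bar h_{i,\wp_im}=\al^\vee_{i,\wp_im}$, and $\bar\xi_{i,m-r_i}^0-\bar\xi_{i,m+r_i}^0$ equals $y^0_{p_{i,m}}$; I would check the latter by a telescoping computation. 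The resulting map then restricts to the maps of Lemmas \ref{lem:Heisenberg-hom} and \ref{lem:small-map}. A caveat is that the proposition as stated has the arrow going from $V_{\wp,\varepsilon}^\ell(\g)$ into $\mathcal V$, so the relevant statement is that $\mathcal V$ satisfies the defining relations.

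Next I would write down the vertex operators in $\mathcal V$. Since $\mathcal V=\mathfrak D_{T^\mu}(V_\wp^{r\ell}(Q,L)\ot V_{\wh\h}^1)$ with $T^\mu(z)=T_\mu^{14}(z)T_\mu^{32}(-z)$, Theorem \ref{thm:nonlocal-VA-twistor-deform} combined with Lemma \ref{lem:T-sigma} shows the following. The operators $Y(y^a_{p_{j,n}},z)$ and $Y(\al^\vee_{i,s},z)$ are the original ones. The only correction appears in the mixed products, where the pair $(\al^\vee_{i,\wp_im},y^{2^\pm}_{p_{j,n}})$ picks up the term $\pm[a_{ij}]_{\zeta_i^{\wp_im}}\zeta^{\wp_im(n+r\ell)}z^{-1}$, appearing symmetrically from the two factors $T_\mu^{14}$ and $T_\mu^{32}$. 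Concretely, I expect to find
\begin{align*}
  [Y(\al^\vee_{i,\wp_im},z_1),Y(y^{2^\pm}_{p_{j,n}},z_2)]=\pm[a_{ij}]_{\zeta_i^{\wp_im}}\zeta^{\wp_im(n+r\ell)}Y(y^{2^\pm}_{p_{j,n}},z_2)z_1^{-1}\delta\left(\frac{z_2}{z_1}\right),
\end{align*}
together with the statement that $\al^\vee$ commutes with $y^0$ and $y^{1^\pm}$. The Heisenberg relations themselves come from $V_{\wh\h}^1$.

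With these commutators in hand, Lemma \ref{lem:equiv-rels} reduces \eqref{tau1}--\eqref{tau3} and \eqref{eq:va-rel-der} for the $\bar\xi$ to the following facts.
\begin{itemize}
\item The $h$--$h$ relation \eqref{eq:h-h-rel}, which holds by the definition of the bilinear form on $\h$.
\item The $h$-commutators computed above.
\item The $\wt\xi$-relations, which are \eqref{Qv1}--\eqref{Qv3} after translating $\vvp{\cdot}$ into arrow counts in $Q$. The key identity is $\vvp{q_i^{\pm a_{ij}}q^{n-m+c}}=|p_{i,m}\to p_{j,n+c}|$ and its variants.
\end{itemize}
Relations \eqref{tau4}--\eqref{tau7} involve only $\xi^{1^\pm},\xi^{2^\pm}$, so they are exactly \eqref{Qv4}--\eqref{Qv7}. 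Likewise \eqref{eq:va-e+e-}, \eqref{eq:va-rel-varphi+-}, \eqref{eq:va-rel-serre} and \eqref{eq:va-rel-rtu} are \eqref{Qv8}, \eqref{Qv10} and \eqref{Qv11}, using the loops in $L$. This requires $\tau=\varepsilon$, so that the $\tau(0)$ constants equal $1$ or $0$. Relation \eqref{eq:va-rel-der} becomes \eqref{Qv9}, because the $h$-part of $\bar\xi^0$ drops out of $\bar\xi^0_{i,m-r_i}-\bar\xi^0_{i,m+r_i}$. I still need to check that the $(-1)$-product in $\mathcal V$ of $y^0$ with $y^{1^\pm}$ is undeformed; this should hold because $T_\mu$ acts trivially on $y^{1^\pm}$.

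Proposition \ref{prop:universal-V-sp} then gives the nonlocal vertex algebra homomorphism. It preserves the quantum Yang--Baxter operators because both are determined on generators: compare Remark \ref{rem:S-wp-tau} at $\tau=\varepsilon$ with $S_Q^{24}(z)$, which is trivial on $\al^\vee$, consistent with Lemma \ref{lem:S-h}. I expect the main obstacle to be the bookkeeping in the first step, namely computing the deformed vertex operators of the double-twisted product precisely, including signs and the factor $1/2$ versus $1$ from the two twisting factors. I would carry this out on generators first, before extending.
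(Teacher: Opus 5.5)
Your route is the paper's. You build the map from the universal property and read off the one nontrivial mixed commutator $[Y(\vac\ot\al^\vee_{i,\wp_im},z_1),Y(y^{2^\pm}_{p_{j,n}}\ot\vac,z_2)]$ from $\mathfrak D_{T^\mu}(Y)$, using that $S_Q^{24}(z)$ is trivial on that pair. There is no factor-$\tfrac12$ issue, because only $T_\mu^{32}(-z)$ acts nontrivially on $(\vac\ot\al^\vee)\ot(y\ot\vac)$. You then use Lemma \ref{lem:equiv-rels} for \eqref{tau1}--\eqref{tau3} and \eqref{eq:va-rel-der}, and match \eqref{eq:va-e+e-}--\eqref{eq:va-rel-rtu} with \eqref{Qv8}--\eqref{Qv11}.

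The step you call a telescoping check fails. With your $\bar\xi^0$ one gets
\begin{align*}
  \bar\xi^0_{i,m-r_i}-\bar\xi^0_{i,m+r_i}=y^0_{p_{i,m}}-\frac{1}{\wp_i}Z_{i,m},\qquad
  Z_{i,m}:=\sum_{s=0}^{\wp_i-1}y^0_{p_{i,m+2sr_i}}.
\end{align*}
Likewise $\bar h_{i,\wp_im}$ differs from $\al^\vee_{i,\wp_im}$ by $\tfrac{\wp_i-1}{2}\zeta^{-\wp_imr_i}\sum_{v\in\Z_\wp}\zeta^{\wp_imv}y^0_{p_{i,v}}$, which is again a combination of the orbit sums $Z_{i,m}$.

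This is not an artifact of your formula. In $V_{\wp,\varepsilon}^\ell(\g)$ one has $\sum_{s=0}^{\wp_i-1}\wt\xi^0_{i,m+2sr_i}=0$ identically, because $2r_i\wp_i\equiv 0$ mod $\wp$. So any homomorphism with $\wt\xi^0_{i,m}\mapsto y^0_{p_{i,m}}$ forces $Z_{i,m}=0$ in $V_\wp^{r\ell}(Q,L)$, and such a homomorphism is exactly what extending the map of Lemma \ref{lem:small-map} means. None of \eqref{Qv1}--\eqref{Qv11} imposes a linear relation among the $y^0$'s.

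You also cannot avoid this by taking $y^0_{p_{i,m}}-Z_{i,m}/\wp_i$ as the image of $\wt\xi^0_{i,m}$. $Z_{i,m}$ has zero commutator with every generator, since the orbit sums of the arrow counts cancel because $2r_ia_{ij}\in 2r_i\Z$. So \eqref{tau1}--\eqref{tau3} survive. But \eqref{eq:va-rel-der} then requires $Z_{i,m}(-1)y^{1^\pm}_{p_{i,m}}=0$. Pairing with $y^{1^\mp}_{p_{i,m}}$ via \eqref{Qv10}, and using centrality of $Z_{i,m}$, this again forces $Z_{i,m}=0$.

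So the missing ingredient is a proof that these orbit sums vanish in $V_\wp^{r\ell}(Q,L)$, for instance from \eqref{Qv8}--\eqref{Qv10}; otherwise they must be added as defining relations. The paper's proof passes over the same point. It prescribes images of $h_{i,\wp_im}$ and $\wt\xi^0_{i,m}$ rather than of the actual generators $\xi^0_{i,m}$, which tacitly assumes this consistency.

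Two smaller points.
\begin{itemize}
\item Proposition \ref{prop:universal-V-sp} is formulated for the special $\tau$ of \S\ref{subsec:Z-wp-mod}; its primed relations are not the $\tau=\varepsilon$ relations. For $\tau=\varepsilon$, use Proposition \ref{prop:universal-V} and then check that the generators of $R_{\wp,\varepsilon}^\ell(\g)$ are killed. This is what your checks amount to anyway.
\item \eqref{eq:va-rel-serre} with $a_{ij}=0$ is $\xi^{2^\pm}_{i,m}(0)\xi^{2^\pm}_{j,m}$, whose loop is the $2$-cycle $p_{i,m}\rightleftarrows p_{j,m}$. This loop is not in $L$ as defined (only $a_{ij}<0$), so the identification of \eqref{eq:va-rel-serre} with \eqref{Qv11} needs $L$ enlarged.
\end{itemize}
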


\begin{proof}
Denote by $Y$ the vertex operator map of $V_\wp^{r\ell}(Q,L)\rtimes_\mu V_{\wh\h}^1$.
From Theorem \ref{thm:nonlocal-VA-twistor-deform} and \eqref{eq:T-tau}, we have that
\begin{align*}
  &\Sing_zY(\vac\ot \al^\vee_{i,\wp_im},z)(y_{p_{j,n}}^a\ot \vac)\\
  =&\Sing_zY^{12}(z)Y^{34}(z)\sigma^{23}T_\mu^{32}(-z)T_\mu^{14}(z)
  (\vac\ot \al^\vee_{i,\wp_im}\ot y_{p_{j,n}}^a\ot\vac)\\
  =&\Sing_zY^{12}(z)Y^{34}(z)\sigma^{23}
  \big(\vac\ot \al^\vee_{i,\wp_im}\ot y_{p_{j,n}}^a\ot\vac\\
  &\quad+(\delta_{a,2^+}-\delta_{a,2^-})[a_{ij}]_{\zeta_i^{\wp_im}}
  \zeta^{\wp_im(n+r\ell)}
  \vac\ot \vac\ot y_{p_{j,n}}^{2^\pm}\ot \vac z\inv \big)\\
  =&(\delta_{a,2^+}-\delta_{a,2^-})[a_{ij}]_{\zeta_i^{\wp_im}}
  \zeta^{\wp_im(n+r\ell)} y_{p_{j,n}}^{2^\pm}\ot\vac z\inv.
\end{align*}
Combining this with the fact that the quantum Yang-Baxter operator of $V_\wp^{r\ell}(Q,L)\rtimes_\mu V_{\wh\h}^1$ is $S_Q^{24}(z)$, we have that
\begin{align*}
  &[Y(\vac\ot \al^\vee_{i,m\wp_i},z_1),Y(y_{p_{j,n}}^a\ot \vac,z_2)]\\
  =&(\delta_{a,2^+}-\delta_{a,2^-})
  [a_{ij}]_{\zeta_i^{\wp_im}}
  \zeta^{\wp_im(n+r\ell)}
  Y(y_{p_{j,n}}^a\ot \vac,z_2)z_1\inv\delta\left(\frac{z_2}{z_1}\right).
\end{align*}
From Proposition \ref{prop:universal-V} and Lemma \ref{lem:equiv-rels}, we get a nonlocal vertex algebra homomorphism
$$f:V_{\wp,\varepsilon}'^\ell(\g)\longrightarrow V_\wp^{r\ell}(Q,L)\rtimes_\mu V_{\wh\h}^1$$
defined by
\begin{align*}
  &h_{i,\wp_im}\mapsto \vac\ot\al_{i,\wp_im}^\vee,\quad
  \wt\xi_{i,m}^0\mapsto y_{i,m}^0\ot\vac,\quad
  \xi_{i,m}^a\mapsto y_{i,m}^a\ot \vac\quad\te{for }i\in I,\,m\in\Z_\wp,\,a\in\{1^\pm,2^\pm\}.
\end{align*}
By comparing \eqref{eq:va-e+e-}-\eqref{eq:va-rel-rtu} with
\eqref{Qv8}-\eqref{Qv11}, we get that $f$ factor through $V_{\wp,\varepsilon}^\ell(\g)$, as desired.
\end{proof}

Next, we construct the following
quantum vertex algebra homomorphism
\begin{align*}
  V_\wp^{r\ell}(Q,L)\rtimes_\mu V_{\wh\h}^1\longrightarrow V_{\wp,\varepsilon}^\ell(\g).
\end{align*}

\begin{lem}\label{lem:V-wp-T}
There exists a unique invertible twistor $T(z)$ of $V_{\wp,\varepsilon}^\ell(\g)$, such that \eqref{eq:qyb-com1} holds for the pair $(T(z),T(z))$ and
\begin{align*}
  &T(z)(b_{j,n}\ot a_{i,m})=b_{j,n}\ot a_{i,m}\quad\te{for }a,b\in\{\wt\xi^0,\xi^{1^\pm},\xi^{2^\pm}\},\\
  &T(z)(h_{j,\wp_jn}\ot \xi_{i,m}^a)
  =h_{j,\wp_jn}\ot \xi_{i,m}^a
  +(\delta_{a,2^+}-\delta_{a,2^-}) \vac\ot \xi_{i,m}^{2^\pm}\ot [a_{ij}]_{\zeta_i^{\wp_im}}
  \zeta^{\wp_im(n+r\ell)}z\inv,\\
  &T(z)(\xi_{j,n}^a\ot h_{i,\wp_im})
  =\xi_{j,n}^a\ot h_{i,\wp_im}
  -(\delta_{a,2^+}-\delta_{a,2^-})\xi_{j,n}^a\ot\vac
  \ot [a_{ji}]_{\zeta_j^{\wp_jn}}
  \zeta^{\wp_jn(m+r\ell)}z\inv.
\end{align*}
\end{lem}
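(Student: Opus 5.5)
We build $T(z)$ from a deforming triple on $V_{\wp,\varepsilon}^\ell(\g)$, mirroring the construction of $T_\mu(z)$ for $V_\wp^{r\ell}(Q,L)$, and then apply Proposition \ref{prop:deforming-triple-twistor}. The relevant bialgebra is $V_{\wh\h}^0$ from Lemma \ref{lem:Heisenberg-comod}, which is commutative and cocommutative.

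\textbf{Comodule structure.} We first equip $V_{\wp,\varepsilon}^\ell(\g)$ with a $V_{\wh\h}^0$-comodule nonlocal vertex algebra structure $\rho$. It is determined by
\begin{align*}
  \rho(h_{i,\wp_im})=h_{i,\wp_im}\ot\vac+\vac\ot\al^\vee_{i,\wp_im},\qquad
  \rho(\wt\xi_{i,m}^0)=\wt\xi_{i,m}^0\ot\vac,\qquad
  \rho(\xi_{i,m}^a)=\xi_{i,m}^a\ot\vac .
\end{align*}
By Lemma \ref{lem:equiv-rels}, $V_{\wp,\varepsilon}'^\ell(\g)$ can be presented through the generators $h_{i,\wp_im}$, $\wt\xi^0_{i,m}$ and $\xi^{a}_{i,m}$.
\begin{itemize}
\item Since $V_{\wh\h}^0$ is commutative, the images satisfy the relations of Lemma \ref{lem:equiv-rels}. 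The bracket of $h$ with $\xi^{2^\pm}$ is unchanged, because the $\vac\ot\al^\vee$ part commutes with everything.
\item Hence Proposition \ref{prop:universal-V} yields $\rho$ on $V_{\wp,\varepsilon}'^\ell(\g)$.
\item The relations \eqref{eq:va-e+e-}--\eqref{eq:va-rel-rtu} involve no $h$'s after rewriting \eqref{eq:va-rel-der} as in Lemma \ref{lem:equiv-rels}, so $\rho$ descends to $V_{\wp,\varepsilon}^\ell(\g)$.
\item Coassociativity and the counit property are checked on generators, as in Lemma \ref{lem:comod-nonlocalVA}.
\end{itemize}

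\textbf{Module structure.} Next we construct a $V_{\wh\h}^0$-module nonlocal vertex algebra structure $\nu$ on $V_{\wp,\varepsilon}^\ell(\g)$, with $\nu(\al^\vee_{i,\wp_im},z)=\nu_{i,\wp_im}(z)$.
\begin{itemize}
\item Each $\nu_{i,\wp_im}(z)$ is a pseudo-derivation that kills $h$, $\wt\xi^0$ and $\xi^{1^\pm}$, and acts on $\xi^{2^\pm}_{j,n}$ by $\pm c\,\xi^{2^\pm}_{j,n}\ot z^{-1}$. The constant $c$ is read off from the desired $T(z)$, i.e.\ the same constant that appears in \eqref{eq:sigma-pesudo-derivation}.
\item Existence follows as in Lemma \ref{lem:pseudos}: the map $1+\delta\nu$ respects the defining relations and \eqref{eq:va-e+e-}--\eqref{eq:va-rel-rtu}, and we apply Proposition \ref{prop:pseudo}.
\item These pseudo-derivations mutually commute, so, as in Lemma \ref{lem:deform-tri-nonlocalVA}, they define an $H$-module structure for $H=V_{\wh\h}^0$.
\item Invertibility comes from the opposite-sign structure, as in Lemma \ref{lem:Heisenberg-mod}.
\item The compatibility \eqref{eq:rho-tau-compatible} is checked on generators and extended by \cite[Lemma 2.28]{JKLT-Defom-va}. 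The check works because $\nu$ never produces $h$'s, and $\rho$ is trivial on the $\xi$'s.
\end{itemize}

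\textbf{The twistor and uniqueness.} Proposition \ref{prop:deforming-triple-twistor} makes $T(z)=T_\nu(z)$ a twistor for which \eqref{eq:qyb-com1} holds. By Lemma \ref{lem:T-tau-mult}, $T_\nu T_{\nu^{-1}}=1$, so $T(z)$ is invertible. Its values on pairs of generators follow directly from \eqref{eq:T-tau}:
\begin{itemize}
\item If the second factor is not $h$, then $\rho$ gives a $\vac$ in $H$ and $T$ acts as the identity.
\item If the second factor is $h_{i,\wp_im}$, the $\al^\vee$ term contributes $\nu(\al^\vee,-z)$ acting on the first factor. This gives the $-z^{-1}$ term on $\xi^{2^\pm}$, and the sign is fixed by the sign convention of $\nu$.
\end{itemize}
The third displayed formula exactly matches this computation. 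I expect the second formula, with $h$ in the first slot, to be the main obstacle: the raw formula \eqref{eq:T-tau} gives the identity there. Two fixes are available:
\begin{itemize}
\item use the double-twisted form $T^{14}_\nu$-type composite $T_\nu(z)\,T_\nu^{21}(-z)^{-1}$, or take the product of $T_\nu$ with the analogous twistor built from a right coaction;
\item alternatively, realize $T$ by transporting the twistor $T^\mu(z)$ along the homomorphism of Proposition \ref{prop:decomp}.
\end{itemize}
The composite route is preferred. It is justified by Remark \ref{rem:prod-twistor}, since $T_\nu$ and the reversed twistor satisfy \eqref{eq:qyb-com1} and \eqref{eq:twistor-com} by cocommutativity (Lemma \ref{lem:T-tau-mult}).

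Uniqueness follows because the values of $T(z)$ on generators determine it, via \eqref{eq:qyb-hex1}, \eqref{eq:qyb-hex2} and \eqref{eq:qyb-vac}.
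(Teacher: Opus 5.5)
Your deforming triple $(V_{\wh\h}^0,\rho,\nu)$ is sound, and you are right that $T_\nu(z)$ alone is the identity on $h\ot\xi^{2\pm}$. The gap is in the repair you prefer. Write $\nu(\al^\vee,z)\xi^{2\pm}=\pm c\,\xi^{2\pm}z^{-1}$. Then \eqref{eq:T-tau} gives
\begin{align*}
  T_\nu(z)(\xi^{2\pm}\ot h)=\xi^{2\pm}\ot h\mp c\,\xi^{2\pm}\ot\vac\, z^{-1},\qquad
  T_\nu^{21}(-z)(h\ot\xi^{2\pm})=h\ot\xi^{2\pm}\pm c\,\vac\ot\xi^{2\pm}\, z^{-1}.
\end{align*}
Moreover, $T_\nu(z)$ fixes $h\ot\xi^{2\pm}$ and $T_\nu^{21}(-z)$ fixes $\xi^{2\pm}\ot h$. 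Hence $T_\nu(z)T_\nu^{21}(-z)^{-1}$ adds $\mp c\,\vac\ot\xi^{2\pm}z^{-1}$ to $h\ot\xi^{2\pm}$ and $\mp c\,\xi^{2\pm}\ot\vac\,z^{-1}$ to $\xi^{2\pm}\ot h$. These carry the same sign, whereas the lemma demands opposite signs for the same pair: after relabelling, compare the $+$ in the second formula with the $-$ in the third. Replacing $\nu$ by $\nu^{-1}$ (i.e.\ $c\mapsto -c$) flips both signs, so no choice of $\nu$ helps.

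The obstruction is structural. Put $R(z)=T_\nu(z)T_\nu^{21}(-z)^{-1}$. Then $R(z)R^{21}(-z)=1$, so $R^{21}(-z)=R(z)^{-1}$. The required $T$ instead satisfies $T^{21}(-z)=T(z)$ on these pairs, and both can hold only if the correction term vanishes. That symmetry is exactly what Lemma \ref{lem:tensor-prod-hom} uses to kill the singular parts of both $Y(h,z)\xi^{2\pm}$ and $Y(\xi^{2\pm},z)h$.

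The fix is to drop the inverse. The product $T_\nu(z)T_\nu^{21}(-z)$ produces exactly the two stated corrections. It is a twistor because $T_\nu^{21}(-z)=T_{\nu^{-1}}^{21}(-z)^{-1}$ with $T_{\nu^{-1}}=T_\nu^{-1}$ (Lemma \ref{lem:T-tau-mult}), so Remark \ref{rem:twistor-inv} applies. This is the one-algebra analogue of the paper's $T_\mu^{14}(z)T_\mu^{32}(-z)$, which likewise carries no inverse. You must still verify on generators:
\begin{itemize}
\item \eqref{eq:qyb-com1} and \eqref{eq:twistor-com} for the pair $(T_\nu(z),T_\nu^{21}(-z))$;
\item \eqref{eq:qyb-com1} for the product with itself.
\end{itemize}
Lemma \ref{lem:T-tau-mult} does not cover the flipped factor, and Remark \ref{rem:prod-twistor} only gives the twistor property.

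Your other fallback, transporting $T^\mu$ along Proposition \ref{prop:decomp}, is circular. That map is only a homomorphism, and its invertibility (Theorem \ref{thm:decomp}) is derived from this very lemma.

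The paper avoids the two-factor construction altogether. It applies Lemma \ref{lem:qva-twistor} to $V_{\wp,\varepsilon}'^\ell(\g)$, where $H'$ coacts on $\xi^{2\pm}$ through the group-like $\wh\xi^{2\pm}$. There $\bar\vartheta(\wh\xi^{2\pm},z)$ is a pseudo-endomorphism sending $h$ to $h\ot 1\mp\vac\ot(\cdot)$. So a single $T_{\bar\vartheta}$, with the entries $\bar\vartheta^{0,\pm}$ and $\bar\vartheta^{\pm,0}$ chosen independently, yields both formulas and \eqref{eq:qyb-com1} at once. The paper then only checks that this twistor preserves $R_{\wp,\varepsilon}^\ell(\g)\ot V_{\wp,\varepsilon}'^\ell(\g)+V_{\wp,\varepsilon}'^\ell(\g)\ot R_{\wp,\varepsilon}^\ell(\g)$, so that it descends to the quotient.
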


\begin{proof}
By using Lemma \ref{lem:qva-twistor}, we get the unique twistor $T(z)$ of $V_{\wp,\varepsilon}'^\ell(\g)$ satisfying the desired conditions.
Recall the ideal $R_{\wp,\varepsilon}^\ell(\g)$ of $V_{\wp,\varepsilon}'^\ell(\g)$ defined in Definition \ref{de:V}.
It is straightforward to verify that $T(z)$ preserves
\begin{align*}
  R_{\wp,\varepsilon}^\ell(\g)\ot V_{\wp,\varepsilon}'^\ell(\g)+
  V_{\wp,\varepsilon}'^\ell(\g)\ot R_{\wp,\varepsilon}^\ell(\g).
\end{align*}
Then $T(z)$ induces the twistor of $V_{\wp,\varepsilon}^\ell(\g)$, as desired.
\end{proof}

\begin{lem}\label{lem:V-wp-T-inv}
$T(z)\inv$ is also a twistor of $V_{\wp,\varepsilon}^\ell(\g)$.
Moreover,
\begin{align*}
  &T(z)\inv(b_{j,n}\ot a_{i,m})=b_{j,n}\ot a_{i,m}\quad\te{for }a,b\in\{\wt\xi^0,\xi^{1^\pm},\xi^{2^\pm}\},\\
  &T(z)\inv(h_{j,\wp_jn}\ot \xi_{i,m}^a)
  =h_{j,\wp_jn}\ot \xi_{i,m}^a
  -(\delta_{a,2^+}-\delta_{a,2^-}) \vac\ot \xi_{i,m}^{2^\pm}\ot [a_{ij}]_{\zeta_i^{\wp_im}}
  \zeta^{\wp_im(n+r\ell)}z\inv,\\
  &T(z)\inv(\xi_{j,n}^a\ot h_{i,\wp_im})
  =\xi_{j,n}^a\ot h_{i,\wp_im}
  +(\delta_{a,2^+}-\delta_{a,2^-})\xi_{j,n}^a\ot\vac
  \ot [a_{ji}]_{\zeta_j^{\wp_jn}}
  \zeta^{\wp_jn(m+r\ell)}z\inv
\end{align*}
for $i,j\in I$ and $m,n\in\Z_\wp$.
\end{lem}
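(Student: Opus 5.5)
The natural route is to realize $T(z)^{-1}$ as the twistor attached to the inverse parameter, so that twistor-ness follows from Lemma \ref{lem:qva-twistor} rather than from a direct check of \eqref{eq:qyb-hex1}, \eqref{eq:qyb-hex2} and \eqref{eq:twistor}. Recall how $T(z)$ arose in Lemma \ref{lem:V-wp-T}. It is $T_{\bar\vartheta}(z)$ for the element $\bar\vartheta\in\mathfrak V$ whose only nonzero additive components are the $z^{-1}$ coefficients coupling $h_{j,\wp_jn}$ with $\xi_{i,m}^{2^\pm}$ (in both orders). All multiplicative components of $\bar\vartheta$ equal $1$. Since $\mathfrak V$ is a group, I take $\bar\vartheta^{-1}$: it has the negated additive components and the same trivial multiplicative ones. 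Lemma \ref{lem:qva-twistor} then gives a twistor $T_{\bar\vartheta^{-1}}(z)$ of $V_{\wp,\varepsilon}'^\ell(\g)$ acting on generator pairs exactly by the formulas displayed in the statement, with the signs of the correction terms flipped. As in the proof of Lemma \ref{lem:V-wp-T}, the same straightforward check shows it preserves $R_{\wp,\varepsilon}^\ell(\g)\ot V_{\wp,\varepsilon}'^\ell(\g)+V_{\wp,\varepsilon}'^\ell(\g)\ot R_{\wp,\varepsilon}^\ell(\g)$. So it descends to a twistor $T'(z)$ of $V_{\wp,\varepsilon}^\ell(\g)$.

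Next I show $T'(z)=T(z)^{-1}$. Both twistors come from the single deforming triple $(H',\rho,\cdot)$ via \eqref{eq:T-tau}, with module structures $\bar\vartheta$ and $\bar\vartheta^{-1}$. By Lemma \ref{lem:T-tau-mult}, $T_{\bar\vartheta}(z)T_{\bar\vartheta^{-1}}(z)=T_{\bar\vartheta\ast\bar\vartheta^{-1}}(z)$, where $\ast$ is the convolution in $\Hom(H',\Hom(V,V\ot\C((z))))$. So it remains to check that the convolution product is the counit $\varepsilon$. Both sides are $H'$-module nonlocal vertex algebra structures on $V_{\wp,\varepsilon}'^\ell(\g)$. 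For the product this uses Proposition \ref{prop:L-H-rho-V-compostition}, whose hypothesis is satisfied because the pseudo-derivations and pseudo-endomorphisms commute, $H'$ being commutative and cocommutative. It therefore suffices to compare them on generators $\wh a_i^s$ of $H'$ and on the generators $a_j^t$. On primitive $\wh a_i^0$ the convolution is the sum $\bar\vartheta_i^0+(\bar\vartheta^{-1})_i^0=0$. On grouplike $\wh a_i^\pm$ it is the composite of the two pseudo-endomorphisms, which is the identity because the multiplicative components are $1$ and the additive corrections cancel. The extension from generators to all of $H'$ and $V$ is then the same argument as for \eqref{eq:Heisenberg-mod-inv}. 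Hence $T(z)T'(z)=1$, and symmetrically $T'(z)T(z)=1$, first on $V'\ot V'$ and then on the quotient.

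The explicit formulas are then immediate. They also follow by a direct check: on the generator pairs, $T(z)$ adds a correction term proportional to $\vac\ot\xi^{2^\pm}$ or $\xi^{2^\pm}\ot\vac$. The operator $T(z)$ fixes those pairs, because $\vac$ and $h$ are only ever paired with $\xi^{2^\pm}$ through terms that vanish on $\vac$, using \eqref{eq:qyb-vac}. So the inverse on generator pairs just subtracts the correction.

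I expect the main obstacle to be justifying that $T(z)$ is determined by generator data on all of $V\ot V$, i.e.\ identifying $T(z)$ globally with $T_{\bar\vartheta}(z)$ coming from the triple. I would handle this by working on $V'$ via \eqref{eq:T-tau}, where the convolution identity of Lemma \ref{lem:T-tau-mult} is global, and only afterwards passing to the quotient.
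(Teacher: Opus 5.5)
Your proposal is correct, but it takes a different route from the paper.

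The paper never returns to the deforming triple. It sets $\bar T(z)=T^{21}(-z)^{-1}$, which is a twistor by Remark~\ref{rem:twistor-inv}. It then observes that \eqref{eq:qyb-com1} and \eqref{eq:twistor-com} pass from the pair $(T,T)$ to the pair $(T,\bar T)$, so $T(z)\bar T(z)$ is a twistor by Remark~\ref{rem:prod-twistor}. Finally it checks that $T(z)\bar T(z)$ fixes all pairs from the generating set $\{h_{i,\wp_im},\wt\xi_{i,m}^0,\xi_{i,m}^{1^\pm},\xi_{i,m}^{2^\pm}\}$, concludes that it is the trivial twistor, and gets $T(z)^{-1}=\bar T(z)$. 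As a byproduct this shows $T(z)=T^{21}(-z)$.

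That argument works entirely on the quotient $V_{\wp,\varepsilon}^\ell(\g)$ and needs no second compatibility check with $R_{\wp,\varepsilon}^\ell(\g)$. However, it uses invertibility of $T(z)$ as an input, which the paper takes from Lemma~\ref{lem:V-wp-T} without a separate argument.

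Your route uses the group inverse $\bar\vartheta^{-1}\in\mathfrak V$ and the multiplicativity $T_{\tau_1\ast\tau_2}=T_{\tau_1}T_{\tau_2}$ of Lemma~\ref{lem:T-tau-mult}. This produces that invertibility together with the twistor property of the inverse. The price is two extra verifications:
\begin{enumerate}
\item $T_{\bar\vartheta^{-1}}(z)$ must also preserve $R_{\wp,\varepsilon}^\ell(\g)\ot V_{\wp,\varepsilon}'^\ell(\g)+V_{\wp,\varepsilon}'^\ell(\g)\ot R_{\wp,\varepsilon}^\ell(\g)$. This does not follow from the corresponding fact for $T(z)$ alone, so you are right to redo it.
\item $\bar\vartheta\ast\bar\vartheta^{-1}=\varepsilon$ must hold on all of $H'$ and $V_{\wp,\varepsilon}'^\ell(\g)$, not just on generators.
\end{enumerate}

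For the second point, your stated reason for the commuting hypothesis of Proposition~\ref{prop:L-H-rho-V-compostition} is slightly off. Commutativity of $\bar\vartheta(h,z_1)$ and $\bar\vartheta^{-1}(k,z_2)$ as operators on $V_{\wp,\varepsilon}'^\ell(\g)$ does not follow from $H'$ being commutative and cocommutative. It holds because, on generators, each of these operators either rescales by a series in $z$ or adds a multiple of $\vac$. The pseudo-endomorphism and pseudo-derivation identities then propagate the commutation from generators to all of $V_{\wp,\varepsilon}'^\ell(\g)$.
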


\begin{proof}
The moreover statement is clear.
By Remark \ref{rem:twistor-inv}, we get another twistor $\bar T(z)=T^{21}(-z)\inv$.
Since the relations \eqref{eq:twistor-com} and \eqref{eq:qyb-com1} hold for the pair $(T(z),T(z))$,
they also hold for the pair $(T(z),\bar T(z))$.
Remark \ref{rem:prod-twistor} yields that $T(z)\bar T(z)$ is also a twistor.
Note that
\begin{align*}
  T(z)\bar T(z)(b\ot a)=b\ot a
\end{align*}
for $a,b$ lie in the following generating subset of $V_{\wp,\varepsilon}^\ell(\g)$:
\begin{align*}
  \set{h_{i,\wp_im},\,\wt\xi_{i,m}^0,\,\xi_{i,m}^a}{i\in I,\,m\in\Z_\wp,\,a\in\{1^\pm,2^\pm\}}.
\end{align*}
Therefore, $T(z)\bar T(z)$ is the trivial twistor of $V_{\wp,\varepsilon}^\ell(\g)$, and hence, $T(z)\inv=\bar T(z)$ is a twistor.
\end{proof}

\begin{lem}\label{lem:T-S-wp-epsilon-com}
The relations \eqref{eq:qyb-com1} and \eqref{eq:twistor-com} hold for $(T(z),S_{\wp,\varepsilon}(z))$.
\end{lem}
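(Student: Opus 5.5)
We must show that for all generators the commutation relations
\begin{align*}
  &T^{12}(z_1)S_{\wp,\varepsilon}^{13}(z_2)=S_{\wp,\varepsilon}^{13}(z_2)T^{12}(z_1),\quad
  T^{23}(z_1)S_{\wp,\varepsilon}^{13}(z_2)=S_{\wp,\varepsilon}^{13}(z_2)T^{23}(z_1),\\
  &T^{12}(z_1)S_{\wp,\varepsilon}^{23}(z_2)=S_{\wp,\varepsilon}^{23}(z_2)T^{12}(z_1)
\end{align*}
hold on $V_{\wp,\varepsilon}^\ell(\g)^{\ot 3}$, together with the variants with $T$ and $S_{\wp,\varepsilon}$ interchanged. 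The plan is the same two-stage argument used for \eqref{eq:V-M-quantumVA-temp} in the proof of Proposition \ref{prop:V-M-quantumVA}: first verify the identities on triples of generators, then extend to the whole algebra.

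\textbf{Choice of generators.} I would work with the generating set
\[
G=\set{h_{i,\wp_im},\,\wt\xi_{i,m}^0,\,\xi_{i,m}^{1^\pm},\,\xi_{i,m}^{2^\pm}}{i\in I,\,m\in\Z_\wp}\cup\{\vac\}.
\]
On pairs from $G$, both $T(z)$ (Lemma \ref{lem:V-wp-T}) and $S_{\wp,\varepsilon}(z)$ have a simple form. Each is the identity plus a correction whose image lies in $\C\vac\ot G$ or $G\ot\C\vac$, possibly multiplied by scalar Laurent series.

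\textbf{Step 1: the action of $S_{\wp,\varepsilon}(z)$ on $G\ot G$.} I would compute this from Remark \ref{rem:S-wp-tau} at $\tau=\varepsilon$, rewriting $\xi_{i,m}^0$ via $h$ and $\wt\xi^0$. Lemma \ref{lem:S-h} already shows that $S_{\wp,\varepsilon}$ is trivial whenever $h$ is the left tensor factor. By unitarity \eqref{eq:qyb-unitary-qyb}, it is then also trivial when $h$ is the right factor. Consequently the two operators do not interact nontrivially:
\begin{itemize}[leftmargin=*]
\item $T$ acts nontrivially only on pairs involving $h$ together with a $\xi^{2^\pm}$, where it multiplies by a scalar and replaces $h$ by $\vac$;
\item $S_{\wp,\varepsilon}$ acts nontrivially only on pairs not involving $h$.
\end{itemize}

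\textbf{Step 2: checking the triple identities on generators.} For a triple $u\ot v\ot w$ of generators, I would split into cases according to the positions of the $h$'s. In each case one side of an identity is the identity operator on the relevant slots. The one point needing care is that a correction term may produce $\vac$ in some slot. Then \eqref{eq:qyb-vac} makes the other operator act trivially there. For the same reason, scalar factors from $S_{\wp,\varepsilon}$ on the $\xi^{2}$-slots commute with the scalar corrections from $T$.

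\textbf{Step 3: extension to all of $V_{\wp,\varepsilon}^\ell(\g)$.} I would use the hexagon identities \eqref{eq:qyb-hex1} and \eqref{eq:qyb-hex2} for both $T$ and $S_{\wp,\varepsilon}$, the translation identity \eqref{eq:qyb-der-shift}, and induction on the length of the expressions $a^{(1)}_{n_1}\cdots a^{(k)}_{n_k}\vac$ with $a^{(s)}\in G$. This is exactly as in the ending of the proof of Proposition \ref{prop:V-M-quantumVA}.

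\textbf{Main obstacle.} I expect the extension step to be the hardest. One must check that the set of vectors in each slot on which a given identity holds is closed under $Y(a,z)$ for $a\in G$. This requires commuting $T^{ij}$ and $S^{kl}$ past $Y^{12}$ via the hexagon relations, which produces products like $S^{23}(z_1)S^{13}(z_1+z_2)$. The generator-level identities must then be applied with shifted arguments.

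I would first try to treat one slot at a time: fix two slots as generators and induct on the third, then iterate over the slots. Since the operators here are \emph{diagonal up to $\vac$-corrections}, I expect the bookkeeping to close.
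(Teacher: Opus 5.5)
Your proposal is correct and follows the paper's proof: the paper verifies $T^{ij}(z_1)S_{\wp,\varepsilon}^{ab}(z_2)=S_{\wp,\varepsilon}^{ab}(z_2)T^{ij}(z_1)$ for $(i,j)\notin\{(a,b),(b,a)\}$ on triples from the generating set $\{h_{i,\wp_im},\wt\xi_{i,m}^0,\xi_{i,m}^{1^\pm},\xi_{i,m}^{2^\pm}\}$, and then extends to all of $V_{\wp,\varepsilon}^\ell(\g)$ by generation; your use of Lemma \ref{lem:S-h} together with unitarity is what makes the generator-level check transparent. The extension step you worry about does close if you order the inductions suitably, and \eqref{eq:qyb-der-shift} is not needed: for $T^{12}S_{\wp,\varepsilon}^{13}$, first induct on slot 1 with slots 2 and 3 generators, then on slot 2 with slot 3 a generator, then on slot 3, and each hexagon expansion then only calls on identities already established.
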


\begin{proof}
It is straightforward to verify that the following relations
\begin{align}\label{eq:T-S-wp-epsilon-com}
  T^{ij}(z_1)S_{\wp,\varepsilon}^{ab}(z_2)(u\ot v\ot w)
  =S_{\wp,\varepsilon}^{ab}(z_2)T^{ij}(z_1)(u\ot v\ot w)
\end{align}
hold for $(i,j)\not\in\{(a,b),(b,a)\}$ and
\begin{align*}
  u,v,w\in S=\set{h_{i,\wp_im},\wt\xi_{i,m}^0,\xi_{i,m}^a}{i\in I,\,m\in\Z_\wp,\,a\in\{1^\pm,2^\pm\}}.
\end{align*}
Since $V_{\wp,\varepsilon}^\ell(\g)$ is generated by $S$,
the relation \eqref{eq:T-S-wp-epsilon-com} can be extended to the whole space $V_{\wp,\varepsilon}^\ell(\g)$,
which proves the lemma.
\end{proof}

\begin{lem}\label{lem:tensor-prod-hom}
There is a unique nonlocal vertex algebra homomorphism
from the tensor product nonlocal vertex algebra  $V_\wp^{r\ell}(Q,L)\ot V_{\wh\h}^1$ to $\mathfrak D_T(V_{\wp,\varepsilon}^\ell(\g))$ such that
\begin{align*}
  \vac\ot\al_{i,\wp_im}^\vee\mapsto h_{i,\wp_im},\quad
  y_{i,m}^0\ot\vac\mapsto \wt \xi_{i,m}^0,\quad
  y_{i,m}^a\ot\vac\mapsto \xi_{i,m}^a
  \quad\te{for }i\in I,\,m\in\Z_\wp,\,a\in\{1^\pm,2^\pm\}.
\end{align*}
\end{lem}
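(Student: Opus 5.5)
Since $V_\wp^{r\ell}(Q,L)\ot V_{\wh\h}^1$ is a tensor product, a nonlocal vertex algebra homomorphism out of it is the same thing as a pair of homomorphisms $\varphi_1:V_\wp^{r\ell}(Q,L)\to \mathfrak D_T(V_{\wp,\varepsilon}^\ell(\g))$ and $\varphi_2:V_{\wh\h}^1\to \mathfrak D_T(V_{\wp,\varepsilon}^\ell(\g))$ whose images commute. More precisely, the requirement is
\[
Y_T(\varphi_1(u),z_1)Y_T(\varphi_2(v),z_2)=Y_T(\varphi_2(v),z_2)Y_T(\varphi_1(u),z_1),
\]
with $Y_T=\mathfrak D_T(Y)$. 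Given such a pair, the map is $u\ot v\mapsto \varphi_1(u)_{-1}\varphi_2(v)$. Uniqueness is automatic, because the tensor product is generated by the listed elements. So I would build the two factor maps separately and then check the commutation on generators.

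\textbf{Step 1: the deformed vertex operators on generators.} From the formulas for $T(z)$ in Lemma \ref{lem:V-wp-T} and $\mathfrak D_T(Y)(u,z)=Y(z)T^{21}(-z)$ (Theorem \ref{thm:nonlocal-VA-twistor-deform}), I would compute:
\begin{itemize}
\item $Y_T(a,z)=Y(a,z)$ for $a\in\{\wt\xi_{i,m}^0,\xi_{i,m}^{1^\pm},\xi_{i,m}^{2^\pm}\}$ acting on the corresponding generators, since $T$ is trivial there;
\item $Y_T(h_{i,\wp_im},z)$ differs from $Y(h_{i,\wp_im},z)$ by a pseudo-derivation-type correction. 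On $\xi_{j,n}^{2^\pm}$ this correction is $\mp\xi_{j,n}^{2^\pm}[a_{ij}]\zeta^{\cdots}z^{-1}$, and it vanishes on the other generators.
\end{itemize}
The correction is exactly $-\mu^+_{i,\wp_im}(z)$, transported to $V_{\wp,\varepsilon}^\ell(\g)$. By Lemma \ref{lem:equiv-rels} with $\tau=\varepsilon$, the commutator $[h_{i,\wp_im}(z_1),\xi_{j,n}^{2^\pm}(z_2)]$ in $V_{\wp,\varepsilon}^\ell(\g)$ is a pure delta-function term with this same coefficient. So the deformation should cancel it exactly, and I expect
\[
[Y_T(h_{i,\wp_im},z_1),Y_T(a,z_2)]=0\quad\text{for all } a\in\{\wt\xi^0,\xi^{1^\pm},\xi^{2^\pm}\}.
\]
Establishing this cancellation, including the signs in $T^{21}(-z)$, is the main computation. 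I expect it to be the principal obstacle, because the delta-function commutator has to be recovered from singular parts via $S$-locality. I would handle this the way the proof of Proposition \ref{prop:decomp} does: compute $\Sing_z Y_T(h,z)\xi$ and use the quantum Yang--Baxter operator $T^{21}(-z)^{-1}S_{\wp,\varepsilon}(z)T(z)$, which is supplied by Lemmas \ref{lem:V-wp-T-inv} and \ref{lem:T-S-wp-epsilon-com} together with Theorem \ref{thm:qva-twistor}.

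\textbf{Step 2: the two factor maps.}
\begin{itemize}
\item For $\varphi_2$: among the $h$'s, $T$ acts trivially. Hence the $h_{i,\wp_im}$ still satisfy \eqref{eq:h-h-rel} under $Y_T$. The universal property of $V_{\wh\h}^1$, which is free as generated by a Heisenberg algebra, then gives $\varphi_2$, exactly as in Lemma \ref{lem:Heisenberg-hom}.
\item For $\varphi_1$: the elements $\wt\xi^0,\xi^{1^\pm},\xi^{2^\pm}$ have unchanged mutual vertex operators. They therefore still satisfy \eqref{Qv1}--\eqref{Qv7}, and the ideal relations \eqref{Qv8}--\eqref{Qv11}, as in Lemma \ref{lem:small-map}. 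Proposition \ref{prop:universal-nonlocal-va}, followed by passing to the quotient, gives $\varphi_1$.
\end{itemize}

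\textbf{Step 3: extending commutativity.} The commutation from Step 1 holds on generators. I would extend it to the full images by the standard argument: the set of $u$ whose $Y_T(u,z)$ commutes with a fixed family of fields is closed under all $n$-th products, by weak associativity (as in the proof of Proposition \ref{prop:universal-nonlocal-va}). This yields the tensor product homomorphism.
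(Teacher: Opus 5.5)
Your proposal is correct and follows essentially the same route as the paper. The paper takes the two factor maps to be the epimorphisms of Lemmas \ref{lem:Heisenberg-hom} and \ref{lem:small-map} composed with the inclusions of $\mathcal H$ and $v_{\wp,\varepsilon}^\ell(\g)$, which remain undeformed subalgebras of $\mathfrak D_T(V_{\wp,\varepsilon}^\ell(\g))$ because $T(z)$ is trivial on them. It then gets commutativity of the images from $\Sing_z\mathfrak D_T(Y)(h_{i,\wp_im},z)\xi_{j,n}^a=0$ together with $T^{21}(-z)^{-1}S_{\wp,\varepsilon}(z)T(z)$ fixing $h_{i,\wp_im}\ot\xi_{j,n}^a$, and finally extends from generators. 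The one step you left implicit is the check that this deformed Yang--Baxter operator acts trivially on $h\ot\xi$, which is what lets the commutator formula give an honest commutator; the paper treats this same check as a straightforward verification.
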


\begin{proof}
From Lemma \ref{lem:V-wp-T}, we have that
$T(z)(b\ot a)=b\ot a$ for $a,b\in \mathcal H$ or $a,b\in v_{\wp,\varepsilon}^\ell(\g)$.
Then $\mathcal H$ and $v_{\wp,\varepsilon}^\ell(\g)$ are also nonlocal vertex subalgebras of $\mathfrak D_T(V_{\wp,\varepsilon}^\ell(\g))$.
Let $g_1$ and $g_2$ be the following two composition nonlocal vertex algebra homomorphisms
\begin{align*}
  g_1:\begin{tikzcd}[column sep=1.5em, ampersand replacement=\&]
    V_{\wh\h}^1\ar[r,"\pi_2"]\&\mathcal H\ar[r,hook]
    \&\mathfrak D_T(V_{\wp,\varepsilon}^\ell(\g)),
  \end{tikzcd}
  &&
  g_2:
  \begin{tikzcd}[column sep=1.5em, ampersand replacement=\&]
    V_\wp^{r\ell}(Q,L)\ar[r,"\pi_1"]\&v_{\wp,\varepsilon}^\ell(\g)\ar[r,hook]
    \&\mathfrak D_T(V_{\wp,\varepsilon}^\ell(\g)),
  \end{tikzcd}
\end{align*}
where $\pi_1$ is given in Lemma \ref{lem:Heisenberg-hom}
and $\pi_2$ is given in Lemma \ref{lem:small-map}.
To show that $g_1,g_2$ can be extended to the desired nonlocal vertex algebra homomorphism, it is sufficient to prove the following result
\begin{align}\label{eq:tensor-prod-hom}
  [\mathfrak D_T(Y)(u,z_1),\mathfrak D_T(Y)(v,z_2)]=0
  \quad\te{for }u\in \mathcal H,\,v\in v_{\wp,\varepsilon}^\ell(\g).
\end{align}

Recall the definition of $\mathfrak D_T(Y)$ (see Theorem \ref{thm:nonlocal-VA-twistor-deform}).
We have that
\begin{align*}
  &\Sing_z \mathfrak D_T(Y)(h_{i,\wp_im},z)\xi_{j,n}^a
  =\Sing_z Y(z)T^{21}(-z)(h_{i,\wp_im}\ot \xi_{j,n}^a)\\
  =&\Sing_zY(h_{i,\wp_im},z) \xi_{j,n}^a
  -(\delta_{a,2^+}-\delta_{a,2^-})\Sing_zY(\vac,z) \xi_{j,n}^a
  \ot [a_{ji}]_{\zeta_j^{\wp_jn}}\zeta^{\wp_jn(m+r\ell)}z\inv \\
  =&0.
\end{align*}
Applying Theorem \ref{thm:qva-twistor} to Lemmas \ref{lem:V-wp-T} and \ref{lem:T-S-wp-epsilon-com}, we have that $\mathfrak D_T(V_{\wp,\varepsilon}^\ell(\g))$ is a quantum vertex algebra with quantum Yang-Baxter operator $T^{21}(-z)\inv S(z)T(z)$.
It is straightforward to verify that
\begin{align*}
  T^{21}(-z)\inv S(z)T(z)(h_{i,\wp_im}\ot \xi_{j,n}^a)=h_{i,\wp_im}\ot \xi_{j,n}^a.
\end{align*}
The commutator formula yields that the relation \eqref{eq:tensor-prod-hom} holds for
\begin{align*}
  u\in A=\set{h_{i,\wp_im}}{i\in I,\,m\in\Z_\wp},\quad
  v\in B=\set{\wt\xi_{i,m}^0,\,\xi_{i,m}^a}{i\in I,\,m\in\Z_\wp,\,a\in\{1^\pm,2^\pm\}}.
\end{align*}
Since $\mathcal H$ is generated by $A$ and $v_{\wp,\varepsilon}^\ell(\g)$ is generated by $B$,
the relation \eqref{eq:tensor-prod-hom} holds for all $u\in \mathcal H$ and $v\in v_{\wp,\varepsilon}^\ell(\g)$, which completes the proof of lemma.
\end{proof}

\begin{prop}\label{prop:decomp-reverse}
The map obtained in Lemma \ref{lem:tensor-prod-hom} is also a nonlocal vertex algebra homomorphism from
$V_\wp^{r\ell}(Q,L)\rtimes_\mu V_{\wh\h}^1$ to $V_{\wp,\varepsilon}^\ell(\g)$.
\end{prop}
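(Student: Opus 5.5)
The map $g$ from Lemma \ref{lem:tensor-prod-hom} goes from $V_\wp^{r\ell}(Q,L)\ot V_{\wh\h}^1$ to $\mathfrak D_T(V_{\wp,\varepsilon}^\ell(\g))$. The plan is to deform both sides by mutually inverse twistors and use functoriality of $\mathfrak D$ under homomorphisms that intertwine the twistors.

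The key observation is what the twistors look like on generators. On generators of $V_\wp^{r\ell}(Q,L)\ot V_{\wh\h}^1$, the twistor $T^\mu(z)=T_\mu^{14}(z)T_\mu^{32}(-z)$ acts by the formulas of Lemma \ref{lem:T-sigma}. On generators of $V_{\wp,\varepsilon}^\ell(\g)$, the twistor $T(z)\inv$ acts by the formulas of Lemma \ref{lem:V-wp-T-inv}. Under $g$, the images of $T^\mu(z)$ should match $T(z)\inv$ after the correct sign/orientation bookkeeping; I expect $g^{\ot 2}\circ T^\mu(z)=T(z)\inv\circ g^{\ot 2}$.

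1. Check that $T(z)\inv$ is a twistor of $\mathfrak D_T(V_{\wp,\varepsilon}^\ell(\g))$, and that $\mathfrak D_{T\inv}(\mathfrak D_T(V_{\wp,\varepsilon}^\ell(\g)))=V_{\wp,\varepsilon}^\ell(\g)$.
   (a) Lemma \ref{lem:V-wp-T-inv} gives that $T(z)\inv$ is a twistor of $V_{\wp,\varepsilon}^\ell(\g)$.
   (b) Relations \eqref{eq:qyb-com1} and \eqref{eq:twistor-com} hold for the pair $(T,T\inv)$, because they hold for $(T,T)$.
   (c) Proposition \ref{prop:prod-twistor} then gives that $T\inv$ is a twistor of $\mathfrak D_T(V)$ with $\mathfrak D_{T\inv}\mathfrak D_T(V)=\mathfrak D_{TT\inv}(V)=V$.

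2. Verify the intertwining relation $g^{\ot 2}\circ T^\mu(z)=T(z)\inv\circ g^{\ot 2}$ on the generating set
\begin{align*}
  \{\vac\ot\al^\vee_{i,\wp_im},\ y_{i,m}^a\ot\vac\}.
\end{align*}
   (a) On pairs of generators this is a direct comparison of Lemma \ref{lem:T-sigma} with Lemma \ref{lem:V-wp-T-inv}.
   (b) Extend to all of the domain using \eqref{eq:qyb-hex1}, \eqref{eq:qyb-hex2} for both twistors, together with the fact that $g$ is a homomorphism. This is the same extension argument used in Lemmas \ref{lem:sigma-S-com} and \ref{lem:T-S-wp-epsilon-com}.

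3. Deduce the statement. A nonlocal vertex algebra homomorphism $f\colon A\to B$ satisfying $(f\ot f)T_A(z)=T_B(z)(f\ot f)$ is also a homomorphism $\mathfrak D_{T_A}(A)\to\mathfrak D_{T_B}(B)$. This follows immediately from the definition $\mathfrak D_T(Y)(z)=Y(z)T^{21}(-z)$ in Theorem \ref{thm:nonlocal-VA-twistor-deform}. Applied to $g$, it gives a homomorphism
\begin{align*}
  V_\wp^{r\ell}(Q,L)\rtimes_\mu V_{\wh\h}^1=\mathfrak D_{T^\mu}(V_\wp^{r\ell}(Q,L)\ot V_{\wh\h}^1)\longrightarrow \mathfrak D_{T\inv}\mathfrak D_T(V_{\wp,\varepsilon}^\ell(\g))=V_{\wp,\varepsilon}^\ell(\g).
\end{align*}

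The main obstacle is step 2, the exact matching of the two twistors. The term $T_\mu^{32}(-z)$ contributes the correction on (Heisenberg, $\xi^{2^\pm}$) pairs, while $T_\mu^{14}(z)$ contributes the correction on ($\xi^{2^\pm}$, Heisenberg) pairs. Each must land on the correct sign and coefficient, $[a_{ij}]_{\zeta_i^{\wp_im}}\zeta^{\wp_im(n+r\ell)}z\inv$ versus $[a_{ji}]_{\zeta_j^{\wp_jn}}\zeta^{\wp_jn(m+r\ell)}z\inv$. I would first check the pair $(\vac\ot\al^\vee,\ y^{2^+}\ot\vac)$ and its reverse explicitly. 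If the signs fail to match, I would replace $T(z)\inv$ by $T^{21}(-z)\inv$, which by Lemma \ref{lem:V-wp-T-inv} and Remark \ref{rem:twistor-inv} equals $T(z)\inv$ up to the flip.
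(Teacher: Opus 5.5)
Your proposal is essentially the paper's proof: it checks on the generating set $\{\vac\ot\al^\vee_{i,\wp_im},\,y^a_{i,m}\ot\vac\}$ that $g\ot g$ intertwines $T^\mu(z)$ with the inverse twistor, extends this to the whole tensor product by generation, and concludes that $g$ is a homomorphism of the deformed algebras. Your step 1 (that $T(z)\inv$ is a twistor of $\mathfrak D_T(V_{\wp,\varepsilon}^\ell(\g))$ with $\mathfrak D_{T\inv}\mathfrak D_T(V_{\wp,\varepsilon}^\ell(\g))=V_{\wp,\varepsilon}^\ell(\g)$) makes explicit what the paper leaves implicit.

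The paper writes the intertwining relation with $T(z)$ where you have $T(z)\inv$, but your $T(z)\inv$ is the one required by $\mathfrak D_T(Y)(z)=Y(z)T^{21}(-z)$. If a literal comparison of the stated formulas in Lemmas \ref{lem:T-sigma} and \ref{lem:V-wp-T-inv} shows a sign mismatch, fix the sign of $T$ by requiring $\mathfrak D_T(Y)(h_{i,\wp_im},z)\xi^{2^\pm}_{j,n}$ to be regular, which is what Lemma \ref{lem:tensor-prod-hom} needs. Your proposed fallback does not help here, since $T^{21}(-z)\inv=T(z)\inv$ exactly, by the proof of Lemma \ref{lem:V-wp-T-inv}.
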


\begin{proof}
Denote by $g$ the nonlocal vertex algebra homomorphism obtained in Lemma \ref{lem:tensor-prod-hom}.
Recall the twistor $T^\mu(z)$ of the tensor product nonlocal vertex algebra $V_\wp^{r\ell}(Q,L)\ot V_{\wh\h}^1$.
From Lemma \ref{lem:V-wp-T-inv}, we get a twistor $T(z)\inv$ of $V_{\wp,\varepsilon}^\ell(\g)$.
Let
\begin{align*}
  &A=\set{\vac\ot\al_{i,\wp_im}^\vee,\,y_{i,m}^a\ot \vac}{i\in I,\,m\in\Z_\wp,\,a\in\{0,1^\pm,2^\pm\}}.
\end{align*}
Note that
\begin{align}\label{eq:decomp-reverse}
  T(z)(g(v)\ot g(u))=(g\ot g)(T^\mu(z)(v\ot u))\quad\te{for }u,v\in A.
\end{align}
Since $V_\wp^{r\ell}(Q,L)\ot V_{\wh\h}^1$ is generated by $A$,
the relation \eqref{eq:decomp-reverse} holds for the whole space $V_\wp^{r\ell}(Q,L)\ot V_{\wh\h}^1$.
Therefore, $g$ becomes a nonlocal vertex algebra homomorphism
from $V_\wp^{r\ell}(Q,L)\rtimes_\mu V_{\wh\h}^1$ to $V_{\wp,\varepsilon}^\ell(\g)$ as desired.
\end{proof}

Combining Propositions \ref{prop:decomp} with \ref{prop:decomp-reverse}, we immediately get the main result of this subsection.
\begin{thm}\label{thm:decomp}
The vertex algebra epimorphism $V_{\wh\h}^1\to \mathcal H$ given in Lemma \ref{lem:Heisenberg-hom} and
the quantum vertex algebra epimorphism $V_\wp^{r\ell}(Q,L)\to v_{\wp,\varepsilon}^\ell(\g)$ given in Lemma \ref{lem:small-map}
extend to a quantum vertex algebra isomorphism
\begin{align*}
  V_{\wp,\varepsilon}^\ell(\g)\cong V_\wp^{r\ell}(Q,L)\rtimes_\mu V_{\wh\h}^1.
\end{align*}
\end{thm}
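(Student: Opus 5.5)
Theorem \ref{thm:decomp} should follow by showing that the two homomorphisms of Propositions \ref{prop:decomp} and \ref{prop:decomp-reverse} are mutually inverse and compatible with the quantum Yang-Baxter operators. Write
\begin{align*}
  f:V_{\wp,\varepsilon}^\ell(\g)\to V_\wp^{r\ell}(Q,L)\rtimes_\mu V_{\wh\h}^1
  \quad\te{and}\quad
  g:V_\wp^{r\ell}(Q,L)\rtimes_\mu V_{\wh\h}^1\to V_{\wp,\varepsilon}^\ell(\g)
\end{align*}
for these maps. Both are nonlocal vertex algebra homomorphisms. They extend the given epimorphisms $V_{\wh\h}^1\to\mathcal H$ and $V_\wp^{r\ell}(Q,L)\to v_{\wp,\varepsilon}^\ell(\g)$ (or their inverse assignments on generators).

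\textbf{Step 1: the compositions are the identity.} First check $g\circ f=\id$. The algebra $V_{\wp,\varepsilon}^\ell(\g)$ is generated by $\{h_{i,\wp_im},\wt\xi_{i,m}^0,\xi_{i,m}^{1^\pm},\xi_{i,m}^{2^\pm}\}$. This uses the inversion formula from the proof of Lemma \ref{lem:equiv-rels}, which expresses each $\xi_{i,m}^0$ through the $h$'s and $\wt\xi^0$'s. On these generators $g\circ f$ is visibly the identity, and a homomorphism fixing a generating set is the identity. Similarly, $f\circ g$ fixes the generating set $\{\vac\ot\al^\vee_{i,\wp_im},\,y^a_{p_{i,m}}\ot\vac\}$ of the deformed algebra, so $f\circ g=\id$. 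This needs that $V_\wp^{r\ell}(Q,L)\rtimes_\mu V_{\wh\h}^1$ is generated by that set. I would obtain this as in Lemma \ref{lem:F-J-gen}, or from Remark \ref{rem:double-twisted} together with the fact that $Y(u\ot\vac,z)(\vac\ot v)$ recovers $u\ot v$ up to correction terms of lower filtration. Hence $f$ is bijective, and so it is an isomorphism of nonlocal vertex algebras.

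\textbf{Step 2: compatibility with the quantum Yang-Baxter operators.} It remains to show $(f\ot f)S_{\wp,\varepsilon}(z)=S_Q^{24}(z)(f\ot f)$. Both sides satisfy the hexagon identities \eqref{eq:qyb-hex1} and \eqref{eq:qyb-hex2} together with \eqref{eq:qyb-vac}, so it suffices to compare them on pairs of generators. On pairs from $v_{\wp,\varepsilon}^\ell(\g)$, compare Remark \ref{rem:S-wp-tau} at $\tau=\varepsilon$ with Theorem \ref{thm:V-Q-L-qva}, using the arrow set of $Q$. On pairs involving $h$, Lemma \ref{lem:S-h} shows that $S_{\wp,\varepsilon}$ acts trivially, and $S_Q^{24}$ is trivial on $V_{\wh\h}^1$-components.

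\textbf{Main obstacle.} I expect the hardest part to be the generator-level identification in Step 2. One must verify that $\vvp{\cdot}$-coefficients such as $\Apsc{\ell}^{a,b}_{ijmn}$, after the change of basis to $\wt\xi^0$, match exactly the exponents $|u\to v|$ of the quiver with $\ell$ replaced by $r\ell$. I would first do this for the pair $(\wt\xi^0,\wt\xi^0)$ using Lemma \ref{lem:equiv-rels}, then treat the remaining pairs in the same way.
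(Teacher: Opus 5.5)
Your proposal is correct and follows the paper's proof. That proof consists exactly of combining Propositions \ref{prop:decomp} and \ref{prop:decomp-reverse}, with the two homomorphisms mutually inverse because each fixes the other's generating set. Your Step 2 (matching the quantum Yang--Baxter operators on generator pairs and extending via \eqref{eq:qyb-hex1}, \eqref{eq:qyb-hex2}) and your remark that the deformed algebra is generated by $\{\vac\ot\al^\vee_{i,\wp_im},\,y^a_{p_{i,m}}\ot\vac\}$ only make explicit points the paper leaves implicit.
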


\bibliographystyle{unsrt}



\end{document}